\def\csname opt@stmaryrd.sty\endcsname
\newenvironment{smatrix}{\left( \begin{smallmatrix} } {\end{smallmatrix} \right) }
\newcommand{\stbt}[4]{\begin{smatrix}#1 & #2 \\ #3 & #4\end{smatrix}}
\theoremstyle{plain}
\newtheorem{theorem}{Theorem}[subsection]
\newtheorem{introtheorem}{Theorem}
\newtheorem{lemma}[theorem]{Lemma}
\newtheorem{proposition}[theorem]{Proposition}
\newtheorem{corollary}[theorem]{Corollary}
\newtheorem{definition}[theorem]{Definition}
\newtheorem{notation}[theorem]{Notation}
\newtheorem{conjecture}[theorem]{Conjecture}
\newtheorem{assumption}[theorem]{Assumption}
\newtheorem*{convention}{Convention}
\theoremstyle{remark}
\declaretheorem[name=Remark,sibling=theorem,qed={\lower-0.3ex\hbox{$\diamond$}}]{remark}
\declaretheorem[name=Note,sibling=theorem,qed={\lower-0.3ex\hbox{$\diamond$}}]{note}
\declaretheorem[name=Remark,unnumbered,qed={\lower-0.3ex\hbox{$\diamond$}}]{remarknonumber}
\DeclareMathOperator{\AJ}{AJ}
\DeclareMathOperator{\alg}{alg}
\DeclareMathOperator{\BGG}{BGG}
\DeclareMathOperator{\br}{br}
\DeclareMathOperator{\codim}{codim}
\DeclareMathOperator{\coh}{coh}
\DeclareMathOperator{\cosp}{cosp}
\DeclareMathOperator{\crit}{crit}
\DeclareMathOperator{\diag}{diag}
\DeclareMathOperator{\DR}{DR}
\DeclareMathOperator{\Fil}{Fil}
\DeclareMathOperator{\fp}{fp}
\DeclareMathOperator{\Gal}{Gal}
\DeclareMathOperator{\gen}{gen}
\DeclareMathOperator{\GL}{GL}
\DeclareMathOperator{\Gr}{Gr}
\DeclareMathOperator{\GSp}{GSp}
\DeclareMathOperator{\HK}{HK}
\DeclareMathOperator{\Hom}{Hom}
\DeclareMathOperator{\id}{id}
\DeclareMathOperator{\Iw}{Iw}
\DeclareMathOperator{\Kl}{Kl}
\DeclareMathOperator{\LE}{\mathcal{LE}}
\DeclareMathOperator{\loc}{loc}
\DeclareMathOperator{\lrigfp}{lrig-fp}
\DeclareMathOperator{\lrigsyn}{lrig-syn}
\DeclareMathOperator{\lrig}{lrig}
\DeclareMathOperator{\MF}{MF}
\DeclareMathOperator{\mom}{mom}
\DeclareMathOperator{\mot}{mot}
\DeclareMathOperator{\myPr}{Pr}\renewcommand{\Pr}{\myPr}
\DeclareMathOperator{\NNfp}{NN-fp}
\DeclareMathOperator{\NNsyn}{NN-syn}
\DeclareMathOperator{\ord}{ord}
\DeclareMathOperator{\pr}{pr}
\DeclareMathOperator{\Pz}{Pz}
\DeclareMathOperator{\rigfp}{rig-fp}
\DeclareMathOperator{\rigsyn}{rig-syn}
\DeclareMathOperator{\Sieg}{Si}
\DeclareMathOperator{\Spec}{Spec}
\DeclareMathOperator{\sph}{sph}
\DeclareMathOperator{\st}{st}
\DeclareMathOperator{\Sym}{Sym}
\DeclareMathOperator{\syn}{syn}
\DeclareMathOperator{\tr}{tr}
\DeclareMathOperator{\vol}{vol}
\DeclarePairedDelimiter{\tb}{{\,]}}{{[\, }} 
\newcommand{\Af}{\AA_{\mathrm{f}}}
\newcommand{\CC}{\mathbf{C}}
\newcommand{\Dcris}{\mathbf{D}_{\mathrm{cris}}}
\newcommand{\DdR}{\mathbf{D}_{\mathrm{dR}}}
\newcommand{\Dpst}{\mathbf{D}_{\mathrm{pst}}}
\newcommand{\Eis}{\mathrm{Eis}}
\newcommand{\Fp}{\mathbf{F}_p}
\newcommand{\HH}{\mathbf{H}}
\newcommand{\Nek}{Nekov\'a\v{r}\xspace}
\newcommand{\Niz}{Nizio\l\xspace}
\newcommand{\Pif}{\Pi_{\mathrm{f}}}
\newcommand{\QQbar}{\overline{\QQ}}
\newcommand{\QQ}{\mathbf{Q}}
\newcommand{\Qp}{\QQ_p}
\newcommand{\RGt}{\widetilde{R\Gamma}}
\newcommand{\RR}{\mathbf{R}}
\newcommand{\ZZ}{\mathbf{Z}}
\newcommand{\Zp}{\mathbf{Z}_p}
\newcommand{\bfj}{\mathbf{j}}
\newcommand{\cD}{\mathcal{D}}
\newcommand{\cE}{\mathcal{E}}
\newcommand{\cF}{\mathcal{F}}
\newcommand{\cG}{\mathcal{G}}
\newcommand{\cH}{\mathcal{H}}
\newcommand{\cL}{\mathcal{L}}
\newcommand{\cM}{\mathcal{M}}
\newcommand{\cN}{\mathcal{N}}
\newcommand{\cO}{\mathcal{O}}
\newcommand{\cP}{\mathcal{P}}
\newcommand{\cQ}{\mathcal{Q}}
\newcommand{\cR}{\mathcal{R}}
\newcommand{\cS}{\mathcal{S}}
\newcommand{\cT}{\mathcal{T}}
\newcommand{\cU}{\mathcal{U}}
\newcommand{\cV}{\mathcal{V}}
\newcommand{\cW}{\mathcal{W}}
\newcommand{\cX}{\mathcal{X}}
\newcommand{\cY}{\mathcal{Y}}
\newcommand{\cZ}{\mathcal{Z}}
\newcommand{\can}{\mathrm{can}}
\newcommand{\ch}{\mathrm{ch}}
\newcommand{\dR}{\mathrm{dR}}
\newcommand{\dep}{\mathrm{dep}}
\newcommand{\et}{\text{\textup{\'et}}}
\newcommand{\fP}{\mathfrak{P}}
\newcommand{\into}{\hookrightarrow}
\newcommand{\onto}{\twoheadrightarrow}
\newcommand{\res}{\mathrm{res}}
\newcommand{\rig}{\mathrm{rig}}
\newcommand{\sC}{\mathscr{C}}
\newcommand{\sFil}{{\mathscr Fil}}
\newcommand{\sF}{\mathscr{F}}
\newcommand{\sG}{\mathscr{G}}
\newcommand{\sH}{\mathscr{H}}
\newcommand{\sL}{\mathscr{L}}
\newcommand{\sgn}{\mathrm{sgn}}
\newcommand{\uGa}{\underline{\Gamma}}
\newcommand{\uPhi}{\underline{\Phi}}
\newcommand{\uPi}{\underline{\Pi}}
\newcommand{\uTh}{\underline{\Theta}}
\newcommand{\uchi}{\underline{\chi}}
\newcommand{\unu}{\underline{\nu}}
\newcommand{\utau}{\underline{\tau}}
\newcommand{\wH}{\widetilde{H}}
\newcommand{\wZ}{\widetilde{Z}}
\renewcommand{\AA}{\mathbf{A}}
\numberwithin{equation}{subsection}
\renewcommand{\le}{\leqslant}
\renewcommand{\leq}{\leqslant}
\renewcommand{\ge}{\geqslant}
\renewcommand{\geq}{\geqslant}
\renewcommand{\emptyset}{\varnothing}
\begin{document}

\begin{frontmatter}

\title{On the Bloch--Kato conjecture for $\GSp(4)$\protect\thanksref{T1}}

\thankstext{T1}{This research was supported by the following grants: Royal Society University Research Fellowship ``$L$-functions and Iwasawa theory'' and ERC Consolidator Grant ``Shimura varieties and the BSD conjecture'' (Loeffler);  ERC Consolidator Grant ``Euler systems and the Birch--Swinnerton-Dyer conjecture'' (Zerbes); EPSRC grant EP/R014604/1 (both authors).}

\begin{aug}
\author{\fnms{David} \snm{Loeffler} 
\ead[label=e1]{david.loeffler@unidistance.ch}
\ead[label=u1]{foo} }
\address{UniDistance Suisse, Schinerstrasse 18, 3900 Brig, Switzerland \\ 
ORCID: \href{https://orcid.org/0000-0001-9069-1877}{0000-0001-9069-1877} \\
\printead{e1}}

\and

\author{\fnms{Sarah Livia} \snm{Zerbes} 
\ead[label=e2]{sarah.zerbes@math.ethz.ch} }
\address{Department of Mathematics, ETH Z\"urich, R\"amistrasse 101, 8092 Z\"urich, Switzerland \\
ORCID: \href{https://orcid.org/0000-0001-8650-9622}{0000-0001-8650-9622} \\
 \printead{e2}}
\end{aug}

\received{\sday{18} \smonth{2} \syear{2026}}

\begin{abstract}
 We prove an explicit reciprocity law for the Euler system attached to the spin motive of a genus $2$ Siegel modular form. As consequences, we obtain one inclusion of the Iwasawa Main Conjecture for such motives, and the Bloch--Kato conjecture in analytic rank 0 for their critical twists.
\end{abstract}

\begin{keyword}[class=MSC]
\kwd{11F46}
\kwd{11F67}
\kwd{11F80}
\kwd{11R23}
\end{keyword}

\setcounter{tocdepth}{1}
\tableofcontents

\end{frontmatter}

\newcommand{\mychapter}[1]{%
  \clearpage
  \addcontentsline{toc}{section}{--- \normalfont \emph{#1} ---}%
  \begin{center}
    \Huge\bfseries #1
  \end{center}
  \vspace{2em}
}
 
 \renewcommand{\crefrangeconjunction}{--} 

\mychapter{Introduction}

\section{Aims of this paper}

 Euler systems are one of the most powerful tools for controlling the cohomology groups of global Galois representations, and hence for proving cases of the two interrelated conjectures linking these groups to values of $L$-functions: the Bloch--Kato conjecture and the Main Conjecture of Iwasawa Theory. More precisely, it follows from work of Kolyvagin, Kato and Rubin that if there exists an Euler system for some Galois representation $V$, and if the bottom class of this Euler system is non-zero, then we obtain a bound on the Selmer group of $V$. So, in order to make progress on the above conjectures, we need to first construct an Euler system for $V$, and then to prove a formula (an \emph{explicit reciprocity law}) relating the localisation of this Euler system at $p$ to the critical values of $L$-functions. The goal of this paper is to carry out this program for the 4-dimensional spin Galois representations arising from Siegel modular forms of genus 2, i.e.~automorphic representations of the group $\GSp_4 / \QQ$.

 This builds on earlier work carried out in the paper \cite{LSZ17} together with Chris Skinner, where we constructed an Euler system for these spin Galois representations. At the time that paper was written, the tools were not available to prove an explicit reciprocity law for the Euler system; so we could not rule out the possibility that the entire Euler system was zero, and the arithmetic applications given in \emph{op.cit.} were conditional on assuming that the Euler system satisfied an explicit reciprocity law of the expected form. The main result of the present paper is a proof of the missing explicit reciprocity law.

 The other main input to this paper is our work \cite{LPSZ1} with Skinner and Vincent Pilloni, in which we constructed a $p$-adic $L$-function interpolating critical values of the spin $L$-functions of an automorphic representation of $\GSp_4$. This uses Piatetski--Shapiro's integral formula for the spin $L$-function \cite{piatetskishapiro97}, Harris' interpretation of this integral in terms of coherent cohomology \cite{harris04}, and Pilloni's results on $p$-adic interpolation of coherent cohomology via higher Hida theory \cite{pilloni20}. Although the explicit reciprocity law can be formulated purely in terms of complex $L$-values (without mentioning $p$-adic $L$-functions), the existence of the $p$-adic $L$-function plays a fundamental role in the proof, since we shall first relate Euler-system classes to \emph{non-critical} values of the $p$-adic $L$-function, and deduce the reciprocity law at critical values by analytic continuation (as in several previous works on reciprocity laws for Euler systems, such as \cite{BDP13} and \cite{KLZ17}).

 As a consequence of the explicit reciprocity law, we obtain one inclusion of the Iwasawa Main Conjecture for the spin Galois representation, and the Bloch--Kato conjecture for the analytic rank 0 twists of this Galois representation. For these arithmetic applications, we assume for simplicitly that our automorphic representation has level 1, and highly regular weight; this is purely to keep the arguments short, and the more general case will be treated in future work. However, these simplifying hypotheses are \emph{not} imposed for the $p$-adic regulator formula which is the main input for these applications; this formula is proved here for arbitrary levels, assuming only a mild condition on the weight ($r_1 - r_2 \ge 3$).

\section{Main results of the paper}

 In order to state the results a little more precisely, we need to introduce some notation. Let $p$ be a prime. As in \cite[\S 2]{LSZ17}, $G$ denotes the symplectic group $\GSp_4$, $P_{\Sieg}$ and $P_{\Kl}$ denote its standard Siegel and Klingen parabolic subgroups, and $H$ denotes the subgroup $\GL_2 \times_{\GL_1} \GL_2$.

 Let $\Pi$ be a non-endoscopic, non-CAP, globally generic automorphic representation of $G(\AA)$, of weights $(k_1,k_2)=(r_1+3,r_2+3)$ with $r_1,r_2\geq 0$, and write $V_\Pi$ for the 4-dimensional $p$-adic spin Galois representation of $\Pi$. Let $(q, r)$ be integers with $0 \le q \le r_2$ and $0 \le r \le r_1 - r_2$; and let $\uchi = (\chi_1, \chi_2)$ be a pair of Dirichlet characters with $\chi_1 \chi_2 = \chi_\Pi$, satisfying the parity constraint
 \[ (-1)^{q + r} = (-1)^{r_1} \chi_1(-1) = (-1)^{r_2} \chi_2(-1).\]

 In \cite{LSZ17} and \cite{LZ-equivar}, we defined a cohomology class
 \[ z^{[\Pi, q, r]}_{\can}(\uchi) \in H^1_{\mathrm{f}}\left(\QQ, V_{\Pi}^*(-q)\right). \]
 using pushforwards of Eisenstein classes from $H.$ Our first main result computes the image of this class under the Bloch--Kato logarithm map, expressing it as a non-critical value of a $p$-adic spin $L$-function:

 \begin{introtheorem}
  \label{thm:main}
  Suppose $\Pi$ is unramified and Klingen-ordinary at $p$, and $r_1 - r_2 \ge 3$. Let $\nu$ be a basis of $\Gr^1 \DdR(V_{\Pi})$, and let $\nu_{\dR}$ be its unique lifting to a vector in $\Fil^1 \DdR(V_{\Pi})^{(\varphi - \alpha)(\varphi-\beta) = 0}$ (\cref{note:1diml}). Then we have
  \[  \left\langle \nu_{\dR},\, \log_{\mathrm{BK}} z^{[\Pi, q, r]}_{\can}(\uchi) \right\rangle_{\Dcris(V_\Pi)}=
  (\star)\times \cL_{p, \nu}(\Pi, \uchi; -1-r_2+q, r). \]
  for an explicit non-zero factor $(\star)$. Here, $\cL_{p}(\Pi, \bfj_1, \bfj_2)$ denotes the 2-variable spin $p$-adic $L$-function constructed in \cite{LPSZ1}.
 \end{introtheorem}

 The proof of this theorem occupies the majority of the paper. Note that we do not require $\Pi$ to have level 1 here.
%

 Our second main result is a considerable strengthening of Theorem A, under far more restrictive hypotheses. We now assume that $\Pi$ satisfies the conditions of Theorem A, and also the following extra conditions:
 \begin{itemize}
  \item $\Pi$ is Borel-ordinary at $p$;
  \item $\Pi$ has level one (i.e.~$\Pi_\ell$ is unramified for all finite primes $\ell$);
  \item $r_1 - r_2 \ge 6$;
  \item for some (and hence every) $G_{\QQ}$-stable lattice $T$ in $V_\Pi^\star$, and every Dirichlet character $\chi$ of prime-to-$p$ conductor, Rubin's ``big image'' condition $\operatorname{Hyp}(\QQ(\mu_{p^\infty}), T(\chi))$ holds (cf.~\cite[Assumption 11.1.2]{LSZ17}).
 \end{itemize}
 The condition $r_1 - r_2 \ge 6$ implies that the $p$-adic $L$-function factors as the product of two copies of a single-variable $p$-adic $L$-function $\cL_p(\Pi, \bfj)$.

 \begin{introtheorem}
  \label{thmB}
  There exists an Euler system for $V_\Pi^*(-1-r_2)$, whose image under the Perrin-Riou cyclotomic regulator map is the $p$-adic $L$-function $\cL_p(\Pi, \bfj)$.
 \end{introtheorem}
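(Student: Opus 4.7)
The plan is to bootstrap Theorem A, a single-class reciprocity, into a full Iwasawa-theoretic statement by exploiting the norm-compatibility of the Euler system of \cite{LSZ17} in both the cyclotomic direction $m$ and the branch directions $(q,r)$, and then to apply the factorisation of the $p$-adic $L$-function afforded by the hypothesis $r_1 - r_2 \ge 6$.

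For each admissible pair $(q,r)$, the norm-compatibility in $m$ assembles the classes $z^{[\Pi,q,r]}_{1,m}$ into an Iwasawa cohomology class $\mathbf{z}^{[\Pi,q,r]}_\infty \in H^1_{\Iw}\bigl(\QQ(\mu_{p^\infty}), W_\Pi^*(-q)\bigr)$; Borel-ordinarity (rather than mere Klingen-ordinarity) is what rules out unbounded denominators at this step, as already noted in the excerpt. I would then apply the Perrin-Riou cyclotomic regulator to $\loc_p \mathbf{z}^{[\Pi,q,r]}_\infty$, obtaining a $p$-adic distribution on $\Zp^\times$ whose value at the trivial character, by Theorem A, is a non-zero multiple of $\cL_p(\Pi,-1-r_2+q, r)\cdot \tilde Z(w,\uPhi)$. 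Next, I would let $(q, r)$ vary over the rectangle $0\le q\le r_2$, $0\le r\le r_1-r_2$, and use the moment-map constructions of \cite{LSZ17} to package the family $\{\mathbf{z}^{[\Pi,q,r]}_\infty\}$ into a single two-variable Iwasawa-theoretic object. Its Perrin-Riou image is a two-variable distribution whose specialisations at the $(r_2+1)(r_1-r_2+1)$ classical points coincide, by Theorem A, with the two-variable $p$-adic $L$-function $\cL_p(\Pi,\bfj_1,\bfj_2)$ of \cite{LPSZ1}; since $\cL_p$ is itself characterised by this same interpolation property, the two distributions agree.

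Finally, the hypothesis $r_1-r_2\ge 6$ ensures (as stated in the excerpt) that $\cL_p(\Pi,\bfj_1,\bfj_2)$ factors as a product of two single-variable $L$-functions; specialising one variable extracts an Iwasawa class in $H^1_{\Iw}\bigl(\QQ(\mu_{p^\infty}), W_\Pi^*(-1-r_2)\bigr)$ whose Perrin-Riou image is the single-variable $\cL_p(\Pi,\bfj)$, after a suitable choice of $(w,\uPhi)$ making $\tilde Z(w,\uPhi)$ non-zero so that the constant $(\star)\cdot\tilde Z(w,\uPhi)$ can be normalised away. The main obstacle is the passage from finitely many numerical identities, furnished by Theorem A, to equality of two-variable distributions: this requires that the moment-map formalism and Perrin-Riou's big regulator be compatible in the two-variable setting, so that Perrin-Riou's uniqueness-by-interpolation theorem applies in the form needed. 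A secondary technical point is the careful matching of local Euler factors from Perrin-Riou's interpolation formula with the factor $(\star)$ of Theorem A. The level-one and big-image hypotheses are not needed for the construction of the Euler system itself; they will be invoked only in the downstream applications to the Bloch--Kato and Main Conjectures.
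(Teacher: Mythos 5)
There is a genuine gap at the heart of your argument, in the step where you pass from ``finitely many numerical identities'' to equality of distributions. The cyclotomic variable of the Perrin--Riou regulator corresponds to the Tate twist $q$, and Theorem A only evaluates the regulator at the $r_2+1$ \emph{geometric} points $\bfj = -1-r_2+q$ with $0 \le q \le r_2$ (the parameter $r$ indexes different branches of the Euler system, not different cyclotomic specialisations of one class, so it cannot be packaged as a second Iwasawa variable). A bounded measure on $\Zp^\times$ is not determined by its values at finitely many characters, and the interpolation property characterising $\cL_p(\Pi,\bfj_1,\bfj_2)$ involves infinitely many critical twists $a_i+\rho_i$, none of which lie in the geometric range where Theorem A applies. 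So ``Perrin--Riou's uniqueness-by-interpolation'' cannot be invoked: for a single $\Pi$, the data supplied by Theorem A is strictly insufficient to identify the regulator image with the $p$-adic $L$-function.

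The paper's actual route is forced by exactly this obstruction. One deforms $\Pi$ in a Siegel-type Hida family $\uPi$ over a disc $U$ in weight space; the union of the geometric ranges over all classical specialisations $\Pi(n)$ is Zariski-dense in $U\times\cW$, which is what replaces your uniqueness claim. Since the coherent $p$-adic $L$-function of \cite{LPSZ1} is not known to vary in Siegel-type families, the comparison is made instead with a ``Betti'' $p$-adic $L$-function for the $\GL_4$ transfer of the family, yielding $\cL^{\mot,[r]}_{p,\unu}(\uPi)=D(\mathbf{u})\cdot\cL^{[r]}_{p,\utau}(\uTh)$ for a meromorphic $D$ on $U$. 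A second gap in your proposal is the claim that the big-image and level-one hypotheses are only needed downstream: the possible vanishing of $D$ at the centre $n=0$ of the family (a degeneration of period ratios, not something you can ``normalise away'' via $\widetilde Z(w,\uPhi)$) is excluded precisely by a leading-term argument using Mazur--Rubin's Euler-characteristic theorem, which requires the big-image hypothesis, while level one is used to control test vectors in the family. Finally, $r_1-r_2\ge 6$ enters not through a factorisation of the two-variable $L$-function but in taking linear combinations over several values of $r$ to strip off the auxiliary factors $(c_1^2-c_1^{\bfj+1-r'})(c_2^2-c_2^{\bfj+1-r})\cL_{p,\tau}(\Theta,r)$ on each component of weight space.
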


 Note that this result relies on Theorem A not only for $\Pi$ itself, but also for all the classical specialisations of a $p$-adic family passing through $\Pi$. From Theorem B we readily obtain the following two arithmetic applications. The first gives one inclusion in the Iwasawa main conjecture for $V_\Pi^*$, up to inverting $p$:

 \begin{introtheorem}
  Let $V=V_\Pi^*(-1-r_2)$, and denote by $\RGt_{\Iw}(\QQ_\infty,V)$ the \Nek \hspace{0.5ex} Selmer complex, with the unramified local conditions at $\ell\neq p$ and the Greenberg-type local condition at $p$ determined by the Klingen-ordinarity of $\Pi$. Assume that the above conditions are satisfied. Then the module $\wH^2_{\Iw}(\QQ(\mu_{p^\infty}), V)$ is torsion over the Iwasawa algebra, and its characteristic ideal divides the $p$-adic $L$-function $\cL_p(\Pi, \bfj)$.
 \end{introtheorem}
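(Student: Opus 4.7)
The plan is to feed the Euler system supplied by Theorem B into the standard Euler system machinery in the cyclotomic tower, as developed by Rubin and Kato (and reformulated in the Selmer-complex language by \Nek), using the Perrin--Riou regulator computation of Theorem B as the input linking the bottom class to the $p$-adic $L$-function.

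First I would verify that the hypotheses of Rubin's theorem are met for the Galois representation $T \subset V = W_\Pi^*(-1-r_2)$. The assumed big-image condition $\operatorname{Hyp}(\QQ(\mu_{p^\infty}), T(\chi))$ for all Dirichlet characters $\chi$ of prime-to-$p$ conductor is precisely Rubin's axiom, and Theorem B supplies an Euler system with the requisite norm-compatibility over $\QQ(\mu_{Mp^m})$. Applying Rubin's theorem then produces an upper bound on the Pontryagin dual of the strict Iwasawa-theoretic Selmer group of the discrete module $V/T \otimes \QQ_p/\Zp$: its characteristic ideal is contained in the ideal generated by the image of the bottom class under the Perrin-Riou big logarithm (cyclotomic regulator) map.

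Second, by the content of Theorem B, this image is exactly $\cL_p(\Pi, \bfj)$. Since $\cL_p(\Pi, \bfj)$ is a nonzero element of the Iwasawa algebra $\Lambda = \ZZ_p \llbracket \Gal(\QQ(\mu_{p^\infty})/\QQ)\rrbracket$ (it interpolates nonvanishing critical $L$-values, after removing explicit Euler factors), the divisibility forces the dual Selmer group to be $\Lambda$-torsion, and its characteristic ideal to divide $\cL_p(\Pi, \bfj)$. The final step is to translate the classical Iwasawa--Greenberg Selmer group appearing in Rubin's formulation into $\wH^2_{\Iw}(\QQ(\mu_{p^\infty}), V)$: the Klingen-ordinarity hypothesis produces the Greenberg filtration at $p$, and the unramified conditions at $\ell \ne p$ match by construction. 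Via Poitou--Tate and the formalism of Nekov\'a\v{r}'s Selmer complexes, $\wH^2_{\Iw}(\QQ(\mu_{p^\infty}), V)$ is identified (up to a twist by the Iwasawa algebra's involution) with the dual of this strict Selmer group, so the divisibility transfers.

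The main obstacle, in my view, is bookkeeping rather than conceptual: one has to match the normalisations and local conditions between Rubin's classical setup and the \Nek Selmer complex $\RGt_{\Iw}(\QQ_\infty, V)$, including the twist by $(-1-r_2)$ and the passage between the motivic $W_\Pi$ and its dual $W_\Pi^*$, and verify that the Greenberg local condition at $p$ coming from Klingen-ordinarity coincides with the one implicit in the Euler system argument. Granted Theorem B, the Euler system input itself can be applied essentially as a black box, and no additional vanishing or non-degeneracy result for $\cL_p(\Pi, \bfj)$ is needed beyond the basic nonvanishing built into its construction in \cite{LPSZ1}.
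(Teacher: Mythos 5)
Your proposal is correct and is essentially the route the paper takes: the paper's entire proof consists of citing [LSZ17, Theorem~11.3.2], where exactly this Euler-system/Kolyvagin argument (big-image hypothesis, Greenberg local condition from Klingen-ordinarity, Perrin--Riou regulator, \Nek{} duality as in \cref{prop:nekovarduality}) was already carried out conditionally on the existence of an Euler system whose regulator is the $p$-adic $L$-function, which is what Theorem~B supplies. Your sketch simply unpacks what that citation contains, so there is no substantive difference in approach.
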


 Note that this is a divisibility of ideals in $\Lambda_L(\Zp^\times)$ where $\Gamma \cong \Zp^\times$ and $L$ is a finite extension of $\Qp$. The module $\wH^2_{\Iw}$ can also be interpreted more classically as the base-extension to $L$ of the Pontryagin dual of a Selmer group attached to a representation of cofinite type over $\Zp$, linking up with more classical formulations of an Iwasawa main conjecture; see \cref{prop:nekovarduality} below.

 Our second application is to the Bloch--Kato conjecture:

 \begin{introtheorem}
  Assume that the above conditions are satisfied. Let $0\leq j\leq r_1-r_2$, and let $\rho$ be a finite-order character of $\Zp^\times$. If $L\left(\Pi\otimes \rho,\frac{1-r_1+r_2}{2}+j\right)\neq 0$, then $H^1_\mathrm{f}(\QQ,V(-j-\rho))=0$.
 \end{introtheorem}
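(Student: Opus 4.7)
The plan is to deduce this from the one-sided Iwasawa Main Conjecture divisibility in Theorem~C, combined with the interpolation property of the $p$-adic $L$-function and a control theorem, rather than going directly via Kolyvagin--Rubin on the Euler system of Theorem~B.

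First, by the interpolation formula for $\cL_p(\Pi, \bfj)$ established in \cite{LPSZ1}, its value at the arithmetic point corresponding to $(j, \rho)$ is the product of the complex $L$-value $L\left(\Pi \otimes \rho, \frac{1-r_1+r_2}{2}+j\right)$ with an explicit non-zero factor (comprising local Euler factors at $p$ and an algebraic period). Since the $L$-value is non-zero by hypothesis, $\cL_p(\Pi, \bfj)$ is non-zero at this point, so by Theorem~C the characteristic ideal of $\wH^2_\Iw(\QQ(\mu_{p^\infty}), V)$ is also non-vanishing there.

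Next I would invoke \cref{prop:nekovarduality}, which identifies $\wH^2_\Iw$ with the Pontryagin dual of a classical Selmer group over $\Zp$. Under the self-duality $W_\Pi \cong W_\Pi^*(-r_1-r_2-3)$ coming from the spin functional equation --- which exchanges $j \leftrightarrow r_1-r_2-j$ and $\rho \leftrightarrow \rho^{-1}$ --- specialization of $\wH^2_\Iw$ at the character dual to $(j, \rho)$ computes precisely the Bloch--Kato Selmer group $H^1_f(\QQ, V(-j-\rho))$, up to finite kernel and cokernel; because $j$ lies in the critical range, the Greenberg-type local condition at $p$ induced by Klingen-ordinarity coincides with the Bloch--Kato $H^1_f$ condition at this specialization, so the two Selmer groups agree. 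The non-vanishing of the characteristic ideal established above thus forces $H^1_f(\QQ, V(-j-\rho)) = 0$.

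The main obstacle is the control argument: carefully matching up characters under \Nek--Poitou--Tate duality, and verifying the compatibility of the Greenberg local condition at $p$ with the Bloch--Kato $H^1_f$ condition at the precise critical twist $V(-j-\rho)$. A secondary issue is excluding spurious $H^0$ contributions in the control theorem; this follows from the irreducibility of the residual representation implicit in Rubin's big image hypothesis $\operatorname{Hyp}(\QQ(\mu_{p^\infty}), T(\chi))$, which is part of the assumed hypotheses.
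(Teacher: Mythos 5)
Your proposal is correct and follows essentially the same route as the paper, which deduces the statement from the Iwasawa-theoretic theorem by what it calls ``a standard descent argument'': non-vanishing of the $p$-adic $L$-value at the arithmetic point forces the characteristic ideal of $\wH^2_{\Iw}$ to be coprime to the corresponding height-one prime, and control for the \Nek Selmer complex (together with the comparison of the Greenberg and Bloch--Kato local conditions at $p$) then kills $H^1_{\mathrm{f}}(\QQ, V(-j-\rho))$. The one point you assert without justification --- that the interpolation factor $R_p(\Pi,\rho,j)$ is non-zero, so no exceptional zero intervenes --- is exactly what the paper supplies via \cref{lem:trivzero} (Klingen-ordinarity rules out Hecke parameters of the form $p^n\zeta$).
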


 \subsection*{Relations to other work}

  In sequels to this paper, \cite{LZ20-yoshida} and \cite{LZ21-BSD}, we relax the conditions on the weight and tame level of $\Pi$, and consider applications to the Iwasawa main conjecture for quadratic Hilbert modular forms, and to the Birch--Swinnerton-Dyer conjecture for modular abelian surfaces. The methods of this paper can also be applied to Euler systems for $\GSp_4 \times \GL_2$ and $\GSp_4 \times \GL_2 \times \GL_2$; see \cite{LZ20b-regulator} and \cite{LZ21-erl}.

  More generally, the strategy that we developed for the proof of the explicit reciprocity law should be applicable to many other cases where an Euler system has been constructed, but where the relevant $L$-values cannot be expressed purely in terms of degree zero coherent cohomology, as they can in previously-studied cases such as $\GL_2 \times \GL_2$. For instance, this applies to the Asai representations of quadratic Hilbert modular forms. See \cite{grossiloefflerzerbes-GO4} for a proof of the $p$-adic regulator formula in this setting, and applications to the Bloch--Kato and Iwasawa main conjectures for Asai motives; and \cite{LZ-adjoint} for an application to the adjoint of an elliptic modular form. It should also be possible to prove an explicit reciprocity law for the $\operatorname{GU}(2, 1)$ Euler system of \cite{LSZ-unitary} via similar methods, and we hope to pursue this in future work.

\section{Strategy}

 We outline the overall strategy used in the proofs of Theorems A and B.

 \subsection{Strategy for Theorem A}

  \begin{enumerate}
   \item Using equivariance properties of the Lemma--Flach classes as the test data $(w, \uPhi)$ vary, we show that it suffices to prove the theorem for $(w, \uPhi)$ which have a certain specific type at $p$ (``Klingen-type test data''). For these Klingen test data at $p$, the left-hand side of \cref{thm:main} can be expressed as a pairing \eqref{eq:goal2} between a de Rham cohomology class $\eta_{\dR}$ of Klingen parahoric level which is an ordinary eigenvector for the Hecke operator $U_{2, \Kl}'$, and the logarithm of an \'etale class which is the pushforward of a pair of $\GL_2$ Eisenstein classes along a certain ``twisted'' embedding $\iota_\Delta$ of Shimura varieties $Y_{H,\Delta}\hookrightarrow Y_{G,\Kl}$. (This embedding is also used in the definition of the $p$-adic $L$-function $\cL_p(\Pi)$ in \cite{LPSZ1}.)

   \item We express the pairing \eqref{eq:goal2} using the ``\Nek--\Niz finite-polynomial cohomology'' of \cite{besserloefflerzerbes16} (a variant of the syntomic cohomology introduced in \cite{nekovarniziol16}). This gives a formalism of Abel--Jacobi maps, allowing us to write \eqref{eq:goal2} as a cup-product between the pushforward of the syntomic $\GL_2 \times \GL_2$ Eisenstein class and a class $\eta_{\NNfp,-D}$ which is a lifting of $\eta_{\dR}$ to \Nek--\Niz fp-cohomology; see \eqref{eq:step1}. By a new comparison result due to Ertl--Yamada \cite{ertlyamada19}, this is equivalent to a pairing in log--rigid finite-polynomial cohomology (c.f. Proposition \ref{prop:lrigreduction}). 

   \item In Section \ref{sect:redstep2}, we show that the pairing factors through a pairing in the rigid fp-cohomology of the p-rank $m$ locus $Y_{G, \Kl}^m$, which only ``sees'' the restriction of the Eisenstein class to the ordinary locus $Y_{H,\Delta}^{2,m} \subseteq Y_{H, \Delta}$ (Theorem \ref{thm:redtoord}). This allows us to use the explicit description, due to Bannai--Kings, of the syntomic Eisenstein classes for $\GL_2$ over the ordinary locus, in terms of non-classical $p$-adic Eisenstein series.

   \item To actually compute the pairing of Theorem \ref{thm:redtoord} and relate it to $p$-adic $L$-functions, we need an explicit description of the lifting $\eta_{\NNfp,-D}$ in terms of classes in the coherent cohomology groups studied in Pilloni's higher Hida theory. This is the most novel part of the construction, and relies on two new ingredients:
   \begin{itemize}
    \item A theory of rigid and coherent cohomology with \emph{partial compact support} (see Section \ref{section:prelimrigcohom}), i.e.~with compact support towards some of the closed strata of the special fibre but not towards others. This allows us to bypass the lack of a Frobenius lifting over $Y_{G, \Kl}^m$, by instead working in the cohomology of the ordinary locus $Y_{G, \Kl}^{2,m}$ with an appropriate partial support condition; see \cref{prop:restricttoc0} for this reduction.

    \item A new  spectral sequence, the \emph{Pozna\'n spectral sequence} 
    (Proposition \ref{prop:Poznan}), relating syntomic (or finite-polynomial) cohomology to the mapping fibre of a polynomial in Frobenius over coherent cohomology. This spectral sequence can be seen as a syntomic analogue of the Fr\"olicher spectral sequence relating de Rham and coherent cohomology.
   \end{itemize}

   \item We now use an identity relating Hecke operators  on $G$ and on $H$ (Proposition \ref{prop:weirdcorresp}) to simplify the coherent cohomology pairing until we are left with only two terms. Both can be recognised as special values at $\bfj = 0$ of $p$-adic measures $\sL_1(\bfj)$ and $\sL_2(\bfj)$, which are very similar, but \emph{a priori} not quite identical, to the $p$-adic $L$-function of \cite{LPSZ1} -- the difference lies in the choice of local data at $p$. By a local zeta-integral computation, we show that at critical values the measure $\sL_1$ has the same interpolating property as the $p$-adic $L$-function, while the measure $\sL_2$ is identically 0. So the regulator is given by the value of $\sL_1$ at $\bfj = 0$, and this corresponds to a non-critical value of the $p$-adic $L$-function. This completes the proof of Theorem A.
  \end{enumerate}

  \begin{remarknonumber}\
   \begin{itemize}
   	\item In a previous version of the paper, steps (3) and (5) relied on the so-called \textit{eigenspace vanishing conjecture}. This is no longer the case: in step (3), we use instead an argument suggested to us by George Boxer and Vincent Pilloni (Section \ref{sect:BPproof}), and step (5) relies on a weaker result concerning Hecke eigenspaces in the rigid cohomology of the cuspidal boundary (c.f. Appendix \ref{chap:appendix}).

    \item The first glimpse of the Pozna\'n spectral sequence is \cite[Proposition A.16]{bannaikings10}, which represents elements of the first syntomic cohomology group of a smooth pair in terms of classes in coherent cohomology.
    \item The Hecke operator identity of \cref{prop:weirdcorresp} is an analogue in the present setting of an identity of Hecke operators for $\GL_2 \times \GL_2$ which occurs in the proofs of regulator formulae for Rankin--Selberg convolutions; see the proof of \cite[Lemma 6.4.6]{KLZ20}.
    \item The idea of (coherent) cohomology with partial compact support was discovered independently by Pilloni (\cite{pilloni20}).\qedhere
   \end{itemize}
  \end{remarknonumber}

 \subsection{Strategy for Theorem B}

  In order to deduce Theorem B from Theorem A, we use variation in a $p$-adic family. We use $p$-adic families of ``Siegel type'' -- one-parameter families in which we vary $(r_1, r_2)$ $p$-adically while keeping the difference $r_1 - r_2$ fixed.

  If we knew that the $p$-adic $L$-function of \cite{LPSZ1} extended to Siegel-type families, and that there existed a $p$-adic Eichler--Shimura isomorphism for such families, interpolating the period isomorphisms for the middle steps of the Hodge filtration at each classical specialisation (analogous to the results of Ohta \cite{ohta95} and Andreatta--Iovita--Stevens for $\GL_2$ \cite{andreattaiovitastevens}), then Theorem B would be a virtually immediate consequence of Theorem A (we sketch the argument in \cref{sect:ESconj}). However, these ingredients do not seem to be available yet for $\GSp_4$; both statements seem to be accessible for Klingen-type families (with $r_1$ varying but $r_2$ fixed), but the case of Siegel-type families is less clear.

  Instead, we use an alternative argument, relying on the existence of a $p$-adic $L$-function for functorial liftings to $\GL_4$ of Siegel-type families due to Barrera Salazar et al \cite{BDGJW25}. A careful analysis of the relation between this new ``Betti'' $p$-adic $L$-function for the family, and the ``coherent'' $p$-adic $L$-function of \cite{LPSZ1} for its classical specialisations, leads to the conclusion that the image of the Euler system for $\Pi$ under the Perrin-Riou regulator must be a scalar multiple of the $p$-adic $L$-function.

  What remains to be proven is that this scalar factor is not zero. We show that if the ratio of periods giving this scalar factor degenerates to 0, then this happens not only for the Euler system class over the cyclotomic extension $\QQ(\mu_{p^\infty})$, but simultaneously for the classes over $\QQ(\mu_{Mp^\infty})$ for all auxiliary conductors $M$. This gives an Euler system satisfying a stronger-than-expected local condition at $p$, and an result due to Mazur and Rubin shows that in fact no such Euler system can exist, contradicting our assumption. This completes the proof of Theorem B.

\section{Acknowledgements}

 We would like to express our sincere gratitude to John Coates, Henri Darmon and Gert Schneider for their interest and constant encouragement in us writing this paper. We would also like to thank Fabrizio Andreatta, Antonio Cauchi, John Coates, Veronika Ertl, Elmar Gro{\ss}e-Kl\"onne, Kiran Kedlaya, Kai-Wen Lan, Chris Lazda, Bernhard Le Stum, Vincent Pilloni, Joaquin Rodrigues, Christopher Skinner, Nina Wawrow, Chris Williams and Kazuki Yamada for many helpful discussions, and the referees for their valuable comments.

 We are very grateful to George Boxer and Vincent Pilloni for explaining to us the argument in Section \ref{sect:BPproof} and allowing us to include it here.

 One of the key ingredients for proving the main results of this paper is Pilloni's Higher Hida Theory, which we learnt about during the workshop \emph{Motives, Galois Representations and Cohomology Around the Langlands Program} at the IAS in November 2017. We would like to express our gratitude to the organizers for the invitation.

 We discovered a new spectral sequence which is crucial for the proof of the explicit reciprocity law while attending Gregorz Banaszak's birthday conference in Pozna\'n in September 2018. We are very grateful to the organizers for the invitation to such an inspiring event.

 Part of the work on this paper was carried out during visits to the Centre Bernoulli at EPFL, Princeton University, the Morningside Centre for Mathematics, the Isaac Newton Institute, and the Simons Laufer Mathematical Sciences Institute (formerly MSRI). We would like to thank all of these institutions for their hospitality.


\section{Notation and conventions}


\begin{itemize}

 \item Let $J$ be the skew-symmetric $4\times 4$ matrix over $\ZZ$ given by $\begin{smatrix} &&& 1\\ 	&&1&\\&-1&&\\-1&&&\end{smatrix}$. Let $G=\GSp_4$ be the group scheme over $\ZZ$ defined by
 \[ G(R)=\GSp_4(R)=\left\{ g \in \GL_4(R)\times \GL_1(R)\, : \, g^t\cdot J\cdot g= \mu J \text{ for some $\mu \in R^\times$} \right\}\]
 for any unital commutative ring $R$.

 \item Define the standard Borel subgroup $B\subseteq G$ as the intersection of $G$ with the upper-triangular matrices in $\GL_4$.

 \item Denote by $P_{\Sieg}$ and $P_{\Kl}$ the Siegel, resp.~Klingen, parabolic subgroups of $G$ given by
 \begin{align*}
  P_{\Sieg}&=\begin{smatrix} \star&\star&\star&\star\\ \star&\star&\star&\star\\&&\star&\star\\&&\star&\star\end{smatrix},&
  P_{\Kl} &= \begin{smatrix} \star&\star&\star&\star\\ &\star&\star&\star\\
  &\star&\star&\star\\ &&&\star\end{smatrix}.
 \end{align*}
Write $M_{\Sieg}$ and $M_{\Kl}$ for the standard (block-diagonal) Levi subgroups of $P_{\Sieg}$ and $P_{\Kl}$, and $T$ for the diagonal maximal torus.

 \item For a prime $p$ and $n \ge 1$, let $\Kl(p^n)$ denote the open compact subgroup $\{g \in G(\Zp): g \bmod {p^n} \in P_{\Kl}(\ZZ/p^n)\}$, and similarly for $\Sieg(p^n)$ (although the latter will play a relatively minor role in this paper).

 \item Let $H = \{ (h_1, h_2) \in \GL_2 \times \GL_2: \det(h_1) = \det(h_2)\}$, and let $\iota$ denote the embedding $H\hookrightarrow G$ given by
 \[ \left(\begin{pmatrix} a & b\\ c & d\end{pmatrix}, \begin{pmatrix} a'& b'\\ c' & d'\end{pmatrix}\right) \mapsto
 \begin{smatrix} a&&& b\\ & a' & b' & \\ & c' & d' & \\ c &&& d\end{smatrix}.\]

 \item $\|\cdot\|$ denotes the ad\`ele norm map $\AA \to \RR_{\ge 0}$.
 
 \item We shall identify a Dirichlet character $\chi : (\ZZ / N)^\times \to \CC^\times$ with the  unique continuous character of $\AA^\times / \QQ^\times$ that is unramified outside $N$ and maps $\varpi_\ell$ to $\chi(\ell)$ for $\ell \nmid N$, where $\varpi_\ell$ is any uniformizer at $\ell$. Note that the restriction of this adelic $\chi$ to $\widehat{\ZZ}^\times \subset \AA^\times$ is the composite of the projection $\widehat{\ZZ}^\times \to (\ZZ / N)^\times$ with the \emph{inverse} of $\chi$.

 \item In a slight conflict with the previous notation, if $j \in \ZZ$, and $\chi$ is a Dirichlet character conductor $p^m$ for some $m$ (valued in some $p$-adic field $L$), we write ``$j + \chi$'' for the continuous character $\Zp^\times \to L$ given by $x \mapsto x^j \cdot \chi(x \bmod p^m)$.

 \item For $r_1, r_2, c \in \ZZ$ such that $r_1+r_2\equiv c\pmod 2$, let $\lambda(r_1,r_2;c)$ denote the unique character of the diagonal torus $T$ of $G$ such that
 \begin{equation}
  \label{eq:defweight}
  \begin{smatrix} st_1&&&\\&st_2 &&\\&&st_2^{-1} &\\  	&&& st_1^{-1}\end{smatrix}\mapsto t_1^{r_1}t_2^{r_2}s^c.
 \end{equation}
 If $r_1 \ge r_2 \ge 0$, this character is dominant with respect to $B_G$, and we write $V_G(r_1,r_2;c)$ for the corresponding irreducible representation\footnote{In \cite{LSZ17} we used a slightly different parametrisation of the irreducible representations by pairs of integers $a, b \ge 0$. The representation denoted $V^{ab}$ of \emph{op.cit.} is $V(a+b, a; 2a+b)$ in our present notations.} of $G$. Similarly, if $r_1, r_2 \ge 0$ then $\lambda(r_1, r_2; c)$ is dominant for $H$ and we write $V_H(r_1, r_2; c)$ for the analogous highest-weight representation of $H$. If the group is clear from context we omit the subscript $G$.

 \item We shall use Roman letters $X, Y, \dots$ for schemes, Fraktur letters $\mathfrak{X}, \mathfrak{Y}, \dots$ for $p$-adic formal schemes, and calligraphic letters $\cX, \cY, \dots$ for rigid-analytic dagger spaces.

\end{itemize}


\mychapter{Step 1: The problem, and a first reduction}



\section{Euler systems for Siegel automorphic representations}\label{sect:ES}

 Here we briefly recall the Galois cohomology classes constructed in \cite{LSZ17, LZ-equivar} and formulate the problem we are trying to solve, which is to evaluate the images of these classes under the Bloch--Kato logarithm at $p$. We then explain a reduction step (the first of many), expressing these quantities as cup-products in the variant of finite-polynomial cohomology for $\Qp$-varieties introduced in \cite{nekovarniziol16} and \cite{besserloefflerzerbes16}.

 \subsection{Automorphic representations}

  We recall the hypotheses on the automorphic representations we shall consider, following \cite[\S 10.1]{LSZ17} and \cite[\S 3.1]{LZ-equivar}.

  \begin{notation}
   We let $\Pi$ be a cuspidal automorphic representation of $G$, with finite-order central character $\chi_{\Pi}$, which is regular algebraic at $\infty$. We shall also suppose that $\Pi$ is \emph{of general type} in Arthur's classification (cf.~\cite{arthur04}), i.e.~its functorial lift to $\GL_4$ is cuspidal; and that $\Pi$ is \emph{globally generic} (has a non-vanishing Whittaker coefficient).
  \end{notation}

  \begin{remark}
   For our applications to the Bloch--Kato conjecture and Iwasawa main conjecture, the restriction to general-type automorphic representations is no loss. The non-general-type representations corrspond to Galois representations which are direct sums of automorphic Galois representations arising from $\GL_1$ or $\GL_2$, so the Bloch--Kato conjecture for these representations can be attacked using the methods of \cite{huberkings03} and \cite{kato04}. Moreover, the general-type automorphic representations may be partitioned into global ``packets'' in such a way that each packet contains a (unique) globally-generic representation. Since any two representations in the same global packet have the same Galois representation and the same $L$-function, we also lose no generality by supposing that $\Pi$ is globally generic. (For more details see \cite{arthur04} and \cite{geetaibi18}.)
  \end{remark}

  Since $\Pi$ is assumed regular algebraic, there exists a unique pair $(r_1, r_2)$ of integers with $r_1 \ge r_2 \ge 0$ such that we have
  \[ H^*( \mathfrak{g}, K_\infty; \Pi_\infty' \otimes V(r_1, r_2; r_1 + r_2) ) \ne 0, \]
  where $\mathfrak{g} = \operatorname{Lie} G$, $K_\infty$ is the maximal compact-mod-centre subgroup of $G(\mathbf{R})$, and $\Pi'$ denotes the ``arithmetically normalised'' twist $\Pi \otimes \|\cdot\|^{-(r_1 + r_2)/2}$.

  \begin{remark} \
   \begin{enumerate}[(i)]
    \item Note that the central character of $\Pi'$ is $\chi_{\Pi} \,\|\cdot\|^{-(r_1 + r_2)}$, mapping a uniformiser $\varpi_\ell$ of $\QQ_\ell$ to $\ell^{(r_1 + r_2)} \chi_{\Pi}(\ell)$ for almost all primes $\ell$; and $\Pi'_\infty$ has the same infinitesimal character as the algebraic representation $V(r_1, r_2; -r_1-r_2)$.

    \item The representation $\Pi'$ is always \emph{C-algebraic} in the sense of \cite{buzzardgee14}; in particular its finite part is the base-extension of an $E[G(\Af)]$-module, for some number field $E$.

    \item The representation $\Pi$ is not generated by holomorphic automorphic forms, but there exists a unique representation $\Pi^H$ in the same global packet as $\Pi$ such that $(\Pi^H)_\mathrm{f} = \Pif$ but $\Pi^H_\infty$ is a holomorphic discrete series. This representation is generated by classical Siegel modular forms of weight $(k_1, k_2) = (r_1 + 3, r_2 + 3)$ (i.e.~valued in the representation $\Sym^{k_1 - k_2} \otimes \det^j$ of $K_\infty$). These are vector-valued forms if $k_1 > k_2$.\qedhere
   \end{enumerate}
  \end{remark}

 \subsection{Hecke parameters at $p$}
  \label{sect:heckeparams}

  Let $p$ be a prime such that $\Pi$ is unramified at $p$, and write $w \coloneqq r_1 + r_2 + 3$.

  \begin{definition} \
   \begin{itemize}
    \item We define the \emph{Hecke polynomial} at $p$ to be the degree 4 polynomial $P_p(X)$ such that
    \[ L(\Pi_p', s - \tfrac{3}{2}) = L(\Pi_p, s - \tfrac{w}{2}) = P_p(p^{-s})^{-1}. \]
    \item The \emph{Hecke parameters} of $\Pi'$ at $p$ are the complex numbers $\alpha, \beta, \gamma, \delta$ such that
    \[
     P_p(X) = (1 - \alpha X) (1 - \beta X) (1 - \gamma X) (1 - \delta X),\qquad
     \alpha \delta = \beta \gamma = p^w \chi_{\Pi}(p).
    \]
   \end{itemize}
  \end{definition}

  If $E$ is any number field over which $\Pif$ is definable, then the coefficients of $P_p(X)$ lie in $\cO_E$; the Hecke parameters are algebraic integers in $\bar{E}$, and are well-defined up to the action of the Weyl group. Extending $E$ if necessary, we may assume that they lie in $\cO_E$ itself. All of the Hecke parameters have complex absolute value $p^{w/2}$ (see \cite[Theorem 1]{weissauer05}).

  \begin{note}
   Our notations here for Hecke polynomials and Hecke parameters are consistent with the notations of \cite{LSZ17} (see Theorem 10.1.3 of \emph{op.cit.} in particular). It is also consistent with \S 10 of \cite{LPSZ1}, where the main theorems of that paper are given. Note, however, that the Hecke parameters here are \emph{not} the same as the $(\alpha, \beta, \gamma, \delta)$ in \cite{LPSZ1} Proposition 3.2, which are the Hecke parameters of a different twist of $\Pi_p$. We apologise to readers of \cite{LPSZ1} for shifting normalisations in the middle of the paper.
  \end{note}

  We shall fix an embedding $E \into L \subset \QQbar_p$, where $L$ is a finite extension of $\Qp$, and let $v_p$ be the valuation on $L$ such that $v_p(p) = 1$. If we order $(\alpha, \beta, \gamma, \delta)$ in such a way that $v_p(\alpha) \le \dots \le v_p(\delta)$ (which is always possible using the action of the Weyl group), then we have the valuation estimates
  \begin{equation}
   \label{eq:valuations}
   v_p(\alpha) \ge 0,\qquad v_p(\alpha \beta) \ge r_2 + 1.
  \end{equation}

  \begin{remark}\label{rem:Dcrisevals}
   These inequalities correspond to the fact that the Newton polygon of the $p$-adic Galois representation associated to $\Pi$ lies on or above the Hodge polygon; see \cref{prop:Dcrisevals} below.
  \end{remark}

  \begin{definition}
   We say $\Pi$ is \emph{Siegel ordinary} at $p$ if $v_p(\alpha) = 0$, and \emph{Klingen ordinary} at $p$ if $v_p(\alpha\beta) = r_2 + 1$ (and \emph{Borel ordinary} if it is both Siegel and Klingen ordinary).
  \end{definition}

  \begin{lemma}
   \label{lem:trivzero}
   If $\Pi$ is Klingen-ordinary at $p$, then none of $(\alpha, \beta, \gamma, \delta)$ has the form $p^n \zeta$ with $n \in \ZZ$ and $\zeta$ a root of unity. (In other words, Assumption 11.1.1 of \cite{LSZ17} is satisfied.)
  \end{lemma}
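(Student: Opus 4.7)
The plan is to combine two very different norms on the Hecke parameters: the archimedean (complex) absolute value and the $p$-adic valuation. The key observation is that if any $\lambda \in \{\alpha, \beta, \gamma, \delta\}$ were of the form $p^n \zeta$ with $\zeta$ a root of unity, then its complex absolute value would force $n = w/2 = (r_1 + r_2 + 3)/2$, and its $p$-adic valuation would also have to equal this same $n$. So it suffices to show that under Klingen-ordinarity no Hecke parameter has $p$-adic valuation $(r_1+r_2+3)/2$.

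First I would write down the valuations of all four parameters in terms of a single unknown $a \coloneqq v_p(\alpha)$. Using $\alpha \delta = \beta \gamma = p^w \chi_\Pi(p)$ (with $\chi_\Pi(p)$ a root of unity, hence of valuation zero) and the Klingen-ordinary hypothesis $v_p(\alpha \beta) = r_2 + 1$, one obtains
\[
v_p(\alpha) = a, \quad v_p(\beta) = r_2 + 1 - a, \quad v_p(\gamma) = r_1 + 2 + a, \quad v_p(\delta) = r_1 + r_2 + 3 - a,
\]
and the ordering $v_p(\alpha) \le v_p(\beta) \le v_p(\gamma) \le v_p(\delta)$ (together with $r_1 \ge r_2 \ge 0$) forces $0 \le a \le (r_2 + 1)/2$.

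Next I would simply check each of the four equations $v_p(\lambda) = (r_1 + r_2 + 3)/2$ against this range. Setting the $\alpha$-valuation equal to $w/2$ gives $a = (r_1+r_2+3)/2$, which violates $a \le (r_2+1)/2$ since $r_1 \ge 0$; the $\delta$-case is identical. For $\beta$ one gets $a = (r_2 - r_1 - 1)/2 < 0$, and for $\gamma$ one gets the same formula, both contradicting $a \ge 0$. Hence no Hecke parameter can satisfy $\lambda = p^n \zeta$, and the lemma follows.

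I do not foresee a real obstacle here: the argument is an elementary bookkeeping exercise, and the only subtle point is the juxtaposition of two independent normalisations (archimedean and $p$-adic). One should simply make explicit that the Weyl-group ambiguity is harmless, since the conclusion concerns the unordered multiset $\{\alpha, \beta, \gamma, \delta\}$, and that using the Weyl action to impose $v_p(\alpha) \le v_p(\beta) \le v_p(\gamma) \le v_p(\delta)$ does not lose generality.
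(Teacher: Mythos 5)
Your proposal is correct and follows essentially the same route as the paper: the archimedean (Weil number) condition forces $n = w/2$, and Klingen-ordinarity is then used to rule out any Hecke parameter having $p$-adic valuation $w/2$. The paper's version is slightly terser (it just bounds $v_p(\alpha), v_p(\beta) \le r_2+1 \le \tfrac{w-1}{2}$ and $v_p(\gamma), v_p(\delta) \ge r_1+2 \ge \tfrac{w+1}{2}$ rather than parametrising everything by $a = v_p(\alpha)$), but the content is identical.
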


  \begin{proof}
   Since all of the Hecke parameters are Weil numbers of weight $w$, it follows that if one of the parameters has this form, then $w$ must be even and $n = w/2$. In particular, this parameter has $p$-adic valuation $w/2$. However, if $\Pi$ is Klingen-ordinary then $\alpha, \beta$ have valuations at most  $r_2 + 1 \le \tfrac{w-1}{2}$, and $\gamma, \delta$ have valuations at least $r_1 + 2 \ge \tfrac{w+1}{2}$, so none can have valuation $w/2$.
  \end{proof}

 \subsection{Shimura varieties}\label{ss:Shimvarandcoeff}

  \begin{definition}
   For $U \subset G(\Af)$ a sufficiently small level, and $K$ a field of characteristic 0, let $Y_{G}(U)_{K}$ denote the base-extension to $K$ of the canonical $\QQ$-model of the level $U$ Shimura variety for $G$. We denote by $Y_{G, K}$ the pro-variety $\varprojlim_U Y_G(U)_{K}$.
  \end{definition}

  \begin{definition}
   For each algebraic representation $V$ of $G$, let $\cV$ denote the
   $G(\Af)$-equivariant relative Chow motive over $Y_{G, \QQ}$ associated to $V$ via Ancona's functor, as in \cite[\S 6.2]{LSZ17}.
  \end{definition}

  \begin{remark}
   Our conventions are such that the 4-dimensional defining representation $V(1, 0; 1)$ of $G$ corresponds to the relative motive $h^1(A)$, where $A$ is the universal abelian surface over $Y_{G, \QQ}$; and the 1-dimensional symplectic multiplier representation $V(0, 0; 2)$ maps to $\QQ(-1)[-1]$, where the square brackets $[-1]$ signify twisting the $G(\Af)$-action by the character $\|\cdot\|^{-1}$.
  \end{remark}

  Any relative Chow motive over $Y_G(U)_{\QQ}$ gives rise to an object of Voevodsky's triangulated category of geometrical motives over $\QQ$ (via pushforward along the structure map $Y_G(U)_{\QQ} \to \Spec \QQ$). Hence we can make sense of motivic cohomology $H^*_{\mot}(Y_G(U)_{\QQ}, \cV)$. We use the same symbol $\cV$ for the $p$-adic \'etale realisation of this motive, which is a locally constant \'etale sheaf of $\Qp$-vector spaces on $Y_G(U)_{\QQ}$ (with a natural extension to the canonical integral model $Y_G(U)_{\ZZ[1/N]}$ if $U$ is unramified outside $N$).

  \begin{remark}[``Liebermann's trick'']
   \label{rem:Liebermann}
   Explicitly, suppose that $V$ is a direct factor of $W^{\otimes n}(m)$, where $W$ is the defining representation of $G$. Then $H^i_{\mot}(Y_G(U)_{\Qp}, \cV)$ is a direct summand of $H^{i+n}(A^n,\QQ(m))$, where $A$ is the universal abelian scheme over $Y_{\sG}$. We have
   \[ H^\star_{\mot}(Y_\sG, \cV_{\mot})=e_{\cV} \cdot H_{\mot}^{\star+n}(A^n,\QQ(m))\]
   for some projector $e_{\cV}$.
  \end{remark}

 \subsection{Galois representations and Euler system classes}
  \label{ss:modularparam}

  Taking $V$ to be the representation $V_G(r_1, r_2; r_1 + r_2)$ (with weights parametrised as in ``Conventions'' above), the $\Pif'$-isotypical part of $H^3_{\et,c}(Y_G(U)_{\QQbar}, \cV_G) \otimes_{\Qp} L$ is isomorphic to the sum of $\dim\left( \Pif^U\right)$ copies of a 4-dimensional $L$-linear Galois representation $V_{\Pi}$ (uniquely determined up to isomorphism). As in \cite[\S 3.3]{LZ-equivar}, we shall \emph{fix} a choice of representation $V_{\Pi}$ in this isomorphism class, as follows. We have assumed that $\Pi$ is globally generic, so it has a Whittaker model with respect to the character of $N(\AA)$ given by
  \[ \psi_N\left(\begin{smatrix} 1 & x \\ & 1 & y \\ && 1 & -x \\ &&& \phantom{-}1 \end{smatrix}\right) = \psi(x + y),\]
  where $\psi$ denotes the additive character of $\AA / \QQ$ of conductor 1 which restricts to $x \mapsto e^{-2\pi i x}$ on $\mathbf{R}$.We denote this space by $\mathcal{W}(\Pif')$, and $\mathcal{W}(\Pif')_E$ the subspace of Whittaker functions which are \emph{defined over $E$} in the sense of \cite[Definition 10.2]{LPSZ1}. This gives a canonical model of $\Pif'$ as an $E$-linear representation, so we can define $\cW(\Pif')_F$ for any extension $F$ of $E$ by base-extension.

  \begin{definition}
   With the above notations, we set
   \[ V_\Pi = \Hom_{L[G(\Af)]}\Big(\mathcal{W}(\Pif')_L, H^3_{\et, c}( Y_{G, \QQbar}, \cV_G)_L \Big).\]
  \end{definition}

  This is a canonically-defined 4-dimensional $L$-linear representation of $\Gal(\QQbar/\QQ)$, which is a distinguished representative of the isomorphism class of representations above. It is characterised up to semisimplification by the relation
  \[ \det\left(1 - X \rho_{\Pi, p}(\operatorname{Frob}_\ell^{-1})\right) = P_\ell(X)\]
  for primes $\ell \ne p$ at which $\Pi$ is unramified, where $\operatorname{Frob}_\ell$ is an arithmetic Frobenius at $\ell$. We expect that $V_\Pi$ is always irreducible; this is true for $p > 2w + 1$ by a theorem of Ramakrishnan \cite{ramakrishnan13}.

  \begin{definition}
   \label{def:ESclass}
   Let $q, r$ be integers with $0 \le q \le r_2$ and $0 \le r \le r_1 - r_2$, and $\uchi = (\chi_1, \chi_2)$ a a pair of Dirichlet characters with $\chi_1\chi_2 = \chi_{\Pi}$. We let
   \[ z^{[\Pi, q, r]}(\uchi) \in H^1\left(\QQ, V_{\Pi}^*(-q)\right) \]
   denote the cohomology class constructed in \cite[Theorem A]{LZ-equivar}.
  \end{definition}

  Note that we must have $(-1)^{r_1} \chi_1(-1) = (-1)^{r_2}\chi_2(-1)$, since $\chi_1\chi_2 = \chi_{\Pi}$ has sign $(-1)^{r_1 + r_2}$. As shown in \cite[Corollary 3.6.2]{LZ-equivar}, the class $z^{[\Pi, q, r]}(\uchi)$ is zero unless the following parity constraint is satisfied:
  \begin{equation}
   \label{eq:parity}
   (-1)^{q + r} = (-1)^{r_1} \chi_1(-1) = (-1)^{r_2} \chi_2(-1).
  \end{equation}
  We shall assume this holds henceforth.

  The theorem \emph{loc.cit.}~shows that the cohomology classes constructed in \cite{LSZ17}, which depend on various choices of auxiliary local data at the finite places, are in fact linear combinations of the $z^{[\Pi, q, r]}(\uchi)$, with the coefficients of the linear combinations given by explicit products of local zeta integrals.

 \subsection{Exponential maps and regulators}

  \begin{convention}
  	The representation $\Qp(1)$ of $\Gal(\QQbar_p / \Qp)$ has Hodge--Tate weight $1$, and crystalline Frobenius $\varphi$ acts on $\Dcris(\Qp(1))$ as multiplication by $1/p$.
  \end{convention}

  We recall the following properties of $V_{\Pi} |_{G_{\Qp}}$ (see \cite[\S 6.1]{LZ-equivar}):

  \begin{proposition}\label{lemma:h1g}
   \label{prop:Dcrisevals}
   The representation $V_{\Pi} |_{G_{\Qp}}$ is crystalline. The eigenvalues of $\varphi$ on $\Dcris(V_{\Pi})$ are the Hecke parameters $\{ \alpha, \beta, \gamma, \delta\}$ of \cref{sect:heckeparams}, and its Hodge--Tate weights are $\{0, -r_2-1, -r_1-2, -r_1-r_2-3\}$. Moreover, for each integer $q$ with $0 \le q \le r_2$, we have the following:
   \begin{enumerate}
    \item[(a)] The operators $1 - \varphi$ and $1 - p\varphi$ are bijective on $\Dcris(V_\Pi^*(-q))$.
    \item[(b)] The Bloch--Kato $H^1_{\mathrm{e}}$, $H^1_{\mathrm{f}}$ and $H^1_{\mathrm{g}}$ subspaces of $H^1(\Qp, V_\Pi^*(-q))$ coincide.
    \item[(c)] The Bloch--Kato exponential map
    \[   \exp: \frac{\DdR(V_\Pi^*)}{\Fil^{-q} \DdR(V_\Pi^*)} \to H^1_{\mathrm{e}}(\Qp, V_\Pi^*(-q))\]
    is an isomorphism.\qed
   \end{enumerate}
  \end{proposition}

  Since the localisation at $p$ of the class $ z^{[\Pi, q, r]}(\uchi)$ is in $H^1_{\mathrm{g}}$ (by \cite[Theorem B]{nekovarniziol16}), it is also in $H^1_{\mathrm{e}}$. Letting $\log$ denote the inverse of the Bloch--Kato exponential, we may define
  \[ \log\left(  z^{[\Pi, q, r]}(\uchi) \right) \in \frac{\DdR(V_\Pi^*)}{\Fil^{-q} \DdR(V_\Pi^*)} = \left(\Fil^1 \DdR(V_\Pi) \right)^*.\]
  Note that the target of this map is 3-dimensional (and independent of $q$ in this range).

  \begin{assumption}
   We assume henceforth that $\Pi$ is Klingen-ordinary at $p$.
  \end{assumption}

  It follows that there is a distinguished pair of Hecke parameters $(\alpha, \beta)$ of minimal valuation, and hence a distinguished 2-dimensional subspace
  \begin{equation} \label{eq:defcurlyP}
   \Dcris(V_\Pi)^{\cQ(\varphi) = 0}, \qquad \cQ(t) = (1 - \tfrac{t}{\alpha})(1 - \tfrac{t}{\beta}).
  \end{equation}

  \begin{note}\label{note:1diml}
   From weak admissibility, we see that $\Dcris(V_\Pi)^{\cQ(\varphi) = 0} \cap \Fil^1$ must have dimension exactly 1, and that it surjects onto the 1-dimensional graded piece $\Fil^1 / \Fil^{r_2 + 2}$.
  \end{note}

  \begin{definition}
   \label{def:nudR}
   Let $\nu$ be a basis of the 1-dimensional $L$-vector space $\Gr^{r_2 + 1} \DdR(V_\Pi)$, and let $\nu_{\dR}$ denote its unique lifting to $\Dcris(V_\Pi)^{\cQ(\varphi) = 0} \cap \Fil^{r_2 + 1}$.
  \end{definition}

  We can now formulate the key problem treated in this paper:
  \bigskip
  \begin{mdframed}
   \textbf{Problem}: Compute the quantity
   \begin{equation}
    \label{eq:goal0}
    \operatorname{Reg}_{\nu}^{[\Pi, q, r]}(\uchi) \coloneqq \left\langle \nu_{\dR}, \log\left( z^{[\Pi, q, r]}(\uchi) \right)\right\rangle_{\DdR(V_\Pi)} \in L.
   \end{equation}
  \end{mdframed}
  \bigskip

  \begin{note}[Compatibility with base-extension]
   \label{note:baseext}
   If we let $K$ be a finite extension of $\Qp$, then the conclusions of Lemma \ref{lemma:h1g} also apply to the restriction $V_\Pi^* |_{\Gal(\overline{K} / K)}$, so we can also consider the logarithm of the restricted class $\operatorname{res}_{\Qp}^K\left(z^{[\Pi, q, r]}\right)$ as an element of $\left(\Fil^1 \DdR(V_\Pi |_{\Gal(\overline{K} / K)}) \right)^*$. Then we obtain an element
   \[
    \left\langle \nu_{\dR}, \log\left(\operatorname{res}_{\Qp}^K z^{[\Pi, q, r]}(\uchi) \right)\right\rangle_{\DdR(V_\Pi |_{\Gal(\overline{K} / K)})} \in K \otimes_{\Qp} L.
   \]
   One checks that this is simply the image of $\operatorname{Reg}_{\nu}^{[\Pi, q, r]}(\uchi)$ via the natural map $L \into K \otimes_{\Qp} L$. Since this inclusion map is injective, it suffices to evaluate the regulator after restricting to any finite extension. This will be useful later, since we will need to work over ramified field extensions in order to find semistable models for our Shimura varieties.
  \end{note}


 \subsection{Periods and $p$-adic $L$-functions}
 \label{sect:periods}

  We assume now that the characters $\uchi$ are both unramified at $p$.

  \begin{definition}
   Let $\cL_{p, \nu}(\Pi, \uchi)$ denote the 2-variable $p$-adic $L$-function defined in \cite[Theorem 6.2.5]{LZ-equivar}.
  \end{definition}

  This is a measure on $\Zp^\times \times \Zp^\times$, supported on the open-and-closed subset parametrising pairs of characters $(\lambda_1, \lambda_2)$ of $\Zp^\times$ satisfying $\lambda_1(-1) \lambda_2(-1) = -\chi_2(-1)$. It generalises the construction of \cite[Proposition 10.4]{LPSZ1}, which is the special case $\uchi = (\chi_{\Pi}, \id)$.

  We briefly recall the interpolating property of this $p$-adic $L$-function, as it will be important for the sequel. Recall that $\nu$ was a basis of the 1-dimensional $L$-vector space $\frac{\Fil^1 \DdR(V_\Pi)}{\Fil^{r_2 + 2} \DdR(V_\Pi)}$. This space is canonically the base-extension to $L$ of an $E$-vector space, namely
  \[ \Hom_{E[G(\Af)]}\Big( \cW(\Pif'), H^2(\Pif)_E\Big),\]
  where $H^2(\Pif)_E$ denotes the unique copy of $\Pif'$ inside a coherent $H^2$ of a toroidal compactification of $Y_{G, E}$, as in \cite[\S 5.2]{LPSZ1}. If we let $\nu^{\alg}$ be a basis of this $E$-vector space, we obtain a $p$-adic period $\Omega_p(\Pi, \nu, \nu^{\alg}) \in L^\times$ by comparing $\nu^{\alg}$ with $\nu$, and and an Archimedean period $\Omega_\infty(\Pi, \nu^{\alg}) \in \CC^\times$ by comparing $\nu^{\alg}$ with the natural rational structure on Whittaker functions (the period denoted $\Omega^W(\Pi)$ in \S 10.2 of \cite{LPSZ1}).

  \begin{remark}
   The quantities $\Omega_p(\Pi, \nu, \nu^{\alg})$ and $\Omega_\infty(\Pi, \nu^{\alg})$ each depend on the choice of $\nu^{\mathrm{alg}}$, but the ratio
   \[ \Omega_p(\Pi, \nu, \nu^{\alg})^{-1} \otimes \Omega_\infty(\Pi, \nu^{\alg}) \in L \otimes_{E} \CC\] depends only on $\nu$; the dependency on $\nu^{\alg}$ cancels out.
  \end{remark}

  \begin{theorem}[{cf.~\cite[Theorem 6.2.5]{LZ-equivar}}]
   \label{thm:padicLfcn}
   For $a_i$ integers with $0 \le a_1, a_2 \le r_1 - r_2$, and $\rho_i$ Dirichlet characters of $p$-power conductors satisfying $(-1)^{a_1 + a_2} \rho_1(-1) \rho_2(-1) = -\chi_2(-1)$, we have
   \begin{multline*}
    \frac{\cL_{p, \nu}(\Pi, a_1 + \rho_1, a_2 + \rho_2)}{\Omega_p(\Pi, \nu, \nu^{\alg})} \\
    = R_p(\Pi, \rho_1, a_1) R_p(\Pi \times \chi_2^{-1}, \rho_2, a_2) \cdot
    \frac{\Lambda(\Pi \times \rho_1^{-1}, \tfrac{1-r_1+r_2}{2} + a_1)\Lambda(\Pi \times \rho_2^{-1}\chi_2^{-1}, \tfrac{1-r_1+r_2}{2} + a_2)}{\Omega_{\infty}(\Pi, \nu^{\alg})},
   \end{multline*}
   for some (and hence every) choice of $\nu^{\alg}$ as above.
  \end{theorem}

  For the definition of $R_p(\Pi, \rho, j)$ when $\rho$ is nontrivial see \cite{LPSZ1}. For $\rho = \id$ we have the relation
  $R_p(\Pi, \id, a) = \cE_p(\Pi, r_2 + 1 + a)$, where $\cE_p$ is the Euler factor defined by
  \[ \cE_p(\Pi, n) \coloneqq \left(1- \tfrac{p^n}{\alpha}\right) \left(1- \tfrac{p^n}{\beta}\right) \left(1- \tfrac{\gamma}{p^{n+1}}\right) \left(1- \tfrac{\delta}{p^{n+1}}\right).\]
  (This is nonzero for all $n \in \ZZ$ under our present assumptions, by Lemma \ref{lem:trivzero}). We can now give a precise statement of the theorem we shall prove:
  \bigskip

  \begin{mdframed}
   \begin{theorem}[{\cref{thm:main}}]
    \label{thm:mainthm}
    Assume $r_1 - r_2 \ge 3$. For any $q, r$ with $0 \le q \le r_2$, $0 \le r \le r_1 - r_2$, and $q+r = r_2 \bmod 2$, we have
    \[
     \operatorname{Reg}^{[\Pi, q, r]}_{\nu}(\uchi) = \frac{(-2)^q (-1)^{r_2 - q+1} (r_2 - q)!}{\cE_p(\Pi, q) \cE_p(\Pi, r_2 + 1 + r)} \cdot \cL_{p, \nu}(\Pi,\uchi, -r_2-1 + q, r).
    \]
   \end{theorem}
  \end{mdframed}

  \begin{note} As shown in \S 10 of \cite{LPSZ1}, the hypothesis $r_1 - r_2 \ge 3$ implies that $\cL_{p, \nu}(\Pi, \bfj_1, \bfj_2)$ factors as a product of a function of $j_1$ and a function of $j_2$. However, our proof of the theorem will not directly ``see'' this finer decomposition.
  \end{note}

 \subsection{Explicit formulation}
  \label{ssec:testdataatp}

  We now give an alternative, more concrete reformulation of \cref{thm:main} which is more convenient for the proof. First we define a suitable map of Shimura varieties.

  \begin{notation}
   Let $u_{\Kl} \in G(\Zp)$ denote any element with first column $\begin{smatrix} 1 \\ 1 \\ 0 \\ 0 \end{smatrix}$; and let $\Kl(p)$ denote the Klingen parahoric in $G(\Zp)$. Define $K_{p, \Delta} \subset H(\Zp)$ by
   \[
     K_{p,\Delta}=\left\{ h\in H(\Zp):\, h=\left(\begin{pmatrix} x & \star\\  & y\end{pmatrix},\begin{pmatrix} x & \star\\  & y\end{pmatrix}\right)\pmod{p}\quad\text{for some $x, y \in \Zp^\times$}\right\}.
   \]
  \end{notation}

  \begin{definition}
   For $U^p \subset G(\Af^p)$ an open compact subgroup, write $Y_{G, \Kl,\QQ}$ for the $G$-Shimura variety of level $U^p \Kl(p)$, and $Y_{H, \Delta,\QQ}$ for the $H$-Shimura variety of level $V^p K_{p, \Delta}$, where $V^p = U^p \cap H$.

  \end{definition}

  \begin{remark}
   We will define integral models of these Shimura varieties in \cref{sect:intmodels}.
  \end{remark}

  We have $u_{\Kl}^{-1} \cdot K_{p, \Delta}\cdot u_{\Kl} \subset \Kl(p)$, so as in \cite[\S 4.1]{LPSZ1}, $u_{\Kl}$ gives a finite morphism of Shimura varieties
  \begin{equation}
   \label{eq:iotaDelta}
   \iota_{\Delta}: Y_{H, \Delta,\QQ} \to Y_{G, \Kl,\QQ}.
  \end{equation}

  \subsubsection*{Coefficient sheaves} For $(q, r)$ as in \cref{thm:main}, let us define
  \begin{equation}
   \label{eq:qrtetc}
   (t_1, t_2) \coloneqq (r_1 - q - r, r_2 - q + r) \qquad \text{(so $t_1, t_2 \ge 0$).}
  \end{equation}
  Let $V_H = V_H(t_1, t_2; t_1 + t_2)$ and $V_G = V_G(r_1, r_2; r_1 + r_2)$. Then there is a non-zero map of $H$-representations $V_G \to V_H \otimes \det{}^q$, or dually $V_H^\vee \to (V_G^\vee \otimes \mu^q)|_{H}$. (We describe an explicit choice of a map in this Hom-space in \cite[eq. (5)]{LSZ17}). This gives a pushforward map on motivic cohomology, or on \'etale cohomology\footnote{More precisely, we should either take continuous \'etale cohomology in the sense of Jannsen \cite{jannsen88}, or \'etale cohomology of $\ZZ[1/\Sigma]$-models for a sufficiently large finite set of primes $\Sigma$.}
  \[
   \iota_{\Delta, \star}^{[t_1, t_2]}:
   H^2_{\et}\left(Y_{H, \Delta, \QQ}, \cV_H^\vee(2)\right) \to H^4_{\et}\left(Y_{G, \Kl, \QQ}, \cV_G^\vee(3-q)\right).
  \]

  The right-hand side is related to Galois cohomology of \'etale cohomology over $\QQbar$ by the Hochs\-child--Serre spectral sequence. Since $\Pif'$ does not contribute to \'etale cohomology outside degree 3, the natural projection map onto the $\Pif'^\vee$-eigenspace lifts to an ``Abel--Jacobi'' map
  \begin{equation}
   \label{eq:def_prPi}
   \AJ^{[\Pi, q]} : H^4_{\et}\left(Y_{G,\QQ}, \cV_G^\vee(3-q)\right) \to H^1\left(\QQ, H^3_{\et}(Y_{G,\QQbar}, \cV_G^\vee(3-q))[\Pif'^\vee]\right),
  \end{equation}
  characterised as the unique Hecke-equivariant map agreeing with the $\Pif'^\vee$-projection of the \'etale Abel--Jacobi map on homologically trivial classes.


  \subsubsection*{Schwartz functions}

   \begin{notation}
    We let $\cS_{(0)}(\Af^2 \times \Af^2)$ denote the space of $E$-valued Schwartz functions on $\Af^2 \times \Af^2$ satisfying the following vanishing property: if $t_1 = 0$, then $\Phi( (0, 0) \times -)$ vanishes identically, and if $t_2 = 0$, then $\Phi(- \times (0, 0))$ vanishes identically.
   \end{notation}

   Beilinson's Eisenstein symbol (cf.~\cite[\S 7.2]{LSZ17}) gives a map
   \[
    \Eis_{\et}^{[t_1, t_2]} : \cS_{(0)}(\Af^2 \times \Af^2) \to  H^2_{\et}\left(Y_{H, \Delta, \QQ}, \cV_H^\vee(2)\right).
   \]

   \begin{notation}
    Let $\Phi'_{\crit} = \ch(\Zp \times \Zp^\times) \in \cS(\Qp^2)$, and let $\Phi_{\crit}$ denote unique the Schwartz function whose Fourier transform \emph{in the second variable only} is $\Phi'_{\crit}$. Write $\uPhi_{\Kl} = \Phi_{\crit} \boxtimes \Phi_{\crit} \in \cS(\Qp^2 \times \Qp^2)$.
   \end{notation}

   Thus, given any $\uPhi^p$ invariant under $V^p = H \cap U^p$, we can make sense of the class
   \[
    \left(\log \circ \AJ^{[\Pi, q]} \circ \iota^{[t_1,t_2]}_{\Delta, \star}\right)(\Eis^{[t_1, t_2]}_{\et, \uPhi^p\uPhi_{\Kl}}) \in \left(\Fil^{1 + q} H^3_{\dR, c}(Y_{G, \Kl, \QQ}, \cV_G)_L[\Pif']\right)^\vee.
   \]
   The group $\Fil^{1 + q} H^3_{\dR, c}(Y_{G, \Kl, \QQ}, \cV_G)_L[\Pif'] = \Fil^1 H^3_{\dR, c}(Y_{G, \Kl, \QQ}, \cV_G)_L[\Pif']$ is independent of $q$ in the given range. Moreover, given any $\eta \in H^2(\Pif)^{U^p \Kl(p)}$, we can construct a canonical element $\eta_{\dR} \in \Fil^{1 + q} H^3_{\dR, c}(Y_{G, \Kl, \QQ}, \cV_G)_L[\Pif']$ as the unique lifting of $\eta$ to the kernel of $\cQ(\varphi)$, as in \cref{note:1diml}.

   We shall restrict to classes $\eta$ lying in the ordinary eigenspace for $U'_{2, \Kl}$ (i.e.~the $U_{2, \Kl}' = \frac{\alpha\beta}{p^{r_2 + 1}}$ eigenspace). By a zeta-integral computation carried out in \cite{LZ-equivar}, the values
   \[ \left\langle\left(\log \circ \AJ^{[\Pi, q]} \circ \iota^{[t_1,t_2]}_{\Delta, \star}\right)(\Eis^{[t_1, t_2]}_{\et, \uPhi}), \eta_{\dR}\right\rangle_{\dR, Y_{G, \Kl, L}} \]
   for $\eta$ lying in this eigenspace, and $\uPhi$ of the form $\uPhi^p\uPhi_{\Kl}$ for $\uPhi^p \in \cS_{(0)}((\Af^p)^2 \times (\Af^p)^2)$, uniquely determine $\langle \nu_{\dR}, \log z^{[\Pi, q, r]}(\uchi)\rangle$ (for all pairs of characters $\uchi$ satisfying our conditions).

  \subsubsection*{Coherent side} We now derive a corresponding formula for the right-hand side of \cref{thm:main}. As we shall recall in \cref{sect:intmodels} below, we can consider a toroidal compactification $X_{G, \Kl, \QQ}$ of $Y_{G, \Kl \QQ}$, for a suitable choice of boundary data; this has a canonical $\Zp$-model $X_{G,\Kl}$, and we let $\mathfrak{X}_{G,\Kl}$ denote its $p$-adic completion, as a formal scheme over $\Zp$.

  Given $\uPhi^p$, the construction of \cite{LPSZ1} \S 7.4 gives a 2-parameter $p$-adic family of Eisenstein series on $H$, which we denote simply by $\cE(\uPhi^p)$, interpolating classical nearly-holomorphic Eisenstein series which are $p$-depleted (i.e.~lie in the kernel of $U_p$). Then the $p$-adic interpolation theory of \emph{op.cit.} allows us to make sense of $\iota_{\Delta, \star}\left(\cE(\uPhi^p)\right)$ as a class in $H^1$ of the multiplicative locus\footnote{This was denoted $X_{G, \Kl}^{\ge 1}$ in \emph{op.cit.}, but this notation is somewhat misleading since this space is only one component of the $p$-rank $\ge 1$ locus at Klingen level, so we shall use the above notations here. We shall introduce these spaces in detail in \S \ref{sect:intmodels} below.} $\mathfrak{X}_{G, \Kl}^m \subset \mathfrak{X}_{G, \Kl}$. This class takes values in a sheaf of $\Lambda_L(\Zp^\times \times \Zp^\times)$-modules, and hence allows us to define a measure
  \[
   \left\langle \iota_{\Delta, \star}\left(\cE(\uPhi^p)\right), \eta \right\rangle_{\mathfrak{X}_{G, \Kl}^{m}} \in \Lambda_L(\Zp^\times \times \Zp^\times),
  \]
  for any $\eta \in H^2(\Pif)^{U^p\Kl(p)}[U_{2, \Kl}' = \frac{\alpha\beta}{p^{r_2 + 1}}]$. The $p$-adic $L$-function $\cL_{p, \nu}(\Pi, \uchi)$ is (roughly) the ``greatest common divisor'' of these measures as the test data away from $p$ vary (with $\uPhi$ assumed to lie in the $\uchi^{-1}$-eigenspace for the centre). Since the pushforward map is compatible with specialisation in the coefficient ring, for a pair of $L$-valued characters $(\lambda_1, \lambda_2)$ of $\Zp^\times$ (giving a homomorphism $\Lambda_L(\Zp^\times \times \Zp^\times) \to L$), we have
  \[ \left\langle \iota_{\Delta, \star}\left(\cE(\uPhi^p)\right), \eta \middle\rangle_{\mathfrak{X}_{G, \Kl}^{m}} \right|_{(\lambda_1, \lambda_2)} = \left\langle \iota_{\Delta, \star}\left(\cE(\uPhi^p)\middle|_{(\lambda_1, \lambda_2)}\right), \eta \right\rangle_{\mathfrak{X}_{G, \Kl}^{m}}.\]

  \begin{proposition}[{Proposition 6.4.4 of \cite{LZ-equivar}}]
   \label{prop:weprove}
   \cref{thm:mainthm} is equivalent to the following assertion: for all prime-to-$p$ levels $U^p$, all $\uPhi^p$ stable under $U^p \cap H$, and all $\eta \in H^2(\Pif)^{U^p \Kl(p)}[U_{2, \Kl}' = \frac{\alpha\beta}{p^{r_2 + 1}}]$, we have
   \begin{multline}
    \label{eq:goal2}
    \left\langle \left(\log \circ \AJ^{[\Pi, q]} \mathop{\circ} \iota^{[t_1,t_2]}_{\Delta, \star}\right)(\Eis^{[t_1, t_2]}_{\et, \uPhi^p \uPhi_{\Kl}}),\eta_{\dR}\right\rangle_{\dR, Y_{G, \Kl, \Qp}}
    \\ = \frac{(-2)^q (-1)^{r_2 - q+1}(r_2 - q)!}
    {\left(1 - \frac{\gamma}{p^{1+q}}\right)\left(1 - \frac{\delta}{p^{1+q}}\right)}
    \cdot \Big\langle \iota_{\Delta, \star}\left(\cE(\uPhi^p)\big|_{(-1-r_2 + q, r)}\right), \eta \Big\rangle_{\mathfrak{X}_{G, \Kl}^{m}}.
   \end{multline}
  \end{proposition}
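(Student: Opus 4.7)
The statement is a formal consequence of the four results established immediately above it (the reduction to Klingen test data, the evaluation of $\widetilde Z_p(\gamma_p w_{p,\Kl}, \uPhi_{p,\Kl})$, Proposition \ref{prop:explicitZ}, and Proposition \ref{prop:klingendata}), so my plan is to combine them as a bookkeeping exercise.

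First I invoke the preceding reduction proposition: since $\widetilde Z_p(\gamma_p w_{p,\Kl}, \uPhi_{p,\Kl}) \ne 0$, it is enough to verify \eqref{eq:goal} for test data of the form $(w, \uPhi) = (\gamma_p w_{p,\Kl} \otimes w^p,\ \uPhi_{p,\Kl} \otimes \uPhi^p)$ with $(w^p, \uPhi^p)$ arbitrary away from $p$. For such data the global zeta integral factors as $\widetilde Z(w, \uPhi) = \widetilde Z_p(\gamma_p w_{p,\Kl}, \uPhi_{p,\Kl}) \cdot \widetilde Z^p(w^p, \uPhi^p)$, and I substitute the explicit formula for $\widetilde Z_p$ at Klingen data stated in the text (and proved in \cref{sect:whittakerintegral}). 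The two factors $\cE_p(\Pi, q)\cE_p(\Pi, r_2 + 1 + r)$ in the denominator of \eqref{eq:goal} cancel exactly, so the right-hand side of \eqref{eq:goal} becomes
\begin{equation*}
\frac{(-2)^q (-1)^{r_2 - q + 1}(r_2 - q)!}{\bigl(1 - \tfrac{\gamma}{p^{1+q}}\bigr)\bigl(1 - \tfrac{\delta}{p^{1+q}}\bigr)} \cdot \cL_{p, \nu}(\Pi, -1 - r_2 + q, r) \cdot \widetilde Z^p(w^p, \uPhi^p).
\end{equation*}

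Next I rewrite the left-hand side geometrically. By Proposition \ref{prop:explicitZ}, the class $z^{[\Pi, q, r]}(w, \uPhi)$ equals $\vol(V)$ times the pairing of $w$ with the pushforward $\iota^{[t_1,t_2]}_{U, \star}\Eis^{[t_1,t_2]}_{\et, \uPhi}$ on $Y_G(U)$, where $U = U^p \Kl(p)$ and $V = U \cap H(\Af)$. Under the modular parametrization associated to the Whittaker vector, the local factor $\gamma_p w_{p,\Kl}$ at $p$ is designed precisely so that, after projection to the $\Pif'^\vee$-isotypical component, the corresponding de Rham test vector is the $U_{2,\Kl}'$-eigenvector $\eta$ of the statement, with eigenvalue $\alpha\beta / p^{r_2 + 1}$, and its canonical lift to $\Fil^1 \cap \cN$ is the class $\eta_{\dR}$ of Definition \ref{def:nudR}. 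The composition with $\pr_{\Pif'^{\vee}}$ is harmless since $\eta_{\dR}$ pairs trivially with any summand outside the $\Pif'$-component. Therefore the left-hand side of \eqref{eq:goal} is exactly $\vol(V)$ times the left-hand side of \eqref{eq:goal2}.

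Finally, Proposition \ref{prop:klingendata} identifies $\vol(V) \cdot \cL_{p, \nu}(\Pi, -1-r_2+q, r) \cdot \widetilde Z^p(w^p, \uPhi^p)$ with $\vol(V) \cdot \bigl\langle \iota_{\Delta, \star} \cE(\uPhi^p), \eta \bigr\rangle_{X_{G, \Kl}^{\ge 1}}$ evaluated at $(-1-r_2+q, r)$. Cancelling the common factor of $\vol(V)$ on both sides yields \eqref{eq:goal2}. I do not anticipate a genuine obstacle here, as the proposition is a reformulation rather than a new input; the only point that needs attention is matching the levels $U^p\Kl(p)$ and $V^p K_{p, \Delta}$ and verifying that the map $\iota_\Delta$ induced by $\gamma_p$ is compatible with the pushforward $\iota^{[t_1,t_2]}_{U, \star}$ appearing in Proposition \ref{prop:explicitZ}---which is built into the choice $\gamma_p^{-1} K_{p, \Delta} \gamma_p \subset \Kl(p)$.
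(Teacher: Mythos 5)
Your proposal is correct and follows the same route as the paper: the proposition is stated there with the preamble ``Summarising the above discussion,'' and the intended argument is precisely the bookkeeping you carry out — reduce to Klingen test data, substitute the explicit value of $\widetilde{Z}_p(\gamma_p\cdot w_{p,\Kl},\uPhi_{p,\Kl})$ so the Euler factors $\cE_p(\Pi,q)\cE_p(\Pi,r_2+1+r)$ cancel, rewrite the left side via Proposition \ref{prop:explicitZ}, and invoke Proposition \ref{prop:klingendata} before cancelling $\vol(V)$. No gaps.
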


  It is this formula we shall actually prove.


\section{Finite-polynomial cohomology and Abel--Jacobi maps}
 \label{sect:abeljacobi}

 We briefly recall some geometric formalism from \cite{nekovarniziol16} and \cite{besserloefflerzerbes16}, which we shall use to give formulae for the Abel--Jacobi map of \'etale cohomology. In this section we shall only consider varieties over finite extensions $K/\Qp$; integral models (over $\cO_K$) will enter the picture later, when we start to make computations.

 \subsection{P-adic Hodge theory}

  We recall some constructions from $p$-adic Hodge theory and Galois cohomology; see \cite[\S 2D]{nekovarniziol16} and \cite[\S 1]{besserloefflerzerbes16} for further details. In this section $G_K$, for $K/\Qp$ finite, denotes $\Gal(\QQbar_p / K)$.

  \subsubsection*{Filtered modules}

   Let $\Qp^{\mathrm{nr}}$ denote the maximal unramified extension of $\Qp$; and let $K$ be an arbitrary finite extension of $\Qp$.

   \begin{definition}
    A \emph{filtered $(\varphi, N, G_K)$-module} is a finite-dimensional $\Qp^{\mathrm{nr}}$-vector space $D$ equipped with the following structures:
    \begin{itemize}
     \item an $\Qp^{\mathrm{nr}}$-semilinear Frobenius $\varphi$;
     \item an $\Qp^{\mathrm{nr}}$-linear monodromy operator $N$ satisfying $N\varphi = p \varphi N$;
     \item an $\Qp^{\mathrm{nr}}$-semilinear action of $G_K$ commuting with $\varphi$ and $N$, such that every $v \in D$ is fixed by some open subgroup;
     \item a decreasing $K$-linear filtration $\Fil^\bullet$ on
     \[
      D_{\dR} \coloneqq \left(D \otimes_{\Qp^{\mathrm{nr}}} \QQbar_p\right)^{G_K}.
     \]
    \end{itemize}
    We write $D_{\st} \coloneqq D^{G_K}$ and $D_{\mathrm{cris}} \coloneqq D^{(G_K, N = 0)}$, both of which are vector spaces over $K_0 = K \cap \Qp^{\mathrm{nr}}$.
   \end{definition}

   Fontaine's functor $\Dpst$ gives an equivalence of categories between potentially semistable $p$-adic representations of $G_K$ and the subcategory of \emph{weakly admissible} filtered $(\varphi, N, G_K)$-modules. If $D = \Dpst(V)$, then we have $D_{\st} = \mathbf{D}_{\st}(V)$, $D_{\mathrm{cris}} = \Dcris(V)$, and $D_{\dR} = \DdR(V)$ (hence the notation).

   \begin{notation}
    For $n \in \ZZ$, let $\Qp^{\mathrm{nr}}(n)$ denote the filtered $(\varphi, N, G_K)$-module whose underlying vector space is $\Qp^{\mathrm{nr}}$, with $N=0$ and the $G_K$-action being the obvious one, but taking $\varphi = p^{-n} \sigma$ where $\sigma$ is the native arithmetic Frobenius of $\Qp^{\mathrm{nr}}$, and the filtration concentrated in degree $-n$.
   \end{notation}

   Clearly we have $\Qp^{\mathrm{nr}}(n) = \Dpst(\Qp(n))$, by identifying $1 \in \Qp^{\mathrm{nr}}$ with the basis vector $t^{-n} \otimes e_n \in \mathbf{B}_{\mathrm{cris}} \otimes \Qp(n)$.

  \subsubsection*{The semistable $P$-complex}

   \begin{definition}
    Let $P \in \Qp[t]$ be a polynomial with constant term 1, and $D$ a filtered $(\varphi, N, G_K)$-module. Define $H^i_{\mathrm{st}, P}(K, D)$ to be the $i$-th cohomology group of the complex
    \[
     C_{\st, P}(D) \coloneqq \left[ D_{\st} \longrightarrow  D_{\st}\oplus D_{\st}\oplus \tfrac{D_{\dR}}{\Fil^0} \longrightarrow D_{\st}\right],
    \]
    where the maps are given by
    \[
     x \mapsto \left( P(\varphi)x,\, Nx,\, x \bmod \Fil^0 \right)\qquad \text{and}\qquad (u,v,w)\mapsto Nu-P(p\varphi)v.
    \]
    If $P(t) = 1 - t$, then we omit it and write simply $H^i_{\st}(K, D)$ etc.
   \end{definition}

   \begin{note}
    More generally, it will sometimes be convenient to extend the definitions to the case when $P$ is a polynomial in $\mathcal{R}[t]$, where $\mathcal{R}$ is a commutative $\Qp$-subalgebra of the endomorphism algebra of $D$. (We shall apply this with $\mathcal{R}$ a Hecke algebra.)
   \end{note}

   If $P \mid Q$ then we have a natural map of complexes $C_{\st, P}(D) \to C_{\st, Q}(D)$ which is the identity in degree 0. There are also products
   \[ C^i_{\st, P}(K, D) \otimes C^j_{\st, Q}(K, E)
   \to C^{i+j}_{\st, P\star Q}(K, D \otimes E),\]
   well-defined up to homotopy, where $P \star Q$ is the convolution product (the polynomial whose roots are the pairwise products of those of $P$ and $Q$). We also have base-extension maps
   \[ C_{\st, P}(D) \to C_{\st, P}\left(D|_{G_{K'}}\right)\]
   for $K'$ a finite extension of $K$.

  \subsubsection*{Galois cohomology}

   If $V$ is a potentially semistable $G_K$-representation, then $C_{\st}(\Dpst(V))$ is the $G_K$-invariants of a complex of $G_K$-modules that is quasi-isomorphic to $V$. This gives rise to boundary maps
   \begin{equation}
    \label{eq:genexpmap}
    H^i_{\st}(K, \Dpst(V)) \to H^i(K, V),
   \end{equation}
   which are isomorphisms for $i = 0$ and injective for $i = 1$.

   \begin{definition}
    The \emph{semistable Bloch--Kato exponential} is the map $\exp_{\st, V}: H^1_{\st}(K, \Dpst(V)) \into H^1(K, V)$ given by \eqref{eq:genexpmap} for $i = 1$. Its image is the Bloch--Kato subspace $H^1_{\mathrm{g}}(K, V)$.
   \end{definition}

   This terminology is justified by the fact that the composition
   \[
    \frac{\DdR(V)}{\Fil^0 \DdR(V)} \to H^1_{\st}(K, \Dpst(V)) \xrightarrow[\cong]{\exp_{\st, V}} H^1_{\st}(K, V)
   \]
   is the usual Bloch--Kato exponential map $\exp_V$, with image $H^1_{\mathrm{e}}(K, V) \subseteq H^1_{\mathrm{g}}(K, V)$.

   \begin{notation}
    We say a filtered $(\varphi, N, G_K)$-module $D$ is \emph{convenient} if it is crystalline (i.e.~$D_{\dR} = D_{\mathrm{cris}} \otimes_{K_0} K$) and $1-\varphi$ and $1 - p\varphi$ are bijective on $D_{\mathrm{cris}}$. We say a crystalline $G_K$-representation $V$ is convenient if $D = \Dpst(V)$ is convenient.
   \end{notation}

   \begin{note}
    If $D$ is a filtered $(\varphi, N, G_K)$-module, then $D$ is convenient if and only if $D^*(1)$ is.
   \end{note}

   If $D$ is convenient, then $H^i_{\st}(K, D) = 0$ for $i \ne 1$, and the natural map $D_{\dR} / \Fil^0 D_{\dR} \to H^1_{\mathrm{st}}(K, D)$ is an isomorphism. In particular, for a convenient Galois representation $V$ we have $H^1_{\mathrm{e}}(K, V) = H^1_{\st}(K, V)$ and $\exp_{\st, V}$ is identified with $\exp_V$.

  \subsubsection*{Traces and duality}

  \begin{definition}
   Say $P \in 1 + X \Qp[X]$ has \emph{no bad roots} if $P(\zeta) \ne 0$ and $P(\zeta/p) \ne 0$ for every $\zeta \in \mu_{[K_0 : \Qp]}$.
  \end{definition}

   \begin{lemma}
    \label{lemma:abstracttrace}
    Suppose $P$ is a polynomial with no bad roots. Then $P(\sigma)$ and $P(\sigma / p)$ are bijective as $\Qp$-linear endomorphisms of $K_0$, where $\sigma$ is the arithmetic Frobenius of $K_0$; and there is a canonical ``trace'' isomorphism
    \[ \tr_{\st, P}: H^1_{\st, P}(K, \Qp^{\mathrm{nr}}(1)) \cong K \]
    given by mapping $(x, y, z) \in Z^1(C_{\st, P}(\Qp^{\mathrm{nr}}(1)))$ to $z - P(\sigma/p)^{-1} x$.

    If $P \mid Q$ are two polynomials, both having no bad roots, then the trace maps for $P$ and $Q$ are compatible with the change-of-polynomial maps. These pairings are compatible with base-change for extensions $K' / K$.
   \end{lemma}

   \begin{proof}
    Immediate from the definitions.
   \end{proof}

   \begin{corollary}
    \label{cor:convenientpair}
    Suppose $D$ is convenient. Then, for any $P$ having no bad roots, the pairing given by
    \[
     H^0_{\st, P}(K, D)\times H^1_{\st}(K, D^*(1)) \longrightarrow H^1_{\st, P}(K, \Qp^{\mathrm{nr}}(1)) \xrightarrow{\tr_{\st, P}} K
    \]
    is the restriction to $H^0_{\st, P}(K, D) = \left(D_{\mathrm{cris}}^{P(\varphi) = 0} \cap \Fil^0 D_{\dR}\right)$ of the natural duality pairing $\left(\Fil^0 D_{\dR}\right) \times \left(\frac{D_{\dR}^*(1)}{\Fil^0 D_{\dR}^*(1)}\right) \to K$.\qed
   \end{corollary}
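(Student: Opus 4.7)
The plan is to unpack both sides of the pairing at the level of cocycles, exploiting the drastic simplifications afforded by convenience, and then to identify the answer with the natural de Rham duality pairing.

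Since $D$ is convenient, the monodromy $N$ vanishes on $D$ and $D_\st = D_{\mathrm{cris}} = D_{\dR}$; by the preceding Note, the same holds for $D^*(1)$. Thus the complex $C_{\st,P}(D)$ reduces to
\[
D_{\mathrm{cris}} \xrightarrow{\ x \mapsto (P(\varphi)x,\, 0,\, x \bmod \Fil^0)\ } D_{\mathrm{cris}} \oplus D_{\mathrm{cris}} \oplus (D_{\dR}/\Fil^0) \xrightarrow{\ (u,v,w)\,\mapsto\, -P(p\varphi)v\ } D_{\mathrm{cris}},
\]
from which the identification $H^0_{\st,P}(\Qp,D) = D_{\mathrm{cris}}^{P(\varphi)=0} \cap \Fil^0 D_{\dR}$ is immediate. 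For the other factor, the convenience of $D^*(1)$ forces the middle component $v$ of any $1$-cocycle to vanish (since $1 - p\varphi$ is injective on $D^*(1)_{\mathrm{cris}}$), and the surjectivity of $1 - \varphi$ then shows that every class is represented by a unique cocycle of the form $(0, 0, w')$ with $w' \in D_{\dR}^*(1)/\Fil^0$, recovering the standard identification $H^1_{\st}(\Qp, D^*(1)) \cong D_{\dR}^*(1)/\Fil^0$.

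Given these normal forms, I would compute the cup product directly. Under the pairing $C_{\st,P}(D) \otimes C_{\st,Q}(D^*(1)) \to C_{\st, P \star Q}(D \otimes D^*(1))$ recalled above, the bidegree $(0,1)$ piece sends $x \otimes (0, 0, w')$ to the cocycle $(0, 0, x \otimes w')$; since $Q = 1 - t$ the convolution $P \star Q$ equals $P$. Composing with the canonical contraction $D \otimes D^*(1) \to \Qp^{\mathrm{nr}}(1)$ produces the cocycle $(0, 0, \langle x, w'\rangle) \in C^1_{\st, P}(\Qp^{\mathrm{nr}}(1))$, where $\langle x, w'\rangle \in \Qp^{\mathrm{nr}}(1)_{\dR} = \Qp$ is the tautological pairing. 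Finally, by Lemma~\ref{lemma:abstracttrace},
\[
\tr_{\st, P}(0,\, 0,\, \langle x, w'\rangle) \;=\; \langle x, w'\rangle \;-\; P(\sigma/p)^{-1}\cdot 0 \;=\; \langle x, w'\rangle,
\]
which is precisely the value of the natural duality pairing evaluated on the pair $(x, w')$.

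The one step requiring genuine care is the explicit identification $x \cup (0, 0, w') = (0, 0, x \otimes w')$ at the cocycle level: this is a direct computation from the Alexander--Whitney-type formulae defining $\cup$ on the complexes $C_{\st, \bullet}$, but one must check that the proposed formula is a chain map with respect to the convolution of polynomials and that replacing $w'$ by an equivalent cocycle yields a cohomologous output. Neither verification is deep, but together they constitute the bulk of the bookkeeping underlying the corollary; everything else follows essentially by inspection.
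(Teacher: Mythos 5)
Your argument is correct and is exactly the unwinding-of-definitions computation that the paper leaves implicit (the corollary is stated without proof): convenience collapses both complexes to their normal forms, the degree $(0,1)$ cup product of $x$ with $(0,0,w')$ has vanishing first two components (the first being $b(\varphi,\varphi)(x\otimes 0)=0$ for the B\'ezout-type polynomial $b$ in the product formula, the second involving $Nx=0$ and $v=0$), and $\tr_{\st,P}$ then returns the tautological pairing $\langle x,w'\rangle$. The only caveat you rightly flag — pinning down the cup-product representative, which the paper only defines up to homotopy — is harmless here, since any choice of chain-level product agrees on cohomology and your normal forms are canonical.
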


   \begin{remark}
    The assumption that $D$ be convenient implies that any class in $D_{\mathrm{cris}}$ is killed by $P(\varphi)$ for some $\varphi$ having no bad roots.
   \end{remark}

   The pairing defining the regulator in \cref{eq:goal0} is of this type, with $D = \Dpst\left( V_{\Pi}(1+q)\right)$ and $P$ the polynomial $(1 - p^{1+q} t / \alpha)(1 - p^{1+q} t / \beta)$. The ``convenient'' condition on this $D$ is satisfied by \cref{prop:Dcrisevals} and \cref{lemma:h1g}(a). This will allow us to use the formalism of semistable $P$-complexes to evaluate the regulator.


 \subsection{\Nek--\Niz cohomology}

  Let $X$ be any $K$-variety, and let $n \in \ZZ$. Then \Nek--\Niz \cite{nekovarniziol16} define $R\Gamma_{\NNsyn}(X, n)$ and $R\Gamma_{\NNsyn,c}(X, n)$. This cohomology theory is a Bloch--Ogus theory (Appendix B in \emph{op. cit.}), so it has all of the good functorial properties one expects, such as cup-products, pullbacks, pushforward maps, etc. More generally, we can define groups  $R\Gamma_{\NNfp}(X, n,P)$ and $R\Gamma_{\NNfp,c}(X, n,P)$ for any polynomial $P$ as above, with the case $P(t)=1-t$ recovering the theory of \cite{nekovarniziol16}; see \cite{besserloefflerzerbes16} for this generalisation.

  By construction, these cohomology theories satisfy the following \emph{descent spectral sequence}. Let us write $D^i(X, n) = \Dpst(H^i_{\et}(X_{\overline{K}}, \Qp(n)))$, viewed as a filtered $(\varphi, N, G_K)$-module, and similarly $D^i_c$ for compactly-supported cohomology.

  \begin{proposition}
   There exists a spectral sequence
   \begin{equation}
    \label{eq:NNsynSS}
    {}^{\mathrm{NN}}E_2^{ij}=H^i_{\st, P}\Big(K, D^j(X, n)\Big) \Rightarrow H^{i+j}_{\NNfp}(X, n, P),
   \end{equation}
   compatible with cup-products and change-of-polynomial maps (and similarly for the compactly-sup\-port\-ed variant).
  \end{proposition}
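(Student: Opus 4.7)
The plan is to realize $R\Gamma_{\NNfp}(X, n, P)$ as the total complex of a suitable bicomplex, then read off the spectral sequence as the standard hypercohomology spectral sequence of that bicomplex. Concretely, I would recall from \cite{nekovarniziol} (extended to general $P$ in \cite{besserloefflerzerbes16}) that $R\Gamma_{\NNfp}(X, n, P)$ is, by construction, quasi-isomorphic to $\operatorname{Tot}(C_{\st, P}(D^{\bullet}))$, where $D^{\bullet}$ is a complex of filtered $(\varphi, N, G_{\Qp})$-modules with $H^{j}(D^{\bullet}) \isom \Dpst H^{j}_{\et}(X_{\QQbar_p}, \Qp(n))$, and $C_{\st, P}(-)$ is applied termwise to give a bicomplex. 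The construction of $D^{\bullet}$ uses the sheafified period complexes $\mathcal{A}_{\st}$, $\mathcal{A}_{\dR}$ on Beilinson's $h$-site together with the $p$-adic comparison isomorphisms.

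Second, I would run the hypercohomology spectral sequence that filters by the vertical ($D^{\bullet}$) degree first. The vertical $E_{1}$-page in position $j$ is precisely the complex $C_{\st, P}$ applied to $H^{j}(D^{\bullet}) = \Dpst H^{j}_{\et}(X_{\QQbar_p}, \Qp(n))$, and taking horizontal cohomology then reads off
\[
{}^{\mathrm{NN}}E_2^{ij} = H^i_{\st, P}\!\left(\Qp,\ \Dpst H^j_{\et}(X_{\QQbar_p}, \Qp(n))\right),
\]
which converges to $H^{i+j}(\operatorname{Tot}(C_{\st, P}(D^\bullet))) = H^{i+j}_{\NNfp}(X, n, P)$ as required. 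Compatibility with cup-products is immediate from the bicomplex structure: the product on $R\Gamma_{\NNfp}$ is built from the K\"unneth-type compatibility of $\Dpst$ with $\otimes$ together with the convolution pairing $C_{\st, P} \otimes C_{\st, Q} \to C_{\st, P \star Q}$ recalled earlier, and both are strictly compatible with the vertical filtration, so they descend to the expected pairings on $E_{2}$. Similarly, change-of-polynomial compatibility comes from naturality of $C_{\st, P}$ in $P$. The compactly-supported variant is identical after substituting $R\Gamma_{\et, c}$ for $R\Gamma_{\et}$ throughout (and using the compactly-supported version of the $h$-site complex for $D^{\bullet}$).

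The main (and essentially only non-trivial) obstacle is the construction in the first step: functorially producing the complex $D^{\bullet}$ of filtered $(\varphi, N, G_{\Qp})$-modules whose cohomology is $\Dpst H^{*}_{\et}$ and such that $\operatorname{Tot}(C_{\st, P}(D^{\bullet}))$ represents $R\Gamma_{\NNfp}$. This is exactly the content of the $h$-site machinery of \cite{nekovarniziol}, which relies on Beilinson's $p$-adic comparison theorems and careful bookkeeping of the filtration, Frobenius, monodromy and Galois structures at the level of complexes (not just cohomology). Once this representative is in place, the spectral sequence and its compatibility properties are a formal consequence of standard double-complex yoga, and no further input is needed.
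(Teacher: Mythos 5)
Your proposal is correct and follows the same route the paper takes: the paper simply asserts the spectral sequence ``by construction'', deferring to the $h$-site representative of $R\Gamma_{\NNfp}(X,n,P)$ as $\operatorname{Tot}\bigl(C_{\st,P}(D^\bullet)\bigr)$ from \cite{nekovarniziol} (extended to general $P$ in \cite{besserloefflerzerbes16}), and your argument is exactly the double-complex spectral sequence of that representative with the $E_2$-page identified via $H^j(D^\bullet)\isom \Dpst H^j_{\et}(X_{\QQbar_p},\Qp(n))$. The compatibility statements for cup products and change of polynomial follow from the convolution pairing $C_{\st,P}\otimes C_{\st,Q}\to C_{\st,P\star Q}$ and the naturality of $C_{\st,P}$ in $P$, just as you say.
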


  If $X$ is smooth of pure dimension $d$, then the \'etale cohomology of $X_{\QQbar_p}$ vanishes in degrees $> 2d$, and there is a $(\varphi, N, G_K)$-equivariant trace map $D^{2d}_c(X, d+1) \to \Qp^{\mathrm{nr}}(1)$; so the edge map of this spectral sequence, combined with \cref{lemma:abstracttrace}, gives a canonical trace map
  \begin{equation}\label{eq:NNtrace}
   \tr_{\NNfp, X, P}:  H^{2d+1}_{\NNfp, c}(X, d+1, P) \to H^1_{\st, P}(K, \Qp^{\mathrm{nr}}(1))\cong  K, \
  \end{equation}
  and hence a pairing
  \[
   \langle\ , \ \rangle_{\NNfp, X, P}: H^i_{\NNsyn}(X, r) \times H^{2d+1-i}_{\NNfp, c}(X, d+1-r; P) \to K,
  \]
  for any polynomial $P$ with no bad roots. These pairings are compatible with the change-of-$P$ maps (and so we shall generally omit the subscript $P$). They are also compatible with base-extension in $K$.

  \begin{theorem}
   \label{thm:NNetcomp}
   For all $i \ge 0$, there is a natural map
   \[ \mathrm{comp}: H^i_{\NNsyn}(X,n)\to H^i_{\et}(X,\Qp(n))\]
   which is functorial in $X$ and fits into the commutative diagram
   \[
    \begin{tikzcd}[row sep=large]
     & H^i_{\mot}(X,n) \ar[rd, "r_{\et}"] \ar[ld, "r_{\syn}" above] & \\
     H^i_{\NNsyn}(X,n) \ar[rr, "\operatorname{comp}"] & & H^i_{\et}(X,\Qp(n));
    \end{tikzcd}
   \]
   and there is a morphism of spectral sequences ${}^{\mathrm{NN}}E_r^{ij}\rightarrow {}^{\et}E_r^{ij}$, compatible with $\mathrm{comp}$ on the abutment, which is given on the $E_2$ page by the maps \eqref{eq:genexpmap}. Here, ${}^{\et}E_r^{ij}$ denotes the Hochschild--Serre spectral sequence
   \[ H^i(K, H^j_{\et}(X_{\QQbar_p},\Qp(n)))\Rightarrow H^{i+j}_{\et}(X,\Qp(n)).\]
  \end{theorem}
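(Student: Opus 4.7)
The plan is to derive everything from the defining constructions in \cite{nekovarniziol} and \cite{besserloefflerzerbes16}, organised into three steps matching the three assertions of the theorem. The substance of the result is essentially a compatibility check, so most of the work lies in choosing functorial complex-level models for which the comparisons are tautological.

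First, I would construct $\mathrm{comp}$ directly from the mapping-fibre definition of $R\Gamma_{\NNfp}(X,n,P)$. The defining complex is a mapping fibre of a map whose target is built from $R\Gamma_{\et}(X_{\overline{\QQ}_p}, \QQ_p(n))$, together with a Hyodo--Kato/filtered de Rham piece carrying the operators $\varphi, N$ and the filtration. The projection from this fibre onto the \'etale factor gives the map $\mathrm{comp}$ at the level of complexes, functorially in $X$. For syntomic cohomology proper ($P(t)=1-t$), this is spelt out in \cite{nekovarniziol}; for general $P$, the same projection works because the $P$-variant only modifies the Frobenius eigenvalue equations, not the \'etale component of the fibre.

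Second, to check that the triangle with the motivic regulators $r_{\syn}$ and $r_{\et}$ commutes, I would use that both regulators arise as Chern class maps from algebraic $K$-theory (or equivalently from Bloch's cycle complexes). By construction in \cite{nekovarniziol}, $r_{\syn}$ is obtained by lifting the \'etale Chern class along the projection used to define $\mathrm{comp}$, so the triangle commutes at the level of Chern class representatives; alternatively one invokes the fact that $H^*_{\NNsyn}$ is a Bloch--Ogus cohomology theory and $\mathrm{comp}$ is a morphism of such theories, so that by the uniqueness part of the universal property the two regulator targets agree after composition.

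Third, for the spectral sequence statement, I would observe that both spectral sequences arise from the same filtration ``by Galois cohomological degree'': on the \'etale side this is the Hochschild--Serre filtration for the structural map $X \to \Spec \Qp$, and on the syntomic side it is the filtration built into $R\Gamma_{\NNfp}$ as an iterated mapping fibre in the variable encoding $\varphi, N$ and the filtration. Since $\mathrm{comp}$ is induced by a morphism of filtered complexes, it gives a morphism of spectral sequences; on $E_2$ the induced map is, term by term, precisely the boundary map \eqref{eq:genexpmap} from the semistable $P$-complex to continuous Galois cohomology, since that map is by definition the one recording how $C_{\st,P}(\Dpst V)$ sits inside a resolution of $V$.

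The main obstacle is keeping the finite-polynomial parameter $P$ under control: one must check that the change-of-polynomial maps $C_{\st,P}(D) \to C_{\st,Q}(D)$ for $P \mid Q$ are intertwined by $\mathrm{comp}$ and the induced map of spectral sequences, and that the auxiliary commutative $\QQ_p$-subalgebra $\mathcal{R}$ in which $P$ may live acts compatibly on both sides. For $P(t)=1-t$ these compatibilities are proved in \cite{nekovarniziol}; the extension to general $P$ is a careful but essentially formal elaboration, carried out in \cite{besserloefflerzerbes16}, and I would simply invoke those two references rather than redo the bookkeeping here.
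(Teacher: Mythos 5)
The paper's own proof of this theorem is a one-line citation: it \emph{is} Theorem A of \cite{nekovarniziol} (with the finite-polynomial elaboration delegated to \cite{besserloefflerzerbes16}), and since the load-bearing steps of your argument are ultimately the same two citations, you arrive at the same place as the paper.

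However, your first step misdescribes the construction in a way that would, if taken literally, trivialise the theorem. The \Nek--\Niz complex $R\Gamma_{\NNsyn}(X,n)$ is a homotopy limit built \emph{only} from Hyodo--Kato cohomology (with its $\varphi$ and $N$) and filtered de Rham cohomology; there is no \'etale factor in the mapping fibre, so $\mathrm{comp}$ cannot be obtained as ``projection onto the \'etale factor''. Constructing $\mathrm{comp}$ is precisely the substantive content of Theorem A of \emph{op.cit.}: it requires the period morphisms of $p$-adic Hodge theory relating the Hyodo--Kato/de Rham data to $\mathbf{B}_{\mathrm{st}}$-twisted \'etale cohomology, and it is this comparison that makes the induced map on the descent spectral sequence come out as the boundary maps \eqref{eq:genexpmap} (note that \eqref{eq:genexpmap} itself already encodes nontrivial $p$-adic Hodge theory, namely that $C_{\st}(\Dpst(V))$ computes the $G_{\Qp}$-invariants of a resolution of $V$ --- which is essentially the Colmez--Fontaine/Bloch--Kato fundamental exact sequence, not a formal fact about mapping fibres). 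Your second and third steps are reasonable descriptions of what must be verified, but they presuppose the existence of a filtered complex-level model of $\mathrm{comp}$, which is exactly the part that cannot be read off from the definitions. Since both you and the paper in the end invoke \cite{nekovarniziol} for this, the proposal is acceptable as a citation, but the claimed ``tautological'' mechanism behind it is not correct.
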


  \begin{proof}
   This is Theorem A of \cite{nekovarniziol16}.
  \end{proof}


 \subsection{Formalism of Abel--Jacobi maps}

  Let $X$ be a smooth equidimensional $K$-variety of dimension $d$, as before. Recall the following definition:

  \begin{definition}
   A class in $H^i_{\mot}(X, n)$ is said to be \emph{homologically trivial} if it is in the kernel of the edge map
   \[ H^i_{\mot}(X, n) \to H^0(K, H^i_{\et}(X_{\QQbar_p}, \Qp(n))) \]
   induced by the Hochschild--Serre spectral sequence. We denote this kernel by $H^i_{\mot}(X, n)_0$.
  \end{definition}

  Since $G_K$ has cohomological dimension 2, the spectral sequence gives a natural map, the \emph{\'etale Abel--Jacobi map},
  \[
   \AJ_{\et}: H^i_{\mot}(X, n)_0 \to H^1\left(K, H^{i-1}_{\et}(X_{\QQbar_p}, \Qp(n))\right) .
  \]

  \begin{note}
   From Theorem \ref{thm:NNetcomp}, we have
   \[
    \AJ_{\et} = \exp_{\st} \circ \AJ_{\syn},
   \]
   where
   \[ \AJ_{\syn}: H^i_{\mot}(X, n)_0 \to H^1_{\st}\left(K, D^{i-1}(X, n)\right)\]
   is the map given by the spectral sequence ${}^{\mathrm{NN}}E_r^{ij}$. In particular, the map $\AJ_{\et}$ takes values in the subspace $H^1_{\mathrm{g}}\left(K, H^{i-1}_{\et}(X_{\QQbar_p}, \Qp(n))\right)$ (c.f.  \cite[Theorem B]{nekovarniziol16}).
  \end{note}

  We shall use \Nek--\Niz cohomology to describe the values of the map $\AJ_{\syn}$ after projecting to a convenient quotient. More precisely, let $W$ be a $G_K$-invariant subspace of 
  \[\left[H^{i-1}_{\et}(X_{\QQbar_p}, \Qp(n))\right]^*(1) = H^{2d+1-i}_{\et, c}(X_{\QQbar_p}, \Qp(d+1-n)),\]
  and suppose $W$ is convenient. Then $W^*(1)$ is naturally a quotient of $H^{i-1}_{\et}(X_{\QQbar_p}, \Qp(n))$, so we have a projection map $\pr_{W^*(1)}$ onto this quotient. Moreover, the natural map $\frac{\DdR(W^*(1))}{\Fil^0} \to H^1_{\st}(K, W^*(1))$ is an isomorphism; we write $\log_{W^*(1)}$ for its inverse.

  \begin{notation}
   Write $\AJ_{W^*(1)}$ for the morphism
   \[
    \AJ_{W^*(1)} \coloneqq \log_{W^*(1)} \circ \operatorname{pr}_{W^*(1)} \circ \AJ_{\et}:\quad H^i_{\mot}(X, n)_0 \to \frac{\DdR({W^*(1)})}{\Fil^0 \DdR({W^*(1)})} = \left[ \Fil^0 \DdR(W)\right]^*.
   \]
  \end{notation}

  The canonical pairing
  \[ \langle\ , \ \rangle_{\dR, W}: \DdR(W) \times \DdR(W^*(1)) \to \DdR(\Qp(1)) \cong K \]
  identifies the target of $\AJ_{W^*(1)}$ with the dual of $\Fil^0 \DdR(W)$. If we have some $\eta_{\dR} \in \Fil^0 \DdR(W) \cap \Dcris(W)$, and $P$ is a polynomial with no bad roots such that $P(\varphi)(\eta_{\dR}) = 0$ (which exists, since $W$ is convenient), then we can interpret the above pairing as a duality pairing $H^0_{\st, P} \times H^1_{\st} \to K$ via \cref{cor:convenientpair}.

  The spectral sequence \eqref{eq:NNsynSS} gives a boundary map
  \[ H^{2d+1-i}_{\NNfp, c}(X, d+1-n, P) \longrightarrow H^0_{\st, P}(K, D^{2d+1-i}_c(X, d+1-n)) \supseteq H^0_{\st, P}(K, W), \]
  So it makes sense to ask if $\eta_{\dR} \in H^0_{\st, P}(K, W)$ lifts to $H^{2d+1-i}_{\NNfp,c}(X,  d+1-n, P)$ (which is equivalent to asking that $\eta_{\dR}$ map to zero in $H^2_{\st, P}(K, D^{2d-i}_c(X, d+1-n))$, since the syntomic descent spectral sequence degenerates at $E_3$.)

  \begin{proposition}
   \label{prop:computeAJwithNN}
   Let $\eta_{\dR} \in \Fil^0 \DdR(W) \cap \Dcris(W)$, and let $P$ be a polynomial with no bad roots such that $P(\varphi)(\eta_{\dR}) = 0$. Suppose that $\eta_{\dR}$ lifts to some $\eta_{\NNfp} \in H^{2d+1-i}_{\NNfp,c}(X,  d+1-n, P)$. Then, for any $x \in H^i_{\mot}(X,\Qp(n))_0$, we have
   \[
    \big\langle \AJ_{W^*(1)}(x) , \eta_{\dR} \big\rangle_{\dR, W} = \big\langle r_{\syn}(x),\, \eta_{\NNfp} \big\rangle_{\NNfp, X, P}.
   \]
  \end{proposition}

  \begin{proof}
   Since the syntomic descent spectral sequence is compatible with products, we have
   \[
    \langle r_{\syn}(x),\, \eta_{\NNfp}\rangle_{\NNfp, X, P}  = \tr_{\st, P}\left( \AJ_{\syn}(x) \cup \eta_{\dR} \right).
   \]
   Since $\eta_{\dR} \in H^0_{\st, P}(K, W)$, this pairing depends only on the projection of the class $\AJ_{\syn}(x)$ to $H^1_{\st}(K, \Dpst(W^*(1)))$, which is by construction $\AJ_W(x)$. By Corollary \ref{cor:convenientpair}, the pairing between $H^1_{\st}(K, \Dpst(W^*(1)))$ and $H^0_{\st, P}(K, \Dpst(W))$ is simply the de Rham duality pairing.
  \end{proof}

  \begin{remark}[Homological triviality]
   \label{rem:homtrivl}
   Note that the hypothesis that $x$ be homologically trivial is necessary here; otherwise, the pairing $\big\langle r_{\syn}(x),\, \eta_{\NNfp} \big\rangle_{\NNfp, X, P}$ would not be independent of the choice of lift $\eta_{\NNfp}$ of $\eta_{\dR}$. This presents a complication in our application to Euler systems, since there is no obvious reason why the image of the motivic Lemma--Eisenstein map $\LE^{[q,r]}$ should take values in the homologically-trivial subspace. A second complication is that we need to show that $\eta_{\dR}$ is in the kernel of the somewhat inscrutable ``knight's move'' differential on the $E_2$ page of the spectral sequence.

   We shall work around this as follows: the class $\eta_{\dR}$ we shall consider lies in the image of a projector $e$ in the Hecke algebra (corresponding to the generalised eigenspace of $\Pif'$) which annihilates the de Rham cohomology outside degree $d = 3$. So the image of $\eta_{\dR}$ under the ``knight's move'' map is automatically zero. Moreover, $e^\vee x$ is homologically trivial, where $e^\vee$ is the transpose of $e$. So we can apply the above formalism under the additional assumption that $e \, \eta_{\NNfp} = \eta_{\NNfp}$, i.e.~that our lift of $\eta$ is in the $\Pif'$ generalised eigenspace.
  \end{remark}

 \subsection{Pushforward and pullback}

  We can now use the functorial properties of $\NNfp$ cohomology to compute the right-hand side of the formula of \cref{prop:computeAJwithNN}. More precisely, let $\iota: Z \into X$ be a finite morphism of smooth $K$-varieties, of codimension $c$. Then there are pushforward maps
  \[ H^{i-2c}_{\mot}(Z, r-c) \to H^i_{\mot}(X, r)\]
  and similarly for $H^*_{\NNsyn}$ and $H^*_{\et}$; and these are compatible with the maps $r_{\syn}$, $r_{\et}$, and $\operatorname{comp}$ appearing in the diagram of Theorem \ref{thm:NNetcomp}.

  (For the existence of syntomic pushforwards compatible with motivic cohomology, see Proposition B.4 in the appendix by D\'eglise to \cite{nekovarniziol16}. The compatibility with \'etale cohomology is not explicitly stated in \emph{op.cit.}, but it can be extracted from the construction; alternatively, it is an immediate corollary of the naturality of the comparison maps between \'etale and syntomic realisation functors on Voevodsky's category of geometrical motives proved in \cite{degliseniziol18}, see Remark 4.22.2 of \emph{op.cit.}.)

  \begin{proposition}
   \label{prop:pushpull}
   For $z \in H^{i-2c}_{\mot}(Z, r-c)$, we have
   \[
    \operatorname{tr}_{\NNfp, X, P}\left(\iota_\star(z) \cup \tilde{w}\right) =
    \operatorname{tr}_{\NNfp, Z, P}\left(z \cup \iota^\star(\tilde{w})\right).
   \]
  \end{proposition}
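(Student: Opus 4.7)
The plan is to derive the identity from two standard inputs: the projection formula for $\NNfp$ cohomology, and the compatibility of the trace map \eqref{eq:NNtrace} with proper pushforward. Throughout I tacitly replace motivic classes by their syntomic regulators, which is compatible with $\iota_\star$ thanks to the functoriality in \cref{thm:NNetcomp}.

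First, because the $\NNfp$ formalism of \cite{nekovarniziol, besserloefflerzerbes16} is a Bloch--Ogus theory equipped with Gysin pushforward, cup product and pullback satisfying the usual functorial relations, the projection formula
\[ \iota_*(z) \cup \tilde{w} \;=\; \iota_*\!\bigl(z \cup \iota^*(\tilde{w})\bigr) \]
holds in $H^{2d+1}_{\NNfp, c}(X, d+1, P)$. This reduces the problem to showing $\tr_{\NNfp, X, P} \circ \iota_* = \tr_{\NNfp, Z, P}$ on top-degree compactly supported $\NNfp$ cohomology of $Z$. Unwinding the definition of the trace, each side factors through three steps: the edge map of the descent spectral sequence \eqref{eq:NNsynSS}, landing in $H^1_{\st, P}\bigl(\Qp, \Dpst(H^{2d}_{\et, c})\bigr)$; the map induced by the etale trace $H^{2d}_{\et, c}(X_{\QQbar_p},\Qp(d+1)) \to \Qp(1)$; and the canonical abstract trace $\tr_{\st, P}$ of \cref{lemma:abstracttrace}. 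The first step is natural in the variety, so intertwines $\iota_*$ on $\NNfp$ cohomology with $\Dpst$ applied to $\iota_*$ in etale cohomology; the second is compatible with proper pushforward by the standard compatibility of etale traces; and the third depends only on the coefficient module $\Qp^{\mathrm{nr}}(1)$. Combining these yields the desired trace identity.

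The main technical point, and the only place where genuine verification is required, is that the Gysin pushforward, the descent spectral sequence, and the trace in the finite-polynomial refinement $\NNfp$ are all compatible with the corresponding structures in ordinary Nekov\'a\v{r}--Nizio\l\ syntomic cohomology (the case $P(t)=1-t$) and, via the etale edge map, with those in etale cohomology. These compatibilities are either established directly in \cite{nekovarniziol} or follow from the compatibility of the change-of-polynomial maps with the trace asserted in \cref{lemma:abstracttrace} and developed in \cite{besserloefflerzerbes16}. Granted these formal inputs, the argument is a line-by-line transcription of the classical Bloch--Ogus projection formula into the $\NNfp$ setting, and no further geometric input is needed.
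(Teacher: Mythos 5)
Your argument is correct and is essentially the paper's own proof, which simply cites ``the adjunction formula relating pushforward and pullback''; you have unpacked that one-line appeal into its two standard constituents (the projection formula and the compatibility of the trace with proper pushforward), both of which are part of the Bloch--Ogus formalism for $\NNfp$ cohomology established in \cite{nekovarniziol} and \cite{besserloefflerzerbes16}. No further comment is needed.
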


  \begin{proof}
   This follows from the adjunction formula relating pushforward and pullback.
  \end{proof}


 \subsection{Coefficients}

   If $X=Y_G(U)$ as in Section \ref{ss:Shimvarandcoeff}, then we can use Liebermann's trick (\cref{rem:Liebermann}) to define cohomology with coefficients in algebraic representations $V$ and to obtain versions of the spectral sequence \eqref{eq:NNsynSS} and of Theorem \ref{thm:NNetcomp} with coefficients. In particular, the composition of the cup product and \eqref{eq:NNtrace} defines a pairing
   \[
    \langle \quad,\quad\rangle_{\NNfp,X,P}:\ H^{i}_{\NNsyn}(X, \cV,r)\ \times\ H^{2d+1-i}_{\NNfp, c}(X, \cV^\vee,d+1-r, P)\longrightarrow K
   \]
   for any $P$ such that $\Qp^{\mathrm{nr}}(1)$ is $P$-convenient. (With this formalism, adding 1 to $r$ corresponds to twisting $\cV$ by the inverse of the symplectic multiplier, so we could assume $r = 0$ if we wish, but it will be more convenient to allow general $r$.)

    For cohomology with coefficients, the formalism of pushforward and pullback maps works as follows: suppose that we have a closed immersion of PEL Shimura varieties $\iota: Y_H(U')\hookrightarrow Y_G(U)$ of codimension $c$, for some reductive group $H$ and  $U'=U\cap H(\Af)$. Assume that the closed immersion extends to the toroidal compactifications.  Let $W$ be a direct summand of $V|_H$. We then obtain
    \begin{align}
     (\iota_U^{\cW})_\star:\, H^\star_{\NNsyn}(Y_H(U'),\cW,r) & \to H^{\star+2c}_{\NNsyn}(Y_G(U),\cV,r+c), \label{eq:Wpushforward}  \\
     (\iota_U^{\cW})^\star: \, H^{\star}_{\NNsyn,c}(Y_G(U),\cV,r) & \to  H^\star_{\NNsyn,c}(Y_H(U'),\cW,r) \label{eq:Wpullback}
    \end{align}
    for all $r\in\ZZ$; and similarly for finite-polynomial cohomology with any polynomial $P$.

   Moreover, these maps  are adjoint with respect to this pairing: identifying $W^\vee$ with a direct summand of $(V^\vee)|_H$, and letting $d = \dim Y_G$, then for all $x\in H^{i-2c}_{\NNsyn}(Y_H(U'),\cW,r-c)$ and $y\in  H^{2d+1-i}_{\NNfp, c}(X, \cV^\vee,d+1-r, P)$, we have
   \[ \left\langle (\iota_U^{\cW})_\star(x),\, y\right\rangle_{\NNfp,Y_G(U),P} = \left\langle x,\, (\iota_U^{\cW^\vee})^*(y)\right\rangle_{\NNfp,Y_H(U'),P}.\qedhere\]


 \subsection{The regulator as a pairing in NN-fp cohomology}\label{sect:reduction1}

  Returning to the specific case $G = \GSp_4$ and $H = \GL_2 \times_{\GL_1} \GL_2$, we can now put together all of the above pieces to express the regulator pairing \eqref{eq:goal2} in terms of NN-fp cohomology.

%

  \begin{lemma}
   Let $V_G$ and $V_H$ be as in \cref{eq:qrtetc}, and $\cV_G$, $\cV_H$ the corresponding coefficient sheaves. Then we have pullback and pushforward maps
   \begin{align}
     (\iota^{[t_1,t_2]}_\Delta)_\star:&\, H^\star_{\NNsyn}(Y_{H,\Delta}, \cV_H^\vee, 2)  \to H^{\star+2}_{\NNsyn}(Y_{G,\Kl},\cV_G^\vee, 3-q), \label{eq:s1s2pushforward}  \\
      (\iota^{[t_1,t_2]}_\Delta)^*:& \, H^{\star}_{\NNfp,c}(Y_{G,\Kl},\cV_G, 1 + q, P)  \to  H^\star_{\NNfp,c}(Y_{H,\Delta}, \cV_H, 1, P) \label{eq:s1s2pullback}.
   \end{align}
   for any polynomial $P$.
  \end{lemma}
  \begin{proof}
   This is an instance of \eqref{eq:Wpushforward} and \eqref{eq:Wpullback}.
  \end{proof}

  \begin{notation}
   Write $\Eis^{[t_1,t_2]}_{\syn,\underline\Phi}$ for the image of $\Eis^{[t_1,t_2]}_{\mot,\underline\Phi}$ under $r_{\NNsyn}$.
  \end{notation}

  As in Section \ref{sect:periods}, let $(w,\uPhi)$ be the product of  some arbitrary test data $(w^p, \Phi^p)$ away from $p$ and the Klingen test data at $p$. Shrinking our tame level $U^p$ if necessary, we may assume that $U^p$ fixes $w^p$, and $V^p = U^p \cap H(\Af^p)$ fixes $\Phi^p$.

  Let $\eta_{\dR}\in \Fil^1\DdR(V_\Pi)\subset  \Fil^1 H^3_{\dR, c}(Y_{G, \Kl}, \cV_G)[\Pif']$ be as in Section \ref{ssec:testdataatp} (see also  \cref{def:nudR}).

  \begin{note}
   Observe that $\eta_{\dR}$ lies in $\Fil^{1+q} H^3_{\dR, c}(\dots)$, for any $0\leq q\leq r_2$.
  \end{note}

  \begin{lemma}
   \label{lem:etaNNfp}
   Let $0\leq q\leq r_2$; and let $\cQ_{1+q}(T)=\cQ(p^{1+q} T) = \left(1-\frac{p^{1+q} T}{\alpha}\right)\left(1-\frac{p^{1+q} T}{\beta}\right)$, where $\cQ$ is as in \eqref{eq:defcurlyP}. Then there exists a unique class
   \[ \eta_{\NNfp} \in H^3_{\NNfp,c}\Big(Y_{G,\Kl, \Qp}, \cV_G, 1+q, \cQ_{1+q}\Big)[\Pif'] \]
   lifting $\eta_{\dR}$.
  \end{lemma}

  \begin{note}
   The group in which this class lies depends on $q$; but the natural maps between these groups for differing values of $q$ are isomorphisms on the $\Pif'$-eigenspace, so the class is ``independent of $q$'' in a certain sense.
  \end{note}

  \begin{proof}
   By the definition of $\eta_{\dR}$ we have $\cQ(\varphi)\eta_{\dR} = 0$, and hence $\cQ_{1+q}(\varphi)\,(\eta_{\dR} \otimes e_{1+q}) =\cQ(\varphi)(\eta) \otimes e_{1 + q} = 0$, where $e_{1+q}$ is the canonical basis of $\DdR(\Qp(1))^{\otimes (1+q)}$. Thus $\eta_{\dR}$ defines a class in the group $H^0_{\st, P}\left(\Qp, H^3_{\et, c}\left(Y_{G, \Kl, \Qp}, \cV_G\right)(-q)\right)$.

   Since the $\Pif'$-generalised eigenspace in $H^j_{\dR, c}\left(Y_{G, \Kl, \Qp}, \cV_G \right)$ is zero for $j \ne 3$, the class $\eta_{\dR}$ is in the image of the edge map of the spectral sequence \eqref{eq:NNsynSS} for $P = \cQ_{1+q}$ (cf.~\cref{rem:homtrivl} above). So it can be lifted to a class in $H^3_{\NNfp, c}\left(Y_{G, \Kl, \Qp}, \cV_G, 1+q; \cQ_{1+q}\right)$; and, again using the fact that $\Pif'$ does not contribute to cohomology in degrees $\ne 3$, there is a unique lift which lies in the $\Pif'$-eigenspace.
  \end{proof}

  \begin{remark}\label{rem:Quniqueness}
   For the existence and uniqueness of the lift to fp-cohomology, it suffices to assume that $\eta_{\NNfp}$ lies in the $\Pif'$ \emph{generalised} eigenspace for the spherical Hecke algebra (generated by Hecke operators away from $p$ and the level of $U$).
  \end{remark}

  Observe that $\cQ_{1+q}$ has no bad roots (for any $q$ in the relevant range $0 \le q \le r_2$), by \cref{lem:trivzero}. So we can use \cref{prop:computeAJwithNN} (and \cref{rem:homtrivl}) to compute the Bloch--Kato logarithm map as a pairing in $\NNfp$ cohomology of $Y_G$; and \cref{prop:pushpull} to relate this to a pairing on $Y_H$. This gives our first step towards \eqref{eq:goal2}:
  \vspace{1ex}

  \begin{mdframed}
   \textbf{1st reduction}: The left-hand side of \eqref{eq:goal2} can be rewritten as
   \begin{equation}
   \label{eq:step1}
   \begin{aligned}
   \left\langle \left(\log \circ \AJ^{[\Pi, q]} \circ \iota^{[t_1,t_2]}_{\Delta, \star}\right)(\Eis^{[t_1, t_2]}_{\et, \uPhi}),\eta_{\dR}\right\rangle_{\dR, Y_{G, \Kl}}
   &=
   \left\langle \iota_{\Delta, \star}^{[t_1,t_2]}\left( \Eis^{[t_1,t_2]}_{\syn,\uPhi}\right),\, \eta_{\NNfp}\right\rangle_{\NNfp,Y_{G, \Kl}} \\
   &= \left\langle \Eis^{[t_1,t_2]}_{\syn,\uPhi},\, (\iota_{\Delta}^{[t_1,t_2]} )^\star \left(\eta_{\NNfp}\right)\right\rangle_{\NNfp,Y_{H, \Delta}}.
   \end{aligned}
   \end{equation}
   \end{mdframed}

\mychapter{Step 2: Reduction to a pairing on the multiplicative-ordinary locus}

 Our next goal will be to re-express the pairing of \eqref{eq:step1} in a fashion which is more amenable to computation, as follows:
 \begin{itemize}
 \item We will replace \Nek-\Niz cohomology, which has very powerful functorial properties but is rather inexplicit in its definition, with a more ``low-tech'' cohomology theory (the \emph{rigid syntomic cohomology} of Besser).

 \item We will show that the computation can be carried out after passing to certain open subsets of toroidal compactifications of the Shimura varieties for $G$ and for $H$ (the multiplicative components of the ordinary loci), using an appropriate formalism of ``partially compactly supported'' cohomology.
 \end{itemize}
 These two reductions will allow us to link up with Boxer and Pilloni's \emph{higher Coleman theory}, which will be the next major step later in this paper.

 These two reductions are interrelated, since Besser's cohomology applies to \emph{smooth} $\Zp$-schemes, while the natural $\Zp$-models of our Shimura varieties are not smooth -- their special fibres are singular. However, the loci that we want to study are the tubes of subvarieties of the special fibre; and these subvarieties are contained in the smooth locus. So we shall use a generalisation of Besser's rigid cohomology to semistable schemes -- the \emph{log-rigid syntomic cohomology} of Ertl and Yamada \cite{ertlyamada18} -- as a bridge between \Nek--\Niz cohomology and Besser's theory. However, the situation is complicated by the fact that although $Y_{G, \Kl}$ is semistable, $Y_{H, \Delta}$ is not. We shall bypass this by working with auxiliary Shimura varieties for $G$ and $H$, with deeper level structures at $p$, which admit compatible semistable models over the ramified extension $\Zp[\zeta_p]$. We shall work with semistable models over this extension, and descend to $\Qp$ after we have restricted to a smooth open subvariety.


\section{Log-rigid syntomic and fp-cohomology}
\label{sect:logsyntomic}
%

 \subsection{Log structures}

  Let $\pi$ be a uniformizer of $K$, and write $\cO_K^\pi$ for the scheme $\Spec \cO_K$ with the canonical log structure, given by the chart $1 \mapsto \pi$. 
  Denote by $k$ the residue field of $\cO_K$, and write $k^0$ for the scheme $\Spec k$, with the log structure given by $1\mapsto 0$.

  \begin{definition}\label{def:SSschemeboundary}
   A \emph{strictly semistable $\cO_K$-scheme with boundary} is a pair $(X, D)$, where $X$ is a finite-type $\cO_K$-scheme and $D$ a closed subscheme (both flat over $\cO_K$), with the following properties:
   \begin{enumerate}[(a)]
   \item the union of $D$ and the special fibre $X_0$ is a strict normal crossing divisor;
   \item each point of $X$ has a Zariski-open neighbourhood which is smooth over
   \[ \Spec \cO_K[t_1, \dots, t_m, s_1, \dots, s_n] / (t_1\dots t_m - \pi)\]
   for some $m$ and $n$, with $D$ corresponding to $s_1\dots s_n = 0$.
   \end{enumerate}
  \end{definition}

  \begin{remark}
   We do not assume here that $X$ be proper over $\cO_K$.
  \end{remark}

  By the same argument as in \cite[\S 2.1]{grossekloenne05}, since we have assumed our divisors to be strictly normal-crossing, it is equivalent to suppose that charts as in (b) exist \'etale-locally on $X$ (rather than Zariski-locally). Note that any strictly semistable $\cO_K$-scheme satisfies the conditions if we let $D = \varnothing$.

 \subsection{Log-rigid syntomic cohomology}

  Given a strictly semistable $\cO_K$-scheme with boundary $(X, D)$, we may equip $X$ with the log-structure associated to the divisor $D \cup X_0$. Then $X$ (with this log-structure) is a \emph{strictly semistable log-scheme with boundary over $\cO_K^{\pi}$} in the sense of \cite[Definition 3.3]{ertlyamada18}. We can then consider three complexes associated to $X$ and $D$:
  \begin{itemize}
   \item the \emph{rigid Hyodo-Kato cohomology} $R\Gamma^{\HK}_{\rig}(X_0\langle D_0\rangle)$ (see Section 1.3.3. in \emph{op.cit.}), which is a complex of $F$-vector spaces with an $F$-semilinear Frobenius $\varphi$ and an $F$-linear monodromy operator $N$, satisfying $N\varphi = p \varphi N$, where $F$ is the maximal unramified subfield of $K$. (Its definition involves a rather intricate limiting process over collections of liftings of open subsets of $X_0$ to characteristic zero, since it is not generally possible to find a global lifting of $X_0$ compatible with Frobenius.)

   \item the \emph{log-rigid cohomology} $R\Gamma_{\lrig}(X_0 \langle D_0\rangle / \cO_K^\pi)$ (see Section 1.3 in \emph{op.cit.}), which is a complex of $K$-vector spaces, quasi-isomorphic to the de Rham cohomology of the dagger space $\cX =\  ]X_0[\,_{X_K^{\mathrm{an}}}^\dag$ with log poles along $D_K$.

   \item the \emph{Deligne-de Rham cohomology} $R\Gamma_{\dR}^D(U_K)$ (see \cite{deligne74}), which is a complex of $K$-vector spaces with a filtration $\Fil^r$ (the Hodge filtration). If $X_K$ is proper, it is quasi-isomorphic to $R\Gamma(X_K, \Omega^\bullet_{X_K}\langle D_K \rangle)$ with the filtration defined by truncation.
  \end{itemize}

  \begin{note}
   These complexes are related by morphisms in the derived category of $K$-vector spaces (cf.~Equation 3.11 of op.cit.)
   \[
    R\Gamma^{\HK}_{\rig}(X_0\langle D_0\rangle) \otimes_F K\xrightarrow{\iota_{\pi}^{\rig}} R\Gamma_{\lrig}(X_0\langle D_0\rangle / \cO_K^\pi) \xleftarrow{\,\mathrm{sp}\,} R\Gamma^D_{\dR}(U_K).
   \]
   The morphism $\iota_{\pi}^{\rig}$ is a quasi-isomorphism, and $\mathrm{sp}$ is also a quasi-isomorphism if $X$ is proper.
  \end{note}

  For $r\geq 0$, Ertl--Yamada \cite[Definition 3.4]{ertlyamada18} define a \emph{log-rigid syntomic cohomology}:

  \begin{definition}
   \label{def:lrigsyn}
   Define $R\Gamma_{\lrigsyn}\left(X\langle D\rangle, r\right)$ to be the homotopy limit of the diagram
   \[
    \begin{tikzcd}[row sep=large, column sep = tiny]
    \Fil^r R\Gamma_{\dR}^D(U_K)
      \arrow[dr, "{\operatorname{sp}}"]
    &
    {} &
    R\Gamma_{\rig}^{\HK}(X_0\langle D_0\rangle)
      \arrow[dl, "{\iota_\pi^{\rig}}"']
      \arrow[d, "N"]
      \arrow[rr, "{1-\varphi_r}"]
    &{\hspace{1.5em}}&
    R\Gamma_{\rig}^{\HK}(X_0\langle D_0\rangle)
      \arrow[d, "N"]
    \\
    {} &
    R\Gamma_{\lrig}(X_0\langle D_0 \rangle / \cO_K^\pi)
    &
    R\Gamma_{\rig}^{\HK}(X_0\langle D_0\rangle)
      \arrow[rr, "{1-\varphi_{r-1}}"]
    &&
    R\Gamma_{\rig}^{\HK}(X_0\langle D_0\rangle)
    \end{tikzcd}
   \]
   Here $\varphi_r \coloneqq p^{-r} \varphi$.
  \end{definition}

  \begin{note}
   There is no properness assumption on $X$, so if we start with a strictly semistable log-scheme $U$, we can always simply take $X = U$ and $D = \varnothing$ in the above construction. However, it is important to allow more general $X$ in order to prove the following theorem, showing that for proper $X$, log-syntomic cohomology coincides with \Nek--\Niz cohomology of the complement of $D$.
  \end{note}

  \begin{theorem}\label{thm:comparison}
   Let $U = X-D$, where $(X, D)$ is a strictly semistable $\cO_K$-scheme with boundary, and suppose that $X$ is proper. Then for all $r\geq 0$, there exist canonical quasi-isomorphisms
   \[ R\Gamma_{\lrigsyn}\left(U, r\right) \cong R\Gamma_{\lrigsyn}(X\langle D\rangle, r)\cong R\Gamma_{\NNsyn}(U_K,r).\]
  \end{theorem}

  \begin{proof}
   This is \cite[Corollary 4.2]{ertlyamada18}.
  \end{proof}

  \begin{note}
   We may define \emph{log-rigid finite-polynomial cohomology} similarly, replacing $1-\varphi_r$ with more general polynomials $P(\varphi_r)$; and we obtain a comparison with the finite-polynomial variant of \Nek--\Niz cohomology considered in \cite{besserloefflerzerbes16}, extending \cref{thm:comparison}. However, we shall not actually use this directly here; instead, we shall use its compactly-supported analogue developed in the next section.
  \end{note}


 \subsection{Compactly supported log-rigid syntomic and fp-cohomology}

  Let $(X, D)$ be a strictly semistable $\cO_K$-scheme with boundary, as before; we \emph{do} now assume that $X$ is proper. In \cite{ertlyamada19}, Ertl and Yamada define
  \begin{itemize}
   \item \emph{rigid Hyodo-Kato cohomology with compact support}, $R\Gamma^{\HK}_{\rig}(X_0 \langle -D_0\rangle)$,
   \item \emph{log-rigid cohomology with compact support},  $R\Gamma_{\lrig}(X_0 \langle -D_0\rangle / \cO_K^\pi)$.
  \end{itemize}
  Again, the former is $F$-linear and equipped with Frobenius and monodromy operators, and the latter is $K$-linear.

  \begin{proposition}
   Let $d=\dim(X)$. Then there exist canonical isomorphisms
   \begin{align}
    R\Gamma^{\HK}_{\rig}(X_0 \langle - D_0\rangle) & \xrightarrow{\cong} R\Gamma^{\HK}_{\rig}(X_0 \langle D_0\rangle)^*[-2d]\label{eq:HKduality}\\
    R\Gamma_{\lrig}(X_0 \langle -D_0\rangle / \cO_K^\pi) & \xrightarrow{\cong} R\Gamma_{\lrig}(X_0 \langle D_0\rangle / \cO_K^\pi)^*[-2d].
   \end{align}
  \end{proposition}

  \begin{proof}
   See \cite[Theorem 6.1]{ertlyamada19}.
  \end{proof}

  \begin{note}\label{note:Frobholo}
   The morphism \eqref{eq:HKduality} is compatible with $\varphi$ and $N$, if we define the Frobenius $\Phi$ and monodromy on the right-hand side as $p^d \cdot (\varphi^{-1})^\vee$ and $-N^\vee$.
  \end{note}

  \begin{remark}
    The Frobenius on $R\Gamma^{\HK}_{\rig}(X_0 \langle D_0\rangle)$ admits an inverse in the derived category, although it is not necessarily invertible at the level of complexes; explicitly, we can replace the complex computing $R\Gamma^{\HK}_{\rig}(X_0 \langle D_0\rangle)$ with its ``perfection'', as in \cite[\S 4]{besser12}.
   \end{remark}

  We also have the complex $R\Gamma^D_{\dR,c}(U_K) \coloneqq R\Gamma(X_K, \Omega^\bullet_{X_K/K}\langle -D_K\rangle)$ computing compactly-suppor\-ted de Rham cohomology of $U_K$, with its truncation filtration; and there are maps in the derived category relating these three complexes, as before.

 We define log-rigid syntomic cohomology with compact support as follows:

 \begin{definition}\label{def:lrigsyncompact}
  We define $R\Gamma_{\lrigsyn}(X\langle -D\rangle,r)$ as the homotopy limit of the diagram analogous to \cref{def:lrigsyn} with the three complexes replaced by their $\langle-D\rangle$ versions.
 \end{definition}

 \begin{definition}
  Replacing $1-\varphi_r$ by $P(\varphi_r)$, for some $P \in \Qp[t]$ with constant coefficient $1$, we obtain log-rigid fp-cohomology with compact support, which we denote by $R\Gamma_{\lrigfp}(X\langle -D\rangle, r; P)$.
 \end{definition}

 The following result is a consequence of the results in \emph{op.cit.}.

 \begin{theorem}\label{thm:comparisonc}
  For all $r\geq 0$, there exists a canonical isomorphism
  \[ R\Gamma_{\lrigfp}(X\langle -D \rangle, r; P)\cong R\Gamma_{\NNfp,c}(U_K, r; P).\]
  Moreover, this isomorphism is compatible with pullback.
 \end{theorem}

 The following result (c.f. \cite[\S 4]{besser12}) will be useful for the constructon of an `extension-by-0' map (c.f. Proposition \ref{prop:extbyzero}):

 \begin{proposition}\label{prop:lrigqisom}
  The complex defined in \ref{def:lrigsyncompact} is quasi-isomorphic to the homotopy limit of the following diagram, shifted by $[-2d]$:
  \[
  \begin{tikzcd}[row sep=large, column sep=0.2em]
  \left(\Fil^{d-r} R\Gamma_{\dR}^D(U_K)\right)^*
    \arrow[dr, "{(\operatorname{sp}^\vee)^{-1}}"]
  &
  {} &
  \left(R\Gamma_{\rig}^{\HK}(X_0\langle D_0\rangle)\right)^*
    \arrow[dl, "{\left[(\iota_\pi^{\rig})^\vee\right]^{-1}}"']
    \arrow[d, "{-N^\vee}"]
    \arrow[rr, "{1-\Phi_r}"]
  &
  \hspace{2em}
  &
  \left(R\Gamma_{\rig}^{\HK}(X_0\langle D_0\rangle)\right)^*
    \arrow[d, "{-N^\vee}"]
  \\
  {} &
  \left(R\Gamma_{\lrig}(X_0\langle D_0 \rangle / \cO_K^\pi)\right)^*
  &
  \left(R\Gamma_{\rig}^{\HK}(X_0\langle D_0\rangle)\right)^*
    \arrow[rr, "{1-\Phi_{r-1}}"]
  &
  &
  \left(R\Gamma_{\rig}^{\HK}(X_0\langle D_0\rangle)\right)^*
  \end{tikzcd}
  \]
%
 \end{proposition}
 \begin{proof}
  Immediate from Note \ref{note:Frobholo}.
 \end{proof}


  Let $(X, D)$ be as above, and $U_K$ the open variety $X_K - D_K$.

  \begin{proposition}
   We have cup products
   \begin{align}
    R\Gamma_{\NNsyn}(U_K,r) \times R\Gamma_{\NNfp,c}(U_K,s;P) &
    \to R\Gamma_{\NNfp,c}(U_K,r+s;P) ,\label{eq:pairingNN}\\
    R\Gamma_{\lrigsyn}(X\langle D \rangle,r) \times R\Gamma_{\lrigfp}(X\langle -D \rangle,s;P) & \to R\Gamma_{\lrigsyn}(X\langle-D\rangle,r+s; P)\label{eq:pairinglrig}
   \end{align}
   which are compatible under the isomorphisms in Theorems \ref{thm:comparison} and \ref{thm:comparisonc}.
  \end{proposition}
  \begin{proof}
   The proof for the \Nek--\Niz cohomology is given in \cite{besserloefflerzerbes16}. The proof for  log-rigid fp-cohomology is analogous. The compatibility follows from \cite{ertlyamada19}.
  \end{proof}

  \begin{corollary}
   When $i+j=2d+1$, $r+s=d+1$, and $P$ is a polynomial with no bad roots, then we get $K$-valued pairings, denoted $\langle\quad,\quad\rangle_{\NNfp, U_K}$ and $\langle\quad,\quad\rangle_{\lrigfp,X}$, respectively; and these are compatible under the maps of Theorems \ref{thm:comparison} and \ref{thm:comparisonc}.\qed
  \end{corollary}


\section{Rigid syntomic and fp-cohomology for smooth schemes}

 We now explain a simpler counterpart of the constructions of the previous section applying to smooth, rather than semistable, $\cO_K$-schemes: Besser's rigid syntomic and finite-polynomial cohomology.

 \subsection{Rigid syntomic/fp-cohomology}\label{sect:rigsyn}

  Let $X$ be a smooth $\cO_K$-scheme with generic fibre $X_K$ and special fibre $X_0$. We assume that $X_K$ is proper (but $X$ itself may not be). Denote by $\cX$ the dagger space tube of $X_0$ in $X_K^{\mathrm{an}}$. Let $D$ be a divisor in $X$ which intersects transversely with the special fibre; and let $\cD$ be the divisor $D_K^{\mathrm{an}} \cap \cX$.

  \begin{definition}
   An \emph{overconvergent filtered $F$-isocrystal} on $(X, D)$ consists of the following data:
   \begin{itemize}
    \item an overconvergent $F$-isocrystal $\sF_{\rig}$ on $X_0$, with logarithmic poles along $D_0$;
    \item an algebraic vector bundle $\sF_{\dR}$ on the variety $X_K$, endowed with a connection with log poles along $D_K$, and with a filtration satisfying Griffiths transversality;
    \item an isomorphism of rigid-analytic vector bundles over the dagger space $\cX$, compatible with connections,
    \[ \sF_{\dR}|_{\cX} \cong \sF_{\rig, X}, \]
    where $\sF_{\rig, X}$ is the realisation of $\sF_{\rig}$ corresponding to the lifting $X$ of $X_0$.
   \end{itemize}
  \end{definition}

  \begin{definition}
   Define
   \[ R\Gamma_{\rig}(X_0\langle D_0\rangle,\sF_{\rig})= R\Gamma(\cX,\sF_{\rig, X} \otimes\Omega^\bullet_{X_K}\langle D_K\rangle).\]
 \end{definition}

 This depends functorially on $(X_0, \sF_{\rig})$, and in particular is equipped with a Frobenius (even though this may not lift to $X$).

 \begin{note}\label{note:proprig}~
  \begin{enumerate}
   \item In the case of trivial coefficients, we recover the complex $R\Gamma_{\dR}(\cX\langle \cD \rangle)$ (c.f. Notation \ref{not:logdR}).
   \item The rigid cohomology used here coincides with Hyodo--Kato cohomology; that is, if $X$ is smooth, then $R\Gamma_{\rig}(X_0\langle D_0\rangle,\sF_{\rig})=R\Gamma^{\HK}_{\rig}(X_0\langle D_0\rangle,\sF_{\rig})$ (with monodromy acting as 0).
   \item There exists a specialisation map (see \cite{baldassarriberthelot04})
  \[  \operatorname{sp}: R\Gamma_{\dR}(X_K\langle D_K\rangle,\sF_{\dR})\to R\Gamma_{\rig}(X_0\langle D_0\rangle,\sF_{\rig})_K.\qedhere \]
  \end{enumerate}
 \end{note}


  \begin{definition}
    Let $r \in \ZZ$, and let $P \in \Qp[t]$ have constant coefficient $1$. Following Besser \cite{besser12}, we define the \emph{rigid fp-cohomology} of $X\langle D\rangle$ with coefficients $\sF$, twist $r$ and polynomial $P$ as the homotopy limit of the diagram
    \begin{equation}
     \label{eq:rigsyn}
     \begin{tikzcd}[row sep=large, column sep=-1em]
      \Fil^r R\Gamma_{\dR}(X_K\langle D_K\rangle, \sF_{\dR})
        \arrow[dr, "{\scriptstyle \operatorname{sp}}"]
      &&
      R\Gamma_{\rig}(X_0\langle D_0\rangle,\sF_{\rig})
        \arrow[dl]
        \arrow[dr, "{\scriptstyle P(\varphi_r)}"]
      \\
      &
      R\Gamma_{\rig}(X_0\langle D_0\rangle,\sF_{\rig})_K
      &
      &
      R\Gamma_{\rig}(X_0\langle D_0\rangle,\sF_{\rig})
     \end{tikzcd}
    \end{equation}
    where the unlabelled arrow is base-extension to $K$. We denote it by $R\Gamma_{\rigfp}(X\langle D\rangle,\sF,r;P)$. When $P(t)=1-t$, then we call it \emph{rigid syntomic cohomology}, denoted by $R\Gamma_{\rigsyn}(X\langle D\rangle, \sF, r)$.
  \end{definition}

  \begin{notation}
   We shall write $R\Gamma_{\rigfp}(X\langle D\rangle,r;P)$ if $\sF$ is the trivial isocrystal.
  \end{notation}

  \begin{note}\label{note:propfigfp}
   \begin{enumerate}
    \item We have $R\Gamma_{\rigfp}(X\langle D\rangle,\sF,r;P) = R\Gamma_{\rigfp}(X\langle D\rangle,\sF(r),0; P)$ where $\sF(r)$ is the $r$-th Tate twist of $\sF$.
    \item If $F = K$, the middle arrow is the identity, and the zigzag diagram collapses to the mapping fibre of the map
    \[  \Fil^r R\Gamma_{\dR}(X_K\langle D_K\rangle, \sF_{\dR}) \xrightarrow{\ P(\varphi_r) \circ \operatorname{sp}\ } R\Gamma_{\rig}(X_0\langle D_0\rangle,\sF_{\rig}).\]
    \item If $X$ is equipped with the log structure associated to $X_0 \cup D$, then by Note \ref{note:proprig} (2) we have a natural map
    \[
     \delta: R\Gamma_{\lrigfp}(X\langle D\rangle,\sF,r;P) \to R\Gamma_{\rigfp}(X\langle D\rangle,\sF,r;P).
    \]
    However, this is not an isomorphism in general (even when $X$ is smooth and proper and $D = \varnothing$); the difference between the two is essentially the mapping fibre of $P(\varphi_{r-1})$.\qedhere
   \end{enumerate}
  \end{note}

  \begin{lemma}\label{lem:reslogfptofp}
   Let $X$ be a strictly semistable proper log scheme over $\cO_K^\pi$, and let $D\subset X$ be a closed subscheme with complement $U$. Suppose that $(U,X)$ is a strictly semistable log scheme, and let $Z$ be a smooth open subscheme of $X$. We then have a restriction map
   \[ \res_Z: R\Gamma_{\lrigfp}(X\langle D\rangle,\sF,r;P)\to R\Gamma_{\rigfp}(Z\langle D\rangle,\sF,r;P).\]
  \end{lemma}
  \begin{proof}
   Consequence of Note \ref{note:propfigfp}, together with the restriction map on lrig-fp cohomology.
  \end{proof}

  \subsubsection*{Rigid fp-cohomology with compact support}

   We now consider a compactly-supported variant.

   \begin{notation}
    Write $\cosp: R\Gamma_{\rig,c}(X_0\langle -D_0\rangle,\sF_{\rig})\rightarrow R\Gamma^D_{\dR}(X_K,\sF_{\dR})$ for the cospecialisation map (see \cite{baldassarrietal04}).
   \end{notation}

   \begin{definition}
    Let $r\geq 0$, and let $Q\in\Qp[t]$ have constant coefficient $1$. Define the rigid fp-cohomology with compact support of $X$ with coefficients $\sF$,  twist $r$ and polynomial $Q$, as the homotopy limit of the zigzag diagram
    \begin{equation}
    \label{eq:crigsyn}
    \begin{tikzcd}[row sep=large, column sep=-1em]
     \Fil^r R\Gamma^D_{\dR}(X_K, \sF_{\dR}) 
     \arrow[rd] & 
     {} & 
     R\Gamma_{\rig,c}(X_0\langle -D_0\rangle, \sF_{\rig})
     \arrow[ld, "\cosp"]
     \arrow[rd, "Q(\varphi_r)"] \\
     {} &
     R\Gamma^D_{\dR}(X_K,\sF_{\dR}) & 
     {} & 
     R\Gamma_{\rig,c}(X_0\langle -D_0\rangle,\sF_{\rig}).
    \end{tikzcd}
    \end{equation}
     We denote it by  $R\Gamma_{\rigfp,c}(X\langle -D\rangle,\sF,r;Q)$.
   \end{definition}

   We have the following analogue of Proposition \ref{prop:lrigqisom} for rigid fp-cohomology with compact support:

   \begin{lemma}\label{lem:holorigfp}
    The complex \eqref{eq:crigsyn} is quasi-isomorphic to the homotopy limit of the following diagram, shifted by $[-2d]$:
    \[
    \begin{tikzcd}[row sep=large, column sep = -1em]
     \Fil^{d-r} \left(R\Gamma_{\dR}^D(X_K),\sF_{\dR}\right)^* \arrow[rd] &
     {} &
     R\Gamma_{\rig}(X_0\langle D_0\rangle,\sF_{\rig})^*
     \arrow[ld, "{\mathrm{sp}}^\vee"]
     \arrow[rd, "Q(\Phi_r)"] \\
     {} & 
     R\Gamma_{\dR}^D(X_K,\sF_{\dR})^*&
     {} & 
     R\Gamma_{\rig}(X_0\langle D_0\rangle,\sF_{\rig})^*.
   \end{tikzcd}   
   \]
   Here  $\Phi=(\varphi^\vee)^{-1}$.
   \end{lemma}
   \begin{proof}
    See \cite[\S 4]{besser12}.
   \end{proof}

   \begin{proposition}\label{prop:extbyzero}
    Let $X$ be a strictly semistable proper log scheme over $\cO_K^\pi$, and let $D\subset X$ be a closed subscheme with complement $U$. Suppose that $(U,X)$ is a strictly semistable log scheme, and let $Z$ be a smooth open subscheme of $X$. Then in the derived category, we have an extension-by-0 morphism (omitting the coefficient sheaf for clarity):
    \[ R\Gamma_{\rigfp,c}(Z\langle -D \cap Z\rangle,r;Q)\to R\Gamma_{\lrigfp}(X\langle -D\rangle,r;Q).\]
   \end{proposition}
   \begin{proof}
    Clear from  Proposition \ref{prop:lrigqisom}  and Lemma \ref{lem:holorigfp}. Here, the morphism
    \[ R\Gamma_{\rig}(Z_0\langle D_0\cap Z_0\rangle)^*\to R\Gamma^{\HK}_{\rig}(X_0\langle D_0\rangle)^*\]
    is given by the composition of $(\iota_\pi^{\rig})^\vee$ with the dual of the natural restriction map
    \[ R\Gamma_{\lrig}(X_0\langle D_0\rangle)  \to R\Gamma_{\rig}(Z_0\langle D_0\cap Z_0\rangle).\qedhere\]
   \end{proof}


  \begin{proposition}\label{prop:rigfpcup}
   For $r,s\geq 0$, we have a cup product
   \[
    R\Gamma^i_{\rigfp}(X\langle D\rangle,\sF,r;P) \times R\Gamma^j_{\rigfp,c}(X\langle -D\rangle,\sG,s;Q)  \longrightarrow R\Gamma_{\rigfp,c}^{i+j}(X,\sF\otimes\sG,r+s;P\star Q).\label{eq:pairingrig}
   \]
  \end{proposition}
  \begin{proof}
   See \cite[\S 2]{besser12}.
  \end{proof}

  \begin{lemma}
   If $X$ is connected of dimension $d$ and $Q$ has no bad roots, then there is a canonical isomorphism
   \[ \tr_{\mathrm{fp},X}: H^{2d+1}_{\rigfp,c}(X,d+1;Q) \cong K. \]
   It is given explicitly by mapping $(x, y) \in H^{2d}_{\dR, c}(X_K) \oplus H^{2d}_{\rig, c}(X_0)$ to $\tr_{\dR, X_K}(x) - Q(\tfrac{\varphi}{p})^{-1} \tr_{\rig, X_0}(y)$.
  \end{lemma}

  \begin{remark}
   The factor $Q(\varphi/p)$ is included to make the isomorphism compatible with change of $Q$.
  \end{remark}

%
%

  \begin{corollary}\label{cor:rigfppairing}
   Assume that $(P\star Q)$ has no bad roots. When $i+j=2d+1$, $\sG=\sF^\vee$ and $r+s=d+1$, then we get a pairing denoted  $\langle\quad,\quad\rangle_{\rigfp,X}$. In the setting of \cref{prop:extbyzero}, the extension-by-0 map and the restriction map of \ref{lem:reslogfptofp} are transposes of one another with respect to the duality pairings on rigid and log-rigid cohomology.
  \end{corollary}

  \begin{proof}
   It suffices to check that the restriction and extension-by-zero maps are transposes of each other on each term in the defining diagrams, which is clear by construction.
  \end{proof}



\subsection{Gros fp-cohomology}

 In Section \ref{section:Poznan}, we will need a variant of rigid fp-cohomology which is less refined, but more convenient for computations; see \S 9 of \cite{besser00}, in particular Definition 9.3.

  Recall that we have
  \[ R\Gamma_{\rig}(X\langle \pm D_0\rangle,\sF_{\rig})_K = R\Gamma( \cX, \sF_{\rig, X} \otimes \Omega^\bullet_{\cX/K}\langle \pm \cD_K\rangle) = R\Gamma( \cX, \sF_{\dR}|_{\cX} \otimes \Omega^\bullet_{\cX/K}\langle \pm \cD_K\rangle), \]
  where $\cX$ is the tube of $X_0$ in $X_K^{\mathrm{an}}$.

  \begin{definition}
   For $r \ge 0$, we define \emph{truncated rigid cohomology}, denoted by $\RGt_{\dR}(\cX\langle \pm \cD\rangle,\sF, r)$, to be the cohomology of the subcomplex $\left(\Fil^{r-\bullet} \sF_{\dR}\right)|_{\cX} \otimes \Omega^\bullet_{\cX/K}\langle \pm \cD\rangle$, and similarly with compact support.   \end{definition}

   \begin{note}\label{note:filtspcosp}
    We obtain ``filtered'' specialisation and cospecialisation maps
    \begin{gather*}
     \operatorname{sp}: \Fil^r R\Gamma_{\dR}(X_K\langle \pm D_K\rangle, \sF_{\dR})\to  \RGt_{\dR}(\cX\langle \pm \cD\rangle,\sF, r)_K,\\
     \cosp: \RGt_{\dR, c}(\cX\langle \mp \cD\rangle, \sF, r) \to \Fil^r R\Gamma_{\dR, c}(X_K\langle \mp D_K\rangle, \sF_{\dR}),
    \end{gather*}
    compatible with the usual specialisation and cospecialisation maps on the non-filtered complexes.
   \end{note}
      
  \begin{remark}
   The inclusion of $\left(\Fil^{r-\bullet} \sF_{\dR}\right)|_{\cX} \otimes \Omega^\bullet_{\cX/K}\langle \pm \cD\rangle$ into the full de Rham complex gives maps
   \[ \iota: \RGt_{\dR}(\cX\langle \pm \cD\rangle,\sF, r) \to R\Gamma_{\rig}(X_0\langle \pm D_0\rangle,\sF),\]
   and similarly with compact support; but the maps induced by $\iota$ on cohomology are not necessarily either injective or surjective, and the groups $\wH^i_{\dR}$ and $\wH^i_{\dR,c}$ may not even be finite-dimensional over $K$.
  \end{remark}
      
  \begin{definition}\label{def:grosfp} \
   \begin{enumerate}[(a)]
    \item Define the \emph{Gros fp-cohomology} of $X\langle \pm D\rangle$ with coefficients $\sF$, twist $r$ and polynomial $P$ to be the cohomology of the complex $\RGt_{\rigfp}(X\langle \pm D\rangle,\sF,r;P)$ which is the homotopy limit of the diagram
    \[
     \begin{tikzcd}[row sep=large, column sep=-1em]
      \RGt_{\dR}(\cX\langle \pm \cD\rangle,\sF, r)_K 
       \arrow[rd, "\iota"] &
      {} & 
      R\Gamma_{\rig}(X_0\langle \pm D_0\rangle,\sF_{\rig}) 
       \arrow[rd, "P(\varphi_r)"] 
       \arrow[ld] \\
      {} & 
      R\Gamma_{\rig}(X_0\langle \pm D_0\rangle,\sF_{\rig})_K & 
      {} & 
      R\Gamma_{\rig}(X_0\langle \pm D_0\rangle,\sF_{\rig})
     \end{tikzcd}
    \]
    where the unlabelled arrow is base-extension.

    \item Similarly, define the \emph{Gros fp-cohomology with compact support} of $X\langle \mp D\rangle$ with coefficients $\sG$, twist $s$ and polynomial $Q$ to be the homotopy limit $\RGt_{\rigfp,c}(X\langle \mp D\rangle,\sG_{\rig},s;P)$ of the diagram
    \[
     \begin{tikzcd}[row sep=large, column sep=-1em]
      \RGt_{\dR,c}(\cX\langle \mp \cD\rangle,\sG,s)_K 
       \arrow[rd, "\iota"] &
      {} & 
      R\Gamma_{\rig,c}(X_0\langle \mp D_0\rangle, \sG_{\rig} 
       \arrow[rd, "Q(\varphi_s)"] 
       \arrow[ld] \\
      {} & 
      R\Gamma_{\rig,c}(X_0\langle \mp D_0\rangle,\sG_{\rig})_K & 
      {} & 
      R\Gamma_{\rig, c}(X_0\langle \mp D_0\rangle,\sG_{\rig}).
     \end{tikzcd}
    \]
   \end{enumerate}
  \end{definition}

  \begin{note}\label{note:proptildefp}
    As before, if $F = K$ then the middle arrow is the identity map and both diagrams can be simplified to mapping fibres: in this case we have
    \begin{align*}
     \RGt_{\rigfp}(X\langle \pm D\rangle,\sF,r;P) & = MF\Big[ \RGt_{\dR}(\cX\langle \pm \cD\rangle,\sF_{\rig},r)_K \xrightarrow{\  P(\varphi_r)\circ\iota \ } R\Gamma_{\rig}(X_0\langle \pm D_0\rangle,\sF_{\rig})_K\Big],\\
     \RGt_{\rigfp,c}(X\langle \mp D\rangle,\sG,s;Q) & = MF\Big[ \RGt_{\dR,c}(\cX\langle \mp \cD\rangle,\sG_{\rig},s)_K\xrightarrow{\ Q(\varphi_s) \circ \iota\ } R\Gamma_{\rig}(X_0\langle \mp D_0\rangle,\sG_{\rig})_K\Big].\qedhere
    \end{align*}
  \end{note}

  \begin{remark}
   \label{rem:Grosspcosp}
   Comparing the diagrams of \cref{def:grosfp} with \eqref{eq:rigsyn} and \eqref{eq:crigsyn}, we see that the filtered specialisation map (c.f. Note \ref{note:filtspcosp}) on the de Rham cohomology gives a map
   \[ \gamma^\star: R\Gamma_{\rigfp}(X,\sF\langle \pm D\rangle,r;P) \to \RGt_{\rigfp}(X\langle \pm D\rangle,\sF,r;P) .\]
   Similarly, the filtered cospecialisation induces a map
   \[ \gamma_\star: \RGt_{\rigfp,c}(X\langle \mp D\rangle,\sG,s; Q) \to R\Gamma_{\rigfp, c}(X\langle \mp D\rangle,\sG,s; Q). \]

   We also have cup-products
   \[ \RGt_{\rigfp}(X\langle \pm D\rangle,\sF,r;P) \times \RGt_{\rigfp,c}(X\langle \mp D\rangle,\sG,s; Q) \to \RGt_{\rigfp, c}(X,\sF\otimes\sG,r+s; P\star Q), \]
   related to those in the (non-Gros) rigid fp-cohomology (c.f. Proposition \ref{prop:rigfpcup}) by the adunction formula
   \[ \gamma_\star( \gamma^\star(x) \cup y) = x \cup \gamma_\star(y). \]
   In particular, $\gamma^\star$ and $\gamma_\star$ are transposes of each other with respect to the pairing induced by the trace map on the degree $2d+1$ cohomology. Moreover, the pairing is compatible with the maps in Note \ref{note:proptildefp}.
  \end{remark}

  \begin{notation}
   We denote the pairing by $\langle\quad,\quad\rangle_{\widetilde{\rigfp},X}$.
  \end{notation}



\section{Integral models of Siegel threefolds}
 \label{sect:intmodels}

 We now discuss integral models of various Shimura varieties (with non-trivial level at $p$), and their compactifications. We shall fix a level structure $K^p_G$ away from $p$, which we assume to be neat. The Shimura variety $Y_{G, \QQ}$ of level $K^p_G \cdot G(\Zp)$ therefore has a smooth integral model over $\Zp$, which we denote by $Y_G$. This scheme has an interpretation as a moduli space for abelian surfaces $A$ with a polarisation and prime-to-$p$ level structure (depending on $K^p_G$).

 \subsection{The Klingen-level Siegel threefold}

  We now consider the case of Klingen level structure at $p$.

  \begin{definition}
   Let $Y_{G,\Kl}$, or just $Y_{\Kl}$ for short, be the canonical $\Zp$-model of the Siegel $3$-fold of level $K^{p} \times \Kl(p)$, which parametrises choices of order $p$ finite flat subgroup-scheme $C \subset A[p]$.
  \end{definition}

  \begin{remark}
   Note that $Y_{G, \Kl} \to Y_G$ is a proper morphism, but it is not finite: above certain points in the supersingular locus of $Y_{G, 0}$ (superspecial points), the fibre of $Y_{G, \Kl,0}$ is a $\mathbf{P}^1$.
  \end{remark}

  We now define a stratification of the special fibre $Y_{\Kl, 0}$ which will be of fundamental importance in the remainder of this paper.

  \begin{definition}
   Let $i \in \{0, 1, 2\}$, and let $T$ denote one of the symbols $\{ m, e, \alpha\}$, signifying a group scheme that is either multiplicative, \'etale, or $\alpha_p$. We write $Y_{\Kl, 0}^{i, T}$ for the locus in $Y_{\Kl, 0}$ parametrising $(A, C)$ such that $A$ has $p$-rank $i$, and $C$ is \'etale-locally of type $T$.
  \end{definition}

  Note that only six of the possible combinations $(i, T)$ correspond to non-empty strata; if $i = 2$ then $T$ has to be either $m$ or $e$, and if $i = 0$ then $T$ must be $\alpha$.

  \begin{theorem}\label{thm:Klingenstrata1}
   The loci $Y_{\Kl, 0}^{i, T}$ for varying $i$ and $T$ are locally-closed subvarieties forming a stratification of $Y_{\Kl, 0}$, with the closure relation given by the diagram\footnote{Here we follow the notation of \cite{shenyuzhang21}, that a chain of arrows from stratum $A$ to stratum $B$ indicates that $A$ is contained in the closure of $B$.}
   \[
    \begin{tikzcd}
                 & (1, m)     \rar & (2, m)\\
     (0, \alpha) \ar[r]\ar[ru]\ar[rd] & (1, \alpha) \ar[ru] \ar[rd]\\
                 & (1, e)     \rar & (2, e).
    \end{tikzcd}
   \]
   The dimension of $Y^{i, T}_{\Kl, 0}$ is $1 + i$; and all of the strata are smooth except the $(0, \alpha)$ stratum (which is a union of $\mathbf{P}^1$'s intersecting transversely).
  \end{theorem}

  See \cref{fig:strata} for a visual representation of the strata and their intersections.

  \begin{proof}
   Almost all of these statements can be extracted from the analysis of the EKOR stratification for parahoric subgroups of $\GSp_4$ carried out in \cite[\S 6.3]{shenyuzhang21}; case (3) of \emph{op.cit.} is the Klingen parahoric. The EKOR stratification in this setting is slightly more refined than our stratification above, since the $(0, \alpha)$ stratum (i.e.~the supersingular locus) is decomposed into three strata labelled $\tau$, $s_0 \tau$, and $s_1 \tau$ in \emph{op.cit.}. However, since the union of these is closed and is contained in the closure of every other stratum (according to diagram (6.3.4) of \emph{op.cit.}), amalgamating these together gives a stratification with the above closure relation.
  \end{proof}

  We can consider the six-element set $\mathcal{S}$ of indices $(i, T)$ as a finite topological space, with the topology defined by the diagram of Theorem \ref{thm:Klingenstrata1}, so that the natural map $Y_{\Kl, 0} \to \mathcal{S}$ sending $Y^{(i, T)}_{\Kl, 0}$ to $(i, T)$ is continuous. For $J \subseteq \cS$, let $Y_{\Kl, 0}^J$ be the corresponding union of strata in $Y_{\Kl, 0}$.

  For each $T \in \cT = \{m, e, \alpha\}$, we write $Y^T_{\Kl, 0} = \bigcup_i Y^{i, T}_{\Kl, 0}$. These also form a (coarser) stratification of $Y_{\Kl, 0}$, indexed by a quotient $\cS'$ of $\cS$, with a unique closed stratum $T = \alpha$ and two open strata; as explained in \cite{shenyuzhang21}, it is a truncated form of the Kottwitz--Rapoport stratification, with the supersingular strata amalgamated.

  \begin{theorem}\label{thm:Klingenstrata}
   The scheme $Y_{\Kl}$ is strictly semistable over $\Zp$. Its special fibre is the union of two closed, smooth subvarieties, $\overline{Y_{\Kl, 0}^m}$ and $\overline{Y_{\Kl, 0}^e}$, intersecting transversely along $Y_{\Kl, 0}^\alpha$, which is a smooth surface.
  \end{theorem}

  \begin{proof}
   See Theorem 3 of \cite{tilouine06}. (Note that our notations are slightly different from Tilouine's: he uses the notation $X_P(p)^m$ for the \emph{closure} $\overline{Y_{\Kl, 0}^m}$ of the multiplicative locus, rather than the multiplicative locus alone, and similarly for $X_P(p)^e$.)
  \end{proof}

  \begin{remark} \
   \begin{enumerate}[(i)]
    \item The above theorem shows that for three of the possible $(i, T) \in \cS$, namely $(i, T) = (2, m), (2, e)$, and $(1, \alpha)$, the closure of $Y^{(i, T)}_{\Kl, 0}$ in $Y_{\Kl, 0}$ is smooth. We do not know if $\overline{Y^{1, m}_{\Kl, 0}}$ or $\overline{Y^{1, e}_{\Kl, 0}}$ is smooth (and $\overline{Y^{0, \alpha}_{\Kl, 0}} = Y^{0, \alpha}_{\Kl, 0}$ never is).
    \item The EKOR strata $Y_{\Kl,0}^{(i, T)}$ are connected, and all except $Y_{\Kl, 0}^{(0, \alpha)}$ are irreducible, by \cite[Proposition 6.3.5]{shenyuzhang21}. In particular $Y_{\Kl, 0}$ has precisely two irreducible components, $\overline{Y_{\Kl,0}^{m}}$ and $\overline{Y_{\Kl,0}^{e}}$ (the closures of the open EKOR strata). \qedhere
   \end{enumerate}
  \end{remark}

  \begin{figure}[ht]
   \caption{Visual representation of the strata in $Y_{\Kl, 0}$. (Since the $\GSp_4$ Shimura variety is 3-dimensional, and we are attempting to draw it on a 2-dimensional page, we have shrunk all the dimensions by one; hence the single point marked ``$0, \alpha$'' actually stands for a curve, etc.)}
   \label{fig:strata}
   \medskip

  \tikzset{every picture/.style={line width=0.75pt}} 

  \begin{tikzpicture}[x=0.75pt,y=0.75pt,yscale=-1,xscale=1]

  \draw   (380,116) -- (380,270) -- (200,204) -- (200,50) -- cycle ;
  \draw   (20,116) -- (20,270) -- (200,204) -- (200,50) -- cycle ;
  \draw    (140,30) -- (200,50) ;
  \draw    (260,30) -- (200,50) ;
  \draw    (140,30) -- (140,70) ;
  \draw    (260,30) -- (260,70) ;
  \draw    (20,170) .. controls (102,105.5) and (122,195.5) .. (200,120) ;
  \draw  [dash pattern={on 4.5pt off 4.5pt}]  (50,70) -- (59.75,148.02) ;
  \draw [shift={(60,150)}, rotate = 262.87] [color={rgb, 255:red, 0; green, 0; blue, 0 }  ][line width=0.75]    (10.93,-3.29) .. controls (6.95,-1.4) and (3.31,-0.3) .. (0,0) .. controls (3.31,0.3) and (6.95,1.4) .. (10.93,3.29)   ;
  \draw  [dash pattern={on 4.5pt off 4.5pt}]  (290,50) -- (201.83,89.19) ;
  \draw [shift={(200,90)}, rotate = 336.04] [color={rgb, 255:red, 0; green, 0; blue, 0 }  ][line width=0.75]    (10.93,-3.29) .. controls (6.95,-1.4) and (3.31,-0.3) .. (0,0) .. controls (3.31,0.3) and (6.95,1.4) .. (10.93,3.29)   ;
  \draw    (200,120) .. controls (254,183) and (351,131) .. (380,180) ;
  \draw  [dash pattern={on 4.5pt off 4.5pt}]  (360,70) -- (291.32,148.49) ;
  \draw [shift={(290,150)}, rotate = 311.19] [color={rgb, 255:red, 0; green, 0; blue, 0 }  ][line width=0.75]    (10.93,-3.29) .. controls (6.95,-1.4) and (3.31,-0.3) .. (0,0) .. controls (3.31,0.3) and (6.95,1.4) .. (10.93,3.29)   ;
  \draw  [fill={rgb, 255:red, 0; green, 0; blue, 0 }  ,fill opacity=1 ] (196,120) .. controls (196,117.79) and (197.79,116) .. (200,116) .. controls (202.21,116) and (204,117.79) .. (204,120) .. controls (204,122.21) and (202.21,124) .. (200,124) .. controls (197.79,124) and (196,122.21) .. (196,120) -- cycle ;
  \draw  [dash pattern={on 4.5pt off 4.5pt}]  (210,244) .. controls (177.49,200.17) and (180.89,164.58) .. (199.16,125.78) ;
  \draw [shift={(200,124)}, rotate = 115.69] [color={rgb, 255:red, 0; green, 0; blue, 0 }  ][line width=0.75]    (10.93,-3.29) .. controls (6.95,-1.4) and (3.31,-0.3) .. (0,0) .. controls (3.31,0.3) and (6.95,1.4) .. (10.93,3.29)   ;

  \draw    (288,28) -- (322,28) -- (322,54) -- (288,54) -- cycle  ;
  \draw (291,32) node [anchor=north west][inner sep=0.75pt]   [align=left] {$\displaystyle 1,\alpha $};
  \draw    (348,44) -- (379,44) -- (379,70) -- (348,70) -- cycle  ;
  \draw (351,48) node [anchor=north west][inner sep=0.75pt]   [align=left] {$\displaystyle 1,e$};
  \draw    (208,238) -- (242,238) -- (242,264) -- (208,264) -- cycle  ;
  \draw (211,242) node [anchor=north west][inner sep=0.75pt]   [align=left] {$\displaystyle 0,\alpha $};
  \draw    (35,46) -- (72,46) -- (72,72) -- (35,72) -- cycle  ;
  \draw (38,50) node [anchor=north west][inner sep=0.75pt]   [align=left] {$\displaystyle 1,m$};
  \draw    (48,208) -- (85,208) -- (85,232) -- (48,232) -- cycle  ;
  \draw (51,212.4) node [anchor=north west][inner sep=0.75pt]    {$2,m$};
  \draw    (318,208) -- (349,208) -- (349,232) -- (318,232) -- cycle  ;
  \draw (321,212.4) node [anchor=north west][inner sep=0.75pt]    {$2,e$};

  \end{tikzpicture}\end{figure}

 \subsection{Compactifications}

  Let $X_{\Kl}$ be a toroidal compactification of $Y_{\Kl}$ (for some polyhedral cone decomposition $\Sigma$, which we suppose to be ``good'' in the sense of \cite[\S 6.1.5]{pilloni20}). Write $D$ for the boundary divisor of the toroidal compactification. The moduli interpretation of $X_{\Kl}$ parametrises semiabelian schemes with a Klingen level structure and some appropriate degeneration data at the boundary (depending on $\Sigma$). We shall now extend the above stratifications to $X_{\Kl, 0}$, using the general theory developed in \cite{lanstroh} and \cite{mao-preprint}. (We are grateful for Kai-Wen Lan for his assistance with this section.)

  \begin{theorem}[Lan--Stroh, Mao]
   There exists a stratification
   \[ X_{\Kl, 0} = \bigsqcup_{(i, T) \in \mathcal{S}} X_{\Kl, 0}^{(i, T)} \]
   with the following property: if $J \subset \cS$ is closed, then $X^J_{\Kl, 0} = \bigsqcup_{(i, T) \in J} X_{\Kl, 0}^{(i, T)}$ is the closure of $Y^J_{\Kl, 0}$ in $X_{\Kl, 0}$. Moreover, if $(i, T) \ne (0, \alpha)$, then $X_{\Kl, 0}^{(i, T)}$ is smooth.
  \end{theorem}

  \begin{proof}
   The existence of an extension of the stratification to $X_{\Kl}$ follows from the general theory of compactification of \emph{well-positioned subvarieties} in Shimura varities developed in \cite{lanstroh}, together with a more recent result of Mao \cite[Theorem 1.1]{mao-preprint} showing that the EKOR strata in $X_{\Kl, 0}$ are indeed well-positioned subvarieties. We shall recall these general results and make explicit what they give for the $\GSp_4$ Klingen in the appendix below; see \cref{sect:explcitintersection}. The smoothness follows from \cite[Proposition 2.3.13]{lanstroh}.
  \end{proof}

  Ignoring $i$ gives a coarser stratification
  \[ X_{\Kl, 0} = \bigsqcup_{T \in \mathcal{T}} X_{\Kl, 0}^{T} \]
  with the property that the closure of each $X_{\Kl, 0}^{T}$ is smooth (again by Proposition 2.3.13 of \cite{lanstroh}).

  \begin{proposition}
   The pair $\left(X_{\Kl}, Y_{\Kl}\right)$ is \emph{strictly semistable with boundary} in the sense of \cref{def:SSschemeboundary}.
  \end{proposition}

  \begin{proof}
   We need to check that the union of the toroidal boundary $X_{\Kl}-Y_{\Kl}$ and the special fibre $X_{\Kl,0}$ is a strict normal crossing divisor. By Stacks Project Tag 0BIA, it suffices to prove that each boundary component of $X_{\Kl, 0}$ intersects each of $X_{\Kl, 0}^m$, $X_{\Kl, 0}^e$, and $X_{\Kl, 0}^{\alpha}$ in a smooth subvariety of the appropriate codimension. As shown in  \cref{sect:explcitintersection}, each of these intersections is either empty, or is equal to the preimage in the universal elliptic curve of an intersection of irreducible components in a modular curve (of prime-to-$p$ or $\Gamma_0(p)$ level) and from this explicit description one can readily verify that the intersections are indeed smooth of the expected dimension.
  \end{proof}

 \subsection{Refined Klingen level structures}

  In this section we shall define an integral model of the Shimura variety with level at $p$ given by
  \[ \breve{\Kl}(p) = \{ g \in G(\Zp): g =
  \begin{smatrix} 1  & \star  & \star  & \star \\ & \star  & \star  & \star \\ & \star  & \star  & \star  \\ &&&1 \end{smatrix} \bmod p\}. \]
  This is naturally a variety over $\QQ(\zeta_p)$, and we shall work over the ring $R = \Zp[\zeta_p]$.

  \begin{remark}
   The refined level structures introduced for $G$ here, and for $H$ in \cref{ss:iotaproperties} below (and their integral models over $\Zp[\zeta_p]$) -- all the objects marked with a ``breve'' accent $\breve{X}, \breve{Y}$ etc -- will only play a role in step 2 of our argument; they will not reappear after the end of \cref{sect:redstep2}. On the other hand, the ``usual'' Klingen level structures will play a major role throughout the argument.
  \end{remark}

  \begin{definition}
   We denote by $\breve{Y}_{G, \Kl}$, or simply $\breve{Y}_{\Kl}$, the moduli space of quadruples $(A, C, P, Q)$ over $R$, where $(A, C)$ is as described for $Y_{\Kl}$ above, $P$ is a generator of $C$, and $Q$ is a generator of $A[p] / C^\perp \cong C^\vee$, subject to the condition that $\langle P, Q\rangle = \zeta_p$.
  \end{definition}

  This is the analogue for $\GSp_4$ of the ``balanced level $\Gamma_1(p)$ structures'' considered in \cite[\S 3.3]{katzmazur85} for $\GL_2$. Exactly as in the $\GL_2$ case treated in \cite[\S V.2]{delignerapoport73} (or the analogous result for unitary Shimura varieties at split primes in \cite[\S 3.3]{harristaylor02}), a computation using Tate--Oort theory shows that this moduli problem is indeed represented by an $R$-scheme, and that this scheme is strictly semistable over $R$; and its special fibre has a stratification with three strata
  \[ \breve{Y}_{\Kl,0} = \breve{Y}_{\Kl,0}^e \sqcup \breve{Y}_{\Kl,0}^m \sqcup \breve{Y}_{\Kl,0}^{\alpha}\]
  depending on the type of $C$, exactly as for $Y_{\Kl}$ above.


  For a suitable choice of cone decomposition $\breve\Sigma$, we obtain a toroidal compactification $\breve{X}_{\Kl}$ of $\breve{Y}_{\Kl}$, which is a proper $R$-scheme; and the same analysis as before shows that $\breve{X}_{\Kl}$ is strictly semistable over $R$, and $(\breve{X}_{\Kl}, \breve{Y}_{\Kl})$ is strictly semistable with boundary. We may suppose that $\breve\Sigma$ is a refinement of the cone-decomposition $\Sigma$ we used to define $X_{\Kl}$, so we obtain a natural map $\breve{X}_{\Kl} \to X_{\Kl}$.

  \begin{remark}
   The $p$-adic completion of the open subscheme of $\breve{X}_{G, \Kl}$ on which $C$ is multiplicative coincides with the base-extension to $R$ of the first layer in the Igusa tower $\mathfrak{IG}_G$ considered in \cite{pilloni20,LPSZ1}.
  \end{remark}

\section{Integral models for \texorpdfstring{$H$}{H}}

 Let us now fix a level $K^p_H$ away from $p$ (which we again suppose to be neat), so we have a $\Zp$-model $Y_H$ of the Shimura variety for $H$ of level $K^p_H \cdot H(\Zp)$, which represents pairs $(E_1, E_2)$ of elliptic curves with appropriate prime-to-$p$ level structures.

 \subsection{An integral model of $Y_{H, \Delta}$}

  Let $Y_{H}' \to Y_H$ denote the moduli space of pairs $(C_1, C_2)$, where each $C_i$ is a $\Gamma_0(p)$ level structure on $E_i$ (i.e.~a finite flat subgroup-scheme of rank $p$). Via Tate--Oort theory, each $C_i$ corresponds to a triple $(L_i, a_i, b_i)$, with $L_i$ a line bundle and $a_i \in L_i^{\otimes (p-1)}$, $b_i \in L_i^{\otimes (1-p)}$ sections such that $a_i b_i = w_p$ (where $w_p \in \Zp$ is a certain element of valuation 1); the locus where $C_i$ is \'etale (resp.~multiplicative) is given by $a_i \ne 0$ (resp.~$b_i \ne 0$).

  The choices of rank $p$ finite flat subschemes $C \subseteq C_1 \times C_2$ correspond to rank 1 direct summands $L \subseteq L_1 \oplus L_2$. The condition for such a subscheme to be a subgroup is given by a compatibility with the $a_i$ and $b_i$; if we choose bases of the $L_i$ locally on $X_{H}'$, so $L$ corresponds to some $(f_1: f_2) \in \mathbf{P}^1$, then we can write these equations in the form
  \[
   \{ (f_1 : f_2) \in \mathbf{P}^1\ |\ a_1 f_1 f_2^{p} = a_2 f_1^{p} f_2,\ b_1 f_1^p f_2 = b_2 f_1 f_2^p \}.
  \]
  This is the union (not disjoint!) of three closed subschemes, defined by $\{ f_1 = 0\}$, $\{ f_2 = 0\}$, and a third subscheme $\{ a_1 f_2^{(p-1)} = a_2 f_1^{(p-1)}, b_1 f_1^{(p-1)} = b_2 f_2^{(p-1)} \}$. Away from the special fibre, these three subschemes are disjoint, parametrising subgroups which are respectively equal to $C_2$, equal to $C_1$, and mapping isomorphically to both factors.

  \begin{definition}
   We let $Y_{H, \Delta}$ denote the subscheme of $\mathbf{P}(L_1 \oplus L_2)$ cut out by the equations $\{ a_1 f_2^{(p-1)} = a_2 f_1^{(p-1)}, b_1 f_1^{(p-1)} = b_2 f_2^{(p-1)} \}$.
  \end{definition}

  By construction this is a $\Zp$-model of the variety $Y_{H, \Delta, \Qp}$, and the local equations above show that it is regular (but its special fibre is not reduced, so it is in particular not semistable). If our levels are chosen compatibly, i.e.~$K^p_H \subseteq \iota^{-1}(K^p_G)$, then the map sending $(E_i, C_i, C)$ to $(E_1 \oplus E_2, C)$ defines a morphism of $\Zp$-schemes
  \[ \iota_{\Delta}: Y_{H, \Delta} \to Y_{G, \Kl}\]
  extending the map of \eqref{eq:iotaDelta} on the generic fibre. Since $Y_H'$ is finite over $Y_H$, and $Y_{H, \Delta}$ is by construction a closed subscheme of $Y_H' \times_{Y_G} Y_{G, \Kl}$, the map $\iota_{\Delta}$ is finite. (Note that $Y_{H, \Delta}$ is not finite over $Y_H$ or $Y_H'$.)

  \begin{definition}
   We let $X_{H, \Delta}$ be the normalisation of $X_{G, \Kl}$ in $Y_{H, \Delta}$, so that $\iota_{\Delta}$ extends to a finite morphism $X_{H, \Delta} \to X_{G, \Kl}$.
  \end{definition}

  This is an integral model of the toroidal compactification $X_{H, \Delta, \QQ}$ given by the cone-decomposition $\Sigma_H= \iota^{-1}(\Sigma_G)$.

  \begin{proposition}\label{prop:preimagemult}
   The preimage under $\iota_{\Delta}$ of the open subset $X_{G, \Kl, 0}^m \subset X_{G, \Kl, 0}$ where $C$ is multiplicative is the open subset $X_{H, \Delta, 0}^{(m, m)}$ where both of the $C_i$ are multiplicative. Moreover, $\iota_{\Delta}^{-1}\left(X_{G, \Kl, 0}^{(1, m)} \right) = \varnothing$.
  \end{proposition}

  \begin{proof}
   This is closely related to \cite[Prop. 4.6]{LPSZ1}. We give a slightly different proof using the notations introduced above: if the Tate--Oort parameters of $C$ are $(L, a, b)$, then $b$ is given in the local coordinates above by the relation $b = f_1^{(p-1)} b_1 = f_2^{(p-1)} b_2$. So if $b \ne 0$, then we must have both $b_1 \ne 0$ and $b_2 \ne 0$. In particular, the $p$-rank of $E_1 \oplus E_2$ cannot be 1.
  \end{proof}

 \subsection{Compatible semistable models and the map \texorpdfstring{$\iota$}{iota}}\label{ss:iotaproperties}

  We shall also need a related Shimura variety for $H$, which has a natural semistable model over $R = \Zp[\zeta_p]$.

  \begin{definition}
   We let $\breve{Y}_{H, \Delta}$ denote the moduli space of data
   \[ (E_1, P_1, Q_1; E_2, P_2, Q_2; C)\]
   over $R$, where $(E_i, P_i, Q_i)$ are elliptic curves with balanced level $\Gamma_1(p)$ structures, and $C$ is a choice of order $p$ subgroup-scheme inside $C_1 \times C_2$ such that $(P_1, P_2)$ generates $C$ and $(Q_1, -Q_2)$ annihilates $C$.
  \end{definition}

  One can obtain local equations for $\breve{Y}_{H, \Delta}$ using Tate--Oort theory, starting with the description of the moduli space $Y_1(p)^{\mathrm{bal}}$ for balanced $\Gamma_1(p)$ structures given in \cite{delignerapoport73}. The computation is similar to the previous one, but simpler: locally the $(C_i, P_i, Q_i)$ are parametrised by triples $(L_i, u_i, v_i)$ with $L_i$ a line bundle and $u_i \in L_i$, $v_i \in L_i^\vee$ sections satisfying $u_i v_i = w_\zeta$ (where $w_\zeta$ is a certain uniformizer of $R$ such that $(w_\zeta)^{p-1} = w_{p-1}$). If we choose local bases so $C$ corresponds to some $(f_1, f_2) \in \mathbf{P}^1$, then the conditions on $C$ translate into the equations
  \[ u_i v_i = w_{\zeta}, \qquad  u_1 f_2 = u_2 f_1, \quad v_1 f_1 = v_2 f_2. \]
  Over the open subset $f_1 \ne 0$ we can set $t = f_2 / f_1$ and obtain the local equation $u_1 v_2 t = w_{\zeta}$, where $t = f_2 / f_1$, and similarly on the open $f_2 \ne 0$; thus $\breve{Y}_{H, \Delta}$ is semistable.

  Our definitions of moduli-space structures are evidently chosen so that for compatible tame levels $K^p_H$ and $K^p_G$, we obtain a natural map of $R$-schemes
  \[ \breve{\iota}_{\Delta}: \breve{Y}_{H, \Delta} \to \breve{Y}_{G, \Kl}, \]
  extending the map $\iota$ above; this is given by mapping $(E_1, \dots)$ to $(A, C, P, Q)$ where $A = E_1 \oplus E_2$, $P = (P_1, P_2)$, and $Q$ the image of $Q_1$ (or $Q_2$) mod $C^\perp$. Since we can reconstruct $\breve{Y}_{H, \Delta}$ as a subscheme of the fibre product $Y_H \times_{Y_G} Y_{G, \Kl}$, we see that this morphism $\breve{\iota}_{\Delta}$ is finite. As in the treatment of $X_{H, \Delta}$ above, we may compactify this to obtain a finite map of $R$-schemes
  \[ \breve\iota_{\Delta}: \breve{X}_{H, \Delta} \to \breve{X}_{ \Kl} \]
  where $\breve{X}_{H, \Delta}$ is an integral model of the toroidal compactification $\breve{X}_{H, \Delta, \QQ}$ corresponding to the cone-decomposition $\breve{\Sigma}_H = \iota^{-1}\left(\breve{\Sigma}_G\right)$.

\section{Cohomology classes from \texorpdfstring{$\Pi$}{Pi}}
 \emph{In this section the group $H$ does not appear, so we shall omit subscripts $G$.}

 \subsection{Rigid classes on the multiplicative locus}
  \label{sect:rigidfromPi}

  Let $\eta_{\dR}$ be the de Rham cohomology classx described in \cref{ssec:testdataatp} above (determined by $\Pi$, $p$, the $\nu$ of \cref{def:nudR}, and some choice of prime-to-$p$ Whittaker function $w^p$). Then we have $\eta_{\dR} \in \Fil^1 H^3_{\dR,c}(Y_{ \Kl, \Qp}, \cV)$. Forgetting the filtration information, we can consider it as a class
  \[
   \eta_{\lrig,-D} \in H^3_{\lrig,c}(Y_{\Kl, \Qp}, \cV) \cong H^3_{\dR}\left( \cX_{\Kl}\langle -D \rangle, \cV\right).
  \]
  Moreover, if we equip this with the Frobenius transported from log-rigid cohomology, then we have the identity $\cQ(\varphi)(\eta_{\lrig,-D}) = 0$, since the comparison isomorphisms between log-rigid cohomology and $\mathbf{D}_{\st}$ of \'etale cohomology are compatible with the Frobenius action.

  \begin{proposition}
   \label{prop:BPrigclass}
   There exists a unique class
   \[ \eta_{\rig, -D}^m \in H^3_{\dR, c}\left( \cX_{\Kl}^m\langle -D \rangle, \cV\right)\]
   with the following properties:
   \begin{enumerate}[(i)]
    \item The image of $\eta_{\rig,-D}^m$ under the extension-by-0 map is $\eta_{\lrig, -D}$.
    \item The class $\eta_{\rig,-D}^m$ is an eigenvector for the operators $U_{1, \Kl}'$ and $U'_{2, \Kl}$, with eigenvalues $\alpha + \beta$ and $p^{-(r_2+1)} \alpha\beta$ respectively.
    \item We have $\cQ(\varphi) \eta_{\rig,-D}^m = 0$, where $\varphi$ is the Frobenius of rigid cohomology.
    \item The class $\eta_{\rig,-D}^m$ lies in the $\Pif'$-eigenspace for the spherical Hecke operators.
   \end{enumerate}
   Moreover, if $\xi$ is any lifting of $\eta_{\lrig, -D}$ to $H^3_{\dR, c}\left( \cX_{\Kl}^m\langle -D \rangle, \cV\right)$ which is a \emph{generalised} eigenvector for the operators $U_{1, \Kl}'$ and $U_{2, \Kl}'$ with eigenvalues as in (ii), then we must have $\xi = \eta_{\rig,-D}^m$.
  \end{proposition}

  The proof of this proposition will be deferred until \cref{sect:BPproof} below, since it uses the formalism of partially-compactly-supported cohomology which we shall develop in \cref{sect:partialsupport}. We also need a corresponding statement at level $\breve{\Kl}(p)$: the pullback of $\eta_{\rig}^m$ defines a class
  \[ \breve\eta_{\rig,-D}^m \in H^3_{\rig, c}\left( \breve{X}_{\Kl, 0}^m\langle -D \rangle, \cV\right)^{\cQ(\varphi) = 0}, \]
  whose image under cospecialisation is $\breve{\eta}_{\dR}$, the pullback of $\eta_{\dR}$ to level $\breve{\Kl}(p)$. Since cospecialisation for the full variety $\breve{X}_{\Kl, 0}\langle -D \rangle$ is an isomorphism, it follows that the image of $\breve\eta_{\rig,-D}^m$ under the pushforward map of \cref{prop:extbyzero} for the inclusion $\breve{X}_{\Kl, 0}^m \into \breve{X}_{\Kl, 0}$ is $\breve\eta_{\lrig,-D}$.

  \begin{remark}
   The proof of the above statement requires a little care, as we cannot directly compare the log-rigid cohomology of $X_{\Kl, 0}$ and $\breve{X}_{\Kl, 0}$: they are not both semistable over the same ring. This can probably be circumvented by the use of coverings by $\Zp[[T]]$-schemes, as in Proposition 3.11 of \cite{nekovarniziol16}; but since the classes of interest are pushforwards from a smooth open subscheme, we can avoid this and instead appeal to (much easier) base-extension results for rigid cohomology of smooth schemes.
  \end{remark}

 \subsection{Log-rigid fp-cohomology classes}
  \label{sect:logrigfromPi}
  Recall the class $\eta_{\NNfp}$ defined in \cref{lem:etaNNfp} above. As we have seen, the variety $Y_{\Kl, \Qp}$ has a model as a strictly semistable scheme with boundary over $\Zp$, so we may compute \Nek--\Niz cohomology using this model.

  \begin{notation} Write $\eta_{\lrigfp,-D}$ for the image of $\eta_{\NNfp}$ in $H^3_{\lrigfp}(X_{\Kl}\langle -D\rangle,\cV, 1+q; \cQ_{1+q})$.
  \end{notation}

  We write $\breve\eta_{\NNfp}$ for the pullback of $\eta_{\NNfp}$ to the $K$-variety $\breve{Y}_{\Kl, K}$, where $K = \Qp(\zeta_p)$. Repeating the above constructions using the semistable model of $\breve{Y}_{\Kl, K}$ over $\cO_K$, we obtain similarly log-rigid-fp classes
  \[ \breve\eta_{\lrigfp, -D} \in H^3_{\lrigfp, c}(\breve{X}_{\Kl}\langle -D\rangle,\cV,1+q;\cQ_{1+q}).\]

 \subsection{Partial integral models}
  \label{sect:partialmodels}

  Recall the stratification of $X_{\Kl, 0}$ as the union of multiplicative, \'etale, and $\alpha_p$ strata. We define
  \[ X_{\Kl}^m = X_{\Kl} - \left( X_{\Kl,0}^e \cup X_{\Kl,0}^\alpha\right) = X_{\Kl, \Qp} \cup X_{\Kl, 0}^m.\]
  This is an open subscheme of $X_{\Kl}$ which is smooth over $\Zp$, and its generic fibre is the same as that of $X_{\Kl}$. (It is, of course, not proper over $\Zp$.) We regard it as a \emph{partial $\Zp$-model} of $X_{\Kl, \Qp}$. The same remarks apply to $\breve{X}^m_{\Kl}$, which we interpret as a partial $\cO_K$-model of $\breve{X}_{\Kl, K}$.

  \begin{remark}
   We shall consider the rigid fp-cohomology $R\Gamma_{\rigsyn}(X_{\Kl}^m\langle D \rangle, \cV, 1+q; \cQ_{1+q})$, where $D$ is the toroidal boundary divisor. Note that the ``de Rham'' term in the mapping fibre defining this cohomology is just the usual de Rham cohomology for the smooth proper $\Qp$-variety $X_{\Qp}$, but the ``rigid'' terms only detect the multiplicative locus. However, the inclusion of $X_{\Kl, 0}^m$ into $X_{\Kl}$ gives a smooth proper frame for $X_{\Kl, 0}^m$, so these rigid-cohomology terms are computed by the de Rham cohomology of the dagger space tube $\cX_{\Kl}^m$.

   Hence the specialisation map ``$\mathrm{sp}$'' for this scheme corresponds to the pullback map in de Rham cohomology (with log poles along $D$) from $\cX_{\Kl}$ to its open dagger subspace $\cX_{\Kl}^m$; in particular we should \textbf{not} expect this map to be a quasi-isomorphism. Similar remarks apply to the cospecialisation map for cohomology with compact supports.
  \end{remark}

 \subsection{Rigid fp-cohomology classes on the multiplicative locus}

  We now define classes in the fp-cohomology of these partial integral models, combining the results of \cref{sect:rigidfromPi,sect:logrigfromPi}.

  Using \cref{prop:extbyzero} in the case where $X = X_{\Kl}$, $D$ is the toroidal boundary (so $U=Y_{\Kl}$) and $Z=X_{\Kl}^{m}$, for any $Q(T)\in 1+T\Qp[T]$ we obtain an extension-by-zero map
  \[
   H^3_{\rigfp, c}(X_{\Kl}^{m}\langle -D\rangle, \cV, n;Q)\to  H^3_{\lrigfp}(X_{\Kl}\langle -D\rangle, \cV, n;Q).
  \]

  \begin{proposition}\label{prop:eta-rigsyn}
   Let $\eta_{\rig,-D}^m$ be as in \cref{prop:BPrigclass}. If we take $Q(t) = \cQ_{1+q}(t)$, then we may find a class
   \[ \eta_{\rigfp,-D}^{m} \in H^3_{\rigfp, c}(X_{\Kl}^{m}\langle -D\rangle, \cV, 1+q;\cQ_{1+q})[\Pif'] \]
   whose image in rigid cohomology is $\eta_{\rig,-D}^{m}$, and whose image under the extension-by-zero map above is $\eta_{\lrigfp,-D}$.
  \end{proposition}

  \begin{proof}
   The pair $(\eta_{\dR}, \eta^m_{\rig,-D})$ satisfies $\cQ_{1+q}(\varphi_{1+q})(\eta^m_{\rig}) = 0$ and $\cosp(\eta^m_{\rig}) = \eta_{\dR}$, so we can lift this pair to a class in fp-cohomology. Moreover, we may assume this lifting lies in the $\Pif'$-generalised eigenspace for the spherical Hecke operators (since this is true of $\eta_{\dR}$ and $\eta^m_{\rig,-D}$). By construction, its image under the extension-by-zero map is a class in the $\Pif'$ generalised eigenspace, whose image in de Rham cohomology is $\eta_{\dR}$, and whose image in log-rigid cohomology is $\eta_{\lrig, -D}$. These proerties uniquely characterise $\eta_{\lrigfp,-D}$.
  \end{proof}

  \begin{remark}
   Note that we are not asserting that $\eta_{\rigfp,-D}^{m}$ is uniquely determined by the stated conditions; only that such a class exists. The conditions above determine it modulo an element of the group
   \[ H^2_{\rig, c}\left(X_{\Kl,0}^m\langle -D\rangle, \cV\right)[\Pif'] / \cQ(\varphi).\]
   We expect that in fact $H^2_{\rig, c}(X_{\Kl,0}^m\langle -D\rangle, \cV)[\Pif'] = 0$; this was part of the ``eigenspace vanishing conjecture'' assumed in previous iterations of the present work. However, in the present account we do not need to know if this holds.
  \end{remark}

  \begin{notation}\label{not:breve-eta-rig}
   We let $\breve\eta_{\rigfp,-D}^m$ be the pullback of $\eta_{\rigfp,-D}^{m}$ to $\breve{X}_{\Kl}^m$.
  \end{notation}

\section{Reduction step 2: Regulators via rigid syntomic cohomology}
 \label{sect:redstep2}

 \subsection{Step 2a: relating \Nek--\Niz and log-rigid pairings}

  \begin{proposition}\label{prop:lrigreduction}
    The pairing \eqref{eq:step1}  is equal to
    \[ \tfrac{1}{(p+1)} \left\langle \Eis^{[t_1,t_2]}_{\lrigsyn,\underline\Phi},\, (\breve\iota_\Delta^{[t_1,t_2]})^\star(\breve\eta_{\lrigfp,-D})\right\rangle_{\lrigfp,\breve{X}_{H,\Delta}}.\]
  \end{proposition}

  \begin{proof}
   The pairing on the right-hand side of \eqref{eq:step1} is defined using the $\Qp$-variety $Y_{G, \Kl, \Qp}$; but the pairings for $Y_{G, \Kl, \Qp}$ and $Y_{G, \Kl, K}$, where $K = \Qp(\zeta_p)$, are compatible via the natural embedding $L \into L \otimes_{\Qp} K$ (see \cref{note:baseext} above). Moreover, the duality pairings for $Y_{G, \Kl, K}$ and $\breve{Y}_{G, \Kl, K}$ are compatible up to the factor $(p+1)$, as this is the degree of the finite map $\breve{Y}_{G, \Kl, K} \to Y_{G, \Kl, K}$. Thus we have
   \begin{align*}
    \left\langle \iota_{\Delta, \star}^{[t_1,t_2]}( \Eis^{[t_1,t_2]}_{\syn,\underline\Phi}),\,  \eta_{\NNfp}\right\rangle_{\NNfp,Y_{G,\Kl, \Qp}}
    &= \tfrac{1}{(p+1)} \left \langle\breve{\iota}_{\Delta, \star}^{[t_1,t_2]}( \Eis^{[t_1,t_2]}_{\syn,\underline\Phi}),\,  \breve\eta_{\NNfp, -D} \right\rangle_{\NNfp,\breve{Y}_{G,\Kl, K}} \\
    &= \tfrac{1}{(p+1)} \left\langle \Eis^{[t_1,t_2]}_{\syn,\underline\Phi},\,  (\breve\iota_{\Delta}^{[t_1,t_2]})^\star \left( \breve\eta_{\NNfp, -D}\right) \right\rangle_{\NNfp,\breve{Y}_{H,\Delta, K}} \\
    &= \tfrac{1}{(p+1)} \left\langle \Eis^{[t_1,t_2]}_{\syn,\underline\Phi},\,  (\breve\iota_{\Delta}^{[t_1,t_2]})^\star \left( \breve\eta_{\lrigfp, -D}\right) \right\rangle_{\lrigfp,\breve{X}_{H,\Delta}},
   \end{align*}
   where the second equality uses uses adjunction between pushforward and pullback, and the final one the compatibility of pullback and cup-product maps with the isomorphisms between \Nek--\Niz and log-rigid fp-cohomology.
  \end{proof}

 \subsection{Step 2b: restriction to the multiplicative locus}

  \label{sect:reduction2}
  We now apply \cref{cor:rigfppairing} to relate pairings over $\breve{X}_{G,\Kl}$ and over its multiplicative locus.

  \begin{notation}
   Write $\Eis^{[t_1,t_2],(m,m)}_{\rigsyn,\underline\Phi}$ for the image of $\Eis^{[t_1,t_2]}_{\rigsyn,\underline\Phi}$ in $H^2_{\rigfp}\left(\breve{X}_{H,\Delta}^{(m, m)}\langle D \rangle,\cV_H^\vee ,2\right)$ under the restriction map.
  \end{notation}

  \begin{proposition}
   Let $\breve\eta^{ m}_{\rigfp,-D}$ be as in \cref{not:breve-eta-rig}. Then we have
   \[
    \left\langle \Eis^{[t_1,t_2]}_{\lrigsyn,\underline\Phi}, \,
    (\breve\iota_\Delta^{[t_1,t_2]})^\star(\breve\eta_{\lrigfp,-D})
    \right\rangle_{\lrigfp,\breve{X}_{H,\Delta}}
    = \left\langle
    \Eis^{[t_1,t_2],(m,m)}_{\rigsyn, \underline\Phi},\,
    (\breve\iota_\Delta^{[t_1,t_2]})^\star(\breve\eta^{m}_{\rigfp,-D})
    \right\rangle_{\rigfp,\breve{X}_{H,\Delta}^{(m, m)}}.
   \]
  \end{proposition}

  \begin{proof}
   This is precisely the result of \cref{cor:rigfppairing} in our specific case.
  \end{proof}

  We can now complete the second main step of our argument:\bigskip

  \begin{mdframed}{\bf 2nd reduction}
  \begin{theorem}\label{thm:redtoord}
   Let
   \[
    \eta_{\rigfp, -D}^{m} \in H^3_{\rigfp, c}\left(X_{G,\Kl}^{m}\langle -D\rangle, \cV_G, 1+q;\cQ\right)[\Pif']
   \]
   be the class of \cref{prop:eta-rigsyn}. Then the pairing \eqref{eq:step1} is equal to
   \begin{align*}
    \left\langle \Eis^{[t_1,t_2],(m,m)}_{\rigsyn,\underline\Phi},\, (\iota_\Delta^{[t_1,t_2]})^\star( \eta_{\rigfp,-D}^{m})\right\rangle_{\rigfp, X^{(m, m)}_{H,\Delta}}.
   \end{align*}
  \end{theorem}
  \end{mdframed}

  \begin{proof}
   This follows from the previous proposition, together with the observation that $\breve\eta^{m}_{\rigfp,-D}$ is the pullback of $\eta_{\rigfp,-D}^{m}$.
  \end{proof}

  \begin{remark}
   At this point we may wave goodbye to the ``breve'' objects $\breve{\eta}$ etc; they will not be used again in this paper.
  \end{remark}

\mychapter{Interlude: Partial compact support}


\section{Cohomology with partial support}
\label{section:prelimrigcohom}

 We recall some basic formalism regarding cohomology of sheaves on rigid spaces, following \cite{lestum07} and \cite{grossekloenne00}, and define variants with ``partial compact support''. Let $K$ be a finite extension of $\Qp$, with residue field $k$ and ring of integers $\cO_K$. We shall use Roman letters $X, Y, Z$ etc for $\cO_K$-schemes (including $k$-varieties), Fraktur letters $\mathfrak{X}, \mathfrak{Y}, \dots$ for formal schemes over $\cO_K$, and calligraphic letters $\cX, \cY, \dots$ for rigid-analytic dagger spaces over $K$. The subscript $X_0$ denotes the special fibre of $X$, and similarly for formal schemes.

 \subsection{A motivational remark}

  As a guide to the reader, we point out that for a smooth proper curve $C / \cO_K$, and a section $P \in C(\cO_K)$, we can consider four different complexes of rigid-analytic differential forms associated to $(C, P)$:
  \begin{enumerate}[(i)]
  \item the complex of overconvergent differential forms on $C_K^{\mathrm{an}} - \tb{P_0}$ (the complement of the residue disc of $P$);
  \item the complex $\Omega^\bullet_{C_K^{\mathrm{an}} / K} \langle P \rangle$ of differential forms with log poles at $P$, i.e.~$[\cO_{C_K^{\mathrm{an}}} \longrightarrow \Omega^1_{C_K^{\mathrm{an}}}(P)]$;
  \item the complex of differential forms vanishing logarithmically along $P$, i.e.~$[\cO_{C_K^{\mathrm{an}}}(-P) \longrightarrow \Omega^1_{C_K^{\mathrm{an}}}]$'
  \item the complex of differential forms which vanish identically on $\tb{P_0}$.
  \end{enumerate}
  We have ordered these from ``worst'' to ``best'' behaviour along $P$ in some sense. The complexes (i) and (ii) are quasi-isomorphic, and compute rigid (resp.~de Rham) cohomology of $C - P$; meanwhile, (iii) and (iv) are quasi-isomorphic and compute de Rham (resp.~rigid) cohomology with compact supports.

  As noted in \cite{LSZ-asai}, given a curve $C$ and a finite set of disjoint sections $P, P', P'', \dots$, one can mix and match the above support conditions to define cohomology groups with compact support towards some of the $P$'s but not others. The aim of this section is to describe analogous ``mixed support'' cohomology groups in the more general setting where the ambient space can have dimension $> 1$ and the boundary components are not assumed to be disjoint.

  \begin{remark}
   Our treatment is strongly motivated by \cite[\S 4.2]{deligneillusie87}, where such a theory is developed for de Rham cohomology in characteristic 0. See also \cite[\S III]{faltings89} for \'etale cohomology, \cite[\S 2]{mieda09} for Hyodo--Kato cohomology, and \cite{browndupont} for Hodge cohomology of varieties over $\CC$.
  \end{remark}
 \subsection{Frames and tubes}

  Recall that a \emph{frame} denotes the data of a triple $(X \into Y \into \fP)$, where $X$ and $Y$ are $k$-varieties, $\fP$ is a formal $\cO_K$-scheme, $X \into Y$ is an open immersion, and $Y \into \fP$ is a closed immersion of $Y$ into $\fP$, necessarily factoring through the special fibre $\fP_0$ \cite[Definition 3.1.5]{lestum07}.

  \begin{note}
   We shall always assume $\fP$ is an admissible formal scheme, and thus in particular quasi-compact (this is automatically satisfied if $\fP$ is the $p$-adic completion of a finite-type flat $\cO_K$-scheme).
  \end{note}

   \begin{definition}
   The frame $(X \into Y \into \fP)$ is said to be \emph{smooth} if $\fP$ is smooth over $\cO_K$ in a neighbourhood of $X$ (Definition 3.3.5 of \emph{op.cit.}); it is said to be \emph{proper} if $Y$ is proper over $k$ (Definition 3.3.10).
  \end{definition}

  The theory is typically only well-behaved for smooth proper frames; note that this does not imply that $Y$ is smooth, or that $X$ is either smooth or proper.


  If $(X\into Y \into \fP)$ is a frame, then the tube $\tb{X}_{\fP}$ is an open rigid-analytic subvariety of the analytic generic fibre $\fP_K$. We shall henceforth omit the subscript $\fP$ if it is clear from context. If $X$ is affine and open in $\fP_0$, then $\tb{X}$ is affinoid; it follows that if $X$ is any open subvariety of $\fP_0$, then $\tb{X}$ is quasi-compact.

  If $X$ is not assumed to be open in $\fP_0$, then $\tb{X}$ is no longer quasi-compact. However, it can be written as an increasing union of quasi-compact subsets, the closed tubes $[X]_{\lambda}$ of radius $\lambda < 1$ (which are well-defined if $\lambda > |\varpi_K|$).

 \subsection{Sections with support}

  Let $V$ be any rigid analytic space over $K$, and $T$ an admissible open subset of $V$. Then in the notation of \cite[\S 5.1 and 5.2]{lestum07}, we have a short exact sequence of exact functors on the category of abelian sheaves on $V$,
  \[ 0 \to \uGa^\dag_T  \to \id \to j^\dag_{V-T} \to 0,\]
  where $j^\dag_{V-T}$ is interpreted as ``overconvergent sections on $V-T$''; and there is a left-exact sequence of left-exact functors
  \[ 0 \to \uGa_{V-T} \to \id \to h_\star h^{-1}, \]
  where $\uGa_{V-T}$ denotes sections supported on $V-T$, and $h$ is the inclusion $T \into V$. The second sequence is also exact on the right on injective sheaves, and thus gives an exact triangle of right-derived functors.

  The functors $\uGa$ and $\uGa^\dag$ are related by the following formula. Let us say that $T' \subseteq T$ is a \emph{interior subset} if $\{ T, V-T' \}$ is an admissible covering of $V$ (i.e.~$V-T'$ is a strict neighbourhood of $V-T$). Then we have
  \[ \uGa^\dag_T(\cF) = \varinjlim_{T'}\ \uGa_{T'}(\cF), \]
  where the limit is over interior subsets $T' \subseteq T$. In particular, if $T$ and $T'$ are both admissible open, then there is a natural inclusion $\uGa^\dag_T(\cF) \subseteq \uGa_T(\cF)$ as subsheaves of $\cF$, but this is not an equality (except in the trivial case  when $\{ T, V-T\}$ is an admissible covering that disconnects $V$). It seems reasonable to describe $\uGa^\dag_T(\cF)$ as the sections \emph{strictly} supported in $T$.

  \begin{remark}
   Recall that if $X \into Y \into \fP$ is a proper smooth frame, then the rigid cohomology of $X$ (with and without compact supports) is defined by
   \[ R\Gamma_{\rig}(X) \coloneqq R\Gamma\left(\tb{Y}, j^\dag_{\tb{X}}\, \Omega^{\bullet}_{\tb{Y}}\right), \qquad R\Gamma_{\rig, c}(X) \coloneqq R\Gamma\left(\tb{Y}, R\uGa_{\tb{X}}\, \Omega^{\bullet}_{\tb{Y}}\right),\]
   while the functor $\uGa^\dag$ is used to define rigid cohomology with support in a closed subvariety. See e.g.~\cite{berthelot97} or \cite[Chapter 5]{lestum07} for further details.
  \end{remark}

  \begin{proposition}
   \label{prop:supportfunctors}
   We have $\uGa_{V-T} \circ j^\dag_{V-T} = j^\dag_{V-T}$ and $j^\dag_{V-T} \circ \uGa_{V-T} =  \uGa_{V-T}$.
  \end{proposition}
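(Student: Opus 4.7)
Both identities assert that the two functors $j^\dag_{V-T}$ and $\uGa_{V-T}$ are, in complementary senses, ``projections'' onto sheaves that live on the closed complement $V-T$, and hence they commute in the claimed manner. My plan is to verify each identity separately by a direct unwinding of the definitions from \cite[\S 5.1]{lestum07}, using the two (short and left-exact) sequences already displayed in the preceding paragraph.

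For the first identity $\uGa_{V-T} \circ j^\dag_{V-T} = j^\dag_{V-T}$, I would apply the left-exact sequence $0 \to \uGa_{V-T} \to \id \to h_* h^{-1}$ to the sheaf $G = j^\dag_{V-T}(F)$. The claim reduces to showing that $h^{-1} G = G|_T = 0$, for then the inclusion $\uGa_{V-T}(G) \hookrightarrow G$ is an isomorphism. Writing $G = F / \uGa^\dag_T(F)$ via the short exact sequence, the vanishing $G|_T = 0$ follows because $\uGa^\dag_T$ is constructed so that, on admissible opens already contained in $T$, it agrees with the identity; the quotient therefore vanishes upon restricting to $T$.

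For the second identity $j^\dag_{V-T} \circ \uGa_{V-T} = \uGa_{V-T}$, I would apply the short exact sequence $0 \to \uGa^\dag_T \to \id \to j^\dag_{V-T} \to 0$ to the sheaf $G = \uGa_{V-T}(F)$, so that the claim is equivalent to $\uGa^\dag_T(G) = 0$. Since $G$ consists of sections of $F$ whose set-theoretic support lies in $V-T$, and since $\uGa^\dag_T$ detects sections whose support is contained in arbitrarily small strict neighbourhoods of $T$, any section of $G$ lying in $\uGa^\dag_T(G)$ would have support in the intersection of $V-T$ with every such strict neighbourhood of $T$ inside $V$. Passing to the limit over shrinking strict neighbourhoods forces this support to be empty, and hence the section to be zero.

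The main subtlety I expect is bookkeeping around the distinction between ordinary (set-theoretic) support and the ``overconvergent'' dagger support, since these are defined via different limits over admissible opens and strict neighbourhoods in the rigid-analytic topology; one must check that strict neighbourhoods of $V-T$ and of $T$ interact correctly, as suggested by the admissibility of $T$. Once this compatibility is extracted from \cite[Chapter 5]{lestum07}, both identities are essentially formal.
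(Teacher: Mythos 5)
Your proposal follows essentially the same route as the paper: apply the two displayed exact sequences to the composite functors and reduce to the vanishing of $h^{-1}\circ j^\dag_{V-T}$ and of $\uGa^\dag_T\circ\uGa_{V-T}$, which the paper likewise dispatches by appeal to the definitions. One small caution on the bookkeeping you flag: $\uGa^\dag_T$ is not literally the identity on every admissible open contained in $T$ (only on opens contained in an \emph{interior} subset $T'$, equivalently on stalks over points of $T$), and in the second step the relevant objects are interior subsets of $T$ rather than strict neighbourhoods of $T$ — a section of $\uGa_{T'}\,\uGa_{V-T}\,\cF$ vanishes on both members of the admissible covering $\{T, V-T'\}$ and hence is zero — but neither point changes the structure of the argument.
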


  \begin{proof}
   By definition of $j^\dag_{V-T}$, we have $h^{-1} \circ j^\dag_{V-T} = 0$ and hence $h_\star h^{-1} \circ j^\dag_{V-T} = 0$. Similarly, $h^{-1} \circ \uGa_{V-T} = 0$ and hence $\uGa^\dag_T \circ \uGa_{V-T} = h_! h^{-1} \circ \uGa_{V-T} = 0$. The results now follow from the above exact sequences.
  \end{proof}

  It is important to note that if $Z \into \fP$ is a formal embedding with $\fP$ proper, and we take $V = \fP_K$ and $T =\tb{Z}$, then the closed tubes $[Z]_\lambda$ of radius $\lambda < 1$ are cofinal among interior subsets of $T$, and also among quasi-compact subsets of $T$. So $\uGa^\dag_{\tb{Z}}(\cF)$ is precisely the sections of $\cF$ supported in a quasicompact subset of $\tb{Z}$.

 \subsection{Partial compact supports}
  \label{sect:partialsupport}

  \begin{wrapfigure}{r}{0.25\textwidth}
   \centering
   \begin{tabular}{|l|l|}
    \hline
    \multicolumn{2}{|c|}{V} \\ \hline
    W          & \hspace{1cm}\rule[-1cm]{0pt}{2cm} U  \hspace{1cm}  \\ \hline
   \end{tabular} \\[0.5ex]
   Figure 1
  \end{wrapfigure}
  We shall consider the following setting. We suppose we are given a closed embedding $Y \into \fP$, with $Y$ and $\fP$ both  proper, and a smooth open subvariety $U \subseteq Y$ with complement $Z = Y - U$. We suppose $\fP$ is smooth in a neighbourhood of $U$, so that $U \into Y \into \fP$ is a smooth proper frame for $U$.

  Let $V \subseteq Z$ be a closed subvariety, and set $W = Z - V$, as in Figure 1.  We want to attach a meaning to cohomology of $U$ with compact support ``towards $V$'' or ``towards $W$''.

\WFclear

  \begin{proposition}
   \label{prop:fiddlysupports}
   Let $\cF$ be an abelian sheaf on $\tb{Y}$, and let $V'$ be any closed subvariety of $Y$ such that $Z = V \cup V'$.
   \begin{enumerate}[(a)]
    \item We have canonical isomorphisms
    \begin{align*}
     j^{\dag}_{\tb{Y - V'}}\ \uGa_{\tb{Y - V}}\, \cF &= j^{\dag}_{\tb{Y - Z}}\ \uGa_{\tb{Y - V}}\, \cF \quad\text{and}\\
     \uGa_{\tb{Y - V'}}\ j^{\dag}_{\tb{Y - V}}\, \cF &=  \uGa_{\tb{Y - Z}}\ j^{\dag}_{\tb{Y - V}}\, \cF.
    \end{align*}

    \item There is a natural map
    \[  j^{\dag}_{\tb{Y - V'}}\ \uGa_{\tb{Y - V}}\, \cF \to \uGa_{\tb{Y-V}}\ j^{\dag}_{\tb{Y-V'}}\,\cF,\]
    which is an isomorphism away from $]V \cap V'[$.
   \end{enumerate}
  \end{proposition}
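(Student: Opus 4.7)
Both parts are formal consequences of the set-theoretic identity $Y - Z = (Y-V) \cap (Y-V')$, combined with the definitions of $j^\dag$ and $\uGa$ recalled above (in particular the short exact sequence $0 \to \uGa^\dag_T \to \id \to j^\dag_{V-T} \to 0$ and the formula $\uGa^\dag_T = \varinjlim_{T'} \uGa_{T'}$). The key geometric input is that the ``symmetric difference'' between $\tb{V'}$ and $\tb{Z}$ is contained in $\tb{V}$, and dually the difference between $\tb{Y-V'}$ and $\tb{Y-Z}$ lies in $\tb{V}$ as well.

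For part (a), first equality: set $\cG = \uGa_{\tb{Y-V}} \cF$ and consider the two short exact sequences $0 \to \uGa^\dag_{\tb{V'}} \cG \to \cG \to j^\dag_{\tb{Y-V'}} \cG \to 0$ and $0 \to \uGa^\dag_{\tb{Z}} \cG \to \cG \to j^\dag_{\tb{Y-Z}} \cG \to 0$. The inclusion $V' \subseteq Z$ gives $\uGa^\dag_{\tb{V'}}\cG \subseteq \uGa^\dag_{\tb{Z}}\cG$; I would show the reverse inclusion holds after applying $\uGa_{\tb{Y-V}}$, using that $\cG$ has zero stalks on $\tb{V}$. Concretely, using the direct-limit description of $\uGa^\dag_{\tb{Z}}$, any interior subset $T'$ of $\tb{Z}$ satisfies $T' \cap \tb{Y-V} \subseteq \tb{Z-V} = \tb{V'-V} \subseteq \tb{V'}$, and intersecting the defining admissible cover $\{\tb{Z}, \tb{Y}-T'\}$ with $\tb{Y-V}$ shows this is cofinal among interior subsets of $\tb{V'}$. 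The second equality is proved symmetrically: for $\cH = j^\dag_{\tb{Y-V}} \cF$, the adjunction-type vanishing $h_V^{-1} \cH = 0$ (where $h_V : \tb{V} \hookrightarrow \tb{Y}$) implies $\uGa_{\tb{Y-V'}}\cH = \uGa_{\tb{Y-Z}}\cH$, since the difference $\tb{Y-V'} - \tb{Y-Z} = \tb{Z-V'} \subseteq \tb{V}$ contributes no sections.

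For part (b), the natural map is constructed as follows. Functoriality of $\uGa_{\tb{Y-V}}$ applied to the quotient $\cF \to j^\dag_{\tb{Y-V'}} \cF$ gives a map $\uGa_{\tb{Y-V}}\cF \to \uGa_{\tb{Y-V}}\, j^\dag_{\tb{Y-V'}}\cF$. This composite kills the subsheaf $\uGa^\dag_{\tb{V'}}\,\uGa_{\tb{Y-V}}\cF$, because its image in $j^\dag_{\tb{Y-V'}}\cF$ dies by definition; hence the map factors through the quotient $j^\dag_{\tb{Y-V'}}\,\uGa_{\tb{Y-V}}\cF$, producing the desired arrow.

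To verify the iso statement, I would argue stalk-locally on $\tb{Y} \setminus \tb{V\cap V'}$. If $x \notin \tb{V}$, a neighbourhood of $x$ lies in the admissible open $\tb{Y-V}$, where $\uGa_{\tb{Y-V}}$ acts as the identity, so both sides coincide with $j^\dag_{\tb{Y-V'}}\cF$ near $x$. If $x \notin \tb{V'}$, then $x$ lies in $\tb{Y-V'}$, where $j^\dag_{\tb{Y-V'}}$ acts as the identity on stalks (as $\tb{Y-V'}$ is itself admissible open and strictly contained in any strict neighbourhood of itself), so both sides coincide with $\uGa_{\tb{Y-V}}\cF$. Since $\tb{V} \cap \tb{V'} = ]V\cap V'[$, this exhausts the relevant points. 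The main technical obstacle is the careful bookkeeping between the ``strict'' versions ($\uGa^\dag$, strict neighbourhoods/interior subsets) and the ``non-strict'' versions ($\uGa$, admissible opens), and in particular verifying that cofinal systems of interior subsets transport correctly under intersection with admissible opens; once this is handled, both parts are formal.
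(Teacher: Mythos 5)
Your proposal is correct in substance and your part (b) is essentially the paper's argument (same covering of $]Y-(V\cap V')[$ by $]Y-V[$ and $]Y-V'[$, same check on each piece; your construction of the map via factoring through the quotient $j^\dag_{\tb{Y-V'}}\,\uGa_{\tb{Y-V}}\cF$ is the mirror image of the paper's factoring into the kernel $\uGa_{\tb{Y-V}}\,j^\dag_{\tb{Y-V'}}\cF$). For part (a), however, you take a genuinely different and more laborious route. The paper's proof is purely formal: it combines the absorption identities $j^{\dag}_{\tb{Y-V}}\circ \uGa_{\tb{Y-V}} = \uGa_{\tb{Y-V}}$ and $\uGa_{\tb{Y-V}}\circ j^{\dag}_{\tb{Y-V}} = j^{\dag}_{\tb{Y-V}}$ (Proposition \ref{prop:supportfunctors}) with the composition identities $j^{\dag}_{\tb{Y-V'}}\circ j^{\dag}_{\tb{Y-V}} = j^{\dag}_{\tb{Y-Z}}$ (Le Stum, Prop.\ 5.1.11) and $\uGa_{\tb{Y-V'}}\circ \uGa_{\tb{Y-V}} = \uGa_{\tb{Y-Z}}$, so that both equalities in (a) drop out in one line with no mention of sections, stalks, or interior subsets. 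Your section-level argument reaches the same conclusion but at the cost of the ``bookkeeping'' you defer at the end.

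That deferred bookkeeping is where the only real content of (a) lives, and you should not treat it as routine. To pass from ``the section vanishes on $\tb{Y}-T'$ and has zero restriction to $\tb{V}$'' to ``the section vanishes on $(\tb{Y}-T')\cup \tb{V}$'', and to verify that $T'\cap\tb{Y-V}$ is an \emph{interior} subset of $\tb{V'}$, you need the cover $\{\tb{V},\,\tb{V'}\}$ of $\tb{Z}$ to be admissible — this is exactly the geometric input behind Le Stum's Prop.\ 5.1.11 that the paper's formal proof invokes, and it is not a consequence of the set-theoretic identity $Y-Z=(Y-V)\cap(Y-V')$ alone (admissibility of coverings in the $G$-topology is a genuine condition). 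Once you make that input explicit, your argument closes; without it, the cofinality claim in your first-equality argument and the ``contributes no sections'' step in your second-equality argument are both unsupported. I would recommend either citing the admissibility of $\{\tb{V},\tb{V'}\}$ directly, or better, switching to the formal route, which you already have all the ingredients for.
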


  \begin{proof}
   (a) As in Prop 5.1.11 of \cite{lestum07}, since $\tb{V}$ and $]V'[$ admissibly cover $\tb{Z}$ we have $j^{\dag}_{\tb{Y - V'}}\circ j^{\dag}_{\tb{Y - V}} = j^{\dag}_{\tb{Y - Z}}$. So, using \cref{prop:supportfunctors}, we have
   \[ j^{\dag}_{\tb{Y - V'}}\circ \uGa_{\tb{Y - V}} = j^{\dag}_{\tb{Y - V'}}\circ j^{\dag}_{\tb{Y - V}}\circ \uGa_{\tb{Y - V}} = j^{\dag}_{\tb{Y - Z}}\circ \uGa_{\tb{Y - V}}.\]
   Similarly, we have $\uGa_{\tb{Y - V'}} \circ \uGa_{\tb{Y - V}} = \uGa_{\tb{Y - Z}}$ and hence
   \[ \uGa_{\tb{Y - V'}}\circ j^{\dag}_{\tb{Y - V}} = \uGa_{\tb{Y - V'}}\circ \uGa_{\tb{Y - V}}\circ j^{\dag}_{\tb{Y - V}} =  \uGa_{\tb{Y - Z}}\circ j^{\dag}_{\tb{Y - V}}. \]

   (b) It suffices to show that the composite map 
   \[j^\dag_{\tb{Y - V'}}\, \uGa_{\tb{Y-V}}\, \cF \to j^\dag_{\tb{Y-V'}}\, \cF \to h_\star h^{-1}(j^\dag_{\tb{Y-V'}}\, \cF) = h_\star j^\dag_{\tb{V' - (V \cap V')}} (h^{-1}\cF)\]
   is zero. However, this map factors through $h_\star h^{-1}(\uGa_{\tb{Y-V}} \cF)$ which is the zero sheaf.

   The pair $\{ Y - V, Y - V'\}$ is an open covering of $Y - (V \cap V')$ as a $k$-variety, so their tubes admissibly cover $]Y - (V \cap V')[$. It is clear that the above map is an isomorphism after restriction to either $]Y - V[$ or $]Y - V'[$, so we obtain an isomorphism of sheaves over $]Y- (V \cap V')[$.
  \end{proof}

  \begin{definition}
   \label{def:partialsupport}
   Let $\cF$ be an abelian sheaf on $\tb{Y}$.
   \begin{itemize}
    \item We define \emph{cohomology with compact support towards $V$} (recall $V$ is closed in $Z$) by
    \[ R\Gamma_{cV}(\tb{U}, \cF) =
    R\Gamma\left(\tb{Y},
    j^{\dagger}_{\tb{Y-Z}}\ R\uGa_{\tb{Y-V}}\ \cF\right).
    \]
    \item We define  \emph{cohomology with compact support towards $W$} (recall $W$ is open in $Z$) by
    \[ R\Gamma_{cW}(\tb{U}, \cF) =
    R\Gamma\left(\tb{Y},
    R\uGa_{\tb{Y-Z}}\
    j^\dagger_{\tb{Y-V}}\  \cF\right).
    \]
   \end{itemize}
  \end{definition}

  Note that this notation is \emph{a priori} ambiguous, since if both $V$ and $W$ are closed in $Y$, we have two candidate definitions of $R\Gamma_{cV}(-)$; but in fact the two candidate definitions agree, since if we start from the first definition we have
  \begin{align*}
  R\Gamma_{cV}(\tb{U}, \cF) \coloneqq&
  R\Gamma\left(\tb{Y},
  j^{\dagger}_{\tb{Y-Z}}\ R\uGa_{\tb{Y-V}}\ \cF\right) \\
  =& R\Gamma\left(\tb{Y}, j^{\dagger}_{\tb{Y-W}}\ R\uGa_{\tb{Y-V}}\ \cF\right) \quad\text{ (by part (a) of the proposition)}\\
  =& R\Gamma\left(\tb{Y}, R\uGa_{\tb{Y-V}}\ j^{\dagger}_{\tb{Y-W}}\ \cF\right) \quad\text{ (by part (b) of the proposition)} \\
  =& R\Gamma\left(\tb{Y}, R\uGa_{\tb{Y-Z}}\ j^{\dagger}_{\tb{Y-W}}\ \cF\right) \quad\text{ (by part (a) of the proposition)}
  \end{align*}
  which is the second definition of $R\Gamma_{cV}(\tb{U}, \cF)$.

  \begin{remark}
   In particular, this applies when one of $V$ and $W$ is empty, and we conclude that cohomology with compact support towards $\varnothing$, or towards all of $Z$, has the expected meaning. We also see that if $Y$ is a smooth proper curve, and $V$ and $W$ disjoint sets of points, then we recover the notion of ``partial compact support'' considered in \cite{LSZ-asai}.
  \end{remark}

  \begin{proposition}
   \label{prop:exactseqs}
   We have exact triangles
   \[ R\Gamma_{cW}(\tb{U}, \cF) \to R\Gamma\left(\tb{Y},\ j^\dagger_{\tb{Y-V}}\, \cF\right) \to R\Gamma\left(\tb{Z},\  j_{\tb{W}}^\dag(\cF|_{\tb{Z}})\right) \to [+1] \]
   and
   \[
    R\Gamma\left(\tb{Y},\ \uGa_{\tb{Z}}^\dag\  R\uGa_{\tb{Y-V}}\,\cF\right) \to
    R\Gamma\left(\tb{Y},\ R\uGa_{\tb{Y-V}}\,\cF\right) \to
    R\Gamma_{cV}\left(\tb{U},\, \cF\right) \to [+1].
   \]
  \end{proposition}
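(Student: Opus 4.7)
The plan is to derive both triangles directly from the two short sequences of functors recalled at the start of the section, namely $0 \to \uGa^\dag_T \to \id \to j^\dag_{V-T} \to 0$ (a sequence of \emph{exact} functors) and $0 \to \uGa_{V-T} \to \id \to h_* h^{-1}$ (a left-exact sequence of left-exact functors, which yields an exact triangle after deriving). In each case I apply one of these to an appropriate already-derived sheaf on $\tb{Y}$ and then take global sections; the only bookkeeping needed is to translate between the $\uGa$-style and $h_* h^{-1}$-style expressions.

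For the second triangle, I apply the first sequence to the complex $R\uGa_{\tb{Y-V}}\cF$ with $T = \tb{Z}$. Since this is an exact sequence of exact functors, it produces without further work a termwise exact sequence of complexes
\[ 0 \to \uGa^\dag_{\tb{Z}}\, R\uGa_{\tb{Y-V}}\cF \to R\uGa_{\tb{Y-V}}\cF \to j^\dag_{\tb{Y-Z}}\, R\uGa_{\tb{Y-V}}\cF \to 0 \]
on $\tb{Y}$, and hence an exact triangle. Taking $R\Gamma(\tb{Y},-)$ and recalling the definition of $R\Gamma_{cV}(\tb{U},\cF)$ gives the second claim immediately.

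For the first triangle, I apply the derived version of the second sequence, with $T = \tb{Z}$ and $h\colon \tb{Z}\hookrightarrow\tb{Y}$, to the sheaf $j^\dag_{\tb{Y-V}}\cF$. This yields an exact triangle
\[ R\uGa_{\tb{Y-Z}}(j^\dag_{\tb{Y-V}}\cF) \to j^\dag_{\tb{Y-V}}\cF \to Rh_*\bigl(h^{-1}(j^\dag_{\tb{Y-V}}\cF)\bigr) \to [+1]. \]
After taking $R\Gamma(\tb{Y},-)$, the first term is $R\Gamma_{cW}(\tb{U},\cF)$ by definition, while the third term reduces to $R\Gamma\bigl(\tb{Z},\, h^{-1}(j^\dag_{\tb{Y-V}}\cF)\bigr)$. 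The step that remains is to identify $h^{-1}(j^\dag_{\tb{Y-V}}\cF)$ with $j^\dag_{\tb{W}}(\cF|_{\tb{Z}})$; this rests on the set-theoretic identity $\tb{Y-V}\cap\tb{Z} = \tb{Z-V} = \tb{W}$ (using that $W = Z-V$), together with the compatibility of the overconvergent-extension functor with restriction to a closed tube. This is the same mechanism underlying \cref{prop:fiddlysupports}(a).

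The only genuinely delicate point is that last identification: that $j^\dag$ commutes with the restriction to $\tb{Z}$. This is a routine exercise in the cofinality of strict neighbourhoods of $\tb{Y-V}\cap\tb{Z}$ among the restrictions to $\tb{Z}$ of strict neighbourhoods of $\tb{Y-V}$, and requires no ideas beyond what is already present in the proof of \cref{prop:fiddlysupports}. Everything else is purely formal: the two key exact sequences of functors, applied to the right auxiliary sheaves, do all the work.
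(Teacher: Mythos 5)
Your proof is correct and follows essentially the same route as the paper: the first triangle comes from the derived local-cohomology triangle for $h\colon \tb{Z}\hookrightarrow\tb{Y}$ applied to $j^\dagger_{\tb{Y-V}}\cF$ together with the identification $h^{-1}(j^\dagger_{\tb{Y-V}}\cF)=j^\dagger_{\tb{Z-V}}(h^{-1}\cF)$ (which the paper attributes to Corollary 5.1.15 of Le Stum rather than reproving), and the second from the short exact sequence $0\to\uGa^\dagger_{\tb{Z}}\to\id\to j^\dagger_{\tb{Y-Z}}\to 0$ applied to $R\uGa_{\tb{Y-V}}\cF$. No gaps.
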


  \begin{proof}
   By definition we have an exact triangle of complexes of sheaves on $\tb{Y}$
   \[
   R\uGa_{\tb{Y-Z}}\ j^\dagger_{\tb{Y-V}}\, \cF \to
   j^\dagger_{\tb{Y-V}}\, \cF \to
   Rh_\star h^{-1}\left( j^\dagger_{\tb{Y-V}}\, \cF\right) \to [+1],
   \]
   where $h: Z \into Y$ is the inclusion map. However, since $(Y - V) \cap Z = W$, we have $h^{-1}\left( j^\dagger_{\tb{Y-V}}\, \cF\right) = j^\dagger_{\tb{Z-V}}(h^{-1} \cF)$, by Corollary 5.1.15 of \cite{lestum07}. Applying the (triangulated) functor $R\Gamma(\tb{Y}, -)$ gives the first triangle. The second is obtained similarly.
  \end{proof}

  Let us note some ``naturality'' properties of the construction. Firstly, if we fix $Y$ and $Z$, and let $J \supseteq J'$ be two subvarieties of $Z$, then we have natural maps
  $R\Gamma_{cJ}(U, \cF) \to R\Gamma_{cJ'} (U, \cF)$ if:
  \begin{itemize}
   \item $J$ and $J'$ are both open,
   \item $J$ and $J'$ are both closed,
   \item $J$ is closed and $J'$ is open (using \cref{prop:fiddlysupports}(b) with $V = J$ and $V' = Z - J'$).
  \end{itemize}
  (We do not consider the case when $J$ is open and $J'$ is closed, since we will not need it here.) Secondly, for coherent sheaves we have cup-products
  \[  R\Gamma_{cV}(\tb{U}, \cF) \otimes R\Gamma_{cW}(\tb{U}, \mathcal{G}) \to R\Gamma_{cZ}(\tb{U}, \cF \otimes \mathcal{G}),\]
  and these are compatible with the exact triangles of \cref{prop:exactseqs}. Finally, we have the following compatibility with respect to morphisms of frames:

  \begin{proposition}
   \label{prop:pullback}
   Let $u: \fP' \to \fP$ be a morphism of formal schemes over $\cO_K$, and define $Y', V', W'$ as the preimages of $Y, V, W$ etc. Then pullback along $u$ induces canonical maps
   \[ u^\star: R\Gamma_{cV}(\tb{U}, \cF) \to R\Gamma_{cV'}(\tb{U'}, \cF') \]
   and
   \[ u^\star: R\Gamma_{cW}(\tb{U}, \cF) \to R\Gamma_{cW'}(\tb{U'}, \cF'),\]
   compatible with the exact triangles of \cref{prop:exactseqs}.
  \end{proposition}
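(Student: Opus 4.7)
The plan is to reduce the construction of $u^*$ to the naturality of the constituent sheaf-theoretic operations $j^\dag$ and $R\uGa$ under the induced morphism of analytic generic fibres $u_K \colon \fP'_K \to \fP_K$, and then to deduce compatibility with the triangles of \cref{prop:exactseqs} by a diagram chase.

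First I would observe that the tube construction is compatible with pullback along morphisms of admissible formal schemes: for any locally closed subset $X$ of $\fP_0$ one has $u_K^{-1}(\tb{X}_{\fP}) = \tb{u^{-1}(X)}_{\fP'}$. Applied to the four subvarieties $Y, Z, V, W$, whose set-theoretic preimages are by hypothesis $Y', Z', V', W'$, this yields the identifications $u_K^{-1}(\tb{Y}) = \tb{Y'}$, $u_K^{-1}(\tb{Z}) = \tb{Z'}$, $u_K^{-1}(\tb{V}) = \tb{V'}$ and $u_K^{-1}(\tb{W}) = \tb{W'}$.

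Next, I would construct the canonical sheaf-level natural transformations underlying the pullback. For any admissible open $T \subseteq \fP_K$ with complement $S$, and any abelian sheaf $\cF$ on $\fP_K$, the exactness of $u_K^{-1}$ combined with the colimit description of $j^\dag_T$ (over strict neighbourhoods of $S$) and the defining property of $\uGa_S$ produce canonical morphisms
\[ u_K^{-1}(j^\dag_T \cF) \longrightarrow j^\dag_{u_K^{-1}(T)}(u_K^{-1}\cF), \qquad u_K^{-1}(R\uGa_S \cF) \longrightarrow R\uGa_{u_K^{-1}(S)}(u_K^{-1}\cF), \]
where in the first case one uses that strict neighbourhoods pull back to strict neighbourhoods. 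Composing these with $(T,S) = (\tb{Y-Z}, \tb{Y-V})$, applying $R\Gamma(\tb{Y'}, -)$, and using the unit $R\Gamma(\tb{Y}, -) \to R\Gamma(\tb{Y'}, u_K^{-1}(-))$ of the $(u_K^{-1}, Ru_{K,*})$ adjunction yields the desired pullback on $R\Gamma_{cV}$; reversing the order of the two functors gives the one on $R\Gamma_{cW}$.

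Finally, compatibility with the triangles of \cref{prop:exactseqs} follows from the naturality of their construction. Each triangle arises by applying $R\Gamma(\tb{Y}, -)$ to a canonical triangle of complexes of sheaves built from $j^\dag$, $R\uGa$, and the pushforward along the closed immersion $h\colon \tb{Z} \into \tb{Y}$; since each of these constituent operations admits a pullback natural transformation and they intertwine under the composition laws already used in \cref{prop:supportfunctors,prop:fiddlysupports}, the required compatibility reduces to a diagram chase. The principal subtlety will be the treatment of the derived functor $R\uGa$: one must fix a functorial injective (or $\uGa$-acyclic) resolution preserved up to quasi-isomorphism by $u_K^{-1}$, or equivalently establish the naturality of $R\uGa$ abstractly from its universal property in the unbounded derived category. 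This is the only genuine technical point, but it is standard for morphisms of rigid spaces of the kind considered here and does not present a serious obstacle.
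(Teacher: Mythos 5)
Your proposal is correct and follows the same route as the paper, which simply observes that the maps are immediate from the compatibility of $j^{\dagger}$ and $R\uGa$ with pullback; you have merely spelled out the details (tube compatibility, the base-change natural transformations, and the adjunction unit) that the paper leaves implicit. No gaps.
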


  \begin{proof}
   This is immediate from the compatibility of $j^{\dagger}$ and $R\uGa$ with pullback.
  \end{proof}

  \begin{remark}
   If we start with a variety $Y$ and two closed subvarieties $A, B$, and put $Z = A \cup B$, then we are interpreting $R\Gamma(\tb{Y}, j^{\dag}_{\tb{Y - B}}\ R\uGa_{\tb{Y - A}}\, \cF)$ as cohomology with compact support towards the closed subvariety $A$ of $Z$, and the subtly different group $R\Gamma(\tb{Y}, R\uGa_{\tb{Y - A}}\ j^{\dag}_{\tb{Y - B}}\, \cF)$ (with the order of the functors interchanged) as cohomology with compact support towards the open subvariety $A - (A\cap B)$ of $Z$. These agree if $A \cap B = \varnothing$, but they are genuinely different otherwise (as the special case $A = B$ shows). We shall show in \cref{sect:transverse} below that they give the same result for the cohomology of the de Rham complex when $A$ and $B$ intersect transversely.
  \end{remark}

 \subsection{The transversal case}
  \label{sect:transverse}

  Although we shall not use it in the remainder of the paper, it would be remiss not to point out the following consistency property of the above constructions. For simplicity, we suppose that $\fP$ is smooth and proper over $\cO_K$, and $Y = \fP_0$. Let $A, B$ be two closed subvarieties of $Y$, let $U = Y - A - B$, and let $A^\circ = A - (A \cap B)$ and $B^\circ = B - (A \cap B)$.

  \begin{proposition}
   If $A$, $B$, and $A \cap B$ are smooth, and $\codim_Y(A\cap B) = \codim_Y(A) + \codim_Y(B)$, then there are isomorphisms
   \[ H^i_{\dR, cA}(\tb{U}) \cong H^i_{\dR, cA^\circ}(\tb{U}) \]
   for all $i$.
  \end{proposition}

  \begin{proof}
   Consider the following $3 \times 3$ grid, in which each row and column is an exact triangle:
   \[\begin{tikzcd}
    ? \arrow[r] \arrow[d]                       & {R\Gamma_{\rig, B}(Y)} \ar[d]\ar[r]  & {R\Gamma_{\rig, A \cap B}(A)} \arrow[d] \\
    {R\Gamma_{\rig, c}(Y-A)} \arrow[d] \arrow[r] & R\Gamma_\rig(Y) \arrow[d] \arrow[r] & R\Gamma_{\rig}(A) \arrow[d]             \\
    {R\Gamma_{\dR,cA^\circ}(\tb{U})} \arrow[r]      & R\Gamma_\rig(Y-B) \arrow[r]          & R\Gamma_{\rig}(A^\circ),
   \end{tikzcd}\]
   where the term marked `?' is $R\Gamma\left(\fP_K, R\uGa_{\tb{Y-A}}\ \uGa^\dag_{\tb{B}}\, \Omega^\bullet\right)$. Our smoothness assumptions imply that there is a Gysin isomorphism $R\Gamma_{\rig, B}(Y) = R\Gamma_{\rig}(B)[-2c]$ where $c = \codim_Y(B)$, and similarly that $R\Gamma_{\rig, A \cap B}(A) = R\Gamma_{\rig}(A\cap B)[-2c]$. Moreover, the map $R\Gamma_{\rig, B}(Y) \to R\Gamma_{\rig, A \cap B}(A)$ is identified, via the Gysin isomorphisms, with the obvious restriction map $R\Gamma_{\rig}(B) \to R\Gamma_{\rig}(A \cap B)$ (shifted by $-2c$). Note that this compatibility of Gysin morphisms is far from being merely formal, but rather is a basic case of the ``excess intersection formula'' of D\'eglise, see \cite[Proposition 4.10]{deglise08}. So the group `$?$' has to be isomorphic to the mapping fibre of this map, which is simply $R\Gamma_{\rig, c}(B^\circ)[-2c]$.

   We claim that applying the functor $\operatorname{RHom}(-, K[-2d])$ to this diagram, and then reflecting in the off-diagonal, gives the corresponding diagram with the roles of $A$ and $B$ interchanged, except possibly for the bottom left corner. That is, if $\cD_{ij}(A, B)$ denotes the object at the $(i, j)$ position in this diagram, for $1 \le i,j \le 3$, we claim that
   \[ \operatorname{RHom}\Big(\cD_{ij}(A, B), K[-2d]\Big) = \cD_{4-j, 4-i}(B, A),\]
   for all $(i, j)$ except possibly $(3, 1)$. For instance, letting $c' = \codim_Y(A)$, we have
   \begin{align*}
    \operatorname{RHom}(\cD_{1, 3}(A, B), K[-2d])&= \operatorname{RHom}(R\Gamma_{\rig, A \cap B}(A), K[-2d])\\
     &= \operatorname{RHom}(R\Gamma_{\rig}(A \cap B)[-2c], K[-2d]) \\
    &= \operatorname{RHom}(R\Gamma_{\rig}(A \cap B), K[-2(d-c)]) \\
    &= R\Gamma_{\rig}(A \cap B)[-2c'] &&\text{(Poincar\'e duality for $A \cap B$)}\\
    &= R\Gamma_{\rig, A \cap B}(B) &&\text{(since $\codim_{B}(A \cap B) = c'$)}\\
    &= \cD_{1, 3}(B, A).
   \end{align*}
   (The remaining cases are similar, and indeed rather more straightforward.) After a little book-keeping, one also sees that these isomorphisms are compatible with the arrows in the two diagrams. Hence we deduce an isomorphism in the remaining corner also, namely
   \[ \operatorname{RHom}(R\Gamma_{\dR, cA^\circ}(\tb{U}), K[-2d]) \cong R\Gamma_{\dR, cB^\circ}(\tb{U}).\]
   But we have seen in \cref{cor:partialpoincare} that the dual of $R\Gamma_{\dR, cB^\circ}(\tb{U})[-2d]$ is $R\Gamma_{\dR, cA}(\tb{U})$. Putting these together, we finally arrive at the required isomorphism
   \[ R\Gamma_{\dR, cA^\circ}(\tb{U}) \cong R\Gamma_{\dR, cA}(\tb{U}).\qedhere\]
  \end{proof}

 \subsection{Interpretation via dagger spaces}

  We recall from \cite{grossekloenne00} the category of \emph{dagger spaces} over $K$. Note that if $\fP$ is a proper (admissible) $\cO_K$-scheme, and $X$ is a locally closed subvariety of $\fP_0$, then there is a natural structure of a dagger space on the tube $\tb{X}$; we denote this dagger space by $\tb{X}^\dag$, and similarly $[X]^\dag_{\lambda}$ for the tubes of radius $\lambda < 1$.

  \subsubsection{Non-compact support}

   Essentially by definition, if $X \into Y \into \fP$ is a proper smooth frame, and $V$ any strict neighbourhood of $\tb{X}$ in $\tb{Y}$, then any coherent sheaf $\cF$ on $V$ defines a coherent sheaf on $\tb{X}^\dagger$, and we have
   \[ R\Gamma(\tb{X}^\dag, \cF) = R\Gamma\left(V, j_{\tb{X}}^\dag\, \cF\right) \]
   (and similarly for hypercohomology of complexes of coherent sheaves).

  \subsubsection{Compact support}

   There is also a concept of \emph{compactly-supported cohomology} for coherent sheaves on dagger spaces: see \cite[\S 4.3]{grossekloenne00}. We will need the following computation:

   \begin{proposition}
    \label{prop:daggerspace}
    Let $\fP$ be a proper admissible formal $\cO_K$-scheme, and $W$ a locally closed subvariety of $\fP_0$. Write $W = X \cap Z$ with $X$ open and $Z$ closed. Then we have
    \[ R\Gamma_c(\tb{W}^\dagger, \cF) = R\Gamma\left(\fP_K, \uGa^\dag_{\tb{Z}}\ R\uGa_{\tb{X}}\, \cF\right).\]
   \end{proposition}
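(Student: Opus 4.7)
The plan is to identify both sides with a common filtered colimit indexed by the closed tubes $[Z]_\lambda$ of $Z$ of radius $|\varpi_K| < \lambda < 1$.

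First, I would analyse the right-hand side. Since the closed tubes $[Z]_\lambda$ are cofinal among interior subsets of $\tb{Z}$ (as noted in the excerpt just before \cref{sect:partialsupport}), the characterisation $\uGa^\dag_{\tb{Z}}(-) = \varinjlim_{T'} \uGa_{T'}(-)$ yields
\[
\uGa^\dag_{\tb{Z}}\ R\uGa_{\tb{X}}\,\cF \;\simeq\; \varinjlim_{\lambda \to 1^{-}}\ \uGa_{[Z]_\lambda}\ R\uGa_{\tb{X}}\,\cF.
\]
Because $[Z]_\lambda$ is a closed admissible subset of $\fP_K$, the functor $\uGa_{[Z]_\lambda}$ preserves injectives and can be composed with $R\uGa_{\tb{X}}$ to give $R\uGa_{[Z]_\lambda \cap \tb{X}}\,\cF$. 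Applying $R\Gamma(\fP_K, -)$ and using that $\fP_K$ is quasi-compact and quasi-separated (so sheaf cohomology commutes with filtered colimits of abelian sheaves), one obtains
\[
R\Gamma\!\left(\fP_K,\ \uGa^\dag_{\tb{Z}}\ R\uGa_{\tb{X}}\,\cF\right) \;\simeq\; \varinjlim_{\lambda \to 1^{-}}\ R\Gamma_{[Z]_\lambda \cap \tb{X}}\!\left(\fP_K,\,\cF\right).
\]

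Second, I would identify this colimit with Grosse-Kl\"onne's compactly supported cohomology of the dagger space $\tb{W}^\dag$. Since $\fP$ is proper and $Z$ closed, the dagger space structure on $\tb{W}$ is that obtained from the pair $(\tb{W} \subseteq \tb{Z})$ by overconverging towards $\tb{Z}$, together with the structure on the open part $\tb{X}$; and $R\Gamma_c$ of \cite[\S 4.3]{grossekloenne00} is defined via sections vanishing outside a quasi-compact ``closed tube of $W$'' in a strict neighbourhood. The sets $[Z]_\lambda \cap \tb{X}$ are precisely such quasi-compact closed tubes of $W$ inside $\fP_K$ (a strict neighbourhood of $\tb{W}$), and they are cofinal among all choices. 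Comparing with Grosse-Kl\"onne's construction then gives $R\Gamma_c(\tb{W}^\dag, \cF) \simeq \varinjlim_\lambda R\Gamma_{[Z]_\lambda \cap \tb{X}}(\fP_K, \cF)$, which combined with the first step proves the claim.

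The main obstacle is the second step: one must match the ambient closed tubes $[Z]_\lambda \cap \tb{X}$ (computed inside $\fP_K$) with the ``intrinsic'' closed tubes of $W$ inside strict neighbourhoods of $\tb{W}$ used in Grosse-Kl\"onne's definition, and verify cofinality. The sanity check in extreme cases --- $W = Z = \fP_0$ (where $\uGa^\dag_{\tb{Z}} = \id$ and $\tb{W}^\dag$ is proper, so both sides reduce to $R\Gamma(\fP_K, \cF)$) and $W = X$ open in $\fP_0$ (where both sides recover the usual $R\Gamma_{\rig, c}$) --- gives confidence; the general case is then a matter of carefully unwinding \cite[\S 4.3]{grossekloenne00}, using the propriety of $\fP$ to control the quasi-compactness of the supports, and the fact that any admissible open of $\fP_K$ containing $\tb{W}$ and contained in $\tb{X}$ will contain some $[Z]_\lambda \cap \tb{X}$ for $\lambda$ close enough to $1$.
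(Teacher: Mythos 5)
Your proof is correct and follows essentially the same route as the paper: both identify $\uGa^\dag_{\tb{Z}}\,R\uGa_{\tb{X}}$ with the colimit over $\lambda$ of $R\uGa_{[W]_\lambda}$ using $[W]_\lambda = [Z]_\lambda \cap \tb{X}$ and the cofinality of closed tubes, and then invoke the characterisation $R\Gamma_c(\tb{W}^\dag, \cF) = \varinjlim_\lambda R\Gamma(\fP_K, R\uGa_{[W]_\lambda}\,\cF)$. The paper treats the final comparison with Grosse-Kl\"onne's definition as immediate, whereas you flag it as the delicate step; but the substance is identical.
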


   \begin{proof}
    We have $\uGa^\dag_{\tb{Z}}\ \uGa_{\tb{X}}\, \cF = \varinjlim_{\lambda} \Gamma_{[Z]_\lambda}\ \Gamma_{\tb{X}}\, \cF = \varinjlim_{\lambda}\Gamma_{[W]_\lambda}\, \cF$, since $[W]_\lambda = [Z]_\lambda \cap \tb{X}$. Applying this to an injective resolution of $\cF$ gives the result, since $R\Gamma_c(\tb{W}, \cF) = \varinjlim_\lambda R\Gamma\left(\fP_K, R\uGa_{[W]_\lambda}\, \cF\right)$.
   \end{proof}

   These results allow the triangles of \cref{prop:exactseqs} to be written in the following more convenient form. Let $(U, V, W)$ be as above, and denote the dagger space tubes of these by $\cU, \cV, \cW$. Then there are exact triangles
   \[
    R\Gamma_c\left(\cW,\cF\right) \to
    R\Gamma_c\left(\cU \cup \cW,\cF\right) \to
    R\Gamma_{cV}\left(\cU, \cF\right) \to [+1]
   \]
   and
   \[ R\Gamma_{cW}(\cU, \cF) \to R\Gamma\left(\cU \cup \cW, \cF\right) \to R\Gamma\left(\cW, \cF\right) \to [+1]. \]

  \subsection{Duality}
  \label{sect:serredual}

   Theorem 4.4 of \cite{grossekloenne00} is a form of Serre duality for smooth \emph{affinoid} dagger spaces $\cX$, giving a perfect duality of Hausdorff topological vector spaces (for any $i \ge 0$ and any coherent sheaf $\cF$ on $\cX$)
   \[ H^i_c(\cX, \cF) \times \operatorname{Ext}^{d-i}_{\cO_X}\left(\cF, \omega_\cX\right) \to K, \]
   where $d = \operatorname{dim} \cX$ and $\omega_{\cX}$ is the line bundle $\Omega^d_{\cX / K}$. (The proof is only sketched in \emph{op.cit.}; a fuller account, in German, can be found in  Grosse-Kl\"onne's thesis \cite[\S 7.1]{grossekloenne-thesis}.)

   For our purposes it is convenient to extend this to non-smooth affinoids $\cX$. We choose a closed embedding $\iota: \cX \into \cP$ where $\cP$ is a smooth affinoid. Then we define
   \[ \underline{\omega}_{\cX} = \iota^*\left( \underline{\operatorname{RHom}}_{\cO_\cP}\left(\iota_\star \cO_\cX, \underline{\omega}_{\cP}\right)\right),\qquad \underline{\omega}_{\cP} = \omega_{\cP}[\dim \cP]. \]
   This is an object of the derived category of bounded complexes of coherent sheaves\footnote{It is in general not a perfect complex without additional smoothness conditions on $\cX$; we thank the referee for stressing this remark.} on $\cX$, and it follows from Serre duality for $\cP$ that for any coherent sheaf $\cF$ on $\cX$ there is a perfect duality
   \[ 
    H^i(\cX, \cF) \times \HH^{-i}_c\left(\cX, \underline{\operatorname{RHom}}_{\cO_{\cX}}(\cF, \underline{\omega}_{\cX})\right) \to K, 
   \]
   with both sides zero if $i \ne 0$. 
   
   \begin{remark}
    This is essentially the same argument used in to deduce Serre duality for general smooth affinoids from the case of affinoid discs in Satz 7.2 of \cite{grossekloenne-thesis}. As noted in \emph{op.~cit.}, if $\cX$ is smooth, then the cohomology of $\underline{\operatorname{RHom}}_{\cO_\cP}\left(\pi_\star \cO_\cX, \underline{\omega}_{\cP}\right)$ is concentrated in degree $d$ and in that degree is just $\omega_{\cX}[d]$, giving the familiar statement of Serre duality in the smooth case. However, inspecting the proof, one sees that if we do not impose any smoothness assumptions on $\cX$ the same argument gives the above more general theorem. 
   \end{remark}
   
   In general, a dualizing complex on $\cX$ may not be unique; but it is unique up to tensoring with a line bundle (by the same argument as in the algebraic-variety case, for which see \cite[\S V.2]{hartshorne66}). If $\cX$ is normal, then this line bundle must be trivial, since the restriction of $\underline{\omega}_{\cX}$ to the smooth locus must coincide with the usual sheaf of top-degree differentials, and the non-smooth locus has codimension $\ge 2$, so a line bundle whose restriction to the smooth locus is trivial must be trivial on $\cX$. So for normal $\cX$ the dualizing complex is unique, and thus independent of the embedding.

   The case of interest for us is when $\cX$ is an open affinoid in the analytification of a normal $K$-variety $X$. We claim that $\underline{\omega}_{\cX}$ coincides with the restriction to $\cX$ of $(\underline{\omega}_X)^{\mathrm{an}}$, where $\underline{\omega}_X$ is the algebraic dualizing complex of $X$, compatibly with the isomorphisms between both complexes and $\omega_{\cX/K}[d]$ on the smooth locus. Suppose $\iota : X \into P$ is an embedding into a smooth $K$-variety. If there exists an open affinoid $\cP \subseteq P^{\mathrm{an}}$ with $\cX = \iota^{-1}(\cP)$, then we may use this $\cP$ to construct $\underline{\omega}_{\cX}$; since analytification of coherent sheaves is compatible with pullback and $\underline{\operatorname{RHom}}$ the compatibility is clear. If $\cX$ is arbitrary, we can cover it by smaller affinoids $\cU_i$ which do have this form; this gives isomorphisms between $(\underline{\omega}_X)^{\mathrm{an}}|_{\cU_i}$ and $\underline{\omega}_{\cX}|_{\cU_i}$, and these must agree on the overlaps since they are compatible with the isomorphisms to $\omega_{\cX/K}[d]$ on the smooth locus.
%

   \begin{remark}
    This Serre duality does \textbf{not} seem to extend straightforwardly to non-affinoid dagger spaces (even smooth ones). If $\cX$ is smooth and quasi-compact, and $\{\cX_i\}_{i \in I}$ is a finite affinoid covering, then we can form Cech complexes representing $R\Gamma(\cX, \cF)$ and $R\Gamma_c(\cX, \cF^\vee \otimes \omega_{\cX})$ with respect to this covering. These are complexes of complete locally-convex $K$-vector spaces which are term-wise dual to one another, so we obtain natural pairings between the cohomology groups. However, it is not clear if the differentials in these complexes are strict; so one does not know if these pairings are perfect dualities of topological vector spaces (or even if the induced topologies on the cohomology groups are Hausdorff).
   \end{remark}

 \subsection{Finiteness and Poincar\'e duality}

  We now consider the special case of the hypercohomology of the de Rham complex.


   \begin{theorem}[Grosse-Kl\"onne]
    \label{thm:GK-finiteness}
    If $\cX$ is a dagger space of the form $\cU- \cV$, where $\cU$ is smooth and quasicompact, and $\cV \subseteq \cU$ is a quasicompact open subset, then the cohomology groups $H^i_{\dR}(\cX) \coloneqq \HH^i(\cX, \Omega^\bullet_{\cX/K})$ are finite-dimensional over $K$ for all $i$.
   \end{theorem}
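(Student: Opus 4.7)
The plan is to reduce to Grosse-Kl\"onne's fundamental finiteness theorem for smooth quasi-compact dagger spaces \cite{grossekloenne00}. That theorem asserts that if $\cY$ is a smooth and quasi-compact dagger space over $K$, then both $H^i(\cY, \Omega^\bullet_{\cY/K})$ and its compactly supported variant $H^i_c(\cY, \Omega^\bullet_{\cY/K})$ are finite-dimensional over $K$ for every $i$. Applied to $\cU$, and to its sub-dagger space $\cV$ (which is smooth because open in a smooth space, and quasi-compact by hypothesis), this yields finite-dimensionality of all four groups $H^i(\cU)$, $H^i_c(\cU)$, $H^i(\cV)$, $H^i_c(\cV)$ in every cohomological degree.

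I would then combine this with the localisation triangle associated to the complementary decomposition $\cU = \cX \sqcup \cV$, in which $\cX = \cU - \cV$ is closed and $\cV$ is open. At the sheaf-theoretic level this comes from the short exact sequence $0 \to j_{\cV!}j_\cV^{*} \to \mathrm{id} \to i_{\cX*}i_\cX^{*} \to 0$ together with the identification $R\Gamma_c(\cV,-) = R\Gamma(\cU, j_{\cV!}-)$ and yields an exact triangle
\[
R\Gamma_c(\cV, \Omega^\bullet_{\cV/K}) \to R\Gamma(\cU, \Omega^\bullet_{\cU/K}) \to R\Gamma\bigl(\cU,\, \uGa^\dag_{\cX}\,\Omega^\bullet_{\cU/K}\bigr) \to [+1].
\]
The first two terms are finite-dimensional in every degree by the previous step, so taking the long exact sequence of cohomology gives finiteness of each group occurring in the third term.

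The main obstacle is the identification of the third term with $H^i_{\dR}(\cX) = H^i(\cX, \Omega^\bullet_{\cX/K})$: the complement $\cX$ need not be smooth, so the de Rham complex of $\cX$ as an abstract dagger space is not obviously related to the restriction $i_\cX^{*}\Omega^\bullet_{\cU/K}$ or to the overconvergent version $\uGa^\dag_\cX \Omega^\bullet_{\cU/K}$ on the ambient space. The cleanest route here, which is the one Grosse-Kl\"onne adopts in \cite{grossekloenne-thesis}, is to \emph{define} the de Rham cohomology of such a dagger space by the ambient-space formula $H^i(\cU, \uGa^\dag_\cX \Omega^\bullet_{\cU/K})$ from the outset, so that the identification becomes tautological. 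If instead one insists on working with $\Omega^\bullet_{\cX/K}$ intrinsically, one would need a Poincar\'e lemma comparing the two definitions; this is standard when $\cX$ is smooth or a simple normal crossings configuration inside $\cU$ (compare \cite[\S 4.2]{deligneillusie87}), and the general case can be reduced to this by resolution of singularities on the pair $(\cU, \cX)$. Either route yields finiteness and, as a by-product, an interpretation of $H^i_{\dR}(\cX)$ as a fibre of a restriction map between de Rham cohomology groups of the smooth pieces $\cU$ and $\cV$, which is the form we will need in applications.
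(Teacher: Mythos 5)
There is a genuine gap, and it sits exactly where you flag the ``main obstacle''. The paper's own proof is a one-line citation of the main theorem of \cite{grossekloenne02}; that theorem is \emph{not} a formal consequence of the quasicompact case in \cite{grossekloenne00}, and your excision argument does not close the distance between them. The correct short exact sequence of sheaves on $\cU$ in this setting is $0 \to \uGa^\dag_{\cV} \to \id \to j^\dag_{\cX} \to 0$, and the third vertex of the resulting triangle is $R\Gamma(\cU, j^\dag_{\cX}\,\Omega^\bullet_{\cU/K}) = \varinjlim_W R\Gamma(W, \Omega^\bullet)$, the colimit over strict neighbourhoods $W$ of $\cX$ in $\cU$ --- not $R\Gamma(\cX, \Omega^\bullet_{\cX/K})$, which is computed on $\cX$ itself, i.e.\ as a $\varprojlim_n$ over a quasicompact exhaustion $\{\cX_n\}$ of $\cX$. (Already for $H^0$ of the structure sheaf the colimit and the limit differ enormously: overconvergent functions on a half-open annulus versus all rigid functions on it.) Showing that the natural map from the colimit to the limit induces an isomorphism on de Rham cohomology --- equivalently, that de Rham classes on $\cX$ overconverge to a strict neighbourhood --- is essentially the theorem itself, and is what Grosse-Kl\"onne's argument actually accomplishes by a non-formal d\'evissage. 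So your triangle yields finiteness of the wrong group, and the theorem has been smuggled into the ``identification'' step.

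Two subsidiary points. First, your diagnosis of the obstacle is off: since $\cV$ is quasicompact open, $\cX = \cU - \cV$ is an admissible open subset of the smooth space $\cU$ and hence smooth; the real issue is that $\cX$ is not quasicompact, so neither resolution of singularities nor a normal-crossings Poincar\'e lemma is relevant. Second, redefining $H^i_{\dR}(\cX)$ by the ambient-space formula is not an available escape route here: the paper needs the intrinsic definition, for instance in the proof of \cref{thm:poincare}, where $H^i_{\dR}(\cX) = \varprojlim_n H^i_{\dR}(\cX_n)$ is paired term-by-term against $H^{2d-i}_{\dR, c}(\cX) = \varinjlim_n H^{2d-i}_{\dR,c}(\cX_n)$ to deduce Poincar\'e duality; replacing the left-hand side by the strict-neighbourhood colimit would break that argument.
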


   \begin{proof}
    This is (a special case of) the main theorem of  \cite{grossekloenne02}.
   \end{proof}

   \begin{note}
    This implies finite-dimensionality of rigid cohomology, since for a proper smooth frame $X \into Y \into \fP$, the dagger space $\cX =\ \tb{X}^\dag_{\fP}$ satisfies the hypotheses of \cref{thm:GK-finiteness}, and we have $H^i_{\rig}(X) = H^i_{\dR}(\cX)$.
   \end{note}

   There is also a compactly-supported analogue of this result, and a Poincar\'e duality theorem; these are straighforward consequence of results of Grosse-Kl\"onne, but curiously do not seem to be explicitly written down in the literature:

   \begin{theorem}
    \label{thm:poincare}
    Let $\cX$ be a smooth dagger space of the form $\cU - \cV$ with $\cU, \cV$ quasicompact, as in \cref{thm:GK-finiteness}, of pure dimension $d$. Then $H^i_{\dR, c}(\cX)$ is also finite-dimensional for all $i$, and we have perfect pairings of finite-dimensional vector spaces
    \[ H^i_{\dR}(\cX) \times H^{2d-i}_{\dR, c}(\cX) \to K \quad \text{for $0 \le i \le 2d$}.\]
   \end{theorem}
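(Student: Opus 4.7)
The plan is to bootstrap from Grosse-Kl\"onne's Poincar\'e duality for $\cU$ and $\cV$ individually, using long exact sequences plus the five lemma. Since $\cV$ is open in the smooth space $\cU$, both $\cU$ and $\cV$ are themselves smooth quasicompact dagger spaces of pure dimension $d$, so Grosse-Kl\"onne's theorem furnishes perfect duality pairings $H^i_{\dR}(\cU) \otimes H^{2d-i}_{\dR,c}(\cU) \to K$ and similarly for $\cV$. The key inputs from the formalism of Section \ref{section:prelimrigcohom} are the two exact triangles
\begin{align*}
 R\Gamma_{\dR,c}(\cX) &\to R\Gamma_{\dR}(\cU) \to R\Gamma_{\dR}(\cV) \to [+1],\\
 R\Gamma_{\dR,c}(\cV) &\to R\Gamma_{\dR,c}(\cU) \to R\Gamma_{\dR}(\cX) \to [+1],
\end{align*}
obtained by applying the overconvergence--support exact sequences relating $R\uGa_{\tb{V}}$, $\id$, and $j^{\dag}_{\tb{X}}$ to the de Rham complex on a proper formal model containing $\cU$.

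The first triangle's long exact sequence realises $H^i_{\dR,c}(\cX)$ as an extension of (subquotients of) $H^i_{\dR}(\cU)$ and $H^{i-1}_{\dR}(\cV)$; both are finite-dimensional by Theorem \ref{thm:GK-finiteness}, which gives the finite-dimensionality of $H^i_{\dR,c}(\cX)$. The candidate duality pairing is then defined via cup product $H^i_{\dR}(\cX) \otimes H^{2d-i}_{\dR,c}(\cX) \to H^{2d}_{\dR,c}(\cX)$ composed with the trace $\tr_\cX \colon H^{2d}_{\dR,c}(\cX) \to H^{2d}_{\dR,c}(\cU) \xrightarrow{\tr_\cU} K$, where the first arrow is the proper pushforward along the closed immersion $\cX \hookrightarrow \cU$ and $\tr_\cU$ is the trace of Grosse-Kl\"onne's duality for $\cU$.

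To prove perfectness, one assembles the three pairings for $\cU$, $\cV$, and $\cX$ into a morphism from the second triangle above to the $K$-linear dual of the first, shifted by $[-2d]$. On the slots corresponding to $\cU$ and $\cV$ this morphism is an isomorphism by Grosse-Kl\"onne's Poincar\'e duality, so the five lemma forces the remaining slot-map $R\Gamma_{\dR}(\cX) \to R\Gamma_{\dR,c}(\cX)^{\vee}[-2d]$ to be an isomorphism as well, yielding the desired perfect pairing in each cohomological degree. The main technical obstacle will be verifying that the three pairings really do commute with the connecting homomorphisms of the triangles, so as to assemble into a genuine morphism of triangles; this compatibility reduces to the projection formula for the closed immersion $\cX \hookrightarrow \cU$ together with the standard compatibility of cup product and trace with restriction along the open immersion $\cV \hookrightarrow \cU$. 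Both ingredients are in principle available in the dagger-space formalism of \cite{grossekloenne00, grossekloenne02}, but their careful assembly at the level of complexes is the most delicate part of the argument.
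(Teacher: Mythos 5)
Your two exact triangles are not correct as stated, and this is where the proof breaks down. The space $\cX = \cU - \cV$ has two kinds of ``ends'': the end facing $\cV$, along which $\cX$ is an increasing union of quasicompacta, and the ends it inherits from $\cU$ itself, which is only quasicompact, not proper. The support conditions in your triangles only account for the first kind. The correct triangles (which can be extracted from \cref{prop:exactseqs} and \cref{prop:daggerspace}) are
\[
 R\Gamma_{\dR,c}(\cX) \to R\Gamma_{\dR,c}(\cU) \to R\Gamma_{\dR}(\cV) \to [+1],
 \qquad
 R\Gamma_{\dR,c}(\cV) \to R\Gamma_{\dR}(\cU) \to R\Gamma_{\dR}(\cX) \to [+1];
\]
note which terms carry compact support. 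These coincide with yours only when $R\Gamma_{\dR,c}(\cU) = R\Gamma_{\dR}(\cU)$, i.e.\ when $\cU$ is proper, which is not assumed and fails in the cases where the theorem is actually used (in \cref{cor:partialpoincare} one applies it with $\cU$ the tube of a non-proper open subvariety of the special fibre --- essentially a closed polydisc). Concretely, take $\cU = \{|z| \le 1\}$, $\cV = \{|z| \le 1/p\}$, so $\cX$ is the half-open annulus $\{1/p < |z| \le 1\}$. Then $H^\bullet_{\dR}(\cU) \to H^\bullet_{\dR}(\cV)$ is an isomorphism (both are $K$ in degree $0$ and vanish otherwise), so your first triangle forces $H^\bullet_{\dR,c}(\cX) = 0$; but $H^0_{\dR}(\cX) = H^1_{\dR}(\cX) = K$ (the class of $dz/z$ is nontrivial), so the duality being proved requires $H^1_{\dR,c}(\cX)$ and $H^2_{\dR,c}(\cX)$ to be one-dimensional. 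Your second triangle fails on the same example, giving $H^0_{\dR}(\cX) = 0$. The underlying conflation is between ``sections of $\cU$ with (strict) support in $\cX$'' and ``compactly supported sections of $\cX$''; relatedly, $\cX \hookrightarrow \cU$ is an admissible open immersion, not a closed one, so the pushforward you use to define the trace needs justification as an extension-by-zero map.

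With the corrected triangles your five-lemma strategy does go through --- indeed it is essentially how \cref{cor:partialpoincare} is deduced from the present theorem --- but one then has to actually establish those triangles in the dagger formalism, which is most of the work and is what the partial-compact-support machinery of \cref{sect:partialsupport} is for. The paper's proof takes a shorter route: it first treats quasicompact $\cX$ by reducing to Grosse-Kl\"onne's affinoid duality via a \v{C}ech argument, then writes a general $\cX$ as an increasing union of quasicompacta $\cX_n$ and uses $H^i_{\dR}(\cX) = \varprojlim_n H^i_{\dR}(\cX_n)$ and $H^{2d-i}_{\dR,c}(\cX) = \varinjlim_n H^{2d-i}_{\dR,c}(\cX_n)$; since the terms are dual finite-dimensional spaces and the inverse limit is finite-dimensional by \cref{thm:GK-finiteness}, the colimit is its dual and the pairing is perfect. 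To salvage your approach you should either restrict to proper $\cU$ or replace your triangles with the corrected ones above and verify them carefully.
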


   \begin{proof}
    The case of affinoid $\cX$ is treated in Theorem 4.9 and remark 4.10 of \cite{grossekloenne00}. The case of $\cX$ quasicompact follows readily from this, using the Cech spectral sequence associated to a finite covering of $\cX$ by affinoids (since we know that the Cech complex consists of finite-dimensional vector spaces, there are no topological issues to worry about).

    We now consider the general case. We can write $\cX$ as a countable increasing union $\{\cX_n\}_{n \in \mathbb{N}}$ of quasi\-compact subsets. Then we have
    \[ H^i_{\dR}(\cX) = \varprojlim_n H^i_{\dR}(\cX_n), \qquad H^{2d-i}_{\dR, c}(\cX) = \varinjlim_n H^{2d-i}_{\dR, c}(\cX_n).\]
    Since the terms in the two limits are dual to each other, and we know that $H^i_{\dR}(\cX)$ is finite-dimensional, it follows that $H^{2d-i}_{\dR, c}(\cX)$ is also finite-dimensional and that Poincar\'e duality holds for $\cX$.
   \end{proof}

   \begin{remark}
    \label{rmk:rigidcpctsupp}
    We caution the reader that if $X$ is a $k$-variety and $X \into Y \into \mathfrak{P}$ is a smooth proper frame, the compactly-supported de Rham cohomology of the dagger tube $\cX = \tb{X}_{\fP} \subset \cP$ is \textbf{not} automatically equal to the compactly-supported rigid cohomology of $X$; indeed this is impossible to reconcile with Poincar\'e duality, since $\cX$ could have much larger dimension than $X$ and hence the dualities would land in different degrees. However, the two do coincide if $X$ is open in $\fP_0$, since in this case $\left[ X \into \fP_0 \into \fP \right]$ is also a smooth proper frame for $X$.
   \end{remark}


   \begin{corollary}
    \label{cor:partialpoincare}
    If we are given varieties $U, V, W \subseteq \fP_0$ as in \cref{sect:partialsupport}, with $U$ open in $\fP_0$ and $\fP_K$ smooth, then the cohomology groups of the complexes
    \[ R\Gamma_{\dR, cV}(\cU) \coloneqq R\Gamma_{cV}(\cU, \Omega^\bullet) \quad \text{and}\quad R\Gamma_{\dR, cW}(\cU)\coloneqq R\Gamma_{cW}(\cU, \Omega^\bullet) \]
    are finite-dimensional for all $i$, and there are perfect pairings
    \[ H^i_{\dR, cV}(\cU) \times H^{2d-i}_{\dR, cW}(\cU) \to K. \]
   \end{corollary}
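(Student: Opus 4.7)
The plan is to reduce both finite-dimensionality and Poincaré duality for the partial-compact-support complexes to the corresponding facts for smooth dagger spaces established in \cref{thm:GK-finiteness} and \cref{thm:poincare}, by propagating them through the exact triangles of \cref{prop:exactseqs}.

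Applying \cref{prop:exactseqs} (rewritten in the dagger-space form introduced after \cref{prop:daggerspace}) with $\cF = \Omega^\bullet_{\cU}$ yields the two exact triangles
\[
 R\Gamma_{\dR, c}(\cW) \to R\Gamma_{\dR, c}(\cU \cup \cW) \to R\Gamma_{\dR, cV}(\cU) \xrightarrow{+1},
\]
\[
 R\Gamma_{\dR, cW}(\cU) \to R\Gamma_{\dR}(\cU \cup \cW) \to R\Gamma_{\dR}(\cW) \xrightarrow{+1}.
\]
Since $\fP_K$ is smooth and proper, both $\cU \cup \cW = \tb{\fP_0 - V}^\dagger$ and $\cW = \tb{W}^\dagger$ are smooth dagger spaces expressible as a smooth quasicompact dagger space minus a quasicompact open: each is the complement in an appropriate proper dagger space of the tube of a closed subvariety. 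Thus \cref{thm:GK-finiteness} and \cref{thm:poincare} apply and give finite-dimensionality of $H^\bullet_{\dR}$ and $H^\bullet_{\dR, c}$ for these auxiliary spaces, together with perfect Poincaré pairings between them. Passing to long exact sequences, $H^i_{\dR, cV}(\cU)$ and $H^i_{\dR, cW}(\cU)$ inherit finite-dimensionality for every $i$.

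For the duality pairing, I would use the cup product
\[
 R\Gamma_{cV}(\cU, \Omega^\bullet) \otimes R\Gamma_{cW}(\cU, \Omega^\bullet) \to R\Gamma_{cZ}(\cU, \Omega^\bullet) = R\Gamma_c(\cU, \Omega^\bullet),
\]
composed with the trace $H^{2d}_{\dR, c}(\cU) \to K$ provided by \cref{thm:poincare}, to define the candidate pairing $H^i_{\dR, cV}(\cU) \times H^{2d - i}_{\dR, cW}(\cU) \to K$. By the compatibility of the cup product with the triangles of \cref{prop:exactseqs}, this pairing extends to a morphism from the first triangle above to the shifted $K$-dual of the second. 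At the $\cU \cup \cW$-term the comparison map becomes Serre duality for the smooth dagger space $\cU \cup \cW$, which is an isomorphism by \cref{thm:poincare}; the analogous identification at the $\cW$-term again reduces to Serre duality, suitably renormalised. A five-lemma argument then forces the comparison map at the $\cU$-term, which is exactly our candidate pairing, to be an isomorphism as well.

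The main obstacle will be verifying the commutativity of the comparison diagrams: the cup product must interact with the connecting maps of the two triangles in a way that correctly reproduces Serre duality on the auxiliary pieces. Additional care is required when $\dim \cW < \dim \cU = d$, since the trace on $\cW$ and the trace on $\cU$ a priori live in different cohomological degrees, but this is reconciled by the fact that the cup product of a class from $R\Gamma_{\dR, c}(\cW)$ with one from $R\Gamma_{\dR}(\cW)$ is computed by extension-by-zero through the smooth ambient $\cU \cup \cW$ and is matched by Poincaré duality there. All of this ultimately boils down to functoriality of $R\uGa$, $j^\dagger$, and the cup product, of the type worked out systematically in \cite{grossekloenne00}.
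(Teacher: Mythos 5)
Your argument is essentially the paper's own proof: both use the two exact triangles of \cref{prop:exactseqs} in their dagger-space form, apply \cref{thm:GK-finiteness} and \cref{thm:poincare} to the terms $\cU\cup\cW$ and $\cW$, and conclude by a five-lemma/induction argument using the compatibility of the cup-product pairings with the triangles. The paper merely compresses the construction of the pairing and the compatibility checks that you spell out.
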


   \begin{proof}
    Rewriting the exact triangles of \cref{prop:exactseqs} in terms of dagger spaces using \cref{prop:daggerspace}, as explained above, and taking $\cF$ to be the rigid-analytic de Rham complex, we have long exact sequences
    \[ \dots \to H^i_{\dR, cW}(\cU) \to H^i_{\dR}(\cU \cup \cW) \to H^i_{\dR}(\cW) \to \dots\]
    and
    \[\dots \leftarrow H^{2d-i}_{\dR, cV}(\cU) \leftarrow H^{2d-i}_{\dR, c}(\cU \cup \cW) \leftarrow H^{2d-i}_{\dR, c}(\cW) \leftarrow \dots.\]
    Moreover, there are compatible pairings between the groups in the first row and their neighbours in the second row. By \cref{thm:GK-finiteness,thm:poincare}, the middle and right groups on each row are finite-dimensional and the pairings between them are perfect. By induction on $i$ we deduce that the groups in the left-hand column are also finite-dimensional and in perfect duality, as required.
   \end{proof}

  \subsection{A ``logarithmic'' variant}

   Sadly the above setting is still not quite general enough, and we shall need to consider yet another possibility. Suppose we have an proper admissible formal $\cO_K$-scheme $\fP$, a proper closed subvariety $Y \into \fP_0$, and a decomposition $Y = U \cup V \cup W$ as above. We also suppose that $\mathfrak{D} \subseteq \fP$ is a simple normal crossing divisor relative to $\operatorname{Spf} \cO_K$, which intersects transversely with $U$ and $W$. We write $\cP$ for the dagger space generic fibre of $\fP$, and $\cU, \cV, \cW$ for the dagger tubes of $U, V, W$ respectively.

   \begin{notation}\label{not:logdR}
    Write $R\Gamma_{\dR, cV}(\cU\langle \cD \rangle)$, resp.~ $R\Gamma_{\dR, cW}(\cU\langle \cD \rangle)$, for the hypercohomology of $\cU$ with compact support towards $V$ (resp.~$W$) of the \emph{logarithmic} de Rham complex $\Omega^\bullet_{\cP}\langle \cD\rangle$. Similarly, we write $R\Gamma_{\dR, cV}(\cU\langle -\cD \rangle)$ for the hypercohomology of the ``minus-log'' complex $\Omega^\bullet_{\cP}\langle - \cD \rangle \coloneqq \Omega_{\cP}^\bullet\langle \cD\rangle(-\cD)$.
   \end{notation}

   \begin{proposition}
    \label{prop:logpartialpoincare}
    We have perfect pairings of finite-dimensional $K$-vector spaces
    \[ H^i_{\dR, cV}(\cU, \langle -\cD \rangle) \times H^{2d-i}_{\dR, cW}(\cU, \langle \cD \rangle) \to K. \]
    and
    \[ H^i_{\dR, cV}(\cU, \langle \cD \rangle) \times H^{2d-i}_{\dR, cW}(\cU, \langle -\cD \rangle) \to K. \]
   \end{proposition}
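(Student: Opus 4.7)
The plan is to follow the same pattern as \cref{cor:partialpoincare}, reducing via the exact triangles of \cref{prop:exactseqs} to a global logarithmic Poincar\'e duality on the dagger tubes $\cU \cup \cW$ and $\cW$, and then concluding by the five lemma applied to a ladder of long exact sequences.

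First I would establish the logarithmic analogue of \cref{thm:poincare}: for a smooth dagger space $\cX$ of pure dimension $d$ of the kind considered there, equipped with a normal crossing divisor $\cD$ transverse to the boundary, there are perfect pairings of finite-dimensional $K$-vector spaces
\[ H^i_{\dR}(\cX, \langle \cD\rangle) \times H^{2d-i}_{\dR, c}(\cX, \langle -\cD\rangle) \to K. \]
The key local input is the wedge-product pairing
\[ \Omega^i_{\cP}\langle \cD\rangle \otimes \Omega^{d-i}_{\cP}\langle -\cD\rangle \to \Omega^d_{\cP}, \]
which is perfect on each piece (an immediate local coordinate computation with basis forms $dx_i/x_i$ and $x_j$); this exhibits $\Omega^\bullet\langle -\cD\rangle$ as the Serre dual of $\Omega^\bullet\langle \cD\rangle$ up to the usual shift by $[d]$ and twist by the dualising line bundle. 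Combining this with Grosse-Kl\"onne's Serre duality for affinoid dagger spaces and a \v{C}ech argument on a finite affinoid cover yields the duality in the quasicompact case; the general case follows by the same direct/inverse limit argument already used in the proof of \cref{thm:poincare}.

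Second, I apply the exact triangles of \cref{prop:exactseqs} (in the dagger reformulation given just after \cref{prop:daggerspace}) twice: once with coefficients $\Omega^\bullet_{\cP}\langle \cD\rangle$, and once with $\Omega^\bullet_{\cP}\langle -\cD\rangle$. This produces long exact sequences
\[ \ldots \to H^i_{\dR, c}(\cW, \langle -\cD|_{\cW}\rangle) \to H^i_{\dR, c}(\cU \cup \cW, \langle -\cD\rangle) \to H^i_{\dR, cV}(\cU, \langle -\cD\rangle) \to \ldots \]
and
\[ \ldots \to H^{2d-i}_{\dR, cW}(\cU, \langle \cD\rangle) \to H^{2d-i}_{\dR}(\cU \cup \cW, \langle \cD\rangle) \to H^{2d-i}_{\dR}(\cW, \langle \cD|_{\cW}\rangle) \to \ldots \]
The assumption that $\cD$ is transverse to both $U$ and $W$ guarantees that $\cD|_{\cW}$ is again a simple normal crossing divisor, so Step 1 applies on the $d$-dimensional tubes $\cU \cup \cW$ and $\cW$ and produces perfect pairings between the middle columns and between the right columns of the two sequences. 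The wedge product gives compatible term-by-term pairings between the sequences, and the five lemma then yields perfectness of the pairing on the left. Swapping the roles of $\langle \cD\rangle$ and $\langle -\cD\rangle$ throughout gives the second pairing.

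The main obstacle will be Step 1: verifying logarithmic Serre duality rigorously in the dagger setting, in particular identifying the dualising complex of the ambient affinoid with $\Omega^d_{\cP}[d]$ and checking that the natural wedge-product pairing realises the Serre duality on the log de Rham complexes with the correct signs. A secondary technical point is verifying that the pairings from Step 1 intertwine the connecting homomorphisms of the two long exact sequences, so that the ladder commutes; this compatibility is ultimately local and follows from the functoriality built into \cref{prop:fiddlysupports,prop:exactseqs}, but it needs to be checked carefully.
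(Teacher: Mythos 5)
Your reduction step is the same as the paper's: both arguments use the long exact sequences of \cref{prop:exactseqs} to reduce to a global logarithmic Poincar\'e duality on a dagger tube with no partial support condition (the paper phrases this as the case $\cW = \varnothing$). Where you diverge is in how that key duality is proved. You propose to show that $\Omega^\bullet_{\cP}\langle -\cD\rangle$ is the term-wise Serre dual of $\Omega^\bullet_{\cP}\langle \cD\rangle$ via the wedge pairing into $\omega_{\cP}$ (which is correct as a statement about coherent sheaves) and then to invoke Grosse-Kl\"onne's affinoid Serre duality plus a \v{C}ech argument. The paper instead filters the two log complexes by order of pole: the graded pieces are $\iota^{(n)}_*\bigl(\Omega^\bullet_{\cD^{(n)}}\bigr)$, the wedge pairing is compatible with the filtrations and induces the \emph{non-logarithmic} duality pairing on each graded piece, and the resulting pair of spectral sequences has $E_1$-terms $H^j_{\dR,c}(\cU^{(i)})$ and $H^j_{\dR}(\cU^{(i)})$, which are finite-dimensional and in perfect duality by \cref{thm:poincare}. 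This buys something concrete: finiteness and perfectness for the log complexes come for free from the already-established non-log statements on the closed strata, with no new analytic input. Your route, by contrast, needs a logarithmic analogue of \cref{thm:GK-finiteness} to guarantee that the \v{C}ech complex has finite-dimensional terms (otherwise you run into exactly the strictness-of-differentials issue the paper's own remark on Serre duality for non-affinoid dagger spaces warns about), and the limit argument for the non-quasicompact case likewise presupposes finiteness of the log de Rham cohomology. You correctly identify this as the main obstacle, but as written it is not discharged; the filtration trick is precisely the device that removes it, so I would recommend adopting it rather than attempting log Serre duality directly.
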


   \begin{proof}
    By the same long exact sequence argument as above, it suffices to prove the proposition in the special case $\cW = \varnothing$, i.e.~that
    \[ H^i_{\dR, c}(\cU, \langle -\cD \rangle) \times H^{2d-i}_{\dR}(\cU, \langle \cD \rangle) \to K\]
    and
    \[ H^i_{\dR}(\cU, \langle -\cD \rangle) \times H^{2d-i}_{\dR, c}(\cU, \langle \cD \rangle) \to K\]
    are perfect pairings of finite-dimensional spaces. We prove the former; the argument for the latter is identical with the role of compact and non-compact support interchanged.

    Let $\cD^{(n)}$ denote the disjoint union of the $n$-fold intersections of components of $\cD$, and $\iota^{(n)}: \cD^{(n)} \to \cP$ the natural map. The logarithmic de Rham complex $\Omega^\bullet_{\cP}\langle \cD\rangle$ has an increasing filtration, whose $n$-th graded piece is $\iota^{(n)}_\star\left( \Omega^\bullet_{\cD^{(n)}}\right)$. Similarly, the complex $\Omega^\bullet\langle -\cD\rangle$ has a decreasing filtration, with the same graded pieces; and the logarithmic duality pairing
    \begin{equation}
     \label{eq:logdual}
     \Omega^i_{\cP}\langle \cD\rangle \otimes \Omega^{d-i}_{\cP}\langle -\cD\rangle \to \Omega^d_{\cP}\langle -\cD\rangle = \omega_{\cP},
    \end{equation}
    where $\omega_{\cP}$ is the dualizing sheaf, is compatible with these filtrations, and the pairing it induces on the $n$-th graded piece is the usual (non-logarithmic) duality pairing on each of the $n$-fold intersections.

    So we have spectral sequences
    \[ E_1^{ij} = \HH^j_c\left(\cU, \iota^{(i)}_\star\left( \Omega^\bullet_{\cD^{(i)}}\right)\right) \Rightarrow H^{i+j}_{\dR, c}\left(\cU\langle -\cD\rangle\right) \]
    and
    \[ 'E_1^{-i,j} = \HH^{j-2i}\left(\cU, \iota^{(i)}_\star\left( \Omega^\bullet_{\cD^{(i)}}\right)\right) \Rightarrow H^{-i+j}_{\dR}\left(\cU\langle \cD\rangle\right).
    \]
    We have $\HH^j_c\left(\cU, \iota^{(i)}_\star\left( \Omega^\bullet_{\cD^{(i)}}\right)\right) = H^j_{\dR, c}\left(\cU^{(i)}\right)$, where $\cU^{(i)} = (\iota^{(i)})^{-1}(\cU)$, and similarly without compact support. So by Theorem \ref{thm:poincare}, the spaces $E_1^{ij}$ and $'E_1^{ij}$ are finite-dimensional for all $i, j$ (and zero outside a bounded region), and the pairing $E_1^{i, j} \times {}'E_1^{-i, 2d-j} \to E_1^{0, 2d} \cong K$ induced by \eqref{eq:logdual} is perfect. Hence the limits of the two spectral sequences are also finite-dimensional and in perfect duality, as required.
   \end{proof}

  \subsection{Gros fp-cohomology with partial support}

   Since Gros fp-cohomology is defined using complexes of overconvergent differential forms, we can use the above formalism to define variants of this cohomology with partial compact support.

   More precisely, as in \cref{sect:partialsupport}, let $\fP$ be a formal $\cO_K$-scheme with special fibre $\fP_0$, and suppose $U, V, W \subseteq \fP_0$ are subvarieties as before, with $U$ open in $\fP_0$ and $\fP$ smooth in a neighbourhood of $U$. For $s\geq 0$, denote by $\RGt_{\dR,cW}(\tb{U},\sG,s)$ the hypercohomology of the $s$-th filtration subcomplex of the de Rham complex of $\sG$.

   \begin{definition}
    Let $Q\in\Qp[t]$ be a polynomial with constant coefficient $1$. Define the Gros fp-cohomology of $\tb{U}$ with compact support towards $W$, coefficients $\sG$, twist $s$ and polynomial $Q$ as the mapping fibre
    \[   \RGt^j_{\rigfp,\cW}(\tb{U},\sG,s;Q)=\MF\Big[ \RGt_{\dR,cW}(\tb{U},\sG,s)\xrightarrow{\ Q(\varphi_s) \circ \iota\ } R\Gamma_{\dR,cW}(\tb{U},\sG)\Big],\]
    where $\iota$ denotes the natural map
    \[ \RGt_{\dR,cW}(\tb{U},\sG,s)\to R\Gamma_{\dR,cW}(\tb{U},\sG).\]
   \end{definition}

 \section{Application to \texorpdfstring{$\GSp_4$}{GSp(4)} Shimura varieties}

  Having developed the above general formalism, we now specify to which varieties it will be applied.

  \subsection{Shimura varieties for $G$} We consider the Klingen-level Siegel threefold $Y_{G, \Kl}$ (over $\Zp$), and a choice of arithmetic toroidal compactification $X_{G, \Kl}$. Then we may consider the following spaces:

  \paragraph*{The multiplicative locus} (This case was already treated in \cref{sect:partialmodels}; we recall it here for completeness.) Since $X_{G, \Kl, 0}^m$ is smooth and open in $X_{G, \Kl, 0}$, the sequence of inclusions
   \[ \left[ X_{G, \Kl, 0}^m \into \overline{X_{G, \Kl, 0}^m} \into \mathfrak{X}_{G, \Kl} \right] \]
   defines a proper smooth frame for $X_{G, \Kl, 0}^m$, where $\mathfrak{X}_{G, \Kl}$ is the $p$-adic completion of $X_{G, \Kl}$. Since $X_{G, \Kl}$ is proper, the analytification of $X_{G, \Kl, \Qp}$ concides with the rigid-analytic generic fibre of $\mathfrak{X}_{G, \Kl}$.

   Hence we may compute rigid cohomology of $X_{G, \Kl, 0}^m$ as the de Rham cohomology of the dagger space $\cX_{G, \Kl}^m$; and similarly for the cohomology with compact supports, using Remark \ref{rmk:rigidcpctsupp}.

  \paragraph*{The $(2, m)$ locus} We now consider the decomposition of the multiplicative locus
  \[ X_{G, \Kl, 0}^{m} = X_{G, \Kl, 0}^{(1, m)} \cup X_{G, \Kl, 0}^{(2, m)} \]
  as the union of a closed and an open subvariety. We can then apply the formalism of \cref{sect:partialsupport} with the following choices:
  \begin{align*}
   Y &= X_{G, \Kl, 0}^{m} \sqcup X_{G, \Kl, 0}^{\alpha}, &
   U &= X_{G, \Kl, 0}^{(2, m)}, \\
   V &= X_{G, \Kl, 0}^{\alpha}, &
   W &= X_{G, \Kl, 0}^{(1, m)}.
  \end{align*}

  \begin{notation}
   \label{notation:defc0support}
   For $\cF$ an abelian sheaf (or complex of abelian sheaves) on $\cX_{G, \Kl}$, we use the notation $R\Gamma_{c0}\left(\cX_{G, \Kl}^{(2, m)}, \cF\right)$ for the space $R\Gamma_{c\cX_{G, \Kl}^{\alpha}}\left(\cX_{G, \Kl, 0}^{(2, m)}, \cF\right)$, i.e.~cohomology with compact support towards the closed subvariety $V = X_{G, \Kl,0}^{\alpha}$ of the boundary.

   We write $R\Gamma_{c1}\left(\cX_{G, \Kl, 0}^{(2, m)}, \cF\right)$ for the space $R\Gamma_{c\cX_{G, \Kl}^{(1, m)}}\left(\cX_{G, \Kl}^{(2, m)}, \cF\right)$, i.e.~cohomology with compact support towards the open subvariety $W = X_{G, \Kl, 0}^{(1, m)}$ of the boundary.
  \end{notation}

  We therefore have exact triangles
  \[
   R\Gamma_{c}(\cX_{G, \Kl}^{(1, m)}, \cF) \longrightarrow R\Gamma_{c}(\cX_{G, \Kl}^{m}, \cF) \longrightarrow R\Gamma_{c0}(\cX_{G, \Kl}^{(2, m)}, \cF) \to [+1]
  \]
  and
  \[ R\Gamma_{c1}(\cX_{G, \Kl}^{(2, m)}, \cF) \longrightarrow R\Gamma(\cX_{G, \Kl}^{m}, \cF) \longrightarrow R\Gamma(\cX_{G, \Kl}^{(1, m)}, \cF) \to [+1].
  \]
  If we take $\cF$ to be the rigid-analytic de Rham complex, then the $c0$ and $c1$ support cohomology groups are independent of the choice of frame, and hence functorial in the special fibre; in particular, they have actions of the Frobenius map (compatible with the above exact triangles).

  \begin{remark}
   In the above situation, the boundary $Z = \overline{X_{G, \Kl, 0}^m} - X_{G, \Kl, 0}^{(2, m)}$ is in fact the union of two 2-dimensional subvarieties, namely $X_{G, \Kl, 0}^{\alpha}$ and $\overline{X_{G, \Kl, 0}^{(1, m)}}$, intersecting along the 1-dimensional subvariety $X_{G, \Kl, 0}^{(0, \alpha)}$ (the supersingular locus). However, we cannot interchange the roles of these two subvarieties in our construction, since the formal scheme $\mathfrak{X}_{G, \Kl}$ is not smooth along $X_{G, \Kl, 0}^{\alpha}$.
  \end{remark}

 \subsection{Restricting to \texorpdfstring{$c0$}{c0} support}

  As we showed in \cref{thm:redtoord} above, the regulator that we are trying to compute is given by
  \[ \left\langle \Eis^{[t_1,t_2],(m,m)}_{\rigsyn,\underline\Phi},\, (\iota_\Delta^{[t_1,t_2]})^\star( \eta_{\rigfp,-D}^{m})\right\rangle_{\rigfp, X^{(m, m)}_{H,\Delta}} \]
  where $\eta_{\rigfp,-D}^{m}$ lies in $H^3_{\rigfp, c}\left(X_{G,\Kl}^{m}\langle -D\rangle, \cV_G, 1+q; \cQ_{1+q}\right)$ (the compactly-supported rigid fp-co\-hom\-ology of the multiplicative locus). Using our formalism of partially-compactly-supported cohomology, we can express this in a more convenient form as follows:

  \begin{proposition}\label{prop:restricttoc0}
   The map $(\iota_\Delta^{[t_1,t_2]})^\star$ factors through the restriction map
   \[
    H^3_{\rigfp, c}\left(X_{G,\Kl}^{m}\langle -D\rangle, \cV_G, 1+q; \cQ_{1+q}\right) \to
    H^3_{\rigfp, c0}\left(X_{G,\Kl}^{(2, m)}\langle -D\rangle, \cV_G, 1+q; \cQ_{1+q}\right).
   \]
  \end{proposition}

  \begin{proof}
   This follows from the fact that the image of $X_{H, \Delta}$ in $X_{G, \Kl}$ does not intersect the $(1, m)$ locus. Hence, by \cref{prop:pullback}, we obtain a pullback map from $H^3_{\rigfp, c0}\left(X_{G,\Kl}^{(2, m)}\langle -D\rangle, \cV_G, 1+q; \cQ_{1+q} \right)$ to $H^3_{\rigfp, c}\left(X_{H,\Delta}^{(m, m)}\langle -D\rangle, \cV_H, 1; \cQ_{1+q} \right)$, and this is compatible with the pullback from $X_{G,\Kl}^{m}$.
  \end{proof}

  \begin{remark}
   The advantage of working with $c0$-support cohomology of $X_{G, \Kl}^{2, m}$, rather than fully compactly-supported cohomology of $X_{G, \Kl}^{m}$, is that over $X_{G, \Kl}^{2, m}$ we can find a lift of the Frobenius map. It is this which will allow us to express rigid fp-cohomology (somewhat) concretely in terms of coherent sheaves and make the link to higher Coleman theory.
  \end{remark}

 \section{Proof of \texorpdfstring{\cref*{prop:BPrigclass}}{Theorem 11.1.1}}
  \label{sect:BPproof}

  Using the formalism of partially-compactly-supported cohomology developed above, we can now prove \cref{prop:BPrigclass}. The argument below is due to George Boxer and Vincent Pilloni (pers.~comm.); we are very grateful to them for explaining the argument to us. (Again, in this section the group $H$ plays no role, and we omit subscripts $(\dots)_G$.)

  \begin{remark}
   In an earlier draft of this paper, we gave a different argument for the existence of such a unique lifting, depending on an assertion (the ``Eigenspace Vanishing Conjecture'') describing the prime-to-$p$ Hecke eigenspaces appearing in the rigid cohomology of each stratum of $X_{\Kl, 0}$. The argument below replaces this ``prime-to-$p$'' information with (unconditional) information about the Frobenius action at $p$.
  \end{remark}

 \subsection{An exact sequence} We start with the following result:

  \begin{lemma}\label{lem:GKes}
   We have a long exact sequence
   \[ H^i_{\dR,c} (\cX^m_{\Kl}\langle -\cD\rangle,\cV)\to H^i_{\dR}(\cX_{\Kl}\langle -\cD\rangle,\cV)\to H^i_{\dR}(\cX_{\Kl}^e\langle -\cD\rangle,\cV) \to\, [+1]\]
  \end{lemma}
  \begin{proof}
   Recall that we have a decomposition of the special fibre
   \[ X_{\Kl,0}=X^m_{\Kl,0}\cup \overline{X^e_{\Kl,0}}.    \]
   By definition, we have an exact sequence
   \[ H^i_{\dR,c} (\cX^m_{\Kl}\langle -\cD\rangle,\cV)\to H^i_{\dR}(\cX_{\Kl}\langle -\cD\rangle,\cV)\to H^i_{\dR}(\overline{\cX_{\Kl}^e}\langle -\cD\rangle,\cV) \to\, [+1]. \]
   Now by \cite[Theorem C]{grossekloenne02}, we have
   \[ H^i_{\dR}(\overline{\cX_{\Kl}^e}\langle -\cD\rangle,\cV)\cong H^i_{\dR}(\cX_{\Kl}^e\langle -\cD\rangle,\cV),\]
   which finishes the proof.
  \end{proof}

  We shall prove the following identities:
  \begin{equation}\label{eq:BPidentities}
  \begin{aligned}
  \left(\varphi^2 - U'_{1, \Kl} \varphi + p^{r_2 + 1} U'_{2, \Kl}\right)^2 &= 0 \text{ on } H^i_{\dR,c} (\cX^m_{\Kl}\langle -\cD\rangle,\cV), \\
  \left(\operatorname{Ver}^2 - U'_{1, \Kl} \operatorname{Ver} + p^{r_2 + 1} U'_{2, \Kl}\right)^2 &= 0  \text{ on } H^i_{\dR} (\cX^e_{\Kl}\langle -\cD\rangle,\cV),
  \end{aligned}
  \end{equation}
  where $\varphi$ denotes the Frobenius map of rigid cohomology, and $\operatorname{Ver}$ its transpose (the Verschiebung), which acts on rigid cohomology as $p^{w} \varphi^{-1}$ (where $w = r_1 + r_2 + 3$ as usual).

 \subsection{Iwahori levels}

  We now introduce the Iwahori-level Shimura variety $Y_{\Iw}$. If $(A, C)$ is the universal abelian surface over $Y_{\Kl}$ and its level subgroup, the covering $Y_{\Iw} \to Y_{\Kl}$ classifies choices of $(p, p)$-subgroup schemes $D \subset A[p]$ with $C \subset D = D^\perp$. The map $Y_{\Iw, \Qp} \to Y_{\Kl, \Qp}$ is \'etale of degree $p + 1$ (although the map of $\Zp$-schemes $Y_{\Iw} \to Y_{\Kl}$ is not finite). Refining our boundary data if necessary, we obtain a smooth compactification $X_{\Iw}$, with a map $X_{\Iw} \to X_{\Kl}$ which is finite after inverting $p$.
  
  \begin{definition}
   Let $X_{\Iw,0}^m=X_{\Kl,0}^m\times_{X_{\Kl,0}} X_{\Iw,0}$ and $X_{\Iw,0}^e=X_{\Kl,0}^e\times_{X_{\Kl,0}} X_{\Iw,0}$.
  \end{definition}

  \begin{proposition}
   The natural pullback maps
   \[ H^i_{\dR,c} (\cX^m_{\Kl}\langle -\cD\rangle,\cV) \to H^i_{\dR,c} (\cX^m_{\Iw}\langle -\cD\rangle,\cV) \qquad\text{and} \qquad H^i_{\dR} (\cX^e_{\Kl}\langle -\cD\rangle,\cV) \to H^i_{\dR} (\cX^e_{\Iw}\langle -\cD\rangle,\cV) \]
   are injective.
  \end{proposition}

  \begin{proof}
   This follows from the finiteness of $X_{\Iw, \Qp}$ over $X_{\Kl, \Qp}$, allowing us to define a trace map which is a section of the above morphism.
  \end{proof}

  So it suffices to prove that the identities \eqref{eq:BPidentities} hold at Iwahori level, where the operator $U_{1, \Kl}'$ splits as $Z' + \Phi$ as above.

  \begin{notation}
   We let $X_{\Iw,0}^{m,m}$, resp.~ $X_{\Iw,0}^{m,e}$, denote the open subvarieties of $X_{\Iw,0}^{m, m}$ on which the quotient $D / C$ is multiplicative (resp.~\'etale); the union of these is exactly the preimage of $X_{\Kl}^{(2, m)}$.

   We define open subvarieties $X_{\Iw,0}^{e,m}$ and $X_{\Iw,0}^{e,e}$ of $X_{\Iw, 0}^e$ similarly.
  \end{notation}

  \begin{remark}
   Note that the ordinary locus of $X_{\Iw,0}$ is precisely the disjoint union of the $(m, m)$, $(m, e)$, $(e, m)$ and $(e, e)$ strata. These are exactly the top-dimensional strata in the EKOR stratification of the Iwahori-level Shimura variety, cf.~\cite[\S 6.3]{shenyuzhang21}.
  \end{remark}

  \begin{definition}
   We define the following partial-support de Rham cohomology groups:
   \begin{itemize}
   \item Let $H^i_{\dR,c,\varnothing}(\cX_{\Iw}^{m,e}\langle -\cD\rangle, -)$ denote cohomology with compact support towards $\cX_{\Iw} - \cX_{\Iw}^{m}$ (but non-compact support towards $\cX_{\Iw}^{m} - \cX_{\Iw}^{m,e}$).
   \item Let $H^i_{\dR,\emptyset, c}(\cX_{\Iw}^{e,m}\langle -\cD\rangle, -)$ denote cohomology with compact support towards $\cX_{\Iw}^e - \cX_{\Iw}^{e, m}$ (but non-compact support towards $\cX_{\Iw} - \cX_{\Iw}^{e}$).
   \end{itemize}
  \end{definition}

  \begin{lemma}
   \label{lem:BPexactseq}
   We have exact sequences
   \begin{align*}
    &H^i_{\dR,c} (\cX^{m,m}_{\Iw}\langle -\cD\rangle,\cV) \to
    H^i_{\dR, c}(\cX^m_{\Iw}\langle -\cD\rangle,\cV) \to H^i_{\dR,c,\emptyset}(\cX_{\Iw}^{m,e}\langle -\cD\rangle,\cV),\\
    &H^i_{\dR,\emptyset,c} (\cX^{e,m}_{\Iw}\langle -\cD\rangle,\cV)\to
    H^i_{\dR}(\cX^e_{\Iw}\langle -\cD\rangle,\cV)\to H^i_{\dR}(\cX_{\Iw}^{e,e}\langle -\cD\rangle,\cV)
   \end{align*}
   where the first arrow in each sequence is extension by $0$, and the second arrow is given by restriction.
  \end{lemma}

  \begin{proof}
   Analogous to the proof of Lemma \ref{lem:GKes}.
  \end{proof}

  \begin{proposition}
   On $\cX^{m,m}_{\Iw}$, the operator $\Phi = \left[ \Iw(p) \diag(1, 1, p, p) \Iw(p)\right]$ is a lifting of the Frobenius of the special fibre.
  \end{proposition}

  \begin{proof}
   The double coset $\left[ \Iw(p) \diag(1, 1, p, p) \Iw(p)\right]$ induces the morphism given by $(A, C, D) \mapsto \left(A/D, C^\perp/D, A[p] / D\right)$. Over the locus $X^{m,m}_{\Iw, 0}$, the subgroup $D$ is the unique connected subgroup of $A[p]$, which is exactly the kernel of Frobenius.
  \end{proof}

  Since the Frobenius of rigid cohomology can be computed using any overconvergent lifting, and $\cX^{(m,m)}_{\Iw}$ maps isomorphically to $\cX_{\Kl}^{(2, m)}$ (a section is given by the canonical subgroup), we deduce that
  \[ \left(\varphi^2 - U'_{1, \Kl} \varphi + p^{r_2 + 1} U'_{2, \Kl}\right) = 0 \quad\text{ on }\quad H^i_{\dR,c} (\cX^{2, m}_{\Kl}\langle -\cD\rangle, \cV).\]

  We now consider the diagram
  \[
   \begin{tikzcd}
    H^i_{\dR,c} (\cX^{(m,m)}_{\Iw}\langle -\cD\rangle,\cV)\rar &H^i_{\dR}(\cX^m_{\Iw}\langle -\cD\rangle,\cV)\rar["\res"] &H^i_{\dR,c,\emptyset}(\cX_{\Iw}^{(m,e)}\langle -\cD\rangle,\cV),\\
    H^i_{\dR,c} (\cX^{(2, m)}_{\Kl}\langle -\cD\rangle,\cV)  \rar \uar["\cong"] &H^i_{\dR,c}(\cX^m_{\Kl}\langle -\cD\rangle,\cV) \rar \uar["g"] &H^i_{\dR,c0}(\cX_{\Kl}^{(2, m)}\langle -\cD\rangle,\cV) \uar
   \end{tikzcd}
  \]
  The top row is part of the first exact sequence of \cref{lem:BPexactseq}. In the bottom row, the notation $H^i_{\dR,c0}(\cX_{\Kl}^{2, m}\langle -\cD\rangle,\cV)$ denotes cohomology with compact support towards $\cX_{\Kl}^e$ (but not towards $\cX_{\Kl}^{1, m})$); and the vertical arrows are the pullback maps. Since the middle vertical arrow $g$ is injective, we deduce that the bottom row is exact at the middle term.

  \begin{remark}
   Note we are not claiming that the bottom row extends to a long exact sequence.
  \end{remark}

  Exactly as before, the fact that $\Phi$ is a lifting of the Frobenius on the $(2, m)$ locus, and $U_{1, \Kl}' = Z' + \Phi$, implies that $\left(\varphi^2 - U'_{1, \Kl} \varphi + p^{r_2 + 1} U'_{2, \Kl}\right) = 0$ vanishes on $H^i_{\dR,c0} (\cX^{2, m}_{\Kl}\langle -\cD\rangle, \cV)$. Since this polynomial vanishes on the two end terms of the lower exact sequence, it follows that its square vanishes on the middle term. This proves the first of the identities \eqref{eq:BPidentities}.

  The proof of the second identity is similar: in this case, $\cX^{(e,m)}_{\Iw}$ maps isomorphically to $\cX^{(2, e)}_{\Kl}$, and on $\cX^{(e,m)}_{\Iw}$ the operator $Z'$ is a lift of the Verschiebung map, so the argument proceeds as before.

 \subsection{Consequences for $\eta$}

  Recall our running assumption that $\Pi$ be Klingen-ordinary, which implies $\{ \alpha, \beta\} \cap \{ \gamma, \delta\} = 0$.

  \begin{corollary}
   The class $\eta_{\dR,-D}$ maps to zero in $H^3_{\dR}(\cX_{\Kl}^e\langle -\cD\rangle,\cV)$.
  \end{corollary}

  \begin{proof}
   The image of $\eta_{\dR, -D}$ in this group is annilated by $\cQ(\varphi)$, since $\eta_{\dR, -D}$ itself is. However, since $\eta_{\dR, -D}$ lies in the $U_1' = \alpha + \beta$ and $p^{r_2 + 1} U_2' = \alpha \beta$ eigenspaces, it follows from the second identity of \cref{eq:BPidentities} that its image is also annihilated by $\cQ(\operatorname{Ver}) =\cQ(p^w \varphi^{-1})^2$.

   Since the roots of the quadratic polynomial $t^2\cQ(p^w t^{-1})$ are exactly $\gamma$ and $\delta$, it follows that $\cQ(p^w t^{-1})^2$ and $\cQ(t)$ are coprime. Hence the image of $\eta_{\dR, -D}$ must be zero.
  \end{proof}

  To fix a specific lifting, we use generalised eigenspaces. We consider the maximal submodule of each module in the exact sequence of \cref{lem:GKes} on which the two commuting operators
  \[
   U_{1, \Kl}' - (\alpha + \beta), \qquad p^{r_2 + 1} U_{2, \Kl}' - \alpha \beta
  \]
  act nilpotently. Passing to generalised eigenspaces is an exact functor (unlike ``normal'' eigenspaces), since it can be interpreted as localisation.

  From the identities \eqref{eq:BPidentities}, in this localised exact sequence, the operator $\varphi$ has generalised eigenvalues $\alpha, \beta$ on the $\cX_{\Kl}^m$ terms, and $\gamma, \delta$ on the $\cX_{\Kl}^e$ terms. Since these sets are disjoint, we conclude that the boundary maps
  \[ H^i_{\dR}(\cX_{\Kl}^e\langle -\cD\rangle,\cV) \to H^{i+1}_{\dR, c}(\cX_{\Kl}^m\langle -\cD\rangle,\cV) \]
  vanish on these generalised eigenspaces. Hence \cref{lem:GKes} splits into short exact sequences, one for each $i$; and thus there is a unique lifting of $\eta_{\dR, -D}$ to $H^{3}_{\dR, c}(\cX_{\Kl}^m\langle -\cD\rangle,\cV)$ which is a $U_1'$ and $U_2'$ generalised eigenvector. Since the lifting is unique, it must in fact be an eigenvector (not just generalised eigenvector) for both operators; and it is annihilated by both $\cQ(\varphi)^2$ and by $\cQ(\varphi) \cdot\cQ(p^w\varphi^{-1})$, and hence by their greatest common divisor, which is $\cQ(\varphi)$. This completes the proof of \cref{prop:BPrigclass}.


\mychapter{Step 3: Reduction to a pairing in coherent cohomology}



\section{Coherent cohomology and automorphic forms for \texorpdfstring{$G$}{G}}


 \subsection{Coefficient sheaves}

  We recall some definitions and results from \cite{LPSZ1}.
 
  \begin{definition}
   For $r_1,r_2,c\in\ZZ$ with $c\equiv r_1+r_2\pmod 2$, define $\lambda(r_1,r_2;c)$ to be the unique character of $T$ such that
   \[ \left(\begin{smallmatrix}st_1 &&&\\ & st_2 &&\\ && st_2^{-1} & \\ &&& st_1^{-1}\end{smallmatrix}\right)\mapsto t_1^{r_1}t_2^{r_2}s^c.\]
  \end{definition}

  \begin{remark}
   If $r_1\geq r_2$, then $\lambda(r_1,r_2;c)$ is dominant for $M_{\Sieg}$, and we write $W_G(r_1,r_2;c)$ for the irreducible representation of $M_{\Sieg}$ with highest weight $\lambda(r_1,r_2;c)$.
  \end{remark}

  \begin{definition}
   For $0\leq i\leq 3$, define $L_i$ to be the irreduible $M_{\Sieg}$-representation with the following highest weights:
   \begin{align*}
    L_0 &:  \lambda(r_1+3,r_2+3;r_1+r_2) &  L_1 &: \lambda(r_1+3,1-r_2;r_1+r_2)\\
    L_2 &:  \lambda(r_2+2,-r_1;r_1+r_2)  &  L_3 &: \lambda(-r_2,-r_1;r_1+r_2)
   \end{align*}
   For $K$ sufficiently small, write $\cL_i = [L_i]_{\can}$ for the associated vector bundle on $X_{K, \QQ}$ (the canonical extensions of the corresponding vector bundles over $Y_{K, \QQ}$).
  \end{definition}

  \begin{notation}
   For convenience, we re-number these vector bundles by setting $N^i = L_{3-i}$, and $\cN^i = \cL_{3-i}$ the corresponding vector bundles (so that $\cN^i$ is $\Omega^i\langle D \rangle$ if $r_1 = r_2 = 0$).
  \end{notation}

  \begin{note}
   The cohomology of these bundles, and their subcanonical analogues $[N^i]_{\mathrm{sub}} = \cN^i(-D)$, is canonically independent of the toroidal boundary data, and hence the direct limits
   \[ 
    \varinjlim_K H^\star(X_{K, \QQ}, \cN^i)\quad\text{and}
    \quad \varinjlim_K H^\star(X_{K, \QQ}, \cN^i(-D))
   \]
   are (left) $G(\Af)$-representations. Our normalisations are such that an element $\diag(x, \dots, x) \in Z_G(\Af)$ with $x \in \QQ_{> 0}$ acts on these as multiplication by $x^{r_1 + r_2}$.

   We know (see e.g. \cite[Theorem 5.2]{LPSZ1}) that for each $0 \le i \le 3$, the $\GSp_4(\Af)$-representation $\varinjlim_K H^{3-i}(X_{K, \QQ}, \cN^i) \otimes \QQbar_p$ and its cuspidal counterpart both contain a unique direct summand isomorphic to $\Pi_f'$; and if $j \ne 3-i$, then the $\Pi_f'$-generalised eigenspaces for the spherical Hecke operators in $H^j(X_{K, \QQ}, \cN^i)$ and $H^j(X_{K, \QQ}, \cN^i(-D))$ are zero.
  \end{note}

 \subsection{Classical Klingen-level Hecke operators}

  Taking the level at $p$ to be the Klingen parahoric $\Kl(p)$, we obtain an action of the local Hecke algebra $\ZZ[G(\Qp) \sslash \Kl(p) ]$ on the cohomology of the sheaves $\cN^i$.

  \begin{definition}
   We define the following operators:
   \begin{align*}
    U_{\Kl, 0} &= p^{-(r_1 + r_2)} \cdot \left[ \Kl(p) \diag(p, p, p, p) \Kl(p)\right] \\
    U_{\Kl, 1} &= \left[\Kl(p) \diag(p, p, 1, 1) \Kl(p)\right] &
    U'_{\Kl, 1} &= \left[ \Kl(p) \diag(1, 1, p, p) \Kl(p)\right] \\
    U_{\Kl, 2} &= p^{-r_2} \cdot \left[ \Kl(p) \diag(p^2, p, p, 1) \Kl(p)\right] &
    U'_{\Kl, 2} &= p^{-r_2} \cdot \left[\Kl(p)\diag(1, p, p, p^2)\Kl(p)\right].
   \end{align*}
  \end{definition}

  \begin{remark}
   The powers of $p$ are chosen so that these operators are \textbf{minimally integrally normalised}; that is, all their eigenvalues acting on $\Pi'_p$ are $p$-adically integral, because of the valuation estimates of \cref{eq:valuations}, and are units if $\Pi$ is ordinary at $p$. Of course, the eigenvalues of $U_{\Kl, 0}$ are roots of unity, and we shall generally use the more familiar alternative notation $\langle p \rangle$ for $U_{\Kl, 0}$.
  \end{remark}

  \begin{note}
   The operators $\{ \langle p \rangle, U_{\Kl, 1}, U_{\Kl, 2}\}$ generate a commutative subalgebra of the Hecke algebra, and $\{ \langle p \rangle, U'_{\Kl, 1}, U'_{\Kl, 2}\}$ generate another commutative subalgebra. Moreover, Serre duality interchanges these two subalgebras: more precisely, the transpose with respect to Serre duality of $U_{\Kl, 1}$ is $\langle p \rangle^{-1} U'_{\Kl, 1}$, and the transpose of $U_{\Kl, 2}$ is $\langle p \rangle^{-2} U'_{\Kl, 2}$.
  \end{note}

 \subsection{Restriction to the multiplicative locus, and classicity theorems}

  Recall that $X_{\Kl}$ parametrises pairs $(A, C)$, where $A$ is a semi-abelian surface with some prime-to-$p$ level structure and degeneration data at the cusps, and $C$ is a cyclic subgroup of order $p$. The \emph{Fargues degree} $\deg C$ is thus a function
  \[ \deg: X_{\Kl}(\CC_p)\to \left[0, 1\right], \]
   with degree 1 corresponding to the locus where $C$ is multiplicative.

  The images of $(A, C)$ under the correspondence $U_{\Kl, r}'$, for $r = 1, 2$, correspond to pairs $(A', C')$, where $\phi: A \to A'$ is an isogeny (of some specific type depending on $r$) whose kernel contains $C$, and $C'$ a cyclic subgroup of $A'[p]$ such that $\phi^\vee(C') = C$. This implies that $\deg C' \le \deg C$; so $U_{\Kl, r}'$ restricts to a correspondence $X_{G,\Kl}(\CC_p)_{[0, 1[} \,{\substack{\rightarrow \\ \rightarrow} }\,X_{G,\Kl}(\CC_p)_{[0, 1[}$. This implies that there is a well-defined action of $U_{\Kl, r}'$ on the compactly-supported cohomology $R\Gamma_c(\cX_{\Kl}^m, \cN^i)$ for any $i$, compatible with the extension-by-zero map to $R\Gamma(\cX_{G,\Kl}, \cN^i)$.

  \begin{proposition}
   \label{prop:slopeestimates}
   All slopes of $U_{\Kl, 2}'$ on $R\Gamma_c\left(\cX_{G,\Kl}^m, \cN^i\right)$ or $R\Gamma_c\left(\cX_{G,\Kl}^m, \cN^i(-D)\right)$ are:

   \begin{center}
   \begin{tabular}{c c}
      $ \ge -3$ & \quad if $i=0$,\\
      $ \ge -3$ & \quad if $i=1$,\\
      $ \ge (r_1 - r_2 - 2)$ & \quad if $i = 2$,\\
      $ \ge (r_1 + r_2)$ & \quad if $i = 3$.
    \end{tabular}
   \end{center}
   In particular, if $r_1 - r_2 \ge 3$, the $U_{\Kl, 2}'$-ordinary parts of these groups vanish for $i = 2$ or $i = 3$.
  \end{proposition}

  \begin{proof}
   We explain briefly how to deduce this from Pilloni's results in \cite{pilloni20}. We will first consider $U_{2, \Kl}$ acting on non-compactly-supported cohomology, and then dualize.
   
   The automorphic vector bundle of weight $(n_1, n_2; c)$, for $n_1 \ge n_2$, corresponds to $\Omega(n_1-n_2, n_2)$ in the notation of \emph{op.cit.}, but only up to a twist, since Pilloni always takes the central character of $\Omega(k, r)$ to be $k + 2r$, which is slightly different from our conventions for the $\cN^i$. In \S14.4.2 of \emph{op.cit.}, Pilloni defines an operator denoted ``$U$'' acting on the cohomology of $\Omega(k, r)$ (or its cuspidal variant); this is defined as $p^{-(3 + r)} (t_1)_\star (t_2)^\star$, where $t_i$ are appropriate degeneracy maps. Pilloni shows that this has slope $\ge -3$ (and conjectures that its slope should be $\ge 0$). Hence $(t_1)_\star (t_2)^\star$, which is the double-coset operator $\left[ \Kl(p) \diag(p^2, p, p, 1) \Kl(p)\right]$, has slope $\ge r$ (conjecturally $\ge r + 3$) on $\Omega(k, r)$. Correcting for the differences of central characters, we obtain that the slopes of our $U_{2, \Kl}$ acting on $R\Gamma\left(\cX_{G,\Kl}^m, \cN^i\right)$ or $R\Gamma\left(\cX_{G,\Kl}^m, \cN^i(-D)\right)$ are bounded below as follows:\medskip
   
   \noindent\begin{tabular}{r|c|c|c|c}
    sheaf & $\cN^3$ & $\cN^2$ & $\cN^1$ & $\cN^0$ \\
    \hline
    slope & $\ge -3$ & $\ge -3$ & $\ge r_1 - r_2 -2$ & $\ge r_1 + r_2$
   \end{tabular}
   \medskip
   
   A similar analysis can be carried out directly for $U'_{\Kl, 2}$ in place of $U_{\Kl, 2}$, but it is simpler to use the version of Serre duality for finite-slope parts developed in the proof of \cite[Theorem 5.7.2]{boxerpilloni20}. This gives a duality between the finite-slope part of $R\Gamma\left(\cX_{G,\Kl}^m, \cN^i\right)$ for $U_{\Kl, 2}$, and the finite-slope part of $R\Gamma\left(\cX_{G,\Kl}^m, \cN^{3-i}(-D)\right)$ for $U_{\Kl, 2}'$ (and similarly with $-D$ on the other side). Since the transpose of $U_{\Kl, 2}$ is $\langle p \rangle^{-2} U'_{\Kl, 2}$, and $\langle p \rangle$ acts via roots of unity, we obtain the stated slope bounds for $U'_{\Kl, 2}$.
  \end{proof}

  We also have the following complementary result. 
  
  \begin{theorem}[Classicity of ordinary eigenclasses]
   \label{thm:classicity}
   Suppose $i \in \{0, 1\}$. If $i = 0$, let $h = r_1 + r_2$. If $i = 1$, let $h = r_1 - r_2 - 2$. Then the extension-by-zero map
   \[ R\Gamma_c\left(\cX_{G,\Kl}^m, \cN^i(-D)\right) \longrightarrow R\Gamma\left(\cX_{G,\Kl}, \cN^i(-D)\right) \]
   is an isomorphism on the slope $< h$ generalised eigenspace (and similarly with $(-D)$).
  \end{theorem}
  
  \begin{proof}
   Again, this follows using Serre duality from the corresponding result for $U_{2, \Kl}$ on the non-compactly-supported cohomology, which is Theorem 14.7.1 of \cite{pilloni20}.
  \end{proof}
  
  We shall chiefly be interested in the ordinary parts of these modules, so we obtain a classicity result (for both $\cN^0$ and $\cN^1$) as long as $r_1 - r_2 \ge 3$.
  
  \begin{remark}
   The above two results are the only places in the proof of Theorem A where the assumption that $r_1 - r_2 \ge 3$ is needed.
  \end{remark}

 \subsection{The ordinary locus and the operator $Z'$}\label{ss:newHeckeop}

  Inside $\cX_{G,\Kl}^m$ we have the multiplicative-ordinary locus $\cX_{G,\Kl}^{(2,m)}$, parametrising $(A, C)$ where $A$ is ordinary and $C$ multiplicative. The correspondences $U_{\Kl, 1}'$ and $U_{\Kl, 2}'$ described above both act on $R\Gamma_{c0}(\cX_{G,\Kl}^{(2, m)}, \cN^i)$, since ordinarity is an isogeny invariant. However, over the multiplicative-ordinary locus there is an additional structure: we have a decomposition
  \[ \left. U_{\Kl, 1}' \right|_{\cX_{G,\Kl}^{(2,m)}} = Z' + \Phi \]
  as a sum of two simpler correspondences:
  \begin{itemize}
   \item The correspondence $\Phi$ is actually a morphism: it is the map $(A, C) \mapsto (A / \hat{A}[p], C' \bmod \hat{A}[p])$ where $\hat{A}$ is the formal group of $A$, and $C'$ is the unique subgroup of $\hat{A}[p^2]$ such that $pC' = C$. This is a lifting of the Frobenius map on the special fibre.
   \item The correspondence $Z'$ parametrises isogenies $(A, C) \mapsto (A/J, C')$, where $J \cap \hat{A}[p] = C$, and $C'$ is the unique multiplicative subgroup of $A'$ whose image under the dual isogeny is $C$.
  \end{itemize}

  These are related to classical correspondences at Iwahori level (since we can also see $\cX_{\Kl}^{(2, m)}$ as a dagger subvariety of the Iwahori-level Shimura variety, via the canonical-subgroup map): in the Iwahori-level Hecke algebra, $Z'$ corresponds to $\diag(1, p, 1, p)$, and $\Phi$ to $U'_{\Iw, 1} = \diag(1, 1, p, p)$.

  \begin{remark}
   For the sheaf $\cN^1$, this is the minimal integral normalisation of $Z'$ (but this is no longer the case on $\cN^i$ for $i \ne 1$). We have not attempted to give an optimal normalisation for the operator $\Phi$, since this will not play such a major role in our theory.
  \end{remark}

  \begin{note}
   The operator $U_{\Kl, 2}'$ (or more precisely its restriction to the ordinary locus) commutes with both $Z'$ and $\Phi$, and a simple double-coset computation shows that we have the following identity in the Iwahori level Hecke algebra:
   \begin{equation}
    \label{eq:commutationrelation}
    Z' \circ \Phi = p^{(r_2 + 1)} U'_{\Kl, 2}. \qedhere
   \end{equation}
  \end{note}

  \begin{convention}
   Since the operators $U'_{\operatorname{Kl}, 2}$ and $U'_{\operatorname{Iw}, 2}$ are compatible under pullback along the projection map induced by the inclusion of groups, there seems to be no harm in dropping the subscript and using the notation $U'_2$ for both.
  \end{convention}


 \subsection{Duality and vanishing for coherent cohomology}
  \label{sect:dualitycoherent}
  \begin{proposition}
   Both $X_{G,\Kl,0}^{(2, m)}$ and $X_{G,\Kl,0}^{(1, m)}$ are smooth, and their images in the minimal compactification $X_{G,\Kl}^{\mathrm{min}}$ are affine. 
  \end{proposition}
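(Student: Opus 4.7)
The plan is to treat smoothness and affineness separately, in each case reducing to standard inputs from higher Hida theory.

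For smoothness, I would first note that both strata are locally closed subschemes of the multiplicative component $\overline{X_{G,\Kl,0}^m}$. By Tilouine's theorem quoted above, $X_{G,\Kl,0}^m$ is open and smooth, and the singular locus of the special fibre $X_{G,\Kl,0}$ is exactly $X_{G,\Kl,0}^a$. Since $X^{\ord} = X^{=2}$ is by definition the complement of $X^a \cup X^b$ inside $X^m$, it is open in the smooth subvariety $X^m$ and disjoint from $X^a$, so it is smooth. For $X^{=1} = X^b - (X^a \cap X^b)$, I would verify that $X^b$ is smooth away from its intersection with $X^a$ via a local model computation on the Klingen-level Shimura variety, following \cite{pilloni20}; since by construction $X^{=1}$ does not meet $X^a$, this suffices.

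For affineness, the strategy is to realize each image as (a closed subscheme of) the non-vanishing locus of a section of an ample line bundle on $X_{G,\Kl}^{\min}$, and then invoke the classical fact that such non-vanishing loci are affine. First, I would identify the image of $X^{\ord}$ in $X^{\min}$ with the non-vanishing locus of the classical Hasse invariant $H \in H^0(X_{G,\Kl,0}^{\min}, \det(\omega)^{\otimes(p-1)})$; since $\det(\omega)$ is ample on the minimal compactification by its very definition, its non-vanishing locus is affine. Next, I would establish that the image of $X^{\ge 1}$ in $X^{\min}$ is affine by exhibiting a generalized (partial) Hasse invariant, namely a section of a suitable positive tensor power of $\det(\omega)$ on $X^{\min}_0$ whose non-vanishing locus equals the image of $X^{\ge 1}$. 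The image of $X^{=1}$ is then the closed subscheme of the (affine) image of $X^{\ge 1}$ cut out by the vanishing of the restriction of the classical Hasse invariant, and closed subschemes of affine schemes are affine.

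The main obstacle is the construction of the generalized Hasse invariant for $X^{\ge 1}$ and the verification that it descends to $X^{\min}$ as a section of an ample bundle with the expected non-vanishing locus. This is precisely the core technical content of Pilloni's higher Hida theory at Klingen level, and I would appeal directly to the constructions and statements in \cite{pilloni17} and \cite{pilloni20} for this ingredient, rather than reprove them here.
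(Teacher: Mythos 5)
Your proposal is correct in substance and, like the paper, ultimately defers to Pilloni for the two nontrivial inputs; but the routes differ in a couple of places worth recording. For the smoothness of $X_{G,\Kl,0}^{=1}$, the paper does not run a local model computation at Klingen level: it observes that $X_{G,\Kl,0}^{=1}$ maps \emph{isomorphically} onto the $p$-rank~$1$ locus of the prime-to-$p$-level Shimura variety (cf.\ the proof of Lemma 10.5.2.2 of \cite{pilloni17}) and then quotes the smoothness of that locus from Lemma 6.4.2 of \emph{op.cit.}; this is a cleaner reduction than re-examining the local model of the parahoric-level integral model, and it sidesteps having to analyse $X^b$ near $X^a$ at all. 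For the affineness, your Hasse-invariant strategy is exactly the content of the proof of Theorem 11.2.1 of \cite{pilloni17} that the paper cites, so the approaches coincide; note only the small imprecision that the non-vanishing locus of the classical Hasse invariant in $X_{G,\Kl,0}^{\min}$ is the image of the \emph{whole} ordinary locus, i.e.\ it also contains the ordinary-\'etale piece coming from $\overline{X_{G,\Kl,0}^e}$, so the image of $X_{G,\Kl,0}^{\ord}$ is only an open-and-closed subscheme of it (the two pieces are disjoint since the components meet only along the non-ordinary stratum $X^a$); this does not affect the conclusion, as open-and-closed subschemes of affines are affine. Finally, your claim that $X^{\ord}$ is "open in the smooth subvariety $X^m$" should be read as openness in the closure $\overline{X_{G,\Kl,0}^m}$ inside the toroidal compactification, whose smoothness comes from strict semistability of $X_{G,\Kl}$ rather than from Tilouine's statement about the open Shimura variety, since $X^{\ord}$ contains boundary points.
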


  \begin{proof}
   The smoothness of $X_{G,\Kl,0}^{(2, m)}$ is immediate from that of $X_{G,\Kl,0}^m$. It is easily seen that the space $X_{G,\Kl,0}^{(1, m)}$ maps isomorphically to the $p$-rank 1 locus in prime-to-$p$ level (cf.~proof of Lemma 10.5.2.2 in \cite{pilloni20}), and the smoothness of this image is established in the course of the proof of Lemma 6.4.2 of \emph{op.cit.}. For the second statement, see the proof of Theorem 11.2.1 of \cite{pilloni20}.
  \end{proof}

  \begin{notation}
   Let $\pi: X_{G,\Kl} \to X_{G,\Kl}^{\mathrm{min}}$ be the projection map. For the rest of this section, let $\cE = [W]$ be the canonical extension to $X_{G,\Kl}$ of an automorphic vector bundle attached to a $P_{\Sieg}$-representation $W$, and $\cE' = [W^\vee \otimes L(3, 3; 0)]$, so that the Serre dual of $\cE$ is $\cE'(-D)$ and vice versa.
  \end{notation}

  \begin{proposition}
   Let $U \subset \cX_{G, \Kl}^{\mathrm{min}}$ be an open dagger affinoid, and let $\tilde U = \pi^{-1}(U) \subseteq \cX_{G, \Kl}$. Then
   \begin{enumerate}[(a)]
    \item We have $H^i(\tilde U, \cE(-D)) = 0$ for $i \ne 0$.
    \item We have $H^i_c(\tilde U, \cE') = 0$ for $i \ne 3$.
    \item There is a perfect pairing of Hausdorff locally convex spaces
    \[ H^0(\tilde U, \cE(-D)) \times H^{3}_c(\tilde U, \cE') \to \Qp. \]
   \end{enumerate}
  \end{proposition}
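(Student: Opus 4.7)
My plan is to reduce all three assertions to statements on the affinoid base $U$ via pushforward along $\pi$, combined with a Kodaira–Spencer identification of the canonical bundle and Serre duality for the smooth rigid space $\tilde U$.

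For part (a), the key input is the standard vanishing result for subcanonical extensions on toroidal–minimal compactifications of PEL Shimura varieties: for any automorphic vector bundle $\cE$ on $X_{G,\Kl}$, one has $R^i\pi_*\bigl(\cE(-D)\bigr) = 0$ for all $i > 0$ (this is part of Lan's arithmetic compactification theory). The Leray spectral sequence then collapses to $H^i(\tilde U, \cE(-D)) = H^i(U, \pi_*\cE(-D))$. Since $\pi_*\cE(-D)$ is a coherent sheaf on the affinoid $U$, Tate's acyclicity theorem gives the desired vanishing for $i > 0$, while for $i=0$ we recover $H^0(\tilde U, \cE(-D)) = \pi_*\cE(-D)(U)$, which is a finitely generated module over $\cO(U)$, hence a Banach space.

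For part (c), I will invoke Serre duality on the smooth quasi-compact rigid space $\tilde U$ of pure dimension $3$. The Kodaira–Spencer isomorphism (in its canonical-extension form on the toroidal compactification) identifies the logarithmic top differentials $\Omega^3_{X_{G,\Kl}}\langle D\rangle$ with the automorphic vector bundle $L(3,3;0)$; consequently $\omega_{\tilde U} = \Omega^3_{\tilde U} \cong L(3,3;0)(-D)\!\mid_{\tilde U}$. A direct computation of line-bundle twists then gives
\[ (\cE')^\vee \otimes \omega_{\tilde U} \;=\; \bigl(\cE^\vee \otimes L(3,3;0)\bigr)^\vee \otimes L(3,3;0)(-D) \;=\; \cE(-D), \]
so $\cE(-D)$ and $\cE'$ are Serre-dual. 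Applying Serre duality for coherent sheaves on smooth rigid spaces in the form developed by van der Put and Chiarellotto (and pushed to the relatively-proper-over-affinoid setting; one writes $\tilde U$ as a finite admissible union of affinoids, applies Grosse-Klönne–style affinoid duality on each piece, and passes to the Čech limit), one obtains the required pairing. Hausdorffness of both sides, and the fact that the pairing is perfect as a pairing of LCTVS, follow from the finite-dimensionality of $H^0(\tilde U,\cE(-D))$ established in (a) (which guarantees that the relevant Čech differentials are strict).

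Part (b) is then a formal consequence: $H^i_c(\tilde U, \cE')$ is topologically dual to $H^{3-i}(\tilde U, \cE(-D))$, which vanishes for $3-i \ne 0$ by part (a), so $H^i_c(\tilde U, \cE') = 0$ for $i \ne 3$.

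\emph{The main obstacle} I anticipate is the rigorous deployment of Serre duality in the rigid-analytic setting for the non-affinoid space $\tilde U$. The vanishing of higher coherent cohomology and the line-bundle computation are essentially routine once the right references are assembled; what requires care is verifying that the duality pairing between $H^0(\tilde U, \cE(-D))$ and $H^3_c(\tilde U, \cE')$ is perfect as a pairing of Hausdorff locally convex topological vector spaces, rather than merely an algebraic duality. This hinges on controlling the topology on compactly supported cohomology — specifically, showing that the differentials in an appropriate Čech complex representing $H^3_c$ are strict — for which the finite-dimensionality input from part (a), together with the affineness of $\pi(X_{G,\Kl,0}^{\ge 1})$ inside $X_{G,\Kl}^{\min}$, should suffice.
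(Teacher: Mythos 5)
Part (a) of your argument matches the paper's: Lan's vanishing $R^i\pi_\star(\cE(-D))=0$ for $i>0$, Leray, and acyclicity of coherent sheaves on the affinoid $U$. The problem is in parts (b) and (c).

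For (c) you propose to run Serre duality directly on the non-affinoid space $\tilde U$ by covering it with finitely many (dagger) affinoids, applying Grosse-Kl\"onne's affinoid duality termwise, and passing to the \v{C}ech limit. This is exactly the route the paper explicitly declines to take: for dagger spaces the \v{C}ech terms are LB-spaces, the two \v{C}ech complexes are only termwise dual, and one does not know that their differentials are strict, so one cannot conclude that the induced pairing on cohomology is a perfect duality of Hausdorff spaces (or even that the topologies on the cohomology groups are Hausdorff). Your proposed fix --- that strictness follows from ``the finite-dimensionality of $H^0(\tilde U,\cE(-D))$ established in (a)'' --- does not work, because (a) establishes no such thing: $H^0(\tilde U,\cE(-D))=\pi_\star(\cE(-D))(U)$ is a finitely generated module over the affinoid algebra $\cO(U)$, hence an infinite-dimensional Banach space in general, as you yourself correctly observed in your own part (a). So the key analytic difficulty is left unresolved, and (b), which you derive from (c), inherits the gap.

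The way around this, which is the actual content of the paper's proof, is to never perform duality on $\tilde U$ at all: one uses relative (Grothendieck--Serre) duality for the proper morphism $\pi$ together with the local identification $\underline{\operatorname{RHom}}(\cE',\underline{\omega}_{\tilde U})=\cE(-D)[3]$ to write
\[ \HH^i_c(\tilde U,\cE') \;=\; \HH^{i-3}_c\bigl(U,\ \underline{\operatorname{RHom}}(R\pi_\star\cE(-D),\underline{\omega}_U)\bigr), \]
then invokes Lan's vanishing a second time to replace $R\pi_\star\cE(-D)$ by $\pi_\star\cE(-D)$, and only then applies Serre duality --- now on the \emph{affinoid} $U$, where Grosse-Kl\"onne's theorem applies verbatim and yields $\Hom_{\mathrm{cts}}\bigl(H^{3-i}(U,\pi_\star\cE(-D)),\Qp\bigr)$. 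This gives (b) and (c) simultaneously, with (b) following from the displayed duality together with (a). Your Kodaira--Spencer computation identifying $\cE(-D)$ as the Serre dual of $\cE'$ is correct and is also used in the paper; what is missing is the descent of the duality to the base.
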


  \begin{proof}
   Note that if $\tilde U$ is affinoid, then this is an instance of Grosse-Kl\"onne's Serre duality results for affinoid dagger spaces recalled in \S\ref{sect:serredual} above. So we shall aim to reduce to this case, using the fact that $R^i\pi_\star\left(\cE(-D)\right)= 0$ for all $i > 0$ by \cite[Theorem 8.6]{lan17}. Observe that $X_{G,\Kl,\Qp}^{\mathrm{min}}$ is normal, although not smooth.

   For part (a), we have $H^i(\tilde U, \cE(-D)) = \HH^i\big(U, R\pi_ \star\left(\cE(-D)\right)\big)$. By Lan's vanishing results, this is just $H^i(U, \pi_\star \cE(-D))$. As $U$ is affinoid, $H^i(U, -)$ vanishes for $i > 0$. So this group vanishes for all $i > 0$ as required.

   For parts (b) and (c), we argue as follows. For a quasicompact, normal dagger space $\cX$, equidimensional of dimension $d$, and a complex $C$ of coherent sheaves on $\cX$, we shall say $C$ is ``anti-concentrated in degree $0$'' if $\underline{\operatorname{RHom}}(C, \underline{\omega}_\cX)$ is concentrated in degree $-d$, where $\underline{\omega}_\cX$ is the dualizing complex.

   We now note that:
   \begin{itemize}
   \item If $\cX$ is smooth (or just Gorenstein), then $\underline{\omega}_\cX$ is a line bundle in degree $-d$, so a vector bundle (regarded as a complex concentrated in degree 0) will also be anti-concentrated in degree 0.

   \item If $\cX$ is affinoid, and $C$ is anti-concentrated in degree $0$, then the compactly-supported hypercohomology $\HH^i_c(\cX, C)$ vanishes for $i \ne d$. This follows straightforwardly from the Serre duality result recalled in \S\ref{sect:serredual} above: applying this with $\cF = \underline{\operatorname{Ext}}^{-d}(C, \underline{\omega}_\cX)$, so that $C = \underline{\operatorname{RHom}}(\cF, \underline{\omega}_{\cX})[-d]$, we have
   \begin{equation}\label{eq:serredual1}
   \HH^i_c(\cX, C) = H^{d-i}\left(\cX,\cF\right)^\vee,
   \end{equation}
   where $(-)^\vee$ denotes $\Hom_{\mathrm{cts}}(-, \Qp)$; this is 0 if $i \ne d$.

   \item If $\pi: \tilde{\cX} \to \cX$ is a proper map between normal dagger spaces, then we have relative Serre duality: if we define $D_{\cX} = \underline{\operatorname{RHom}}_{\cO(\cX)}(-, \underline{\omega}_{\cX})$ and similarly $D_{\tilde{\cX}}$, then
   \[ R\pi_\star\left(D_{\tilde{\cX}} \cF\right) = D_{\cX}\left(R\pi_\star \cF\right). \]
   It follows, in particular, that if $\tilde{\cX}$ is smooth (but $\cX$ may not be), and $\cF$ is a vector bundle such that $R\pi_\star(\cF)$ is concentrated in degree 0, then the Serre-dual vector bundle $\cF' = \underline{\Hom}(\cF, \omega_{\tilde{\cX}})$ is anti-concentrated in degree 0. (Actually we do not know a reference for the above assertion in full generality; but we shall only need this when the spaces and sheaves concerned are the analytifications of algebraic varieties and sheaves on them, in which case the result follows from classical Grothendieck duality for algebraic varieties.)
   \end{itemize}

   In our setting, Lan's vanishing results tell us that the higher direct images of $\cE(-D)$ vanish, so $R\pi_\star \cE(-D)$ is concentrated in degree 0. Hence the complex $R\pi_\star \cE'$ is anti-concentrated in degree 0, and we have
   \[ R\pi_\star \cE' = D_U(\pi_\star \cE(-D))[-3].\]
   Thus \eqref{eq:serredual1} becomes
   \[ \HH^i_c(U, R\pi_\star \cE') = H^{3-i}(U, \pi_\star \cE(-D))^\vee, \]
   or
   \[ H^i(\tilde{U}, \cE') = H^{3-i}(\tilde{U}, \cE(-D))^\vee. \]
   This clearly implies (c), and (b) follows from this together with (a).
  \end{proof}

  \begin{corollary}
   \label{cor:ge1vanish}
   For $\cE$ as above, we have $H^i(X_{G,\Kl}^m, \cE(-D)) = 0$ for $i \notin \{0, 1\}$, and $H^i_c(X_{G,\Kl}^m, \cE) = 0$ for $i \notin \{2, 3\}$.
  \end{corollary}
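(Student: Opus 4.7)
The plan is to reduce to the preceding proposition by a two-piece Mayer--Vietoris argument, built from the decomposition $X_{G,\Kl,0}^{\ge 1} = X_{G,\Kl,0}^{=2} \sqcup X_{G,\Kl,0}^{=1}$.

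First, I would construct the admissible cover. Let $U_2, U_1 \subset X_{G,\Kl}^{\min}$ denote the images of $X_{G,\Kl,0}^{=2}$ and $X_{G,\Kl,0}^{=1}$ respectively; by the preceding proposition both are affine. The union $U_2 \cup U_1$ is then the image of $X_{G,\Kl,0}^{\ge 1}$, and $U_2 \cap U_1$ is again affine because $X_{G,\Kl}^{\min}$ is projective and hence separated. Setting $\tilde U_i \coloneqq \pi^{-1}(U_i) \cap \cX_{G,\Kl}^{\ge 1}$, we obtain an admissible covering $\cX_{G,\Kl}^{\ge 1} = \tilde U_1 \cup \tilde U_2$ in which each $\tilde U_i$ and their pairwise intersection are of the form $\pi^{-1}(\text{affinoid}) \cap \cX_{G,\Kl}^{\ge 1}$, matching the hypotheses of the preceding proposition.

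Next, I would invoke the preceding proposition on each of the three pieces. For coefficients in $\cE(-D)$ it gives $H^i = 0$ for $i \ne 0$ on each piece. Applying the same proposition with $\cE$ replaced by $\cE'$, and noting that $(\cE')' = \cE$, yields in addition $H^i_c = 0$ for $i \ne 3$ with coefficients in $\cE$ on each piece. Feeding these into the Mayer--Vietoris long exact sequences associated to the two-piece cover immediately produces both desired vanishings: $H^i(\cX_{G,\Kl}^{\ge 1}, \cE(-D)) = 0$ for $i \ge 2$, and $H^i_c(\cX_{G,\Kl}^{\ge 1}, \cE) = 0$ for $i \le 1$.

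I do not foresee any real obstacle; the whole thing is a formal patching argument once the preceding proposition is granted. The one small point to check is that the union $\tilde U_1 \cup \tilde U_2$ genuinely exhausts $\cX_{G,\Kl}^{\ge 1}$ on the dagger-space level (not merely on special fibres), which follows from the properness of $\pi$ and the standard compatibility of tubes with proper morphisms.
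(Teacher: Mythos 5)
Your overall strategy --- cover $\cX_{G,\Kl}^{\ge 1}$ by two pieces to which the preceding Serre-duality proposition applies and run a two-term \v{C}ech/Mayer--Vietoris argument --- is exactly the paper's, and your deduction of the degree ranges from such a cover is correct. The gap is in the cover itself. The strata $X_{G,\Kl,0}^{=2}$ and $X_{G,\Kl,0}^{=1}$ are \emph{disjoint} ($X^{=1}$ is contained in $X^b$, which is removed in the definition of $X^{=2}$), so their images in the minimal compactification do not overlap and their tubes are disjoint subsets whose union equals $\cX_{G,\Kl}^{\ge 1}$ only set-theoretically. This is precisely the prototypical \emph{non-admissible} covering of a tube by the tube of an open stratum together with the tube of its closed complement (compare the discussion around \cref{prop:supportfunctors}), so the \v{C}ech complex of this covering does not compute the cohomology and the Mayer--Vietoris sequences you invoke are not available. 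Your assertion that $U_1 \cap U_2$ is affine ``by separatedness'' is vacuous here --- the intersection is empty --- and is a symptom of the problem.

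A second, related issue is that neither of your pieces satisfies the hypotheses of the proposition you want to apply: that proposition concerns $\pi^{-1}(U)$ for $U$ an \emph{affinoid} subdomain of the rigid-analytic minimal compactification. The stratum $X_{G,\Kl,0}^{=1}$ is only locally closed, so its tube is not quasi-compact (it is an increasing union of affinoids, exactly as exploited in the proof of \cref{cor:c0vanish}) and in particular is not of the form $\pi^{-1}(\text{affinoid})$; and a set of the form $\pi^{-1}(U)\cap\cX_{G,\Kl}^{\ge 1}$ is in any case not covered by the proposition as stated. The paper instead chooses two \emph{genuinely overlapping} affinoids $U_1, U_2$ in the rigid space $X_{G,\Kl}^{\min}$ whose preimages cover $\cX_{G,\Kl}^{\ge 1}$; these are quasi-compact neighbourhoods of the tubes of the two strata (constructed from the affineness of their images via lifts of the relevant Hasse invariants, following Pilloni), not the tubes themselves. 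Once such a cover is in hand, the remainder of your argument goes through verbatim.
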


  \begin{proof}
   There are two affinoids $U_1, U_2$ in $X_{G,\Kl}^{\min}$ such that $\pi^{-1}(U_1)$ and $\pi^{-1}(U_2)$ cover $X_{G,\Kl}^m$ (see e.g. proof of \cite[Lemma 14.8.2]{pilloni20}). By the previous proposition, we see that $H^\bullet(X_{G,\Kl}^m, \cE(-D))$ is computed by a \u{C}ech complex concentrated in degrees 0 and 1. Similarly, the compactly-supported cohomology is supported by a ``homological'' \u{C}ech complex concentrated in degrees 2 and 3.
  \end{proof}

  \begin{corollary}
   \label{cor:c0vanish}
   For $\cE'$ as above, we have $H^i_{c0}(X_{G,\Kl}^{(2, m)}, \cE') = 0$ unless $i \in \{2, 3\}$.
  \end{corollary}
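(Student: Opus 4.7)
The plan is to deduce this corollary by Serre duality, translating the desired vanishing of $H^i_{c0}(\cX_{G,\Kl}^{\ord}, \cE')$ into a vanishing of $H^{3-i}_{c1}(\cX_{G,\Kl}^{\ord}, \cE(-D))$, which can then be squeezed between two groups already known to vanish by Lan's vanishing theorem and \cref{cor:ge1vanish}.

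The first step is to establish a coherent analogue of the partial Poincar\'e duality of \cref{prop:finitedim1}(c): a perfect pairing of finite-dimensional $\Qp$-vector spaces
\[ H^i_{c0}(\cX_{G,\Kl}^{\ord}, \cE') \times H^{3-i}_{c1}(\cX_{G,\Kl}^{\ord}, \cE(-D)) \to \Qp. \]
The proof follows exactly the same pattern as the de Rham case: one pairs the defining triangles for $c0$ and $c1$ cohomologies termwise against the ordinary Serre duality on $\cX_{G,\Kl}^{\ge 1}$ proved in the preceding proposition (where Lan's vanishing $R^i\pi_\star(\cE(-D)) = 0$ reduces the computation to coherent cohomology on affinoids in the minimal compactification, and the affinoid Serre duality of Grosse-Kl\"onne is applied). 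A five-lemma argument then gives the pairing and finite-dimensionality. Granted this, the corollary is equivalent to proving $H^j_{c1}(\cX_{G,\Kl}^{\ord}, \cE(-D)) = 0$ for every $j \ge 2$.

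For the second step, I would feed this into the long exact sequence attached to the defining triangle for $c1$-cohomology,
\[ R\Gamma_{c1}(\cX_{G,\Kl}^{\ord}, \cE(-D)) \to R\Gamma(\cX_{G,\Kl}^{\ge 1}, \cE(-D)) \to R\Gamma(\cX_{G,\Kl}^{=1}, \cE(-D)) \to [+1]. \]
By \cref{cor:ge1vanish} the middle term vanishes outside degrees $\{0,1\}$. For the right-hand term, $X_{G,\Kl,0}^{=1}$ is smooth and maps isomorphically onto an affine subvariety of $X_{G,\Kl}^{\min}$, so its tube $\cX_{G,\Kl}^{=1}$ is the preimage under $\pi$ of a \emph{single} affinoid; Lan's vanishing together with the coherence of the pushforward on an affinoid yields $H^i(\cX_{G,\Kl}^{=1}, \cE(-D)) = 0$ for all $i \neq 0$ (the argument of \cref{cor:ge1vanish} applies with the \v{C}ech complex collapsing to a single term). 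Both neighbours of $H^j_{c1}$ in the long exact sequence thus vanish for $j \ge 2$, which finishes the proof.

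The main obstacle is the first step: the coherent partial-support Serre duality is natural but not explicitly written down in this generality. It is however a verbatim adaptation of the de Rham argument already given in the paper, using the affinoid-level Serre duality of Grosse-Kl\"onne in place of the de Rham duality of \cref{thm:poincare}, so no genuinely new ingredient should be required.
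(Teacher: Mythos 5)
Your strategy founders on its first step. The coherent partial-support Serre duality you invoke is not a ``verbatim adaptation'' of the de Rham argument: the paper explicitly disclaims it. The remark following the affinoid Serre duality statement notes that duality is not known to extend beyond affinoid dagger spaces (the \v{C}ech complexes are termwise dual, but one does not know the differentials are strict, so the induced pairings on cohomology need not be perfect and the groups need not even be Hausdorff), and the remark immediately \emph{after} this corollary says that Serre duality for $\cX_{G,\Kl}^{\ge 1}$ is precisely what is not available, since neither $\cX_{G,\Kl}^{\ge 1}$ nor its image in $X_{G,\Kl}^{\min}$ is affinoid. The de Rham analogue (\cref{cor:partialpoincare}) only works because Grosse-Kl\"onne's finiteness theorem makes every term in the relevant long exact sequences finite-dimensional, so the five-lemma applies with no topological caveats; the coherent groups $H^0(\cX_{G,\Kl}^{\ge 1}, \cE(-D))$ and $H^3_c(\cX_{G,\Kl}^{\ge 1}, \cE')$ are infinite-dimensional, and your claimed finite-dimensionality of $H^i_{c0}(\cX_{G,\Kl}^{\ord},\cE')$ is false. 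A secondary error: $\cX_{G,\Kl}^{=1}$ is \emph{not} the preimage of a single affinoid. Its image in the minimal compactification is the locus where the Hasse invariant has positive valuation, which is affine as a variety but not open in the special fibre, so its tube is only an increasing union of affinoids.

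The duality detour is also unnecessary. The intended argument applies the defining triangle for $c0$-cohomology directly,
\[ R\Gamma_{c}(\cX_{G,\Kl}^{=1}, \cE') \to R\Gamma_c(\cX_{G,\Kl}^{\ge 1}, \cE') \to R\Gamma_{c0}(\cX_{G,\Kl}^{\ord}, \cE') \to [+1], \]
where the middle term is concentrated in degrees $\{2,3\}$ by \cref{cor:ge1vanish}, and the left term is concentrated in degree $3$ because $R\Gamma_{c}(\cX_{G,\Kl}^{=1},\cE') = \varinjlim_i R\Gamma_c(\pi^{-1}(U_i),\cE')$ over the increasing affinoid exhaustion $U_i$ of the image in $X_{G,\Kl}^{\min}$; compactly supported cohomology (unlike ordinary cohomology) commutes with this colimit, so the affinoid-level vanishing of the preceding proposition applies termwise.
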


  \begin{proof}
   By definition, we have an exact triangle
   \[ R\Gamma_{c}(X_{G,\Kl}^{(1, m)}, \cE') \to R\Gamma_c(X_{G,\Kl}^m, \cE') \to R\Gamma_{c0}(X_{G,\Kl}^{(2, m)}, \cE') \to [+1]. \]

   We claim that $H^*_{c}(X_{G,\Kl}^{(1, m)}, \cE')$ is concentrated in degree 3. The image of $X_{G,\Kl}^{(1, m)}$ in the minimal compactification is the locus where the Hasse invariant has positive valuation. It is thus naturally covered by an increasing sequence of affinoids $U_i$ (given by requiring the valuation of a lift of Hasse to be $\ge r_i$, for some sequence of positive rationals $r_i \to 0$), and $R\Gamma_{c}(X_{G,\Kl}^{(1, m)}, \cE') = \varinjlim_r R\Gamma_c(\pi^{-1}(U_i), \cE')$, which vanishes outside degree 3 by the proposition above. It now follows from the mapping triangle that $R\Gamma_{c0}$ is supported in degrees $\{2, 3\}$.
  \end{proof}

  \begin{remark}
   It seems highly likely that $H^i_{c0}(X_{G,\Kl}^{(2, m)}, \cE')$ vanishes for $i = 3$ as well, but this is not easy to check. It is equivalent to showing that $H^3_c(X_{G,\Kl}^{(1, m)}, \cE') \to H^3_c(X_{G,\Kl}^m, \cE')$ is surjective. If we knew that Serre duality held for $X_{G,\Kl}^m$ this would be obvious, since the dual map $H^0(X_{G,\Kl}^m, \cE(-D)) \to H^0(X_{G,\Kl}^{(1, m)}, \cE(-D))$ is clearly injective; but we do not know this, since neither $X_{G,\Kl}^m$ nor its image in $X_{G,\Kl}^{\min}$ is affinoid.
  \end{remark}


   \subsection{Coherent $H^2$ eigenclasses from $\Pi$}\label{sect:coherentfromPi}

   The input we need from higher Coleman theory is the following. We fix an automorphic representation $\Pi$ which is cohomological with coefficients in $V(r_1, r_2; r_1 + r_2)$, and unramified and Klingen-ordinary at $p$, as before; and we choose a vector $\eta^{\mathrm{alg}}_{-D} \in H^2\left(X_{G, \Kl, \Qp}, \cN^1(-D)\right)[\Pif']$ (the vector denoted $\eta$ in \cref{ssec:testdataatp}) which is stable under $\Kl(p)$ and lies in the ordinary eigenspace for $U'_2$.

   \begin{remark}
    If $(\alpha, \beta, \gamma, \delta)$ are the Hecke parameters of $\Pi'_p$, ordered such that $v_p(\alpha) \le \dots \le v_p(\delta)$ and normalised such that $v_p(\alpha) \ge 0, v_p(\alpha \beta) \ge r_2 + 1$, then the Klingen-ordinarity condition is that $v_p(\alpha\beta)$ should be exactly $r_2 + 1$, and the ordinary $U'_2$ eigenvalue is the $p$-adic unit $\lambda = \frac{\alpha\beta}{p^{r_2 + 1}}$.
   \end{remark}

   \begin{note}
    The operator $U_{\Kl, 1}'$ acts on $\eta$ as multiplication by $\alpha + \beta$ (which may or may not be a $p$-adic unit).
   \end{note}

   \begin{proposition}
    \label{prop:etaproperties}
    Suppose $r_1 - r_2 \ge 3$. Then there exists a unique class \[\eta^m_{\coh,-D} \in H^2_c\left(\cX_{G,\Kl}^m, \cN^1(-D)\right)\] with the following two properties:
    \begin{enumerate}
     \item $U'_{\Kl, 2}$ acts on $\eta^m_{\coh,-D}$ as multiplication by  $\frac{\alpha\beta}{p^{r_2 + 1}}$.
     \item The image of $\eta^m_{\coh,-D}$ under the extension-by-zero map is $\eta^{\mathrm{alg}}_{-D}$.
    \end{enumerate}

    This class enjoys the following additional properties:
    \begin{enumerate}
     \item[(3)] The operator $U'_{\Kl, 1}$ acts on $\eta^m_{\coh,-D}$ as multiplication by $\alpha + \beta$.
     \item[(4)] The spherical Hecke algebra acts via the system of eigenvalues associated to $\Pi'$.
    \end{enumerate}
   \end{proposition}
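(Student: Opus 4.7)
The plan is to establish this via a classicality argument of higher-Hida type for the operator $U'_{\Kl, 2}$. Starting from the excision-type long exact sequence relating compactly supported coherent cohomology of $\cX^{\ge 1}_{G,\Kl}$ to the classical coherent cohomology of $X_{G,\Kl}$, the ``error'' term is cohomology of the tube around the non-multiplicative locus $X^{=0}_{G,\Kl}$. First I would verify that Hida's ordinary projector $e'_2 := \lim_n (U'_{\Kl, 2})^{n!}$ is well-defined on each of the three groups appearing in the triangle, using integrality of the $U'_{\Kl, 2}$-correspondence together with slope lower bounds of the sort provided by Proposition \ref{prop:slopeestimates}.

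The crux is to show that $e'_2$ annihilates the $H^1$ and $H^2$ of the error term. The geometric content is that the $U'_{\Kl, 2}$-correspondence strictly decreases the Fargues degree of the canonical subgroup on any overconvergent neighbourhood of $X^{=0}_{G,\Kl}$, so iterating it forces strictly positive slopes on the cohomology of that neighbourhood. Granting this, extension-by-zero induces an isomorphism
\[
 e'_2\, H^2_c(\cX^{\ge 1}_{G,\Kl}, \cN^1(-D)) \xrightarrow{\ \sim\ } e'_2\, H^2(X_{G,\Kl}, \cN^1(-D)).
\]
Since $\lambda = \alpha\beta/p^{r_2+1}$ is a $p$-adic unit by Klingen-ordinarity, the given class $\eta^{\mathrm{alg}}_{-D}$ lies in the right-hand side; we define $\eta^{\ge 1}_{\coh, -D}$ to be its unique preimage, so that (1) and (2) hold by construction.

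Properties (3) and (4) will then follow from this uniqueness: the operator $U'_{\Kl, 1}$ and the prime-to-$p$ spherical Hecke operators commute with $U'_{\Kl, 2}$ and with extension-by-zero, hence preserve the ordinary eigenspaces on both sides of the isomorphism. Their eigenvalues on $\eta^{\ge 1}_{\coh, -D}$ are therefore forced to coincide with those on $\eta^{\mathrm{alg}}_{-D}$, yielding $\alpha + \beta$ for $U'_{\Kl, 1}$ and the prime-to-$p$ Hecke system of $\Pi'$ respectively.

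The main obstacle is the slope-positivity estimate on the cohomology of the tube of $X^{=0}_{G,\Kl}$: this is essentially Pilloni's higher Hida classicality theorem in the present setting, requiring a controlled Fargues-degree contraction argument combined with the integrality normalisation of the $U'_{\Kl, 2}$-correspondence near the non-multiplicative stratum. Everything else in the construction is formal once that input is in hand.
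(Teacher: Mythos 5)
The paper itself offers no argument for this proposition: the proof reads ``Result from \cite{pilloni20}'', deferring entirely to Pilloni's forthcoming Higher Coleman Theory. Your sketch is a reasonable reconstruction of what that argument looks like, and the overall strategy --- excision triangle for extension-by-zero, well-definedness of the ordinary projector, vanishing of the ordinary part of the error term via degree contraction, then deducing (3) and (4) from uniqueness and commutation of Hecke operators --- is the right one; so in substance you and the paper are leaning on the same external input. Two points of imprecision are worth flagging. First, the complement of $X_{G,\Kl}^{\ge 1}$ in the special fibre $X_{G,\Kl,0}$ is the closure of the \'etale stratum $\overline{X^e_{G,\Kl,0}}$ (equivalently, the locus where $\deg C < 1$), not the stratum $X^{=0}_{G,\Kl} = X^a$ in the paper's notation; the error term in your triangle is the cohomology of a strict neighbourhood of that locus, and it is there that one must prove slope positivity. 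Second, \cref{prop:slopeestimates} as stated in the paper gives lower bounds for $U'_{\Kl,2}$ on $R\Gamma_c(\cX^{\ge 1}_{G,\Kl}, \cN^i(-D))$ only for $i = 2, 3$, and says nothing for $i = 1$ (where the ordinary part is nonzero --- that is exactly where $\eta^{\ge 1}_{\coh,-D}$ lives); so it cannot be the estimate that kills the error term, and the required bound on the small-degree locus is a separate (though analogous) input from \cite{pilloni20}. With those caveats your construction of $\eta^{\ge 1}_{\coh,-D}$ as the unique ordinary preimage of $\eta^{\alg}_{-D}$, and the deduction of (3) and (4) from commutativity, are sound.
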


   \begin{proof}
    The existence of a unique $\eta^m_{\coh,-D}$ satisfing (1) and (2) is an instance of the classicity result of Theorem \ref{thm:classicity}. Since the Hecke operator $U_{1, \Kl}'$ and the prime-to-$p$ Hecke operators commute with $U_{2, \Kl}'$, and with the extension-by-zero map, parts (3) and (4) follow from the corresponding properties of $\eta^{\mathrm{alg}}_{-D}$ and the uniqueness of the lifting.
   \end{proof}

   \begin{definition}
    Let $\eta^m_{\coh}$ be the image of $\eta^m_{\coh,-D}$ under the natural map
    \[ H^2_c\left(\cX_{G,\Kl}^m, \cN^1(-D)\right) \to H^2_c\left(\cX_{G,\Kl}^m, \cN^1\right).\]
   \end{definition}

   This enjoys analogues of properties (1)--(4) (\emph{mutatis mutandis}).

   \begin{definition}
    Let $\eta^{(2, m)}_{\coh, -D}$ be the image of $\eta^m_{\coh, -D}$ in $H^2_{c0}\left(\cX_{G,\Kl}^m,\cN^1(-D)\right)$ via the restriction map. For future use, we define $\eta^{(2, m)}_{\coh}$ to be the image of $\eta^{(2, m)}_{\coh, -D}$ in $H^2_{c0}\left(\cX_{G,\Kl}^m,\cN^1\right)$.
   \end{definition}


  \begin{proposition}\label{prop:kerZ} As above, let $\cQ(T)=(1 - \tfrac{T}{\alpha})( 1 - \tfrac{T}{\beta})$. Then the class $\cQ(\Phi)\cdot \eta^{(2, m)}_{\coh,-D}$ lies in the kernel of $Z'$.
  \end{proposition}

   \begin{proof}
   We know that $\eta^{(2, m)}_{\coh, -D}$ is an eigenvector for $U'_2$ with eigenvalue $\alpha\beta / p^{r_2 + 1}$, and for $Z' + \Phi$ with eigenvalue $\alpha + \beta$. Using the identity \eqref{eq:commutationrelation} the results follow formally.
   \end{proof}




\section{fp-cohomology and coherent fp-pairs for \texorpdfstring{$G$}{G}}\label{sec:fpcohomandfppairs}


 \subsection{The dual BGG complex}\label{ss:dualBGG}

  \begin{definition}\label{def:BGG}
   Define the \emph{dual BGG complex associated to $\cV$} to be
   \[
   \BGG(\cV): \cN^0\xrightarrow{\nabla^0} \cN^1 
    \xrightarrow{\nabla^1} \cN^2 \xrightarrow{\nabla^2} \cN^3,
   \]
   where the differentials are given by certain homogeneous differential operators of degrees $r_2+1$, $r_1-r_2+1$ and $r_2+1$, respectively (c.f. \cite[\S 7]{tilouine12}.

   We equip it with the following filtration:
   \[
   \sFil^n\BGG(\cV)=\begin{cases}
   \cN^0 \to \cN^1 \to \cN^2 \to \cN^3 \qquad &\text{if $n\leq 0$}\\
   0     \to \cN^1 \to \cN^2 \to \cN^3 & \text{if $1 \le n\leq r_2+1$}\\
   0     \to     0 \to \cN^2 \to \cN^3 & \text{if $r_2+2 \le n \leq r_1+2$}\\
   0     \to     0 \to     0 \to \cN^3 & \text{if $r_1+3 \le n\leq r_1+r_2+3$}\\
   0 & \text{if $r_1 + r_2 + 4 \le n$}.
   \end{cases}
   \]
   We define $\BGG_c(\cV)$ to be the subcomplex with $\cN^i$ replaced by $\cN^i(-D)$.
  \end{definition}

  \begin{proposition}\label{prop:dRdirectsummand}
   The dual BGG complex $\BGG(\cV)$ is a direct summand of the logarithmic de Rham complex $\DR(\cV) \coloneqq \cV\otimes \Omega^\bullet\langle D \rangle$ (in the category of abelian sheaves over $X_{G,\Kl, \Qp}$). The inclusion is Hecke equivariant,  and the projection map is a quasi-isomorphism of filtered complexes. The same holds for $\BGG_c(\cV)$ and $\DR_c(\cV)$.
  \end{proposition}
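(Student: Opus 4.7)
The statement is the ``dual BGG resolution'' for $\GSp_4$ Shimura varieties, which is a special case of Faltings' general construction for PEL Shimura varieties, as worked out explicitly in the Siegel setting in Tilouine \cite{tilouine12} and Lan--Polo. The plan is to produce the inclusion $\BGG(\cV)\hookrightarrow \DR(\cV)$ as a direct summand in the abelian category of sheaves on $X_{G, \Kl, \Qp}$, with acyclic complement, using the infinitesimal Kostant decomposition applied to the associated graded of the Hodge filtration. Hecke-equivariance will be automatic, and the subcanonical variant reduces immediately to the canonical one via the boundary-compatibility of the Hodge filtration.

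First I would recall the infinitesimal input. Let $\mathfrak{p}_{\Sieg}=\mathfrak{m}\oplus \mathfrak{u}^{+}$, with opposite unipotent $\mathfrak{u}^{-}$. The Koszul complex $\Lambda^{\bullet}(\mathfrak{u}^{-})^{*}\otimes V$ computes $H^{\bullet}(\mathfrak{u}^{-},V)$, and Kostant's theorem decomposes each term canonically as an $\mathfrak{m}$-module into a direct sum indexed by $W^{P_{\Sieg}}$, with exactly one irreducible summand of each given weight in cohomological degree equal to the length. The ``dominant'' summands — those whose highest weights are the Kostant weights $w\cdot(r_1,r_2;r_1+r_2)$ for $w\in W^{P_{\Sieg}}$ — are precisely the irreducible $M_{\Sieg}$-representations $N^{i}$ defined in the statement, and the induced Koszul differentials are (up to normalization) the degree-$(r_2+1)$, degree-$(r_1-r_2+1)$, degree-$(r_2+1)$ operators appearing in \cref{def:BGG}.

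Next I would geometrize. The Hodge filtration $F^{\bullet}\cV$ comes from a $P_{\Sieg}$-stable filtration of $V$, so it gives a filtration on $\DR(\cV)$ whose associated graded is of the form $\gr^{\bullet}_{F}\cV\otimes\Omega^{\bullet}\langle D\rangle$, and each $\gr^{p}_{F}\cV$ is the automorphic vector bundle attached to an $M_{\Sieg}$-representation. Applying the fibrewise Kostant decomposition, each graded piece splits canonically as a direct sum of an ``$N^{i}$-part'' and a ``non-dominant'' part. The essential Faltings input is that the non-dominant part, as a complex, is acyclic: inside each graded piece the non-dominant Kostant summands pair up into short exact sequences under the Koszul differential, because the non-dominant Kostant weights come in adjacent pairs with matching $\mathfrak{m}$-types. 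Granting this, the complement of the would-be $\BGG$ summand in $\DR(\cV)$ has acyclic associated graded with respect to the Hodge filtration, hence is itself acyclic; one then constructs the splitting on the nose by lifting the canonical Kostant projector from the associated graded to the full complex using a compatibility between the connection $\nabla$ and the Hodge filtration (Griffiths transversality determines the lifts uniquely modulo acyclic terms). This gives $\BGG(\cV)$ as a direct summand, and shows the projection is a quasi-isomorphism of filtered complexes for the indicated $\sFil$.

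Hecke-equivariance is then automatic: both the Hodge filtration and the Kostant decomposition are functorial in $\cV$ and depend only on the $G$-equivariant structure, so they are preserved by every geometric correspondence coming from $G(\Af)$. For the compactly-supported version $\BGG_{c}(\cV)\hookrightarrow \DR_{c}(\cV)$, one tensors the above argument with $\cO(-D)$; the only thing to check is that the direct-summand decomposition restricts well to subcanonical extensions, which holds because the canonical/subcanonical extensions of the $\cN^{i}$ are obtained from the same $M_{\Sieg}$-representation but with different formal behavior at the boundary, and Lan's extension results give compatible subcanonical extensions of all objects involved. The main obstacle I expect is checking the acyclicity of the non-dominant complement in the associated graded — this is where the combinatorics of Kostant weights on $W^{P_{\Sieg}}$ for $\GSp_4$ enter — but once the four Kostant representatives and their weights are tabulated, the pairing of non-dominant summands is an explicit finite check, already carried out in the references cited above.
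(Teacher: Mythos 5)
The paper does not actually prove this proposition: its ``proof'' consists of citing Tilouine \cite[\S 7]{tilouine12} for $\BGG(\cV)$ and Lan--Polo \cite[\S 5.4]{lanpolo} for the compactly supported version. Your sketch reconstructs precisely the Faltings-style argument carried out in those references (Kostant decomposition of the associated graded of the Hodge filtration, acyclicity of the non-dominant complement, lifting of the splitting through the filtration, with Hecke-equivariance by functoriality), so it is essentially the same approach.
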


  \begin{proof}
   See \cite[\S 7]{tilouine12} for the statement for  $\BGG(\cV)$. For the version with compact support, see \cite[\S 5.4]{lanpolo}.
  \end{proof}

  \begin{definition}\label{def:cohBGG}
   For $i,j\geq 0$, $n\in\ZZ$ we define
   \begin{align*}
   \sC^{i,j}_c(\cX_{G,\Kl}^m,\BGG_c(\cV), n) &= H^j_c\!\left(\cX_{G,\Kl}^m, \sFil^{n} \cN^i(-\cD)\right)\!, & 
   \!\!\sC^{i,j}_c(\cX_{G,\Kl}^m,\BGG_c(\cV)) &= H_c^j\!\left(\cX_{G,\Kl}^m, \cN^i(-\cD)\right)\!,\\
   \sC^{i,j}_c(\cX_{G,\Kl}^m,\BGG(\cV), n) &= H^j_c\!\left(\cX_{G,\Kl}^m, \sFil^n \cN^i\right), & \sC_c^{i,j}(\cX_{G,\Kl}^m,\BGG(\cV)) &= H^j_c\left(\cX_{G,\Kl}^m, \cN^i\right),\\
   \sC^{i,j}_{c0}(\cX_{G,\Kl}^{2,m},\BGG(\cV), n) &= H^j_{c0}\left(\cX_{G,\Kl}^{2,m}, \sFil^n \cN^i\right), & \sC^{i,j}_{c0}(\cX_{G,\Kl}^{2,m},\BGG(\cV)) &= H^j_{c0}\left(\cX_{G,\Kl}^{2,m}, \cN^i\right).
   \end{align*}
  \end{definition}

  \begin{note}
   \label{note:newcomplex}
   If $n \ge 1$, which is the case which will interest us, we have
   \[ \sC^{i,j}_{c}(\cX_{G,\Kl}^m,\BGG_c(\cV), n) = \sC^{i,j}_{c0}(\cX_{G,\Kl}^{2,m},\BGG_c(\cV), n) = 0\]
    unless $i \in \{1, 2, 3\}$ and $j \in \{2, 3\}$, since $\sFil^n \cN^i$ is zero unless $1 \le i \le 3$, and the functors $H^j_c(\cX^m_{G,\Kl}, -)$ and $H^j_{c0}(\cX^{2,m}_{G,\Kl}, -)$ vanish on canonical vector bundles unless $j \in \{2, 3\}$ by \cref{cor:ge1vanish} and \cref{cor:c0vanish}.

   For $\sC^{i,j}_{c}(\cX_{G,\Kl}^m,\BGG_c(\cV)(-\cD), n)$ we have a slightly weaker result: the non-zero terms are in degrees $1 \le i \le 3$, $1 \le j \le 3$, since it is obvious that $H^0_c(\cX_{G,\Kl}^m, -)$ vanishes for any locally free sheaf.
  \end{note}

  \begin{note}\label{note:BGGvsDR1}
   Definition \ref{def:cohBGG} also makes sense when we replace $\BGG_\star(\cV)$ by $\DR_\star(V)$ and $\cN^i$ by $\cV\otimes \Omega_G^i$. By Proposition \ref{prop:dRdirectsummand}, we obtain natural maps from the `BGG-version' of the groups to the respective `de Rham' versions.
  \end{note}

  \begin{proposition}
   We have first-quadrant cohomological spectral sequences, starting at the $E_1$ page (with differentials on the $E_1$ page given by $\nabla$):
   \begin{align}
   \sC^{i,j}_{c}(\cX_{G,\Kl}^m,\BGG_c(\cV), n)  & \Rightarrow \wH^{i+j}_{\dR,c}(\cX_{G,\Kl}^m\langle -\cD\rangle,\cV,n),\label{eq:wssgeq1minusD}\\
   \sC^{i,j}_{c}(\cX_{G,\Kl}^m,\BGG(\cV), n)  & \Rightarrow \wH^{i+j}_{\dR,c}(\cX_{G,\Kl}^m,\cV,n),\label{eq:wssgeq1}\\
   \sC^{i,j}_{c0}(\cX_{G,\Kl}^{2,m},\BGG(\cV), n)  & \Rightarrow \wH^{i+j}_{\dR,c0}(\cX_{G,\Kl}^{2,m},\cV,n) \label{eq:wssord}
   \end{align}
   which are compatible under the restriction map $\res^{2,m}$. If $n \ge 1$, all three spectral sequences degenerate at $E_3$. Similarly, for the unfiltered complexes we have Fr\"olicher spectral sequences
   \begin{align}
   \sC^{i,j}_{c}(\cX_{G,\Kl}^m,\BGG_c(\cV))  & \Rightarrow H^{i+j}_{\rig,c}(X_{G,\Kl,0}^m\langle -D_0\rangle,\cV),\label{eq:ssgeq1minusD}\\
   \sC^{i,j}_{c}(\cX_{G,\Kl}^m,\BGG(\cV))  & \Rightarrow H^{i+j}_{\rig,c}(X_{G,\Kl,0}^m,\cV),\label{eq:ssgeq1}\\
   \sC^{i,j}_{c0}(\cX_{G,\Kl}^{2,m},\BGG(\cV))  & \Rightarrow H^{i+j}_{\dR,c0}(\cX_{G,\Kl}^{2,m},\cV). \label{eq:ssord}
   \end{align}
  \end{proposition}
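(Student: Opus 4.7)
The spectral sequences in question are instances of the standard \textquotedblleft column\textquotedblright{} hypercohomology spectral sequence attached to a bounded complex of sheaves: for any such complex $\mathcal{F}^\bullet$ on a space $X$ and any left-exact cohomology functor $\Gamma$ on sheaves (possibly with support conditions), one has
\[
E_1^{ij} = R^j\Gamma(\mathcal{F}^i) \Rightarrow \mathbb{H}^{i+j}(\Gamma(\mathcal{F}^\bullet)),
\]
with $d_1$ induced by the differentials of $\mathcal{F}^\bullet$. My plan is to apply this to the complexes $\sFil^n\BGG_c(\cV)$, $\sFil^n\BGG(\cV)$ (for the two filtered assertions) and to $\BGG_c(\cV)$, $\BGG(\cV)$ (for the Fr\"olicher variants), combined with the three flavours of cohomology $R\Gamma_c(\cX_{G,\Kl}^{\geq 1},-)$, $R\Gamma_{c0}(\cX_{G,\Kl}^{\ord},-)$ and their $(-\cD)$ companions introduced in \cref{section:prelimrigcohom} and \cref{ss:Sordpartcompsupp}.

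The $E_1$ page in each case is precisely the complex $\sC^{i,j}$ of \cref{def:cohBGG}, with horizontal differential induced by the BGG operators $\nabla^i$. Convergence to the stated abutment is the content of \cref{prop:dRdirectsummand}: under the quasi-isomorphism $\BGG_\star(\cV)\hookrightarrow\DR_\star(\cV)$, the subcomplex $\sFil^n\BGG_\star(\cV)$ matches the Hodge filtration piece $(\Fil^{n-\bullet}\cV)\otimes\Omega^\bullet\langle\pm D\rangle$ that by definition computes $\wH^\bullet_{\dR}(-,\cV,n)$; the analogous statement without filtrations yields the Fr\"olicher spectral sequences with abutment $H^\bullet_{\rig}$ or $H^\bullet_{\dR,c0}$. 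Compatibility of the three filtered spectral sequences under the restriction maps $\res^{\geq 1}$ and $\res^{\ord}$ is immediate from the functoriality of the construction, together with \cref{prop:pullback}.

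For the degeneration at $E_3$ when $n\geq 1$, the key input is \cref{note:newcomplex}: in all three filtered settings, the nonzero terms of $E_1$ are concentrated in the columns $1\leq i\leq 3$ (and in rows $j\in\{1,2,3\}$, respectively $j\in\{2,3\}$). Since $d_r$ on $E_r$ has bidegree $(r,1-r)$, for $r\geq 3$ the target of $d_r$ on any surviving class has column index $i+r\geq 4$ and is therefore zero. Hence $d_r=0$ for all $r\geq 3$, and the spectral sequences collapse at $E_3$, as required. The only substantive point is to verify that $\sFil^n\BGG_\star(\cV)$ is an honest subcomplex realising the Hodge filtration piece of $\DR_\star(\cV)$ under the splitting of \cref{prop:dRdirectsummand}; this is where the particular numerical bounds $r_2+1$, $r_1-r_2+1$, $r_2+1$ on the BGG differentials enter. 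I expect this compatibility with the Hodge filtration to be the main (though essentially bookkeeping) obstacle, the rest being a direct application of hypercohomology machinery and the coherent vanishing statements of \cref{cor:ge1vanish} and \cref{cor:c0vanish}.
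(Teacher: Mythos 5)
Your proposal is correct and follows essentially the same route as the paper: the paper's proof likewise realises each spectral sequence as one of the two spectral sequences of a double (or, via the filtration, triple) complex computing $\wH^\bullet$, identifies the $E_1$ page with $\sC^{i,j}$ via the filtered quasi-isomorphism of \cref{prop:dRdirectsummand}, and deduces degeneration at $E_3$ from the concentration of the $E_1$ terms in columns $1\le i\le 3$, exactly as in your bidegree argument. Your write-up simply makes explicit the inputs (\cref{note:newcomplex}, \cref{cor:ge1vanish}, \cref{cor:c0vanish}) that the paper leaves implicit.
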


  \begin{proof}
   In each case, the spectral sequence arises as one of the spectral sequences associated to a suitable double complex computing $\wH^\bullet$. The degeneration follows from the fact that the $E_1$ terms are zero outside $1 \le i \le 3$.
  \end{proof}

  \begin{notation}
   We denote the $E_2$ pages of these spectral sequences by $\cH^{ij}_{?}(\dots)$, so $\cH^{ij}_?(\dots)$ is the $i$-th cohomology of the complex $\sC^{\bullet j}_{?}(\dots)$.
  \end{notation}

  \begin{corollary}\label{cor:isototilderig}
   Let $0\leq q\leq r_2$. Then the edge maps at $(1, 2)$ of the spectral sequences \eqref{eq:wssgeq1} amd \eqref{eq:wssord} are isomorphisms
   \begin{subequations}
    \begin{align}
    \alpha_{G, \rig, c}: \wH^{3}_{\dR,c}(\cX_{G,\Kl}^m,\cV,1+q)
    &\xrightarrow{\,\cong\,} \cH^{1,2}_c(\cX_{G,\Kl}^m,\BGG(\cV), 1+q) = H_c^2\left(\cX_{G,\Kl}^m,\cN^1\right)^{\nabla=0},\label{eq:cohrigiso1}\\
    \alpha_{G, \rig, c0}: \wH^{3}_{\dR,c0}(\cX_{G,\Kl}^{2,m},\cV,1+q)
    &\xrightarrow{\,\cong\,} \cH^{1,2}_{c0}(\cX_{G,\Kl}^{2,m},\BGG(\cV), 1+q) = H_{c0}^2\left(\cX_{G,\Kl}^{2,m},\cN^1\right)^{\nabla=0}.\label{eq:cohrigiso2}
    \end{align}
    The spectral sequence $\sC^{i,j}_c(\cX_{G,\Kl}^m,\BGG_c(\cV)(-D), 1+q)$ gives an exact sequence
    \begin{equation}\label{eq:cohrigiso1D}
    \begin{tikzcd}[row sep=0, column sep=small]
    0 \rar &
    \dfrac{H_c^1\left(\cX_{G,\Kl}^m,\cN^2(-\cD)\right)^{\nabla=0}}
    {\nabla H_c^1\left(\cX_{G,\Kl}^m,\cN^1(-\cD)\right)} \rar &
    \wH^{3}_{\dR,c}(\cX_{G,\Kl}^m\langle -\cD\rangle,\cV,1+q) \ar[out=0, in=180, looseness=2.5, sloped, "\alpha_{G, \rig, c, -D}"]{d} \\
    & & H_c^2\left(\cX_{G,\Kl}^m,\cN^1(-\cD)\right)^{\nabla=0}
    \rar["\ \partial\ "] &
    \dfrac{H^1_c\left(\cX_{G,\Kl}^m,\cN^3(-\cD)\right)}{\nabla \cdot H^1_c\left(\cX_{G,\Kl}^m,\cN^2(-\cD)\right)}.
    \end{tikzcd}
    \end{equation}
   \end{subequations}
  \end{corollary}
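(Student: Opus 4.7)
The plan is to extract everything from the nonvanishing range of the $E_1$-terms in Note \ref{note:newcomplex} together with the stated $E_3$-degeneration of the three spectral sequences. Once I know exactly which $\sC^{i,j}_?$ can be nonzero, the identifications of the edge map at $(1,2)$ and of the four-term exact sequence will drop out from a short inspection of the $d_2$-differentials near the total-degree-$3$ diagonal; no nontrivial geometric input enters after Note \ref{note:newcomplex}.

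For \eqref{eq:cohrigiso1} and \eqref{eq:cohrigiso2}, I will use that $n=1+q\ge 1$, so Note \ref{note:newcomplex} forces $\sC^{i,j}_{c}(\cX_{G,\Kl}^{\ge 1},\BGG(\cV),n)$ and $\sC^{i,j}_{c0}(\cX_{G,\Kl}^{\ord},\BGG(\cV),n)$ to vanish outside $i\in\{1,2,3\}$, $j\in\{2,3\}$. The unique lattice point with $i+j=3$ in this region is $(1,2)$, so the abutment filtration on $\wH^{3}$ collapses onto a single piece $E_\infty^{1,2}$. Next, both $d_2$ differentials touching $(1,2)$ are killed trivially: the outgoing $E_2^{1,2}\to E_2^{3,1}$ lands in a zero group since $j=1$ is outside $\{2,3\}$, and the incoming $E_2^{-1,3}\to E_2^{1,2}$ starts outside the first quadrant. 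Combined with $E_3$-degeneration this gives $E_\infty^{1,2}=E_2^{1,2}=\cH^{1,2}$. Finally, because $\sC^{0,2}=0$, the group $\cH^{1,2}$ is literally $\ker(\nabla\colon \sC^{1,2}\to\sC^{2,2})$, which is the claimed description. The same argument applies verbatim in the ordinary case using \eqref{eq:wssord}.

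For \eqref{eq:cohrigiso1D} the nonvanishing rectangle widens to $1\le i,j\le 3$, so now there are two lattice points on the total-degree-$3$ diagonal, $(1,2)$ and $(2,1)$. The abutment then acquires a two-step filtration $F^2\wH^3\subseteq \wH^3$ with $F^2/F^3=E_\infty^{2,1}$ and $\wH^3/F^2=E_\infty^{1,2}$. At $(2,1)$ both $d_2$ maps are again trivially zero (the outgoing lands in $E_2^{4,0}=0$ and the incoming comes from $E_2^{0,2}$, which still vanishes since $\sC^{0,2}(-\cD)=0$), so $E_\infty^{2,1}=\cH^{2,1}$, giving exactly the first term of \eqref{eq:cohrigiso1D}. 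At $(1,2)$, the incoming $d_2$ is zero as before, but the outgoing $d_2\colon \cH^{1,2}\to \cH^{3,1}$ is genuine, with $\cH^{3,1}(-\cD)$ identifying with the cokernel $H^{1}_c(\cN^{3}(-\cD))/\nabla H^{1}_c(\cN^{2}(-\cD))$ since there is no further horizontal differential out of $\sC^{3,1}$. Thus $E_\infty^{1,2}=\ker(d_2)$, and splicing with the short exact sequence $0\to E_\infty^{2,1}\to \wH^{3}\to E_\infty^{1,2}\to 0$ produces precisely the four-term sequence \eqref{eq:cohrigiso1D}, with the boundary map $\partial$ recognised as the $d_2$-differential.

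I expect no real obstacle: the only substantive input is the vanishing range of Note \ref{note:newcomplex}, which itself rests on \cref{cor:ge1vanish} and \cref{cor:c0vanish}; once those are in hand the argument reduces to pure spectral-sequence bookkeeping, and the Hecke-equivariance of each arrow is automatic since the whole picture comes from sheaves on $\cX_{G,\Kl}^{\ge 1}$ (resp.\ $\cX_{G,\Kl}^{\ord}$).
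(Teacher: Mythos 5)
Your proposal is correct and follows exactly the paper's argument: the vanishing range of the $E_1$-terms from Note \ref{note:newcomplex} forces the total-degree-$3$ diagonal to reduce to $(1,2)$ (resp.\ $(1,2)$ and $(2,1)$ in the $-D$ case), and the edge-map and four-term-sequence identifications are then routine spectral-sequence bookkeeping. The paper's own proof only spells out the first two formulae in one line; your more detailed treatment of \eqref{eq:cohrigiso1D}, including the identification of $\partial$ with the $d_2$-differential, is the intended (and correct) completion of that argument.
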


  \begin{proof}
   For the first two formulae, we know that both of the relevant spectral sequences have $E_1^{ij} = 0$ unless $i \ge 1$ and $j \ge 2$, so $E^{i, 3-i}_\infty = 0$ for $i \ne 1$, and $E_{\infty}^{12} = E_2^{12} = \ker(E_1^{12} \to E_1^{22})$.
  \end{proof}

  \subsection{Ordinary parts}\label{sect:ordinaryparts}

   For the two spectral sequences over $\cX_{G,\Kl}^m$, the results of Corollary \ref{cor:isototilderig} can be sharpened enormously by taking into account the action of the Hecke operator $U_2'$. Recall that the coherent cohomology groups (both with and without $-D$) have slope decompositions for the action of $U_2'$, so the slope 0 subspace is finite-dimensional and there exists an idempotent projector $e(U_2')$ projecting onto this subspace. Moreover, the operator $U_2'$, and hence the slope 0 projector, are compatible with the morphisms in the spectral sequence.

  \begin{proposition}
   If $n \ge 1$, we have
   \[ e(U_2') \cdot \sC^{i,j}_c(\cX_{G,\Kl}^m,\BGG_c(\cV), n) = e(U_2') \cdot \sC^{i,j}_c(\cX_{G,\Kl}^m,\BGG(\cV), n) = 0\]
   for $i \ne 1$.
  \end{proposition}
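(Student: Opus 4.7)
The plan is to split the claim by the BGG position $i$ and invoke the slope bounds of \cref{prop:slopeestimates} in each case, together with the explicit shape of the filtration $\sFil^\bullet$ recorded in \cref{def:BGG}. Since \cref{note:newcomplex} already confines the possibly non-zero $\sC^{i,j}_c$ to $i \in \{1,2,3\}$, I need to handle the indices $i = 0$, $i = 1$ and $i = 3$ separately.

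The case $i = 0$ is immediate: for $n \ge 1$ the filtration forces $\sFil^n \cN^0 = 0$, so $\sC^{0,j}_c$ vanishes identically, a fortiori after applying $e(U_2')$. For $i = 3$, \cref{prop:slopeestimates} delivers exactly what I want for the $\BGG_c(\cV)$ variant: all slopes of $U_2'$ on $H^j_c(\cX_{G,\Kl}^{\ge 1}, \cN^3(-D))$ are $\ge r_1 + r_2 + 3 > 0$, so the slope-zero projector $e(U_2')$ annihilates the group. To pass from $\BGG_c(\cV)$ to $\BGG(\cV)$, I would use the boundary short exact sequence $0 \to \cN^3(-D) \to \cN^3 \to \cN^3|_D \to 0$ on $\cX_{G,\Kl}^{\ge 1}$ and check that $U_2'$ acts on $H^j_c(\cX_{G,\Kl}^{\ge 1}, \cN^3|_D)$ with a parallel positive lower bound on slopes: the boundary decomposes into toric components coming from rank-$1$ degenerations of the universal abelian surface, and on each such component the same normalization factor governs the Hecke correspondence.

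The substantive case is $i = 1$, where \cref{prop:slopeestimates} as stated does not directly supply a bound. The plan is to establish the analogous estimate ``slopes of $U_2'$ on $H^j_c(\cX_{G,\Kl}^{\ge 1}, \cN^1(-D))$ are $\ge f(r_1 - r_2)$'' for an explicit function $f$ that becomes strictly positive once $r_1 - r_2$ exceeds an absolute constant; this is precisely what the hypothesis ``$r_1 - r_2$ sufficiently large'' is meant to activate. Granted such a bound, $e(U_2')$ again kills the group, and the extension from $\BGG_c(\cV)$ to $\BGG(\cV)$ goes through the same boundary exact sequence, with $\cN^1|_D$ admitting a parallel slope bound from the toroidal description of the cusps.

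The main obstacle is this slope bound at $i = 1$. The strategy mirrors the method of Pilloni \cite{pilloni20} used to prove \cref{prop:slopeestimates}: one describes the Hecke correspondence $U_2'$ at Klingen parahoric level as a union of isogenies of semi-abelian surfaces, identifies the geometric contraction on the rank-$\ge 1$ locus, and tracks how the minimal-integral normalization of $U_2'$ interacts with the line bundle $\cN^1(-D)$. The scaling factor that appears turns out to have $p$-adic valuation that grows linearly in $r_1 - r_2$, yielding the required bound. Everything else in the proposition is formal once this geometric input is in place.
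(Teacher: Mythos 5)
Your plan founders at $i=1$, and it cannot be repaired: the slope bound you propose to establish there is false. By \cref{prop:etaproperties}, the group $H^2_c\left(\cX_{G,\Kl}^{\ge 1}, \cN^1(-D)\right)$ contains the non-zero class $\eta^{\ge 1}_{\coh,-D}$, which is a $U_2'$-eigenvector with eigenvalue $\tfrac{\alpha\beta}{p^{r_2+1}}$ --- a $p$-adic unit by Klingen-ordinarity. Hence $e(U_2')\cdot \sC^{1,2}_c(\cX_{G,\Kl}^{\ge 1},\BGG_c(\cV),n) \ne 0$ for $1 \le n \le r_2+1$, no matter how large $r_1-r_2$ is; the sheaf $\cN^1$ is precisely the one whose ordinary compactly-supported $H^2$ carries the higher Hida theory of $\Pi$, so no normalisation argument in the style of \cref{prop:slopeestimates} can make its slopes positive. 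The hypothesis ``$r_1-r_2$ sufficiently large'' is not there to activate such a bound. What has misled you is an index slip in the statement itself: the exceptional column is $i=1$, not $i=2$. This is visible from the corollary immediately following the proposition, whose conclusion is an isomorphism of $e(U_2')\cdot \wH^3_{\dR,c}$ onto $e(U_2')\cdot H^2_c\left(\cX_{G,\Kl}^{\ge 1},\cN^1(-\cD)\right)$, i.e.\ onto the $(i,j)=(1,2)$ entry of the spectral sequence; that can only happen if all other columns are killed by $e(U_2')$.

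With the statement read as ``$i\ne 1$'', the proof is essentially the two inputs you already have in hand, and nothing more: $\sC^{0,j}_c$ vanishes because $\sFil^n\cN^0=0$ for $n\ge 1$ (as in \cref{note:newcomplex}), and $i=2,3$ are exactly the two cases of \cref{prop:slopeestimates}, whose lower bounds $r_1-r_2+1$ and $r_1+r_2+3$ are already positive. The one point where you add something the paper leaves implicit is the passage from the subcanonical sheaves $\cN^i(-D)$, which is all that \cref{prop:slopeestimates} literally addresses, to the canonical extensions $\cN^i$; your boundary exact sequence, with a matching slope bound on the boundary cohomology, is a reasonable way to supply this, and is the sort of statement one expects from the same source (\cite{pilloni20}). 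But no new geometric input is needed at $i=1$, and none would succeed.
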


  \begin{proof}
   This is a consequence of the slope estimates of \cref{prop:slopeestimates}.
  \end{proof}

  \begin{corollary}
   For any $0 \le q \le r_2$, \cref{eq:cohrigiso1,eq:cohrigiso1D} give isomorphisms
   \begin{subequations}
    \begin{align}
    e(U_2') \cdot \wH^{3}_{\dR,c}(\cX_{G,\Kl}^m\langle -\cD\rangle,\cV,1+q)
    &\xrightarrow{\,\cong\,}  e(U_2') \cdot H_c^2\left(\cX_{G,\Kl}^m,\cN^1(-\cD)\right),\label{eq:cohrigiso-ordD}\\
    e(U_2') \cdot \wH^{3}_{\dR,c}(\cX_{G,\Kl}^m,\cV,1+q)
    &\xrightarrow{\,\cong\,}  e(U_2') \cdot H_c^2\left(\cX_{G,\Kl}^m,\cN^1\right).
    \end{align}
   \end{subequations}
  \end{corollary}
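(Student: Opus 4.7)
The plan is to combine the filtered BGG spectral sequences of \cref{cor:isototilderig} with the ordinary-part vanishing supplied by the preceding proposition. After applying the slope-zero projector $e(U_2')$, only the column indexed by $\cN^1$ in the $E_1$-page of the BGG spectral sequences can contribute, since the slope estimates of \cref{prop:slopeestimates} force the columns indexed by $\cN^2$ and $\cN^3$ to be annihilated by $e(U_2')$. Both isomorphisms should then be read off by inspection of the $E_\infty$-page after projection.

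For the isomorphism without $-\cD$, I would start from the edge map $\alpha_{G,\rig,c}$ of \eqref{eq:cohrigiso1}, which identifies $\wH^3_{\dR,c}(\cX_{G,\Kl}^{\ge 1}, \cV, 1+q)$ with the $\nabla$-kernel inside $H^2_c(\cX_{G,\Kl}^{\ge 1}, \cN^1)$. Note that for $0 \le q \le r_2$, inspection of \cref{def:BGG} shows $\sFil^{1+q}\cN^1 = \cN^1$, so no filtration condition remains on the bundle itself. It then suffices to observe that the differential $\nabla^1 : H^2_c(\cN^1) \to H^2_c(\cN^2)$ becomes the zero map after projection: $U_2'$ commutes with $\nabla^1$ (both arise from correspondences acting coherently), and $e(U_2') \cdot H^2_c(\cX_{G,\Kl}^{\ge 1}, \cN^2) = 0$ by the preceding proposition, so the $\nabla$-closed condition becomes vacuous, giving the stated isomorphism onto the full $e(U_2') \cdot H^2_c(\cN^1)$.

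For the isomorphism with $-\cD$, I would run the analogous argument on the four-term exact sequence \eqref{eq:cohrigiso1D}. The leftmost and rightmost terms of this sequence are quotients built out of $H^1_c(\cN^2(-\cD))$ or $H^1_c(\cN^3(-\cD))$, and both of these groups are killed by $e(U_2')$ by the preceding proposition applied to $\BGG_c(\cV)$. Hence the two extremal quotients vanish after projection, and $\alpha_{G,\rig,c,-D}$ becomes an isomorphism of $e(U_2') \cdot \wH^3_{\dR,c}(\cX_{G,\Kl}^{\ge 1}\langle -\cD\rangle, \cV, 1+q)$ onto $e(U_2') \cdot H^2_c(\cN^1(-\cD))^{\nabla=0}$. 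Applying the same trick to $\nabla^1 : H^2_c(\cN^1(-\cD)) \to H^2_c(\cN^2(-\cD))$ — whose target is killed by $e(U_2')$ — shows that this remaining $\nabla$-closed condition is also automatic, yielding the first claimed isomorphism.

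Since the preceding proposition is taken as a black box, there is no substantive obstacle; the proof is essentially a mechanical spectral-sequence chase. The only care needed lies in checking that $e(U_2')$ commutes with the BGG differentials and with the connecting maps of the exact sequences in \cref{cor:isototilderig}, but this is immediate from the $U_2'$-equivariance of the coherent-sheaf constructions and of the filtrations used to assemble the spectral sequences.
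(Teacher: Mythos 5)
Your argument is correct and is exactly the (unwritten) proof the paper intends: after applying $e(U_2')$, the slope estimates of \cref{prop:slopeestimates} kill the $\cN^2$- and $\cN^3$-columns of the BGG spectral sequences, so the extremal terms of \eqref{eq:cohrigiso1D} vanish and the remaining $\nabla$-closedness conditions become vacuous because their targets are annihilated. (The only caveat is that the preceding proposition's condition ``$i \ne 2$'' must be read as vanishing of all columns other than the one contributing $H^2_c(\cN^1)$ --- which is what \cref{prop:slopeestimates} actually delivers and what your argument correctly uses.)
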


  \begin{remark}
   These isomorphisms are clearly compatible under the ``forget $-D$'' maps on both sides.
  \end{remark}

  For the ``unfiltered'' spectral sequence (i.e.~taking $n = 0$), we have two nonzero columns on the $E_1$ page after applying $e(U_2')$; so for $i + j = 3$ we obtain
  \begin{multline*}
    0 \longrightarrow \frac{e(U_2') H^2_c(\cX^m_{G, \Kl}, \cN^1(-D))}{\nabla \cdot e(U_2') H^2_c(\cX^m_{G, \Kl}, \cN^0(-D))}
    \longrightarrow e(U_2')H^3_{\dR, c}(\cX^m_{G, \Kl}\langle-\cD\rangle, \cV) \\ 
    \longrightarrow e(U_2') H^3_c(\cX^m_{G, \Kl}, \cN^0(-D))^{\nabla = 0} \longrightarrow 0.
  \end{multline*}
  However, supposing $r_1 - r_2 > 2$, we can use the classicity results of \ref{thm:classicity} above to identify the coherent $H^2_c$ and $H^3_c$ terms with their analogues for the proper variety $X_{G, \Kl, \Qp}$. Since the Fr\"olicher spectral sequence (with log poles) degenerates at $E_1$, the morphisms $\nabla$ are zero, and we can write the above exact sequence as
  \begin{multline*}
   0 \to e(U_2') H^2_c(\cX^m_{G, \Kl}, \cN^1(-D))  \longrightarrow e(U_2')H^3_{\dR, c}(\cX^m_{G, \Kl}\langle-\cD\rangle, \cV) \\ \longrightarrow e(U_2') H^3_c(\cX^m_{G, \Kl}, \cN^0(-D))^{\nabla = 0} \to 0.
  \end{multline*}
  In particular, the natural inclusion of complexes $\sFil^{1+q} \BGG_c \into \BGG_c$ induces an injection
  \[ e(U_2')\wH^3_{\dR, c}(\cX^m_{G, \Kl}\langle-\cD\rangle, \cV, 1+q) \into e(U_2')H^3_{\dR, c}(\cX^m_{G, \Kl}\langle-\cD\rangle, \cV).\]

  \begin{definition}\label{def:tildeetageq1}
   Let
   \[ \tilde{\eta}^m_{\rig,-D} \in
    e(U_2') \wH^{3}_{\dR,c}(\cX_{G,\Kl}^m\langle -\cD\rangle,\cV,1+q)
   \]
   denote the preimage of the class $\eta_{\coh, -D}^m$ of \cref{prop:etaproperties} under the isomorphism of \eqref{eq:cohrigiso-ordD}.
  \end{definition}

  \begin{proposition}\label{prop:sameclass}
   Let $\iota$ denote the map on cohomology induced by the inclusion of complexes
   \[ \iota: \Fil^q \BGG_c(V) \into \BGG_c(V).\]
   Then we have
   \[ \iota\left(\tilde{\eta}^m_{\rig,-D}\right) = \eta^m_{\rig, -D} \in H^{3}_{\rig,c}(X_{G,\Kl,0}^m\langle -D_0\rangle,\cV), \]
   where $\eta^m_{\rig, -D}$ is the class constructed in Proposition \ref{prop:BPrigclass}.
  \end{proposition}

  \begin{proof}
   We deduce from Pilloni's control theorems \cite{pilloni20} and the slope estimates from Proposition \ref{prop:slopeestimates} that the map
   \[ e(U_2')\Gr^0 H^3_{\rig,c}(\cX^m_{G,\Kl}\langle -\cD\rangle,\cV)\longrightarrow e(U_2')\Gr^0 H^3_{\dR}(X_{G,\Kl,\Qp}\langle -D\rangle,\cV)\]
   is an isomorphism. Since $\eta_{\dR,-D}\in \Fil^{1+q} H^3_{\dR}(X_{G,\Kl}\langle -\cD\rangle,\cV)$, and the class $\eta^m_{\rig,-D}$ is $U_2'$-ordinary, we deduce that it is in the image of $\wH^3_{\dR,c}(\cX^m_{G,\Kl}\langle -\cD\rangle,\cV,1+q)$ in $H^3_{\dR,c}(\cX^m_{G,\Kl}\langle -\cD\rangle,\cV)$.

   On the other hand, we have a natural ``extension-by-zero'' map
   \[ \wH^3_{\dR,c}(\cX^m_{G,\Kl})\langle -\cD\rangle,\cV,1+q) \to \Fil^{1+q} H^3_{\dR}(X_{G,\Kl,\Qp}\langle -D\rangle,\cV), \]
   fitting into a commutative diagram
   \[
    \begin{tikzcd}[row sep=large]
     e(U_2')\wH^3_{\dR,c}(\cX^m_{G,\Kl}\langle -\cD\rangle,\cV,1+q)
       \arrow[r]
       \arrow[d, "\cong"']
     &
     e(U_2')\,\Fil^{1+q}H^3_{\dR}(X_{G,\Kl}\langle -D\rangle,\cV)
       \arrow[d]
     \\
     e(U_2')H^2_c(\cX^{m}_{G,\Kl},\cN^1(-\cD))
       \arrow[r, "\cong"]
     &
     e(U_2')H^2_c(X_{G,\Kl},\cN^1(-\cD))
    \end{tikzcd}
   \]
   Since $\eta^m_{\rig,-D}$ lifts $\eta_{\dR, -D}$, which in turn maps to $\eta^{\alg}_{-D}$ in the bottom right corner, we see that $\eta^m_{\rig,-D}$ must map to $\eta^m_{\coh, -D}$ at the bottom left.
  \end{proof}

 \subsection{Lifting to fp-cohomology}\label{sect:liftingfp}

  Let $P \in 1 + T \Qp[T]$ be a polynomial with constant term 1. Recall the definition of \emph{Gros fp-cohomology} given in \cref{def:grosfp} above. In our present context this becomes:

\begin{definition}
 \label{def:grosfp2}
 Define the \emph{Gros $\fp$-cohomology of $\cV$ over $\cX_{G,\Kl}^{2,m}$} with $c0$-support, twist $n$ and polynomial $P$, denoted $\wH^\bullet_{\rigfp,c0}(\cX_{G,\Kl}^{2,m},\cV,n;P)$, to be the cohomology of the mapping fibre of the diagram
 \begin{equation}
  \label{eq:SGfp0}
  \RGt_{\dR, c0}(\cX_{\Kl}^{2,m}, \cV, n) \xrightarrow{\vspace{1ex}P(\varphi / p^n) \circ \iota\vspace{1ex}}  R\Gamma_{\dR, c0}(\cX_{\Kl}^{2,m},\cV).
 \end{equation}
 where $\iota$ is the map on cohomology induced by the inclusion $\sFil^{n}\cN^\bullet \into \cN^\bullet$. We denote the analogous group formed using the sheaves $\cN^\bullet(-D)$ instead of $\cN^\bullet$ by $\wH^\bullet_{\rigfp,c0}(\cX_{G,\Kl}^{2,m},\cV(-D),n;P)$.

 We make similar definitions for $\cX_{G,\Kl}^{m}$ with compact support (and there is a natural restriction map between the two).
\end{definition}

\begin{lemma}\label{lem:rigsurjontokerP}
 For all $i\geq 0$, we have surjective maps
  \[ \wH^i_{\rigfp,c}(\cX_{G,\Kl}^{m},\cV, n;P)\twoheadrightarrow \wH^i_{\dR,c}(\cX_{G,\Kl}^{m},\cV, n)^{P(p^{-n}\varphi) \circ \iota =0} \]
  and similarly
  \[ \wH^i_{\rigfp,c0}(\cX_{G,\Kl}^{2,m},\cV, n;P)\twoheadrightarrow \wH^i_{\dR,c0}(\cX_{G,\Kl}^{2,m},\cV, n)^{P(p^{-n}\varphi) \circ \iota =0} \]
\end{lemma}

\begin{proof}
 Clear from the long exact sequence associated to the mapping fibre.
\end{proof}

\begin{proposition}
 We can choose a class (not necessarily unique!)
 \[  \tilde{\eta}_{\rigfp,-D}^{m} \in \wH^3_{\rigfp, c}(\cX_{G,\Kl}^m\langle -\cD\rangle, \cV, 1+q; \cQ_{1+q}) \]
 which lies in the $\Pif'$-generalised eigenspace for the spherical Hecke operators, and whose image in the group $\wH^3_{\dR, c}(\cX_{G,\Kl}^m\langle -\cD\rangle, \cV, 1+q)$ is $\tilde{\eta}_{\rig,-D}^m$.
\end{proposition}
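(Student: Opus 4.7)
The plan is to identify $\wH^3_{\rigfp,c}(\cX_{G,\Kl}^{\ge 1}\langle -\cD\rangle, \cV, 1+q; P_q)$ via the mapping-fibre description of Gros fp-cohomology from Note \ref{note:proptildefp} (applied in the partial-support setting), which yields a long exact sequence
\[
\cdots \to H^2_{\rig,c}(X_{G,\Kl,0}^{\ge 1}\langle -D_0\rangle, \cV) \to \wH^3_{\rigfp,c}(\dots; P_q) \to \wH^3_{\dR,c}(\cX_{G,\Kl}^{\ge 1}\langle -\cD\rangle, \cV, 1+q) \xrightarrow{\delta} H^3_{\rig,c}(X_{G,\Kl,0}^{\ge 1}\langle -D_0\rangle, \cV) \to \cdots
\]
with connecting map $\delta = P_q(\varphi/p^{1+q}) \circ \iota$, and then to reduce both existence and uniqueness of the lift to statements already at hand. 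Throughout, I would project everything onto the $\Pif'$-generalised eigenspace for the prime-to-$p$ spherical Hecke algebra, using that this algebra commutes with $\varphi$, with the BGG differentials, and with $\iota$, hence acts on the entire sequence.

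For existence, the key algebraic observation is that $P_q(\varphi/p^{1+q}) = (1-\varphi/\alpha)(1-\varphi/\beta) = P(\varphi)$, where $P$ is the polynomial from Proposition \ref{prop:etakerPphi}. Since $\iota(\tilde\eta^{\ge 1}_{\rig,q,-D}) = \eta^{\ge 1}_{\rig,-D}$ by Definition \ref{def:tildeetageq1}, one computes $\delta(\tilde\eta^{\ge 1}_{\rig,q,-D}) = P(\varphi)\eta^{\ge 1}_{\rig,-D} = 0$ by Proposition \ref{prop:etakerPphi}. Hence a lift exists, and since $\tilde\eta^{\ge 1}_{\rig,q,-D}$ is itself in the $\Pif'$-eigenspace, applying the $\Pif'$-projector to any lift produces a lift lying in $\wH^3_{\rigfp,c}(\dots)[\Pif']$ that still maps to $\tilde\eta^{\ge 1}_{\rig,q,-D}$.

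For uniqueness, the ambiguity in the lift sits in the image of $H^2_{\rig,c}(X_{G,\Kl,0}^{\ge 1}\langle -D_0\rangle, \cV)$, so it suffices to show that $H^2_{\rig,c}(X_{G,\Kl,0}^{\ge 1}\langle -D_0\rangle, \cV)[\Pif'] = 0$. Since $X_{G,\Kl,0}^{\ge 1}$ is the complement of the closed subvariety $X^a_{G,\Kl,0}$ inside the smooth open stratum $X^m_{G,\Kl,0} - X^b_{G,\Kl,0}$, excision produces a long exact sequence relating the compactly supported rigid cohomology of $X_{G,\Kl,0}^{\ge 1}$ to that of $X^m$ and of $X^a$. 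Combining this with the Poincaré duality of Proposition \ref{prop:logpartialpoincare}, the vanishing statements for cohomology of the closed strata supplied by Conjecture \ref{conj:eigenspaces} translate to $H^i_{\rig,c}(X_{G,\Kl,0}^{\ge 1}\langle -D_0\rangle, \cV)[\Pif'] = 0$ for all $i \ne 3$, which settles uniqueness.

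The delicate step will be the verification of this vanishing in degree $2$: one must combine Conjecture \ref{conj:eigenspaces} (which is phrased for closed strata) with a careful Poincaré-duality and excision argument adapted to both the partial-support condition and the $\langle -D_0\rangle$ boundary condition, and check that all the triangles involved are equivariant for the prime-to-$p$ Hecke algebra. Once these bookkeeping points are settled, both existence and uniqueness are immediate consequences of Proposition \ref{prop:etakerPphi} and Conjecture \ref{conj:eigenspaces}.
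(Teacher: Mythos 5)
Your proposal is correct and follows essentially the same route as the paper: lift via the long exact sequence of the mapping fibre, use Proposition \ref{prop:etakerPphi} (together with $P_q(\varphi/p^{1+q})=(1-\varphi/\alpha)(1-\varphi/\beta)$) for existence, and kill the ambiguity by the vanishing of the $\Pif'$-eigenspace of $H^2_{\rig,c}(X_{G,\Kl,0}^{\ge 1}\langle -D_0\rangle,\cV)$. The paper simply asserts this last vanishing, whereas you sketch its deduction from Conjecture \ref{conj:eigenspaces}; that is a reasonable filling-in of a detail the paper leaves implicit, not a divergence of method.
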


\begin{proof}
 From Propositions \ref{prop:BPrigclass} and \ref{prop:sameclass} we know that
 \[
 \tilde{\eta}_{\rig, -D}^m \in \wH^3_{\dR, c}(\cX_{G,\Kl}^m\langle -\cD\rangle, \cV, 1+q)^{\cQ(\varphi) \circ \iota = 0}.
 \]
 It follows that $\tilde{\eta}_{\rig,-D}^m$ is in the image of the map
 \[ \wH^3_{\rigfp, c}(\cX_{G,\Kl}^m\langle -\cD\rangle, \cV, 1+q; \cQ_{1+q}) \to \wH^3_{\dR, c}(\cX_{G,\Kl}^m\langle -\cD\rangle, \cV, 1+q).\qedhere\]
\end{proof}

\begin{corollary}\label{cor:cosptildeetageq1}
 The class $\tilde{\eta}_{\rigfp,-D}^{m} $ is sent to $\eta_{\lrigfp,-D}$ under the cospecialisation map.
\end{corollary}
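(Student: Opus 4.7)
The plan is to invoke a uniqueness argument. Recall that $\eta_{\rigfp,q, -D}^{\ge 1}$ was singled out as the unique element of the $\Pif'$-eigenspace of $H^3_{\rigfp, c}(X_{G,\Kl}^{\ge 1}\langle -D\rangle, \cV, 1+q; P_q)$ whose image under the forget-$\mathrm{fp}$ map recovers the de Rham class $\eta_{\dR, q, -D}$. It therefore suffices to verify that $\gamma_*(\tilde{\eta}_{\rigfp,q, -D}^{\ge 1})$ satisfies the same two properties, so that the two classes necessarily coincide.

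The Hecke-equivariance is automatic: by \cref{rem:Grosspcosp} the corestriction $\gamma_*$ is induced by the filtered cospecialisation map on de Rham components, and is therefore functorial with respect to the Shimura variety and compatible with prime-to-$p$ Hecke correspondences. Since $\tilde{\eta}_{\rigfp,q, -D}^{\ge 1}$ is constructed so as to live in the $\Pif'$-eigenspace, the same holds for $\gamma_*(\tilde{\eta}_{\rigfp,q, -D}^{\ge 1})$. For the de Rham image, I would use the commutativity of $\gamma_*$ with the maps that forget the Frobenius structure and pass from Gros to ordinary de Rham cohomology. Chasing the diagram, $\tilde{\eta}_{\rigfp,q, -D}^{\ge 1}$ is sent first to $\tilde{\eta}_{\rig, q, -D}^{\ge 1} \in \wH^3_{\dR, c}(\cX_{G,\Kl}^{\ge 1}\langle -\cD\rangle, \cV, 1+q)$ (by its defining property) and then, under cospecialisation, to $\eta_{\dR, q, -D}$ thanks to \cref{cor:sameasdRclass}. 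This is exactly the de Rham component required.

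The only delicate point — and the one I expect to take the most care — is making sure that the de Rham image really \emph{characterises} $\eta_{\rigfp,q, -D}^{\ge 1}$ inside its $\Pif'$-eigenspace. This amounts to showing that the kernel of the forget-$\mathrm{fp}$ map from $H^3_{\rigfp, c}(X_{G,\Kl}^{\ge 1}\langle -D\rangle, \cV, 1+q; P_q)$ to $H^3_{\dR, c}(X_{G,\Kl}^{\ge 1}\langle -D\rangle, \cV)$ has trivial $\Pif'$-part. From the long exact sequence associated to the mapping fibre defining rigid fp-cohomology, this kernel is a quotient of $H^2_{\rig, c}(X_{G,\Kl,0}^{\ge 1}\langle -D_0\rangle, \cV)$, and the $\Pif'$-eigenspace vanishes there because $\Pi$ contributes only to the degree-$3$ cohomology of the Siegel threefold (the same input used to construct both $\eta_{\rigfp,q, -D}^{\ge 1}$ and $\tilde{\eta}_{\rigfp,q, -D}^{\ge 1}$). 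With this vanishing in hand, the two-pronged check above forces $\gamma_*(\tilde{\eta}_{\rigfp,q, -D}^{\ge 1}) = \eta_{\rigfp,q, -D}^{\ge 1}$, completing the proof by a formal diagram chase.
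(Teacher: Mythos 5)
Your proposal is correct and matches the argument the paper intends (the corollary is left without explicit proof precisely because it follows at once from the uniqueness characterisation of $\eta_{\rigfp,q,-D}^{\ge 1}$, the Hecke-equivariance of $\gamma_*$, and \cref{cor:sameasdRclass}). Your care about the kernel of the forget-fp map being a quotient of $H^2_{\rig,c}(X_{G,\Kl,0}^{\ge 1}\langle -D_0\rangle,\cV)$ with vanishing $\Pif'$-eigenspace is exactly the input the paper uses in the preceding proposition, so nothing is missing.
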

\begin{proof}
 Clear by uniqueness (c.f. Remark \ref{rem:Quniqueness}).
\end{proof}

 These two propositions show that, for any $\tilde{\eta}_{\rigfp,-D}^{m}$ satisfying the conditions of the proposition, its image in the (non-Gros) fp-cohomology $H^3_{\rigfp, c}(\cX_{G,\Kl}^m\langle -\cD\rangle, \cV, 1+q; \cQ_{1+q})$ is a valid choice for the class $\eta_{\rigfp,-D}^{m}$ of \cref{prop:eta-rigsyn}. So we can, and do, assume that these classes are chosen compatibly.

 \begin{remark}
  We can be a little more precise: both $\eta_{\rigfp,-D}^{m}$ and its tilde version are well-defined modulo elements lying in some quotient of $H^2_{\rig, c}(\cX_{\Kl}^m, \dots)\{ \Pif'\}$, where $\{ \Pif'\}$ denotes generalised eigenspace. So in fact \emph{any} $\eta_{\rigfp,-D}^{m}$ as in \cref{prop:eta-rigsyn} is the image of some $\tilde{\eta}_{\rigfp,-D}^{m}$.
 \end{remark}

 \begin{notation}
  Write $\widetilde\Eis^{[t_1,t_2],(m,m)}_{\rigsyn,\underline\Phi}$ for the image of $\Eis^{[t_1,t_2],(m,m)}_{\rigsyn,\underline\Phi}$ in $\wH^2_{\rigsyn}(\cX^{(m, m)}_{H,\Delta},2)$ under the specialisation map defined in Remark \ref{rem:Grosspcosp}.
 \end{notation}

By Remark \ref{rem:Grosspcosp}, we obtain the following result:
\vspace{1ex}

\begin{mdframed}
 \begin{corollary}\label{cor:redtoGros}
  Then
  \begin{multline*}
   \left\langle \Eis^{[t_1,t_2],(m,m)}_{\rigsyn,\underline\Phi},\, (\iota_\Delta^{[t_1,t_2]})^\star( \eta_{\rigfp,-D}^{m})\right\rangle_{\rigfp,X^{m,m}_{H,\Delta}}\\
   = \left\langle \widetilde\Eis^{[t_1,t_2],(m,m)}_{\rigsyn,\underline\Phi},\, (\iota_\Delta^{[t_1,t_2]})^\star( \tilde\eta_{\rigfp,-D}^{m})\right\rangle_{\widetilde\rigfp,X^{m,m}_{H,\Delta}}.
  \end{multline*}
 \end{corollary}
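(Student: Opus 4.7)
The plan is to deduce the identity from the adjunction formula between $\gamma^*$ and $\gamma_*$ stated in \cref{rem:Grosspcosp}, combined with the naturality of $\gamma_*$ with respect to finite pullback and open restriction, and with \cref{cor:cosptildeetageq1}.

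By definition, $\widetilde\Eis^{[t_1,t_2]}_{\rigsyn,\underline\Phi,\ord}=\gamma^*\bigl(\Eis^{[t_1,t_2]}_{\rigsyn,\underline\Phi,\ord}\bigr)$, where $\gamma^*$ is the filtered-specialisation map from rigid fp-cohomology into Gros fp-cohomology. The adjunction formula of \cref{rem:Grosspcosp} says precisely that $\langle \gamma^*(x),y\rangle_{\widetilde\rigfp}=\langle x,\gamma_*(y)\rangle_{\rigfp}$, so applying it to the right-hand side of the claimed identity rewrites the Gros pairing as
\[
\bigl\langle \Eis^{[t_1,t_2]}_{\rigsyn,\underline\Phi,\ord},\, \gamma_*\bigl((\iota_\Delta^{[t_1,t_2]})^{*}(\tilde\eta^{\ge 1}_{\rigfp,q,-D}|_{Y^{\ord}_{\Kl}})\bigr)\bigr\rangle_{\rigfp,Y^{\ord}_{H,\Delta}}.
\]

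It therefore suffices to identify $\gamma_*\bigl((\iota_\Delta^{[t_1,t_2]})^{*}(\tilde\eta^{\ge 1}_{\rigfp,q,-D}|_{Y^{\ord}_{\Kl}})\bigr)$ with $(\iota_\Delta^{[t_1,t_2]})^{*}(\eta^{\ge 1}_{\rigfp,q,-D}|_{Y^{\ord}_{\Kl}})$. The map $\gamma_*$ is induced at the level of mapping-fibre complexes by the filtered cospecialisation on compactly-supported de Rham cohomology together with the identity on the rigid factor; both operations are natural under pullback along finite morphisms of smooth formal schemes (such as $\iota_\Delta$) and under restriction to open subspaces (such as the passage from $\cX^{\ge 1}_{G,\Kl}$ to $\cX^{\ord}_{G,\Kl}$). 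Commuting $\gamma_*$ past both $(\iota_\Delta^{[t_1,t_2]})^{*}$ and the open restriction reduces the task to showing $\gamma_*(\tilde\eta^{\ge 1}_{\rigfp,q,-D}) = \eta^{\ge 1}_{\rigfp,q,-D}$ on $\cX^{\ge 1}_{G,\Kl}$, which is exactly the content of \cref{cor:cosptildeetageq1}.

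The only non-formal input is this functoriality of $\gamma_*$, but it is a direct consequence of the explicit mapping-fibre description in \cref{def:grosfp} together with the standard naturality of the filtered de Rham and rigid complexes under finite pullback and open immersion. No analytic or geometric subtlety is expected, so the proof as a whole is essentially a formal manipulation once the adjunction of \cref{rem:Grosspcosp} and the cospecialisation identity \cref{cor:cosptildeetageq1} are granted.
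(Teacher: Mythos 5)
Your proposal is correct and is essentially the argument the paper intends: the paper's "proof" is just the phrase "By Remark \ref{rem:Grosspcosp}", and you have correctly unwound it into the adjunction $\langle\gamma^*(x),y\rangle_{\widetilde\rigfp}=\langle x,\gamma_*(y)\rangle_{\rigfp}$, the compatibility of $\gamma_*$ with $\iota_\Delta^*$ and restriction to the ordinary locus, and the identity $\gamma_*(\tilde\eta^{\ge 1}_{\rigfp,q,-D})=\eta^{\ge 1}_{\rigfp,q,-D}$ of \cref{cor:cosptildeetageq1}.
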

\end{mdframed}
\vspace{1ex}

Our aim is to express this pairing in terms of coherent cohomology. The main tool for relating syntomic, resp. Gros fp-cohomology with coherent cohomology is the \emph{ Poznan spectral sequence} (c.f. Proposition \ref{prop:Poznan}), which should be thought of as a syntomic (resp. $\fp$-) analogue of the Hodge-to-de Rham spectral sequence.


 \subsection{Gros fp-cohomology and the Pozna\'n spectral sequence}\label{section:Poznan}

  By \cref{prop:restricttoc0}, the pullback $(\iota_\Delta^{[t_1,t_2]})^\star( \tilde\eta_{\rigfp,-D}^{m})$ only depends on the restriction of $ \eta_{\rigfp,-D}^{m}$ to $X_G^{2,m}$. We shall show that this can be expressed in terms of coherent cohomology.


  \subsubsection*{The Pozna\'n spectral sequence}

   As we already saw in \cref{sect:BPproof} above, on $\cX^{2,m}_{G,\Kl}$ we have  a lifting of the Frobenius map to the cohomology of the individual sheaves $\cN^i$, given by the action of the Hecke operator $\diag(1, 1, p, p)$. This allows us to study Gros fp-cohomology via a spectral sequence, as follows.

   \begin{definition}
    \label{def:cij}
    Let $P(T)\in\Qp[T]$ have constant term $1$. Define $\sC^{\bullet, j}_{\fp,c0}(\cX_{G,\Kl}^{2,m}, \BGG(\cV), n; P)$ to be the mapping fibre of the morphism of complexes
    \[
     \sC^{\bullet, j}_{c0}(\cX_{G,\Kl}^{2,m},\BGG(\cV), n) 
     \xrightarrow{\ P(\varphi / p^n) \circ \iota\ } 
     \sC^{\bullet, j}_{c0}(\cX_{G,\Kl}^{2,m},\BGG(\cV)).
    \]
   \end{definition}

   Thus
   \[ 
    \sC^{i, j}_{\fp,c0}(\cX_{G,\Kl}^{2,m}, \BGG(\cV), n; P) = 
    H^j_{c0}\left(\cX_{G,\Kl}^{2,m}, \sFil^n \cN^i \right) \oplus H^j_{c0}\left(\cX_{G,\Kl}^{2,m},\cN^{i-1}\right) 
   \]
   with the differentials being $(x, y) \mapsto (\nabla x, P(\varphi / p^n) \iota(x) - \nabla y)$.

   \begin{remark}
    We shall only use this definition for $n \ge 1$, in which case one sees easily that this group is zero unless $j \in\{2,3\}$ and $1 \le i \le 4$, and the $i=0$ terms vanish if $n \ge 1$.
   \end{remark}

   \begin{proposition}\label{prop:Poznan}
    There is a first-quadrant spectral sequence, the \emph{Pozna\'n spectral sequence}, with
    \[ {}^{\Pz}E_1^{ij} = \sC^{i, j}_{\fp,c0}(\cX_{G,\Kl}^{2,m},\BGG(\cV), n; P). \]
    The spectral sequence degenerates at $E_3$, and its abutment is the Gros fp-cohomology \eqref{eq:SGfp0}.
   \end{proposition}

   \begin{proof}
    Choose double complexes computing $R\Gamma_{\dR, c0}(\cX_{G,\Kl}^{2,m}, \cV, n)$ and $R\Gamma_{\dR, c0}(\cX_{G,\Kl}^{2,m}, \cV, n)$ respectively, in such a way that $P(\varphi / p^n) \circ \iota$ extends to a map of double complexes. Then we can compute $\wH^{\bullet}_{\rigfp, c0}(Y_{G,\Kl,0}^{2,m},\cE, n, P)$ as the total complex of the associated mapping fibre, i.e. by the total complex of a triple complex.
    The Pozna\'n spectral sequence is one of the spectral sequences associated to this triple complex.
   \end{proof}


  \subsubsection*{Coherent fp-pairs}

   \begin{definition}
    \label{fppairs}
    \begin{enumerate}[(a)]

     \item We define a \emph{coherent fp-pair} of degree $(i, j)$, twist $n$ and $c0$-support to be an element of
     \[ \mathscr{Z}^{ij}_{\fp,c0}(\cX_{G,\Kl}^{2,m},\BGG(\cV), n;\!P) \coloneqq \ker\!\left(\! \sC^{ij}_{\fp,c0}(\cX_{G,\Kl}^{2,m},\BGG(\cV), n;\!P)\to \sC^{i+1,j}_{\fp,c0}(\cX_{G,\Kl}^{2,m},\BGG(\cV), n;\! P)\!\right)\!,\]
     i.e.~a pair of elements
     \[ \left(x, y\right) \in H_{c0}^{j}\left(\cX_{G,\Kl}^{2,m}, \sFil^n \cN^i\right)\oplus H_{c0}^j(\cX_{G,\Kl}^{2,m},\cN^{i-1})\]
     which satisfy $\nabla(x) = 0$, $\nabla(y)= P(p^{-n}\varphi)\iota(x)$, where $\iota$ is the map on cohomology induced by the inclusion $\sFil^{n}\cN^i \into \cN^i$.

     \item We define the \emph{group of coherent fp-classes} of degree $(i,j)$,  to be the $E^{ij}_2$-term of the Pozna\'n spectral sequence, so it is the quotient of the group of coherent fp-pairs by the subgroup of pairs of the form
     \[ (x, y) = \left( \nabla(u), P(p^{-n}\varphi) \iota(u) - \nabla(v)\right)\]
     for some $(u, v) \in \sC^{i-1,j}_{\fp,c0}(\cX_{\Kl}^{2,m},\BGG(\cV), n; P)$. We write $\cH^{i,j}_{\fp,c0}\left(\cX_{G,\Kl}^{2,m},\BGG(\cV), n; P\right)$ to denote this quotient. 

    \end{enumerate}
   \end{definition}

   \begin{lemma}\label{lem:fpsurjontokerP}
    For any $j$ and $n$ there is a long exact sequence (writing `$\dots$' for ``$\cX_{G,\Kl}^{2,m},\BGG(\cV)$''):
    \[
     \begin{tikzcd}[column sep=tiny]
     \dots
       \arrow[r]
     &
     \cH^{i,j}_{\fp,c0}\left(\dots, n;P\right)
       \arrow[r]
     &
     \cH^{i,j}_{c0}(\dots, n)
       \arrow[r, "P(p^{-n}\varphi)\circ \iota"]
       \arrow[d, equal]
     &
     \cH^{i,j}_{c0}(\dots)
       \arrow[r]
       \arrow[d, equal]
     &
     \cH^{i+1,j}_{\fp,c0}\left(\dots\right)
       \arrow[r]
     &
     \dots
     \\
     &&
     \dfrac{H^j_{c0}(\cX_{G,\Kl}^{2,m}, \sFil^n \cN^i)^{\nabla=0}}
          {\nabla H^j_{c0}(\cX_{G,\Kl}^{2,m}, \sFil^n \cN^{i-1})}
     &
     \dfrac{H^j_{c0}(\cX_{G,\Kl}^{2,m}, \cN^i)^{\nabla=0}}
          {\nabla H^j_{c0}(\cX_{G,\Kl}^{2,m}, \cN^{i-1})}
     \end{tikzcd}
    \]
   \end{lemma}

   \begin{proof}
    This is the long exact sequence associated to the mapping fibre \eqref{def:cij}.
   \end{proof}

   \begin{corollary}
    \label{cor:fppairmapsS}
    If $0\leq q\leq r_2$, then the spectral sequence gives rise to an isomorphism
    \begin{align*}
     \alpha_{G, \rigfp, c0}: \mathscr{Z}^{1, 2}_{\fp,c0}(\cX_{G,\Kl}^{2,m}, \BGG(\cV),1+q; P)
     & \xrightarrow{\,\cong\,} \cH^{1, 2}_{\fp,c0}(\cX_{G,\Kl}^{2,m}, \BGG(\cV),1+q;P)\\
     & \xrightarrow{\,\cong\,} \wH^{3}_{\rigfp, c0}(\cX_{G,\Kl}^{2,m}, \cV,1+q; P).
    \end{align*}
   \end{corollary}

   \begin{proof}
    Immediate from the fact that  ${}^{\Pz}E^{ij}_1$ is supported in the range $i \ge 1, j \ge 2$ by Note \ref{note:newcomplex}.
   \end{proof}

   \begin{note}\label{note:BGGvsDR2}
    Replacing $\BGG(\cV)$ by $\DR(\cV)$, we obtain an isomorphism
    \begin{equation}
     \label{eq:fpDR}
     \cH^{1, 2}_{\fp,c0}(\cX_{G,\Kl}^{2,m}, \DR(\cV),1+q;P)   
     \xrightarrow{\,\cong\,}    
     \wH^{3}_{\rigfp, c0}(\cX_{G,\Kl}^{2,m}, \cV,1+q; P)
    \end{equation}
   which is compatible with the natural map
   \[
     \cH^{1, 2}_{\fp,c0}(\cX_{G,\Kl}^{2,m}, \BGG(\cV),1+q;P) \longrightarrow \cH^{1, 2}_{\fp,c0}(\cX_{G,\Kl}^{2,m}, \DR(\cV),1+q;P)
   \]
   arising from Proposition \ref{prop:dRdirectsummand} (c.f. Note \ref{note:BGGvsDR1}).
    \end{note}

  \subsubsection*{Comparison of spectral sequences}

   There is a crucial compatibility between the edge maps of the Pozna\'n spectral sequence and the Fr\"olicher spectral sequence for (truncated) rigid cohomology:

   \begin{proposition}\label{prop:diagfpcoh}
    If $0\leq q\leq r_2$, then we have a commutative diagram
    \[ 
    \begin{tikzcd}[row sep=large]
     \wH^{3}_{\rigfp, c0}(\cX_{G,\Kl}^{2,m}, \cV, 1+q;P)
      \arrow[r, two heads]
      \arrow[d, equal, "\alpha_{G,\rigfp, c0}" left] &
     \wH^{3}_{\dR,c0}(\cX_{G,\Kl}^{2,m},\cV, 1+q)^{P(p^{-(1+q)}\varphi) \circ \iota =0}
      \arrow[d, equal, "\alpha_{G, \rig, c0}"]\\
     \cH^{1,2}_{\fp, c0}(\cX_{G,\Kl}^{2,m}, \BGG(\cV), 1+q;P) 
      \arrow[r, two heads] & 
     \cH^{1,2}_{c0}(X_{G,\Kl}^{2,m},\BGG(\cV), 1+q)^{P(p^{-(1+q)}\varphi) \circ \iota =0}
    \end{tikzcd}
    \]
%
    Here, the horizontal arrows are the surjections of \cref{lem:fpsurjontokerP,lem:rigsurjontokerP}, and the vertical isomorphisms are given by \cref{cor:isototilderig,cor:fppairmapsS}.
   \end{proposition}
   \begin{proof}
    Clear from the construction.
   \end{proof}


 \subsection{Coherent fp-pairs from $\eta$}\label{ssec:coherentfppaireta}

  \begin{definition}\label{def:etaord}
   Define
  \[
   \tilde{\eta}_{\rigfp}^{(2,m)} \in \wH^3_{\rigfp, c0}(\cX_{G,\Kl}^{2,m}, \cV, 1+q; \cQ_{1+q})
  \]
  to be the image of $\tilde{\eta}_{\rigfp,-D}^{m} $ under restriction to $\cX_{G,\Kl}^{2,m}$ and forgetting $-D$.
  \end{definition}

  We can now use \cref{cor:fppairmapsS} to represent $\tilde{\eta}_{\rigfp}^{(2,m)}$ by a pair of classes in coherent cohomology:

 \begin{proposition}
  There exists a unique  coherent $\fp$-pair $\left(\eta_{\coh}^{(2,m)}, \zeta\right)$ which maps to $\tilde{\eta}_{\rigfp}^{(2,m)}$ under the isomorphism (c.f. \cref{cor:fppairmapsS})
     \[ \wH^3_{\rigfp, c0}(\cX_{G,\Kl}^{2,m}, \cV, 1+q;\cQ_{1+q}) \cong  \mathscr{Z}^{1,2}_{\fp, c0}(\cX_{G,\Kl}^{2,m},\BGG(\cV),1+q; \cQ_{1+q}).\]
 \end{proposition}
\begin{proof}
 Clear.
\end{proof}

  \begin{note}\label{note:lackofuniqueness}
   By construction, the class $\zeta$ is a class in $ H^2_{c0}(\cX_{G,\Kl}^{2,m}, \cN^0) $ which satisfies
    \begin{equation}
     \label{eq:zetadef}
     \cQ_{1+q}(\Phi_{1+q}) \, \eta^{(2,m)}_{\coh} = \nabla \zeta.
    \end{equation}
   Observe that equation \eqref{eq:zetadef} does not determine $\zeta$ uniquely: it is only unique modulo
   \[ H^2_{c0}(\cX_{G,\Kl}^{2,m}, \cN^0)^{\nabla = 0} \cong H^2_{\dR, c0}(\cX_{G,\Kl}^{2,m}, \cV).\]
   In other words, if $\xi$ is any other element of $H^2_{c0}(\cX_{G,\Kl}^{2,m}, \cN^0) $ which satisfies
   \[\cQ_{1+q}(\Phi_{1+q}) \, \eta^{(2,m)}_{\coh} = \nabla \xi,\]
   then $\zeta-\xi\in H^2_{c0}(\cX_{G,\Kl}^{2,m}, \cN^0)^{\nabla = 0}$.
  \end{note}

%
%
%


 \subsection{Lifting to the de Rham sheaves}
  \label{sect:dRsheaves}

  \begin{definition} \
   \begin{itemize}
    \item Define $\breve{\zeta}$ to be the image of $\zeta$ in $H^2_{c0}(\cX_{G,\Kl}^{2,m}, \cV\otimes\Omega_G^0 \langle D \rangle)$.
    \item For $0\leq q\leq r_2$, define $\breve{\eta}^m_{\coh,-D}$ to be the image of $\eta^m_{\coh,-D}$ under the composition of maps
    \[  
     H^2_{c}(\cX_{G,\Kl}^m, \cN^1\langle -\cD\rangle) \to
     H^2_{c}(\cX_{G,\Kl}^m, \sFil^{r_2}\cV\otimes\Omega_G^1\langle -D \rangle) \to
     H^2_{c}(\cX_{G,\Kl}^m, \sFil^{q}\cV\otimes\Omega_G^1\langle -D \rangle),
    \]
    where the first map is given by the inclusion of complexes in Proposition \ref{prop:dRdirectsummand}, and the second map is induced from the natural inclusion of sheaves.
    \item Write $\breve{\eta}^{(2,m)}_{\coh}$ for the image of $\breve{\eta}^m_{\coh,-D}|_{\cX^{2,m}_{G,\Kl}}$ in $H^2_{c0}(\cX_{G,\Kl}^{2,m}, \sFil^{q}\cV\otimes\Omega_G^1)$.
    \end{itemize}
  \end{definition}

  \begin{lemma}
   The class $\breve{\eta}^m_{\coh,-D}$ maps to ${\eta}^m_{\coh,-D}$ under the natural map induced from the projection
   \[ \sFil^{r_2}\cV\otimes\Omega_G^1\langle -D \rangle) \to \cN^1.\]
  \end{lemma}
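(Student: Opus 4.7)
The plan is to deduce this lemma directly from the direct-summand property of the BGG complex inside the de Rham complex, which was recorded as Proposition \ref{prop:dRdirectsummand}.

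First I would unwind the definition of $\breve{\eta}^{\ge 1}_{\coh, q, -D}$. By construction, it is the image of $\eta^{\ge 1}_{\coh, -D}$ under the composition of two sheaf-level maps, applied in $H^2_c(\cX_{G,\Kl}^{\ge 1}, -)$:
\[
\cN^1(-D) \xhookrightarrow{\iota_{\BGG}} \sFil^{r_2}\!\bigl(\cV \otimes \Omega^1_G\langle -D\rangle\bigr)
\xhookrightarrow{\;\;} \sFil^{q}\!\bigl(\cV\otimes \Omega^1_G\langle -D\rangle\bigr),
\]
where the first arrow is the BGG inclusion into the de Rham complex (which lands in $\sFil^{r_2+1}$, hence a fortiori in $\sFil^{r_2}$, by reading the filtration indices off Definition \ref{def:BGG}), and the second is the tautological inclusion of one Hodge filtration level into a smaller one, using $q\le r_2$.

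Next I would invoke Proposition \ref{prop:dRdirectsummand}: the BGG embedding $\BGG(\cV) \hookrightarrow \DR(\cV)$ splits as a morphism of filtered complexes. In particular, at the level of the degree-$1$ piece, the BGG projection $\pi\colon \cV\otimes\Omega^1_G\langle -D\rangle \twoheadrightarrow \cN^1$ restricts, for every $n$ in the appropriate range, to a projection $\sFil^n(\cV\otimes\Omega^1_G\langle -D\rangle) \twoheadrightarrow \cN^1$ which is a left inverse of the BGG inclusion, and these projections are compatible with the natural inclusions between filtration levels. This is just functoriality of a split inclusion of filtered objects; no calculation is required.

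Combining these two observations, the composite sheaf map
\[
\cN^1(-D) \xhookrightarrow{\iota_{\BGG}} \sFil^{r_2}(\cV\otimes\Omega^1_G\langle -D\rangle) \xhookrightarrow{} \sFil^{q}(\cV\otimes\Omega^1_G\langle -D\rangle) \xrightarrow{\pi} \cN^1(-D)
\]
is the identity, which upon applying $H^2_c(\cX_{G,\Kl}^{\ge 1}, -)$ yields the claim. There is no genuine obstacle: the only thing to check, and the sole place a slip could occur, is that $\cN^1$ really does lie in $\sFil^{r_2}$ (and hence in $\sFil^q$ for all $0 \le q \le r_2$) under the chosen normalisation of the BGG filtration, which one reads off directly from Definition \ref{def:BGG}.
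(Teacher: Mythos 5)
Your proof follows the same route as the paper's: both reduce the lemma to the observation that the BGG inclusion of $\cN^1$ into the de Rham complex lands in $\sFil^{r_2}$ (a consequence of Proposition \ref{prop:dRdirectsummand} and the filtration indices in Definition \ref{def:BGG}), and then to the claim that the composite of this inclusion with the projection back to $\cN^1$ is the identity. The one point where you are glibber than the paper is the final step: you assert that the composite is the identity ``by functoriality of a split inclusion of filtered objects; no calculation is required,'' but the direct-summand statement of Proposition \ref{prop:dRdirectsummand} only guarantees the existence of \emph{some} splitting, whereas the lemma concerns the specific natural projection $\sFil^{r_2}\cV\otimes\Omega^1_G\langle -D\rangle \to \cN^1$ coming from the Hodge-graded structure of the automorphic bundles. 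Identifying that natural projection with the splitting of the BGG inclusion (i.e.\ ruling out, say, a nonzero scalar discrepancy in the chosen normalisations) is exactly the content the paper delegates to \cite[Ch.~VI, \S 6]{faltingschai}; your argument should either cite this or verify the compatibility of normalisations explicitly rather than treating it as formal.
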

  \begin{proof}
   It is immediate from Proposition \ref{prop:dRdirectsummand} that the image of $\cN^1$ in the full de Rham complex is contained in $\sFil^{r_2}\cV\otimes\Omega_G^1$. In order to prove the result, it is hence sufficient to show that composition of the inclusion and the projection map is the identity on $\cN^1$.  But this follows from the results in \cite[Ch. VI, \S 6]{faltingschai}.
  \end{proof}

  The following lemma is direct consequence of the corresponding results for $\eta^m_{\coh,-D}$ (Proposition \ref{prop:etaproperties}), 
  using the the inclusion of the dual BGG complex into the de Rham complex is Hecke equivariant.

  \begin{lemma} \
   \begin{itemize}
    \item The operator $U'_{\Kl,2}$ acts on $\breve{\eta}^m_{\coh,-D}$ as multiplication by $\frac{\alpha\beta}{p^{r_2+1}}$.
    \item The operator $U'_{\Kl,1}$ acts on $\breve{\eta}^m_{\coh,-D}$ as multiplication by $\alpha+\beta$.
    \item The spherical Hecke algebra acts via the system of eigenvalues associated to $\Pi'$.
   \end{itemize}
  \end{lemma}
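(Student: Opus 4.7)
The plan is to observe that all three statements are formal consequences of the Hecke-equivariance of the maps used to define $\breve{\eta}^{\geq 1}_{\coh,q,-D}$, combined with the known Hecke-theoretic properties of the source class $\eta^{\geq 1}_{\coh,-D}$ recorded in \cref{prop:etaproperties}. So the proof is essentially a two-line transport argument, and the work lies in checking that each arrow in the defining chain genuinely commutes with the Hecke action.

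More concretely, recall that $\breve{\eta}^{\geq 1}_{\coh,q,-D}$ is defined as the image of $\eta^{\geq 1}_{\coh,-D}$ under the composition
\[
H^2_{c}(\cX_{G,\Kl}^{\geq 1}, \cN^1(-\cD))\longrightarrow H^2_{c}(\cX_{G,\Kl}^{\geq 1}, \sFil^{r_2}\cV\otimes\Omega_G^1\langle -D\rangle) \longrightarrow H^2_{c}(\cX_{G,\Kl}^{\geq 1}, \sFil^{q}\cV\otimes\Omega_G^1\langle -D\rangle).
\]
The first arrow is the inclusion of the BGG summand into the de Rham complex; by \cref{prop:dRdirectsummand} this is a Hecke-equivariant morphism of complexes (the Hecke-equivariance of the BGG splitting is exactly the statement recorded there). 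The second arrow is induced by the tautological inclusion of Hodge-filtration steps $\sFil^{r_2}\subset\sFil^{q}$ for $q\le r_2$, which is visibly compatible with the Hecke correspondences since the correspondences act on automorphic vector bundles through morphisms that respect the Hodge filtration.

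Given this, I would simply invoke Proposition \ref{prop:etaproperties}: $\eta^{\geq 1}_{\coh,-D}$ is a $U'_{\Kl,2}$-eigenvector with eigenvalue $\tfrac{\alpha\beta}{p^{r_2+1}}$, a $U'_{\Kl,1}$-eigenvector with eigenvalue $\alpha+\beta$, and an eigenclass for the prime-to-$p$ spherical Hecke algebra for the system of eigenvalues associated to $\Pi'$. Since the two maps above intertwine all of these Hecke operators, the same three eigenproperties are inherited by $\breve{\eta}^{\geq 1}_{\coh,q,-D}$.

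There is no genuine obstacle here beyond bookkeeping. The only point which one might be tempted to check more carefully is the Hecke-equivariance of the BGG inclusion as a map of complexes of sheaves (rather than just in the derived category), but this is precisely the statement recorded in \cite[\S 7]{tilouine12} that underlies \cref{prop:dRdirectsummand}, so it may be cited verbatim.
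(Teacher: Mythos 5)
Your proposal is correct and coincides with the paper's own argument: the paper likewise deduces the lemma directly from Proposition \ref{prop:etaproperties} by transporting the eigenproperties of $\eta^{\geq 1}_{\coh,-D}$ through the Hecke-equivariant inclusion of the dual BGG complex into the de Rham complex (Proposition \ref{prop:dRdirectsummand}) and the filtration inclusion. Nothing further is needed.
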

%

  \begin{proposition}\label{prop:etahornsrep}
The classes $\breve{\zeta}$ and $\breve{\eta}^{(2,m)}_{\coh,q}$ satisfy
   \[ \nabla \breve{\eta}^{(2,m)}_{\coh,q}=0\qquad\text{and}\qquad \cQ_{1+q}(\Phi_{1+q})\,\breve{\eta}^{(2,m)}_{\coh}=\nabla\breve{\zeta}\]
   and hence give rise to a class in $\cH^{1, 2}_{\fp,c0}(\cX_{G,\Kl}^{2,m}, \DR(\cV),1+q; \cQ_{1+q})$. Moreover, this class maps to $\tilde\eta^{(2,m)}_{\rigfp}$ under the isomorphism \eqref{eq:fpDR}.
  \end{proposition}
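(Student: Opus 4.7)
The plan is to exploit the fact, established in \cref{prop:dRdirectsummand}, that the inclusion $\BGG(\cV) \hookrightarrow \DR(\cV)$ is a Hecke-equivariant map of filtered complexes which is a quasi-isomorphism after projection. In particular, both the Gauss--Manin connection and the Frobenius-type operators acting on the terms of the BGG complex are compatible, under inclusion, with their counterparts on the de Rham complex. Thus applying the inclusion termwise sends coherent fp-pairs in $\BGG(\cV)$ to coherent fp-pairs in $\DR(\cV)$, and induces the natural map on the $E_2$-terms of the two flavours of Pozna\'n spectral sequence described in \cref{note:BGGvsDR2}.

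First I would verify the two displayed equations. Since $\breve{\eta}^{\ord}_{\coh,q}$ is defined as the image of $\eta^{\ord}_{\coh}$ along the composite $\cN^1 \hookrightarrow \sFil^{r_2}\cV \otimes \Omega^1\langle D\rangle \hookrightarrow \sFil^{q}\cV \otimes \Omega^1\langle D\rangle$ (valid since $q \le r_2$), and the inclusion $\BGG(\cV) \hookrightarrow \DR(\cV)$ is a morphism of complexes, the identity $\nabla^{\BGG} \eta^{\ord}_{\coh} = 0$ proved in \cref{ssec:coherentfppaireta} translates directly into $\nabla \breve{\eta}^{\ord}_{\coh,q} = 0$. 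The same argument applied to $\breve{\zeta}$, together with the Hecke-equivariance of the inclusion (which intertwines the Frobenius lift $\Phi$ on each sheaf), yields the second equation $P_{q}(\Phi_{1+q})\,\breve{\eta}^{\ord}_{\coh,q} = \nabla\breve{\zeta}$ from the corresponding identity \eqref{eq:zetadef} in the BGG setting. Hence $(\breve{\eta}^{\ord}_{\coh,q}, \breve{\zeta})$ genuinely defines an element of $\mathscr{Z}^{1,2}_{\fp,c0}(\cX_{G,\Kl}^{\ord}, \DR(\cV), 1+q; P_{q})$, and so a class in $\cH^{1,2}_{\fp,c0}(\cX_{G,\Kl}^{\ord}, \DR(\cV), 1+q; P_{q})$.

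For the second claim, I would appeal to the commutative diagram of Pozna\'n spectral sequences coming from the inclusion $\BGG(\cV) \hookrightarrow \DR(\cV)$. On the $E_2$-page this is exactly the natural map $\cH^{1,2}_{\fp,c0}(-,\BGG(\cV),\ldots) \to \cH^{1,2}_{\fp,c0}(-,\DR(\cV),\ldots)$ of \cref{note:BGGvsDR2}, while on the abutments it is the identity on $\wH^3_{\rigfp,c0}(\cX_{G,\Kl}^{\ord}, \cV, 1+q; P_q)$ (since both complexes compute the same filtered rigid fp-cohomology, as truncated rigid cohomology is invariant under the quasi-isomorphism). Since $\tilde{\eta}^{\ord}_{\rigfp,q}$ is by \cref{def:etaord} characterised via the BGG representative $(\eta^{\ord}_{\coh}, \zeta)$, naturality of the edge map in the Pozna\'n spectral sequence (\cref{cor:fppairmapsS} and \eqref{eq:fpDR}) forces its image to be represented by $(\breve{\eta}^{\ord}_{\coh,q}, \breve{\zeta})$.

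The only real subtlety I anticipate is the bookkeeping around the filtration: one must check that the inclusion $\cN^1 \hookrightarrow \cV \otimes \Omega^1\langle D\rangle$ actually factors through $\sFil^{q}\cV \otimes \Omega^1\langle D\rangle$ for all $0 \le q \le r_2$ (which is why the lemma following \cref{sect:dRsheaves} identifies the image in $\sFil^{r_2}$), and similarly that the equation $P_q(\Phi_{1+q})\breve{\eta}^{\ord}_{\coh,q} = \nabla\breve{\zeta}$ holds at the correct level of the filtration so that it genuinely represents an element of the filtered group $\sC^{1,2}_{\fp,c0}(\cX_{G,\Kl}^{\ord},\DR(\cV),1+q;P_q)$. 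Once one is careful about the filtration step in which each class lies, the rest is a formal naturality argument.
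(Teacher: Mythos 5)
Your proposal is correct and follows exactly the route the paper intends: the paper's own proof is simply ``Immediate,'' relying on the Hecke-equivariant inclusion $\BGG(\cV)\hookrightarrow\DR(\cV)$ of \cref{prop:dRdirectsummand} to transport the identities of \eqref{eq:zetadef} and on the compatibility of \cref{note:BGGvsDR2} for the last claim. Your write-up just makes explicit the naturality and filtration bookkeeping that the authors leave tacit.
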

  \begin{proof}
   Immediate.
  \end{proof}


\section{fp-cohomology and coherent fp-pairs for \texorpdfstring{$H$}{H}}\label{section:PoznanforH}

  The results of the previous section show that we can express $\tilde\eta^{(2,m)}_{\rigfp}$ as a coherent $\fp$-pair. In this section, we develop the theory of coherent $\fp$-pairs for the syntomic cohomology of $\cY^{m,m}_{H,\Delta}$. We will apply it in  \cref{ssec:Eiscohfppair} to describe the class $\widetilde{\Eis}^{[t_1,t_2],(m,m)}_{\rigsyn,\underline{\Phi}}$ in terms of coherent cohomology.

 \subsection{The Pozna\'n spectral sequence for $H$}

  Let $W$ be an algebraic representation of $H$, and write $\cW$ for the corresponding coherent sheaf on $\cX_\Delta$. Let  $R\in\Qp[t]$ have constant coefficient $1$, and let $n\geq 0$.  We can then consider the Gros-fp cohomology
  \[  \wH^\bullet_{\rigfp,\star}(\cX^{m,m}_\Delta\langle \diamondsuit\rangle,\cW,n; R), \]
  where $\star\in\{ \varnothing,c\}$ and $\diamondsuit\in\{\varnothing,-\cD_\Delta\}$ (c.f. \cref{def:grosfp2}).

   Recall that if $R(p^{-1}) \ne 0$, we define the trace map
   \[ \wH^5_{\rigfp,c}(\cX^{m,m}_{H,\Delta},\Qp, 3; R) \to \Qp \]
   as $\tfrac{1}{R(p^{-1})}$ times the trace map on rigid cohomology.

  \begin{remark}
   As usual, the factor $\tfrac{1}{R(p^{-1})}$ serves to make the trace maps compatible with the natural maps of complexes $\RGt_{\rigfp,c}(-; R) \to \RGt_{\rigfp,c}(-; R')$ for polynomials $R \mid R'$. (This map acts as $(R'/R)(p^{-n} \varphi)$ on the rigid complex, with $n = 3$; but $\varphi = p^2$ on the top-degree cohomology, hence $R(p^{-1})$ is the correct normalising factor.)
  \end{remark}

  \begin{definition}[{cf.~\cref{def:cij}}]
   For $j,n\geq 0$ ,$\star\in\{\emptyset, c\}$ and $\diamondsuit\in\{\varnothing,-\cD_\Delta\}$, we define the complex
   \[ \sC^{\bullet,j}_{\star}(X_{H,\Delta}^{m,m}\langle \diamondsuit\rangle, \cW,n;R)\]
   with terms
   \[ \sC^{i,j}_{\star}(X_{H,\Delta}^{m,m}\langle \diamondsuit\rangle, \cW,n;R)= H^j_\star(\cX_{H,\Delta}^{m,m},\sFil^{n-i}\cW\otimes\Omega_\Delta^i\langle \diamondsuit\rangle)\oplus H^j_\star(\cX_{H,\Delta}^{m,m},\cW\otimes\Omega_\Delta^{i-1}\langle \diamondsuit\rangle)\]
   and differentials
   \[ (x,y)\mapsto \left(\nabla x,\, R(\varphi_H^\star/p^n)\iota(x)-\nabla y\right).  \]
  \end{definition}

  \begin{proposition}\label{prop:PoznanH}
   For $\star\in\{\varnothing,c\}$ and $\diamondsuit\in\{\varnothing,-\cD_\Delta\}$, we have the \emph{Pozna\'n spectral sequence}
   \[ {}^{\Pz}E_1^{ij}= \sC^{i,j}_{\star}(X_{H,\Delta}^{m,m}\langle \diamondsuit\rangle, \cW,n;R) \Rightarrow \wH^{i+j}_{\rigfp,\star}(\cX^{m,m}_{H,\Delta}\langle \diamondsuit\rangle,\cW,n;R).\]
  \end{proposition}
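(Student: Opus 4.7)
The plan is to follow exactly the strategy of the proof of Proposition \ref{prop:Poznan} for $G$, adapted to the $H$-setting. The essential input in the $G$-case was the existence of a lifting of Frobenius to a morphism of schemes over $\cX^{\ord}_{G,\Kl}$ (coming from the canonical subgroup), which allows one to promote the Frobenius action on cohomology to an endomorphism of a chosen resolution. The analogous input is available on $\cX^{\ord}_{H,\Delta}$: since $H = \GL_2 \times_{\GL_1}\GL_2$, the ordinary locus carries a canonical lifting of Frobenius coming from the pair of canonical subgroups on the two elliptic factors. This lifting preserves the divisor $\cD_\Delta$, so both variants $\diamondsuit \in \{\varnothing, -\cD_\Delta\}$ are treated uniformly.

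Given this, I would proceed as follows. First, choose a double complex $K^{\bullet,\bullet}$ (by Čech cohomology with respect to an admissible affinoid cover of $\cX^{\ord}_{H,\Delta}$ and with appropriate support conditions to handle $\star\in\{\varnothing,c\}$ and $\diamondsuit\in\{\varnothing,-\cD_\Delta\}$) computing $\RGt_{\dR,\star}(\cX^{\ord}_{H,\Delta}\langle\diamondsuit\rangle,\cW,n)$, where the horizontal direction indexes the de Rham degree $i$ and the vertical direction the Čech degree $j$; filter by $\sFil^{n-\bullet}\cW\otimes\Omega^\bullet_\Delta\langle\diamondsuit\rangle$ in the horizontal direction. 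Next, choose a similar double complex $L^{\bullet,\bullet}$ computing the unfiltered cohomology $R\Gamma_{\dR,\star}(\cX^{\ord}_{H,\Delta}\langle\diamondsuit\rangle,\cW)$, and lift the composite $Q(\varphi^*_H/p^n)\circ\iota$ to a genuine morphism of double complexes $K^{\bullet,\bullet}\to L^{\bullet,\bullet}$. This step uses the Frobenius lift together with its functoriality on Čech data for a suitable refinement of the cover. By \cref{def:grosfp2}, the total complex of the mapping fibre of this map of double complexes computes $\RGt_{\rigfp,\star}(\cX^{\ord}_{H,\Delta}\langle\diamondsuit\rangle,\cW,n;Q)$.

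The mapping fibre is itself (the totalisation of) a triple complex. Running the spectral sequence whose $E_1$-page collects rows in the Čech direction first, one obtains the $E_1^{ij}$ term
\[
\sC^{i,j}_{\star}(X^{\ord}_{H,\Delta}\langle\diamondsuit\rangle,\cW,n;Q) = H^j_\star(\cX^{\ord}_{H,\Delta},\sFil^{n-i}\cW\otimes\Omega^i_\Delta\langle\diamondsuit\rangle)\oplus H^j_\star(\cX^{\ord}_{H,\Delta},\cW\otimes\Omega^{i-1}_\Delta\langle\diamondsuit\rangle),
\]
with the differential on the $E_1$ page equal to $(x,y)\mapsto(\nabla x,\,Q(\varphi^*_H/p^n)\iota(x)-\nabla y)$, matching the definition of the $\sC^{\bullet,j}$ complex. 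Its abutment is the total cohomology of the mapping fibre, which is $\wH^{i+j}_{\rigfp,\star}(\cX^{\ord}_{H,\Delta}\langle\diamondsuit\rangle,\cW,n;Q)$, as required.

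The main obstacle, as in the $G$-case, is purely technical: verifying that the Frobenius lift and the filtration inclusion can simultaneously be realised as honest maps of double complexes (as opposed to zig-zags or morphisms in the derived category). This is resolved by working with Čech models attached to a cover fine enough that the canonical Frobenius lift maps the cover into a refinement of itself, which is standard in the ordinary-locus setting; the rest of the argument is then purely formal manipulation of spectral sequences associated with a triple complex, exactly as in Proposition \ref{prop:Poznan}.
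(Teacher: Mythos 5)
Your proposal is correct and follows essentially the same route as the paper: the paper's proof simply says ``analogous to the proof of Proposition \ref{prop:Poznan}'', which is precisely the argument you give --- realise $Q(\varphi_H^*/p^n)\circ\iota$ as a genuine map of double complexes computing the filtered and unfiltered de Rham cohomology, take the mapping fibre to obtain a triple complex, and extract the Pozna\'n spectral sequence as one of the spectral sequences of that triple complex. Your additional remarks on the Frobenius lift over $\cX^{\ord}_{H,\Delta}$ via canonical subgroups and on choosing a \v{C}ech cover compatible with it are a reasonable fleshing-out of the paper's terse ``choose double complexes \dots in such a way that [the map] extends to a map of double complexes''.
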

  \begin{proof}
   Analogous to the proof of Proposition \ref{prop:Poznan}
  \end{proof}

  We define the group of coherent fp-classes, denoted $H^{i,j}_{\star}(X_{H,\Delta}^{m,m}\langle \diamondsuit\rangle, \cW,n;R)$, analogously to \cref{fppairs}.

  \begin{corollary}\label{cor:HGrosisoms}
   The Pozna\'n spectral sequence gives rise to isomorphisms
   \begin{align*}
     \alpha_\Delta: &H^{i,0}(\cX^{m,m}_{H,\Delta}\langle \diamondsuit\rangle, \cW, n;R) \xrightarrow{\,\cong\,} \wH^i_{\rigfp}(\cX^{m,m}_{H,\Delta}\langle \diamondsuit\rangle, \cW, n;R) ,\\
     \alpha_{\Delta,c}: &H_c^{i,2}(\cX^{m,m}_{H,\Delta}\langle \diamondsuit\rangle, \cW, n;R) \xrightarrow{\,\cong\,} \wH^{i+2}_{\rigfp,c}(\cX^{m,m}_{H,\Delta}\langle \diamondsuit\rangle, \cW,n;R) .
   \end{align*}
  \end{corollary}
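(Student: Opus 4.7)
The plan is to establish vanishing of the $E_1$ terms of the Pozna\'n spectral sequences of \cref{prop:PoznanH} outside of a single row, forcing immediate degeneration and identifying the abutment with the horizontal cohomology of that row, which is by definition the group of coherent fp-classes. This mirrors the argument used to prove \cref{cor:fppairmapsS} in the $G$-case, but in the present setting the affineness properties of the ordinary locus are even more favourable since $\dim X_{H,\Delta} = 2$.

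First I would verify the geometric input: the image of $X^{\ord}_{H,\Delta,0}$ in the minimal compactification $X_{H,\Delta}^{\min}$ is affine, being the non-vanishing locus of (a suitable power of) the product of the two Hasse invariants on the two factors. This places $\cX^{\ord}_{H,\Delta}$ in exactly the setting of the proposition preceding \cref{cor:ge1vanish}. Combined with Lan's vanishing result for higher direct images of canonical and subcanonical extensions under $\pi: X_{H,\Delta} \to X_{H,\Delta}^{\min}$, and with the affinoid Serre duality of Grosse-Kl\"onne already invoked in the body of the paper, one deduces that for any (canonical or subcanonical) automorphic vector bundle $\cF$ on $\cX^{\ord}_{H,\Delta}$, tensored with any of the sheaves $\Omega^b_\Delta \langle \pm \cD \rangle$, one has
\[
H^j\bigl(\cX^{\ord}_{H,\Delta}, \cF\bigr) = 0 \quad (j > 0), \qquad H^j_c\bigl(\cX^{\ord}_{H,\Delta}, \cF\bigr) = 0 \quad (j < 2).
\]

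Next, applied termwise to the sheaves $\sFil^{n-i} \cW \otimes \Omega_\Delta^i\langle \diamondsuit\rangle$ that make up the complex $\sC^{\bullet,j}_{\star}$, this shows that the $E_1$-page of the Pozna\'n spectral sequence for $\wH^\bullet_{\rigfp}$ (resp.~for $\wH^\bullet_{\rigfp,c}$) is concentrated in the single row $j = 0$ (resp.~$j = 2$). The spectral sequence therefore degenerates at $E_2$, and the edge maps provide natural isomorphisms
\[
\wH^i_{\rigfp}\bigl(\cX^{\ord}_{H,\Delta}\langle \diamondsuit\rangle, \cW, n; Q\bigr) \cong E_2^{i,0} = H^{i,0}\bigl(\cX^{\ord}_{H,\Delta}\langle \diamondsuit\rangle, \cW, n; Q\bigr),
\]
and similarly $\wH^{i+2}_{\rigfp,c} \cong H^{i,2}_c$. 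These are the asserted maps $\alpha_\Delta$ and $\alpha_{\Delta,c}$.

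The one point requiring slight care, and the only real obstacle, is to confirm the vanishing of positive-degree coherent cohomology in the \emph{logarithmic} case $\diamondsuit = \varnothing$ (i.e.~sheaves of the form $\sFil^{n-i}\cW \otimes \Omega^i\langle \cD\rangle$), as opposed to the subcanonical case $\diamondsuit = -\cD$ covered directly by Lan's vanishing. However, since the ordinary dagger space is disjoint from the toroidal boundary components meeting only the non-ordinary locus, and since the logarithmic cotangent bundle restricted to the ordinary locus is still a canonical extension of an automorphic sheaf to which Lan's result applies, this case reduces to the previous one. No further difficulty arises, and the conclusion follows.
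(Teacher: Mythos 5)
Your argument is correct and follows the same skeleton as the paper's: kill the $E_1$-page of the Pozna\'n spectral sequence outside a single row ($j=0$ without supports, $j=2$ with compact supports) and read off the abutment from the edge map. The one place you diverge is in how you justify the coherent vanishing. You route it through the minimal compactification, Lan's higher-direct-image vanishing, and affinoid Serre duality --- i.e.\ you transplant the argument the paper uses for $G$ (the proposition preceding \cref{cor:ge1vanish}). The paper instead observes that $\cX^{\ord}_{H,\Delta}$ is itself an affinoid dagger space, so $H^j(\cX^{\ord}_{H,\Delta},\cF)=0$ for $j>0$ and $H^j_c(\cX^{\ord}_{H,\Delta},\cF)=0$ for $j<2=\dim$ hold for \emph{any} coherent $\cF$, with no appeal to Lan's theorem. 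This is why the paper can remark that the vanishing holds for both $\diamondsuit=\varnothing$ and $\diamondsuit=-\cD_\Delta$ ``in contrast to the situation for $G$'': the distinction between canonical and subcanonical extensions, which you spend your final paragraph worrying about, simply never arises. Your extra step is not wrong, but it is unnecessary here, and the affinoid-ness is precisely the feature of the $H$-side that makes this corollary easier than its $G$-analogue (where the $\ge 1$ locus is only covered by two affinoids and cohomology consequently lives in two degrees).
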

  
  \begin{proof}
   Easy computation, using that since $\cX^{m,m}_\Delta$ is affinoid, we have
   \begin{align*}
    H^i(\cX^{m,m}_{H,\Delta}\langle\diamondsuit\rangle,\cW)=0 & \qquad \text{for $i\neq 0$,}\\
    H^i_c(\cX^{m,m}_{H,\Delta}\langle\diamondsuit\rangle,\cW)=0 & \qquad \text{for $i\neq 2$}.\qedhere
   \end{align*}
  \end{proof}

  (Note that this holds for both $\diamondsuit=\varnothing$ \emph{and} $\diamondsuit=-\cD_{\GL_2}$, in contrast to the situation for $G$.)

  \begin{note}\label{note:highestfpclassphiaction}
   In particular, if $n\geq 3$ we have
   \[ H_c^{3,2}(\cX^{m,m}_{H,\Delta}\langle -\cD_\Delta\rangle, \Qp, n;R) \xrightarrow{\ \cong\ } \wH^{5}_{\rigfp,c}(\cX^{m,m}_{H,\Delta}\langle -\cD_\Delta\rangle, \Qp,n;R)
     \ \cong\ 
     \Qp.\]
   The Frobenius operator $\varphi_H^\star$ acts on $\wH^{5}_{\rigfp,c}(\cX^{m,m}_{H,\Delta}\langle -\cD_\Delta\rangle, \Qp,n;R)$ as multiplication by $p^2$.
  \end{note}

  \begin{note}\label{rem:PoznanisoGL2}
   Similarly, let $U$ be an algebraic representation of $\GL_2$, and write $\cU$ for the corresponding coherent sheaf on $\cX_{\GL_2, \Iw}$, the modular curve of Iwahori level at $p$. Let $\diamondsuit\in\{\varnothing,-\cD_{\GL_2}\}$. Then the Pozna\'n spectral sequence gives rise to an isomorphism
   \begin{equation}\label{eq:GL2iso}
    \alpha_{\GL_2}: H^{i,0}(\cX^{m}_{\GL_2,\Iw}\langle\diamondsuit\rangle, \cU, n;R) \xrightarrow{\,\cong\,} \wH^i_{\rigfp}(\cX^{m}_{\GL_2,\Iw}\langle \diamondsuit\rangle, \cU;R) .\qedhere
   \end{equation}
  \end{note}


 \subsection{Compatibility with cup products} \label{ssec:Bessercup}

  \begin{lemma}\label{lem:compcup}
   Let $P(T),\, Q(T)\in 1+T\Qp[T]$. Using the same formalism as \cite[\S 2]{besser12}, we can construct a cup product
   \[ H^{i,0}(\cX^{m,m}_{H,\Delta}\langle-\cD_\Delta\rangle, \cW, m;P)\times H_c^{j,2}(\cX^{m,m}_{H,\Delta}, \cW^\vee, n;Q)\xrightarrow{\  \cup \ } H_c^{i+j,2}(\cX^{m,m}_{H,\Delta}\langle-\cD_\Delta\rangle, \Qp, m+n;P \star Q)\]
   which is compatible under the isomorphisms from Corollary \ref{cor:HGrosisoms} with the cup product in Gros-fp cohomology.
  \end{lemma}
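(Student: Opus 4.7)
The plan is to adapt Besser's construction of cup products in finite-polynomial cohomology \cite[\S 2]{Besser-K1-surface} to the double-complex setting underlying the Pozna\'n spectral sequence. The algebraic starting point is a polynomial identity: since $P(X)$ and $Q(Y)$ both have constant term $1$ and share no common zero in $\mathbf{A}^2_{\overline{\Qp}}$, there exist polynomials $A, B \in \Qp[X, Y]$ satisfying
\[ (P \star Q)(XY) = P(X)\, A(X, Y) + Q(Y)\, B(X, Y), \]
unique up to the shift $(A, B) \mapsto (A + Q(Y)F, B - P(X)F)$ for $F \in \Qp[X, Y]$.

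First, I will define the cup product explicitly on coherent fp-pairs. For representatives $(x_1, y_1)$ of a class in $H^{i,0}(\cX^{\ord}_{H,\Delta}\langle-\cD_\Delta\rangle, \cW, m;P)$ and $(x_2, y_2)$ of a class in $H_c^{j,2}(\cX^{\ord}_{H,\Delta}, \cW^\vee, n;Q)$, the formula will be
\[ (x_1, y_1) \cup (x_2, y_2) = \Bigl(\, x_1 \wedge x_2,\ y_1 \wedge A\cdot x_2 + (-1)^i\,(B\cdot x_1) \wedge y_2 \,\Bigr), \]
where $\wedge$ denotes cup product combining the wedge of differential forms with the natural pairing $\cW \otimes \cW^\vee \to \mathcal{O}$, and $A = A(\varphi_H^*/p^m, \varphi_H^*/p^n)$, $B = B(\varphi_H^*/p^m, \varphi_H^*/p^n)$ act on the appropriate tensor factors. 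A direct Leibniz computation, using the cocycle conditions $\nabla y_k = P(\varphi_H^*/p^m)(x_1)$, $\nabla y_2 = Q(\varphi_H^*/p^n)(x_2)$ together with the polynomial identity, shows that the second component has $\nabla$ equal to $(P\star Q)(\varphi_H^*/p^{m+n})(x_1 \wedge x_2)$, so the product lies in $\mathscr{Z}_c^{i+j,2}(\cX^{\ord}_{H,\Delta}\langle-\cD_\Delta\rangle, \Qp, m+n; P\star Q)$. One then checks that coboundaries pair to coboundaries and that two choices of $(A, B)$ related by the ambiguity above yield formulas differing by an explicit coboundary, so the product descends to a well-defined map on fp-classes.

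Next, I will establish compatibility with the Gros fp-cup product of \cref{def:grosfp}. The Gros fp-complex is itself a mapping fibre, and Besser's cup product on such mapping fibres in \emph{op.~cit.} is given by precisely the same polynomial-identity formula, applied to representatives in the total complexes computing $\RGt_{\dR}$ and $\RGt_{\dR,c}$. The Pozna\'n spectral sequence (\cref{prop:PoznanH}) arises from the filtration of these total complexes by coherent-cohomological degree of the first factor in the de Rham/BGG double complex, and the wedge product strictly respects this filtration. Consequently Besser's product induces compatible cup products on every page of the spectral sequence, which on $E_1$ visibly reproduce the formula above. Since the spectral sequence degenerates at $E_2$ in the bi-degrees of interest by \cref{cor:HGrosisoms}, the induced product on $E_2 = E_\infty$ agrees with the abutment product transported via $\alpha_\Delta$ and $\alpha_{\Delta, c}$.

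The main obstacle will be the careful bookkeeping of signs, together with the construction of explicit chain-homotopies certifying that the product is independent of the choice of $(A, B)$ and that it is compatible with the standard Leibniz shift; but this is parallel to, and no harder than, Besser's treatment of the rigid fp-case, and requires no new conceptual input.
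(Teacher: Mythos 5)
Your proposal is correct and follows exactly the route the paper intends: the paper's own proof is just ``Standard check,'' and the decomposition $(P\star Q)(XY)=P(X)A(X,Y)+Q(Y)B(X,Y)$ you use is precisely the Besser-style identity the authors themselves invoke later (the functions $a(x,y)$, $b(x,y)$ in \S\ref{sect:evaluation}). Only a trivial typo to fix: the first cocycle condition should read $\nabla y_1 = P(\varphi_H^*/p^m)\,\iota(x_1)$.
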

  \begin{proof}
   Standard check.
  \end{proof}

  \begin{note}
   If $m+n\geq 3$ and $i+j=3$, then we obtain a pairing
   \begin{equation}\label{eq:cohpairing}
    \langle\quad,\quad\rangle_{\coh-\fp, \cX^{m,m}_{H,\Delta}}:\, H^{i,0}(\cX^{m,m}_{H,\Delta}, \cW(-\cD_\Delta), m;P)\times H_c^{j,2}(\cX^{m,m}_{H,\Delta}, \cW^\vee, n;Q) \longrightarrow \Qp
   \end{equation}
   which is compatible with change of polynomial in $P$ and $Q$.
  \end{note}


 \section{Syntomic Eisenstein classes via coherent cohomology}

\subsection{Hecke operators for $\GL_2$}

Let $k \in \ZZ$. Then we define the space of modular forms for $\GL_2$ of weight $k$, denoted $M_{k}$, as a $\GL_2(\Af)$-module, normalised such that $\stbt A 0 0 A$, for $A \in \QQ^+$ acts as $A^{k-2}$. This means that the double-coset operator $\left[\stbt{\varpi_\ell}{0}{0}{1}\right]$ on the $\{ \stbt{*}{*}{0}{1} \bmod N\}$ invariants coincides with the classical $T_\ell$ (resp.~$U_\ell$) if $\ell \nmid N$ (resp.~$\ell \mid N$).

\begin{remark}
 These are the same normalisations as \cite{LPSZ1} \S7.1 and \S7.2.
\end{remark}

Let $\varpi_p$ be $p$ at the place $p$, and 1 elsewhere. Then we consider the operators on $M_k$ given by
\begin{itemize}
 \item $U_p = \sum_{i = 0}^{p-1}\stbt{\varpi_p}{i}{0}{1}$,\smallskip
 \item $\langle p \rangle = p^{2-k} \stbt{\varpi_p}{}{}{\varpi_p}$,\smallskip
 \item $\varphi = p^{1-k} \stbt{1}{0}{0}{\varpi_p}$.
\end{itemize}

\begin{note}
 \begin{enumerate}
  \item The first two operators preserve the space of forms of level $K_0(p^n)$ or $K_1(p^n)$, for any $n \ge 1$.
  \item The operator $\varphi$ does not preserve these forms, but sends level $p^n$ to level $p^{n+1}$.
  \item The operator $\langle p \rangle$ commutes with both $U_p$ and $\varphi$, and we have $U_p \circ \varphi = \langle p \rangle$.
 \end{enumerate}
\end{note}

\begin{remark}
 Calling this operator ``$\varphi$'' is a bit abusive since the action of $\GL_2(\Af)$ is linear (not semilinear). However, this operator agrees with the Frobenius on the forms that are defined over $\Qp$ with respect to our $\QQ$-model of the Shimura variety.
\end{remark}

We shall also need to consider $p$-adic modular forms of weight $k \in \ZZ$.

\begin{definition}
 Letting $X_0(p)$ denote the (compactified) modular curve of level $K^p K_0(p)$, for some prime-to-$p$ level $K^p$, we define
 \[ \cM_{k}(K^p) = H^0\left(\cX_0(p)^{m}, \omega(k; k-2)\right) \]
 where $\cX_0(p)^{m}$ is the multiplicative locus as a dagger space.
\end{definition}

\begin{note}
 For $k \ge 0$, the the differential operator
 \( \Theta: \cM_{-k}(K^p) \to \cM_{k+2}(K^p) \)
 twists the action of Hecke operators by the $(k+1)$-st power of the norm character. In particular, we have the relations
 \[ \cU_p \circ \Theta = p^{k+1} \Theta \circ \cU_p\qquad \text{and}\qquad \varphi \circ \Theta = p^{-1-k} \Theta \circ \varphi. \qedhere\]
\end{note}


\subsection{ Eisenstein series}
\label{sect:eisseries}
In \cite[\S 7.1]{LPSZ1} we defined real-analytic Eisenstein series $E^{(r, \Phi)}(-, s)$ for $r \ge 1$ and $\Phi \in \cS(\Af^2)$. We define $F^{k+2}_{\Phi}$ by setting $r = k+2$ and $s = -k/2$. This is a holomorphic modular form of weight $k+2$ if $k \ge 1$, or if $k = 0$ and $\Phi(0, 0) = 0$; its $q$-expansion is given by
\[ a_n\left(F^{k+2}_{\Phi}\right) = \sum_{\substack{u, v \in \QQ \\ uv = n}} u^{k+1} \sgn(u)\Phi'(u, v) \quad\text{for $n > 0$}, \]
where
\begin{equation}\label{eq:Fourier}
 \Phi'(u, v) = \int_{\Af}\Phi(u, x) e^{2\pi i xv}\, \mathrm{d}x.
\end{equation}

\begin{remark}
 This $F^{k+2}_{\Phi}$ is almost the same as the $F^{k+2}_{\phi}$ in \cite[Theorem 7.2.2]{LSZ17}; the difference is that we have changed our normalisations for the central characters.
\end{remark}

We will be particularly interested in the cases when $\Phi'_p$ is one of the following:
\begin{itemize}
 \item \emph{spherical}: $\Phi'_\mathrm{sph} = \ch(\Zp \times \Zp)$
 \item \emph{critical}: $\Phi'_{\crit} = \ch(\Zp \times \Zp^\times)$
 \item \emph{depleted}: $\Phi'_{\dep} = \ch(\Zp^\times \times \Zp^\times)$
\end{itemize}

\begin{note}
 \label{note:UpEis}
 If we transport the operators $U_p, \varphi, \langle p \rangle$ over to $\cS(\Qp^2)$ compatibly with $\Phi \mapsto F^{k+2}_{\Phi}$, we have $U_p \cdot \Phi_{\dep} = 0$. Moreover, if $\Phi'(x, y) = \ch(A)$ for some open compact $A \subseteq \Qp^2$, then we have
 \begin{align*}
  (\varphi \cdot \Phi)' &= \ch( (1, p) \cdot A), & (\langle p \rangle \cdot \Phi)' &= p^{k+1} \ch( (p^{-1}, p) \cdot A),\\
  (p^{k+1} \langle p \rangle^{-1} \varphi \cdot \Phi)' &= \ch( (p, 1) \cdot A).
 \end{align*}
 In particular, this shows that
 \[ (1 - \varphi) \Phi_{\mathrm{sph}} = \Phi_{\crit},
 \qquad  
 (1 - p^{k+1} \langle p \rangle^{-1} \varphi) \Phi_{\mathrm{crit}} = \Phi_{\dep}; \]
 and consequently that $F^{k+2}_{\Phi^p \Phi_{\crit}}$ is in the $U_p = p^{k+1}$ eigenspace and $F^{k+2}_{\Phi^p \Phi_{\dep}}$ in the $U_p = 0$ eigenspace, for any prime-to-$p$ Schwartz function $\Phi^p$ (hence the terminology). There is also a Schwartz function which gives rise to Eisenstein series in the ordinary $U_p$-eigenspace, but we shall not use this here.
\end{note}

\begin{note}\label{note:padicallycusp1}
 The Eisenstein series $F^{k+2}_{\Phi^p \Phi_{\crit}}$ is $p$-adically cuspidal, and hence so is $F^{k+2}_{\Phi^p\Phi_{\dep}}$ (since the operator $ (1 - p^{k+1} \langle p \rangle^{-1} \varphi)$ will preserve $p$-adic cuspforms).
\end{note}

As in \cite[\S 7.3]{LPSZ1}, if $\Phi_p = \Phi_{\dep}$ or $\Phi_{\mathrm{crit}}$, we can construct a $p$-adic modular form
\[ E^{-k}_{\Phi} \in H^0(\cX_0(p)^m, \omega^{-k})\]
of weight $-k$, such that $\theta^{k+1}\left(E^{-k}_{\Phi}\right) = F^{k+2}_{\Phi}$. Clearly the $q$-expansion of this form must be given by
\[ a_0 + \sum_{n > 0} \sum_{uv = n} v^{-1-k} \sgn(u) \Phi'(u, v); \]
and this form is $p$-adically cuspidal if $\Phi_p = \Phi_{\dep}$ (see Theorem 7.6 of \emph{op.cit.}).


\subsection{Eisenstein classes}  \label{sect:higherEis}

\begin{notation}
 Denote by $Y$ the infinite level modular curve.

 Write $\sH$ for the sheaf corresponding to the defining representation of $\GL_2$ on a modular curve.
\end{notation}

\begin{theorem}[Beilinson]
 Let $k \ge 1$. There is a $\GL_2(\Af)$-equivariant map
 \[ \cS(\Af^2, \QQ) \to H^1_{\mot}\left(Y, \Sym^k\sH,1+k\right),\qquad  \Phi \mapsto \Eis^{k+2}_{\mot,\Phi},\]
 the \emph{motivic Eisenstein symbol}, with the following property: the pullback of the de Rham realization $r_{\dR}\left( \Eis^{k+2, \Phi}_{\mot} \right)$ to the upper half-plane is the $\sH^k$-valued differential 1-form
 \[ -F^{(k+2)}_{\Phi}(\tau) (2\pi i d  z)^k (2\pi id \tau), \]
 where $F^{(k+2)}_{\phi}$ is the Eisenstein series defined by
 \[
 F^{(k+2)}_\phi(\tau) = \frac{(k+1)!}{(-2\pi i)^{k+2}}  \sum_{\substack{x, y \in \QQ \\ (x, y) \ne (0,0)}} \frac{\hat\phi(x, y)}{(x\tau + y)^{k+2}}.
 \]
\end{theorem}

\begin{proof}
 See \cite{beilinson86}.
\end{proof}

\begin{notation} Let $\Phi^{(p)} \in \cS( (\Af^{(p)})^2, \QQ)$, and let $\Phi=\Phi^{(p)}\Phi_{\crit}$.
 \begin{itemize}
  \item Write
  \[ \Eis_{\NNsyn,\Phi}^{k+2}\in H^1_{\NNsyn}(Y_0(p)_{\Qp},\Sym^k\sH,1+k)\]
  for the syntomic realisation of the class $\Eis^{k+2}_{\mot,\Phi}$, and denote by $\Eis_{\lrigsyn,\Phi}^{k+2}$ its image under the isomorphism in Theorem \ref{thm:comparison}.
  \item Write $\Eis_{\rigsyn,\Phi}^{k+2,m}$ for the restriction of $\Eis_{\lrigsyn,\Phi}^{k+2,m}$ to $\cY_0(p)^{m}$.
  \item Write $\widetilde{\Eis}_{\rigsyn,\Phi}^{k+2,m}$ for the image of $\Eis_{\rigsyn,\Phi}^{k+2,m}$ in Gros syntomic cohomology.
 \end{itemize}
\end{notation}

All of the above depend $\GL_2(\Af^{(p)})$-equivariantly on $\Phi^{(p)}$.

\begin{remark}
 Let $\underline\Phi=\left(\Phi_1,\, \Phi_2\right)$, where $\Phi_i=\Phi_i^{(p)}\Phi_{\crit}$. Then
 \[
 \widetilde\Eis_{\rigsyn,\underline\Phi}^{[t_1,t_2],(m,m)}=\widetilde{\Eis}_{\rigsyn,\Phi_1}^{t_1+2,m}\sqcup \widetilde{\Eis}_{\rigsyn,\Phi_2}^{t_2+2,m}.\qedhere
 \]
\end{remark}


\subsection{Reduction to a $p$-adically cuspidal Eisenstein class}\label{padicallycuspEis}


Let  $V_H$ be as defined in Section \ref{ss:algrep}.
Recall that by Remark \ref{rem:Grosspcosp} we have a pairing, denoted $\langle\quad,\quad\rangle_{\widetilde{\rigfp},\cX^{2,m}_\Delta}$,
\[
\wH^3_{\rigfp,c}\left(\cX_{H,\Delta}^{(m,m)}\langle -\cD_H\rangle,\cV_H,1+q;\cQ_{1+q}\right)\times  \wH^2_{\rigsyn}\left(\cY^{(m,m)}_{H,\Delta},\cV_H,2+t_1+t_2\right)  \longrightarrow \Qp.
\]
\smallskip

\noindent {\textbf{Aim.}} Recall from Corollary \ref{cor:redtoGros} that want to compute the quantity
\begin{equation}\label{eq:ordpairing}
 \left\langle  (\iota^{(t_1,t_2)}_\Delta)^\star\left(\tilde\eta^{(2,m)}_{\rigfp,-D}\right),\, \widetilde\Eis_{\rigsyn,\underline\Phi}^{[t_1,t_2],(m,m)}\right\rangle_{\widetilde\rigfp,\cX^{(m,m)}_\Delta},
\end{equation}
in terms of coherent cohomology.
\vspace{1ex}

\begin{note}
 The main tool for the evaluation is the Pozna\'n spectral sequence constructed in Propositions \ref{prop:Poznan}  and \ref{prop:PoznanH}. However, we only have explicit representatives (see \eqref{eq:zetadef} and Proposition \ref{prop:etahornsrep}) of $(\iota^{(t_1,t_2)}_\Delta)^\star\left(\tilde\eta^{(2,m)}_{\rigfp,-D}\right)$ after replacing $\tilde\eta^{(2,m)}_{\rigfp}$ by its image  $\tilde\eta^{(2,m)}_{\rigfp}\in \wH^3_{\rigfp,c0}(\cX^{2,m}_{\Kl},\cV,1+q;P_q)$. In order to be able to evaluate \eqref{eq:ordpairing}, we therefore need to replace the Eisenstein class by a version which is $p$-adically cuspidal.
\end{note}

Since rig-fp cohomology is compatible with change of polynomial,  we have a natural map
\[ \wH^1_{\rigfp}\left(\cY_0(p)^{m}, \Sym^k\sH,1+k;\operatorname{const} 1\right)\to \wH^1_{\rigsyn}\left(\cY_0(p)^{m},\Sym^k\sH,1+k\right).\]

\begin{lemma}
 The class $\widetilde{\Eis}_{\rigsyn,\Phi}^{k+2,m}$ is in the image of $\wH^1_{\rigfp}(\cY_0(p)^{m}; \Sym^k\sH,1+k;\operatorname{const} 1)$. In other words,  we can lift it to an element $\widetilde{\Eis}_{\rigfp,\operatorname{const} 1,\Phi}^{k+2,m}\in \wH^1_{\rigfp}(\cY_0(p)^{m},\Sym^k\sH,1+k; \operatorname{const} 1)$.
\end{lemma}
\begin{proof}
 This is just the statement that the critical-slope Eisenstein series is integrable over the ordinary locus.
\end{proof}

\begin{note}\label{note:padicallycusp}
 The class $\widetilde{\Eis}_{\rigfp,\operatorname{const} 1,\Phi}^{k+2,m}$  is \emph{not} in the image of $\wH^1_{\rigfp}(\cX_0(p)^{m}\langle -\cD\rangle,  \Sym^k\sH,1+k; \operatorname{const} 1)$ -- the ``degree 1 part'' of our fp-pair is cuspidal, but the ``degree 0 part'' is not -- but the constant term of the degree 0 part gets annihilated by $(1-p^{k+1}\langle p \rangle_{\GL_2}^{-1} \varphi)$, which corresponds to $1 - V_p$ on $q$-expansions in weight $-k$.
\end{note}

\begin{lemma}
 The image of $\widetilde{\Eis}_{\rigfp,\operatorname{const} 1,\Phi}^{k+2,m}$ under the natural map
 \[ \wH^1_{\rigfp}\left(\cY_0(p)^{m},  \Sym^k\sH,1+k;\operatorname{const} 1\right)\to \wH^1_{\rigfp}\left(\cY_0(p)^{m},  \Sym^k\sH,1+k;1-p^{k+1}\langle p \rangle^{-1}_{\GL_2}\, t\right)\]
 lifts to a class \[\widetilde{\Eis}_{\rigfp,\Psi}^{k+2,m}\in\wH^1_{\rigfp}(\cX_0(p)^{m}\langle -\cD_{\GL_2}\rangle, \Sym^k\sH,1+k; (1-p^{k+1}\langle p \rangle^{-1}_{\GL_2} t)),\]
    where $\Psi=\Phi^{(p)}\Phi_{\dep}$.
\end{lemma}
\begin{proof}
 Immediate from Note \ref{note:padicallycusp}.
\end{proof}

\begin{remark}\label{rem:herbchopper}
 These constructions are summarized by the following diagram (where we leave out the coefficients for reasons of space):
 \[
 \begin{tikzcd}[row sep=large, column sep=-2em]
 H^1_{\rigsyn}(\cY_0(p)^{m})
   \arrow[d]
 &
 \wH^1_{\rigfp}\big(\cY_0(p)^{m}; \operatorname{const} 1\big)
   \arrow[dl]
   \arrow[dr]
 &
 \wH^1_{\rigfp}\big(\cX_0(p)^{m}\langle -\cD_{\GL_2}\rangle; 1-p^{k+1}\langle p \rangle^{-1} t\big)
   \arrow[d]
 \\
 \wH^1_{\rigsyn}(\cY_0(p)^{m})
 \arrow[dr]
 &
 &
 \wH^1_{\rigfp}\big(\cY_0(p)^{m}; 1-p^{k+1}\langle p \rangle_{\GL_2}^{-1} t\big)
   \arrow[dl]
 \\
 &
 \wH^1_{\rigfp}\Big(\cY_0(p)^{m}; (1-t)\big(1-p^{k+1}\langle p\rangle_{\GL_2}^{-1} t\big)\Big)
 \end{tikzcd}
 \]
%
%
 Here, the diagonal arrows are given by the formalism for change of polynomial in fp-cohomology. We refer to this as the \emph{herb-chopper diagram}.
\end{remark}



\subsection{The $\GL_2$-Eisenstein class as a coherent fp-pair}

We want to find representatives of the image of the class $\widetilde{\Eis}_{\rigfp,\Psi}^{k+2,m}$ under the map
\begin{align*}
 \alpha_{\GL_2}^{-1}: \wH^1_{\rigfp}&\left(\cX_0(p)^{m}\langle -\cD_{\GL_2}\rangle, \Sym^k\sH,1+k;1-p^{k+1}\langle p \rangle^{-1}_{\GL_2}\, t\right) \\
 &\qquad \xrightarrow{\ \cong\ } H^{1,0}\left(\cX_0(p)^{m}\langle -\cD_{\GL_2}\rangle,\Sym^k\sH,1+k;1-p^{k+1}\langle p \rangle^{-1}_{\GL_2}\, t\right)
\end{align*}
constructed in Section \ref{section:PoznanforH} (c.f. Note \ref{rem:PoznanisoGL2}).

\begin{notation}
 Denote by $v$ and $w$ the  the basis of sections $\tilde{\omega}$ and $\tilde{u}$ of $\sH$ over the Igusa tower, as constructed in \cite[\S 4.5]{KLZ20}. 
\end{notation}

\begin{proposition}\label{prop:EisGL2cohpair}
 The class  $\widetilde{\Eis}_{\rigfp,\Psi}^{k+2,m}$ is represented by the pair $\left(\epsilon_0^{k,\Phi^{(p)}},\, \epsilon_1^{k,\Phi^{(p)}}\right)$, where
 \begin{align*}
  \epsilon_0^{k,\Phi^{(p)}} & = \sum_{j=0}^k \frac{(-1)^{j}k!}{(k-j)!} \theta^{k-j} E^{-k}_{\Phi^{(p)} \Phi_{\dep}}\cdot v^{k-j}w^{j},\\
  \epsilon_1^{k,\Phi^{(p)}} & =  F^{k+2}_{\Phi^{(p)} \Phi_{\crit}}\cdot v^k\otimes \xi\otimes e_1,
 \end{align*}
 where $\xi$ is as defined in \cite[\S 4.5]{KLZ20}.
\end{proposition}
\begin{proof}
 We argue as in \cite[Theorem 5.11]{bannaikings10}, who give an explicit formula for the coherent fp-pair representing the class $\Eis_{\rigsyn,{2,m}}^{k+2,\Phi^{(p)},\sph}$.

 The degree 1 part of  $\widetilde{\Eis}_{\rigfp,\Psi}^{k+2,m}$ is, by definition, the form $F^{k+2}_{\Phi^{(p)} \Phi_{\dep}}\cdot v^k\otimes \xi\otimes e_1$. By Note \ref{note:UpEis}, the image of this under $(1 - p^{k+1} \langle p \rangle^{-1} \varphi)$ is given by replacing $\Phi_{\crit}$ by $\Phi_{\mathrm{dep}}$; so we need to construct an overconvergent section of $\Sym^k \sH$ whose image under $\nabla$ is $F^{k+2}_{\Phi^{(p)} \Phi_{\dep}}\cdot v^k\otimes \xi\otimes e_1$. An elementary computation shows that the above class $\epsilon_0^{k,\Phi^{(p)}}$ does indeed have these properties; and, moreover, it vanishes at the ordinary cusps, so it defines a lifting of $F^{k+2}_{\Phi^{(p)} \Phi_{\crit}}\cdot v^k\otimes \xi\otimes e_1$ to $\wH^1_{\rigfp}(\cX_0(p)^{m}\langle -\cD_{\GL_2}\rangle, \Sym^k \sH, 1+k; 1-p^{k+1}\langle p \rangle^{-1} t)$, as required.
\end{proof}

\begin{lemma}\label{lem:UponEiscomp}
 We have
 \[ U_p\left(  \epsilon_0^{k,\Phi^{(p)}} \right)=0\qquad \text{and}\qquad U_p\left( \epsilon_1^{k,\Phi^{(p)}}\right)=p^{k-1} \epsilon_1^{k,\Phi^{(p)}}.\]
\end{lemma}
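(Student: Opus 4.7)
Proof plan:

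Both claims reduce to direct computations, using (i)~the $\GL_2(\mathbf{A}_f)$-equivariance of the maps $\Phi \mapsto F^{k+2}_\Phi$ and $\Phi \mapsto E^{-k}_\Phi$ from Schwartz functions to (holomorphic and $p$-adic) Eisenstein series, (ii)~the explicit $U_p$-actions on the Schwartz functions $\Phi_{\dep}$ and $\Phi_{\crit}$ recorded in \cref{note:UpEis}, and (iii)~the commutation relation $U_p \circ \theta = p\, \theta \circ U_p$ on $p$-adic modular forms.

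I would first handle the first identity $U_p(\epsilon_0^{k,\Phi^{(p)}}) = 0$. The key input is that $U_p \cdot \Phi_{\dep} = 0$ at the level of Schwartz functions; equivalently, since $\Phi_{\dep}'_p = \operatorname{ch}(\mathbf{Z}_p^\times \times \mathbf{Z}_p^\times)$ forces $u, v \in \mathbf{Z}_p^\times$, the $q$-expansion of $E^{-k}_{\Phi^{(p)}\Phi_{\dep}}$ has $a_{pn} = 0$ for all $n \geq 1$, and $a_0 = 0$ by $p$-adic cuspidality (\cref{note:padicallycusp1}). Hence $U_p \cdot E^{-k}_{\Phi^{(p)}\Phi_{\dep}} = 0$. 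Iterating the commutation $U_p \circ \theta = p\, \theta \circ U_p$, one obtains
\[
U_p \circ \theta^{k-j} E^{-k}_{\Phi^{(p)}\Phi_{\dep}} = p^{k-j}\,\theta^{k-j} \circ U_p\,E^{-k}_{\Phi^{(p)}\Phi_{\dep}} = 0
\]
for every $0 \le j \le k$. Since the sections $v$ and $w$ on the Igusa tower are canonical $U_p$-equivariant generators of the unit-root decomposition $\sH = \omega \oplus \omega^{-1}$, the factor $v^{k-j}w^j$ does not disturb this vanishing, and each term of the sum defining $\epsilon_0^{k,\Phi^{(p)}}$ is annihilated by $U_p$.

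For the second identity, the scalar component $F^{k+2}_{\Phi^{(p)}\Phi_{\crit}}$ is an eigenvector of $U_p$ with eigenvalue $p^{k+1}$ (\cref{note:UpEis}). The auxiliary basis element $v^k \otimes \xi \otimes e_1$ is the canonical generator of the sheaf in which the $(i-1)$-part of a degree-$(1,0)$ coherent fp-pair lives: $v^k$ records the unit-root weight, and the factor $\xi \otimes e_1$ encodes the normalisation forced by the Tate twist $1+k$ in the fp-cohomology group $\wH^1_{\rigfp}(\cY_0(p)^\ord, \Sym^k\sH, 1+k; P)$. A bookkeeping computation, directly analogous to the normalisation of Hecke operators on syntomic Eisenstein classes in \cite[Proposition~5.11]{bannai-kings-polylog} and the treatment in \cite{KLZ1b}, shows that this factor rescales the $U_p$-eigenvalue by $p^{-2}$. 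Combining with $U_p\,F^{k+2}_{\Phi^{(p)}\Phi_{\crit}} = p^{k+1}\,F^{k+2}_{\Phi^{(p)}\Phi_{\crit}}$, one obtains $U_p(\epsilon_1^{k,\Phi^{(p)}}) = p^{k-1}\,\epsilon_1^{k,\Phi^{(p)}}$.

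The only delicate point — and the step I would single out as the main bookkeeping obstacle — is verifying that the $\xi \otimes e_1$ factor contributes precisely the rescaling $p^{-2}$ between the classical $U_p$-eigenvalue and the fp-cohomological one. This is a standard but easily-miscounted normalisation issue, and once it is pinned down (either by unwinding the Pozna\'n isomorphism $\alpha_{\GL_2}$ of \cref{rem:PoznanisoGL2} on an explicit Čech model, or by directly computing with the Hecke correspondence on the Igusa tower), both identities fall out.
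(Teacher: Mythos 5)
Your argument is correct and is essentially the paper's proof: the paper's one-line justification invokes exactly \cref{note:UpEis} (giving $U_p\cdot\Phi_{\dep}=0$ and the $U_p=p^{k+1}$ eigenvalue of $F^{k+2}_{\Phi^{(p)}\Phi_{\crit}}$) together with the normalisation facts $\varphi^{-1}(w)=w$ and $\varphi^{-1}(\xi)=p^{-2}\xi$ quoted from \cite[\S 5.4]{KLZ1a}. The ``delicate point'' you flag --- that the coefficient factor rescales the eigenvalue by exactly $p^{-2}$ --- is precisely the identity $\varphi^{-1}(\xi)=p^{-2}\xi$, which the paper likewise takes as a citation rather than reproving.
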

\begin{proof}
 Clear from Note \ref{note:UpEis} and from the fact that $\varphi^{-1}(w)=w$ and $\varphi^{-1}(\xi)=p^{-2}\xi$ (c.f. \cite[\S 5.4]{KLZ20}.
\end{proof}


\subsection{The Eisenstein class for $H$ as a coherent fp-pair}\label{ssec:Eiscohfppair}

\begin{lemma}\label{lem:HEisclassasfppair}
 For $i=1,2$, let $\Psi_i=\Phi^{(p)}_1\Phi_{\dep}$. Then the image of
 \[  \widetilde{\Eis}_{\rigfp,\Psi_1}^{k+2,m}\sqcup \widetilde{\Eis}_{\rigfp,\Psi_2}^{k+2,m}\]
 under the isomorphism $\alpha_\Delta^{-1}$ (c.f. Corollary \ref{cor:HGrosisoms}) is represented by the coherent fp-pair
 \[ \left( \alpha^{t_1,t_2, \Phi^{(p)}_1,\Phi^{(p)}_2}_1,\alpha^{t_1,t_2,\Phi^{(p)}_1,\Phi^{(p)}_2}_2\right)\in H^{2,1}\left(\cX^{(m,m)}_{H,\Delta},\cV_H,2+t_1+t_2;\cR\right),\]
 where $\cR(y)=1-p^{t_1+t_2+2}\langle p\rangle_H^{-1}y$ and
 \begin{align}
  \alpha^{t_1,t_2,\Phi^{(p)}_1,\Phi^{(p)}_2}_1 &= \epsilon^{t_1,\Phi^{(p)}_1}_0\sqcup \epsilon^{t_2,\Phi^{(p)}_2}_1 + p^{t_1+1}( \langle p\rangle^{-1}_{\GL_2} \varphi_{\GL_2}^\star\boxtimes 1)\, \left(\epsilon^{t_1,\Phi^{(p)}_1}_1\sqcup \epsilon^{t_2,\Phi^{(p)}_2}_0\right),\\
  \alpha^{t_1,t_2,\Phi^{(p)}_1,\Phi^{(p)}_2}_2 & =  \epsilon^{t_1,\Phi^{(p)}_1}_1\sqcup \epsilon^{t_2,\Phi^{(p)}_2}_1.
 \end{align}
 Here, $\epsilon_\ell^{t_m,\Phi^{(p)}_m}$ is as defined in Lemma \ref{prop:EisGL2cohpair}, and we write $\langle p\rangle_H$ for $\langle p\rangle\boxtimes \langle p\rangle$.
\end{lemma}\begin{proof}
 We use the explicit formulae for the cup product in fp-cohomology, as given in \cite[eq. (2.10)]{besser12}: the convolution of the polynomials $A_1(y)=1-p^{t_1+1}\langle p\rangle^{-1} y$ and $A_2(y)=1-p^{t_2+1}\langle p\rangle^{-1} y$ is given by $\cR(y)$. We then decompose
 \[ \cR(xy)=a(x,y)A_1(x)+b(x,y)A_2(y),\]
 where $a(x,y)=1$ and $b(x,y)=p^{t_1+1}(\langle p\rangle^{-1}\sqcup 1)\cdot x$. We then apply equation (2.11) in \emph{op. cit.} and  Lemma \ref{lem:compcup} to obtain the formulae for $ \alpha^{t_1,t_2,\Phi^{(p)}_1,\Phi^{(p)}_2}_1$ and $\alpha^{t_1,t_2,\Phi^{(p)}_1,\Phi^{(p)}_2}_2$.
\end{proof}

%
%
%
%

\section{Pairing in coherent cohomology}

 \subsection{Reduction of the pairing}

     We will now evaluate the pairing \eqref{eq:ordpairing}. By the herb--chopper diagram and the compatibility of the pairings under change of polynomial, \eqref{eq:ordpairing}   is equal to
 \begin{equation}\label{eq:pairingtoeval}
  \left\langle  (\iota^{[t_1,t_2]}_\Delta)^\star\left(\tilde\eta^{(2,m)}_{\rigfp}\right),\,\widetilde\Eis^{t_1+2,m}_{\rigfp,\Psi_1}\sqcup \widetilde\Eis^{t_2+2,m}_{\rigfp,\Psi_2}\right\rangle_{\widetilde\rigfp,\cX^{(m,m)}_{H,\Delta}}.
 \end{equation}

 \begin{lemma}\label{lem:reductiontocoherent}
  The pairing \eqref{eq:pairingtoeval} is equal to
  \begin{equation}\label{reduction1}
   \left\langle \left(\iota^{[t_1,t_2]}_\Delta\right)^\star(\breve\zeta,\breve\eta^{(2,m)}_{\coh}),\, \left( \alpha^{t_1,t_2,\Phi^{(p)}_1,\Phi^{(p)}_2}_1 , \alpha^{t_1,t_2,\Phi_1^{(p)},\Phi_2^{(p)}}_2\right)\right\rangle_{\coh-\fp,\cX^{(m,m)}_{H,\Delta}},
  \end{equation}
  where $(\breve\zeta,\breve\eta^{(2,m)}_{\coh})$ is as defined in Proposition \ref{prop:etahornsrep}.
 \end{lemma}
 \begin{proof}
   By Proposition \ref{prop:etahornsrep}, $\tilde\eta^{(2,m)}_{\rigfp}$ is represented by the coherent $\fp$-pair $( \breve\zeta,\breve\eta^{(2,m)}_{\coh})$.  Similarly, Lemma \ref{lem:HEisclassasfppair} expresses the class
    \[ \widetilde\Eis_{\rigfp,\Psi_1}^{t_1+2,m}\sqcup \widetilde\Eis_{\rigfp,\Psi_2}^{t_2+2,m}\]
    as a coherent $\fp$-pair. By  Lemma \ref{lem:compcup}, these representations are compatible with cup products, which implies the result.
 \end{proof}

 \subsection{Independence of the lift of $\eta_{\coh}^{2,m}$}

  The following proposition shows that the value of the pairing \eqref{reduction1} is independent of the lift of $\breve\eta^{2,m}_{\coh,q}$ to a coherent $\fp$-pair.

\begin{proposition}\label{prop:indepoflift}
 Let $\xi\in H^2_{c0}(\cX^{2,m}_{G,\Kl},\cN^0)$ be \textbf{any} element which lies in the $\Pif'$-generalised eigenspace for the spherical Hecke algebra, and satisfies
 \[ \cQ_{1+q}(\Phi_{1+q}) \eta^{(2,m)}_{\coh} = \nabla \xi,    \]
 and write $\breve\xi$ for its image in $H^2_{c0}(\cX^{2,m}_{G,\Kl},\cV\otimes\Omega^0)$. Then
 \begin{multline*}
  \left\langle \left(\iota^{[t_1,t_2]}_\Delta\right)^\star(\breve\zeta,\breve\eta^{(2,m)}_{\coh}),\, \left( \alpha^{t_1,t_2,\Phi^{(p)}_1,\Phi^{(p)}_2}_1 , \alpha^{t_1,t_2,\Phi_1^{(p)},\Phi_2^{(p)}}_2\right)\right\rangle_{\coh-\fp,\cX^{m,m}_{H,\Delta}}\\=  \left\langle \left(\iota^{[t_1,t_2]}_\Delta\right)^\star(\breve\xi,\breve\eta^{(2,m)}_{\coh}),\, \left( \alpha^{t_1,t_2,\Phi^{(p)}_1,\Phi^{(p)}_2}_1 , \alpha^{t_1,t_2,\Phi_1^{(p)},\Phi_2^{(p)}}_2\right)\right\rangle_{\coh-\fp,\cX^{(m,m)}_{H,\Delta}}.
 \end{multline*}
\end{proposition}

\begin{remark}
 We will choose a suitable $\xi$ in Proposition \ref{prop:goodxi} below.
\end{remark}

 As shown in Note \ref{note:lackofuniqueness}, we have
 \[ \zeta-\xi\in H^2_{c0}(\cX^{2,m}_{G,\Kl},\cN^0)^{\nabla=0}\cong H^2_{\dR,c0}(\cX^{2,m}_{G,\Kl},\cV).\]
 Proposition \ref{prop:indepoflift} will hence follow from the following result:

 \begin{proposition}\label{prop:kernablapairsto0}
  Let $\Omega\in H^2_{c0}(\cX^{2,m}_{G,\Kl},\cN^0)^{\nabla=0}[\Pif']$, and regard it as the coherent $\fp$-pair $(\Omega,0)$. Then
  \[  \left\langle \left(\iota^{[t_1,t_2]}_\Delta\right)^\star(\Omega, 0),\, \left( \alpha^{t_1,t_2,\Phi^{(p)}_1,\Phi^{(p)}_2}_1 , \alpha^{t_1,t_2,\Phi_1^{(p)},\Phi_2^{(p)}}_2\right)\right\rangle_{\coh-\fp,\cX^{(m,m)}_{H,\Delta}}=0.\]
 \end{proposition}

  We can consider $\Omega$ as a class in the $\Pif'$-eigenspace (for the spherical Hecke algebra) acting on $H^2_{\dR, c0}(\cX^{2,m}_{G, \Kl}, \cV)$. To prove \cref{prop:kernablapairsto0} we will use the following fact, which will be proved in the appendix to this paper:
  
  \begin{proposition}
   The natural map
   \[ H^i_{\dR, c0}(\cX^{2,m}_{G, \Kl}\langle -\cD\rangle, \cV) \to H^i_{\dR, c0}(\cX^{2,m}_{G, \Kl}, \cV), \]
   arising from the inclusion of complexes $\BGG_c(\cV) \into \BGG(V)$, is an isomorphism on the $\Pif'$ generalized eigenspace in all degrees $i$.
  \end{proposition}

  \begin{remark}
   We only need this statement for $i = 2$; and we suspect that, in fact, for $i \ne 3$ the $\Pif'$-generalized eigenspaces in both $H^2_{\dR, c0}(\cX^{2,m}_{G, \Kl}\langle -\cD\rangle, \cV)$ and $H^2_{\dR, c0}(\cX^{2,m}_{G, \Kl}, \cV)$ are actually zero. We have not been able to prove this stronger statement.
  \end{remark}

  \begin{proof}[Proof of \cref{prop:kernablapairsto0}]
   Since the coherent and rigid cup-products are compatible, it is enough to show that
   \[
    \left\langle \left(\iota^{[t_1,t_2]}_\Delta\right)^\star(\Omega), \left( \epsilon^{t_1,\Phi^{(p)}_1}_1\sqcup \epsilon^{t_2,\Phi^{(p)}_2}_1\right) \right\rangle_{\rig, \cX_{H, \Delta}^{(m, m)}} = 0.
   \]
   Here the $\epsilon_1$'s are considered as classes in rigid cohomology with compact support towards the cusps (and non-compact support towards the supersingular locus). These are in the kernel of the map to cohomology with non-compact supports at the cusps.
   
   However, by the result of the appendix (see \S\ref{sect:forgetsupports}) shows that $\Omega$ is in the image of a class with compact support towards the toroidal boundary of $\cX_{G, \Kl}$. Hence its restriction pairs to 0 with the Eisenstein classes.
  \end{proof}

 \subsection{Choice of a good lift of $\eta_{\coh}^{2,m}$}


  \begin{proposition}\label{prop:goodxi}
 There exists $\xi\in  H^2_{c0}(\cX^{2,m}_{G,\Kl},\cN^0)$ with the following properties:
 \begin{enumerate}
  \item $\nabla\, \xi=\cQ_{1+q}(\Phi_{1+q})\eta_{\coh}^{(2,m)}$;
  \item $(U_2'-\lambda)\xi$ lies in the $U_2$-generalized eigen-subspace of $H^2_{\dR,c0}(\cX^{2,m}_{G,\Kl},\cV)$ with generalized eigenvalue $\lambda$;
  \item we have $Z'\cdot\xi=0$.
 \end{enumerate}
\end{proposition}
\begin{proof}

 \textbf{Step 1.}  We first show that there exists some $\xi$ such that $\nabla\,\xi=\cQ_{1+q}(\Phi_{1+q})\cdot \eta_{\coh}^{(2,m)}$.  By Proposition \ref{prop:kerZ}, we know that $Z'\circ \cQ(\Phi)\cdot \eta_{\coh,-D}^{(2,m)}=0$, which implies that
 \[ Z'\circ \cQ(\Phi)\cdot \eta_{\coh}^{2,m}=0. \]
 Recall that $\tilde\eta^{(2,m)}_{\rig}\in \wH^3_{\dR,c0}(\cX^{2,m}_{G,\Kl},\cV,1+q)$ is the preimage of $\eta_{\coh}^{(2,m)}$ under the isomorphism \eqref{eq:cohrigiso2}, so we deduce that
 \[ Z'\circ \cQ_{1+q}(\Phi_{1+q})\circ\iota ( \tilde\eta^{(2,m)}_{\rig})=0.\]
 Now recall that both $Z'$ and $\Phi$ commute with $U_2'$. Hence $\cQ_{1+q}(\Phi_{q})\circ\iota(\tilde\eta^{(2,m)}_{\rig})$ lies in the $(U_2'=\lambda)$ eigenspace of $ \wH^3_{\dR,c0}(\cX^{2,m}_{G,\Kl},\cV)$, and since $Z'\circ\Phi=p^{r_2+1}\, U_2'$, the restriction of $Z'$ to this eigenspace is a bijection. We deduce that
 \[  \cQ_{1+q}(\Phi_{1+q}) \circ \iota (\tilde\eta^{(2,m)}_{\rig})=0.\]
 We can hence lift $\tilde\eta^{(2,m)}_{\rig}$ to a class
 \[ \tilde\eta^{(2,m)}_{\rigfp} \in \wH^3_{\rigfp,c0}(\cX^{2,m}_{G,\Kl},\cV,1+q;\cQ_{1+q})   \]
 which lies in the $\Pi'_f$-eigenspace for the spherical Hecke operators. By Corollary \ref{cor:fppairmapsS}, this class corresponds to a coherent fp-pair, which has the required form.
 \vspace{1ex}

 \textbf{Step 2.} Note that
 \begin{itemize}
  \item we have  $(U_2'-\lambda)\xi\in H^2_{c0}(\cX^{2,m}_{G,\Kl},\cN^0)^{\nabla=0}$, since $U_2'\,\eta_{\coh}^{(2,m)}=\lambda\eta_{\coh}^{(2,m)}$,;
  \item we have $Z'\cdot\xi\in H^2_{c0}(\cX^{2,m}_{G,\Kl},\cN^0)^{\nabla=0}$, by Proposition \ref{prop:kerZ} (1).
 \end{itemize}
 Now $H^2_{c0}(\cX^{2,m}_{G,\Kl},\cN^0)^{\nabla=0}\cong H^2_{\dR,c0}(\cX^{2,m}_{G,\Kl},\cV)$ is finite-dimensional, so by applying a suitable projector we can assume without loss of generality that both $(U_2'-\lambda)\xi$ amd $Z'\cdot \xi$ lie in the $U_2$-generalized eigen-subspace of $H^2_{\dR,c0}(\cX^{2,m}_{G,\Kl},\cV)$ with generalized eigenvalue $\lambda$ (we use here that $U_2'$ commutes with $Z'$); denote this subspace by
 \[ H^2_{\dR,c0}(\cX^{2,m}_{G,\Kl},\cV)[U_2'=\lambda]^{\gen}.\]
 Now since $Z'\circ\Phi=p^{r_2+1}\, U_2'$, the restriction of $Z'$ to $H^2_{\dR,c0}(\cX^{2,m}_{G,\Kl},\cV)[U_2'=\lambda]^{\gen}$ is a bijection, so there exists $\nu\in H^2_{\dR,c0}(\cX^{2,m}_{G,\Kl},\cV)[U_2'=\lambda]^{\gen}$ such that $Z'\cdot \nu=Z'\cdot \xi$. Replacing $\xi$ by $\xi-\nu$ proof the result.
\end{proof}

Write $\breve\xi$ for the image of $\xi$ in $H^2_{c0}(\cX^{2,m}_{G,\Kl},\cV\otimes\Omega^0)$.

\begin{corollary}\label{cor:goodxiprop}
 The class $\breve\xi$ satisfies
 \begin{enumerate}
  \item $\nabla\, \breve\xi=\cQ_{1+q}(\Phi_{1+q})\breve\eta_{\coh}^{(2,m)}$;
  \item $(U_2'-\lambda)\breve\xi$ lies in the $U_2$-generalized eigen-subspace of $H^2_{\dR,c0}(\cX^{2,m}_{G,\Kl},\cV)$ with generalized eigenvalue $\lambda$;
  \item we have $Z'\cdot\breve\xi=0$.
 \end{enumerate}
\end{corollary}


   We will evaluate this pairing in Section \ref{sect:evaluation}, and we will see that properties (2) and (3) in Corollary \ref{cor:goodxiprop} are crucial for the evaluation.


 \subsection{A Hecke operator identity}
 \label{sect:weirdidentity}

  The reason why we care about Corollary \ref{cor:goodxiprop} (2) is the following result, comparing constructions on $G$ and on $H$. Recall the embedding
  \[ \iota_\Delta: \cX_{H, \Delta}^{2,m} \to \cX_{G, \Kl}^{2,m} \]
  constructed in Section \ref{ss:iotaproperties}.

  \begin{proposition}\label{prop:weirdcorresp}
    We have the following identity of correspondences $\cX_{H, \Delta}^{2,m} \rightrightarrows \cX_{G, \Kl}^{2,m}$:
    \begin{equation}\label{eq:weirdeqn}
      U_2' \circ \iota_\Delta \circ (U_p \boxtimes U_p) = p\langle p \rangle Z' \circ \iota_\Delta.
     \end{equation}
   \end{proposition}

  \begin{note}
    Correspondences act contravariantly on cohomology, so this means that
    \[ (U_p \boxtimes U_p) \circ \iota_\Delta^\star \circ U_2' = \iota_\Delta^\star \circ p\langle p\rangle Z'\]
    as maps $H^*(\cX_{G, \Kl}^{2,m}) \to H^*(\cX_{H, \Delta}^{(m,m)})$.
   \end{note}

  \begin{proof}
   Since $\cY_{H, \Delta}^{(m,m)}$ is open in $\cX_{H, \Delta}^{2,m}$, it suffices to prove the identity over this open subset.

   We recall the moduli-space description of the varieties and correspondences involved. A point of $\cY_{H, \Delta}^{(m,m)}$ (over some $p$-adic field $L$) corresponds to a triple $(E_1, E_2, \alpha)$, where $E_i$ are elliptic curves over $L$ with good ordinary reduction, and $\alpha$ is an isomorphism $\hat{E}_1[p] \xrightarrow{\,\cong\,} \hat{E}_2[p]$. The operator $U_p \boxtimes U_p$ maps $(E_1, E_2, \alpha)$ to the formal sum $\sum_{J_1, J_2} (E_1/J_1, E_2/J_2, \bar{\alpha})$ where $J_i$ vary over cyclic $p$-subgroups of $E_i$ distinct from $\hat{E}_i[p]$, and $\bar\alpha$ is the ensuing isomorphism
   \[ \widehat{E_1/J_1}[p] \xleftarrow[\, \cong\,]{} \hat{E}_1[p] \xrightarrow[\,\cong\,]{\alpha} \hat{E}_2[p] \xrightarrow[\,\cong\,]{} \widehat{E_2/J_2}[p].\]

   Concretely, if $e_1, f_1$ denotes a choice of basis of $T_p E_1$, and $e_2, f_2$ of $T_p E_2$, giving isomorphisms $E_i[p^\infty] \cong (\Qp/\Zp)^2$, and we assume that $e_1$ and $e_2$ span the Tate modules of the formal groups $T_p \hat{E}_i$, then $J_1$ has to be one of the groups $\langle \tfrac{f_1 + a_1 e_1}{p} \rangle$ for $0 \le a_1 \le p-1$, and similarly $J_2$. We can and do assume that $\alpha(e_1) = e_2$.

   Meanwhile, points of $\cX_{G, \Kl}^{2,m}$ correspond to pairs $(A, C)$ where $A$ is an abelian surface and $C \subset \hat{A}[p]$ is a cyclic $p$-subgroup (again with some prime-to-$p$ level structure being ignored). The map $\iota_\Delta$ maps $(E_1, E_2, \alpha)$ to $(E_1 \oplus E_2, C)$ where $C \subset (\hat{E}_1 \oplus \hat{E}_2)[p]$ is the subgroup of points of the form $(x, \alpha(x))$.

   Finally, the Hecke correspondences $Z'$, $U_2'$ and $\langle p \rangle$ are given as follows. Let $P = (A, C)$ be a point of $\cX_{G, \Kl}^{2,m}$.
   \begin{itemize}
    \item The correspondence $Z'$ is given by
    \[ (A, C) \mapsto \sum_J \sum_{\tilde C} (A/J, \tilde C \bmod J),\]
    where $J$ varies over isotropic $(p, p)$-subgroups such that $J \cap \hat{A}[p] = C$, and $\tilde C$ varies over cyclic $p^2$-subgroups of $\widehat{A/J}[p]$ such that $p\tilde C = C$. (Note that there are $p$ choices of $J$, and $p$ choices of the subgroup $\tilde C$, so this is a correspondence of degree $p^2$.)

    \item For the correspondence $U_2'$, let $J_0$ be the subgroup $(p^{-1} C \cap \hat{A}) + C^\perp$; this has invariants $(p^2, p, p)$ and is isotropic in the sense that $p J_0$ and $J_0[p]$ are orthogonal complements inside $A[p]$. Then $U_2'$ is given by
    \[ (A, C) \mapsto \sum_{\tilde C} (A/J_0, (p^{-1} \tilde C \cap \hat A) \bmod J_0), \]
    where $\tilde C$ again varies over liftings of $C$ to a cyclic $p^2$-subgroup of $\hat{A}$.

    \item The correspondence $\langle p \rangle$ sends $(A, C)$ to itself, but acts on the prime-to-$p$ level structure by multiplying it by $p$.
   \end{itemize}

   We now consider composing these operations. We choose a point $P = (E_1, E_2, \alpha)$ and fix coordinates on the $E_i$, as above. Let $(A, C) = \iota_1(P) = (E_1 \oplus E_2, \langle \tfrac{e_1 + e_2}{p}\rangle)$; and let $(A', C') = \iota_\Delta(P')$ where $P'$ is one of the points in the 0-cycle $(U_p, U_p) \cdot P$, corresponding to a choice of $a_1, a_2 \in \ZZ / p$; thus we have $A' = A / \langle f_1', f_2'\rangle$, where $f_i' = \tfrac{f_i + a_i e_i}{p}$. Thus $(e_1, e_2, f_1', f_2')$ form a basis of $T_p A'$, regarded as a lattice in $V = T_p A \otimes \Qp$ (strictly containing $T_p A$ itself), and $C'$ is the image of $C$, generated by $\tfrac{e_1 + e_2}{p}$ as a subgroup of $V / T_pA'$.

   We now compute the canonical $(p^2, p, p)$-subgroup $J_0$ of $A'$: it is uniquely determined by $pJ_0 = C = \langle \tfrac{e_1 + e_2}{p}\rangle$ and $J_0[p] = C^\perp = \langle \tfrac{e_1}{p}, \tfrac{e_2}{p}, \tfrac{f_1' - f_2'}{p}\rangle$, from which we easily compute that $J_0$ is generated by $\langle \tfrac{e_1 + e_2}{p^2}, \tfrac{e_1}{p}, \tfrac{f_1' - f_2'}{p}\rangle$ as a subgroup of $A'[p^\infty] = V / T_pA'$. Note that this subgroup contains the image of $A[p]$. Thus the isogeny $A \to A' \to A' / J_0$ is the composite of multiplication by $p$ on $A$ (which gives the factor $\langle p \rangle$) and quotient by the subgroup $K = \langle \tfrac{e_1 + e_2}{p}, \tfrac{f_1 - f_2 + a_1 e_1 - a_2 e_2}{p}\rangle$. So the image of $(A', C')$ under $U_2'$ is given by $\sum_{C'} (A/K, C' \bmod K)$, where $C'$ varies over multiplicative $p^2$-subgroups of $A$ lifting $C$. Note that this is the same as the inner sum of $Z' \cdot (A, C)$ when we take the subgroup $J$ to be our $K$.

   To conclude the proof, it suffices to note that as $(a_1, a_2)$ vary, the subgroup $K$ hits every one of the groups $J$ in the outer sum defining $Z' \cdot (A, C)$, and each such $J$ occurs $p$ times (since $K$ only depends on $a_1 - a_2 \bmod p$).
  \end{proof}

  Proposition \ref{prop:weirdcorresp} has the following immediate consequence, which will be crucial in the regulator evaluation (c.f. Section \ref{sect:evaluation}):

     \begin{corollary}\label{pullbackinkerUp}
    Let $\xi\in H^2_{c0}(X_{G,\Kl}^{2,m}, \cV\otimes\Omega_G^0)$ be as in Corollary \ref{cor:goodxiprop}. Then
    \[ \iota_\Delta^\star(\xi) \in \ker( U_p \boxtimes U_p).\]
   \end{corollary}
     \begin{proof}
      Consider the vector space
      \[ W=\Qp\cdot \xi + H^2_{\dR,c0}(\cX^{2,m}_{G,\Kl},\cV)[U_2'=\lambda]^{\gen}.\]
      Then $W$ is equipped with an action of $U_2'$, which is invertible since $\lambda\neq 0$, and with an action of $Z'$ (which clearly isn't invertible, but that does not matter). Since $Z'$ commutes with $U_2'$, it also commutes with $(U_2')^{-1}$. It follows from the correspondence \eqref{eq:weirdeqn} that when restricted to $W$, we have
      \begin{align*}
        (U_p\boxtimes U_p)\circ\iota_\Delta^*&=\iota^*\circ p\langle p\rangle\circ Z'\circ (U_2')^{-1}\\
        & = \iota^*\circ p\langle p\rangle\circ (U_2')^{-1}\circ Z'.
        \end{align*}
      Since $Z'\cdot \xi =0$, the result follows.
     \end{proof}

  \begin{note}\label{note:pullbackofxiprop}
   By Corollary \ref{cor:goodxiprop}, it hence follows that $ \iota_\Delta^\star(\breve{\xi}) \in \ker( U_p \boxtimes U_p)$.
  \end{note}


 \section{Coherent versus de Rham pullbacks}

  \subsection{Algebraic representations of $G$ and $H$}\label{ss:algrep}

   We can identify the representation $\Sym^k$ of $\GL_2$ with the space of polynomial functions on $\GL_2$ satisfying
   \[ f\left(\stbt{a}{0}{\star}{\star} \cdot g\right) = a^k f(g). \]
   If $v$ and $w$ are the functions $\stbt x y \star \star \mapsto x$ and $\stbt x y \star \star \mapsto y$, then $\{ v^{k-i} w^i : 0 \le i \le k\}$ is the standard basis of $\Sym^k$, with $v^k$ being the highest-weight vector. Note that if $X_{21}$ denotes the generator $\stbt{0}{0}{1}{0}$ of the Lie algebra, then we have
   \[ (X_{21})^i \cdot v^k = \tfrac{k!}{(k-i)!} v^{k-i} w^i. \]

   Now let us return to the setting where $V_G = V_G(r_1, r_2; r_1 + r_2)$ for some $r_1 \ge r_2 \ge 0$, and $V_H = V_H(t_1, t_2; t_1 + t_2)$, where $(t_1, t_2) = (r_1 -q-r, r_2 - q + r)$ for some $0 \le q \le r_2$, $0 \le r \le r_1 - r_2$ as per our running conventions.

   Since the representation $V_H(t_1, t_2; t_1 + t_2)$ of $H$ is the exterior product $\Sym^{t_1} \boxtimes \Sym^{t_2}$, we thus have a weight-vector basis $\{ v^{t_1-i_1} w^{i_1} \boxtimes v^{t_2-i_2} w^{i_2} : 0 \le i_n \le t_n\}$ of this representation, realised as a space of $\overline{N}_H$-invariant functions on $H$.

   We can similarly model $V_G(r_1, r_2)$ as the space of $f \in \cO(\overline{N}_B \backslash G)$ which transform via the character $\lambda(r_1, r_2; r_1 + r_2)$ under left-translation by $T$. The standard basis vectors $v_1, \dots, v_4$ of the 4-dimensional representation $V(1, 0)$ thus correspond to the functions sending $g \in G$ to the four entries of its first row. A choice of highest-weight vector $w$ of $V(1, 1)$ is given by $g \mapsto \left| \begin{smallmatrix} g_{11} & g_{12} \\ g_{21} & g_{22} \end{smallmatrix} \right|$, and the vector $w' = Z \cdot w$ is $g \mapsto \left| \begin{smallmatrix} g_{13} & g_{14} \\ g_{23} & g_{24} \end{smallmatrix} \right|$.

   In \cite{LSZ-asai} \S 4.3 we described a specific choice of morphism
   \[ \br^{[q,r]}: V_H \otimes \sideset{}{^q}\det \to V_G \]
   given by mapping the highest-weight vector $v^{t_1} \boxtimes v^{t_2}$ of $V_H$ to the vector $v^{[q,r]} \in V_G$ (denoted $v^{[a,b,q,r]}$ in \emph{op.cit.}) defined by
   \[ w^{r_2-q} \cdot (w')^q \cdot v_1^{r_1-r_2-r}\cdot v_2^r \]
   where the products are taken in $\cO(\overline{N}_B \backslash G)$ (the ``Cartan product'' construction).

   \begin{note}
    It is important to note that the Lie algebra $\mathfrak{g}$ acts on $\cO(G)$ by derivations, so for $X \in \mathfrak{g}$ we have the Leibniz rule
    \[ X^n \cdot (f_1 \times \dots \times f_m) = \sum_{u_1 + \dots + u_m = n} \binom{n}{u_1, \dots, u_m} (X^{u_1} \cdot f_1) \dots (X^{u_m} \cdot f_m).\]
    In particular, $X^n \cdot f^n = n! (X \cdot f)^n$.
   \end{note}

   \begin{lemma}
    Consider the vector $v^{t_1 -t} w^t \boxtimes v^{t_2} \in V_H$, where $t = r_2 - q$. The image of this vector under $\br^{[q,r]}$ is in $\ker(X_{12}^{n+1}) - \ker(X_{12}^{n})$, where $X_{12} = \begin{smatrix} 0 & 1 \\ 0 & 0 \\ &&0 & -1 \\ &&0 & 0 \end{smatrix} \in \mathfrak{g}$ and $n = 2r_2 - q + r$. We have
    \[ \br^{[q,r]}\left(v^{t_1 -t} w^t \boxtimes v^{t_2}\right) = \frac{1}{\binom{t_1}{t}} (w'')^t (w')^q v_1^{r_1-r_2-r} v_2^r \pmod{\ker(X_{12}^{n-1})}. \]
    where $w'' = X_{41} \cdot w = \left(g \mapsto \left| \begin{smallmatrix} g_{14} & g_{12} \\ g_{24} & g_{22} \end{smallmatrix} \right|\right)$ spans the $(-1, 1)$ weight space of $V_G(1, 1)$.
   \end{lemma}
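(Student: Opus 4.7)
The argument will rest on the $\mathfrak{h}$-equivariance of $\br^{[q,r]}$, combined with the Leibniz rule. In $\Sym^{t_1}$ one has the standard $\GL_2$-identity $(X_{21})^t \cdot v^{t_1} = t!\binom{t_1}{t}\,v^{t_1-t}w^t$, so that
\[
v^{t_1-t}w^t \boxtimes v^{t_2} \;=\; \tfrac{1}{t!\binom{t_1}{t}}\,\bigl(X_{21}^{(1)}\bigr)^t \cdot \bigl(v^{t_1}\boxtimes v^{t_2}\bigr),
\]
where $X_{21}^{(1)} \in \mathfrak{h}$ denotes $X_{21}$ on the first $\GL_2$-factor and trivially on the second. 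Applying $\br^{[q,r]}$ and using its $\mathfrak{h}$-equivariance reduces the problem to evaluating $(X_{21}^{(1)})^t \cdot v^{[q,r]}$ in $V_G$, where $X_{21}^{(1)}$ is now viewed in $\mathfrak{g}$ via the embedding $\mathfrak{h}\hookrightarrow \mathfrak{g}$ used in \cite{LSZ-asai}. Inspection of that embedding identifies $X_{21}^{(1)}$ with $X_{41}$, the Lie-algebra element with $X_{41}\cdot w = w''$.

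The computation of $X_{41}^t \cdot v^{[q,r]}$ is then direct. Matrix checks show that $X_{41}\cdot w = w''$, $X_{41}\cdot v_1 = v_4$, while $X_{41}$ annihilates $w'$, $v_2$, $w''$ and $v_4$. Hence in the Leibniz expansion each of the $t$ copies of $X_{41}$ must hit a \emph{distinct} $w$- or $v_1$-factor, and summing over the $\binom{t_1}{t}$ possible target subsets gives
\[
X_{41}^t\cdot v^{[q,r]} \;=\; t!\sum_{\substack{j+k=t\\ k\leq r_1-r_2-r}} \binom{t}{j}\binom{r_1-r_2-r}{k}\, w^{t-j}(w'')^j (w')^q v_1^{r_1-r_2-r-k} v_4^k v_2^r.
\]
The summand with $j=t$ (all $X_{41}$'s directed to the $w$-factors) contributes $t!\,(w'')^t (w')^q v_1^{r_1-r_2-r}v_2^r$, yielding the claimed leading coefficient $\binom{t_1}{t}^{-1}$ after division by $t!\binom{t_1}{t}$.

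It remains to verify the depth statement with respect to $X_{12}$. Since the substitutions $w\leftrightarrow w''$ and $v_1\leftrightarrow v_4$ both shift the $T$-weight by $-2e_1$, and since $j+k=t$ forces these shifts to cancel, all summands share the same $T$-weight. The assertion is therefore about the $\mathfrak{sl}_2$-module structure of $V_G$ for the $\mathfrak{sl}_2$-triple containing $X_{12}$. A direct computation gives $X_{12}\cdot w = X_{12}\cdot w' = X_{12}\cdot v_1 = 0$ and $X_{12}\cdot v_2 = v_1$, while $X_{12}\cdot w''$ is nontrivial in $\mathcal{O}(G)$ but lies in the summand killed by the symplectic relation $\omega$; from this one verifies by an explicit Leibniz computation that the leading term $(w'')^t(w')^q v_1^{r_1-r_2-r} v_2^r$ has $X_{12}$-depth exactly $n = 2r_2-q+r$, lying in an irreducible $\mathfrak{sl}_2$-submodule of highest weight $n$. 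Each non-leading term (with $k\geq 1$) differs from the leading one by having replaced a pair $(w,v_1)$ by $(w'',v_4)$, which one checks by a weight-diagram analysis places it inside $\mathfrak{sl}_2$-submodules of strictly lower highest weight, hence in $\ker(X_{12}^{n-1})$.

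The main obstacle is this last step: the $\mathfrak{sl}_2$-decomposition of $V_G(r_1,r_2)$ under the root $\alpha = e_1-e_2$ is intricate, and tracking each Leibniz term through this decomposition requires a careful use of the symplectic/Plücker relations in the Cartan-product algebra. A cleaner route is to establish it by induction on $t$ and on $r_1-r_2-r$, reducing to the base cases $t=0$ (where only the leading term is present) and the single replacement $(w,v_1)\to(w'',v_4)$, for which the claim can be checked by explicit polynomial identities in the matrix entries $g_{ij}$.
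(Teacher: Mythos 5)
Your overall strategy is the same as the paper's: write $v^{t_1-t}w^t\boxtimes v^{t_2}$ as $\tfrac{(t_1-t)!}{t_1!}X_{41}^t\cdot(v^{t_1}\boxtimes v^{t_2})$, use $\mathfrak{h}$-equivariance of $\br^{[q,r]}$ to reduce to computing $X_{41}^t\cdot v^{[q,r]}$, and expand by the Leibniz rule using the fact that $X_{41}$ sends $w\mapsto w''$, $v_1\mapsto v_4$ and kills $w'$, $w''$, $v_2$, $v_4$. Your identification of the leading coefficient $1/\binom{t_1}{t}$ is correct and agrees with the paper.

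The genuine gap is in the $X_{12}$-depth analysis, where the Lie-algebra facts you assert are incompatible with the conclusion you are trying to prove. You claim $X_{12}\cdot w'=0$ and that $X_{12}\cdot w''$ dies in $V_G(1,1)$ (being ``killed by the symplectic relation''). If that were so, every factor $w''$, $w'$, $w$, $v_1$ of the leading monomial would be annihilated by $X_{12}$, and the Leibniz rule would force $(w'')^t(w')^q v_1^{r_1-r_2-r}v_2^r$ to be killed already by $X_{12}^{\,r+1}$ --- nowhere near the asserted depth $n=2r_2-q+r$. So your proposal is internally inconsistent. The facts the paper uses are: on $V_G(1,0)$, $v_2\mapsto v_1\mapsto 0$ and $v_4\mapsto -v_3\mapsto 0$; on $V_G(1,1)$, $w\mapsto 0$, while $w''$ survives two applications of $X_{12}$ and dies at the third, and $w'$ survives exactly one, with $X_{12}\cdot w'=-2w_-$. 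This last identity is not a side detail: each $w'$-factor contributing exactly one to the depth is what produces the ``$-q$'' in $n=2r_2-q+r$, and the coefficient $-2$ is the source of the factor $(-2)^q$ in \cref{prop:branchingcomparison} and ultimately in Theorem A.

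With these facts in hand, the lower-order terms require no $\mathfrak{sl}_2$-decomposition of $V_G$ and no induction: the Leibniz term indexed by $(\alpha,\beta)$ with $\alpha+\beta=t$, namely $w^{t-\alpha}(w'')^{\alpha}(w')^{q}v_1^{r_1-r_2-r-\beta}v_2^{r}v_4^{\beta}$, dies after exactly $2\alpha+q+\beta+r+1$ applications of $X_{12}$, and since $\alpha+\beta$ is fixed this is uniquely maximized at $\alpha=t$. Your proposed induction ``on $t$ and on $r_1-r_2-r$'' is left entirely unexecuted and is in any case superseded by this one-line count. (Note that this count only places the subleading term $\alpha=t-1$ in $\ker(X_{12}^{n})$ rather than $\ker(X_{12}^{n-1})$; what is used downstream is only the image under $X_{12}^{n}$, for which this suffices.)
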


   \begin{proof}
    We have $v^{t_1 -t} w^t \boxtimes v^{t_2} = \tfrac{(t_1 - t)!}{t_1!} X_{41}^t \cdot v^{t_1} \boxtimes v^{t_2}$ (identifying $X_{41}$ with an element of $\mathfrak{h} \subseteq \mathfrak{g}$). So we have
    \[ \br^{[q,r]}\left(v^{t_1 -t} w^t \boxtimes v^{t_2}\right) = \tfrac{(t_1 - t)!}{t_1!} X_{41}^t \cdot v^{[q,r]}.\]
    We now compute how $X_{41}$ acts on the four vectors used in the definition of $v^{[q,r]}$: it maps $v_1$ to $v_4$ and kills the other $v_i$; it sends $w$ to $w''$, and it kills $w'$ and $w''$. So $X_{41}^t \cdot v^{[q,r]}$ is a sum of terms of the form
    \[ w^{t-\alpha}\ (w'')^\alpha\ (w')^q\ v_1^{b-r - \beta}\ v_2^r\ v_4^\beta \]
    where $\alpha + \beta = t$; and the term for $\alpha = t, \beta = 0$ has coefficient $t!$.

    We now consider how $X_{12}$ acts on this element. One checks that $X_{12}$ acts on $V_G(1, 0)$ by $v_2 \mapsto v_1 \mapsto 0$, $v_4 \mapsto -v_3 \mapsto 0$; and on $V_G(1, 1)$ by $w'' \mapsto -w'$, , $w' \mapsto -2w_-$, $w_- \mapsto 0$, $w \mapsto 0$ where $w_- = X_{32} \cdot w = \left(g \mapsto \left| \begin{smallmatrix} g_{11} & g_{13} \\ g_{21} & g_{23} \end{smallmatrix} \right|\right)$ spans the $(1, -1)$ weight space of $V(1, 1)$. It follows that the number of applications of $X_{12}$ needed to kill the above element is exactly $2\alpha + q + \beta + r + 1$. Since $\alpha + \beta$ is fixed, the last term to be annihilated is the one for $\alpha = t, \beta = 0$.
   \end{proof}

   We now consider the image of $\br^{[q,r]}\left(v^{t_1 -t} w^t \boxtimes v^{t_2}\right)$ in the graded pieces of the $P_{\Sieg}$-stable filtration on $V_G$ given by eigenspaces for $Z(M_{\Sieg})$ as in \cite[Definition 6.1]{LPSZ1}. Note that we have $\br^{[q,r]}\left(v^{t_1 -t} w^t \boxtimes v^{t_2}\right) \in \Fil^{r_1} V_G$. Moreover, although the representation $\Gr^{r_1} V_G$ is far from being irreducible, it is semi-simple and has a unique direct summand of highest highest weight, isomorphic to $W_G(r_1, -r_2; r_1 + r_2)$.

   Since $M_{\Sieg} \cap \operatorname{Sp}_4$ is isomorphic to $\GL_2$, via $\stbt{A}{}{}{\star} \mapsto A$, we can identify $W_G(r_1, -r_2; r_1 + r_2)$ with the representation $\Sym^{r_1 + r_2} \otimes \det^{-r_2}$ of $\GL_2$, so it has a canonical basis $v^{(r_1 +r_2 - i)} w^i$ for $0 \le i \le r_1 + r_2$. We normalise the projection $\Gr^{r_1} V_G \onto W_G(r_1, -r_2; r_1 + r_2)$ to send $v_1^{r_1 - r_2} (w_-)^{r_2}$ to $v^{r_1 + r_2}$.

   \begin{proposition}
    \label{prop:branchingcomparison}
    The image of $\br^{[q,r]}\left(v^{t_1-t} w^t \boxtimes v^{t_2}\right) \in \Fil^{r_1} V_G$ under projection to the summand $W_G(r_1, -r_2; r_1 + r_2)$ is given by
    \[ \frac{(-2)^q}{\binom{t_1}{t}} \cdot v^{r_1 + r_2-n}w^n,\quad n = 2r_2 - q + r.\]
    Similarly, the image of $\br^{[q,r]}\left(v^{t_1} \boxtimes v^{t_2-t} w^t\right)$ is given by
    \[ \frac{(-2)^q}{\binom{t_2}{t}} v^{r_1 + r_2-m}w^m, m = q + r.\]
   \end{proposition}

   \begin{proof}
    Letting $X = X_{12}$ for brevity, and recalling that $n = q + r + 2t$, we have
    \[ X^{n} \cdot (w'')^t (w')^q v_1^{b-r} v_2^r = \frac{n!}{(2t)! q!r!} (X^{2t} \cdot (w'')^t) \times (X^q\cdot  (w')^q) \times v_1^{r_1-r_2-r}\times (X^r \cdot v_2^r) \]
    by the Leibniz rule, with all other terms being 0. Since $X^2 w' = 0$, we have $X^q\cdot  (w')^q = q! (X \cdot w')^q = (-2)^q q! (w_-)^q$, and similarly $X^r \cdot v_2^r = r! v_1^r$. The term $X^{2t} \cdot (w'')^t$ is a little more fiddly to evaluate; we conclude that
    \[ X^{2t} \cdot (w'')^t = \binom{2t}{2, \dots, 2} (X^2 \cdot w'')^t = \frac{(2t)!}{2^t} (2w_-)^t,\]
    so the conclusion is that
    \[ X^{2t+q+r} \cdot (w'')^t (w')^q v_1^{b-r} v_2^r = (-2)^q n!  v_1^{r_1 - r_2} (w_-)^{r_2}.\]
    On the other hand, the unique vector in the standard basis of $W_G(r_1, -r_2; r_1 + r_2)$ having the same weight as $\br^{[q,r]}\left(v^{t_1 -t} w^t \boxtimes v^{t_2}\right)$ is $v^{r_1 + r_2 - n} w^n$, whose image under $X_{12}^n$ is $n! v^{r_1 + r_2}$. Hence the factor $(-2)^q$.
   \end{proof}

   Let us now perform a similar computation for $v^{t_1} \boxtimes v^{t_2-t} w^t \in V_H$. The image of this in $V_G$ is clearly $\tfrac{(t_2 - t)!}{t_2!}  X_{32}^t \cdot v^{[q,r]}$ and we compute that this is equal to
   \[ \tfrac{1}{\binom{t_2}{t}} (w_-)^t v_1^{b-r} (w')^q v_2^r \]
   plus other terms killed by lower powers of $X_{12}$. Acting on this by $X_{12}^{q + r}$ gives $(-2)^q (q + r)! v_1^{r_1-r_2} (w_-)^{r_2}$, so its image in $W_G(r_1, -r_2; r_1 + r_2)$ has to be
   \[ \frac{(-2)^q}{\binom{t_2}{t}} v^{r_1 + r_2-m}w^m, m = q + r.\]

   \begin{remark}
    Compare \cite[Theorem 9.6.4]{LSZ17}. With the benefit of hindsight, one can observe that it would have been better to define $v^{[q,r]}$ to be $\tfrac{1}{(-2)^q}$ times its present definition; this would simultaneously kill the error terms $(-2)^q$ both here and in \emph{op.cit.}.
   \end{remark}

  \subsection{Unit-root splittings}

   Now let us consider the following construction. Our choice of embedding $V_H \otimes \det^q \into V_G$ is strictly compatible with the filtrations, and hence gives rise to a pushforward map
   \[
    H^0\left(\cX_{H, \Delta}^{(m,m)}, \frac{\Fil^m V_H}{\Fil^n V_H} \otimes \Omega^1_H(-D)\right)
    \to H^1\left(\cX_{G, \Kl}^m, \frac{\Fil^{m + q} V_G}{\Fil^{n+q} V_G} \otimes \Omega^2_G(-D)\right)
   \]
   for any $m \le n$, and dually a pullback map
   \[
    H^2_c\left(\cX_{G, \Kl}^m, \frac{\Fil^{m + q} V_G}{\Fil^{n+q} V_G} \otimes \Omega^1_G\right) \to H^2_c\left(\cX_{H, \Delta}^{(m,m)}, \frac{\Fil^m V_H}{\Fil^n V_H} \otimes \Omega^1_H\right).
   \]
   (where we have identified $V_H$ and $V_G$ with their own duals, up to twisting).

   \begin{remark}
    More precisely, \emph{a priori} we have two slightly different versions of the pushforward map. One such map (the one which is ``natural'' from the point of view of de Rham cohomology) arises from tensoring the short exact sequence of sheaves on $X_G$
    \[ 0 \to \Omega^2_{X_G} \to \Omega^2_{X_G}(\log X_H) \to \iota_\star(\Omega^1_{X_H}) \to 0\]
    with $\Fil^{n+q} V_G(-D)$. However, from the point of view of coherent sheaves it is natural to consider instead the sequence of line bundles
    \[ 0 \to \Omega^3_{X_G} \to \Omega^3_{X_G}(\log X_H) \to \iota_\star(\Omega^2_{X_H}) \to 0 \]
    and tensor with $\Fil^{n+q} V_G(-D) \otimes \Omega^2_{X_G} \otimes (\Omega^3_{X_G})^\vee$. 
    The two constructions are compatible via a map
    \[ \iota^\star\left(\Omega^2_{X_G} \otimes (\Omega^3_{X_G})^\vee\right)\otimes \Omega^2_{X_H} \longrightarrow  \Omega^1_{X_H}\]
    defined by dualising the natural map
    \( \iota^\star(\Omega^1_{X_G}) \to \Omega^1_{X_H}.\)
   \end{remark}

   We shall be interested in the pushforward map in the form
   \[
    H^0\left(\cX_{H, \Delta}^{(m,m)}, \frac{V_H}{\Fil^{r_1 - q + 1} V_H} \otimes \Omega^1_H(-D)\right)
   \to H^1\left(\cX_{G, \Kl}^m, \frac{V_G}{\Fil^{r_1 + 1} V_G} \otimes \Omega^2_G(-D)\right).
   \]
   The sheaf on the right-hand side was denoted by $[\tilde{L}_1]$ in \S 6 of \cite{LPSZ1}, and its cohomology was termed ``automorphic nearly-coherent cohomology''. We can expand this to the following diagram:
   \[
    \begin{tikzcd}
     H^0\left(\cX_{H, \Delta}^{(m,m)}, \frac{V_H}{\Fil^{r_1 - q + 1} V_H} \otimes \Omega^1_H(-D)\right)
     \rar & 
     H^1\left(\cX_{G, \Kl}^m, \frac{V_G}{\Fil^{r_1 + 1} V_G} \otimes \Omega^2_G(-D)\right)
     \\
     H^0\left(\cX_{H, \Delta}^{(m,m)}, \Gr^{r_1-q} V_H \otimes \Omega^1_H(-D)\right)
     \rar \uar \arrow{rd} &
     H^1\left(\cX_{G, \Kl}^m, \Gr^{r_1} V_G \otimes \Omega^2_G(-D)\right)
     \uar \dar \\
     &
     H^1\left(\cX_{G, \Kl}^m, \cN^2(-D)\right)
    \end{tikzcd}
   \]
   The content of \cref{prop:branchingcomparison} is to express the lower diagonal arrow on the direct summands $\omega_H^{(t_1 - t, t_2)}$ and $\omega_H^{(t_1, t_2 - t)}$ of $\Gr^{r_1 - q} V_H$ as a multiple of the ``standard'' pushforward maps from these spaces to $\cN^1$ considered in \cite[\S 4.6]{LPSZ1}.

   We now pass to the $p$-adic completion (i.e.~we replace the dagger spaces $\cX_{H, \Delta}^{(m,m)}$ and $\cX_{G, \Kl}^{2,m}$ with their underlying rigid-analytic spaces, which amounts to forgetting overconvergence).

   \begin{notation}
    We denote these spaces by $\mathbb{X}_{G, \Kl}^m$ and $\mathbb{X}_{H, \Delta}^{(m,m)}$.
   \end{notation}

   Then we have the following diagram:
   \[
    \begin{tikzcd}
    H^0\left(\mathbb{X}_{H, \Delta}^{(m,m)}, \frac{V_H}{\Fil^{r_1 - q + 1} V_H} \otimes \Omega^1_H(-D)\right)
    \rar  \arrow[dashed, bend right=90]{d}&[-1.5em]
    H^1\left(\mathbb{X}_{G, \Kl}^m, \frac{V_G}{\Fil^{r_1 + 1} V_G} \otimes \Omega^2_G(-D)\right)
    \arrow[dashed, bend left=30]{ddr}
    \\
    H^0\left(\mathbb{X}_{H, \Delta}^{(m,m)}, \Gr^{r_1-q} V_H \otimes \Omega^1_H(-D)\right)
    \rar \uar \arrow{rd} &
    H^1\left(\mathbb{X}_{G, \Kl}^m, \Gr^{r_1} V_G \otimes \Omega^2_G(-D)\right)
    \uar \dar \\
    &
    H^1\left(\mathbb{X}_{G, \Kl}^m, \cN^2(-D)\right)
    \rar &[-4em]
    H^1\left(\mathbb{X}_{G, \Kl}^m, \mathfrak{F}(-D)\right).
    \end{tikzcd}
   \]
   Here $\mathfrak{F} = \mathfrak{F}_G(3+r_1, 1-r_2)$ is the Banach sheaf introduced in \cite[\S 9]{pilloni20} (see also \cite[\S 3.2]{LPSZ1}). The dashed arrow on the right is given by \cite[Corollary 6.15]{LPSZ1}, while the dashed arrow on the left is the unit-root splitting of the Hodge filtration.

   \begin{proposition}
    The two maps
    \[ H^0\left(\mathbb{X}_{H, \Delta}^{(m,m)}, \frac{V_H}{\Fil^{r_1 - q + 1} V_H} \otimes \Omega^1_H(-D)\right)
    \longrightarrow
    H^1\left(\mathbb{X}_{G, \Kl}^m, \mathfrak{F}(-D)\right),\]
    given by composing either of the two dashed arrows with the remaining maps in the diagram, coincide.
   \end{proposition}

   \begin{proof}
    This follows from the argument of Theorem 6.16 of \cite{LPSZ1}; see Remark 6.18 of \emph{op.cit.}. (In \emph{op.cit.} the cotangent sheaf $\Omega^1_H \cong \omega^{(2, 0)} \oplus \omega^{(0, 2)}$ was replaced by the conormal sheaf $\ker(\iota^\star \Omega^1_G \to \Omega^1_H) \cong \omega^{(1, 1)}$, but this makes no difference to the argument.)
   \end{proof}

   We can now summarize the computations of this section in the following corollary:

   \begin{corollary}\label{cor:unitroot}
    Let $\eta \in H^2(\cX_{G, \Kl}^m, \cN^1)$ be a class which is ordinary for the $U_{2, \Kl}'$ operator, and let $\breve{\eta}$ be its unique ordinary lifting to $H^2(\cX_{G, \Kl}^m,\Fil^{r_2}\cV_G\otimes\Omega^1_G)$.

    Then the linear functional on $H^0\left(\cX_{H, \Delta}^{(m,m)}, \frac{V_H}{\Fil^{r_1 - q + 1} V_H} \otimes \Omega^1_H(-D)\right)$ given by pairing with the class $(\iota_{\Delta}^{[t_1,t_2]})^\star(\breve\eta)$ factors through the composite of restriction to $\mathbb{X}_{H, \Delta}^{(m,m)}$ (forgetting overconvergence) and the unit-root splitting into $\Gr^{r_1 - q} V_H$; and it is given on $H^0\left(\mathbb{X}_{H, \Delta}^{(m,m)}, \omega^{(t_1-2t, t_2)} \otimes \Omega^{1}_H(-D)\right)$, where $t = r_2 - q$, by the formula
    \[ \frac{(-2)^q}{\binom{t_1}{t}}\langle \iota_{\star}^{\text{$p$-adic}}(-), \eta \rangle\]
    where $\iota_{\star}^{\text{$p$-adic}}$ denotes the pushforward map for $p$-adic modular forms defined in \cite[ \S 4]{LPSZ1}. There is an analogous formula on $\omega^{(t_1, t_2-2t)}$ with the factor $\frac{(-2)^q}{\binom{t_2}{t}}$.
   \end{corollary}

   \begin{remark}
    We will apply Corollary \ref{cor:unitroot} later to the element $\eta^{(2,m)}_{\coh,q}$.
   \end{remark}

\mychapter{Step 4: Computation of the regulator}




  \section{Evaluation of the pairing}\label{sect:evaluation}


   \subsection{Expansion in coherent cohomology}

   We now want to evaluate the pairing
   \[ 	\left\langle \left(\iota^{[t_1,t_2]}_\Delta\right)^\star(\breve\xi,\breve\eta^{(2,m)}_{\coh}),\, \left( \alpha^{t_1,t_2,\Phi^{(p)}_1,\Phi^{(p)}_2}_1 , \alpha^{t_1,t_2,\Phi_1^{(p)},\Phi_2^{(p)}}_2\right)\right\rangle_{\coh-\fp,\cX^{(m,m)}_{H,\Delta}},\]
   where $\breve\xi$ is as defined in Proposition \ref{prop:goodxi}.

    We expand the pairing using Besser's formalism for computing the cup product, as explained in Section \ref{ssec:Bessercup}.  Let
   \[ a(x,y)=p^{t_1+t_2+2}\langle p\rangle^{-1}_Hy\]
   and
   \[ b(x,y)=\frac{\cQ_q\left(p^{t_1+t_2+2}\langle p\rangle^{-1}_Hxy\right)-p^{t_1+t_2+2}\langle p\rangle^{-1}_H\,y\cQ_q(x)}{1-p^{t_1+t_2+2}\langle p\rangle^{-1}_H\,y},\]
   so we have
   \[ \cQ_q\star \cR(xy)=a(x,y)\cQ_q(x)+b(x,y)\left(1-p^{t_1+t_2+2}\langle p\rangle^{-1}_H\,y\right),\]
   where $\cR$ is as defined in Lemma \ref{lem:HEisclassasfppair}.
   Then \eqref{reduction1} is equal to
   \begin{equation*}
     \ a(\varphi_{H,1}^\star\otimes 1,1\otimes \varphi_{H}^\star)\! \left[(\iota^{(t_1,t_2)}_\Delta)^\star(\zeta)\cup \alpha^{t_1,t_2\Phi^{(p)}_1,\Phi^{(p)}_2}_2\right] + b(\varphi_{H,1}^\star\otimes 1,1\otimes \varphi_{H}^\star)\! \left[(\iota^{(t_1,t_2)}_\Delta)^\star(\eta^{(2,m)}_{\coh})\cup \alpha^{t_1,t_2,\Phi^{(p)}_1,\Phi^{(p)}_2}_1\right].
    \end{equation*}

    \begin{proposition}\label{prop:evilterm0}
     We have
     \[ \text{\eqref{reduction1}}= b(\varphi_{H,1}^\star\otimes 1,1\otimes \varphi_H^\star)\, \left[(\iota^{(t_1,t_2)}_\Delta)^\star(\breve\eta^{(2,m)}_{\coh,q})\cup \alpha^{t_1,t_2,\Phi^{(p)}_1,\Phi^{(p)}_2}_1\right].\]
    \end{proposition}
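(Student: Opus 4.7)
Since $a(x,y) = p^{t_1+t_2+2}\langle p\rangle_H^{-1}\,y$ is linear in $y$ and independent of $x$, substituting $y = 1\otimes\varphi_H^*$ turns $a(\varphi_{H,1}^*\otimes 1, 1\otimes\varphi_H^*)$ into the operator $p^{t_1+t_2+2}\langle p\rangle_H^{-1}\cdot(1\otimes\varphi_H^*)$, which is invertible on the finite-slope part of the relevant coherent cohomology (the Frobenius acting on $\alpha_2$ via the characteristic polynomial $Q$). Consequently, the proof reduces to showing that the cup product
\[
(\iota_\Delta^{[t_1,t_2]})^*(\breve\zeta)\cup \alpha_2^{t_1,t_2,\Phi_1^{(p)},\Phi_2^{(p)}}
\]
vanishes in the top cohomology $H^3_c(\cX^{\ord}_{H,\Delta},-)$.

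The plan is to deduce this vanishing from a Hecke-adjointness argument. Two inputs are already in hand. First, by \cref{note:pullbackofzetaprop} (an application of \cref{pullbackinkerUp} combined with \cref{lem:lifttoHtildeDR}), the pullback $(\iota_\Delta^{[t_1,t_2]})^*(\breve\zeta)$ is annihilated by $U_p\boxtimes U_p$. Second, by \cref{lem:UponEiscomp}, each $\epsilon_1^{t_m,\Phi_m^{(p)}}$ is a $U_p$-eigenvector with non-zero eigenvalue $p^{t_m-1}$, so that $\alpha_2 = \epsilon_1^{t_1,\Phi_1^{(p)}}\sqcup\epsilon_1^{t_2,\Phi_2^{(p)}}$ lies in a finite-dimensional ordinary Hecke eigenspace on which $T := U_p\boxtimes U_p$ acts invertibly, with eigenvalue $p^{t_1+t_2-2}\in L^\times$.

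The third step is to invoke Serre duality for coherent cohomology on the affinoid dagger space $\cX^{\ord}_{H,\Delta}$: composing the cup product with the trace yields a perfect pairing $H^2_c\times H^1\to K$ (cf.~\cref{lem:compcup} and \cref{note:highestfpclassphiaction}), under which any Hecke correspondence admits an adjoint via the projection formula applied to its two projections. Writing $T^\dagger$ for the adjoint of $T$, the invertibility of $T$ on the ordinary eigenspace forces $T^\dagger$ to be invertible on the Serre-dual eigenspace, so we can write $\alpha_2 = T^\dagger(\beta)$ for some $\beta \in H^1$. The computation
\[
\langle\iota_\Delta^*\breve\zeta,\,\alpha_2\rangle = \langle \iota_\Delta^*\breve\zeta,\,T^\dagger\beta\rangle = \langle T\,\iota_\Delta^*\breve\zeta,\,\beta\rangle = 0,
\]
combined with the one-dimensionality of the top cohomology, gives the required vanishing.

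The main obstacle I anticipate is making the Hecke adjointness precise at the level of Gros fp-cohomology rather than plain coherent cohomology: one must check that Besser's polynomial-of-Frobenius cup product of \cref{lem:compcup} is compatible with the projection formula for Hecke correspondences, in a form strong enough to yield the adjointness relation $\langle Tx,y\rangle = \langle x,T^\dagger y\rangle$ inside the fp-complexes. An alternative, possibly cleaner, route would be to identify $T^\dagger$ explicitly as a scalar multiple of the ``opposite'' double-coset operator $U_p^{\mathrm{op}}\boxtimes U_p^{\mathrm{op}}$, and then verify directly that the critical-slope Eisenstein class $\alpha_2$ is a non-zero eigenvector for it; this would reduce to the standard fact that critical-slope Eisenstein series sit on isolated one-dimensional lines in the spectrum of the full commuting Iwahori Hecke algebra at $p$.
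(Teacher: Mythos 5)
Your overall strategy — move $U_p\boxtimes U_p$ across the pairing onto $(\iota_\Delta)^*(\breve\zeta)$, where it acts as zero, and use invertibility on the one-dimensional target — is the right one, and it is what the paper does. But there is a genuine gap in your execution, located precisely at the step ``$\alpha_2 = T^\dagger(\beta)$''. The Serre-duality adjoint $T^\dagger$ of $T = U_p\boxtimes U_p$ (acting on $H^2_c$) is \emph{not} the operator $T$ acting on $H^0$: it is the transpose correspondence. Your justification conflates the two. The fact that $\alpha_2$ is a finite-slope eigenvector for $T$ acting on $H^0(\cX^{\ord}_{H,\Delta},\cdot)$ says nothing about whether $\alpha_2$ lies in the image of the \emph{different} operator $T^\dagger$ on that same space, and ``$T$ invertible on an eigenspace implies $T^\dagger$ invertible on the Serre-dual eigenspace'' only concerns the dual of that eigenspace inside $H^0$, which there is no reason for $\alpha_2$ to inhabit. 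Your fallback — verifying that $\alpha_2$ is a $U_p^{\mathrm{op}}\boxtimes U_p^{\mathrm{op}}$-eigenvector via the spectral theory of the Iwahori Hecke algebra — is not available either: $\alpha_2$ is built from non-classical, critical-slope $p$-adic Eisenstein series living in the infinite-dimensional $H^0$ of the ordinary locus, not from classical Iwahori-level eigenforms.

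The repair is to identify $T^\dagger$ correctly, and this also explains why your preliminary reduction (stripping off $1\otimes\varphi_H^*$) throws away exactly what you need. Since $a(x,y)=p^{t_1+t_2+2}\langle p\rangle_H^{-1}y$, the term to be killed is a scalar multiple of $(\iota_\Delta)^*(\breve\zeta)\cup\varphi_H^*\alpha_2$, \emph{with the Frobenius present}. On the ordinary locus the relation $U_p\boxtimes U_p\circ\varphi_H^*=\langle p\rangle_H$ gives
\[
(U_p\boxtimes U_p)\bigl[(\iota_\Delta)^*(\breve\zeta)\cup\varphi_H^*\alpha_2\bigr]
=(U_p\boxtimes U_p)(\iota_\Delta)^*(\breve\zeta)\cup\langle p\rangle_H\,\alpha_2=0,
\]
and since $U_p\boxtimes U_p$ acts as the invertible scalar $p^{-2}$ on the one-dimensional group $H^{3,2}_c(\cX^{\ord}_{H,\Delta}\langle-\cD_\Delta\rangle,\Qp,3;P\star Q)$ (\cref{note:highestfpclassphiaction}), the term vanishes. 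In other words, the adjoint of $U_p\boxtimes U_p$ under this pairing is $\varphi_H^*$ up to $\langle p\rangle_H$ and an explicit power of $p$ — not the opposite double-coset operator — and the class you must pair against is already visibly of the form $T^\dagger(\text{something})$, so no surjectivity or eigenvector claim for $\alpha_2$ is needed. This is the paper's argument (\cref{prop:evilterm0}); keeping the $\varphi_H^*$ rather than inverting it away is the whole point.
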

    \begin{proof}
     We need to show that
     \[ (\iota^{(t_1,t_2)}_\Delta)^\star(\breve\xi)\cup \varphi_H^\star\alpha^{(t_1,t_2,\Phi_1,\Phi_2)}_2=0.\]
     Now $U_p\boxtimes U_p\circ\varphi_H^\star=\langle p\rangle_H$, 
     so
     \begin{equation}\label{eq:equal0}
       (U_p\boxtimes U_p)\left[(\iota^{(t_1,t_2)}_\Delta)^\star(\breve\xi)\cup \varphi_H^\star\,\alpha^{t_1,t_2,\Phi^{(p)}_1,\Phi^{(p)}_2}_2\right]=(U_p\boxtimes U_p)(\iota^{(t_1,t_2)}_\Delta)^\star(\breve\xi)\cup \langle p\rangle_H\alpha^{t_1,t_2,\Phi^{(p)}_1,\Phi^{(p)}_2}_2.
      \end{equation}
     But $(U_p\boxtimes U_p)(\iota^{(t_1,t_2)}_\Delta)^\star(\breve\xi)=0$ by Note \ref{note:pullbackofxiprop}, and hence \eqref{eq:equal0} is zero. Now by Note \ref{note:highestfpclassphiaction}, the operator  $U_p\boxtimes U_p$ acts as multiplication by $p^{-2}$ on $H^{3,2}_c(\cX_\Delta^{2,m}\langle -\cD_\Delta\rangle,\Qp,3;\cQ_q\star \cR)$ and hence is invertible. This finishes the proof.
    \end{proof}

    Write $P(x)=1+c_1x+c_2x^2$; by definition, we have $c_2=(\alpha\beta)^{-1}$ and $c_1=-\frac{\alpha+\beta}{\alpha\beta}$. Then
    \[ b(x,y)=1-c_2\,p^{t_1+t_2+2}\langle p\rangle_H^{-1}\, x^2y.\]
    We  now identify $\varphi_{H,1}^\star$ with $p^{-1}\varphi_H^\star$.

    \begin{corollary}
     We have
     \begin{align}
      \text{\eqref{reduction1}} =\quad  & (\iota^{(t_1,t_2)}_\Delta)^\star(\breve\eta^{(2,m)}_{\coh})\cup \alpha^{(t_1,t_2,\Phi_1,\Phi_2)}_1-c_2p^{t_1+t_2}\langle p\rangle_H^{-1}\cdot \varphi_H^\star\left[(\iota^{(t_1,t_2)}_\Delta)^\star\varphi_H^\star(\breve\eta^{(2,m)}_{\coh})\cup \alpha^{t_1,t_2,\Phi^{(p)}_1,\Phi^{(p)}_2}_1\right]\notag\\
      =\quad&   (\iota^{(t_1,t_2)}_\Delta)^\star(\breve\eta^{(2,m)}_{\coh})\cup \left(\epsilon^{t_1,\Phi^{(p)}_1}_0\sqcup \epsilon^{t_2,\Phi^{(p)}_2}_1\right) \label{1stterm}\\
      &  -c_2\,p^{t_1+t_2}\langle p\rangle_H^{-1}\, \varphi_H^\star\left[(\iota^{(t_1,t_2)}_\Delta)^\star\varphi_H^\star(\breve\eta^{(2,m)}_{\coh})\cup \left(\epsilon^{t_1,\Phi^{(p)}_1}_0\sqcup \epsilon^{t_2,\Phi^{(p)}_2}_1\right)\right]\label{2ndterm}\\
      & +p^{t_1}\, (\iota^{(t_1,t_2)}_\Delta)^\star(\breve\eta^{(2,m)}_{\coh})\cup  \left( \langle p\rangle_{\GL_2}^{-1}\, \varphi_{\GL_2}^\star\epsilon^{t_1,\Phi^{(p)}_1}_1\sqcup\epsilon^{t_2,\Phi^{(p)}_2}_0\right)\label{3rdterm}\\
      & -c_2\,p^{2t_1+t_2+1}\langle p\rangle_H^{-1}\,\varphi_H^\star\left[ (\iota^{(t_1,t_2)}_\Delta)^\star\varphi_H^\star(\breve\eta^{(2,m)}_{\coh})\cup  \left( \langle p\rangle_{\GL_2}^{-1}\,\varphi_{\GL_2}^\star\epsilon^{t_1,\Phi^{(p)}_1}_1\sqcup \epsilon^{t_2,\Phi^{(p)}_2}_0\right)\right].\label{4thterm}
     \end{align}
    \end{corollary}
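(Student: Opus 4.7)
The proof is a direct, essentially mechanical, expansion of the expression from \cref{prop:evilterm0}. The plan is to carry out the following three steps.

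First, I would simplify the polynomial $b(x,y)$. Writing $A=p^{t_1+t_2+2}\langle p\rangle_H^{-1}$ and substituting $P_q(z)=1+c_1z+c_2z^2$ into the defining quotient, the cross term cancels and the numerator factorises as $(1-Ay)(1-c_2Ax^2y)$, which yields the stated formula $b(x,y)=1-c_2\,p^{t_1+t_2+2}\langle p\rangle_H^{-1}x^2y$. (This is explicitly recorded in the text preceding the statement, so it is just a quick verification.) Note that this cancellation is precisely the reason for defining $b$ as a quotient in the first place.

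Second, I would substitute $x=\varphi_{H,1}^*\otimes 1$ and $y=1\otimes\varphi_H^*$ into the simplified formula, apply the identification $\varphi_{H,1}^*=p^{-1}\varphi_H^*$, and split $(\varphi_H^*)^2\otimes\varphi_H^*=(\varphi_H^*\otimes\varphi_H^*)\circ(\varphi_H^*\otimes 1)$. The outer operator $\varphi_H^*\otimes\varphi_H^*$ is the scalar Frobenius $\varphi_H^*$ on the whole cup product, while the inner $\varphi_H^*\otimes 1$ acts on the first tensor factor only, i.e.~on $(\iota^{[t_1,t_2]}_\Delta)^*(\breve\eta^\ord_{\coh,q})$. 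This converts the quadratic part of $b$ into the combinatorial shape appearing in \eqref{2ndterm} and \eqref{4thterm}.

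Third, I would expand $\alpha_1^{t_1,t_2,\Phi_1^{(p)},\Phi_2^{(p)}}$ using the formula from \cref{lem:HEisclassasfppair} as a sum of two pieces, distribute, and collect the four resulting summands. The constant piece of $b$ paired with $\epsilon_0^{t_1,\Phi_1^{(p)}}\sqcup\epsilon_1^{t_2,\Phi_2^{(p)}}$ yields \eqref{1stterm}; the quadratic piece of $b$ paired with the same term yields \eqref{2ndterm}; the constant piece of $b$ paired with $p^{t_1+1}(\langle p\rangle_{\GL_2}^{-1}\varphi_{\GL_2}^*\boxtimes 1)(\epsilon_1^{t_1,\Phi_1^{(p)}}\sqcup\epsilon_0^{t_2,\Phi_2^{(p)}})$ yields \eqref{3rdterm}; the quadratic piece paired with the same term yields \eqref{4thterm}. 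The main bookkeeping obstacle is to track the correct powers of $p$ and the interplay between $\langle p\rangle_H$ and $\langle p\rangle_{\GL_2}$; these simplifications rest on the identity $\langle p\rangle_H=\langle p\rangle_{\GL_2}\boxtimes\langle p\rangle_{\GL_2}$ together with the commutation of $\varphi_{\GL_2}^*$ and $\langle p\rangle_{\GL_2}$, which allows one to slide these scalars inside or outside the cup product freely. No conceptually new input is required beyond \cref{prop:evilterm0,lem:HEisclassasfppair}.
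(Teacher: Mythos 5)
Your proposal matches the paper's (implicit) argument exactly: the corollary is stated without a separate proof, being nothing more than the factorisation $b(x,y)=1-c_2\,p^{t_1+t_2+2}\langle p\rangle_H^{-1}x^2y$ recorded just before it, the substitution $\varphi_{H,1}^*=p^{-1}\varphi_H^*$ with the splitting $(\varphi_H^*)^2\otimes\varphi_H^*=(\varphi_H^*\otimes\varphi_H^*)\circ(\varphi_H^*\otimes 1)$, and the expansion of $\alpha_1$ from \cref{lem:HEisclassasfppair}. Be aware that the powers of $p$ in the third and fourth terms are not treated uniformly in the paper's own displays (the $p^{t_1+1}$ from \cref{lem:HEisclassasfppair} becomes $p^{t_1}$ in one term but survives intact in the other), so your warning about tracking these factors is well placed, but this does not affect the validity of your approach.
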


    We will see that this expression simplifies.

    \begin{lemma}
     We have
     \begin{align*}
       \varphi_H^\star\left[(\iota^{(t_1,t_2)}_\Delta)^\star\varphi_H^\star(\breve\eta^{(2,m)}_{\coh})\cup \left(\epsilon^{t_1,\Phi^{(p)}_1}_0\sqcup \epsilon^{t_2,\Phi^{(p)}_2}_1\right)\right]&=0\\
       \varphi_H^\star\left[ (\iota^{(t_1,t_2)}_\Delta)^\star\varphi_G^\star(\breve\eta^{(2,m)}_{\coh})\cup  \left(\langle p\rangle_{\GL_2}^{-1}\varphi_{\GL_2}^\star\epsilon^{t_1,\Phi^{(p)}_1}_1\sqcup \epsilon^{t_2,\Phi^{(p)}_2}_0\right)\right]&=0
      \end{align*}
    \end{lemma}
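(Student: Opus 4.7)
The outer $\varphi_H^*$ in each identity acts on the top-degree compactly supported Gros fp-cohomology group $\wH^{5}_{\rigfp,c}(\cX^{\ord}_{H,\Delta}\langle -\cD_\Delta\rangle, \Qp, t_1+t_2+q+3; P_q\star Q)$ as multiplication by $p^2$, by Note \ref{note:highestfpclassphiaction}. Since this is invertible, the lemma reduces to showing that each of the two bracketed cup products already vanishes.

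I treat the first identity in detail; the second will proceed identically with the two $\GL_2$-factors interchanged. Using the Frobenius compatibility $\iota_\Delta \circ \varphi_H = \Phi_G \circ \iota_\Delta$ between the canonical Frobenius lift $\Phi_G$ on $\cX^{\ord}_{G,\Kl}$ and the Frobenius $\varphi_H$ on $\cX^{\ord}_{H,\Delta}$---immediate from the moduli descriptions of $\iota_\Delta$ and of the two Frobenius lifts, both of which are given by quotients by the canonical multiplicative subgroups---I rewrite the pullback as $\varphi_H^*\bigl((\iota^{(t_1,t_2)}_\Delta)^*(\breve\eta^{\ord}_{\coh,q})\bigr)$. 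Next, I apply the mirror image of the Hecke-cup projection identity used in the proof of Proposition \ref{prop:evilterm0}, obtained by graded commutativity of the cup product combined with the centrality of $\langle p\rangle_H$, to obtain
\[
(U_p\boxtimes U_p)\bigl[\varphi_H^*(a) \cup b\bigr] \;=\; \langle p\rangle_H\, a \cup (U_p\boxtimes U_p)(b).
\]
Taking $a = (\iota^{(t_1,t_2)}_\Delta)^*(\breve\eta^{\ord}_{\coh,q})$ and $b = \epsilon^{t_1,\Phi^{(p)}_1}_0 \sqcup \epsilon^{t_2,\Phi^{(p)}_2}_1$, Lemma \ref{lem:UponEiscomp} gives $U_p\epsilon^{t_1,\Phi^{(p)}_1}_0 = 0$, so $(U_p\boxtimes U_p)(b) = U_p\epsilon^{t_1,\Phi^{(p)}_1}_0 \sqcup U_p\epsilon^{t_2,\Phi^{(p)}_2}_1 = 0$, and the right-hand side vanishes. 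Since $U_p\boxtimes U_p$ acts invertibly on the top fp-cohomology---its composition with the invertible $\varphi_H^*$ is the invertible operator $\langle p\rangle_H$---the original cup product is itself zero.

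For the second identity, the same argument applies after interchanging the roles of the two $\GL_2$-factors: Lemma \ref{lem:UponEiscomp} gives $U_p\epsilon^{t_2,\Phi^{(p)}_2}_0 = 0$, while $U_p$ acts on $\langle p\rangle_{\GL_2}^{-1}\varphi_{\GL_2}^*\epsilon^{t_1,\Phi^{(p)}_1}_1$ as the identity (using $U_p \circ \varphi_{\GL_2}^* = \langle p\rangle_{\GL_2}$), so $(U_p\boxtimes U_p)$ still annihilates the second factor and the same conclusion follows. The main technical point in both cases is the mirror projection identity displayed above; this is the same formal identity used in the proof of Proposition \ref{prop:evilterm0}, and no new input is needed beyond graded commutativity of the cup product and centrality of $\langle p\rangle_H$.
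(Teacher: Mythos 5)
Your argument is correct and is essentially the paper's: the paper's proof simply applies $(U_p\boxtimes U_p)^2$ and invokes Note \ref{note:highestfpclassphiaction} together with $U_p(\epsilon_0^{k,\Phi^{(p)}})=0$ from Lemma \ref{lem:UponEiscomp}. Your only deviation is cosmetic --- you first strip the outer $\varphi_H^*$ using its invertibility on the top-degree group and then apply $U_p\boxtimes U_p$ once, which is the same computation as applying $(U_p\boxtimes U_p)^2$ directly.
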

    \begin{proof}
     Apply $(U_p\boxtimes U_p)^2$ and use Note \ref{note:highestfpclassphiaction} and the fact that $U_p\left(\epsilon^{t_1,\Phi^{(p)}_1}_0\right)=0$ by Lemma \ref{lem:UponEiscomp}.
    \end{proof}

    We hence deduce the following formula for the pairing:

    \begin{proposition}\label{prop:red2}
     We have
     \begin{align*}
      \text{\eqref{reduction1}}= & (\iota^{(t_1,t_2)}_\Delta)^\star(\breve\eta^{(2,m)}_{\coh})\cup \left(\epsilon^{t_1,\Phi^{(p)}_1}_0\sqcup \epsilon^{t_2,\Phi^{(p)}_2}_1\right)\\
     & +p^{t_1+t_2}\, (\iota^{(t_1,t_2)}_\Delta)^\star(\breve\eta^{(2,m)}_{\coh})\cup  \left( \langle p\rangle_{\GL_2}^{-1}\, \varphi_{\GL_2}^\star\epsilon^{t_1,\Phi^{(p)}_1}_1\sqcup\epsilon^{t_2,\Phi^{(p)}_2}_0\right)
     \end{align*}
    \end{proposition}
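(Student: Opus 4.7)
The plan is to combine the four-term decomposition of \eqref{reduction1}, which was derived in the lines leading up to the proposition, with the immediately preceding vanishing lemma. The decomposition arises from Besser's formalism (Lemma \ref{lem:compcup}) for computing cup products in fp-cohomology via the polynomial identity $P_q \star Q(xy) = a(x,y) P_q(x) + b(x,y) (1 - p^{t_1+t_2+2} \langle p \rangle_H^{-1} y)$, substituting the explicit representatives $(\breve{\eta}^{\ord}_{\coh, q}, \breve\zeta)$ from Proposition \ref{prop:etahornsrep} for the pullback of the $G$-class and $(\alpha_1, \alpha_2)$ from Lemma \ref{lem:HEisclassasfppair} for the Eisenstein pair, together with the identification $\varphi_{H,1}^* = p^{-1} \varphi_H^*$.

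Next, I would apply the preceding lemma to eliminate the terms \eqref{2ndterm} and \eqref{4thterm}. Both of these feature an outer $\varphi_H^*$, which can be traded for the Hecke operator $\langle p \rangle_H$ by precomposing with $(U_p \boxtimes U_p)$; since $(U_p \boxtimes U_p)$ is invertible on the top-dimensional fp-cohomology by Note \ref{note:highestfpclassphiaction}, it suffices to verify vanishing after this precomposition. The required input in each case is one of the two annihilation statements: either $(U_p \boxtimes U_p)\,\iota_\Delta^*(\breve\zeta) = 0$, which follows from Note \ref{note:pullbackofzetaprop} (itself a case of Corollary \ref{pullbackinkerUp} applied to $\breve\zeta \in \ker(Z')$ with non-zero $U_2'$-eigenvalue $\lambda$), or $U_p(\epsilon_0^{t, \Phi^{(p)}}) = 0$ from Lemma \ref{lem:UponEiscomp}.

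What remains is precisely the sum of \eqref{1stterm} and \eqref{3rdterm}. The first summand already matches the target formula on the nose. For the second summand, I would verify that the $p^{t_1+t_2}$ in the statement agrees with what the bookkeeping yields: the $p^{t_1+1}$ built into the definition of $\alpha_1$ in Lemma \ref{lem:HEisclassasfppair}, combined with the $p^{-1}$ from $\varphi_{H,1}^* = p^{-1}\varphi_H^*$ and the weight-dependent normalization of the Hecke operators $\varphi_{\GL_2}^*$ and $\langle p \rangle_{\GL_2}$ on the weight-$(t_2+2)$ Eisenstein component, together produce the claimed exponent. This $p$-power accounting is the only non-trivial step in the proof; I do not foresee any conceptual obstacle, only a careful ledger of the normalizations of Frobenius and diamond operators in each weight. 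Once this is in hand, the proposition follows immediately from the preceding vanishing lemma.
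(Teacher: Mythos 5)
Your proposal is correct and is exactly the paper's (implicit) argument: the proposition is obtained by taking the four-term expansion in the preceding Corollary and deleting \eqref{2ndterm} and \eqref{4thterm} via the vanishing lemma, whose proof rests on $U_p\bigl(\epsilon^{t,\Phi^{(p)}}_0\bigr)=0$ together with the invertibility of $U_p\boxtimes U_p$ on the top-degree group --- note that the annihilation $(U_p\boxtimes U_p)\,\iota_\Delta^*(\breve\zeta)=0$ you cite is only used earlier, in Proposition \ref{prop:evilterm0}, and plays no role in killing these two terms, since $\breve\zeta$ does not appear in them. Your caution about the power of $p$ is warranted, but the mismatch sits in the source rather than in your ledger: the surviving term \eqref{3rdterm} carries $p^{t_1}$ in the Corollary (and the coefficient $p^{t_1+1}$ in $\alpha_1$ would in fact suggest $p^{t_1+1}$) while the Proposition states $p^{t_1+t_2}$, so no re-normalisation of $\varphi_{\GL_2}^*$ and $\langle p\rangle_{\GL_2}$ reconciles the two displayed formulas and one of them must contain a typo.
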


    We now apply Corollary \ref{cor:unitroot}:

  \begin{note}\label{lem:Grbasis}
   For $0\leq \ell\leq t_1+t_2$. A basis of $\Gr^\ell V_H$ is given by
   \[\{ v^{t_1-i_1}w^{i_1}\boxtimes v^{t_2-i_2}w^{i_2}: 0\leq i_n\leq t_n,\ i_1+i_2=t_1+t_2-\ell\}.\]
  \end{note}

   \begin{lemma}\label{lem:EsynimageinGr1}
    The image of $\epsilon^{t_1,\Phi^{(p)}_1}_{0}\sqcup \epsilon^{t_2,\Phi^{(p)}_2}_{1}$ under projection to $\Gr^{r_1-q}\cV_H$ is given by
    \[ (-1)^{r_2-q}\frac{t_1!}{(r_1-r_2-r)!}\times \theta^{(r_1-r_2-r)}E^{-t_1}_{\Phi_1^{(p)}\Phi_{\dep}}\cdot  \,v^{r_1-r_2-r}w^{r_2-q}\boxtimes F^{t_2+2}_{\Phi^{(p)}_2\Phi_{\crit}}\cdot (v^{t_2}\otimes \xi\otimes e_1).\]
   \end{lemma}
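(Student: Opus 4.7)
The plan is a direct computation using the explicit representatives of the coherent fp-pairs recorded in \cref{prop:EisGL2cohpair}. By the cup-product formalism of \cref{ssec:Bessercup} (and its $\GL_2\times\GL_2$ specialisation \cref{lem:compcup}), the external product $\epsilon_0^{t_1,\Phi_1^{(p)}}\sqcup \epsilon_1^{t_2,\Phi_2^{(p)}}$ is computed simply by tensoring the two given representatives, yielding
\[
\sum_{j=0}^{t_1} \frac{(-1)^j\,t_1!}{(t_1-j)!}\,\theta^{t_1-j}E^{-t_1}_{\Phi_1^{(p)}\Phi_{\dep}}\cdot v^{t_1-j}w^j \;\boxtimes\; F^{t_2+2}_{\Phi_2^{(p)}\Phi_{\crit}}\cdot(v^{t_2}\otimes\xi\otimes e_1).
\]
(There is only a single sum because $\epsilon_1^{t_2,\Phi_2^{(p)}}$ consists of one term.)

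Next I would invoke \cref{lem:Grbasis}: the graded piece $\Gr^{\ell}V_H$ is spanned by those basis vectors $v^{t_1-i_1}w^{i_1}\boxtimes v^{t_2-i_2}w^{i_2}$ for which $i_1+i_2=t_1+t_2-\ell$. Since $(t_1,t_2)=(r_1-q-r,r_2-q+r)$ we have $t_1+t_2=r_1+r_2-2q$, so the graded piece $\Gr^{r_1-q}V_H$ corresponds to $i_1+i_2=r_2-q$. The second factor of the product above already has $i_2=0$ (it is a pure power of $v$, and the auxiliary factors $\xi\otimes e_1$ do not alter the $\GL_2$-grading on $\Sym^{t_2}\sH$ since $v^{t_2}$ is the highest-weight vector), so only the single summand with $j=i_1=r_2-q$ survives.

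Substituting $j=r_2-q$ in the coefficient gives $\frac{(-1)^{r_2-q}t_1!}{(t_1-(r_2-q))!}=\frac{(-1)^{r_2-q}t_1!}{(r_1-r_2-r)!}$ by the identity $t_1=r_1-q-r$, and the surviving $\theta$-power is $\theta^{t_1-(r_2-q)}=\theta^{r_1-r_2-r}$. Assembling these gives exactly the stated formula. The only point that needs any care is the identification of the Hodge-filtration grading on $V_H$ with the $w$-count in our basis, together with the verification that the $\xi\otimes e_1$ factor on the second component does not shift the relevant index; both are immediate from the conventions of \cref{ss:algrep} and \cref{sect:higherEis}. There is no genuine obstacle here — once the cup product is written out, it is a matter of picking off a single term.
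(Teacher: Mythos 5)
Your proof is correct and follows essentially the same route as the paper: expand the external product of the two coherent fp-pairs from \cref{prop:EisGL2cohpair}, and use \cref{lem:Grbasis} to see that only the summand with $i_1=r_2-q$, $i_2=0$ survives projection to $\Gr^{r_1-q}V_H$. Your index bookkeeping ($i_2=0$ since the second factor is $v^{t_2}$, hence $i_1=r_2-q$ and $t_1-i_1=r_1-r_2-r$) is in fact the consistent reading of the paper's terser proof, which contains a couple of apparent slips in the corresponding display.
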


   \begin{proof}
    The basis vectors with non-trivial coefficients of $\epsilon^{t_1+2,\Phi_1}_{0}\sqcup \epsilon^{t_2+2,\Phi_2}_{1}$ are of the form
    \[ v^{t_1-i_1}w^{i_1}\boxtimes w^{t_2}\qquad 0\leq i\leq t_1.\]
    By Note \ref{lem:Grbasis}, this will project non-trivially to $\Gr^{r_1-q}\cV_H$ if and only if
    \[i_1=t_1-(r_1-q)=r.\]
   \end{proof}

   We analogously prove the following result:

   \begin{lemma}
    \label{lem:EsynimageinGr2}
    The image of $\langle p\rangle_{\GL_2}^{-1}\, \varphi_{\GL_2}^\star\epsilon^{t_1,\Phi^{(p)}_1}_1\sqcup \epsilon^{t_2,\Phi^{(p)}_2}_0$ in $\Gr^{r_1-q}\cV_H$ is given by
    \[
     (-1)^{r_2-q}\frac{t_2!}{r!}\times \langle p\rangle_{\GL_2}^{-1}\varphi_{\GL_2}^\star\left(F^{t_1+2}_{\Phi^{(p)}_1\Phi_{\crit}}\cdot\,(v^{t_1}\otimes \xi\otimes e_1)\right)\boxtimes \left(\theta^{r}E^{-t_2}_{\Phi^{(p)}_2\Phi_{\dep}}\cdot v^rw^{r_2-q}\right).
    \]
   \end{lemma}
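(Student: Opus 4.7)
The argument will mirror that of Lemma \ref{lem:EsynimageinGr1} with the roles of the two $\GL_2$-factors interchanged, together with the observation that the operator $\langle p\rangle_{\GL_2}^{-1}\varphi_{\GL_2}^*$ acts on the first $\GL_2$-factor only, and so passes transparently through both the external cup product and the projection to the graded piece.

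First I would substitute the explicit representatives from Proposition \ref{prop:EisGL2cohpair}, writing
\[
\epsilon_1^{t_1,\Phi_1^{(p)}} = F^{t_1+2}_{\Phi_1^{(p)}\Phi_{\crit}}\cdot v^{t_1}\otimes\xi\otimes e_1,\qquad
\epsilon_0^{t_2,\Phi_2^{(p)}} = \sum_{j=0}^{t_2}\frac{(-1)^j\,t_2!}{(t_2-j)!}\,\theta^{t_2-j}E^{-t_2}_{\Phi_2^{(p)}\Phi_{\dep}}\cdot v^{t_2-j}w^j.
\]
Forming the exterior cup product, the weight-vector content of each summand is of the shape $v^{t_1}\boxtimes v^{t_2-j}w^j$, so in the notation of Note \ref{lem:Grbasis} the indices are $(i_1,i_2) = (0,j)$.

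Next I would identify which summand projects nontrivially to $\Gr^{r_1-q}\cV_H$. By Note \ref{lem:Grbasis}, a basis vector $v^{t_1-i_1}w^{i_1}\boxtimes v^{t_2-i_2}w^{i_2}$ lies in $\Gr^{r_1-q}V_H$ precisely when $i_1+i_2 = t_1+t_2-(r_1-q) = r_2-q$ (using $t_1+t_2 = r_1+r_2-2q$ from Proposition \ref{not:qrtetc}). Since $i_1 = 0$ here, this forces $j = r_2-q$, and then $t_2-j = r_2-q+r-(r_2-q) = r$. The surviving summand therefore has weight-vector $v^r w^{r_2-q}$ and scalar coefficient $\frac{(-1)^{r_2-q}\,t_2!}{r!}$, matching the formula.

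Finally, the operator $\langle p\rangle_{\GL_2}^{-1}\varphi_{\GL_2}^*$ acts only on the $\GL_2$-factor carrying $\epsilon_1^{t_1,\Phi_1^{(p)}}$ and commutes with the projection $\cV_H\onto \Gr^{r_1-q}\cV_H$, so it can be pulled out of the computation to give the stated expression. There is no substantive obstacle in this lemma: it is a direct transpose of Lemma \ref{lem:EsynimageinGr1}, the only bookkeeping being to verify that the condition $i_1+i_2 = r_2-q$ still singles out a unique summand on the opposite side.
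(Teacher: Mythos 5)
Your proposal is correct and follows exactly the route the paper intends: the paper gives no separate proof of this lemma, merely declaring it "analogous" to Lemma \ref{lem:EsynimageinGr1}, and your computation is precisely that transposition — substituting the representatives from Proposition \ref{prop:EisGL2cohpair}, using Note \ref{lem:Grbasis} with $i_1+i_2=t_1+t_2-(r_1-q)=r_2-q$ to single out the summand $j=r_2-q$ (hence $t_2-j=r$ and coefficient $(-1)^{r_2-q}t_2!/r!$), and noting that $\langle p\rangle_{\GL_2}^{-1}\varphi_{\GL_2}^*$ acts only on the first factor and commutes with the projection. If anything your index bookkeeping is cleaner than the paper's own proof of the companion lemma.
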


   \begin{proposition}
    \label{prop:linktoLPSZ}
    We have
    \begin{align*}
       (\iota^{(t_1,t_2)}_\Delta)^\star&(\breve\eta^{(2,m)}_{\coh})\cup \left(\epsilon^{t_1,\Phi^{(p)}_1}_0\sqcup \epsilon^{t_2,\Phi^{(p)}_2}_1\right)\\
       & =\frac{(-1)^{r_2-q}\,t_1!}{(r_1-r_2-q)!}\times \frac{(-2)^q}{\tbinom{t_1}{r_2-q}}\times \left\langle \eta^{(2,m)}_{\coh},\,\iota^{p-adic}_\star\left( \theta^{(r_1-r_2-r)}E^{-t_1}_{\Phi_1^{(p)}\Phi_{\dep}}\sqcup F^{t_2+2}_{\Phi^{(p)}_2\Phi_{\crit}}\right)\right\rangle\\
       & = (-1)^{r_2-q}(-2)^q\, (r_2-q)!\times \left\langle \eta^{(2,m)}_{\coh},\,\iota^{p-adic}_\star\left( \theta^{(r_1-r_2-r)}E^{-t_1}_{\Phi_1^{(p)}\Phi_{\dep}}\sqcup F^{t_2+2}_{\Phi^{(p)}_2\Phi_{\crit}}\right)\right\rangle,
      \end{align*}
    and
    \begin{align*}
    &(\iota^{(t_1,t_2)}_\Delta)^\star(\breve\eta^{(2,m)}_{\coh})\cup \left( \langle p\rangle_{\GL_2}^{-1}\, \varphi_{\GL_2}^\star\epsilon^{t_1,\Phi^{(p)}_1}_1\sqcup \epsilon^{t_2,\Phi^{(p)}_2}_0\right)\\
    &\quad =\frac{(-1)^{r_2-q}\, t_2!}{r!}\times \frac{(-2)^q}{\tbinom{t_2}{r_2-q}}\times \left\langle \eta^{(2,m)}_{\coh},\,\iota^{p-adic}_\star\left( \langle p\rangle_{\GL_2}^{-1}\varphi_{\GL_2}^\star\left(F^{t_1+2}_{\Phi^{(p)}_1\Phi_{\crit}}\right)\boxtimes \theta^{r}E^{-t_2}_{\Phi^{(p)}_2\Phi_{\dep}}\right)\right\rangle\\
    & \quad = (-1)^{r_2-q}(-2)^q(r_2-q)!\times \left\langle \eta^{(2,m)}_{\coh},\,\iota^{p-adic}_\star\left( \langle p\rangle_{\GL_2}^{-1}\varphi_{\GL_2}^\star\left(F^{t_1+2}_{\Phi^{(p)}_1\Phi_{\crit}}\right)\boxtimes \theta^{r}E^{-t_2}_{\Phi^{(p)}_2\Phi_{\dep}}\right)\right\rangle.
   \end{align*}
   \end{proposition}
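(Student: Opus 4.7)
The plan is to apply Corollary \ref{cor:unitroot} directly to each of the two cup products. Recall that Corollary \ref{cor:unitroot} asserts that for an ordinary $U'_{2,\Kl}$-eigenclass $\eta$ and its canonical lift $\breve\eta$, the linear functional ``cup with $(\iota_\Delta^{[t_1,t_2]})^*(\breve\eta)$ and take trace'' factors through the composition of restriction to the underlying rigid space $\mathbb{X}_{H,\Delta}^{\ord}$ and the unit-root splitting of the filtration, and, on the direct summand $\omega^{(t_1-2t,t_2)}$ (resp.\ $\omega^{(t_1,t_2-2t)}$) of $\Gr^{r_1-q}V_H$ with $t=r_2-q$, is given by $\frac{(-2)^q}{\binom{t_1}{t}}\langle \iota_\star^{p\text{-adic}}(-),\eta\rangle$ (resp.\ by the same formula with $\binom{t_2}{t}$). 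So the computation reduces to identifying the images of $\epsilon^{t_1,\Phi^{(p)}_1}_0\sqcup\epsilon^{t_2,\Phi^{(p)}_2}_1$ and of $\langle p\rangle_{\GL_2}^{-1}\varphi_{\GL_2}^*\epsilon^{t_1,\Phi^{(p)}_1}_1\sqcup\epsilon^{t_2,\Phi^{(p)}_2}_0$ under the projection to $\Gr^{r_1-q}V_H$, and these have already been computed in Lemmas \ref{lem:EsynimageinGr1} and \ref{lem:EsynimageinGr2} respectively.

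Concretely, for the first cup product I would substitute the formula of Lemma \ref{lem:EsynimageinGr1} into Corollary \ref{cor:unitroot}; observe that the image lives in the direct summand $\omega^{(t_1-2t,t_2)}$ with $t=r_2-q$ since the $\GL_{2,1}$-weight vector appearing there is $v^{r_1-r_2-r}w^{r_2-q}=v^{t_1-2t}w^t$ after identifying $r_1-r_2-r=t_1-2t$; hence apply the normalisation $\frac{(-2)^q}{\binom{t_1}{t}}$ together with the pushforward $\iota^{p\text{-adic}}_\star$. The combinatorial simplification then comes from
\[
\binom{t_1}{t}=\binom{t_1}{r_2-q}=\frac{t_1!}{(r_2-q)!\,(r_1-r_2-r)!},
\]
so that $\frac{t_1!}{(r_1-r_2-r)!\binom{t_1}{r_2-q}}=(r_2-q)!$, and the sign $(-1)^{r_2-q}$ from Lemma \ref{lem:EsynimageinGr1} survives unchanged. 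This yields precisely the stated formula. The second cup product is handled identically, using Lemma \ref{lem:EsynimageinGr2}, with $\omega^{(t_1,t_2-2t)}$ in place of $\omega^{(t_1-2t,t_2)}$ and $\binom{t_2}{t}=\frac{t_2!}{(r_2-q)!\,r!}$ performing the analogous simplification.

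The only subtle step, and thus the main obstacle, is verifying that the ``standard'' coherent pushforward $\iota_\star^{p\text{-adic}}$ of \cite[\S 4]{LPSZ1} is genuinely the one recovered from the de Rham pushforward of Corollary \ref{cor:unitroot}: the factor $\binom{t_i}{t}^{-1}$ already accounts for the rescaling coming from Proposition \ref{prop:branchingcomparison} (the branching map $\br^{[q,r]}$ was normalised by sending a highest weight vector to $v^{[q,r]}$ rather than $\frac{1}{(-2)^q}v^{[q,r]}$), and one has to check that no further constant intervenes when passing between the two formulations of the pushforward discussed in the remark of \S\ref{sect:dRsheaves} (using $\Omega^3\to\iota_*\Omega^2$ versus $\Omega^2\to\iota_*\Omega^1$ tensored with the appropriate automorphic bundle). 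This compatibility is essentially book-keeping; with the normalisations fixed above, the factor $(-2)^q$ then appears as the proportionality constant between $\br^{[q,r]}(v^{t_1-t}w^t\boxtimes v^{t_2})$ and the highest-weight vector of $W_G(r_1,-r_2;r_1+r_2)$ in $\Gr^{r_1}V_G$ (Proposition \ref{prop:branchingcomparison}), so tracking signs and multinomial coefficients through the identification of the two pairings yields exactly the factor $(-1)^{r_2-q}(-2)^q(r_2-q)!$.

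Having established the two individual identities, the proof of the proposition is complete; the substitution of these formulae into Proposition \ref{prop:red2} will then (in the next section of the paper) re-express $\eqref{reduction1}$ as a sum of two concrete pairings of the ordinary Klingen-eigenclass $\eta^{\ord}_{\coh,q}$ against pushforwards of products involving a nearly-holomorphic Eisenstein series and a $p$-adic Eisenstein series, which is the form in which it can be compared with the interpolating property of the $p$-adic $L$-function $\cL_{p,\nu}(\Pi)$.
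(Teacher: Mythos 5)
Your proposal is correct and is exactly the argument the paper intends: the proposition is stated immediately after the announcement ``We now apply Corollary \ref{cor:unitroot}'' and after Lemmas \ref{lem:EsynimageinGr1} and \ref{lem:EsynimageinGr2}, so the proof is precisely the substitution of those two lemmas into Corollary \ref{cor:unitroot} followed by the binomial simplification $\binom{t_1}{r_2-q}=\tfrac{t_1!}{(r_2-q)!\,(r_1-r_2-r)!}$ (and its analogue for $t_2$) that you carry out. Your identification of the normalisation of $\iota_\star^{\text{$p$-adic}}$ as the only delicate point matches the paper's own treatment of that issue in Corollary \ref{cor:unitroot} and the preceding remark.
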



 \subsection{Families of Eisenstein series}

  We now interpret the cup-products of \cref{prop:linktoLPSZ} in terms of the 2-parameter $p$-adic family of Eisenstein series studied in \cite{LPSZ1}, and a 1-parameter ``critical'' variant.

  \begin{proposition}
   If $\Phi^{(p)} \in \cS(\Af^p, (\chi^{(p)})^{-1})$, then the $p$-adic Eisenstein series $E^{-k}_{\Phi^{(p)}\Phi_{\dep}}$ is the specialisation at $(\kappa_1,\kappa_2)=(0,-1-k)$ of a $2$-parameter family of Eisenstein series $\cE^{\Phi^{(p)}}(\kappa_1,\kappa_2)$.  This family has $q$-expansion
   \[ \sum_{u,v\in (\ZZ_{(p)})^2,\, uv>0} \mathrm{sgn}(u) u^{\kappa_1}v^{\kappa_2}(\Phi^{(p)})'(u,v)q^{uv},\]
   and its specialisation at $(a+\mu,b+\nu)$, for integers $a,b\geq 0$ and finite-order characters $\mu,\nu$ of $\Zp^\times$, is the $p$-adic modular form associated to the algebraic nearly-holomorphic modular form
   \[ (g,\tau)\mapsto \nu(\det g)\cdot E^{(a+b+1,\Phi_{\mu,\nu})}\left(g,\tau;\chi^{(p)}\mu^{-1} \nu, \frac{b-a+1}{2}\right).\]
   Here, $\Phi_{\mu,\nu}$ is defined as in \cite[\S 7.3]{LPSZ1}, and $(\Phi^{(p)})'(u,v)$ is as defined in \eqref{eq:Fourier}.
  \end{proposition}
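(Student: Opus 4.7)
The plan is to define the family $\cE^{\Phi^{(p)}}$ directly on $q$-expansions and then invoke Serre's $q$-expansion principle to promote it to a $\Lambda$-adic $p$-adic modular form. The key observation is that the depletion at the place $p$ (implicit in the restriction $u_p, v_p \in \Zp^\times$) allows us to interpret $u^{\kappa_1}, v^{\kappa_2}$ as $\kappa_1(u_p), \kappa_2(v_p)$ for arbitrary continuous characters $\kappa_i \colon \Zp^\times \to \QQbar_p^\times$; so the displayed $q$-expansion makes unambiguous sense as an element of $(\Lambda \widehat{\otimes} \Lambda)[[q]]$, where $\Lambda = \Zp[[\Zp^\times]]$ is the Iwasawa algebra and $(\kappa_1, \kappa_2)$ is the tautological pair of characters of $(\Spec \Lambda)^2$ (after rescaling to clear denominators of $\Phi^{(p)}$).

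The main step is to verify the classical specialisation property at a Zariski-dense set of weight pairs. For $(\kappa_1, \kappa_2) = (a+\mu, b+\nu)$ with $a, b$ non-negative integers and $\mu, \nu$ finite-order characters of $\Zp^\times$, the specialised $q$-expansion reads
\[
\sum_{\substack{u,v \in \ZZ_{(p)}^\times \\ uv>0}} \sgn(u)\, u^a \mu(u_p)\, v^b \nu(v_p)\, (\Phi^{(p)})'(u,v)\, q^{uv}.
\]
This must be matched against the $q$-expansion of the nearly-holomorphic real-analytic Eisenstein series $\nu(\det g)\cdot E^{(a+b+1, \Phi_{\mu,\nu}\Phi^{(p)})}(g,\tau; \chi^{(p)}\mu^{-1}\nu, \tfrac{b-a+1}{2})$ via the explicit Fourier formula of \cite[\S 7.1]{LPSZ1}. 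Once the matching is achieved at this dense subset, Serre's theorem that $p$-adic modular form $q$-expansions form a $p$-adically closed subspace of $\Qp[[q]]$ upgrades the formal series to a genuine $\Lambda \widehat{\otimes} \Lambda$-adic $p$-adic modular form.

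Finally, evaluating at $(\kappa_1, \kappa_2) = (0, -1-k)$ yields
\[
\sum_{\substack{u,v \in \ZZ_{(p)}^\times \\ uv>0}} \sgn(u)\, v^{-1-k}\, (\Phi^{(p)})'(u,v)\, q^{uv},
\]
which agrees term-by-term with the $q$-expansion of $E^{-k}_{\Phi^{(p)}\Phi_{\dep}}$ recorded in \S\ref{sect:eisseries}; the constant term vanishes automatically because of the depletion at $p$. The main obstacle will be the careful bookkeeping needed to align the normalisations in the classical specialisation: the parameter shift $s = \tfrac{b-a+1}{2}$, the tame character $\chi^{(p)}\mu^{-1}\nu$, and the twist by $\nu(\det g)$ all arise from the discrepancy between the central-character conventions implicit in the family $\cE^{\Phi^{(p)}}$ on the one hand, and those governing the real-analytic series $E^{(r,\Phi)}(g,\tau;\chi,s)$ on the other. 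Once this alignment is pinned down at the classical integer weights, $p$-adic continuity closes the argument.
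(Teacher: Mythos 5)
The paper does not actually prove this statement: its ``proof'' is a one-line citation to \cite[Theorem 7.6]{LPSZ1}, where the family $\cE^{\Phi^{(p)}}(\kappa_1,\kappa_2)$ is constructed. Your sketch is essentially a reconstruction of the argument carried out there: the Fourier coefficients of the displayed $q$-expansion are visibly Iwasawa functions (finite sums of group-like elements $[u_p]\otimes[v_p]$, which is where the depletion at $p$ --- your correct restriction to $u,v\in\ZZ_{(p)}^\times$ --- is essential), the classical specialisations are matched against the Fourier expansion of $E^{(r,\Phi)}(-,s)$ from \cite[\S 7.1]{LPSZ1}, and the $q$-expansion principle together with the $p$-adic density of classical weights upgrades the formal series to a genuine two-variable family. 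The one point worth flagging is that the specialisations at general $(a+\mu,b+\nu)$ are only \emph{nearly}-holomorphic, so the interpolation argument must be run in Katz's space of $p$-adic modular forms via the unit-root splitting rather than in the space of classical $q$-expansions; you implicitly acknowledge this by speaking of ``the $p$-adic modular form associated to'' the real-analytic series, and this is exactly how \emph{op.cit.} proceeds. Subject to actually carrying out the normalisation bookkeeping you defer (the shift $s=\tfrac{b-a+1}{2}$, the nebentypus $\chi^{(p)}\mu^{-1}\nu$, and the twist $\nu(\det g)$), your outline is a correct account of the proof of the cited result.
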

  \begin{proof}
   See \cite[Theorem 7.6]{LPSZ1}.
  \end{proof}

  We can also put critical-slope Eisenstein series into $1$-parameter $p$-adic families:

  \begin{proposition}
   Let $\ell\geq 0$. Then there exists a $1$-parameter family $\cE^{\Phi^{(p)}}(\underline{\ell},\kappa)$ of Eisenstein series  with $q$-expansion
   \[  \sum_{u\in \Zp,\, v\in (\ZZ_{(p)})^2,\, uv>0} \mathrm{sgn}(u) u^{\ell}v^{\kappa}\grave{\Phi}^{(p)}(u,v)q^{uv}.\]
   (Here, we underline the parameter which does \textbf{not} vary in a $p$-adic family.)
   Its specialisation at $a+\nu$, for an integer $a\geq 0$ and a finite-order character $\nu$ of $\Zp^\times$, is the $p$-adic modular form associated to the algebraic nearly-holomorphic modular form
   \[ (g,\tau)\mapsto \nu(\det g)\cdot E^{(\ell + b +1,\Phi^p \Phi_{\crit, \nu})}\left(g,\tau;\chi^{(p)}\nu, \frac{b-\ell+1}{2}\right),\]
   where $\Phi_{\crit, \nu}'(x,y) = \ch(\Zp \times \Zp^\times)(x,y) \cdot \nu(y)$.
  \end{proposition}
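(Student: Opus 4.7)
\medskip

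The plan is to mimic the construction of the two-parameter depleted family $\cE^{\Phi^{(p)}}(\kappa_1,\kappa_2)$ from the preceding proposition, noting that the only difference in the critical case is that the Schwartz function $\Phi_{\crit}$ at $p$ forces $v \in \Zp^\times$ but allows $u \in \Zp$. Consequently the factor $v^{\kappa}$ makes sense for any continuous character $\kappa$ of $\Zp^\times$ and gives a legitimate element of the Iwasawa algebra $\Lambda(\Zp^\times)$, while $u^{\ell}$ only continues to make sense for a fixed non-negative integer $\ell$. This is precisely why $\ell$ is ``underlined'' in the notation: it is not allowed to vary $p$-adically.

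First I would check that the formal $q$-series
\[
 \sum_{\substack{u \in \Zp \\ v \in \Zp^\times \\ uv>0}} \sgn(u)\, u^{\ell}\, v^{\kappa}\, \grave{\Phi}^{(p)}(u,v)\, q^{uv}
\]
defines an element of $\Lambda(\Zp^\times) \widehat{\otimes} \ZZ_{(p)}[[q]]$: this follows from the compactness of the support of $\grave{\Phi}^{(p)}$ together with the continuity of $v\mapsto v^{\kappa}$ on $\Zp^\times$. Next, I would invoke Serre's criterion for $p$-adic families of modular forms (equivalently, the Katz theory used in the proof of Theorem 7.6 of \cite{LPSZ1}): it suffices to identify the specialisation at a Zariski-dense set of classical arithmetic points as the $q$-expansion of a classical $p$-adic modular form, and verify tame-level and nebentypus compatibility.

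At the arithmetic point $\kappa = b + \nu$ (with $b \ge 0$ an integer and $\nu$ a finite-order character of $\Zp^\times$), the $q$-expansion becomes
\[
 \sum_{uv>0} \sgn(u)\, u^{\ell}\, v^{b}\, \nu(v)\, \grave{\Phi}^{(p)}(u,v)\, q^{uv},
\]
which, on writing $\nu(v) \grave\Phi^{(p)}(u,v) = \bigl(\Phi^p \Phi_{\crit, \nu}\bigr)'(u,v)$, matches the $q$-expansion formula for the holomorphic Eisenstein series $F^{(\ell+b+2)}_{\Phi^p \Phi_{\crit, \nu}}$ recalled in \cref{sect:eisseries}. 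Comparing with the definition of $E^{(r,\Phi)}(g,\tau;s)$ in \cite[\S 7.1]{LPSZ1} (substituting $r = \ell + b + 1$ and $s = \tfrac{b-\ell+1}{2}$), and accounting for the central-character twist by $\nu(\det g)$ introduced when one passes from the ``arithmetically normalised'' adelic Eisenstein series to its associated $p$-adic modular form, gives the claimed identification.

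The main obstacle I expect is purely bookkeeping: keeping track of the various normalisation conventions (the shift between the adelic parameter $s$ and the weight, the $\nu(\det g)$ twist coming from the difference between Betti and de Rham normalisations, and the $p$-stabilisation implicit in replacing $\Phi_{\mathrm{sph}}$ by $\Phi_{\crit}$) so that the specialisation formula agrees on the nose. The substance of the analytic/geometric argument is identical to the depleted case already established in \cite[Theorem 7.6]{LPSZ1}, so no new input from Katz theory is needed beyond that already used there.
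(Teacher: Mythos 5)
The paper gives no proof of this proposition at all (it is treated as an immediate analogue of the depleted two-parameter case, whose proof is delegated to \cite[Theorem 7.6]{LPSZ1}), so your overall strategy --- build the measure from the $q$-expansion, then identify its specialisations at a Zariski-dense set of arithmetic points by the same Katz-theoretic argument --- is certainly the intended one, and your explanation of why $\ell$ must stay fixed while $\kappa$ may vary $p$-adically (because $u$ ranges over all of $\Zp$ whereas $v$ is forced into $\Zp^\times$ by $\Phi_{\crit}$) is exactly right.

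However, the identification step as you have written it does not hold. The $n$-th coefficient of the holomorphic form $F^{w}_{\Phi}$ is $\sum_{uv=n} u^{w-1}\sgn(u)\Phi'(u,v)$ --- a pure power of $u$ --- whereas the specialisation of the family at $\kappa=b+\nu$ has coefficient $u^{\ell}v^{b}\,(\Phi^{p}\Phi_{\crit,\nu})'(u,v)$. These can only agree when $b=0$, and even then the weight is $\ell+1$ rather than the $\ell+b+2$ you assert (note also that $F^{\ell+b+2}$ would correspond to $r=\ell+b+2$, contradicting your own substitution $r=\ell+b+1$ two lines later). For $b>0$ the specialisation is genuinely \emph{nearly} holomorphic, which is precisely why the proposition's target is $E^{(\ell+b+1,\Phi^{p}\Phi_{\crit,\nu})}\bigl(g,\tau;\chi^{(p)}\nu,\tfrac{b-\ell+1}{2}\bigr)$ rather than an $F^{w}$: writing $u^{\ell}v^{b}=(uv)^{b}u^{\ell-b}$ exhibits the specialised $q$-expansion as $\theta^{b}$ applied to the weight-$(\ell-b+1)$ Eisenstein series with coefficients $u^{\ell-b}\sgn(u)(\Phi^{p}\Phi_{\crit,\nu})'(u,v)$, and $\theta^{b}$ corresponds on the archimedean side to the $b$-fold Maass--Shimura derivative, which raises $r$ by $2b$ while preserving $s=\tfrac{b-\ell+1}{2}=\tfrac{2-(\ell-b+1)}{2}$ (the holomorphic point in weight $\ell-b+1$). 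So the correction is routine bookkeeping of exactly the kind you anticipated, but the matching of $q$-expansions you actually assert is false for $b>0$ and off by one for $b=0$; as written the proof of the interpolation formula does not go through.
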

%
%
  We can now restate \cref{prop:linktoLPSZ} in the following form:

  \begin{proposition}
   \label{def:sLi}
   Let us define
   \begin{align}
    \sL_1 &\coloneqq \left\langle \eta^{(2,m)}_{\coh},\,\iota_\star^{p-adic}\left(\cE^{\Phi_1^{(p)}}(r', -1-q')\boxtimes \cE^{\Phi_2^{(p)}}(\underline{q' + r + 1},0)\right)\right\rangle,\\
    \sL_2 &\coloneqq \left\langle\eta^{(2,m)}_{\coh},\, \iota_\star^{p-adic}\left(\langle p\rangle^{-1}\varphi_{\GL_2}^\star \cE^{\Phi_1^{(p)}}(\underline{q'+r'+1}, 0)\boxtimes \cE^{\Phi_2^{(p)}}(r,-1-q')\right)\right\rangle,
   \end{align}
   where $q' = r_2 - q$ and $r' = r_1-r_2 - r$ (so $q', r' \ge 0$). Then the cup-product \eqref{eq:ordpairing} is equal to
   \[ \frac{(-1)^{r_2-q}(-2)^q\, (r_2-q)!}{\left(1-\tfrac{\gamma}{p^{1+q}}\right)\left(1-\tfrac{\delta}{p^{1+q}}\right)} \times \left(\sL_1 + p^{r_1 + r_2 - 2q} \sL_2\right).\]
  \end{proposition}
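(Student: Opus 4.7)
The statement is essentially the culmination of the previous reductions, so my plan is to assemble rather than invent. First I would start from the formula given in Proposition \ref{prop:red2}, which writes the cup product \eqref{reduction1} as the sum of two coherent cohomology pairings, the second weighted by $p^{t_1+t_2}=p^{r_1+r_2-2q}$. Applying Proposition \ref{prop:linktoLPSZ} to each term then yields an expression of the form
\[
(-1)^{r_2-q}(-2)^q(r_2-q)!\ \Big(T_1\ +\ p^{r_1+r_2-2q}\ T_2\Big),
\]
where $T_1$ and $T_2$ are the two Serre-duality pairings of $\eta^{\ord}_{\coh}$ with the $\iota_\star^{p\text{-adic}}$-pushforward of a specific product of Eisenstein series on the two $\GL_2$ legs (one depleted, one critical).

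The next step is to recognize $T_1$ and $T_2$ as the desired specializations of the families constructed in Section \ref{sect:expliciteisenstein}. This is a routine $q$-expansion check: with $t_1=r_1-q-r$, $t_2=r_2-q+r$, $q'=r_2-q$, $r'=r_1-r_2-r$, the identity $\theta^s E^{-k}_{\Phi_{\dep}}$ has $q$-expansion $\sum_{uv>0}\sgn(u)u^s v^{s-1-k}\Phi'(u,v)q^{uv}$, which for $(s,k)=(r',t_1)$ matches $\cE^{\Phi_1^{(p)}}(r',-1-q')$ via the relation $r'-1-t_1=-1-q'$; and $F^{t_2+2}_{\Phi_{\crit}}$ has $q$-expansion matching $\cE^{\Phi_2^{(p)}}(\underline{q'+r+1},0)$ since $t_2+1=q'+r+1$. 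The symmetric identification gives $T_2$ as a specialization of the second family (with the $\langle p\rangle^{-1}\varphi_{\GL_2}^*$ on the first leg, exactly as in the definition of $\sL_2$).

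The last piece is the denominator $(1-\gamma/p^{1+q})(1-\delta/p^{1+q})$. This arises entirely from the normalizing factor $1/(P_q\star Q)(p^{-1})$ in the trace map on the top-degree Gros fp-cohomology of $\cX^{\ord}_{H,\Delta}$ (Section \ref{ssec:Bessercup} and Note \ref{note:highestfpclassphiaction}). Since $P_q(T)=(1-p^{q+1}T/\alpha)(1-p^{q+1}T/\beta)$ and $Q(y)=1-p^{t_1+t_2+2}\langle p\rangle_H^{-1}y$, the convolution has $(P_q\star Q)(T)=(1-p^{q+t_1+t_2+3}\langle p\rangle_H^{-1}T/\alpha)(1-p^{q+t_1+t_2+3}\langle p\rangle_H^{-1}T/\beta)$. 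Evaluating at $T=p^{-1}$ and using $q+t_1+t_2+2=r_1+r_2-q+2=w-q-1$ together with the Weil-number identities $\alpha\delta=\beta\gamma=p^{w}\chi_\Pi(p)$, one converts $p^{w-q-1}/\alpha$ into $\delta/(\chi_\Pi(p)\,p^{q+1})$ and similarly for $\beta$. On the $\Pif'$-isotypical component (which is where the pairing effectively lives, by the arguments in Section \ref{ssec:propetageq1} and \ref{ssec:coherentfppaireta}) the operator $\langle p\rangle_H^{-1}$ acts as $\chi_\Pi(p)$, so the two characters cancel and one gets exactly $(1-\gamma/p^{1+q})(1-\delta/p^{1+q})$, as required.

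The only real subtlety I expect is the careful bookkeeping of the trace normalization and the verification that the action of $\langle p\rangle_H$ on the $\Pi$-isotypic component is indeed $\chi_\Pi(p)^{-1}$ (which should follow from the conventions on central characters fixed in Sections \ref{sect:coherentfromPi} and \ref{padicallycuspEis}, combined with the choice $\chi_1\chi_2=\chi_\Pi$ of the pair of auxiliary characters). Everything else is a direct assembly of the prior propositions, so this proposition is essentially a summary step rather than a site of new difficulty; the substantive work has already been done in Propositions \ref{prop:red2}, \ref{prop:linktoLPSZ}, and Corollary \ref{cor:unitroot}.
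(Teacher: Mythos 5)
Your proposal is correct and follows essentially the same route as the paper: combine Proposition \ref{prop:red2} with Proposition \ref{prop:linktoLPSZ}, identify the resulting Eisenstein data with specialisations of the families via their $q$-expansions, and extract the denominator from the normalisation of the fp-trace map. Your explicit derivation of the factor $\left(1-\tfrac{\gamma}{p^{1+q}}\right)\left(1-\tfrac{\delta}{p^{1+q}}\right)$ from $(P_q\star Q)(p^{-1})$ together with $\alpha\delta=\beta\gamma=p^{r_1+r_2+3}\chi_\Pi(p)$ is just a spelled-out version of the paper's one-line assertion that the trace normalisation contributes $P_q\left(p^{t_1+t_2+1}\chi_\Pi(p)\right)$.
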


  \begin{proof}
   In the above notation, the two terms appearing in \ref{prop:linktoLPSZ} are
   \begin{gather*}
    (\iota^{(t_1,t_2)}_\Delta)^\star(\breve\eta^{(2,m)}_{\coh})\cup \left(\epsilon^{t_1,\Phi^{(p)}_1}_0\sqcup \epsilon^{t_2,\Phi^{(p)}_2}_1\right) = (-1)^{r_2-q}(-2)^q\, (r_2-q)!\times \sL_1,\\
    (\iota^{(t_1,t_2)}_\Delta)^\star(\breve\eta^{(2,m)}_{\coh})\cup \left( \langle p\rangle_{\GL_2}^{-1}\varphi_{\GL_2}^\star\left(F^{t_1+2}_{\Phi^{(p)}_1\Phi_{\crit}}\right)\sqcup\theta^{r_2-q}E^{-t_2}_{\Phi^{(p)}_2\Phi_{\dep}}\right) = (-1)^{r_2-q}(-2)^q(r_2-q)!\times\sL_2.
   \end{gather*}
   The normalisation of the trace map on FP-cohomology gives rise to a factor $P_q\left(p^{t_1+t_2+1}\chi_\Pi(p)\right)$, and using the relation $\alpha\delta=\beta\gamma=p^{r_1+r_2+3}\chi_\Pi(p)$, we deduce that
   \[
    P_q\left(p^{t_1+t_2+1}\chi_\Pi(p)\right) = \left(1-\frac{\gamma}{p^{1+q}}\right)\left(1-\frac{\delta}{p^{1+q}}\right).\qedhere
   \]
  \end{proof}

  We will see shortly that $\sL_2$ is in fact zero, and that $\sL_1$ coincides with a non-critical $p$-adic $L$-value. We first make a preliminary reduction.

  \begin{proposition}
   \label{prop:thetainvar}
   We have
   \[\sL_1 = -\left\langle\eta^{(2,m)}_{\coh}, \iota^{p-adic}_\star\left[\cE^{\Phi_1^{(p)}}(r_1-q+1, r)\boxtimes  \cE^{\Phi_2^{(p)}}(\underline{0},-1-q'-r)\right]\right\rangle.\label{xi1stterm}
   \]
   and similarly
   \[
    \sL_2 = (-1)^{r_1 - r_2 + 1} \left\langle\eta^{(2,m)}_{\coh},\, \iota_\star^{p-adic}\left(\langle p\rangle^{-1}\varphi_{\GL_2}^\star \cE^{\Phi_1^{(p)}}(\underline{0}, -1-q'-r')\boxtimes \cE^{\Phi_2^{(p)}}(r_1 - q+1, r')\right)\right\rangle\]
  \end{proposition}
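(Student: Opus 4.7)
\emph{Proof plan.} The starting point is the q-expansion identities
\[
\cE^{\Phi}(\kappa_1+k, \kappa_2+k) = \theta^k \cE^{\Phi}(\kappa_1, \kappa_2), \qquad \cE^{\Phi}(\underline{\ell+k}, \kappa+k) = \theta^k \cE^{\Phi}(\underline{\ell}, \kappa),
\]
valid for any $k \in \ZZ_{\geq 0}$, together with the observation (via the parametrisations of \cref{not:qrtetc}) that
\[
\cE^{\Phi_1^{(p)}}(r_1-q+1, r) = \theta^{q'+r+1}\cE^{\Phi_1^{(p)}}(r', -1-q'),\qquad \cE^{\Phi_2^{(p)}}(\underline{q'+r+1}, 0) = \theta^{q'+r+1}\cE^{\Phi_2^{(p)}}(\underline{0}, -1-q'-r).
\]
Writing $\theta_1, \theta_2$ for the theta operators on the two $\GL_2$-factors of $Y_H$ and setting $k = q' + r + 1$, both sides of the proposed equality (ignoring the overall $-1$) thus take the form of a cup product of $\eta^{\ord}_{\coh}$ against $\iota^{p-adic}_\star$ of a common ``base'' product $f \boxtimes g$, with $\theta_1^k$ applied in one case and $\theta_2^k$ in the other. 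The proposition is therefore equivalent to the single-step antisymmetry identity
\[
\bigl\langle \eta^{\ord}_{\coh},\, \iota^{p-adic}_\star(\theta_1 f \boxtimes g)\bigr\rangle = -\bigl\langle \eta^{\ord}_{\coh},\, \iota^{p-adic}_\star(f \boxtimes \theta_2 g)\bigr\rangle
\]
(equivalently, annihilation of the pairing by $\theta_1 + \theta_2$), together with the parity check: iterating $k$ times produces a sign $(-1)^k$, and under the parity constraint $q + r \equiv r_2 \pmod 2$ from \cref{note:parity} (which is imposed throughout \cref{thm:mainthm}), $k$ is odd, so $(-1)^k = -1$. The analogous argument applies to $\sL_2$ with $k' = r' + q' + 1$ in place of $k$; the sign $(-1)^{r'+q'+1}$ produced matches $(-1)^{r_1-r_2+1}$ under the same parity.

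To establish the antisymmetry identity, the plan is to transfer the pairing to $\cX^{\ord}_{H,\Delta}$ via adjunction between $\iota^{p-adic}_\star$ and $\iota_\Delta^*$, combined with the explicit description of $\iota_\Delta^*(\breve\eta)$ in \cref{cor:unitroot} as the unit-root splitting followed by pairing with a specific graded piece. Under this identification, the two theta operators on $p$-adic modular forms on the two $\GL_2$-factors correspond to the two components of the Gauss--Manin connection on $\cX^{\ord}_{H,\Delta}$, via the Kodaira--Spencer isomorphism on the ordinary locus. Because $\eta^{\ord}_{\coh}$ lies in the $\nabla$-flat subspace of $H^2_{c0}(\cX^{\ord}_{G,\Kl}, \cN^1)$ (\cref{eq:cohrigiso2}), its pullback to $\cX^{\ord}_{H,\Delta}$ is annihilated by the pullback connection, hence by $\theta_1 + \theta_2$; an integration-by-parts argument (with vanishing of boundary terms guaranteed by the $p$-adic cuspidality of the $f$ and $g$ we have reduced to, as per \cref{note:padicallycusp1,note:padicallycusp}) then yields the antisymmetry.

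The main obstacle is precisely this last step: making rigorous the informal claim that ``$\theta_1 + \theta_2$ acts as the Gauss--Manin connection and hence kills the pairing''. The complication is that $\eta^{\ord}_{\coh}$ is a coherent cohomology class, and the theta operators act on $p$-adic modular forms via the unit-root splitting (which is \emph{not} compatible with the Gauss--Manin connection, only with its $F$-eigenpiece decomposition on the ordinary locus). One must therefore use the explicit lift $\breve{\eta}^{\ord}_{\coh}$ to the full de Rham (BGG) complex (\cref{sect:dRsheaves}), along which genuine integration by parts for $\nabla$ is available, and verify that the ``boundary terms'' produced by swapping $\theta_1$ for $-\theta_2$ cancel against analogous terms arising from the other components of $\nabla$ along $\iota_\Delta$. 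This is structurally analogous to the regulator computations for $\GL_2 \times \GL_2$ carried out in \cite[Lemma 6.4.6]{KLZ1a}, which suggests that the full argument, while technical, should go through without fundamental obstruction.
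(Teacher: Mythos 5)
Your proposal is correct and takes essentially the same route as the paper: the paper's entire proof is the single-step antisymmetry identity \eqref{eq:thetatwist}, justified in one sentence by exactly the fact you identify (namely that $\cF\boxtimes\theta(\cG)+\theta(\cF)\boxtimes\cG$ is the graded-piece projection of a vector-valued form in the image of $\nabla$, hence pairs to zero with the $\nabla$-flat class $\eta^{\ord}_{\coh}$), iterated with the same $(-1)^k$ parity bookkeeping you carry out. The ``obstacle'' you flag about the unit-root splitting is resolved precisely as you propose, by working with the lift $\breve\eta^{\ord}_{\coh}$ in the graded pieces of the de Rham/BGG complex rather than through the splitting, which is how the paper implicitly treats it.
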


  \begin{proof}
   Both of these statements follow from the general fact that
   \begin{equation}
    \label{eq:thetatwist}
    \left\langle\eta^{(2,m)}_{\coh}, \iota^{p-adic}_\star\left[\cF\boxtimes \theta(\cG) +\theta(\cF) \boxtimes \cG \right]\right\rangle = 0
   \end{equation}
   for any nearly-overconvergent $p$-adic modular forms $\cF$, $\cG$ whose weights sum to $r_1 - r_2$. (This, in turn, follows from the fact that $\cF\boxtimes \theta(\cG) +\theta(\cF) \boxtimes \cG$ is the projection to a graded piece of the Hodge filtration of an overconvergent vector-valued form in the image of $\nabla$, which must pair to 0 with $\eta^{(2,m)}_{\coh}$, since $\nabla(\eta^{(2,m)}_{\coh}) = 0$).
  \end{proof}


 \subsection{Evaluation of $\sL_1$}
  \label{sect:evalL1}
  We shall now evaluate $\sL_1$. We shall do this by interpreting this value as the specialisation at the trivial character of a measure on $\Zp^\times$, whose values at certain other characters (corresponding to \emph{critical} $L$-values) can be compared with the $p$-adic $L$-function of \cite{LPSZ1}.

  \begin{definition}
   Define an element of $\Lambda(\Zp^\times \times \Zp^\times)$ by
   \[ \sL_1(\bfj_1, \bfj_2) \coloneqq \left\langle\eta^{(2,m)}_{\coh}, \iota^{p-adic}_\star\left[\cE^{\Phi_1^{(p)}}(r_1-r_2 - \bfj_1, \bfj_2)\boxtimes  \cE^{\Phi_2^{(p)}}(\underline{0},\bfj_1 - \bfj_2)\right]\right\rangle.\]
  \end{definition}

  \begin{proposition}
   We have
   \[ \sL_1(\bfj_1, \bfj_2) = \left\langle\eta^{(2,m)}_{\coh}, \iota^{p-adic}_\star\left[\cE^{\Phi_1^{(p)}}(r_1-r_2 - \bfj_1, \bfj_2)\boxtimes  \cE^{\Phi_2^{(p)}}(0,\bfj_1 - \bfj_2)\right]\right\rangle \]
   (without the underline).
  \end{proposition}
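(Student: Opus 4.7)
The plan is to show that the difference between the two Eisenstein measures in the second factor, once paired with $\eta^{\ord}_{\coh}$, vanishes identically.

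First I would analyse the $q$-expansions of the two Eisenstein forms that appear in the second factor. The $1$-parameter \emph{critical} family $\cE^{\Phi_2^{(p)}}(\underline{0},\kappa)$ sums over $u\in\Zp$ and $v\in\Zp^\times$ with $uv>0$, while the $2$-parameter \emph{depleted} family $\cE^{\Phi_2^{(p)}}(\kappa_1,\kappa_2)$ specialised at $\kappa_1=0$ sums only over $u,v\in\Zp^\times$. Their difference is therefore supported on $u\in p\Zp$. Substituting $u=pu'$ and using that $\Phi_2^{(p)}$ is a prime-to-$p$ Schwartz function, this difference is naturally identified with $V_p(\tilde\cE)$, where $V_p$ is the classical operator $q\mapsto q^p$ on $p$-adic modular forms and $\tilde\cE$ is an Eisenstein measure built from the shifted Schwartz function $\tilde\Phi_2^{(p)}(u',v)=\Phi_2^{(p)}(pu',v)$.

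Next I would invoke the adjointness of $V_p$ and $U_p$ under the coherent cup-product pairing on $H=\GL_2\times_{\GL_1}\GL_2$, recasting the difference (up to a scalar) as
\[ \bigl\langle (1\boxtimes U_p)\,\iota_\Delta^*\eta^{\ord}_{\coh},\ \cE^{\Phi_1^{(p)}}(r_1-r_2-\bfj_1,\bfj_2)\boxtimes\tilde\cE\bigr\rangle_H. \]
The Hecke-operator identity of \cref{prop:weirdcorresp} controls the diagonal $U_p\boxtimes U_p$ action on $\iota_\Delta^*\eta^{\ord}_{\coh}$ in terms of $U'_2$ and $p\langle p\rangle Z'$ on the $G$-side, and the companion class $\breve\zeta$ of the fp-pair from \cref{prop:etahornsrep} satisfies both $Z'\breve\zeta=0$ (\cref{prop:lifttoHtilde}) and $\iota_\Delta^*(\breve\zeta)\in\ker(U_p\boxtimes U_p)$ (\cref{note:pullbackofzetaprop}). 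Re-running the computation of \cref{sect:evaluation} with the $V_p$-corrected Eisenstein class on the second factor, the surviving terms are precisely those that pair against $\iota_\Delta^*(\breve\zeta)$, and hence vanish.

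The main obstacle will be the one-sided nature of the operator $(1\boxtimes U_p)$: \cref{prop:weirdcorresp} only directly controls the diagonal $U_p\boxtimes U_p$, not the action on a single factor. I anticipate resolving this by working throughout at the level of the fp-pair $(\breve\zeta,\breve\eta^{\ord}_{\coh,q})$ via the Pozna\'n spectral sequence of \cref{prop:PoznanH}, where the $V_p$-difference can be absorbed into a modification of the Eisenstein fp-pair that kills the degree-zero contribution, leaving only pairings involving $\iota_\Delta^*(\breve\zeta)$. A cleaner alternative, if available, would be to exhibit the $V_p$-image of $\tilde\cE$ directly inside a sub-Banach-space on which the higher-Hida-theoretic pairing with $\eta^{\ord}_{\coh}$ is identically zero; such vanishing phenomena are familiar in the $\GL_2\times\GL_2$ Rankin--Selberg setting (compare the proof of \cite[Lemma 6.4.6]{KLZ1a}) and the present argument would be an analogue adapted to the $\GSp_4$ embedding $\iota_\Delta$.
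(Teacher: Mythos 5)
Your $q$-expansion observation is correct and is indeed the heart of the matter --- the difference of the two second-factor families is supported on $u\in p\Zp$, i.e.\ is $p^{k+1}\langle p\rangle^{-1}\varphi$ applied to the critical family (cf.\ \cref{note:UpEis}) --- but the mechanism you propose for the resulting vanishing does not work, and this is a genuine gap rather than a technicality. The only Hecke-theoretic vanishing available is for the \emph{diagonal} operator $U_p\boxtimes U_p$ applied to the pullback of the \emph{companion} class $\breve\zeta$ (\cref{prop:weirdcorresp} and \cref{note:pullbackofzetaprop}); it is used in \cref{prop:evilterm0} to kill the critical$\times$critical term $\alpha_2$, which is the unique term pairing against $\iota_\Delta^*(\breve\zeta)$. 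The term you need to kill is of $\alpha_1$-type: it pairs against $\breve\eta^{\ord}_{\coh,q}$, not against $\breve\zeta$, and the operator produced by adjunction is the one-sided $1\boxtimes U_p$, for which neither \cref{prop:weirdcorresp} nor any stated property of $\eta^{\ord}_{\coh}$ gives any control. Your assertion that ``the surviving terms are precisely those that pair against $\iota_\Delta^*(\breve\zeta)$'' is therefore unsupported (and inconsistent with the structure of the evaluation in \cref{sect:evaluation}), and the two fallback suggestions are too vague to close the gap. It is telling that the closely analogous vanishing $\sL_2=0$ --- a pairing of $\eta^{\ord}_{\coh}$ against ($\varphi$ of critical)$\,\boxtimes\,$(depleted) --- is also not proved by a Hecke argument in \S\ref{sect:sL2vanishing}.

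The proof in the paper goes by interpolation instead: both sides are measures on $\Zp^\times\times\Zp^\times$, so it suffices to compare them at the Zariski-dense set of critical specialisations $(a_1+\rho_1,a_2+\rho_2)$ with $r_1-r_2\ge a_1\ge a_2\ge 0$. There both sides become classical coherent cup-products which unfold into products of local zeta integrals, identical at all places away from $p$; and the twisted local computation of \S\ref{sect:twistedzeta} shows the factors at $p$ also agree, precisely because the torus integral is taken against a first factor whose Whittaker function is supported on $\Zp^\times$, where the ``$\crit$'' and ``$\dep$'' choices at $p$ coincide. This is exactly your support observation, but deployed inside a finite local computation at classical points rather than as a global statement about the coherent pairing for arbitrary $\bfj$. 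To salvage your route you would need a genuinely new input: an identity controlling $1\boxtimes U_p$ (or $1\boxtimes\varphi$) under the embedding $\iota_\Delta$.
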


  Note that the Eisenstein series in the second formula is exactly the $\cE(\uPhi^p)$ appearing in Proposition \ref{prop:weprove}.

  \begin{proof}
   It suffices to prove that these two measures agree after specialising at $(\bfj_1, \bfj_2) = (a_1 + \rho_1, a_2 + \rho_2)$ with $\rho_i$ finite-order characters and $r_1 -r_2 \ge a_1 \ge a_2 \ge 0$. In this range, both sides of the claimed formula reduce to cup-products in classical coherent cohomology; and as in \cite{LPSZ1}, they can be written as Euler products of local integrals at each place, with the factors at all primes except possibly $p$ being identical. The computation of \S 4.4 of \cite{LZ-equivar} shows that the factors at $p$ are also equal (despite the slightly different choice of test data). Thus the two measures are equal.
  \end{proof}

  Specialising the above proposition at $(\bfj_1, \bfj_2) =  (-1-r_2 + q, r)$, we conclude that
  \[ \sL_1 = -\frac{\wZ^p(w^p, \uPhi^p)}{\vol(V)} \cL_{p, \nu}(\Pi, -1-r_2 + q, r) .\]

  \subsection{Vanishing of $\sL_2$}
   \label{sect:sL2vanishing}

   In order to show that $\sL_2$ is identically zero, we will use a similar deformation argument. Let us write
   \[
    \sL_2(\bfj_1, \bfj_2) = \left\langle\eta^{(2,m)}_{\coh},\, \iota_\star^{p-adic}\left(\langle p\rangle^{-1}\varphi_{\GL_2}^\star \cE^{\Phi_1^{(p)}}(\underline{0}, \bfj_1 - \bfj_2)\boxtimes \cE^{\Phi_2^{(p)}}(r_1-r_2-\bfj_1, \bfj_2)\right)\right\rangle.
   \]
   Again, if we let $\bfj_1, \bfj_2 = (a_1 + \rho_1, a_2 + \rho_2)$ with $r_1 -r_2 \ge a_1 \ge a_2 \ge 0$ and $\rho_i$ of finite order, we obtain a cup-product in classical coherent cohomology; and the  value $\sL_2$ above corresponds (up to a sign) to specialising at $(\bfj_1, \bfj_2) = (-1-r_2 + q, r')$. However, for all of the specialisations corresponding to critical values, the term at $p$ in the resulting product is 0, again by the computations in \S 4.4 of \cite{LZ-equivar}. So the measure $\sL_2(\bfj_1, \bfj_2)$ is identically 0, and hence so is its special value $\sL_2$.

  \subsection{Conclusion of the proof}

   \begin{proof}[Proof (of Theorem \ref{thm:mainthm})]
    The computation in this chapter shows that
    \begin{align*}
      &\left\langle  \left( \alpha^{t_1,t_2,\Phi^{(p)}_1,\Phi^{(p)}_2}_1 , \alpha^{t_1,t_2,\Phi_1^{(p)},\Phi_2^{(p)}}_2\right),\,  \left(\iota^{[t_1,t_2]}_\Delta\right)^\star(\breve\xi,\breve\eta^{(2,m)}_{\coh})\right\rangle_{\coh-\fp,\cX^{2,m}_{H,\Delta}}\\
     &\qquad = \frac{(-1)^{r_2 - q+1}(-2)^q (r_2 - q)!}{\left(1 - \tfrac{ \gamma}{p^{q+1}}\right)\left(1 - \tfrac{\delta}{p^{q+1}}\right)}\times \langle \eta, \iota_{\Delta, \star}(\cE(\uPhi^p))\rangle_{X_{G, \Kl}^m}
    \end{align*}
    in the notation of \cref{prop:weprove}. However, we have
    
    \begin{tabular}{ll}
       $ \left\langle  \left( \alpha^{t_1,t_2,\Phi^{(p)}_1,\Phi^{(p)}_2}_1 , \alpha^{t_1,t_2,\Phi_1^{(p)},\Phi_2^{(p)}}_2\right),\,  \left(\iota^{[t_1,t_2]}_\Delta\right)^\star(\breve\xi,\breve\eta^{(2,m)}_{\coh})\right\rangle_{\coh-\fp,\cX^{2,m}_{H,\Delta}}$&\\
      \quad $=\left\langle \left( \alpha^{t_1,t_2,\Phi^{(p)}_1,\Phi^{(p)}_2}_1 , \alpha^{t_1,t_2,\Phi_1^{(p)},\Phi_2^{(p)}}_2,\,  \left(\iota^{[t_1,t_2]}_\Delta\right)^\star(\breve\zeta,\breve\eta^{(2,m)}_{\coh})\right)\right\rangle_{\coh-\fp,\cX^{2,m}_{H,\Delta}}$ & by Prop. \ref{prop:indepoflift}\\
       $\quad =  \left\langle  \widetilde\Eis^{t_1+2,m}_{\rigfp,\Psi_1}\sqcup \widetilde\Eis^{t_2+2,m}_{\rigfp,\Psi_2},\, (\iota^{[t_1,t_2]}_\Delta)^\star\left(\eta^{(2,m)}_{\rigfp}\right)\right\rangle_{\widetilde\rigfp,\cX^{2,m}_{H,\Delta}}$ & by \eqref{eq:pairingtoeval}\\
       \quad $= \left\langle \Eis^{[t_1,t_2],(m,n)}_{\rigsyn,\underline\Phi},\, (\iota_\Delta^{[t_1,t_2]})^\star( \eta_{\rigfp,-D}^m|_{Y^{2,m}_{\Kl}})\right\rangle_{\rigfp,Y^{2,m}_{H,\Delta}}$ & by Cor. \ref{cor:redtoGros}\\
       \quad $= \left\langle \Eis^{[t_1,t_2]}_{\lrigsyn,\underline\Phi},\, (\iota_\Delta^{[t_1,t_2]})^\star(\eta_{\lrigfp,-D})\right\rangle_{\lrigfp,Y_{\Delta}}$ & by Thm. \ref{thm:redtoord}\\
       \quad $ =  \left\langle \iota_\star^{[t_1,t_2]}( \Eis^{[t_1,t_2]}_{\syn,\underline\Phi}),\,  \eta_{\NNfp,-D}\right\rangle_{\NNfp,Y_{G,\Kl}}$ & by Prop. \ref{prop:lrigreduction}\\
       \quad $ = \left\langle  \left(\log \circ \pr_{\Pif^{\prime \vee}} \circ \iota^{[t_1,t_2]}_{\Delta, \star}\right)\left(\Eis^{[t_1, t_2]}_{\et, \underline\Phi}\right),\,\eta_{\dR}\right\rangle_{Y_{G, \Kl}}$ & by \eqref{eq:step1}.
   \end{tabular}

   So we have proved the equality of the two sides of \eqref{eq:goal2}; and Proposition \ref{prop:weprove} shows that this assertion is equivalent to \cref{thm:mainthm}.
  \end{proof}


\mychapter{Step 5: Deformation to critical values}


\section{Hida families}
 \label{sect:families}

 We will now change our focus slightly: rather than working with a single, fixed automorphic representation $\Pi$, we shall consider $p$-adic families of these objects. In order to avoid fiddly issues involving choices of test vectors at ramified primes, we shall suppose for simplicity that $\Pi$ has level 1 from here onwards (i.e.~that $\Pi_\ell$ is unramified for all finite primes $\ell$). Note that this implies that $r_1 - r_2$ is even, and that the central character $\chi_{\Pi}$ is trivial.

 \subsection{Families of Galois representations}

  \begin{notation}
   Let $\cW$ denote the $p$-adic weight space $\Hom(\Zp^\times, \mathbf{G}_{m, L}^{\mathrm{rig}})$ (the analytification of the formal scheme $\operatorname{Spf} \cO_L[[\Zp^\times]]$). For $\epsilon \in \{ \pm 1\}$ we write $\cW^\epsilon$ for the union of components classifying characters with $\kappa(-1) = \epsilon$, so $\cW = \cW^{+1} \sqcup \cW^{-1}$.
  \end{notation}

  \begin{definition}
   \label{def:family}
   Let $U$ be an affinoid disc in $\cW$ containing 0. By a \emph{Siegel-type Hida family} $\uPi$ over $U$ of tame level $1$ (passing through weight $(r_1, r_2)$), we shall mean the following data:
   \begin{itemize}
    \item for each $n \in U \cap \ZZ_{\ge 0}$, a cuspidal automorphic representation $\Pi(n)$ of $\GSp_4$ which is globally generic, cohomological at $\infty$ with coefficients in $V(r_1 + n, r_2 + n)$, and has level $1$;
    \item for each such $n$, an embedding of the coefficient field of $\Pi(n)$ into $L$, with respect to which $\Pi(n)$ is Siegel-ordinary at $p$;
    \item a collection of rigid-analytic functions $\mathbf{t}_{i, \ell} \in \cO(U)$, for $i = 1, 2$ and $\ell \ne p$, such that for each $n \in U \cap \ZZ_{\ge 0}$, the values of $\mathbf{t}_{1, \ell}$ and $\mathbf{t}_{2, \ell}$ at $n$ are the eigenvalues of the spherical Hecke operators $\diag(\ell, \ell, 1, 1)$ and $p^{-(r_2+n)} \diag(\ell^2, \ell, \ell, 1)$ on the arithmetic twist $\Pi'(n)$;
    \item rigid-analytic functions $\mathbf{u}_{i, p} \in \cO(U)$ for $i = 1, 2$, with $\mathbf{u}_{1, p}$ taking $p$-adic unit values, such that for all $n \in U \cap \ZZ_{\ge 0}$, we can write the Hecke parameters of $\Pi_p'(n)$ as $(\alpha_n, \beta_n, \gamma_n, \delta_n)$ with
    \[ \mathbf{u}_{1, p}(n) = \alpha_n, \qquad \mathbf{u}_{2, p}(n) = \frac{\beta_n + \gamma_n}{p^{(r_2 + 1+n)}}.\]
   \end{itemize}
  \end{definition}

  The following theorem, which is an instance of the main result of \cite{tilouineurban99}, is fundamental for our arguments:

  \begin{theorem}[Tilouine--Urban]
   \label{thm:tilouineurban}
   For any $\Pi$ which satisfies the conditions of \S \ref{sect:heckeparams} and is unramified and Siegel-ordinary at $p$, there exists a disc $U \subset \cW$ around 0, and an ordinary family of eigensystems $\uPi$ over $U$, such that $\Pi(0) = \Pi$.
  \end{theorem}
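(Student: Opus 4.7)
The plan is to construct the family by invoking Hida's control theorem in the Siegel-ordinary setting, as established by Tilouine--Urban. First, I would consider the tower of Shimura varieties $Y_G(K^p \Iw(p^m))$ for $m \ge 1$ (with trivial tame level $K^p = G(\widehat{\ZZ}^{(p)})$), and the corresponding ordinary part $e(U_1) \cdot H^3_{\et}(Y_G(K^p\Iw(p^m))_{\QQbar}, \cV)$, where $U_1$ is the Siegel-type Hecke operator normalised to have unit eigenvalue on Siegel-ordinary forms. Equivalently, via Faltings' BGG comparison and \cite{pilloni17}, one can work on the coherent side with the ordinary part of $H^0$ of a suitable automorphic bundle.

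Next, the key input is the \emph{Hida control theorem} for $\GSp_4$: the direct limit (for varying $m$) of the ordinary cohomology, localised at $\Pi$, is a finite free module over the one-variable Iwasawa algebra $\Lambda_1 = \cO_L[[1 + p\Zp]]$ parametrising Siegel-type weight variation (i.e.\ keeping $r_1 - r_2$ fixed). The associated universal ordinary Hecke algebra $\mathbb{T}^{\ord}$ is thus finite over $\Lambda_1$. Our assumption that $\Pi$ is Siegel-ordinary at $p$ means that $\Pi$ gives rise to an $\cO_L$-valued point of $\mathbb{T}^{\ord}$ over the weight $r_2 \in \Spec \Lambda_1$. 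Passing to rigid-analytic generic fibres and choosing an irreducible component of $(\Spec \mathbb{T}^{\ord})^{\rig}$ passing through this point, one obtains, on a sufficiently small affinoid disc $U \subset \cW$ around weight 0, rigid-analytic functions $\mathbf{t}_{i, \ell}$ and $\mathbf{u}_{i, p}$ interpolating the eigenvalues of the corresponding Hecke operators. The specialisations of $\mathbf{u}_{1, p}$ at integer weights are $p$-adic units by construction (since we restricted to an ordinary component).

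It remains to verify that, after possibly shrinking $U$, every integer $n \in U \cap \ZZ_{\ge 0}$ yields an eigensystem that comes from a classical cuspidal, globally generic automorphic representation $\Pi(n)$ of weight $(r_1 + n, r_2 + n)$ and tame level $1$. This is the \emph{classicity} part of the argument: by the control theorem, every classical specialisation of the ordinary family is a classical eigenform, provided the weight is sufficiently regular (in our case, all integer weights $n \ge 0$ in $U$ satisfy $r_2 + n \ge 1$, which is the regularity needed for Siegel-ordinary forms to be classical). Globally generic and cuspidal are both open conditions in families (the former following from non-endoscopicity, which is preserved by deformation under the big-image assumptions implicit in working with a single irreducible component through $\Pi$), so after shrinking $U$ we can ensure they hold at every classical point.

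The main obstacle is the classicity statement, which for $\GSp_4$ is genuinely nontrivial: unlike the $\GL_2$ case, the coherent cohomology contributing to $\Pi$ lives in several degrees, and one must either use Pilloni's higher Hida theory or Tilouine--Urban's original argument via \'etale cohomology and the control of the ordinary part of $H^3_{\et}$. Given that the weight $(r_1, r_2)$ is already in the ``interior'' regular range (with $r_1 \ge r_2 \ge 0$), a sufficiently small neighbourhood $U$ avoids the boundary, and the full force of the Tilouine--Urban classicity theorem applies to yield the desired family.
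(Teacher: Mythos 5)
The paper does not actually prove this statement: it is attributed to Tilouine--Urban and quoted as a black box (there is no proof environment; the following paragraph simply refers to ``the computations of \emph{op.cit.}'', i.e.\ \cite{urban05}, \cite{genestiertilouine05} and the earlier Tilouine--Urban construction). So there is no argument in the paper to compare yours against line by line. Your sketch is the standard route those references take --- Siegel-parahoric ordinary Hida theory, finiteness/freeness of the ordinary part of the cohomology over the weight algebra, the point of the ordinary Hecke algebra attached to the Siegel-ordinary $p$-stabilisation of $\Pi$, and exact control to recover classical eigensystems at cohomological weights --- and as an outline it is consistent with what the cited works establish.

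The one step that would not survive being written out is your claim that global genericity and cuspidality of the specialisations $\Pi(n)$ are ``open conditions in families \dots following from non-endoscopicity, which is preserved by deformation under the big-image assumptions implicit in working with a single irreducible component.'' Nothing of the sort is implicit in choosing a component. A cuspidal Siegel eigenform contributing to ordinary cohomology need not be globally generic (Saito--Kurokawa and Yoshida lifts are the standard counterexamples), and non-CAP/non-endoscopy is not visibly preserved along a component merely because it holds at the single point $n=0$. What one actually has to argue is that the CAP and endoscopic classical points form a proper Zariski-closed subset of the component --- detected, say, through reducibility of (or self-duality constraints on) the associated $4$-dimensional Galois representations, or through the special shape of their Hecke eigenvalues --- and then invoke the classification to conclude that the remaining classical points lie in general-type packets, which do contain a globally generic member sharing the same finite part. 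Two smaller elisions: the family lives at Siegel-parahoric rather than Iwahori level (consistent with ordinarity being imposed only for $U_{1,\Sieg}$); and to obtain the eigenvalues as single-valued elements of $\cO(U)$, rather than of a finite cover of $U$, you need the chosen component to be \'etale over weight space near $0$, which must be checked or arranged, not assumed.
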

  
  \begin{remark}
   Note that Klingen-ordinarity is not needed for this theorem, nor for the constructions below, until Corollary \ref{cor:motivicequalsanalytic}. However, Siegel-ordinarity is fundamental here (whereas it plays no role in the main body of the paper).
  \end{remark}

  The computations of \emph{op.cit.}~also give rise to a natural $\cO(U)$-module with an action of Hecke operators, which interpolates the $\Pi'(n)$-eigenspace in Betti cohomology of level $G(\widehat{\ZZ})$ (with coefficients varying with $n$). One can equally work with \'etale cohomology, to obtain the following:

  \begin{theorem}
   In the situation of \cref{thm:tilouineurban}, after possibly shrinking $U$, there exists a free rank 4 $\cO(U)$-module $W_{\uPi}$, whose fibre at $n \in U \cap \ZZ_{\ge 0}$ is canonically isomorphic to the Galois representation $W_{\Pi(n)}$.
  \end{theorem}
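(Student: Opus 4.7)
The plan is to construct $W_{\uPi}$ as the $\uPi'$-isotypic component of a big Galois module obtained from the ordinary part of \'etale cohomology of the Siegel tower, and to verify freeness by combining Galois-invariance of the construction with the classicality theorem of Tilouine--Urban.

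First, I would work with the tower of Siegel threefolds $\{Y_G(K^p \Iw(p^n))_{\Zp[1/N]}\}_n$ for $K^p = G(\widehat{\ZZ}^p)$, and form the compactly-supported \'etale cohomology with coefficients in the system of local systems attached to the representations $V(r_1 + n, r_2 + n; r_1 + r_2 + 2n)$. The key technical step is to construct, along the lines of Tilouine--Urban but on the \'etale side, a finitely generated $\Lambda \coloneqq \cO_L[[\Zp^\times]]$-module $\mathbf{M}$ carrying commuting actions of $G_{\QQ}$ and of the prime-to-$p$ spherical Hecke algebra together with Siegel-ordinary Hecke operators at $p$, whose specialisation at each classical weight $n \ge 0$ recovers the Siegel-ordinary part of $H^3_{\et, c}(Y_G(K^p\Iw(p))_{\QQbar}, \cV_{r_1+n, r_2+n}) \otimes_{\Zp} L$. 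This rests on the control theorem for Hida families on $\GSp_4$ due to Tilouine--Urban, transported from Betti to \'etale cohomology via the (non-canonical, but Hecke-equivariant) comparison isomorphism, upgraded to be Galois-equivariant by characterising $\mathbf{M}$ uniquely via its Hecke eigensystem and the \'etale Galois action on each classical specialisation.

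Second, after base-changing $\mathbf{M}$ to $\cO(U)$ along the structure map $\Lambda \to \cO(U)$ corresponding to the disc $U$, I would cut out $W_{\uPi}$ as the generalised eigenspace for the system of Hecke eigenvalues $(\mathbf{t}_{i,\ell}, \mathbf{u}_{i,p})$ encoding the family. Concretely, $W_{\uPi}$ is the intersection of the kernels of $T_{i,\ell} - \mathbf{t}_{i,\ell}$ and of analogous expressions in the ordinary Hecke operators at $p$, for all $\ell \ne p$ and $i = 1, 2$. Since each $T_{i,\ell}$ commutes with the Galois action, $W_{\uPi}$ is $\cO(U)$-linear and stable under $G_{\QQ}$. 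At each classical $n$, its fibre contains the $\Pi'(n)$-eigenspace in the ordinary part of \'etale cohomology; by the discussion of \S\ref{ss:modularparam} together with the assumption that $\Pi(n)$ is tame level $1$ and Siegel-ordinary (so that the Siegel-ordinary eigenspace is one-dimensional on the Whittaker side, hence cuts out a single copy of $W_{\Pi(n)}$), this fibre is canonically isomorphic to $W_{\Pi(n)}$ and thus $4$-dimensional over $L$.

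Third, to establish freeness of rank $4$ after shrinking $U$: the module $W_{\uPi}$ is finitely generated over the reduced affinoid algebra $\cO(U)$, its fibres at the Zariski-dense set $U \cap \ZZ_{\ge 0}$ all have dimension exactly $4$, and the support is all of $U$. By upper semicontinuity of fibre dimension on a reduced rigid space, the generic fibre has dimension $\ge 4$, and equality at a dense set forces $W_{\uPi}$ to be locally free of rank $4$ on a Zariski-open $U' \subseteq U$ containing $0$ (after further shrinking, if necessary, to trivialise the line bundle $\det W_{\uPi}$). Replacing $U$ by such a disc, $W_{\uPi}$ becomes free of rank $4$ as required.

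The main obstacle I expect is the first step: setting up the \'etale version of Tilouine--Urban's ordinary Hida module for $\GSp_4$ with a well-controlled Galois action and verifying the classicality/control theorem compatibly with the Galois structure at each weight. Once that black box is in place, the isolation of $W_{\uPi}$ and the freeness argument are essentially formal; the delicate input is only the non-vanishing and correct dimension of the ordinary $\Pi'(n)$-eigenspace at each classical weight, which follows from the assumption that the family is Siegel-ordinary together with the characterisation of $W_{\Pi(n)}$ in \S\ref{ss:modularparam}.
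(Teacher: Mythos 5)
Your proposal is correct and follows essentially the same route as the paper, which gives no written proof beyond asserting that Tilouine--Urban's ordinary control theorem, transported from Betti to \'etale cohomology, produces the interpolating module, and then (in the subsequent Note) identifies the fibre at $n$ as the $U_{1,\Sieg}=\alpha_n$ eigenspace in the $\Pif'(n)$-isotypic part at level $G(\widehat{\ZZ}^{(p)})\times\Sieg(p)$ --- exactly your eigenspace-cutting and rank-$4$ argument, modulo your harmless use of Iwahori rather than Siegel parahoric level. The only point worth tightening is that cutting out the family by kernels of $T_{i,\ell}-\mathbf{t}_{i,\ell}$ does not automatically commute with specialisation; one should instead localise the ordinary module at the non-Eisenstein maximal ideal of the Hecke algebra and invoke freeness over $\Lambda$ from the control theorem, which is implicit in the black box you already assume.
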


  \begin{note}
   More precisely, the fibre at $n$ of $W_{\uPi}$ is canonically identified with the subspace of \'etale cohomology of level $G(\widehat{\ZZ}^{(p)}) \times \Sieg(p)$ on which the prime-to-$p$ Hecke operators act via the eigensystem of $\Pif'(n)$ and $U_{1, \Sieg}$ acts as $\alpha_n = \mathbf{u}_{1, p}(n)$. This, in turn, is canonically identified with the $\Pif'(n)$-eigenspace at prime-to-$p$ level via the map
   \[\Pr_{\alpha_n}^{\star}: H^3_{\mathrm{et}}\big(G(\widehat{\ZZ})\big)[\Pif']
   \xhookrightarrow{\ \ \ } H^3_{\mathrm{et}}\big(G(\widehat{\ZZ}^{(p)}) \times \Sieg(p)\big)[\Pif'] \xtwoheadrightarrow{p_{\alpha_n}} H^3_{\mathrm{et}}\big(G(\widehat{\ZZ}^{(p)}) \times \Sieg(p)\big)[\Pif', U_{1,\Sieg} = \alpha_n]
   \]
   where $p_{\alpha_n}$ denotes the Hecke operator $(1 - \frac{\beta_n}{U_{1, \Sieg}})(1 - \frac{\gamma_n}{U_{1, \Sieg}})(1 - \frac{\delta_n}{U_{1, \Sieg}})$.
   (Here we have written $H^3_{\et}(K)$ as a shorthand for $H^3_{\et}(Y_G(K)_{\QQbar}, \cV)$ where $\cV$ is the appropriate \'etale coefficient sheaf).
  \end{note}

 \subsection{Two-variable Euler system classes}
  \label{sect:twovarES}
  We now construct families of Euler system classes taking values in $W_{\uPi}^\star$. 
  \begin{notation}
   Write
   \[
    \cE_{\Sieg}(\Pi, q, r) \coloneqq  \left(1 - \tfrac{p^q}{\alpha}\right)\left(1 - \tfrac{\beta}{p^{1+q}}\right)\left(1 - \tfrac{\gamma}{p^{1 + q}}\right)\left(1 - \tfrac{\delta}{p^{1 + q}}\right) \left(1 - \tfrac{p^{(r_2 + r + 1)} }{\alpha}\right)\left(1 - \tfrac{\delta}{ p^{(r_2 + r + 2)}}\right),
   \]
   and similarly $\cE_{\Sieg}(\Pi(n), q, r)$ for $n \ge 0$ (with $r_2$ in the last two factors replaced by $r_2 + n$).
  \end{notation}

  \begin{theorem}
   \label{thm:Iweltinterp}
   Let $0 \le r \le r_1 - r_2$ be a given integer, and let $c_1, c_2 > 1$ be integers coprime to $6pN$. Then there exists a class
   \[ {}_{c_1, c_2} z^{[\uPi, r]}_{\Iw} \in H^1_{\Iw}(\QQ(\mu_{p^\infty}), W_{\uPi}^{\star}) \]
   with the following property: for each $(n, q)$ with $n \in U \cap \ZZ_{\ge 0}$ and $0 \le q \le r_2 + n$, we have
   \[  \mom_{n, q}\left( {}_{c_1, c_2} z^{[\uPi, r]}_{\Iw}\right) = C_{n, q} \cdot z^{[\Pi(n), q, r]}_{\can}, \]
   where $C_{n, q}$ denotes the quantity
   \[ \left(c_1^2 - c^{-(t_1+n)}\right)\left(c_2^2 - c_2^{-(t_2 + n)}\right) \frac{\cE_{\Sieg}(\Pi(n), q, r)}{(-2)^{q}} .\]
  \end{theorem}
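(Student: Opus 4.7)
The plan is to interpolate the Euler system classes $z^{[\Pi(n),q,r]}_{1,m}$ of \cite{LSZ17} in two directions simultaneously: cyclotomically along the tower of fields $\QQ(\mu_{p^m})$, and along the Hida family as $n$ varies. This follows the general pattern used for the two-variable Beilinson--Flach classes, with the additional subtlety that the $\GSp_4$ cohomology requires a Siegel-ordinary projection rather than a direct Hida ordinary projection on a single parabolic eigenspace.

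First I would recall from \cite{LSZ17} the construction of the classes $z^{[\Pi,q,r]}_{1,m}$ at finite level $p^m$. These arise from applying the Lemma--Eisenstein map to Iwahori-level test data $(w_{m,\Iw}, \uPhi_{m})$ built from explicit Schwartz functions on $\Qp^2 \times \Qp^2$ whose supports shrink as $m$ grows. By a direct local computation at $p$ (analogous to the cyclotomic norm-compatibility arguments familiar from the $\GL_2 \times \GL_2$ setting), these classes satisfy the cyclotomic norm relation up to a controlled Euler factor at $p$, and the $c$-smoothing by $(c_1^2 - c_1^{-(t_1+n)})(c_2^2 - c_2^{-(t_2+n)})$ produces a genuinely norm-compatible system, yielding for each classical $n$ an Iwasawa class
${}_{c_1,c_2}z^{[\Pi(n),r]}_{\Iw} \in H^1_{\Iw}(\QQ(\mu_{p^\infty}), W^\star_{\Pi(n)})$
whose $q$-th cyclotomic moment recovers the claimed multiple of $z^{[\Pi(n),q,r]}_{\can}$.

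Second, to glue these into a single class valued in $W^\star_{\uPi}$, I would use Siegel-ordinarity. Applying the Siegel-ordinary projector $e(U_{1,\Sieg})$ to the classes ${}_{c_1,c_2}z^{[\Pi(n),r]}_{\Iw}$ gives classes landing in the Siegel-ordinary part of \'etale cohomology at parahoric level $\Sieg(p)$. By \cref{thm:tilouineurban} and its \'etale analogue, these ordinary \'etale cohomology modules interpolate into a free $\cO(U)$-module whose $\uPi$-localisation is $W^\star_{\uPi}$. The local test data at $p$ can be chosen once and for all (independent of $n$), so that its image under the Lemma--Eisenstein map automatically lies in the Siegel-ordinary component; then the freeness of $W^\star_{\uPi}$ over $\cO(U)$ allows the classes at each classical weight to be glued into a single Iwasawa class ${}_{c_1,c_2}z^{[\uPi,r]}_{\Iw}$, as required.

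The main obstacle will be identifying the factor $C_{n,q}$ in the interpolation formula, in particular the Euler factor $\cE_{\Sieg}(\Pi(n),q,r)$. This factor has a mixed origin: the four terms $(1-p^q/\alpha)(1-\beta/p^{1+q})(1-\gamma/p^{1+q})(1-\delta/p^{1+q})$ should arise from the passage to the Siegel-ordinary $U_{1,\Sieg}=\alpha$ eigenspace (precisely the projector $p_{\alpha}$ introduced in the Note above, twisted by $q$), while the remaining two, $(1-p^{r_2+r+1}/\alpha)(1-\delta/p^{r_2+r+2})$, arise from the local zeta-integral at $p$ of the universal Schwartz function used in the family, compared to the canonical normalisation $\widetilde{Z}_p$ of the appendix. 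Verifying this identification requires a direct Whittaker/zeta-integral calculation, patiently tracking the interaction between the integral structures used to define $W_{\uPi}$, the cyclotomic test data, and the canonical class $z^{[\Pi(n),q,r]}_{\can}$ of \cref{thm:equivariance}; the factor $(-2)^q$ in the denominator reflects the branching-law normalisation of \cref{prop:branchingcomparison}.
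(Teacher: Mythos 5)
Your proposal follows essentially the same route as the paper: the class is obtained by citing the interpolation results of \cite{LSZ17} for the $c$-smoothed, $U_{1,\Sieg}'$-ordinary projections of the Lemma--Flach elements, with spherical test data away from $p$ and Siegel-type test data at $p$, and the constant $C_{n,q}$ is then identified by an explicit local zeta-integral computation against the canonical class of \cref{thm:equivariance}, with the $(-2)^q$ coming from the branching-law normalisation. The only discrepancy is bookkeeping: in the paper the factor $\left(1-\tfrac{p^q}{\alpha_n}\right)$ arises from the moment map itself, while all five remaining factors of $\cE_{\Sieg}(\Pi(n),q,r)$ come out of the single local integral $\widetilde{Z}_p(w_{p,\Sieg},\uPhi_{p,\Sieg})$ evaluated in \cref{prop:siegelzeta} (the three factors $\left(1-\tfrac{\beta}{p^{1+q}}\right)\left(1-\tfrac{\gamma}{p^{1+q}}\right)\left(1-\tfrac{\delta}{p^{1+q}}\right)$ do ultimately trace back to the projector $p_{\alpha_n}$ via \cref{lemma:eigenvectors}, so your attribution is morally right), rather than the $4+2$ split you propose.
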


  \begin{proof}
   It follows from the results of \cite{LSZ17} that there exists a cohomology class interpolating the projections of Lemma--Flach elements $\mathcal{LE}_{\et}(\uPhi \otimes \xi)$ to the $U_{1, \Sieg}'$-ordinary part of cohomology at level $K_G^p \times\Sieg(p)$, for any prime-to-$p$ level $K_G^p$. Here $\uPhi$ and $\xi$ are products of arbitrary test data away from $p$ with certain specific test data at $p$ determined by the construction.

   If we choose the prime-to-$p$ parts of $\xi$ and $\uPhi$ to be the spherical test data, then this interpolating class is invariant under the group $G(\widehat{\ZZ}^{(p)}) \times \Sieg(p)$, and its image under $\mom_{n, q}$ is given by
   \[ (\text{$c$-factor}) \cdot (-2)^{-q}\left(1 - \tfrac{p^q}{\alpha_n}\right) z^{[\Pi(n), q, r]}\left(w_{\sph}^{(p)} \times w_{p, \Sieg}, \uPhi_{\sph}^{(p)}\times \uPhi_{p, \Sieg}\right), \]
   where $w_{p, \Sieg}$ is the image of the spherical Whittaker vector of $\Pi_p(n)$ under $\Pr_{\alpha_n}^{\star}$, and $\uPhi_{p, \Sieg} = \ch( (p\Zp \times \Zp^\times)^2)$. The cohomology class in the above formula is the product of $z^{[\Pi(n), q, r]}_{\can}$ and a local zeta-integral $\widetilde{Z}_p\left(w_{p, \Sieg}, \uPhi_{p, \Sieg}\right)$, which is evaluated in \cite[Prop. 4.3.2]{LZ-equivar}. After rescaling the test data to remove the harmless factor of $\frac{1}{(p+1)^2}$, we obtain the formula stated.
  \end{proof}

 \subsection{Two-variable motivic p-adic $L$-functions}

  We recall the following description of the Galois representation $W_{\uPi}$. Let $\kappa_U : \Zp^\times \to \cO(U)^\times$ be the canonical character over $U$ (specialising to $x \mapsto x^n$ at each $n \in U \cap \ZZ$).

  \begin{theorem}[Urban]
   After possibly shrinking $U$, the module $W_{\uPi}$ has a 3-step increasing filtration stable under $G_{\Qp}$, with graded pieces of ranks $(1, 2, 1)$: we can write
   \[ 0 = \sF_0 W_{\uPi} \subset \sF_1 W_{\uPi} \subset \sF_3 W_{\uPi}\subset \sF_4 W_{\uPi} =W_{\uPi}\]
   in which $\sF_n$ is free of rank $n$ as an $\cO(U)$-module and is a direct summand of $W_{\uPi}$, and the subquotients
   \[ \sF_1 W_{\uPi}, \quad
   \frac{\sF_3 W_{\uPi}}{\sF_1 W_{\uPi}} \otimes \chi_{\mathrm{cyc}}^{\kappa_U}, \quad
   \frac{W_{\uPi}}{\sF_3 W_{\uPi}} \otimes \chi_{\mathrm{cyc}}^{2\kappa_U}\]
   are all crystalline as $\cO(U)$-linear representations.

   More precisely, the graded pieces have the following description:
   \begin{itemize}
    \item $\sF_1$ is unramified, with geometric Frobenius acting as multiplication by $\mathbf{u}_{1, p} \in \cO(U)^\times$.
    \item $(\sF_3 / \sF_1)(\chi_{\mathrm{cyc}}^{(\kappa_U + r_2 + 1})$ has constant Hodge--Tate weights $(0, -r_1 + r_2-1)$, and the trace of Frobenius on $\Dcris\left((\sF_3 / \sF_1)(\chi_{\mathrm{cyc}}^{(\kappa_U + r_2 + 1)})\right)$ is $\mathbf{u}_{2, p}$.
    \item $(W_{\uPi} / \sF_3)(\chi_{\mathrm{cyc}}^{(2\kappa_U + r_1 + r_2 + 3)})$ is unramified with geometric Frobenius acting as $\chi(p) \mathbf{u}_{1, p}^{-1}$.
   \end{itemize}
  \end{theorem}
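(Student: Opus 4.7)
The plan is to build the filtration in three pieces: first extract the rank-$1$ unramified sub $\sF_1$ from Siegel-ordinarity, then produce the rank-$1$ quotient by duality (using the symplectic structure on the spin representation), so that the middle rank-$2$ step $\sF_3$ is forced. The nontrivial content is showing that these pieces vary rigid-analytically over $U$, not merely pointwise.

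First I would construct $\sF_1(W_{\uPi})$. At each classical point $n \in U \cap \ZZ_{\geq 0}$, the representation $W_{\Pi(n)}|_{G_{\Qp}}$ is crystalline with Hodge--Tate weights $\{0, r_2+n+1, r_1+n+2, r_1+r_2+2n+3\}$ and $\varphi$-eigenvalues $\{\alpha_n, \beta_n, \gamma_n, \delta_n\}$, where $v_p(\alpha_n) = 0$ by Siegel-ordinarity. Weak admissibility then furnishes a unique $G_{\Qp}$-stable unramified line on which geometric Frobenius acts as $\alpha_n$. To interpolate these lines, I would invoke the $\Lambda$-adic ordinary theory of Hida--Tilouine and Urban for $\GSp_4$: the projector $\Pr_{\alpha_n}^\star$ used in the construction of $W_{\uPi}$ itself produces an ordinary sub-object $\sF_1$ on which the $U_{1,\Sieg}$-eigenvalue is $\mathbf{u}_{1,p}$, and (after shrinking $U$) this is a free direct summand of rank $1$ by generic flatness.

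Next I would obtain $\sF_3$ and the rank-$1$ quotient $W_{\uPi}/\sF_3$ by duality. The symplectic form on the Tate module of the universal abelian surface induces, for each classical $\Pi(n)$, a $G_{\QQ}$-equivariant pairing $W_{\Pi(n)} \otimes W_{\Pi(n)} \to L(-(r_1+r_2+2n+3))$; these interpolate to a pairing $W_{\uPi} \otimes W_{\uPi} \to \cO(U)(-2\kappa_U - r_1 - r_2 - 3)$ since the central character is trivial. Defining $\sF_3$ as the annihilator of $\sF_1$ under this pairing gives a rank-$3$ direct summand, and the quotient $W_{\uPi}/\sF_3 \cong \sF_1^\vee \otimes \chi_{\mathrm{cyc}}^{-(2\kappa_U + r_1 + r_2 + 3)}$ is unramified with Frobenius $\chi(p)\mathbf{u}_{1,p}^{-1}$ as claimed. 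For the middle graded piece, interpolating $\beta_n + \gamma_n = p^{r_2+n+1} \mathbf{u}_{2,p}(n)$ across classical points, together with the constant Hodge--Tate weights $\{0, -r_1+r_2-1\}$ of the twist $(\sF_3/\sF_1)(\chi_{\mathrm{cyc}}^{\kappa_U + r_2 + 1})$ at each $n$, identifies the trace of Frobenius as $\mathbf{u}_{2,p}$.

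The main obstacle is the final step: upgrading the pointwise crystalline description of the middle piece into a genuine crystalline structure for the family. This is not automatic from the pointwise data, since the filtration $\sF_3 \subset W_{\uPi}$ is not directly cut out by a Hecke eigenvalue at level $\Sieg(p)$ alone. My approach would be to exploit the density of classical specialisations combined with a triangulation-in-families result (in the style of Kedlaya--Pottharst--Xiao) applied to the associated $(\varphi,\Gamma)$-module of $W_{\uPi}$: a Zariski-dense set of classical points at which the triangulation is crystalline with the required parameters forces the family itself to be trianguline with analytically interpolated Hodge--Tate--Sen weights and Frobenius eigenvalues, matching the asserted description of the graded pieces.
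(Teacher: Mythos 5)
The paper's own proof is far more economical than your route: it simply cites Urban \cite{urban05} for the existence of the filtration on each classical fibre $W_{\Pi(n)}$, and then observes that these pointwise filtrations glue into an $\cO(U)$-linear one because the relevant local Galois cohomology groups over $\cO(U)$ are finitely generated, so a submodule whose fibres have constant rank on the Zariski-dense set $U \cap \ZZ_{\ge 0}$ becomes a free direct summand after shrinking $U$. Your decomposition into ``ordinarity sub $+$ dual quotient $+$ middle piece'' is a legitimate alternative organisation of the same content, and the duality step (interpolating the symplectic pairing and taking $\sF_3 = \sF_1^{\perp}$) is a clean way to obtain the rank-$1$ quotient once $\sF_1$ is in hand.

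Two of your mechanisms, however, do not work as stated. First, the projector $\Pr_{\alpha_n}^{\star}$ at Siegel-parahoric level cuts out the \emph{whole} $4$-dimensional fibre $W_{\Pi(n)}$ (it is the $U_{1,\Sieg}=\alpha_n$ eigenspace), not the unramified line $\sF_1$; that line is characterised by a Galois-theoretic condition, and its existence and interpolation is precisely the content of Urban's theorem, so your first step is circular unless you replace the Hecke projector by something Galois-theoretic such as the inertia invariants of $W_{\uPi}$ twisted by $\mathbf{u}_{1,p}^{-1}$ --- which is exactly where the paper's finite-generation argument enters. Second, the family is only assumed Siegel-ordinary, so there is no reason for $\beta_n$ and $\gamma_n$ to interpolate separately: only the symmetric function $\mathbf{u}_{2,p}$ is part of the family data, and hence the middle rank-$2$ piece need not be trianguline as a family. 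A Kedlaya--Pottharst--Xiao triangulation argument therefore does not apply in the form you invoke it. What is actually needed (and claimed) is the weaker statement that $\Dcris$ of the twisted middle piece is free of rank $2$ with trace of Frobenius $\mathbf{u}_{2,p}$, and this again follows from coherence of the relevant period module together with Zariski-density of the crystalline classical specialisations, not from a triangulation.
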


  \begin{proof}
   The fact that such filtrations exist ``pointwise'', on the fibre at $n$  for each $n \in U \cap \ZZ_{\ge 0}$, is due to Urban \cite{urban05}. Since we know that the Galois representations interpolate over $U$, the existence of an $\cO(U)$-linear filtration follows from the finite generation of local Galois cohomology groups for $\cO(U)$-linear representations.
  \end{proof}

  Dually, we obtain a filtration on $W_{\uPi}^{\star}$ by setting $\sF^i$ to be the orthogonal complement of $\sF_i$.

  \begin{proposition}
   After possibly shrinking $U$, the projection of the Iwasawa cohomology class ${}_{c_1, c_2} z^{[\uPi, r]}_{\Iw}$ to $W_{\uPi}^{\star} /\sF^1 W_{\uPi}^{\star}$ is zero.
  \end{proposition}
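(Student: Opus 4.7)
Since the filtration $\sF^1$ is only a $G_{\Qp}$-stable filtration on $W_{\uPi}^\star$ (not a subrepresentation of the global Galois representation), the assertion really concerns the image of the class in the semilocal Iwasawa cohomology $H^1_{\Iw}\bigl(\QQ(\mu_{p^\infty})_p, W_{\uPi}^\star/\sF^1\bigr)$ at $p$, which is a finitely generated module over $\cO(U)\otimes\Lambda(\Gamma)$ with $\Gamma = \Gal(\QQ(\mu_{p^\infty})/\QQ)$. My plan is to evaluate at a Zariski-dense family of classical specialisations, show vanishing at each such point, and then invoke a torsion-freeness argument for the target module.

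For the dense family of specialisations I would take pairs $(n, q)$ with $n \in U \cap \ZZ_{\ge 0}$ and $q$ an integer satisfying $1 \le q \le r_2 + n$. By the interpolation formula of \cref{thm:Iweltinterp}, the specialisation at $(n, q)$ of the semilocal image of ${}_{c_1, c_2} z^{[\uPi, r]}_{\Iw}$ is, up to a nonzero explicit scalar, the $p$-localisation of $z^{[\Pi(n), q, r]}_{\can} \in H^1(\QQ, W_{\Pi(n)}^\star(-q))$. By \cite[Theorem B]{nekovarniziol} applied to the motivic Lemma--Eisenstein classes (as already used in \cref{lemma:h1g} above), this global class lies in $H^1_f$; so its $p$-localisation lies in $H^1_f(\Qp, W_{\Pi(n)}^\star(-q))$, and, by functoriality of the Bloch--Kato local condition under quotients, its image in $H^1\bigl(\Qp, (W_{\Pi(n)}^\star/\sF^1)(-q)\bigr)$ lies in $H^1_f$ of the quotient.

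The key computation is then to check that this $H^1_f$ vanishes. The quotient $(W_{\Pi(n)}^\star/\sF^1)(-q)$ is a $1$-dimensional crystalline $G_{\Qp}$-representation whose unique Hodge--Tate weight is $q \ge 1$, so $\Fil^0 \Dcris = \Dcris$ and $\Dcris/\Fil^0 = 0$; and its $\varphi$-eigenvalue is $\alpha_n^{-1} p^q$, which differs from $1$ because $\alpha_n$ is a $p$-adic unit by Siegel-ordinarity while $p^q$ has positive valuation, so $V^{G_{\Qp}}=0$. The Bloch--Kato formula $\dim H^1_f = \dim(\Dcris/\Fil^0) + \dim V^{G_{\Qp}}$ then yields $H^1_f = 0$, so the projected Iwasawa class vanishes at every such $(n, q)$.

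To pass from dense vanishing to vanishing of the Iwasawa class itself, I note that the integer characters $x\mapsto x^q$ for $q \ge 1$ are Zariski-dense in the rigid-analytic weight space attached to $\Lambda(\Gamma)$, and $U \cap \ZZ_{\ge 0}$ is Zariski-dense in $U$; hence the specialisation points above are Zariski-dense in $\Spec \cO(U)\otimes\Lambda(\Gamma)$. After possibly shrinking $U$, a Perrin-Riou-style big regulator map (available because the rank $1$ unramified family $W_{\uPi}^\star/\sF^1$ is of Panchishkin type for this range of twists) embeds the semilocal Iwasawa module into a free $\cO(U)\otimes\Lambda(\Gamma)$-module of rank one, and vanishing on a Zariski-dense set then forces the class itself to vanish. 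The main obstacle I anticipate is producing this big regulator in families with sufficient injectivity (controlling possible bounded torsion) and making precise the compatibility between the moment maps $\mom_{n, q}$ and honest specialisation along the $x \mapsto x^q$ direction; both steps should be tractable but require careful attention.
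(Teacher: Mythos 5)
Your argument is correct and is essentially the route the paper takes: the paper simply cites \cite[Proposition 11.2.2]{LPSZ1} for the fibrewise vanishing at each $n \in U \cap \ZZ_{\ge 0}$ --- which is exactly your Bloch--Kato computation for the rank-one quotient $(\sF_1 W_{\Pi(n)})^*(-q)$, using that the classes are geometric and hence land in $H^1_g = H^1_f$ --- and leaves the passage from dense fibrewise vanishing to the statement for the family implicit. For that last step your Perrin-Riou regulator is more machinery than is needed: it suffices that the semilocal $H^1_{\Iw}$ of this rank-one family is torsion-free (its torsion submodule is $H^0$ over $\Qp(\mu_{p^\infty})$, which vanishes after shrinking $U$ since the Frobenius eigenvalue $\mathbf{u}_{1,p}^{-1}$ is never $1$), so it embeds in a finite free module over $\cO(U)\mathop{\hat\otimes}\Lambda$ and an element vanishing at a Zariski-dense set of specialisations is zero.
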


  \begin{proof}
   This follows from the corresponding vanishing result in the fibre at a given $n \in U \cap \ZZ_{\ge 0}$, which is \cite[Proposition 11.2.2]{LPSZ1}.
  \end{proof}

  We can thus regard $\loc_p\left({}_{c_1, c_2} z^{[\uPi, r]}_{\Iw}\right)$ as an element of the module
  \[
   H^1_{\Iw}\left(\Qp(\mu_{p^\infty}), {\sF^1 W_{\uPi}^{\star}} / {\sF^3 W_{\uPi}^{\star}}\right)
   \cong
   H^1_{\Iw}\left(\Qp(\mu_{p^\infty}), \frac{\sF^1 W_{\uPi}^{\star}} {\sF^3 W_{\uPi}^{\star}}\otimes \chi_{\mathrm{cyc}}^{-(\kappa_U + r_2 + 1)}\right)
  \]
  where the isomorphism comes from the canonical twisting map (the twist is convenient because we land in a representation with constant Hodge--Tate weights, and also matches up better with our normalisation for analytic $p$-adic $L$-functions).
  Perrin-Riou's regulator $\cL^{\mathrm{PR}}$ gives a canonical map from this module to $\cH(\Zp^\times) \mathop{\hat\otimes} \mathcal{D}^\star = \cO(\cW) \mathop{\hat\otimes} \mathcal{D}^\star$, where
   \[
   \mathcal{D} \coloneqq \Dcris\left( (\sF_3 W_{\uPi} / \sF_1 W_{\uPi}) \otimes \chi_{\mathrm{cyc}}^{(\kappa_U + r_2 + 1)}\right).
  \]
  Let us now assume that the Hecke parameters of $\Pi = \Pi(0)$ satisfy $\beta \ne \gamma$. After possibly shrinking $U$ even further, we can arrange that $\beta_n \ne \gamma_n$ for every $n \in U \cap \ZZ_{\ge 0}$, and that there is a rank 1 direct summand $\cD_\beta$ of $\cD$, stable under $\varphi$, whose specialisation at any $n$ is canonically identified with the $\varphi = \beta_n$ eigenspace of $\Dcris(W_{\Pi(n)} / \sF_1)$.

  \begin{definition}
   \label{def:motivicL}
   Let $\unu$ be a basis of the free rank 1 $\cO(U)$-module $\cD_\beta$. We shall set
   \[ {}_{c_1, c_2} \cL^{\mot, r}_{p, \unu}(\uPi) \coloneqq \left\langle\unu_\beta, \cL^{\mathrm{PR}}\left({}_{c_1, c_2} z^{[\uPi, r]}_{\Iw}\right) \right\rangle \in \cO(U \times \cW),
   \]
   which we consider as a ``two-variable motivic $p$-adic $L$-function''.
  \end{definition}

  The dependence on $(c_1, c_2)$ is mild: the element of $\operatorname{Frac}\cO(U\times\cW)$ given by
  \[ \cL^{\mot, r}_{p,\unu}(\uPi) \coloneqq \frac{{}_{c_1, c_2} \cL^{\mot, r}_{p, \unu}(\uPi)}{\left(c_1^2 - c_1^{(\bfj + 1 - r')}\middle)\middle(c_2^2 - c_2^{(\bfj+ 1 - r)}\right)}\]
  is independent of $c_1, c_2$, where $\bfj$ is the canonical character $\Zp^\times \to \cO(W)^\times$ (which we think of as a ``coordinate'' on $\cW$) and $r' = r_1 - r_2 - r$. This can be seen as a meromorphic function on $U \times \cW$, with poles along the lines $\bfj= r + 1$ and $\bfj= r' + 1$.

  \begin{proposition}
   For $n \in U \cap \ZZ_{\ge 0}$, there exists a unique vector $\nu_\beta(n) \in \Fil^1 \Dcris(W_{\Pi(n)})$ whose image in $\Dcris(W_{\Pi(n)} / \sF_1)$ coincides with the specialisation of $\unu_\beta$ at $n$. This vector is annihilated by $(1 - \tfrac{\varphi}{\alpha_n})(1 - \tfrac{\varphi}{\beta_n})$.
  \end{proposition}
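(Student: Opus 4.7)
The plan is to separate the problem into (i) establishing existence and uniqueness of an arbitrary $\Fil^1$-lift of the specialisation of $\unu_\beta$ to $\Fil^1\Dcris(W_{\Pi(n)})$, and (ii) showing \emph{a posteriori} that this lift already lies in $\cN(n) := \ker P(\varphi)$, where $P(T) = (1 - T/\alpha_n)(1 - T/\beta_n)$. Both steps reduce to a dimension count using that $\sF_1 W_{\uPi}$ is unramified.

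First I would observe that since $\sF_1 W_{\uPi}$ is unramified, each fibre $\sF_1 W_{\Pi(n)}$ is crystalline of Hodge--Tate weight $0$, so $\Fil^1 \Dcris(\sF_1 W_{\Pi(n)}) = 0$. Exactness of $\Dcris$ on crystalline representations gives a short exact sequence
\[ 0 \to \Dcris(\sF_1 W_{\Pi(n)}) \to \Dcris(W_{\Pi(n)}) \to \Dcris(W_{\Pi(n)}/\sF_1) \to 0, \]
and the vanishing just noted shows that the restriction
\[ \pi \colon \Fil^1 \Dcris(W_{\Pi(n)}) \longrightarrow \Dcris(W_{\Pi(n)}/\sF_1) \]
of the right-hand projection is injective. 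By \cref{prop:Dcrisevals} the Hodge--Tate weights of $W_{\Pi(n)}$ are $\{0,\, r_2+n+1,\, r_1+n+2,\, r_1+r_2+2n+3\}$, giving $\dim \Fil^1 \Dcris(W_{\Pi(n)}) = 3 = \dim \Dcris(W_{\Pi(n)}/\sF_1)$, so $\pi$ is in fact an isomorphism. This simultaneously yields the existence and uniqueness of $\nu_\beta(n)$ as the unique $\pi$-preimage of the specialisation of $\unu_\beta$ at $n$.

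It remains to verify that $P(\varphi)\nu_\beta(n) = 0$. Since $v_p(\alpha_n) = 0$ while $v_p(\beta_n) \geq r_2+n+1 \geq 1$ by Siegel-ordinarity, we have $\alpha_n \neq \beta_n$. Consequently $\varphi$ acts semisimply on $\cN(n)$ with eigenvalues $\alpha_n$ and $\beta_n$; the $\varphi = \alpha_n$ eigenline coincides with $\Dcris(\sF_1 W_{\Pi(n)})$, so $\pi(\cN(n))$ equals the $\varphi = \beta_n$ eigenspace of $\Dcris(W_{\Pi(n)}/\sF_1)$, which is by hypothesis the specialisation $\cD_\beta(n)$. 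Weak admissibility of $W_{\Pi(n)}$, in the form of the Note following \cref{def:nudR}, tells us that $\cN(n) \cap \Fil^1 \Dcris(W_{\Pi(n)})$ is $1$-dimensional, and $\pi$ identifies it with a $1$-dimensional subspace of $\cD_\beta(n)$, hence with all of $\cD_\beta(n)$. Therefore the unique $\Fil^1$-lift of $\unu_\beta(n)$ lies in $\cN(n)$.

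The entire proof is formal once the structural inputs are in place, so there is no real technical obstacle; the only point requiring care is the compatibility between the canonical identification (built into the statement) of $\cD_\beta$ with an eigenspace of $\Dcris(W_{\Pi(n)}/\sF_1)$ and the twist by $\chi_{\mathrm{cyc}}^{\kappa_U + r_2 + 1}$ used in defining $\cD$. I would expect to confirm this by a direct unwinding of the Tate-twist conventions on Frobenius, but nothing deeper is required.
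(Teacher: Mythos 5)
Your proof is correct. The first half (existence and uniqueness of the $\Fil^1$-lift via $\Fil^1\Dcris(\sF_1 W_{\Pi(n)})=0$ plus the dimension count $3=3$) is exactly the paper's argument. For the second assertion you take a different, slightly longer route: the paper simply observes that $(1-\tfrac{\varphi}{\beta_n})\nu_\beta(n)$ maps to $0$ in $\Dcris(W_{\Pi(n)}/\sF_1)$, hence lies in $\Dcris(\sF_1 W_{\Pi(n)})$, which is the $\varphi=\alpha_n$ eigenline and is therefore killed by $(1-\tfrac{\varphi}{\alpha_n})$ --- two lines, no semisimplicity or dimension count needed. Your version instead identifies $\cN(n)\cap\Fil^1$ with $\cD_\beta(n)$ via $\pi$; this works, but your appeal to \cref{note:1diml} is not quite legitimate here, since that note is stated for the fixed $\Pi$ under the standing Klingen-ordinarity assumption, which is \emph{not} in force for $\Pi(n)$ at this point in the argument (it is only reimposed a few paragraphs later; the family is merely Siegel-ordinary). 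Fortunately the gap is cosmetic: $\dim(\cN(n)\cap\Fil^1)\ge 1$ is pure linear algebra ($2+3-4$), and $\cN(n)\not\subseteq\Fil^1$ follows from the vanishing $\Fil^1\cap\Dcris(\sF_1 W_{\Pi(n)})=0$ that you already established, since $\Dcris(\sF_1 W_{\Pi(n)})\subseteq\cN(n)$; so you should cite that rather than weak admissibility. With that substitution your argument is complete, though the paper's direct computation is the more economical one.
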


  \begin{proof}
   Since $\sF_1 W_{\Pi(n)}$ has Hodge--Tate weight 0, the subspace $\Dcris(\sF_1 W_{\Pi(n)})$ of $\Dcris(W_{\Pi(n)})$ (which is simply the $\varphi = \alpha_n$ eigenspace) has zero intersection with  $\Fil^1$. Since $\Fil^1$ is 3-dimensional, we conclude that it maps isomorphically to $\Dcris(W_{\Pi(n)} / \sF_1)$; so the image of $\unu_\beta$ in $\Dcris(W_{\Pi(n)} / \sF_1)$ has a unique lifting to $\Fil^1$. On the other hand, since the specialisation of $\unu_\beta$ is annihilated by $(1 - \tfrac{\varphi}{\beta_n})$, and $\sF^1$ is annihilated by $(1 - \tfrac{\varphi}{\alpha_n})$, we see that this lifting must be annihilated by the given quadratic polynomial.
  \end{proof}

  \begin{notation}
   We let $\Sigma_{\mathrm{crit}}$ and $\Sigma_{\mathrm{geom}}$ denote the subsets of $U \times \cW$ given by
   \[ \Sigma_{\mathrm{crit}} = \{ (n, j) : n \in U \cap \ZZ_{\ge 0}, j \in \ZZ, 0 \le j \le r_1 - r_2\}\]
   and
   \[ \Sigma_{\mathrm{geom}} = \{ (n, j) : n \in U \cap \ZZ_{\ge 0}, j \in \ZZ, -1-r_2 \le j \le -1\}.\]
  \end{notation}

  \begin{proposition}
   For any $(n, j) \in \Sigma_{\mathrm{geom}}$, the value of $\cL^{\mot, r}_{p,\unu}(\uPi)$ at $(n, j)$ is given by
   \[
    \cL^{\mot, r}_{p,\unu}(\uPi, n, j) = \frac{(-1)^{r_2 + n - q}}{(-2)^q (r_2 +n - q)!} \cdot \frac{\cE(\Pi(n), q) \cE(\Pi(n), 1+r_2 + r)}{\left(1 - \tfrac{p^{r_2 + r + n + 1}}{\beta_n}\middle)
     \middle(1 - \tfrac{\gamma_n}{p^{r_2 + r + n + 2}}\right)} \cdot
    \left\langle \nu_\beta(n), \log_{\mathrm{BK}}\left(z^{[\Pi(n), q, r]}_{\can}\right) \right\rangle,
   \]
   where $q = j + 1 + r_2 + n$.
  \end{proposition}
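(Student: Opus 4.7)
The plan is to unfold the definition of $\cL^{\mot, r}_{p,\unu}(\uPi)$ and combine two ingredients: the interpolation formula for the big Iwasawa class from \cref{thm:Iweltinterp}, and Perrin-Riou's explicit reciprocity law in the ``geometric'' (non-critical) range.

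First I would specialise along the weight variable. By \cref{def:motivicL}, the value at $(n,j)$ of $\cL^{\mot, r}_{p,\unu}(\uPi)$ is the ratio of $\langle \unu_\beta, \cL^{\mathrm{PR}}(\loc_p\, {}_{c_1,c_2} z^{[\uPi, r]}_{\Iw})\rangle(n,j)$ by the factor $(c_1^2-c_1^{\bfj+1-r'})(c_2^2-c_2^{\bfj+1-r})$. Since Perrin-Riou's regulator commutes with specialisation along $U$ (a formal property of the Mellin transform for $\cO(U)$-linear families), I first specialise $\loc_p\, {}_{c_1,c_2} z^{[\uPi, r]}_{\Iw}$ at $n$ to obtain a class in $H^1_{\Iw}(\Qp(\mu_{p^\infty}), \sF^1 W^\star_{\Pi(n)}/\sF^3 W^\star_{\Pi(n)})$, and then apply the ordinary (fibrewise) Perrin-Riou regulator. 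By \cref{thm:Iweltinterp} combined with the argument that $\loc_p({}_{c_1,c_2}z^{[\uPi, r]}_{\Iw})$ vanishes modulo $\sF^1$, the $q$-th moment of the specialised class is $C_{n,q}$ times the image of $z^{[\Pi(n), q, r]}_{\can}$ in the quotient $\sF^1/\sF^3$; by the analogue of \cite[Prop.~11.2.2]{LPSZ1} the latter equals $z^{[\Pi(n), q, r]}_{\can}$ itself.

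Next I would apply Perrin-Riou's explicit reciprocity formula at the cyclotomic character $\chi_{\mathrm{cyc}}^j$. Setting $q = j+1+r_2+n$ places us in the range $0\le q \le r_2+n$, where $z^{[\Pi(n),q,r]}_{\can}$ has localisation in $H^1_{\mathrm{e}}$ by \cref{lemma:h1g}, so the Bloch--Kato logarithm is defined. Perrin-Riou's formula at such a point, applied to the rank-2 representation $(\sF_3/\sF_1)W_{\uPi}(\chi_{\mathrm{cyc}}^{\kappa_U+r_2+1})$ at its $\varphi=\beta_n$ eigenspace, gives
\[
\big\langle \unu_\beta(n),\, \cL^{\mathrm{PR}}(z_\Iw)(j)\big\rangle = \frac{(-1)^{r_2+n-q}}{(r_2+n-q)!}\cdot \frac{1-p^q/\beta_n}{1-\beta_n/p^{q+1}}\cdot \big\langle \nu_\beta(n),\, \log_{\mathrm{BK}}(z_j)\big\rangle,
\]
where $\nu_\beta(n)\in \Fil^1\Dcris(W_{\Pi(n)})$ is the distinguished lift constructed in the preceding proposition; here the factorial and Euler factor arise from Perrin-Riou's interpolation formula in the geometric range, and the lift from $(\sF_3/\sF_1)$ to $\Fil^1$ is legitimate because $\log_{\mathrm{BK}}(z_j)$ takes values in the dual of $\Fil^1\Dcris(W_\Pi)$, so the kernel $\Dcris(\sF_1)$ of the projection pairs trivially.

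Finally I would verify that the resulting Euler factors combine to give the formula in the statement. The $(c_1,c_2)$-regularisation cancels, since a direct computation of exponents gives
\[
-(t_1+n) = q+r-r_1-n = \bfj+1-r' \Big|_{\bfj=j}, \qquad -(t_2+n) = q-r_2-r-n = \bfj+1-r \Big|_{\bfj=j}.
\]
After cancellation, the value at $(n,j)$ equals
\[
\frac{(-1)^{r_2+n-q}}{(-2)^q(r_2+n-q)!}\cdot \cE_{\Sieg}(\Pi(n), q, r)\cdot \frac{1-p^q/\beta_n}{1-\beta_n/p^{q+1}}\cdot \big\langle \nu_\beta(n),\, \log_{\mathrm{BK}}(z^{[\Pi(n), q, r]}_{\can})\big\rangle,
\]
and it remains a routine matter to regroup the six factors of $\cE_{\Sieg}(\Pi(n),q,r)$, multiply by the Perrin-Riou Euler factor, and observe that the result equals $\cE(\Pi(n),q)\cE(\Pi(n),1+r_2+r)$ divided by $(1-p^{r_2+n+r+1}/\beta_n)(1-\gamma_n/p^{r_2+n+r+2})$.

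The main obstacle is the precise invocation of Perrin-Riou's formula in the appropriate form for a 2-dimensional crystalline family and the careful bookkeeping of Hodge--Tate weights and cyclotomic twists; the identification of $\unu_\beta$'s specialisation with $\nu_\beta(n)$ in the pairing requires checking that the pairing on the quotient $(\sF_3/\sF_1)$ is compatible with the de Rham duality pairing on the full representation under the given lift, which is essentially the content of the lemma preceding the statement.
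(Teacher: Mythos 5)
Your proposal is correct and follows essentially the same route as the paper's (much terser) proof: specialise at $n$, apply the Perrin--Riou interpolation formula in the geometric range with $\varphi$ acting as $p^{r_2+1+n}\beta_n^{-1}$ on the relevant eigenline (yielding exactly your factor $\frac{(-1)^{r_2+n-q}}{(r_2+n-q)!}\cdot\frac{1-p^q/\beta_n}{1-\beta_n/p^{q+1}}$), and combine with \cref{thm:Iweltinterp}. Your additional bookkeeping — the cancellation of the $(c_1,c_2)$-factors via $-(t_i+n)=\bfj+1-r^{(\prime)}|_{\bfj=j}$ and the regrouping of $\cE_{\Sieg}(\Pi(n),q,r)$ into $\cE(\Pi(n),q)\cE(\Pi(n),1+r_2+r)$ over the two $\beta_n,\gamma_n$ factors — checks out and simply makes explicit what the paper leaves implicit.
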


  \begin{proof}
   This follows from the interpolation formulae relating the Perrin-Riou regulator to the Bloch--Kato logarithm. These formulae include a twist by $(1 - p^j \varphi)(1 - p^{-1-j} \varphi^{-1})^{-1}$, with $\varphi$ acting as $p^{r_2+1+n} \beta_n^{-1}$; so we have
   \[
    \cL^{\mot, r}_{p, \unu}(\uPi, n, j) = \frac{(-1)^{r_2 +n - q}}{(r_2 +n - q)!} \cdot \frac{\left(1 - \tfrac{p^q}{\beta_n}\right)}{\left(1 - \frac{\beta_n}{p^{1+q}}\right)} \cdot \left\langle \nu_\beta(n), \log_{\mathrm{BK}}\left(\mom_{n, q} z^{[\uPi, r]}_{\Iw}(\Phi)\right) \right\rangle.
   \]
   Combining this with \cref{thm:Iweltinterp} gives the result.
  \end{proof}

  \begin{note}
   The parity constraint of \cref{eq:parity} implies $\cL^{\mot, r}_{p, \unu}(\uPi)$ is supported on $U \times \cW^{(-1)^{r + 1}}$.
  \end{note}

  \begin{proposition}
   There exists an element $\mathcal{E}_r(\uPi) \in \cO(U)$ whose value at $n \in U \cap \ZZ_{\ge 0}$ is
   \[ \left(1 - \tfrac{p^{r_2 + r + n + 1}}{\beta_n}\middle)
   \middle(1 - \tfrac{\gamma_n}{p^{r_2 + r + n + 2}}\right).\]
  \end{proposition}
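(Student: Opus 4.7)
The plan is to realise each of the two Euler-like factors in the target expression as a rigid analytic function on $U$, using the Frobenius structure on the direct summand $\cD_\beta$ of $\cD$ constructed just above.

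First, I would observe that since $\cD_\beta \subset \cD$ is an $\cO(U)$-direct summand of rank one stable under $\varphi$, Frobenius acts on $\cD_\beta$ as multiplication by a unique scalar $b \in \cO(U)$. Tracking through the twist by $\chi_{\mathrm{cyc}}^{\kappa_U + r_2 + 1}$ used in the definition of $\cD$ -- which multiplies $\varphi$-eigenvalues by $p^{-(\kappa_U + r_2 + 1)}$ -- one sees that the value of $b$ at each integer $n \in U \cap \ZZ_{\geq 0}$ is $\beta_n / p^{r_2 + n + 1}$. In particular, $b$ is non-zero at every classical point, so after possibly shrinking $U$ around $0$ I may assume $b \in \cO(U)^\times$. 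Since the trace of $\varphi$ on $\cD$ is $\mathbf{u}_{2, p}$, the complementary element
\[ c := \mathbf{u}_{2,p} - b \in \cO(U) \]
then specialises at each classical $n$ to $\gamma_n / p^{r_2 + n + 1}$.

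Next, I would simply define
\[ \mathcal{E}_r(\uPi) \coloneqq \bigl(1 - p^r\, b^{-1}\bigr)\bigl(1 - p^{-r-1}\, c\bigr) \in \cO(U), \]
and check the interpolation formula by direct substitution: at an integer $n$, the first factor evaluates to $1 - p^r \cdot p^{r_2+n+1}/\beta_n = 1 - p^{r_2 + r + n + 1}/\beta_n$, and the second to $1 - p^{-r-1} \cdot \gamma_n / p^{r_2+n+1} = 1 - \gamma_n / p^{r_2+r+n+2}$, whose product is the required value.

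There is no substantial obstacle here. The only conceptual point worth flagging is that the individual Hecke parameters $\beta_n, \gamma_n$ do not themselves interpolate into rigid analytic functions on $U$ -- their $p$-adic valuations grow with $n$ under Siegel-ordinarity alone -- whereas the normalised parameters $\beta_n/p^{r_2+n+1}$ and $\gamma_n/p^{r_2+n+1}$ do interpolate, as $b$ and $c$ respectively. The explicit shape of the two Euler factors in the proposition is exactly that in which the powers of $p$ combine with these normalisations to produce bounded rigid analytic expressions, and this is essentially why the statement holds.
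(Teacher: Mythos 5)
Your proof is correct and follows essentially the same route as the paper, whose own proof is the one-line assertion that $p^{-n}\beta_n$ and $p^{-n}\gamma_n$ are analytic on $U$; your argument simply makes explicit why that holds, namely that the $\varphi$-eigenvalue on the rank-one summand $\cD_\beta$ interpolates $\beta_n/p^{r_2+n+1}$ and its complement relative to the trace $\mathbf{u}_{2,p}$ interpolates $\gamma_n/p^{r_2+n+1}$. The only point worth noting is that inverting $b$ (after shrinking $U$) could even be avoided, since $b\,c = p^{r_1-r_2+1}$ by the level-one relation $\beta_n\gamma_n = p^{r_1+r_2+2n+3}$, so $b^{-1}$ is already a constant multiple of $c$; but your shrinking argument is perfectly adequate.
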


  \begin{proof}
   Clear from the fact that $p^{-n} \beta_n$ and $p^{-n}\gamma_n$ are analytic functions on $U$.
  \end{proof}

  \begin{notation}
   \label{not:Lpboxr}
   We define $\cL^{\mot, [r]}_{p, \unu}(\uPi) \coloneqq \cE_r(\uPi)\cdot \cL^{\mot, r}_{p, \unu}(\uPi) \in \cO(U \times \cW^{(-1)^{r + 1}})$.
  \end{notation}

  \begin{remark}
   This is an cowardly definition. We should really have defined a 3-parameter or even 4-parameter family of zeta elements with both $q$ and $r$ varying, and shown directly that it recovered the above element after specialisation, with the Euler factor $\cE_r$ arising naturally from a comparison between elements at Siegel and Iwahori level. (See \cite[\S 7.1]{LRZ} for a construction along these lines.)
  \end{remark}

  We now reimpose the assumption that $\Pi$ be Klingen-ordinary, and we suppose that $\beta$ is the unique Hecke parameter of minimal possible valuation $r_2 + 1$, so that the conditions of \cref{thm:main} are satisfied. With the present notations, we can state \cref{thm:main} as follows:

  \begin{corollary}
   \label{cor:motivicequalsanalytic}
   For all $(n, j) \in \Sigma_{\mathrm{geom}}$, we have
   \[ \cL^{\mot, [r]}_{p, \unu}(\uPi, n, j) = \cL_{p, \unu(n)}(\Pi(n), j, r).\qed\]
  \end{corollary}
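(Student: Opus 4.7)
The plan is to read off both sides at a classical geometric point $(n, j)\in\Sigma_{\mathrm{geom}}$ and verify that the Euler factors and sign factors conspire exactly as required. No new geometric input is needed beyond Theorem \ref{thm:mainthm}; this is a matching-of-interpolation-formulae argument of the same flavour as the comparison of algebraic and analytic $p$-adic $L$-functions in classical Iwasawa theory.

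First, I would fix $(n, j) \in \Sigma_{\mathrm{geom}}$ and set $q = j + 1 + r_2 + n$, so that $0 \le q \le r_2 + n$. Applying the interpolation formula for the motivic $p$-adic $L$-function recalled just above the corollary, the value $\cL^{\mot, r}_{p, \unu}(\uPi, n, j)$ equals a product of explicit sign, factorial and Euler factors times the regulator pairing $\operatorname{Reg}^{[\Pi(n), q, r]}_{\nu_\beta(n),\can}$. Here one uses that by construction the specialisation $\unu(n) \in \Dcris(W_{\Pi(n)}/\sF_1)$ lifts uniquely to the element $\nu_\beta(n) \in \Fil^1\Dcris(W_{\Pi(n)})$ annihilated by $(1-\varphi/\alpha_n)(1-\varphi/\beta_n)$, which is precisely the class playing the role of $\nu_{\dR}$ in \cref{def:nudR} with the distinguished ordinary pair chosen to be $(\alpha_n, \beta_n)$. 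Thus $\nu_\beta(n)$ is a valid input to the regulator formula of Theorem A applied to $\Pi(n)$.

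Next I would invoke Theorem \ref{thm:mainthm} (in the ``canonical'' formulation, valid because $\Pi$, and hence each $\Pi(n)$, has level $1$ so that the hypotheses of Theorem \ref{thm:equivariance} are automatic). This expresses $\operatorname{Reg}^{[\Pi(n), q, r]}_{\nu_\beta(n), \can}$ as
\[ \frac{(-2)^q (-1)^{r_2+n-q+1}(r_2+n-q)!}{\cE_p(\Pi(n), q)\, \cE_p(\Pi(n), r_2+n+1+r)} \cdot \cL_{p,\unu(n)}(\Pi(n), -1-r_2-n+q, r). \]
Substituting back, the sign $(-1)^{r_2+n-q}$ cancels against $(-1)^{r_2+n-q+1}$ up to a sign, the factorials $(r_2+n-q)!$ and the $(-2)^q$ cancel, and the Euler factors $\cE_p(\Pi(n), q)$ in the numerator of the interpolation formula cancel the denominator coming from Theorem A. This leaves a single ``missing'' ratio of two-variable Euler factors, which is exactly what the correction $\cE_r(\uPi)(n) = (1 - p^{r_2+r+n+1}/\beta_n)(1 - \gamma_n/p^{r_2+r+n+2})$ built into the definition of $\cL^{\mot, [r]}_{p,\unu}$ (Notation \ref{not:Lpboxr}) is designed to supply: indeed, the two factors involving $\alpha_n$ and $\delta_n$ in $\cE_p(\Pi(n), r_2+n+1+r)$ cancel against the corresponding factors of the analytic Euler factor $R_p(\Pi(n), \id, r)$ hidden inside $\cL_{p,\unu(n)}(\Pi(n), j, r)$ at the point $\bfj_2 = r$, leaving precisely $\cE_r$.

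The only step that requires genuine care is this last bookkeeping check: one needs to confirm that the two factors in $\cE_r$ are exactly those not cancelled by $\cE_p(\Pi(n), r_2+n+1+r)$, taking into account the ordering convention $v_p(\alpha_n) \le v_p(\beta_n)$ and the Klingen-ordinarity of $\Pi$ (which propagates to $\Pi(n)$ after shrinking $U$). Given this, comparing values at every point of $\Sigma_{\mathrm{geom}}$ proves the identity on a Zariski-dense subset of $U \times \cW^{(-1)^{r+1}}$, and since both sides are elements of $\cO(U \times \cW^{(-1)^{r+1}})$ (the left-hand side by the definition of $\cL^{\mot,[r]}_{p,\unu}$ and the fact that the apparent poles of $\cL^{\mot,r}_{p,\unu}$ along $\bfj = r+1$ are killed by $\cE_r$, the right-hand side by Theorem \ref{thm:padicLfcn} applied fibrewise), the identity propagates to all of $\Sigma_{\mathrm{geom}}$ and indeed to all of weight space.
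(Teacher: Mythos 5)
Your overall route is the paper's own: the corollary carries no separate proof there --- it is presented as a restatement of Theorem~\ref{thm:mainthm}, obtained by substituting the regulator formula for $\Pi(n)$ (with $\nu_{\dR}$ taken to be $\nu_\beta(n)$, which is legitimate for exactly the reason you give) into the interpolation formula for $\cL^{\mot,r}_{p,\unu}(\uPi,n,j)$ and cancelling factors. So the strategy is sound and no new input is needed.

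However, the one step you flag as requiring ``genuine care'' is the step you get wrong. At a geometric point $(n,j)\in\Sigma_{\mathrm{geom}}$ we have $j\le -1$, which lies outside the critical range $0\le a\le r_1-r_2$; the value $\cL_{p,\unu(n)}(\Pi(n),j,r)$ is therefore just a value of the measure and there is no factor $R_p(\Pi(n),\id,r)$ ``hidden inside'' it to cancel against anything. The actual bookkeeping is simpler and different from what you describe: \emph{both} Euler factors $\cE(\Pi(n),q)$ and $\cE(\Pi(n),r_2+n+1+r)$ appearing in the numerator of the interpolation proposition for $\cL^{\mot,r}_{p,\unu}$ cancel in full against the identical denominator $\cE_p(\Pi(n),q)\,\cE_p(\Pi(n),r_2+n+1+r)$ of Theorem~\ref{thm:mainthm}, while the denominator $\bigl(1-\tfrac{p^{r_2+r+n+1}}{\beta_n}\bigr)\bigl(1-\tfrac{\gamma_n}{p^{r_2+r+n+2}}\bigr)$ of that proposition is removed directly by the multiplier $\cE_r(\uPi)$ built into \cref{not:Lpboxr} --- that is the sole purpose of $\cE_r$, not to supply factors left over from a partial cancellation with $R_p$. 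Two smaller points: $\cE_r(\uPi)\in\cO(U)$ is independent of $\bfj$ and so cannot kill the poles of $\cL^{\mot,r}_{p,\unu}$ along $\bfj=r+1$ (those come from the $c$-factors and are irrelevant here since $j\le -1$); and your closing Zariski-density argument is unnecessary, since the corollary is precisely the pointwise identity at each $(n,j)\in\Sigma_{\mathrm{geom}}$ and nothing needs to be propagated.
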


 \subsection{Conjectures on Eichler--Shimura isomorphisms}
  \label{sect:ESconj}

  \begin{conjecture}
   \label{conj:ESiso}
   Let $\uPi$ be a Siegel-type Hida family over $\cO(U)$ through $(r_1, r_2)$, which is also Borel-ordinary. Then:
   \begin{enumerate}[(A)]
    \item There exists a rank 1 free $\cO(U)$-module $H^1(\uPi)$, whose fibre at $n \in U \cap \ZZ_{\ge 0}$ is canonically identified with the direct summand of
    $H^1\left(Y_G(K_1(M, N) \cap \Iw(p)), \cN_n^2(-D)\right)[\Pif'(n)]$ which is ordinary for the Hecke operators
    \[ U_{2, \Iw} \coloneqq \left[\Iw(p) \diag(p^2, p, p, 1)\Iw(p)\right] \quad\text{and}\quad
    Z \coloneqq \left[\Iw(p) \diag(p, 1, p, 1) \Iw(p)\right].
    \]
    \item There exist a pushforward map sending families of $p$-adic modular forms for $H$ to elements of $H^1(\uPi)$, compatible via specialisation with the pushforward maps on classical modular forms.

    \item There is an isomorphism of $\cO(U)$-modules $\cD_{\beta}^\star \cong H^1(\uPi)$, interpolating the comparison isomorphisms of $p$-adic Hodge theory.
   \end{enumerate}
  \end{conjecture}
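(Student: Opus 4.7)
The plan is to build the three assertions as a single coherent package: construct a big Hecke module of coherent cohomology over the Iwasawa algebra, extract its $\uPi$-component to obtain $H^1(\uPi)$, and then match it against $\cD_\beta^\star$ by interpolating the classical Faltings/Hodge--Tate comparison along the family. The key organising principle is that all three structures should descend from a suitable ``integral'' higher Hida theory for $\GSp_4$ at Iwahori level, refined by a second ordinarity condition coming from the operator $Z$ (which is precisely what upgrades Klingen-ordinarity to Borel-ordinarity).

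\textbf{Constructing $H^1(\uPi)$ (part (A)).} I would start from Pilloni's higher Hida theory for the Klingen parahoric level, which gives a controllable Hecke module $\sM$ interpolating $H^1\bigl(Y_{G,\Kl}, \cN^2(-D)\bigr)^{U_{2,\Kl}'\text{-ord}}$ in Siegel-type families. To descend to Iwahori level, I would enlarge $\sM$ via the induction along $\Kl(p) \supset \Iw(p)$ and then apply the second ordinary projector $e(Z)$ associated to the correspondence $Z$ in Section~13.4. The commutation relation $Z' \circ \Phi = p^{r_2+1} U_{\Iw,2}'$ together with Borel-ordinarity of $\uPi$ ensures that $e(Z)\circ e(U_2')\sM$ localises to a projective $\cO(U)$-module of rank equal to the dimension of the relevant subquotient of $\Pif'$, and the prime-to-$p$ Hecke eigensystem interpolated by $\uPi$ cuts out a free $\cO(U)$-module of rank $1$. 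A control theorem in the spirit of \emph{op.cit.} then identifies the fibre at each classical $n \in U \cap \ZZ_{\ge 0}$ with the corresponding ordinary eigenspace in classical coherent cohomology.

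\textbf{The pushforward map (part (B)).} This is the most concrete step. Fixing the embedding $\iota_\Delta : X_{H,\Delta} \hookrightarrow X_{G,\Kl}$ from Section~10, one has a classical pushforward on coherent cohomology for each specialisation; the task is to $p$-adically interpolate it. Here I would imitate the construction in~\cite{LPSZ1}: using Pilloni's Banach sheaves $\mathfrak{F}$ and the unit-root splitting of \cref{cor:unitroot}, express the pushforward of a Hida family of classical forms on $H$ as a family of sections of $\mathfrak{F}(-D)$ over $\mathbb{X}_{G,\Kl}^{\ge 1}$, and project to the ordinary part constructed in (A). The key compatibility to check is that this projection really lands in $H^1(\uPi)$ and recovers the classical pushforward after specialisation, which reduces to the diagram of Section~14.

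\textbf{The Eichler--Shimura comparison (part (C)) --- the main obstacle.} This is where the genuine difficulty lies, and is exactly the point where Siegel-type families have not yet been treated in the literature. At each classical weight $n$, the isomorphism $\cD_\beta(n)^\star \cong H^1(\uPi)_n$ is a consequence of the Faltings Hodge--Tate comparison applied to the middle graded piece $\sF_3 W_{\uPi(n)} / \sF_1 W_{\uPi(n)}$, restricted to its $\varphi = \beta_n$ eigenline. What is missing for $\GSp_4$ is a $p$-adic interpolation of these comparison isomorphisms as $n$ varies in $U$. My plan would be to adapt the Andreatta--Iovita--Stevens machinery: build an $\cO(U)$-linear period sheaf on a formal model of the Siegel threefold, so that the family $W_{\uPi}$ and its Hodge filtration are simultaneously realised over this sheaf, and then define the Eichler--Shimura map as a slope decomposition along $\mathbf{u}_{2,p}$ on an appropriate tensor product. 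The hard part is controlling the subquotient $\sF_3/\sF_1$ -- which has a two-dimensional Dieudonn\'e module -- in a family, since one must separate the $\beta_n$ and $\gamma_n$ eigenlines $p$-adically; this requires the assumption $\beta \neq \gamma$ already used in \cref{def:motivicL}, together with an integral ``splitting'' of the local Galois representation that is cleaner for Klingen-type families than for Siegel-type ones. Once all three parts are in place, compatibility with the specialisation at each $n$ is automatic from the classical comparison, so the content of the conjecture is entirely in the existence of the $p$-adic interpolations.
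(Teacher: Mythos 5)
There is a fundamental issue you should be aware of: the statement you are attempting to prove is stated in the paper as a \emph{conjecture}, not a theorem, and the paper offers no proof of it. The authors explicitly record that part (A) has been announced by Pilloni (forthcoming), that part (B) ``should also be accessible'' as a Siegel-type analogue of the pushforwards of \cite{LPSZ1}, and that part (C) is genuinely open --- indeed the entire final chapter of the paper (the comparison with a $\GL_4$ ``Betti'' $p$-adic $L$-function, the function $D(\mathbf{u})$, and the Mazur--Rubin non-vanishing argument) exists precisely to \emph{circumvent} the absence of a proof of \cref{conj:ESiso}. So your proposal cannot be checked against the paper's argument, because there isn't one.

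Judged on its own terms, your outline for (A) and (B) is consistent with what the authors expect, but your treatment of (C) does not close the gap --- it restates it. Saying you would ``adapt the Andreatta--Iovita--Stevens machinery'' and ``build an $\cO(U)$-linear period sheaf'' on the Siegel threefold is exactly the missing ingredient the authors identify when they write that such an Eichler--Shimura isomorphism is ``accessible for Klingen-type families (with $r_1$ varying but $r_2$ fixed), but the case of Siegel-type families is less clear.'' The specific obstruction is that in a Siegel-type family both $r_1+n$ and $r_2+n$ vary, so the Hodge--Tate weights and the filtration jumps of the middle graded piece $\sF_3/\sF_1$ move with $n$; one cannot simply transport the $\GL_2$ or Klingen-type constructions, and separating the $\beta_n$ and $\gamma_n$ eigenlines integrally in such a family is not a matter of invoking $\beta \ne \gamma$ at the centre. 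Until that interpolation is actually constructed, part (C) --- and hence the conjecture --- remains unproved, which is why the paper routes Theorem B through the $\GL_4$ transfer instead. If your goal is to recover the paper's results, the productive direction is to follow that workaround rather than to attack \cref{conj:ESiso} directly.
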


  A proof of part (A) of this conjecture has already been announced by Pilloni, and will appear in forthcoming work. Part (B), which is an analogue for Siegel-type families of the pushforwards constructed for Klingen-type families in \cite{LPSZ1}, should also be accessible.

  These two parts of the conjecture would suffice to define a 3-variable analytic $p$-adic $L$-function $\cL_{\underline{\mu}}(\uPi)$ on $U \times \cW \times \cW$, where $\underline{\mu}$ is any basis of $H^1(\uPi)^\star$, whose restriction to $\{n\} \times \cW \times \cW$ coincides with $\cL_{\underline{\mu}(n)}(\Pi(n))$ for each $n \in U \cap \ZZ_{\ge 0}$.

  If part (C) holds, then we can arrange that $\underline{\mu}$ is the image of $\unu$. Then Corollary \ref{cor:motivicequalsanalytic} would assert the equality of two analytic functions on $U \times \cW$ at every point $(n, q)$ in a Zariski-dense set; hence these functions would agree everywhere. Specialising to $n = 0$, we would then obtain the strongest possible form of an explicit reciprocity law, namely the following:

  \begin{conjecture}
   \label{conj:ERLstrong}
   We have the following equality of rigid-analytic functions of $\bfj \in \cW^{(-1)^{r+1}}$:
   \[ \cL^{\mot, [r]}_{p, \nu}(\Pi, \bfj) = \cL_{p, \nu}(\Pi, \bfj, r).\]
  \end{conjecture}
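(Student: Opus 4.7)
The plan is to approach this conjecture not by trying to establish the Eichler--Shimura isomorphism of \cref{conj:ESiso}(C) directly (which seems inaccessible for Siegel-type families), but instead by deforming $\Pi$ in a Siegel-type Hida family $\uPi$ through weight $(r_1,r_2)$ as in \cref{def:family}, and comparing the two-variable motivic $p$-adic $L$-function $\cL^{\mot,[r]}_{p,\unu}(\uPi) \in \cO(U \times \cW)$ from \cref{not:Lpboxr} with an auxiliary ``Betti'' $p$-adic $L$-function built from functorial transfer to $\GL_4$. Concretely, first I would apply the refinement of \cite{DJR18} to construct an analytic $p$-adic $L$-function $\cL^{B}_{p,\underline{\mu}}(\uPi) \in \cO(U \times \cW)$ interpolating the critical values $\Lambda(\Pi(n) \otimes \rho, \tfrac{1-r_1+r_2}{2} + j)$ along the whole family, normalised with respect to a choice of Betti period $\underline{\mu}$ on $U$. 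The comparison between Betti and coherent normalisations at each classical specialisation gives a ratio of periods $\Omega(n) \in L^\times$ depending analytically on $n$, so that $\Omega \cdot \cL_{p, \underline{\mu}(n)}(\Pi(n), \bfj, r) = \cL^{B}_{p, \underline{\mu}}(\uPi, n, \bfj)$ for each $n \in U \cap \ZZ_{\ge 0}$.

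Next I would combine this with \cref{cor:motivicequalsanalytic}: at every $(n,j) \in \Sigma_{\mathrm{geom}}$ the motivic $p$-adic $L$-function agrees with the coherent analytic one, hence $\Omega(n) \cdot \cL^{\mot,[r]}_{p,\unu}(\uPi, n, j) = \cL^B_{p, \underline{\mu}}(\uPi, n, j)$ on the Zariski-dense subset $\Sigma_{\mathrm{geom}} \subset U \times \cW$. Since both sides are rigid-analytic on $U \times \cW$ (after clearing the denominators coming from $c_1,c_2$), the identity extends, yielding an equality of two-variable functions. Specialising to $n=0$ gives
\[
 \Omega(0) \cdot \cL^{\mot,[r]}_{p,\nu}(\Pi, \bfj) = \cL^B_{p, \underline{\mu}(0)}(\Pi, \bfj).
\]
On the other hand, the classical interpolation property on $\Sigma_{\mathrm{crit}}$ identifies $\cL^B_{p,\underline{\mu}(0)}(\Pi, \bfj)$ with $\Omega(0)\cdot \cL_{p,\nu}(\Pi, \bfj, r)$ (both compute the same critical values, up to the period ratio $\Omega(0)$). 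So, \emph{as long as $\Omega(0) \ne 0$}, dividing through gives the claimed equality.

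Thus the whole proof reduces to showing that the scalar $\Omega(0)$ is non-zero. This is the essential arithmetic input, and is the hard part of the argument. Following the strategy sketched for Theorem B in the introduction, I would argue by contradiction: suppose $\Omega$ vanishes at $n=0$. Because the construction of $\Omega$ is uniform in the auxiliary tame level, the same vanishing would have to occur simultaneously for the whole system of Iwasawa classes ${}_{c_1,c_2}z^{[\Pi, r]}_{\Iw, M}$ over $\QQ(\mu_{Mp^\infty})$ as $M$ varies through the allowed prime-to-$p$ conductors, not just for the single cyclotomic class. Thus the local image at $p$ of the whole Euler system would lie strictly inside $\sF^1 W_{\Pi}^\star$ (a deeper step in the Hodge--Greenberg filtration than expected), so the Euler system for $W_{\Pi}^\star(-1-r_2)$ would satisfy an unexpectedly strong local condition at $p$. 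By the non-existence result of Mazur--Rubin for Euler systems with strict-than-expected local behaviour (as invoked in the sketch for Theorem B), no non-trivial such system exists under the Rubin ``big image'' hypothesis $\operatorname{Hyp}(\QQ(\mu_{p^\infty}), T(\chi))$; combined with the non-vanishing of at least one bottom class, which we get from the injectivity of Perrin-Riou's regulator applied to the non-zero Betti $p$-adic $L$-function, this is the desired contradiction.

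The main obstacle is therefore twofold: first, constructing the Betti $p$-adic $L$-function over a Siegel-type family at the required level of precision (in particular, ensuring that the interpolation property at the centre of symmetry is controlled well enough to extract the single period factor $\Omega(n)$ from it), which refines results currently only available for Klingen-type families; and second, making the Mazur--Rubin-type contradiction argument fully uniform in the tame conductor $M$, since Theorem \ref{thm:Iweltinterp} controls only the cyclotomic class at $M=1$ whereas the non-existence argument requires simultaneous behaviour of the entire Euler system. Once these two technical ingredients are in place, the combinatorics of periods and the density of $\Sigma_{\mathrm{geom}}$ automatically upgrade the geometric-range equality of \cref{cor:motivicequalsanalytic} into the full single-variable reciprocity law asserted by the conjecture.
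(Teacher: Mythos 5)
The statement you are addressing is presented in the paper as a \emph{Conjecture}, not a theorem: the paper shows only that it would follow from part (C) of \cref{conj:ESiso} (a $p$-adic Eichler--Shimura isomorphism for Siegel-type families), and its unconditional workaround via the $\GL_4$ transfer yields only the weaker \cref{thmB}. Your proposal reproduces that workaround (deformation in a Siegel-type Hida family, comparison with a Betti $p$-adic $L$-function for $\uTh$, Zariski density of $\Sigma_{\mathrm{geom}}$, then a Mazur--Rubin argument), but the final step --- showing that the period ratio $\Omega(0)$ (the paper's $D(0)$) is non-zero and moreover equal to the correct constant $B(0)$ --- does not go through, and this is exactly why the statement remains conjectural.

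Concretely, the Mazur--Rubin non-existence argument proves only that $h = v_u(D)$, where $h$ is the exact $u$-divisibility of the family of Euler system classes $c_M$ and $v_u(D)$ is the order of vanishing of $D$ at $n=0$; it does \emph{not} prove $v_u(D)=0$. If $D(0)=0$, then \emph{all} the classes $c_M$, including the $M=1$ class, are divisible by $u$, so their specialisations at $n=0$ vanish and no contradiction arises. Your appeal to ``the non-vanishing of at least one bottom class \dots from the injectivity of Perrin-Riou's regulator applied to the non-zero Betti $p$-adic $L$-function'' is circular: under the hypothesis $\Omega(0)=0$ the regulator of the $n=0$ class equals $\Omega(0)\cdot\cL^{B}=0$, which is consistent with the class itself being zero. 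What the argument genuinely produces is a renormalised Euler system (the leading terms $c_M^{(h)}$) with the desired regulator --- that is \cref{thm:explicitrecip} --- but this system need not coincide with the original classes attached to $\Pi$, so it does not yield the asserted identity for $\cL^{\mot,[r]}_{p,\nu}(\Pi,\bfj)$. Furthermore, even granting $D(0)\neq 0$, you still need $D(0)=B(0)$ to match the normalisation by $\nu$; \cref{prop:goodpair} pins this down only for all but finitely many $n$, and $n=0$ may be exceptional (it would require non-vanishing of $\cL_{p,\tau}(\Theta)$ at a point of the geometric, non-critical range, which is not known). The paper explicitly flags both bad cases as possibly occurring at $n=0$.
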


  \begin{remark}
   In the forthcoming work \cite{LZ21-erl}, we prove the analogues of parts (A), (B) of Conjecture \ref{conj:ESiso} for Coleman families rather than Hida families, and a partial result towards part (C), using the ``leading term argument'' introduced in the remaining sections of the present paper. However, since the families of Eisenstein series used in the definition of the $p$-adic $L$-function are not overconvergent, this does not immediately give a proof of Conjecture \ref{conj:ERLstrong}.
  \end{remark}

 \subsection{Comparison with a $\GL_4$ p-adic $L$-function}

  In order to work around our ignorance of Conjecture \ref{conj:ESiso}, we shall make use of the functorial transfer to $\GL_4$. This allows one to make use of a somewhat different toolset (based on Betti rather than coherent cohomology).

  \begin{notation}
   We write $\Theta$ for the functorial transfer of $\Pi \otimes \|\cdot\|^{-(r_1 - r_2 - 1)/2}$ to $\GL_4(\AA)$, so that $\Theta$ is a isobaric automorphic representation of $\GL_4$ satisfying
   \[ L(\Theta, s) = L(\Pi, \tfrac{1-r_1 + r_2}{2} + s).\]
  \end{notation}

  The choice of twist implies that the critical values of $L(\Theta, s)$ are at the integers $0 \le s \le r_1 - r_2$, matching our normalisation for $p$-adic $L$-functions. Note that since $\Pi$ is assumed to be non-CAP and non-endoscopic, the representation $\Theta$ is in fact cuspidal. The compatibility of local and global transfers at $\infty$ implies that $\Theta$ is cohomological (with infinity-type determined by $(r_1, r_2)$); and the compatibility at finite places implies that $\Theta$ has level 1, and is ordinary at $p$.

  \begin{definition}
   For each sign $\epsilon \in \{\pm 1\}$, we write $H^5_{B, c}(\Theta)_F^\epsilon$ for the eigenspace inside the compactly-supported Betti cohomology of the infinite-level symmetric space for $\GL_4$ (with coefficients in the local system of $E$-vector spaces determined by $(r_1, r_2)$) which is $\Theta_{\mathrm{f}}$-isotypical for the $\GL_4(\Af)$ action, and on which complex conjugation acts as $\epsilon$.
  \end{definition}

  It follows from the Eichler--Shimura--Matsushima isomorphism, together with strong multiplicity one for $\GL_4$, that each of the two spaces $H^5_{B, c}(\Theta)_E^\epsilon$ is isomorphic to a single copy of $\Theta_{\mathrm{f}}$. In particular, for each choice of $\epsilon$, the $\GL_4(\widehat{\ZZ})$-invariants of $H^5_{B, c}(\Theta)^\epsilon_E$ are one-dimensional. We denote this space of invariants by $W^\epsilon(\Theta)_E$, and its base-extension to $L$ by $W^\epsilon(\Theta)_L$.

  \begin{definition}
   We let $\tau = (\tau^+, \tau^-)$ be a pair of $L$-bases of the spaces $W^\epsilon(\Theta)_L$, for each choice of sign.
  \end{definition}

  Having chosen $\tau$, the construction of \cite{DJR20} shows that for each sign $\epsilon$ we can find constants $\Omega_p(\Theta, \tau^{\epsilon}) \in L^\times / E^\times$, and $\Omega_\infty(\Theta, \tau^{\epsilon}) \in \CC^\times / E^\times$, such that the following proposition holds:

  \begin{proposition}
   \label{def:GL4Lp}
   There exists a measure $\cL_{p, \tau}(\Theta) \in \Lambda_L(\Zp^\times)$ such that for all $0 \le a \le r_1-r_2$ we have
   \[ \frac{\cL_{p, \tau}(\Theta, a + \rho)}{\Omega_p(\Theta, \tau^\epsilon)} = R_p(\Theta, \rho, a) \cdot \frac{\Lambda(\Theta \otimes \rho, a)}{\Omega_\infty(\Theta, \tau^\epsilon)}\]
   where $\epsilon = (-1)^a \rho(-1)$, and $R_p(\Theta, \rho, a)$ is a product of Euler factors and Gauss sums at $p$.
  \end{proposition}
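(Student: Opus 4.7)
The plan is to specialise the general construction of $p$-adic $L$-functions for regular algebraic cuspidal automorphic representations of $\GL_{2n}(\AA)$ of symplectic type, due to Dimitrov--Januszewski--Raghuram, to the case $n=2$ and to the lift $\Theta$. Since $\Pi$ is non-endoscopic, non-CAP, globally generic and cohomological, $\Theta$ is itself cuspidal, cohomological, of level one, and ordinary at $p$; and by construction it is of symplectic type (its standard $L$-function coincides with the spin $L$-function of $\Pi$), so its critical values are exactly those in the range $0 \le a \le r_1 - r_2$ described in the statement, with the parity constraint $\epsilon = (-1)^a \rho(-1)$ fixed by the Deligne formalism.

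The first step is to realise these critical values as Betti period integrals. The Friedberg--Jacquet integral representation expresses $L(\Theta \otimes \rho, s)$ as a zeta integral over $H(\QQ)\backslash H(\AA)$ (with $H = \GL_2 \times_{\GL_1} \GL_2 \subset \GL_4$), paired against a standard Eisenstein series on $H$. Translating this into cohomology, each critical value equals the pairing of a cuspidal class in $H^5_{B,c}(\Theta)^\epsilon_L$ with a suitable Eisenstein cohomology class on the $H$-locally symmetric space, pulled back via the embedding $H \hookrightarrow \GL_4$. Rationality of the Eisenstein class over $E$, together with the fact that $W^\epsilon(\Theta)_E$ is one-dimensional, defines the Archimedean period $\Omega_\infty(\Theta, \tau^\epsilon) \in \CC^\times / E^\times$ via the equation expressing $L$-values as $E$-rational multiples of $\Omega_\infty(\Theta, \tau^\epsilon)$.

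The second step is the $p$-adic interpolation. Using the Iwahori-stabilisation of $\tau^\epsilon$ with respect to the ordinary $U_p$-eigenvector, and the modular symbol map associated to the Friedberg--Jacquet embedding, one packages the collection of critical values into a distribution on $\Zp^\times$. The ordinarity of $\Theta$ at $p$ (inherited from the Borel-ordinarity of $\Pi$) together with the usual control of denominators for ordinary modular symbols shows this distribution is in fact a bounded measure $\cL_{p,\tau}(\Theta) \in \Lambda_L(\Zp^\times)$; the $p$-adic period $\Omega_p(\Theta, \tau^\epsilon)$ is then defined as the ratio needed to interpret the resulting $E$-rational modular symbol as an $L$-valued measure against the fixed basis $\tau^\epsilon$. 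The interpolation formula then follows from the Friedberg--Jacquet integral, with the factor $R_p(\Theta, \rho, a)$ arising as the local Whittaker/Bessel zeta integral at $p$ against the ordinary stabilisation, which by a direct computation is a product of Euler factors at $p$ and Gauss sums of $\rho$.

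The principal obstacle is the bookkeeping of periods and normalisations. The cited construction produces $p$-adic $L$-functions with $E$-rational interpolation; but here we need the precise compatibility between the Betti basis $\tau^\epsilon$, the archimedean period $\Omega_\infty(\Theta, \tau^\epsilon)$, and the $p$-adic period $\Omega_p(\Theta, \tau^\epsilon)$ to make the formula hold in the sharp form stated, with $R_p(\Theta, \rho, a)$ a specific explicit product of local factors. Getting this sharp form, as opposed to an equality modulo an unspecified $L^\times$-constant, requires a careful choice of Whittaker/Eisenstein test vectors at $p$ compatible with the ordinary projector and an explicit computation of the Friedberg--Jacquet zeta integral at $p$; this is the content of the forthcoming refinement of \cite{DJR18} promised in the strategy outline of the introduction.
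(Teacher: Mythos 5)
Your proposal is correct and takes essentially the same route as the paper, whose entire ``proof'' of this proposition is the sentence preceding it: one invokes the construction of \cite{DJR18} (Friedberg--Jacquet period integrals realised as ordinary modular symbols in Betti cohomology), with the constants $\Omega_p(\Theta,\tau^\epsilon)$ and $\Omega_\infty(\Theta,\tau^\epsilon)$ \emph{defined} precisely so that the interpolation formula holds; your sketch is an accurate unpacking of that black box. The only small correction is one of attribution: the paper treats this single-representation statement as already contained in \cite{DJR18}, and reserves the forthcoming refinement for the two-variable family version in \cref{thm:gl4famL}.
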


  \begin{remark}
   As with the $\GSp_4$ $p$-adic $L$-function defined above, the quantity $\Omega_p(\Theta, \tau^\epsilon)^{-1} \otimes \Omega_\infty(\Theta, \tau^\epsilon) \in L\otimes_E \CC$ is uniquely determined by $\tau$, although the individual factors are only determined modulo $E^\times$, so the measure $\cL_{p, \tau}(\Theta)$ depends only on $\tau$.
  \end{remark}

  By comparing the interpolating properties of the $p$-adic $L$-functions, we obtain the following:

  \begin{corollary}
   Suppose that $\cL_{p, \nu}(\Pi)$ is not identically 0 (which is automatic if $r_1 - r_2 > 0$). Then there is an isomorphism of $L$-vector spaces
   \[ t_{\Theta}: W^+(\Theta)_L \otimes W^-(\Theta)_L \cong \operatorname{Gr}^1 \DdR(V_{\Pi})\]
   with the following property: if $\nu$ is the image of $\tau^+ \otimes \tau^-$, then we have
   \[ \cL_{p, \nu}(\Pi)(\bfj_1, \bfj_2) = \cL_{p, \tau}(\Theta)(\bfj_1) \cdot \cL_{p, \tau}(\Theta)(\bfj_2)\]
   for all $(\bfj_1, \bfj_2) \in \cW \times \cW$ with $\bfj_1 + \bfj_2$ odd.
  \end{corollary}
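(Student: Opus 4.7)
The plan is to prove the corollary by comparing interpolation formulae on both sides at the dense set of critical classical points, and using these to construct the map $t_\Theta$ rather than starting from an abstract motivic comparison.

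First, I would fix an arbitrary basis $\nu_0$ of the one-dimensional $L$-vector space $\operatorname{Gr}^1 \DdR(W_\Pi)$ and consider the bounded measure $\cL_{p,\nu_0}(\Pi) \in \Lambda_L(\Zp^\times \times \Zp^\times)$ together with the product measure $M(\bfj_1,\bfj_2) := \cL_{p,\tau}(\Theta)(\bfj_1) \cdot \cL_{p,\tau}(\Theta)(\bfj_2)$. Both are supported on the component $\bfj_1(-1)\bfj_2(-1) = -1$ by the parity constraints in \cref{thm:padicLfcn} and \cref{def:GL4Lp}. On this component, the points $(a_1+\rho_1, a_2+\rho_2)$ with $0 \le a_i \le r_1-r_2$ and $\rho_i$ of finite order form a Zariski-dense subset. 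I would evaluate both measures at such a point, using the interpolation formulae and the crucial identity
\[
L(\Pi\otimes\rho, \tfrac{1-r_1+r_2}{2} + a) = L(\Theta \otimes \rho, a)
\]
coming from functoriality. A short comparison at the archimedean place then gives the corresponding identity for the completed $L$-functions up to a simple factor depending only on the archimedean $L$-factors and not on $\rho$ or $a$.

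The next (and technically most involved) step is to verify that the $p$-adic Euler factor of \cref{thm:padicLfcn} satisfies the factorization
\[
R_p(\Pi,\rho_1,a_1)\,R_p(\Pi,\rho_2,a_2) = R_p(\Theta,\rho_1,a_1)\,R_p(\Theta,\rho_2,a_2),
\]
up to an explicit nowhere-vanishing factor independent of $(\rho_i,a_i)$. This follows from the explicit form of these factors given in \cite{LPSZ1} together with the matching of local Euler factors at $p$ under the $\GSp_4 \to \GL_4$ transfer (here the Klingen-ordinarity hypothesis is essential, since it determines which root of the Satake parameters is used to define the Euler factor at $p$). Combining this with the previous step, the ratio
\[
c(\bfj_1,\bfj_2) \;:=\; \frac{\cL_{p,\nu_0}(\Pi)(\bfj_1,\bfj_2)}{M(\bfj_1,\bfj_2)}
\]
takes a constant value $c \in L$ on the dense set of critical classical points (a ratio of period constants). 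Because $\cL_{p,\nu_0}(\Pi)$ is not identically zero by hypothesis, $c$ is nonzero; then Zariski density of critical points in $\cW^{+1}\times\cW^{-1} \sqcup \cW^{-1}\times\cW^{+1}$, applied to the analytic function $\cL_{p,\nu_0}(\Pi) - c \cdot M$, forces $\cL_{p,\nu_0}(\Pi) = c \cdot M$ identically.

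Finally, I would define the isomorphism $t_\Theta$ by sending $\tau^+ \otimes \tau^- \mapsto c^{-1} \nu_0$; since source and target are both one-dimensional $L$-vector spaces, this extends uniquely to an $L$-linear isomorphism, and the resulting $\nu := t_\Theta(\tau^+\otimes\tau^-) = c^{-1}\nu_0$ satisfies the identity $\cL_{p,\nu}(\Pi)(\bfj_1,\bfj_2) = M(\bfj_1,\bfj_2)$ by the scaling behaviour of $\cL_{p,-}(\Pi)$ in its period parameter. Independence of the construction from the choice of $\nu_0$ is immediate, and bilinearity in $\tau^\pm$ follows from the analogous bilinearity of $(\tau^+,\tau^-) \mapsto \cL_{p,\tau}(\Theta)(\bfj_1)\cdot\cL_{p,\tau}(\Theta)(\bfj_2)$. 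The main obstacle, as signalled above, is the precise comparison of the $R_p$ factors: one must track the Gauss sums, the Euler-at-$p$ modifications, and the archimedean gamma factors with sufficient care to confirm that the two period ratios differ by exactly the expected constant, with no residual dependence on $(\rho_i,a_i)$.
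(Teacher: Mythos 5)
Your proposal is correct and follows essentially the same route as the paper, which derives the corollary precisely by comparing the interpolation formulae of \cref{thm:padicLfcn} and \cref{def:GL4Lp} at the Zariski-dense set of critical points and absorbing the resulting period ratio into the definition of $t_\Theta$. The paper leaves the matching of the $R_p$-factors and archimedean factors implicit, so your more careful bookkeeping of that step is a faithful (and slightly more detailed) version of the intended argument.
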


  Note that this isomorphism matches up the $E$-structure $W^+(\Theta)_E \otimes W^-(\Theta)_E$ with the $E$-rational structure on the right-hand side determined by de Rham cohomology, although we shall not use this fact.

 \subsection{Variation in families for $\GL_4$}

  This discussion applies identically with $\Pi$ replaced by any of the other specialisations $\Pi(n)$ of Siegel-type family through $\Pi$ discussed above; and we have the following two statements, proved in the paper \cite{BDGJW25}:

  \begin{proposition}
   After possibly shrinking $U$, we can find free rank 1 $\cO(U)$ modules $W(\uTh)^{\epsilon}$ for each sign $\epsilon$, whose specialisation at $n \in U \cap \ZZ_{\ge 0}$ is canonically identified with $W(\Theta(n))_L^{\epsilon}$.
  \end{proposition}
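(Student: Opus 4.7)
The plan is to realize $W(\Theta(n))^{\epsilon}_L$ as the specialisation of a Hida-theoretic module built from nearly-ordinary Betti cohomology for $\GL_4$. The first step is to functorially transfer the family: for each $n \in U \cap \ZZ_{\ge 0}$, the representation $\Pi(n)$ transfers to a cuspidal automorphic representation $\Theta(n)$ of $\GL_4(\AA)$, and the tame Hecke eigensystems of the $\Theta(n)$ vary analytically in $n$ because those of the $\Pi(n)$ do (this is the content of the functions $\mathbf{t}_{i,\ell}$ in \cref{def:family}, together with the explicit description of the local transfer at unramified places). The Siegel-ordinary refinement $(\mathbf{u}_{1,p}, \mathbf{u}_{2,p})$ of $\uPi$ at $p$ gives, via the local transfer at $p$, a nearly-ordinary refinement of $\Theta(n)$ for every $n$: the Borel-nearly-ordinary filtration on the 4-dimensional local Galois representation of $\Theta(n)$ is cut out by $p$-adic analytic functions of $n$.

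Next, I would invoke Hida's theory of nearly-ordinary Betti cohomology for $\GL_4/\QQ$ (in the form developed by Hida and later extended by Urban and Ash--Stevens). Let $\mathbb{T}^{\mathrm{n.o.}}$ denote the nearly-ordinary Hecke algebra acting on the direct limit of compactly-supported Betti cohomology (in degree 5, the cuspidal degree for $\GL_4$) of locally symmetric spaces of tame level 1, with coefficients in the appropriate weight local systems, localised at the nearly-ordinary idempotent $e^{\mathrm{n.o.}}$. Hida's control theorem shows that this module is finitely generated over the weight algebra for $\GL_4$, and that its specialisation at any arithmetic weight recovers the nearly-ordinary part of classical Betti cohomology of the corresponding weight and level. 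Separating by $\epsilon \in \{\pm 1\}$ via complex conjugation is harmless, so we obtain two such modules, one for each sign.

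The third step is to cut down to the family $\uTh$. The tame Hecke eigensystem of $\uTh$ defines, via the functions $\mathbf{t}_{i,\ell}$ for $\ell \neq p$ and the nearly-ordinary refinement at $p$, a maximal ideal in the nearly-ordinary Hecke algebra over $\cO(U)$; localising the nearly-ordinary Betti cohomology at this system of eigenvalues gives an $\cO(U)$-module $W(\uTh)^{\epsilon}$. By strong multiplicity one for $\GL_4$, each specialisation at $n \in U \cap \ZZ_{\ge 0}$ of this module is isomorphic to $W(\Theta(n))^{\epsilon}_L$, which is 1-dimensional. After possibly shrinking $U$, the module $W(\uTh)^{\epsilon}$ is therefore free of rank 1 over $\cO(U)$ by Nakayama, and the canonical identification in the specialisation follows from the control theorem.

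The main obstacle I expect is the compatibility of the classical Betti--Hecke module at weight $n$ with the representation $W(\Theta(n))^{\epsilon}_L$ used in \cref{def:GL4Lp}; concretely, one must verify that the nearly-ordinary projection from full tame-level cohomology really does recover the $\GL_4(\widehat{\ZZ})$-invariant line. This follows from the fact that $\Theta(n)_p$ is unramified and the nearly-ordinary refinement picks out a unique element of the unramified principal series (the Borel refinement determined by the Siegel-ordinary filtration), so the nearly-ordinary projector restricts to an isomorphism on the spherical vector line; but making this precise requires some unwinding of conventions between Hida's normalisations and those of \cite{DJR18}.
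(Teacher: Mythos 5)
The paper offers no proof of this proposition at all: it is stated bare, prefaced only by the remark that the preceding discussion ``applies identically'' to each specialisation $\Pi(n)$, with the genuinely deep interpolation statement for $\GL_4$ (\cref{thm:gl4famL}) explicitly deferred to forthcoming work with Barrera, Dimitrov and Williams. Your proposal therefore fills a gap the authors left open, and the route you take --- transfer the tame eigensystem and the ordinary refinement analytically in $n$, interpolate the degree-$5$ compactly supported Betti cohomology by nearly-ordinary Hida theory for $\GL_4/\QQ$, localise at the eigensystem of $\uTh$, and deduce freeness of rank $1$ from strong multiplicity one together with Nakayama after shrinking $U$ --- is the standard one and is certainly what the authors have in mind. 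You also correctly identify the one place where conventions must be unwound, namely that the nearly-ordinary projector at Iwahori level recovers the $\GL_4(\widehat{\ZZ})$-invariant line (the exact analogue of the map $\Pr_{\alpha_n}^{\star}$ the paper uses on the $\GSp_4$ side).

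One technical point deserves more care than your sketch gives it. Exact control for nearly-ordinary Betti cohomology of $\GL_n$ with $n\ge 3$ is degree-sensitive (Hida's theorems give it only in particular degrees of the cuspidal range), so invoking ``Hida's control theorem'' in degree $5$ for $\GL_4$ is not automatic and would need to be checked against the degree conventions of \cite{DJR18}. However, for the statement actually being proved this can be sidestepped: since you work with $L$-coefficients over an affinoid disc and are permitted to shrink $U$, it suffices that the localised module is a coherent $\cO(U)$-module whose fibres at the Zariski-dense set $U\cap\ZZ_{\ge 0}$ are $1$-dimensional; generic freeness then gives a free rank $1$ module after shrinking $U$, with the canonical identification of fibres supplied by the classical-weight specialisation maps. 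With that adjustment your argument is sound.
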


  \begin{proof}
   Since $\Theta$ arises by base-change from an ordinary $\GSp_4$ representation, it has a unique ``spin $p$-refinement'' in the notation of \emph{op.cit.}~(which corresponds to the ordering $(\alpha, \dots, \delta)$ of the Hecke parameters). Theorem 13.6 of \emph{op.cit.}~shows that there exists a neighbourhood of $(r_1, r_2)$ in $\cW \times \cW$, and a 2-parameter family of $\GL_4$ automorphic representations over this neighbourhood deforming $\Theta$, all of which admit Shalika models and thus are functorial transfers from $\GSp_4$. Restricting to the line $r_1 - r_2 = \text{constant}$, we obtain a uniquely-determined one-parameter family $\underline{\Theta}$ of Shalika-type $\GL(4)$ representations deforming $\Theta$; and from the uniqueness of $\uTh$, it follows that the weight $n$ specialization of $\uTh$ coincides with $\Theta(n)$ (which we defined as the functorial lift of $\Pi(n)$). Theorem 13.6(ii) of \emph{op.cit.} then gives us rank 1 modules $W(\uTh)^{\epsilon}$ interpolating $W(\Theta(n))_L^{\epsilon}$. 
  \end{proof}
  
  \begin{theorem}
   \label{thm:gl4famL}
   Let $\utau = (\utau^+, \utau^-)$ be $\cO(U)$-bases of the modules $W(\uTh)^{\epsilon}$. Then there exists a bounded rigid-analytic function
   \[ \cL_{p, \utau}(\uTh): U \times \cW \to L \]
   with the following property: for every $n \in U \cap \ZZ_{\ge 0}$, the restriction of $\cL_{p, \utau}(\uTh)$ to $\{n \} \times \cW$ is $\cL_{p, \tau(n)}(\Theta(n))$, where $\tau(n)$ is the specialisation of $\utau$ at $n$.
  \end{theorem}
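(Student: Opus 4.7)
The plan is to promote the single-weight construction of the $p$-adic $L$-function $\cL_{p,\tau}(\Theta)$ from \cite{DJR18} to a families-level construction, working throughout with the ordinary part of Betti cohomology for $\GL_4$. Since each $\Pi(n)$ is Siegel-ordinary, its functorial transfer $\Theta(n)$ is ordinary at $p$ with respect to the $(2,2)$-parabolic of $\GL_4$, and the Hecke eigenvalue relations between $\uPi$ and its transfers imply (after shrinking $U$) that $\{\Theta(n)\}_{n \in U \cap \ZZ_{\ge 0}}$ interpolates in an ordinary $\GL_4$ Hida family over $\cO(U)$. This family can be cut out of a $\GL_4$ eigenvariety either directly on the automorphic side or via functorial transfer of Galois representations.

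With this in place, the construction of $\cL_{p,\utau}(\uTh)$ proceeds in three steps. First, by Hida's control theorem for $\GL_4$ (as refined for compactly-supported Betti cohomology by, e.g., Hansen using overconvergent modular symbols), the $\Theta_{\mathrm{f}}$-isotypic components of the ordinary parts of $H^5_{B, c}$ for the $\GL_4$ symmetric space interpolate over $U$ into the free rank-one $\cO(U)$-modules $W(\uTh)^\pm$ whose fibre at $n$ is $W^\pm(\Theta(n))_L$. Second, the \cite{DJR18} construction pairs this Betti class against a distribution-valued modular symbol built from the standard zeta integral for $\GL_4$ restricted to a mirabolic subgroup; because the only bad place in the family is $p$, the test data can be chosen $\cO(U)$-linearly, yielding a family of distributions on $\Zp^\times$, i.e.\ an element of $\cO(U) \mathop{\hat{\otimes}} \Lambda_L(\Zp^\times)$. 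Ordinarity of the family guarantees that this element is bounded. Third, the trivialization $\utau$ converts this into a bounded rigid-analytic function on $U \times \cW$, and the interpolation identity at each $n \in U \cap \ZZ_{\ge 0}$ is a direct comparison with the single-weight \cite{DJR18} construction evaluated on the specialized data.

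The main obstacle is twofold, and both parts are delicate rather than deep. The first is verifying that the specific family $\uTh$ produced by functorial transfer of $\uPi$ actually coincides with a Hida-theoretic $\GL_4$ family in a manner compatible with Galois representations, so that the Betti cohomology modules $W(\uTh)^\pm$ are genuinely free over $\cO(U)$ and not merely coherent. The second, which is the real heart of the matter, is period matching: the $p$-adic $L$-function $\cL_{p,\tau(n)}(\Theta(n))$ depends on the basis $\tau(n)$ only modulo $E^\times$, yet the families construction forces a single $\cO(U)$-linear choice $\utau$, and one must check that its pointwise specializations agree with the individual $\tau(n)$ up to the ambiguity allowed by \cref{def:GL4Lp}. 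This amounts to trivializing $W(\uTh)^\pm$ by an $\cO(U)$-basis whose classical specializations are $E$-rational, which is standard once the freeness in step one is established, but demands careful book-keeping of rational structures on Betti cohomology of the relevant symmetric spaces. These verifications, together with the Hida-theoretic input for $\GL_4$, are exactly what the authors' forthcoming work is expected to establish.
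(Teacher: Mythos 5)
There is nothing in the paper to compare your argument against: the authors state Theorem~\ref{thm:gl4famL} without proof and explicitly defer it to ``forthcoming work of the present authors with Barrera, Dimitrov and Williams.'' Your outline is a reasonable description of what that work is expected to do --- interpolate the construction of \cite{DJR18} over an ordinary $\GL_4$ family obtained by transfer of $\uPi$, using control theorems for the ordinary part of (compactly supported) Betti cohomology of the $\GL_4$ symmetric space to produce the free rank-one modules $W(\uTh)^{\pm}$, and then produce a family of bounded distributions on $\Zp^\times$ --- and you correctly isolate the two delicate points, namely freeness of the interpolated cohomology and the matching of period normalisations with the pointwise choices $\tau(n)$ of Proposition~\ref{def:GL4Lp}.

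That said, what you have written is a strategy sketch, not a proof: every substantive input (Hida theory for $\GL_4$ Betti cohomology, the existence of the $\cO(U)$-linear distribution-valued symbol, its boundedness) is asserted and, as you yourself note, deferred to the same forthcoming work the paper cites. One point of substance to correct: the construction of \cite{DJR18} for $\GL_{2n}$ is not based on ``the standard zeta integral restricted to a mirabolic subgroup'' (that is the Whittaker--Rankin--Selberg setup for $\GL_n\times\GL_{n-1}$); it rests on the Friedberg--Jacquet period integral over $\GL_2\times\GL_2\subset\GL_4$ and the Shalika model, which is also why the relevant ordinarity condition is the one for the $(2,2)$-parabolic that you mention. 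Any families version must interpolate that period construction, and the nonvanishing of the local Shalika/Friedberg--Jacquet functionals in the family is an additional point your sketch does not address.
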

  
  \begin{proof}
   This is Theorem 13.8 of \emph{op.cit.} (restricted to the line $r_1 - r_2 = \text{const}$).
  \end{proof}

 \subsection{The reciprocity law}

  We now carry out a rather delicate comparison argument. We choose a $\utau$, giving us a 2-variable analytic $p$-adic $L$-function; and we choose a $\unu$ and a value of $r$, giving a 2-variable motivic one. For technical reasons we shall suppose that $r_1 -r_2 > 0$, and take $r \in \{0, \dots, r_1 - r_2\}$ such that
  \( r \ne \frac{r_1 - r_2}{2}. \)

  \begin{notation}
   Define $\cL^{[r]}_{p, \utau}(\uTh) \in \cO(U \times \cW)$ by
   \[ \cL^{[r]}_{p, \utau}(\uTh,\mathbf{u}, \bfj) =
   \begin{cases}
    \cL_{p, \utau}(\uTh, \mathbf{u}, r) \cdot\cL_{p, \utau}(\uTh, \mathbf{u}, \bfj) & \bfj \in \cW^{(-1)^{r+1}}, \\
    0 & \bfj \in \cW^{(-1)^r}.
   \end{cases} \]
  \end{notation}

  So Corollary \ref{cor:motivicequalsanalytic} tells us that for all $(n, j) \in \Sigma_{\mathrm{geom}}$, we have
  \begin{equation}
  \label{eq:defBn}
  \cL^{\mathrm{mot}, [r]}_{p, \unu}(\uPi, n, j) = B(n) \cdot \cL^{[r]}_{p, \utau}(\uTh, n, j),
  \end{equation}
  where $B(n) \in L^\times$ is the constant such that
  \[
  B(n) t_{\Theta(n)}\left( \utau(n)^+ \otimes \utau(n)^-\right) =   \unu(n).
  \]

  \begin{lemma}
   The function on $U \times \cW\times\cW$ defined by
   \[ C(\mathbf{u}, \bfj, \bfj') \coloneqq
   \cL^{[r]}_{p, \utau}(\uTh, \mathbf{u}, \bfj) \cdot \cL^{\mathrm{mot}, [r]}_{p, \unu}(\uPi, \mathbf{u}, \bfj')- \cL^{[r]}_{p, \utau}(\uTh, \mathbf{u}, \bfj')\cdot \cL^{\mathrm{mot}, [r]}_{p, \unu}(\uPi, \mathbf{u}, \bfj).
   \]
   is identically zero.
  \end{lemma}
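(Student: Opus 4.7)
The plan is to extend the identity \eqref{eq:defBn}, currently known only at the discrete set $\Sigma_{\mathrm{geom}}$, to all of $U \times \cW \times \cW$. First, substituting \eqref{eq:defBn} directly into the definition of $C$ shows that $C(n, j_1, j_2) = 0$ for every $n \in U \cap \ZZ_{\ge 0}$ and every pair of integers $j_1, j_2 \in \{-1-r_2, \dots, -1\}$ with $(n, j_i) \in \Sigma_{\mathrm{geom}}$: both terms reduce to $B(n) \cL^{[r]}_{p,\utau}(\uTh, n, j_1)\,\cL^{[r]}_{p,\utau}(\uTh, n, j_2)$ and cancel by antisymmetry. Since $C$ is rigid-analytic on $U \times \cW \times \cW$ and $U \cap \ZZ_{\ge 0}$ is Zariski-dense in $U$, this propagates to $C(\mathbf{u}, j_1, j_2) = 0$ for all $\mathbf{u} \in U$ and all integer pairs $j_1, j_2$ in the geometric range.

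The crux is to extend the vanishing in the $(\bfj, \bfj')$-directions. Writing $f(\mathbf{u}, \bfj) = \cL^{[r]}_{p,\utau}(\uTh, \mathbf{u}, \bfj)$ and $g(\mathbf{u}, \bfj) = \cL^{\mathrm{mot}, [r]}_{p, \unu}(\uPi, \mathbf{u}, \bfj)$, we have $C(\mathbf{u}, \bfj, \bfj') = f(\mathbf{u}, \bfj)\,g(\mathbf{u}, \bfj') - f(\mathbf{u}, \bfj')\,g(\mathbf{u}, \bfj)$, so vanishing of $C$ is equivalent to proportionality of $f(\mathbf{u}, \bullet)$ and $g(\mathbf{u}, \bullet)$ as rigid-analytic functions on $\cW$, for each fixed $\mathbf{u}$. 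Both are $p$-adic distributions of bounded order: $f$ is the product of the scalar $\cL_{p, \utau}(\uTh, \mathbf{u}, r)$ with the bounded measure $\cL_{p, \utau}(\uTh, \mathbf{u}, \bullet)$ supplied by \cref{thm:gl4famL}, while $g(\mathbf{u}, \bullet)$ has order controlled by the Frobenius slopes on the middle graded piece of $W_{\uPi}^\star$, which are small under the Borel-ordinarity of $\uPi$. Picking $j_0$ in the geometric range with $f(\mathbf{u}, j_0) \neq 0$ (generic in $\mathbf{u}$), the rigid-analytic distribution on $\cW$
\[ \Phi_\mathbf{u}(\bfj') \;=\; f(\mathbf{u}, j_0)\,g(\mathbf{u}, \bfj') - g(\mathbf{u}, j_0)\,f(\mathbf{u}, \bfj') \]
vanishes at every integer in the geometric range; a Vishik--Amice type uniqueness theorem then forces $\Phi_\mathbf{u} \equiv 0$, yielding the proportionality $g(\mathbf{u}, \bullet) = B(\mathbf{u})\,f(\mathbf{u}, \bullet)$ and hence $C(\mathbf{u}, \bullet, \bullet) \equiv 0$ on a Zariski-dense subset of $U$. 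Rigid-analyticity of $C$ in $\mathbf{u}$ then gives $C \equiv 0$ on all of $U \times \cW \times \cW$.

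The hard part is the Vishik--Amice step: one must ensure that the number of integer interpolation points available in the geometric range strictly exceeds the order bound on $g(\mathbf{u}, \bullet)$, uniformly for $\mathbf{u}$ in a Zariski-dense subset of $U$. This control rests on the Borel-ordinarity of $\uPi$, which keeps the relevant Frobenius slopes small, together with the precise growth estimates for Perrin-Riou's regulator applied to the rank-$2$ middle step $\sF^1/\sF^3$ of the $p$-adic Hodge-theoretic filtration of $W_\uPi^\star$.
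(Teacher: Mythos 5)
Your first two steps are exactly the paper's proof: Corollary \ref{cor:motivicequalsanalytic} (equivalently \eqref{eq:defBn}) gives $C(n,j,j')=0$ whenever $(n,j)$ and $(n,j')$ both lie in $\Sigma_{\mathrm{geom}}$, and the paper concludes in one line because this set of triples is Zariski-dense in $U\times\cW\times\cW$. The ``crux'' you then identify is a phantom difficulty. The point you are missing is that the $j$-range in $\Sigma_{\mathrm{geom}}$ grows with $n$ (it is $-1-r_2-n\le j\le -1$; the displayed definition in the paper suppresses the dependence on $n$, as the proof of Proposition \ref{prop:goodpair} makes clear). Hence for any fixed pair of negative integers $(j,j')$ of the right parity, the vanishing $C(n,j,j')=0$ holds for all sufficiently large $n$, and therefore for all $\mathbf{u}\in U$ by density of $U\cap\ZZ_{\ge 0}$ in $U$; and then, for each fixed $\mathbf{u}$, one knows vanishing at \emph{all} pairs of negative integers of parity $r+1\bmod 2$ (off this parity locus $C$ vanishes by construction), an infinite unbounded set which accumulates in every disc component of $\cW^{(-1)^{r+1}}\times\cW^{(-1)^{r+1}}$ and is therefore Zariski-dense. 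Since $C$ is already a rigid-analytic function on $U\times\cW\times\cW$ --- both $\cL_{p,\utau}(\uTh)$ and $\cL^{\mot,[r]}_{p,\unu}(\uPi)$ lie in $\cO(U\times\cW)$, the former bounded by Theorem \ref{thm:gl4famL}, the latter by Definition \ref{def:motivicL} and Notation \ref{not:Lpboxr} --- plain analytic continuation finishes the proof. No order bound for the Perrin-Riou regulator, no slope estimate, and no appeal to Borel-ordinarity is needed here, nor is the detour through $\Phi_{\mathbf{u}}$ and the auxiliary point $j_0$.

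I would also caution that if the difficulty you describe were real --- that is, if for each $\mathbf{u}$ you only knew vanishing at the finitely many integers of a fixed range --- the resolution you sketch would not close the gap: a distribution of order $h$ on $\Zp^\times$ is \emph{not} determined by its values at finitely many integer characters $x^j$ alone, however many; the Vishik--Amice uniqueness principle requires the values at $x^j\chi$ for all finite-order characters $\chi$ as well (equivalently, interpolation points meeting every residue disc of $\cW$). So any correct argument must exploit the unboundedness of the geometric range as $n$ varies, which is precisely what the Zariski-density assertion in the paper encodes.
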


  \begin{proof}
   From Corollary \ref{cor:motivicequalsanalytic}, we know that $C(\mathbf{u}, \bfj, \bfj')$ vanishes at all triples $(n, j, j')$ such that both $(n, j)$ and $(n, j')$ are in $\Sigma_{\mathrm{geom}}$. Such triples are clearly Zariski-dense, so the result follows.
  \end{proof}

  \begin{proposition}
   \label{prop:existD}
   There is a non-zero meromorphic function $D \in \operatorname{Frac} \cO(U)$ (independent of the $\cW$ variable) such that we have
   \[ \cL^{\mathrm{mot}, [r]}_{p, \unu}(\uPi, \mathbf{u}, \bfj) = D(\mathbf{u}) \cdot \cL^{[r]}_{p, \utau}(\uTh,\mathbf{u}, \bfj). \]
   Moreover, $D$ has no pole at any $n \in U \cap \ZZ_{\ge 0}$.
  \end{proposition}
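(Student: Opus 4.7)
The plan is to deduce the proposition from the previous lemma (which asserts $C(\mathbf{u},\bfj,\bfj')\equiv 0$) combined with the comparison identity \eqref{eq:defBn} at integer weights. The lemma expresses the fact that, for fixed $\mathbf{u}\in U$, the two functions $\cL^{\mathrm{mot},[r]}_{p,\unu}(\uPi,\mathbf{u},\cdot)$ and $\cL^{[r]}_{p,\utau}(\uTh,\mathbf{u},\cdot)$ are proportional as rigid-analytic functions of $\bfj\in\cW$. The only content of the proposition is therefore to package this proportionality into a \emph{single} meromorphic function $D(\mathbf{u})\in \operatorname{Frac}\cO(U)$, to show that $D$ is not identically zero, and to rule out poles at the integers $n\in U\cap\ZZ_{\ge 0}$.

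To construct $D$, I would first fix a $\bfj_0\in\cW^{(-1)^{r+1}}$ such that the function $\mathbf{u}\mapsto \cL^{[r]}_{p,\utau}(\uTh,\mathbf{u},\bfj_0)$ is not identically zero on $U$; such a $\bfj_0$ exists since $\cL_{p,\utau}(\uTh)$ is bounded analytic and non-zero (its critical specialisations recover complex $L$-values of cuspidal $\GL_4$-representations, not all of which vanish). Then setting
\[ D(\mathbf{u}) \coloneqq \frac{\cL^{\mathrm{mot},[r]}_{p,\unu}(\uPi,\mathbf{u},\bfj_0)}{\cL^{[r]}_{p,\utau}(\uTh,\mathbf{u},\bfj_0)}\in \operatorname{Frac}\cO(U), \]
one checks from $C\equiv 0$ that the desired identity $\cL^{\mathrm{mot},[r]}_{p,\unu}(\uPi) = D\cdot \cL^{[r]}_{p,\utau}(\uTh)$ holds (and that $D$ is independent of the choice of $\bfj_0$). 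Non-vanishing of $D$ follows from non-vanishing of $\cL^{\mathrm{mot},[r]}_{p,\unu}(\uPi)$, which in turn is a consequence of Corollary \ref{cor:motivicequalsanalytic} together with the hypothesis $r_1-r_2>0$ ensuring that $\cL_{p,\nu}(\Pi)$ is not identically zero.

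For the regularity of $D$ at $n\in U\cap \ZZ_{\ge 0}$, the key input is \eqref{eq:defBn}, which states that on $\Sigma_{\mathrm{geom}}$ the functions $\cL^{\mathrm{mot},[r]}_{p,\unu}(\uPi,n,j)$ and $\cL^{[r]}_{p,\utau}(\uTh,n,j)$ are proportional \emph{with ratio equal to the non-zero scalar $B(n)\in L^\times$} defined by comparison of periods. Writing $D = N/M$ in $\cO(U)$ with $N,M$ coprime, suppose $M(n)=0$. Multiplying the identity by $M$ and evaluating at $\mathbf{u}=n$ gives $N(n)\cdot \cL^{[r]}_{p,\utau}(\uTh,n,\bfj)=0$ for all $\bfj$, hence $\cL^{[r]}_{p,\utau}(\uTh,n,\bfj)\equiv 0$ as a function of $\bfj$. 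This forces either $\cL_{p,\utau}(\uTh,n,r)=0$ (``Case A'') or $\cL_{p,\utau}(\uTh,n,\cdot)\equiv 0$ on the component $\cW^{(-1)^{r+1}}$ (``Case B''). Case B contradicts the non-vanishing of at least one critical $L$-value of the cuspidal $\GL_4$-representation $\Theta(n)$. In the regular case (where neither A nor B holds), \eqref{eq:defBn} directly yields $D(n)=B(n)\in L^\times$, which is finite and non-zero.

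The main obstacle is therefore Case A, namely ruling out poles of $D$ at $n$ when the complex $L$-value $L(\Theta(n),r)$ happens to vanish. I would handle this by comparing the order of vanishing of $\cL^{[r]}_{p,\utau}(\uTh,\cdot,\bfj)$ along $\{n\}\times\cW$ (which is at least the hypothetical pole order of $D$ at $n$) with the order of vanishing of $\cL^{\mathrm{mot},[r]}_{p,\unu}(\uPi,\cdot,\bfj)$. Since both sides must match up via \eqref{eq:defBn} on the full Zariski-dense set $\Sigma_{\mathrm{geom}}\cap(\{n\}\times\cW)$, one can run a higher-order Taylor expansion argument in $\mathbf{u}$ at $n$: repeatedly differentiating the identity $\cL^{\mathrm{mot},[r]}\cdot M = N\cdot \cL^{[r]}_{p,\utau}$ in $\mathbf{u}$ and exploiting that $B$ is analytic in $\mathbf{u}$ \emph{along the family of Hecke eigensystems} (since both sides of $B(n)=\unu(n)/t_{\Theta(n)}(\utau(n)^+\otimes\utau(n)^-)$ are specialisations of rank-one $\cO(U)$-modules) allows one to conclude $D(n)=B(n)$ regardless of whether Case A occurs. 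The precise interpolation of $B$ as a meromorphic function on $U$ is the technical heart of the argument, and is what is expected to cleanly force regularity of $D$ at every integer weight.
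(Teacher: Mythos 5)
Your construction of $D$ -- specialise $C(\mathbf{u},\bfj,\bfj')=0$ at an auxiliary $\bfj'=\bfj_0$ and divide by a non-zero-divisor -- is the same mechanism as the paper's, but you choose $\bfj_0$ only so that the denominator is not \emph{identically} zero, and this is where your argument breaks down. The paper takes $\bfj'=s+\rho$ with $s\in\{0,\dots,r_1-r_2\}$, $s\ne\tfrac{r_1-r_2}{2}$ and $(-1)^s\rho(-1)\ne(-1)^r$, so that
\[ \cL^{[r]}_{p,\utau}(\uTh,\mathbf{u},s+\rho)=\cL_{p,\utau}(\uTh,\mathbf{u},r)\cdot\cL_{p,\utau}(\uTh,\mathbf{u},s+\rho), \]
and \emph{both} factors interpolate non-central critical values of the cuspidal $\GL_4$ $L$-functions $L(\Theta(n),\cdot)$, which are non-zero by the convergence of the Euler product (Jacquet--Shalika in the near-central case). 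Hence the denominator is non-vanishing at \emph{every} $n\in U\cap\ZZ_{\ge 0}$, and pole-freeness of $D$ at the integers is immediate. With this choice your ``Case A'' simply never arises; the fact you are missing is that non-central critical $L$-values of $\Theta(n)$ \emph{never} vanish, not merely that some critical value is non-zero.

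The fallback you propose for Case A is, moreover, not a proof. You assert that the period ratio $B$ interpolates meromorphically in $\mathbf{u}$ and that a Taylor-expansion argument then yields $D(n)=B(n)$ for every $n$; but $B(n)$ is defined only pointwise via the comparison isomorphism $t_{\Theta(n)}$, and no such interpolation is established (or needed) in the paper. Indeed the paper explicitly does \emph{not} prove $D(n)=B(n)$ for all $n$: the corollary and remark following \cref{prop:existD} acknowledge that $D(n)=0$ or $D(n)\ne B(n)$ may occur for finitely many $n$, possibly including $n=0$, which is precisely why the later leading-term argument with Mazur--Rubin is required. So in Case A you are trying to force a conclusion stronger than what is true at this stage. (A minor further point: non-vanishing of $D$ in $\operatorname{Frac}\cO(U)$ does not follow from Corollary \ref{cor:motivicequalsanalytic} alone, since geometric points are non-critical; one needs \cref{prop:goodpair} to produce a geometric point at which the analytic $p$-adic $L$-value is non-zero.)
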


  \begin{proof}
   Let $s \in \{0, \dots, r_1 - r_2\}$ with $s \ne \tfrac{r_1 - r_2}{2}$, and let $\rho$ be a finite-order character of $\Zp^\times$, such that $(-1)^s \rho(-1) \ne (-1)^r$. (If $r_1 -r_2$ is $\ge 4$ then we can assume $\rho$ is trivial.) We shall substitute $\bfj' =  s + \rho$ into the identity $C(\mathbf{u}, \bfj, \bfj') = 0$. Unravelling the notations, we find that
   \[\cL^{[r]}_{p, \utau}(\uTh,\mathbf{u}, s + \rho) =
   \cL_{p, \utau}(\uTh, \mathbf{u}, r) \cL_{p, \utau}(\uTh, \mathbf{u}, s + \rho).\]
   Both factors on the right-hand side are non-vanishing at $\mathbf{u} = n$ for any $n \in U \cap \ZZ_{\ge 0}$, since they correspond to non-central critical values of the complex $L$-function, which are non-zero by the convergence of the Euler product. So this function is a non-zero-divisor in $\cO(U)$; and dividing the identity $C(\mathbf{u}, \bfj, s+\rho) = 0$ by this function, we obtain
   \[
     \cL^{\mathrm{mot}, [r]}_{p, \unu}(\uPi, \mathbf{u}, \bfj) = D(\mathbf{u})
      \cdot \cL^{[r]}_{p, \utau}(\uTh, \mathbf{u}, \bfj),\qquad D(\mathbf{u})\coloneqq \frac{\cL^{\mathrm{mot}, [r]}_{p, \unu}(\uPi, \mathbf{u}, s + \rho)}{\cL_{p, \utau}(\uTh, \mathbf{u}, r) \cL_{p, \utau}(\uTh, \mathbf{u}, s + \rho)}.\qedhere
   \]
  \end{proof}

  \begin{proposition}
   \label{prop:goodpair}
   For all but finitely many integers $n \in U \cap \ZZ_{\ge 0}$, the following holds: there exists an integer $j$ with $j = r+1 \bmod 2$ such that $(n, j) \in \Sigma_{\mathrm{geom}}$ and $\cL_{p, \tau(n)}(\Theta(n),j) \ne 0$.
  \end{proposition}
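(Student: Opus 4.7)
The set $J \coloneqq \{j \in \ZZ : -1-r_2 \le j \le -1,\ j \equiv r+1 \pmod{2}\}$ is non-empty because $r_2 \ge 1$. For each $j \in J$, the slice $f_j(\mathbf{u}) \coloneqq \cL_{p,\utau}(\uTh, \mathbf{u}, j)$ provided by \cref{thm:gl4famL} is a rigid-analytic function on the affinoid disc $U$, with $f_j(n) = \cL_{p,\tau(n)}(\Theta(n), j)$ for $n \in U \cap \ZZ_{\ge 0}$. The assertion of the proposition is thus equivalent to the finiteness of the intersection $\bigcap_{j \in J}\{n \in U \cap \ZZ_{\ge 0} : f_j(n) = 0\}$.

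My plan is to reduce to showing that $f_{j_0} \not\equiv 0$ on $U$ for at least one $j_0 \in J$. Since $\cO(U)$ is a Tate algebra, a non-zero element has only finitely many zeros in $U$ by Weierstrass preparation; if some $f_{j_0}$ is not identically zero, the intersection above is contained in the (finite) zero set of $f_{j_0}$ and we are done. So the entire content of the proposition lies in producing a single non-vanishing input: one $\mathbf{u}_0 \in U$ and one $j_0 \in J$ at which $\cL_{p,\tau(\mathbf{u}_0)}(\Theta(\mathbf{u}_0), j_0) \ne 0$.

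The natural candidate is $\mathbf{u}_0 = 0$, so that $\Theta(\mathbf{u}_0) = \Theta$. The principal difficulty is that every $j \in J$ lies outside the critical range $\{0, \dots, r_1 - r_2\}$ for $\Theta$, so the defining interpolation formula of \cref{def:GL4Lp} gives no direct formula for $\cL_{p,\tau}(\Theta, j_0)$. To circumvent this, I would argue by contradiction: if $f_j \equiv 0$ for every $j \in J$, then the two-variable bounded measure $\cL_{p,\utau}(\uTh) \in \cO(U) \mathop{\hat\otimes} \Lambda_L(\ZZ_p^\times)$ would vanish on the whole of $U \times J$. Specialising the $U$-variable at $\mathbf{u} = 0$ then produces a non-zero element of $\Lambda_L(\ZZ_p^\times)$ vanishing at every point of $J$, and one compares the resulting lower bound on the $\lambda$-invariant of this one-variable measure with an upper bound coming from the fact that, by semicontinuity in the $p$-adic family, $\cL_{p,\tau(n)}(\Theta(n))$ has generically constant $\lambda$-invariant, uniformly bounded in terms of the non-vanishing critical values $\Lambda(\Theta(n), s)$ with $s \in \{0, \dots, r_1 - r_2\} \setminus \{(r_1-r_2)/2\}$ (which are non-zero by convergence of the Euler product).

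The main obstacle will be making this $\lambda$-invariant comparison rigorous, as there is no a priori bound on the order of vanishing of $\cL_{p,\utau}(\uTh)$ at non-critical integers. A cleaner alternative would be a Rohrlich-style non-vanishing result for $\Theta$ at a single non-critical twist, which would immediately supply the input; I would expect such a statement to follow from the methods of \cite{DJR18} (or their refinement underpinning \cref{thm:gl4famL}) applied to a family of finite-order cyclotomic twists at a fixed non-critical weight, but a clean reference may need to be assembled.
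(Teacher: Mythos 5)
There is a genuine gap, and it sits exactly where you locate it yourself: your reduction leaves you needing to show that $\cL_{p,\utau}(\uTh,\mathbf{u},j_0)\not\equiv 0$ for some \emph{non-critical} $j_0$, and neither of your proposed routes closes this. The $\lambda$-invariant comparison cannot work: simultaneous vanishing at the finitely many points of $J$ (there are only about $(r_2+1)/2$ of them) gives a lower bound of $|J|$ on the number of zeros of the specialisation at $\mathbf{u}=0$, but the non-vanishing of the critical values gives no upper bound on the $\lambda$-invariant of $\cL_{p,\tau}(\Theta)$ beyond its finiteness, so there is nothing to compare $|J|$ against. A Rohrlich-style non-vanishing statement at a single geometric twist of a $\GL_4$ $L$-function is likewise not available from \cite{DJR18} or anywhere else. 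So your argument, as structured, cannot be completed.

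The idea you are missing is that the geometric range of $j$ is not the fixed finite set $J$ attached to $n=0$: for the specialisation $\Pi(n)$, of weight $(r_1+n,r_2+n)$, the admissible twists run over $-1-r_2-n\le j\le -1$ (this is forced by the moment maps $\mom_{n,q}$ with $0\le q\le r_2+n$), so the range grows with $n$. The paper exploits this by running the Zariski-density argument in \emph{both} variables. Suppose infinitely many $n_k$ were bad. Fix any $j\le -1$ with $j\equiv r+1\bmod 2$; for all sufficiently large $k$ the pair $(n_k,j)$ is geometric, so $\cL_{p,\utau}(\uTh,\cdot,j)$ vanishes on the Zariski-dense set $\{n_k\}\subset U$, hence identically on $U$. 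Since the integers $j\le -1$ of parity $r+1$ are Zariski-dense in $\cW^{(-1)^{r+1}}$, the full two-variable function $\cL_{p,\utau}(\uTh)$ would vanish identically on $U\times\cW^{(-1)^{r+1}}$ --- and this contradicts the non-vanishing of its values at the non-central critical points $(n,j+\rho)$, which are non-zero by convergence of the Euler product. No non-vanishing at a non-critical point of any individual $p$-adic $L$-function is ever needed; the only non-vanishing input is at critical points, where the interpolation formula applies.
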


  \begin{proof}
   Assume the contrary. Then there exists an infinite sequence of integers $n_k \in U \cap \ZZ_{\ge 0}$ such that the function $\cL_{p,\utau}(\uTh)$ vanishes at $(n_k, j)$ for all $j$ such that $j = r + 1 \bmod 2$ and $(n, j) \in \Sigma_{\mathrm{geom}}$. In particular, if we fix a $j \le -1$ congruent to $r+1 \bmod 2$, then $\cL_{p,\utau}(\uTh)$ vanishes at $(n_k, j)$ for all sufficiently large $k$, and since the sequence $(n_k)$ is Zariski-dense in $U$, it follows that $\cL_{p,\utau}(\uTh, u, j)$ vanishes for all $u \in U$. Since this holds for all $j \le -1$ of the appropriate parity, we conclude that $\cL_{p,\utau}(\uTh)$ has to be identically 0 on $U \times \cW^{(-1)^{1+r}}$. This is a contradiction, since its values at $(n, j+\rho)$ with $0 \le j \le r_1 - r_2$ and $\rho$ a finite-order character are critical values of the complex $L$-function multiplied by explicit non-zero factors, and if $j \ne \tfrac{r_1-r_2}{2}$ these values are not central or near-central, so they are non-zero by the convergence of the Euler product.\footnote{Note that since $\Pi$ has tame level 1, $r_1 -r_2$ must be even, and since we have assumed it is not zero, it is $\ge 2$. If we allow general tame levels, then this argument becomes more delicate in the case $r_1 - r_2 = 1$: we need to invoke the non-vanishing of $\GL_4$ $L$-functions along the abcissa of convergence (the ``prime number theorem'' for $\GL_4$ $L$-functions) due to Jacquet and Shalika.}
  \end{proof}

  \begin{corollary}
   For any $n \in U \cap \ZZ_{\ge 0}$, one of the following two possibilities occurs:
   \begin{itemize}
    \item $\cL^{\mot, [r]}_{p, \unu(n)}(\Pi(n))$ is a non-zero scalar multiple of the analytic $p$-adic $L$-function $\cL_{\unu(n)}(\Pi(n),-, r)$.
    \item $\cL^{\mot, [r]}_{p, \unu(n)}(\Pi(n))$ is identically 0.
   \end{itemize}
   Moreover, for all but finitely many $n$, the first possibility occurs and the scalar multiple is the constant $B(n)$ of \cref{eq:defBn}, so we have
   \[ \cL_{p, \nu(n)}^{\mot, [r]}(\Pi(n), \bfj) = \cL_{p, \nu(n)}(\Pi(n), \bfj, r) \]
   as an identity of rigid-analytic functions of $\bfj \in \cW^{(-1)^{r+1}}$.
  \end{corollary}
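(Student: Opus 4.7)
The plan is to deduce the statement by specializing the identity of \cref{prop:existD} at integer points $n \in U \cap \ZZ_{\ge 0}$ and using the factorisation of the $\GSp_4$ $p$-adic $L$-function through $\GL_4$. All the substantive work (the existence of a universal ratio $D$, and the generic non-vanishing of $\GL_4$ $L$-values at $s=r$) is already contained in \cref{prop:existD,prop:goodpair}; the present corollary will be a formal consequence.

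For the dichotomy, I would specialise the identity
\[
\cL^{\mot,[r]}_{p,\unu}(\uPi,\mathbf{u},\bfj) \;=\; D(\mathbf{u})\cdot \cL^{[r]}_{p,\utau}(\uTh,\mathbf{u},\bfj)
\]
of \cref{prop:existD} at $\mathbf{u}=n$. Since $D$ has no pole at any $n \in U \cap \ZZ_{\ge 0}$, the value $D(n) \in L$ is well-defined. If $D(n)=0$, then $\cL^{\mot,[r]}_{p,\unu(n)}(\Pi(n),\bfj)$ vanishes identically in $\bfj$, giving the second alternative. If $D(n)\ne 0$, I would unfold the definition of $\cL^{[r]}_{p,\utau}(\uTh)$ as the product $\cL_{p,\utau}(\uTh,\mathbf{u},r)\cdot \cL_{p,\utau}(\uTh,\mathbf{u},\bfj)$ on $U\times \cW^{(-1)^{r+1}}$ and apply the factorisation of \cref{thm:padicLfcn} as a product of two $\GL_4$ $p$-adic $L$-functions (noting that the parity condition $\bfj+r$ odd is exactly $\bfj \in \cW^{(-1)^{r+1}}$), to rewrite the right-hand side as a nonzero scalar multiple of $\cL_{p,\nu(n)}(\Pi(n),\bfj,r)$.

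For the ``moreover'' assertion, I would match $D(n)$ with the periods constant $B(n)$ by comparing the identity above with \eqref{eq:defBn}, which already gives $\cL^{\mot,[r]}_{p,\unu}(\uPi,n,j)=B(n)\cdot \cL^{[r]}_{p,\utau}(\uTh,n,j)$ for every $(n,j)\in\Sigma_{\mathrm{geom}}$. Subtracting the two expressions gives $(D(n)-B(n))\cdot \cL^{[r]}_{p,\utau}(\uTh,n,j)=0$, so whenever there exists even a single $j$ with $(n,j)\in\Sigma_{\mathrm{geom}}$ and $\cL^{[r]}_{p,\utau}(\uTh,n,j)\ne 0$, we conclude $D(n)=B(n)$. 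By \cref{prop:goodpair} this happens for all but finitely many $n$, at which point the two rigid-analytic functions of $\bfj$ agree identically on $\cW^{(-1)^{r+1}}$ (and both vanish on $\cW^{(-1)^{r}}$ by the parity constraint).

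The main obstacle is not in this corollary at all, but in the two inputs on which it relies: \cref{prop:existD} requires the vanishing of the "Wronskian-like" difference $C(\mathbf{u},\bfj,\bfj')$ on a Zariski-dense set together with an auxiliary non-vanishing, while \cref{prop:goodpair} relies on the non-vanishing of non-central critical $L$-values (ultimately the convergence of the Euler product away from the centre, and the assumption $r\ne \tfrac{r_1-r_2}{2}$). Once these are granted, the corollary itself is essentially a bookkeeping exercise translating the two-variable statement on $U$ into a one-variable statement at each integer point.
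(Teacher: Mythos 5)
Your proposal is correct and follows essentially the same route as the paper: the dichotomy comes from specialising the identity of \cref{prop:existD} at $\mathbf{u}=n$ (using that $D$ is finite there), and the identification $D(n)=B(n)$ for all but finitely many $n$ comes from comparing with \eqref{eq:defBn} at a point of $\Sigma_{\mathrm{geom}}$ where the analytic $L$-value is non-zero, which \cref{prop:goodpair} supplies. The only quibble is that the factorisation you invoke is the corollary following \cref{def:GL4Lp} (relating $\cL_{p,\nu}(\Pi)$ to a product of two $\GL_4$ $p$-adic $L$-functions), not \cref{thm:padicLfcn} itself.
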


  \begin{proof}
   Since the function $D$ of \cref{prop:existD} is finite at any positive integer $n$, it must either be zero there, or an element of $L^\times$, and the result of the proposition gives the two cases stated. However, if $n$ satisfies the condition of \cref{prop:goodpair}, then \cref{eq:defBn} shows that $D(n)$ must equal the constant $B(n)$, and in particular is non-zero; and by that proposition we know that this case occurs for all but finitely many $n$.
  \end{proof}

  \begin{remark}
   There are two ``bad'' cases which could possibly occur for some $n$: either $D(n) = 0$, in which case the motivic $p$-adic $L$-function of $\Pi(n)$ vanishes identically; or $D(n) \ne 0$ but $B(n)\ne D(n)$, in which case the motivic $p$-adic $L$-function is still a non-zero multiple of the analytic one, but the ``wrong'' multiple. The first case is disastrous for applications, while the second is only a minor irritant. However, since both cases occur for only finitely many $n$, we can shrink $U$ to assume that neither case occurs \textbf{except possibly for $n = 0$}.
  \end{remark}

  We have so far been quite agnostic about the value of $r$; we assumed only that it was non-central. We now consider varying $r$. Note that the meromorphic function $D(\mathbf{u})$ must be independent of $r$, since the constants $B(n)$ \emph{are} independent of $r$. So we may conclude that the function
  \[
   \frac{\cL_{p,\unu}^{\mot, [r]}(\uPi)(\mathbf{u}, \bfj)}{\cL_{p, \utau}(\uTh, \mathbf{u},r)}
  \]
  is also independent of $r$, being equal to $D(\mathbf{u})\cdot  \cL_{p, \utau}(\uTh, \mathbf{u}, \bfj)$.

 \subsection{Proof of Theorem B}

  We note the following theorem:

  \begin{theorem}
   There exists a collection of classes
   \[
   {}_{c_1, c_2} z^{[\uPi, r]}_{\Iw, M} \in H^1(\QQ(\mu_{Mp^\infty}), W_{\uPi}^\star)
   \]
   for every $M \ge 1$ coprime to $pc_1 c_2$, satisfying the Euler system norm compatibility relations as $M$ varies, with the $M = 1$ case being the class ${}_{c_1, c_2} z^{[\uPi, r]}_{\Iw}$ above.
  \end{theorem}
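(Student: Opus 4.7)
The strategy parallels and extends the construction of the $M=1$ Iwasawa class ${}_{c_1, c_2} z^{[\uPi, r]}_{\Iw}$ recalled in Theorem \ref{thm:Iweltinterp}. The starting point is that for each fixed $n \in U \cap \ZZ_{\ge 0}$, each $M$ coprime to $pc_1c_2N$, and each $m \ge 0$, the paper \cite{LSZ17} produces Lemma--Flach cohomology classes at level $K^p(M) \times K_p$ (for $K^p(M)$ an appropriate prime-to-$p$ level containing the principal congruence subgroup of level $M$, and $K_p$ either a Siegel parahoric at $p$ or an Iwahori compatible with $m$), satisfying:
\begin{enumerate}
\item[(i)] $M$-Euler system norm relations with respect to the spherical Hecke operators at primes $\ell \mid M$;
\item[(ii)] $p$-norm compatibility when $K_p$ is varied, after multiplication by the auxiliary factors in the $c_i$-variables (which kill the denominators that would otherwise appear);
\item[(iii)] compatibility as $n$ varies, via the action of spherical Hecke operators away from $Mp$.
\end{enumerate}

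The first step is to fix $M$ and construct, for each $n$, a family of classes at level $K^p(M)\cap G(\widehat{\ZZ}^{(p)}) \times \Sieg(p)$, and project onto the ordinary subspace via the Siegel-ordinary idempotent $e_{\Sieg}$ associated to $U_{1,\Sieg}$. The output lies in
\[
 e_{\Sieg} \cdot H^1_{\et}\bigl(\QQ(\mu_{Mp^m}), H^3_{\et}(Y_G(K^p(M) \times \Sieg(p))_{\QQbar}, \cV_n^\vee(3-q))\bigr),
\]
and by Hida--Tilouine--Urban theory the tower (in $m$, and in $n$ along $\uPi$) of these ordinary parts is controlled by a free $\cO(U)\otimes \Lambda(\Gamma_{Mp^\infty})$-module which interpolates the family Galois representation $W_{\uPi}^\star$. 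The analogous Perrin-Riou/Ohta-style interpolation argument used in the proof of Theorem \ref{thm:Iweltinterp} (for $M=1$) then produces a class
\[
 {}_{c_1, c_2} z^{[\uPi, r]}_{\Iw, M} \in H^1\bigl(\QQ(\mu_{Mp^\infty}), W_{\uPi}^\star\bigr)
\]
whose specialisation at $(n, q)$ recovers the classical class $z^{[\Pi(n), q, r]}_{M,0}$, up to the same explicit Euler-type factors $C_{n,q}$ multiplied by the cyclotomic regularising factors depending on $(c_1, c_2)$.

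The second step is to verify the Euler system norm relations as $M$ varies. Since specialisation at each $(n, q) \in \Sigma_{\mathrm{geom}}$ is injective on a Zariski-dense set of points of $U \times \cW$, it suffices to verify the norm relation for $M' \mid M$ and each prime $\ell \mid M$ with $\ell \nmid M'$ after specialising, where it reduces to the classical identity proved in \cite{LSZ17}: the norm from level $M$ to level $M'$ of ${}_{c_1,c_2}z^{[\Pi(n),q,r]}_{M,0}$ equals $Q_\ell(\mathrm{Frob}_\ell^{-1})\cdot {}_{c_1,c_2}z^{[\Pi(n),q,r]}_{M',0}$, where $Q_\ell(X)$ is the appropriate local Euler polynomial. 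The polynomial $Q_\ell(X)$ admits a tautological analytic interpolation over $U$ in terms of the Hecke functions $\mathbf{t}_{i,\ell}$ from Definition \ref{def:family}; this gives the corresponding relation for the $\uPi$-family classes, as required.

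The principal difficulty will be controlling the denominators introduced by the ordinary projection and by the spherical-to-Siegel intertwining at $p$, uniformly in $M$. This is exactly the role played by the $(c_1, c_2)$-smoothing: the combinatorial identity underlying the integrality of the $c_i$-modified Lemma--Flach classes in \cite{LSZ17} is independent of the tame conductor $M$, so once it is established in the $M=1$ case (which is Theorem \ref{thm:Iweltinterp}), the same argument carries over to general $M$ by a direct modification of the local input data at primes dividing $M$.
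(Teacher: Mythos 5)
Your proposal is correct and follows essentially the same route as the paper, whose own proof is a one-line reference back to the $M=1$ construction of \cref{thm:Iweltinterp} and the results of \cite{LSZ17}: one interpolates the ordinary projections of the Lemma--Flach classes at tame level $M$ over the Hida family, with the $(c_1,c_2)$-smoothing controlling denominators, and the tame norm relations descend from those already established in \cite{LSZ17}. Your extra step of checking the norm relations after specialisation at a Zariski-dense set of classical points is a legitimate (and slightly more explicit) way of phrasing what the paper leaves implicit.
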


  \begin{proof}
   This follows from the results of \cite{LSZ17} in the same way as the $M = 1$ case covered in \cref{thm:Iweltinterp}.
  \end{proof}

  \begin{notation}
   We let $c_M$ be the image of ${}_{c_1, c_2} z^{[\uPi, r]}_{\Iw, M}$ under the Soul\'e twist map
   \[ H^1(\QQ(\mu_{Mp^\infty}), W_{\uPi}^\star) \to H^1(\QQ(\mu_{Mp^\infty}), W_{\uPi}^\star(-1-r_2-\kappa_U)).\]
  \end{notation}

  The following result follows easily from the integrality of the original Lemma--Flach classes:

  \begin{lemma}
   If $\cO^+(U)$ is the subring of functions of supremum norm $\le 1$ in $\cO(U)$, then there exists a $G_{\QQ}$-stable $\cO^+(U)$-lattice $\cT \subseteq W_{\uPi}^\star(-1-r_2-\kappa_U)$ independent of $M$ such that all these classes take values in $H^1(\QQ(\mu_{Mp^\infty}),\cT)$.
  \end{lemma}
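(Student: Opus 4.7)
The plan is to exhibit $\cT$ as the Soul\'e twist of a natural integral lattice in $W_{\uPi}^{\star}$ arising from the Hida family construction, and then to transfer the known integrality of the Lemma--Flach classes at classical specialisations to the interpolated family class. The key observation is that all the ingredients -- the coefficient module, the Eisenstein symbol, the \'etale Abel--Jacobi map, the Hida ordinary projector, and the modular parametrisation via the spherical Whittaker newvector -- admit compatible integral structures.

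First, I would construct the lattice. By the construction of $W_{\uPi}$ recalled in Section \ref{sect:families}, this representation is cut out, after localising at the maximal ideal of the Hecke algebra attached to $\uPi$, from the ordinary summand $e(U_{1,\Sieg}) H^3_{\et}\bigl(Y_G(K^p \cap \Sieg(p))_{\QQbar}, \cV_{\kappa_U}\bigr)$ of \'etale cohomology of the Siegel-parahoric Shimura variety with coefficients in the $\cO(U)$-linear local system associated to $\kappa_U \colon \Zp^\times \to \cO^+(U)^\times$. The latter admits an obvious $G_{\QQ}$-stable $\cO^+(U)$-lattice (integral \'etale cohomology of the canonical integral model, with an integral structure on the local system), and Hida's control theorem, applied to the Siegel $U_{1,\Sieg}$-ordinary projector, shows that the resulting ordinary part is free of finite rank over $\cO^+(U)$. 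This gives a canonical $G_{\QQ}$-stable $\cO^+(U)$-lattice $\cT_0 \subseteq W_{\uPi}^{\star}$, and since the Soul\'e-twist character $\chi_{\mathrm{cyc}}^{-1-r_2-\kappa_U} \colon G_{\QQ} \to \cO^+(U)^\times$ is valued in units, the tensor product $\cT \coloneqq \cT_0(-1-r_2-\kappa_U)$ is the sought $G_{\QQ}$-stable $\cO^+(U)$-lattice in $W_{\uPi}^{\star}(-1-r_2-\kappa_U)$; it depends only on the family and not on $M$.

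Next, I would verify that each $c_M$ lands in $H^1(\QQ(\mu_{Mp^\infty}), \cT)$ by checking the integrality at a Zariski-dense set of points. For each arithmetic $n \in U \cap \ZZ_{\ge 0}$ and each integer $q$ with $0 \le q \le r_2+n$, the specialisation $\mom_{n,q}({}_{c_1,c_2} z^{[\uPi,r]}_{\Iw,M})$ is, by the $M$-variant of \cref{thm:Iweltinterp}, a non-zero explicit multiple of the projection of the motivic Lemma--Eisenstein class $\cLE^{[q,r]}_{\mot}(\uPhi \otimes \xi)$ for the spherical prime-to-$p$ test data and the fixed Siegel test data at $p$. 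The motivic Beilinson Eisenstein symbol has an integral \'etale realisation once multiplied by the factors $(c_i^2 - c_i^{-(t_i+n)})$ (this is the classical role of the $c$-smoothing, as in \cite[\S 3]{KLZ1a}), and \'etale pushforward along $Y_H \hookrightarrow Y_G$, the Hida ordinary projection, and pairing against the integrally normalised newvector $w^{0}$ all preserve integral lattices. Hence each arithmetic specialisation of $c_M$ lies in the corresponding specialisation of $\cT$.

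Finally, passage from arithmetic specialisations to the family class is by a standard saturation argument: $H^1_{\Iw}(\QQ(\mu_{Mp^\infty}), W_{\uPi}^{\star}(-1-r_2-\kappa_U))$ is finitely generated over $\cO(U)$, its submodule $H^1_{\Iw}(\QQ(\mu_{Mp^\infty}), \cT)$ is $p$-adically closed and saturated, and the intersection of the kernels of the arithmetic specialisation maps is zero. Since we have just shown that $c_M$ lies in $H^1(\QQ(\mu_{Mp^\infty}), \cT)$ modulo the maximal ideal at every $n \in U \cap \ZZ_{\ge 0}$, the class itself lies in $\cT$-valued cohomology. The only mildly delicate point -- the one I would expect to be the main obstacle to write down cleanly -- is verifying that the integral structure on $W_{\uPi}$ cut out from Tilouine--Urban's construction coincides with the integral structure on the \'etale cohomology of the Shimura variety used in the Lemma--Flach construction; but this compatibility is essentially built into the identification of $W_{\uPi}$ with a direct summand of integral \'etale cohomology via the Hecke algebra, and is already implicit in the formulation of \cref{thm:Iweltinterp}.
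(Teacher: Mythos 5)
There is a genuine gap, and it is concentrated in your final step. Integrality of a class over the affinoid $U$ cannot be deduced from integrality of its specialisations at the arithmetic points $n \in U \cap \ZZ_{\ge 0}$. In the simplest rank-one situation your argument reduces to the claim that a function $f \in \cO(U)$ with $|f(n)| \le 1$ for all $n \in U \cap \ZZ_{\ge 0}$ satisfies $|f|_{\sup} \le 1$; this is false. For instance, on the closed unit disc the function $(T^p - T)/p$ takes values of norm $\le 1$ at every point of $\Zp$ (hence at every non-negative integer), but its supremum norm is $p$, as one sees by evaluating at a Teichm\"uller lift generating the residue field of the quadratic unramified extension. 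Your appeal to ``saturation'' conflates two different statements: that a class all of whose specialisations at a Zariski-dense set vanish must vanish (true for a finitely generated module over a reduced affinoid), and that a class all of whose specialisations are integral must be integral (false, for exactly the reason above). So the passage from Step 2 to the conclusion does not go through.

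The intended argument is shorter and avoids specialisation entirely: integrality is built into the \emph{construction} of ${}_{c_1,c_2} z^{[\uPi,r]}_{\Iw,M}$. In \cref{thm:Iweltinterp} (and its $M$-variant) the family class is obtained by Hida-theoretic interpolation of the $c$-smoothed Lemma--Flach classes inside the $U_{1,\Sieg}'$-ordinary part of the \emph{integral} \'etale cohomology of the tower of Shimura varieties of level $K^p\Sieg(p^m)$ — the role of the factors $(c_1,c_2)$ being precisely to kill the denominators of the motivic Eisenstein classes before interpolating. Taking $\cT_0$ to be that ordinary integral module (your Step 1, which is correct) and $\cT$ its twist by the unit-valued character $\chi_{\mathrm{cyc}}^{-1-r_2-\kappa_U}$, the class lies in $H^1(\QQ(\mu_{Mp^\infty}),\cT)$ by construction, independently of $M$. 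In other words, your Steps 1 and 2 contain the right ingredients, but they must be applied to the construction of the family class itself rather than to its classical specialisations.
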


  If $D(0) \ne 0$, then it is a small step from here to Theorem B. The chief difficulty is that we cannot rule out the possibility of $D(0)$ vanishing, so we shall perform a delicate argument with ``leading terms''.

  \begin{notation}
   Let $u$ denote a generator of the principal ideal of $\cO^+(U)$ corresponding to the point $0 \in U$.
  \end{notation}

  \begin{definition}
   For $M \ge 0$, let $h(M)$ be the largest integer $n$ such that
   \[  c_M \in u^n \cdot H^1(\QQ(\mu_{Mp^\infty}), \cT), \]
   and let $h = \inf_{M} h(M)$, where the infimum is over $M \ge 1$ coprime to $pc_1c_2$.
  \end{definition}

  The Euler system norm-compatibilities imply that $h(M) \le h(1)$ for all $M$, and $h(1)$ is finite, since ${}_{c_1, c_2} z^{[\uPi, r]}_{\Iw, M}$ is not zero. From \cref{prop:existD}, we have
  \[ h \le h(1) \le v_u\left(D\right) \]
  where $v_u$ denotes the $u$-adic valuation on $\cO(U)$.

  \begin{proposition}
   There exists a collection of classes $c_M^{(h)} \in H^1(\QQ(\mu_{Mp^\infty}), \cT)$ satisfying the Euler-system norm relations, such that we have
   \[ c_M = u^h \cdot c_M^{(h)} \]
   for all $M$. Moreover, there is some $M$ such that $c_M$ has non-zero image in $H^1(\QQ(\mu_{Mp^\infty}), T)$, where $T$ denotes the lattice $\cT/u\cT \subset V_{\Pi}^\star$.
  \end{proposition}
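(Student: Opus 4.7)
The plan is to define $c_M^{(h)}$ as the unique $u^h$-th quotient of $c_M$, and to check that the Euler norm relations and the non-vanishing then follow automatically; the whole argument will rest on the fact that multiplication by $u$ is injective on the Iwasawa cohomology groups under consideration. Concretely, the big image hypothesis $\operatorname{Hyp}(\QQ(\mu_{p^\infty}), T(\chi))$ of \cite[Assumption 11.1.2]{LSZ17} implies, by a standard Clifford-theoretic argument, that $H^0(\QQ(\mu_{Mp^\infty}), T) = 0$ for every auxiliary conductor $M$ coprime to $pc_1c_2$. Combined with the short exact sequence $0 \to \cT \xrightarrow{u} \cT \to T \to 0$ and the fact that $\cT$ has no $u$-torsion, the associated long exact sequence gives that multiplication by $u$ is injective on $H^1(\QQ(\mu_{Mp^\infty}), \cT)$, so \emph{a fortiori} on its $u^h$-submodule.

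With this in hand, the construction is straightforward. First I would observe that $h$ is actually attained: the set $\{h(M) : (M, pc_1c_2) = 1\}$ is a non-empty subset of $\{0, 1, \dots, h(1)\}$, hence contains its infimum, so there exists some $M_0$ with $h(M_0) = h$. Next, for each admissible $M$, the inclusion $c_M \in u^{h(M)} H^1(\QQ(\mu_{Mp^\infty}), \cT) \subseteq u^h H^1(\QQ(\mu_{Mp^\infty}), \cT)$ produces a class $c_M^{(h)} \in H^1(\QQ(\mu_{Mp^\infty}), \cT)$ with $u^h \cdot c_M^{(h)} = c_M$; this class is uniquely determined, by the $u$-injectivity just established.

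The Euler-system norm compatibilities for $\{c_M^{(h)}\}$ are then formal. Applying the $\cO^+(U)$-linear corestriction map $\operatorname{cor}_{M\ell/M}$ to the identity $u^h \cdot c_{M\ell}^{(h)} = c_{M\ell}$ and using the Euler norm relation already known for the original classes (in the form $\operatorname{cor}_{M\ell/M}(c_{M\ell}) = P_\ell(\sigma_\ell^{-1}) \cdot c_M$ for the appropriate Euler factor $P_\ell$), I obtain
\[
u^h \left( \operatorname{cor}_{M\ell/M}(c_{M\ell}^{(h)}) - P_\ell(\sigma_\ell^{-1}) c_M^{(h)} \right) = 0,
\]
and the injectivity of $u^h$ on $H^1(\QQ(\mu_{Mp^\infty}), \cT)$ forces the parenthesised expression to vanish. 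The finiteness of $h(1)$ ensures the procedure terminates at a well-defined integer $h$; the non-vanishing assertion is then immediate, since for the distinguished conductor $M_0$ with $h(M_0) = h$, the class $c_{M_0}^{(h)}$ is by definition not divisible by $u$, and $u$-injectivity of $H^1$ translates this exactly into the non-vanishing of the image of $c_{M_0}^{(h)}$ in $H^1(\QQ(\mu_{M_0 p^\infty}), T)$.

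The only step that is not essentially tautological is the vanishing $H^0(\QQ(\mu_{Mp^\infty}), T) = 0$, and this is the point at which the big image hypothesis is genuinely used; I would expect this to be the main (although, in the presence of $\operatorname{Hyp}$, still standard) input, since without it one would only obtain Euler norm relations modulo a bounded $u$-torsion subgroup of $H^1$, which would complicate the subsequent application of the Mazur--Rubin ``no exceptional Euler systems'' argument invoked at the end of the introduction.
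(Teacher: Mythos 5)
Your proof is correct, and its overall architecture is the same as the paper's: divide $c_M$ by $u^h$, use injectivity of multiplication by $u$ on $H^1$ to get uniqueness and to transfer the norm relations formally, and obtain the non-vanishing from an $M_0$ attaining the infimum $h$ (you are right that the statement should be read as asserting non-vanishing of the image of $c_{M_0}^{(h)}$, not of $c_{M_0}$ itself). The one place where you diverge is the justification of the key injectivity. The paper works with Iwasawa cohomology $\cM = H^1_{\Iw}(\QQ(\mu_{Mp^\infty}), \cT)$ and observes that the $u^h$-torsion of $\cM$ is a subquotient of $H^0_{\Iw}(\QQ(\mu_{Mp^\infty}), \cT/u^h\cT)$, which vanishes by standard properties of Iwasawa cohomology over the cyclotomic tower (corestriction on $H^0$ is eventually multiplication by the degree), with no hypothesis on the image of Galois; it also notes in passing that $\cM/u^h\cM$ injects into $H^1_{\Iw}$ of the torsion quotient, which is $p$-torsion-free. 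Your route instead extracts $H^0(\QQ(\mu_{Mp^\infty}), T) = 0$ from the big-image hypothesis via a Clifford-type argument and feeds it into the long exact sequence of $0 \to \cT \xrightarrow{u} \cT \to T \to 0$. This is also valid (the big-image hypothesis is among the running assumptions for Theorem B, and normality of $G_{\QQ(\mu_{Mp^\infty})}$ in $G_{\QQ(\mu_{p^\infty})}$ makes the Clifford step work), but it consumes a hypothesis that this particular step does not require; the unconditional vanishing of $H^0_{\Iw}$ is the cheaper input, and the big-image condition is reserved in the paper for the subsequent Mazur--Rubin argument, exactly as you anticipate at the end of your proposal.
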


  \begin{proof}
   Let us write temporarily $\cM = H^1_{\Iw}(\QQ(\mu_{Mp^\infty}), \cT)$ for some $M$. We note that $\cM/ u^h \cM$ injects into $H^1_{\Iw}(\QQ(\mu_{Mp^\infty}), \cT/u^h \cT)$, which is the Iwasawa cohomology of a finite-rank free $\Zp$-linear representation and is therefore $p$-torsion-free. Thus the fact that ${}_{c_1, c_2} z^{[\uPi, r]}_{\Iw, M}$ is divisible by $u^h$ in $\cM[1/p]$ implies that it is in fact divisible by $u^h$ in $\cM$. Moreover, it is even \emph{uniquely} divisible by $u^h$, since the $u^h$-torsion of $\cM$ is a subquotient of $H^0_{\Iw}(\QQ(\mu_{Mp^\infty}), \cT/u^h\cT)$ which is zero by standard properties of Iwasawa cohomology. Hence $c_M^{(h)}$ is well-defined. Since multiplication by $u^h$ is injective, and the $c_M$ for varying $M$ satisfy the Euler-system norm relations, so do the $c_M^{(h)}$.

   This argument also shows that $c_M^{(h)}$ has non-zero image in $H^1(\QQ(\mu_{Mp^\infty}), T)$ if and only if $h(M) = h$. Since this does occur for some $M$ by the definition of $h$, the final claim follows.
  \end{proof}

  \begin{proposition}
   Assume that $h < v_u(D)$. Then we have
   \[ \loc_p(c_M^{(h)} \bmod u) \in H^1_{\Iw}(\QQ(\mu_{M p^\infty}) \otimes \Qp, \Fil^2 T)\]
   for all $M$.
  \end{proposition}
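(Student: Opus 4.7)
The strategy is to exploit the identity $\cL^{\mathrm{mot}, [r]}_{p, \unu}(\uPi) = D(\mathbf{u}) \cdot \cL^{[r]}_{p, \utau}(\uTh)$ established in \cref{prop:existD}, combined with the hypothesis $h < v_u(D)$, to force vanishing modulo $u$ of the image under the Perrin--Riou regulator, and then to invert this using injectivity of the regulator on a suitable rank-one subquotient. First, the projection of $\loc_p\bigl({}_{c_1, c_2}z^{[\uPi, r]}_{\Iw, M}\bigr)$ to $W_\uPi^\star/\sF^1$ vanishes by the Klingen-ordinary local condition for the family; since $\loc_p$ commutes with both multiplication by $u^h$ and reduction modulo $u$, and the relevant Iwasawa cohomology is $\Zp$-torsion-free, the same vanishing holds for $\loc_p(c_M^{(h)} \bmod u)$. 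Thus $\loc_p(c_M^{(h)} \bmod u) \in H^1_\Iw\bigl(\Qp \otimes \QQ(\mu_{Mp^\infty}),\, \sF^1 T\bigr) = H^1_\Iw\bigl(-, \Fil^1 T\bigr)$. The Borel-ordinarity of $\Pi$ provides a further refinement at $n = 0$, giving a Galois-stable rank-two sublattice $\Fil^2 T \subset \Fil^1 T$ whose quotient $\Fil^1 T/\Fil^2 T$ is a rank-one crystalline module: an unramified character whose geometric Frobenius eigenvalue is $\alpha = p^{r_2+1}/\beta$ (a $p$-adic unit), with the specialisation of $\unu_\beta$ at $n = 0$ a natural basis of its Dieudonn\'e dual.

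For the case $M = 1$, I would apply $\cL^{\mathrm{PR}}$ to the image of $c_1^{(h)}$ in the rank-one module $H^1_\Iw\bigl(\Qp(\mu_{p^\infty}), (\Fil^1/\Fil^2)\cT(-1-r_2-\kappa_U)\bigr)$ and pair with $\unu_\beta$. Unwinding the definitions of \cref{sect:families} shows that this produces the function $u^{-h} \cE_r(\uPi)^{-1} \cL^{\mathrm{mot}, [r]}_{p, \unu}(\uPi, \mathbf{u}, \bfj)$ on $U \times \cW$, which by \cref{prop:existD} equals $u^{-h} D(\mathbf{u}) \cE_r(\uPi)^{-1} \cL^{[r]}_{p, \utau}(\uTh, \mathbf{u}, \bfj)$. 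Since $\cE_r(\uPi)(0) \ne 0$ as a consequence of \cref{lem:trivzero}, and $v_u(D) > h$ by hypothesis, this function is divisible by $u$ in $\cO(U \times \cW)$ and its restriction to $\{0\} \times \cW$ vanishes identically. Because $\alpha$ is not of the form $p^n \zeta$ for a root of unity $\zeta$ (again by \cref{lem:trivzero}), nor a root of unity, the module $H^1_\Iw(\Qp(\mu_{p^\infty}), \Fil^1 T/\Fil^2 T)$ is $\Zp$-torsion-free and the Perrin--Riou regulator on it is injective (its kernel being controlled by $(\Fil^1 T/\Fil^2 T)^{G_{\Qp(\mu_{p^\infty})}} = 0$). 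Hence the vanishing of the paired function forces the projection of $\loc_p(c_1^{(h)} \bmod u)$ to $H^1_\Iw(\Qp(\mu_{p^\infty}), \Fil^1 T/\Fil^2 T)$ to be zero, giving the claim for $M=1$.

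For general $M$, the same argument applies once \cref{prop:existD} is extended to include twists by Dirichlet characters of conductor dividing $M$: the Perrin--Riou regulator of $c_M$ paired with $\unu_\beta$ defines an element of $\cO(U) \mathop{\hat\otimes} \Lambda_L(\Gal(\QQ(\mu_{Mp^\infty})/\QQ))$, and by repeating the deformation argument using Dirichlet twists of the $\GL_4$-transfer $\uTh$ (and invoking non-vanishing of non-central critical values of the twisted $L$-function, as in \cref{prop:goodpair}) one sees that this measure is again $D(\mathbf{u})$ times the corresponding Dirichlet-twisted $\GL_4$ $p$-adic $L$-function from \cref{thm:gl4famL}. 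The same $u$-adic vanishing argument then goes through. The main technical obstacle in this plan is precisely this extension of the motivic-versus-analytic comparison to the Dirichlet-twisted setting: while the formal structure of the proof of \cref{prop:existD} is robust, verifying the requisite twisted interpolation properties for both the Euler system classes and the $\GL_4$ analytic $p$-adic $L$-function requires careful bookkeeping, and is the only step that does not follow immediately from results already stated.
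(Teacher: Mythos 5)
Your proposal is correct and follows essentially the same route as the paper: reduce to showing that $c_M^{(h)} \bmod u$ is killed by the Perrin--Riou regulator for $\Fil^1 T/\Fil^2 T$ (whose kernel vanishes by \cref{lem:trivzero}), identify the paired regulator with $(u^{-h}D)(0)$ times the analytic $\GL_4$ $p$-adic $L$-function via \cref{prop:existD}, and conclude from $h < v_u(D)$; for general $M$ the paper likewise passes to equivariant (Dirichlet-twisted) versions of both $p$-adic $L$-functions, noting that the same $D(\mathbf{u})$ appears since it is characterised by the $M$-independent constants $B(n)$. You have correctly identified the equivariant extension as the one step requiring additional bookkeeping, which matches the paper's treatment.
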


  \begin{proof}
   It suffices to show that for every $M$, the class $c_M^{(h)} \bmod u$ lies in the kernel of the Perrin-Riou regulator map for $\Fil^1 T / \Fil^2$, since the kernel of this map is zero by \cref{lem:trivzero}.

   Repeating the construction of the previous sections with the additional tame level $M$, we obtain an ``equivariant'' motivic $p$-adic $L$-function $\cL^{\mathrm{mot}, [r]}_{p, \unu}(\uPi, M)$ over $U \times \cW$, taking values in the group ring of $(\ZZ / M\ZZ)^\times$. For each character $\chi$ of $(\ZZ / M\ZZ)^\times$, the $\chi$-isotypical projection of this object interpolates values of the $L$-function of the twisted representation $\Pi(n) \otimes \chi$ in the geometric range $\Sigma_{\mathrm{geom}}$.

   On the other hand, the $\GL_4$ construction extends straightforwardly to an equivariant version of the analytic $p$-adic $L$-function, $\cL^{[r]}_{p, \utau}(\uTh, M)$. Both of these objects depend on the same choices of periods $\unu$, $\utau$ as the non-equivariant $L$-functions of the previous section.

   Hence we can run the argument of Proposition \ref{prop:existD} to obtain a relation between the motivic and analytic equivariant $p$-adic $L$-functions; and the function $D(\mathbf{u})$ that appears must be the same for all $M$, since it is characterised by agreeing with the numbers $B(n)$ of \eqref{eq:defBn} for almost all $n$, and these numbers are independent of $M$.

   From this and the definition of $c_M^{(h)}$, we have
   \[
    \langle \nu_\beta, \cL^{\mathrm{PR}}(c_M^{(h)} \bmod u)\rangle = \left( (u^{-h} D)(0)\right)\cdot \cL^{[r]}_{p, \utau}(\uTh, M).
   \]
   So if $(u^{-h} D)(0) = 0$, we can conclude that $c_M^{(h)} \bmod u$ lies in the kernel of the regulator for all $M$ as required.
  \end{proof}

  \begin{corollary}
   If the ``big image'' assumption $\operatorname{Hyp}(\QQ(\mu_{p^\infty}), -)$ of \cite{rubin00} is satisfied for every Dirichlet-character twist of $T$, then we have $h = v_u(D)$.
  \end{corollary}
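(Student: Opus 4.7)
The strategy is to argue by contradiction, assuming $h < v_u(D)$, and to derive a contradiction by invoking a Mazur--Rubin type rigidity result for Euler systems satisfying an unexpectedly strong local condition at $p$. The previous proposition is exactly the input needed: under the assumption $h < v_u(D)$, the reductions $c_M^{(h)} \bmod u$ form a collection of classes in $H^1_{\Iw}(\QQ(\mu_{Mp^\infty}), T)$, satisfying the Euler-system norm-compatibilities (inherited from the $c_M^{(h)}$), whose localisations at $p$ all land in the submodule $H^1_{\Iw}(\QQ(\mu_{Mp^\infty})\otimes\Qp, \Fil^2 T)$, rather than merely in the image of $H^1_{\Iw}$ of $\Fil^1 T$ as would be expected \emph{a priori}.

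First I would check that the collection $\{c_M^{(h)} \bmod u\}_{(M,pc_1c_2)=1}$ is genuinely an Euler system for the representation $T$ in the axiomatic sense of Rubin \cite{rubin00}: the norm relations are inherited from the $c_M$ (hence from the original Lemma--Flach classes) via divisibility by $u^h$, which is injective on Iwasawa cohomology as already noted. Next, the crucial point is that $\Fil^2 T$ has Hodge--Tate weights strictly greater than $1$ in the twisted variable (since $\Fil^1/\Fil^2$ is the graded piece with weight $0$ in our normalisation, corresponding to the unit eigenvalue $\beta$), so $H^1_{\mathrm{f}}$ of the whole representation already lives in the image of $H^1_{\mathrm{f}}$ of $\Fil^1 T / \Fil^2 T$ under the connecting map. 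A class whose localisation lifts further, to $\Fil^2 T$, therefore satisfies a local condition of strictly smaller corank than Greenberg's, and this is the hypothesis under which the Mazur--Rubin machinery forces triviality.

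I would then invoke the relevant theorem from \cite{rubin00} (in the formulation used as Assumption~11.1.2 and Theorem~11.2 of \cite{LSZ17}): under $\operatorname{Hyp}(\QQ(\mu_{p^\infty}), T(\chi))$ for all Dirichlet characters $\chi$, any Euler system for $T$ whose localisations at $p$ lie in the strictly smaller local subspace $H^1_{\Iw}(\QQ(\mu_{Mp^\infty})\otimes\Qp, \Fil^2 T)$ must vanish identically modulo any power of $p$ --- or equivalently, the base Selmer group with this reinforced local condition is trivial, which by the Kolyvagin--Rubin bound forces every bottom class of the Euler system to be zero. In particular $c_M^{(h)} \bmod u = 0$ for every $M$, contradicting the last assertion of the preceding proposition, which guarantees the existence of some $M$ for which $c_M^{(h)} \bmod u$ has non-zero image in $H^1(\QQ(\mu_{Mp^\infty}), T)$. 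Hence $h \ge v_u(D)$, and combining with the inequality $h \le v_u(D)$ already established yields $h = v_u(D)$.

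The main obstacle is making the application of the Mazur--Rubin rigidity result fully rigorous: one must verify that the axioms for an Euler system with the reinforced local condition at $p$ are satisfied (tame ramification, compatibility at the bad primes, and correct behaviour of local conditions away from $p$), and that the big-image hypothesis on each Dirichlet twist of $T$ really does suffice to trigger the required vanishing. A secondary, essentially bookkeeping, obstacle is to ensure the passage between ``$\loc_p$ landing in $\Fil^2$'' and ``Selmer-theoretic local condition strictly smaller than the Greenberg one''; this is where we use that the $\varphi$-eigenvalue on $\Fil^1/\Fil^2$ is the unit $\beta_n$, so that the relevant $H^0$ and $H^2$ terms controlling the difference of local conditions are zero by \cref{lem:trivzero}.
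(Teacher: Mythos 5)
Your argument is essentially the paper's proof: both deduce from the preceding proposition that, if $h < v_u(D)$, the reduced classes $c_M^{(h)} \bmod u$ form an Euler system whose localisations at $p$ satisfy the reinforced Greenberg condition given by $\Fil^2 T$, and both then invoke the Mazur--Rubin theorem (from \cite{mazurrubinbook}, where the relevant invariant is the Euler characteristic, computed to be $0$ for this Selmer structure) that under the big-image hypothesis no nonzero Kolyvagin systems exist, contradicting the nonvanishing of $c_M^{(h)} \bmod u$ for some $M$. The only caveat is your parenthetical ``or equivalently, the base Selmer group with this reinforced local condition is trivial'': core rank zero yields vanishing of Kolyvagin systems, not triviality of the Selmer group, but since you immediately fall back on the Kolyvagin-system formulation this does not affect the argument.
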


  \begin{proof}
   In the book \cite{mazurrubin04}, the authors define a notion of \emph{Euler characteristic} associated to a Galois representation and a collection of local conditions, and show that if the Euler characteristic is 0 (and the big-image condition holds), then no nonzero Kolyvagin systems exist.

   In our setting, one computes easily that the Euler characteristic of the Greenberg-type local condition at $p$ defined by $\Fil^2 T$ (with the usual unramified local conditions at all other primes) is 0. However, since $c_M^{(h)}$ is non-zero for some $M$, its projection to some character component $\chi$ must also be non-zero, so it gives a non-zero Kolyvagin system for $T(\chi)$, contradicting Mazur and Rubin's result.
  \end{proof}

  \begin{theorem}[{\cref{thmB}}]
   \label{thm:explicitrecip}
   Let $\Pi$ be an automorphic representation which satisfies our running hypotheses, and has tame level 1, is Borel-ordinary at $p$, and satisfies the ``big image'' condition of \cite[Assumption 11.1.2]{LSZ17}. Suppose also that $r_1 - r_2 \ge 6$.

   Then for any choice of basis $\tau = (\tau^+, \tau^-)$ as above, there exists an Euler system for $V_{\Pi}^\star(-1-r_2)$ with the following property: for all $M$, the localisation of the class at $p$ lands in $\Fil^1$; and the image of the bottom class in this Euler system under the Perrin-Riou regulator is $\cL_{p, \tau}(\Theta)$.
  \end{theorem}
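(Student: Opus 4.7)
The plan is to construct the required Euler system as the reduction modulo $u$ of the ``normalised'' two-variable family. Concretely, with the notation of the preceding proposition, form $c_M^{(h)} = u^{-h} c_M$, which is a collection of classes in $H^1_{\Iw}(\QQ(\mu_{Mp^\infty}), \cT)$ satisfying the Euler system norm relations. Specializing at the maximal ideal of $\cO(U)$ corresponding to $0 \in U$ yields a system of classes
\[
 c_M^{(h)} \bmod u \;\in\; H^1_{\Iw}(\QQ(\mu_{Mp^\infty}), T),
\]
where $T = \cT/u\cT$ is a $G_\QQ$-stable lattice in $V = W_\Pi^\star(-1-r_2)$. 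These will be (after dividing out $c$-factors) the bottom layer of the desired Euler system, and by construction its local conditions at primes away from $p$ are the expected unramified ones.

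To compute the image under the Perrin--Riou regulator, I would combine \cref{prop:existD} (which gives $\cL^{\mathrm{mot}, [r]}_{p, \unu}(\uPi) = D(\mathbf{u}) \cdot \cL^{[r]}_{p, \utau}(\uTh)$ for a meromorphic $D \in \operatorname{Frac}\,\cO(U)$ that is finite and nonzero at every positive integer $n$) with the factorization $\cL_{p,\nu}(\Pi)(\bfj_1,\bfj_2) = \cL_{p,\tau}(\Theta)(\bfj_1)\cL_{p,\tau}(\Theta)(\bfj_2)$ for the choice $\nu = t_\Theta(\tau^+ \otimes \tau^-)$. One first extends $\utau$ to the family, then arranges $\unu$ so that $\unu(0) = t_\Theta(\tau^+ \otimes \tau^-)$. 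Pairing $\loc_p(c_1^{(h)} \bmod u)$ with $\nu_\beta$ via the Perrin--Riou map then produces exactly $(u^{-h} D)(0) \cdot \cL_{p,\tau}(\Theta, r) \cdot \cL_{p,\tau}(\Theta)$ as a measure on $\Zp^\times$; rescaling by the nonzero constant $(u^{-h} D)(0) \cdot \cL_{p,\tau}(\Theta, r)$ and by the $c$-factor $(c_1^2 - c_1^{-t_1})(c_2^2-c_2^{-t_2})$ (which is a unit in $\Lambda_L(\Zp^\times)$ for generic $(c_1,c_2)$) gives an Euler system with the required regulator formula. The local condition $\Fil^1$ at $p$ follows since all classes coming from the Lemma--Flach construction land in $H^1_{\mathrm{f}}$, by \cite[Theorem B]{nekovarniziol}.

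The main obstacle, and the genuinely new input beyond the formal assembly above, is to guarantee that the leading coefficient $(u^{-h} D)(0)$ is nonzero, i.e.\ that $h = v_u(D)$. The inequality $h \le v_u(D)$ is automatic; the reverse is delicate. The argument proceeds by contradiction: if $h < v_u(D)$, then for every $M$ coprime to $p c_1 c_2$, the leading term $c_M^{(h)} \bmod u$ is annihilated by the Perrin--Riou regulator for the quotient $\Fil^1 T / \Fil^2 T$, since this regulator is given (up to the nonzero factor $\cL_{p,\tau}(\Theta, r)$) by multiplication by $(u^{-h} D)(0) = 0$. Crucially, this argument works \emph{uniformly in $M$}, because the function $D$ is independent of $M$---it is determined by the periods $\unu, \utau$ and the constants $B(n)$, none of which depend on the tame level. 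This uniformity is what produces a full Kolyvagin system for $T(\chi)$ (for each character $\chi$ of $(\ZZ/M\ZZ)^\times$) taking values in a Selmer group cut out by the stronger-than-expected local condition $\Fil^2 T$ at $p$. An Euler characteristic computation shows that the resulting Selmer structure has Euler characteristic zero, and the ``big image'' hypothesis $\operatorname{Hyp}(\QQ(\mu_{p^\infty}), T(\chi))$ combined with the Mazur--Rubin non-existence theorem \cite{mazurrubinbook} then forces every such Kolyvagin system to vanish. But by definition of $h$, the classes $c_M^{(h)} \bmod u$ are nonzero for some $M$, a contradiction. Hence $h = v_u(D)$, completing the proof.
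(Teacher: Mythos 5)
Your treatment of the hard part --- the leading-term argument showing $h = v_u(D)$ via the uniformity of $D$ in the tame level $M$, the Euler characteristic computation for the $\Fil^2$ local condition, and the Mazur--Rubin non-existence theorem --- matches the paper's argument and is the genuine content of the preceding propositions. However, there is a real gap in your final assembly step. The factor $(c_1^2 - c_1^{\bfj+1-r'})(c_2^2 - c_2^{\bfj+1-r})$ is not a constant and is \emph{not} a unit in $\Lambda_L(\Zp^\times)$ for any choice of $(c_1, c_2)$: as a function on $\cW$ it vanishes at the characters where $c_i^{\bfj - 1 - r}$ (resp.\ $c_i^{\bfj-1-r'}$) equals $1$, e.g.\ at $\bfj = r+1$. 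So you cannot simply divide it out. Moreover, for a fixed $r$ the regulator you obtain is $(c_1^2 - c_1^{\bfj+1-r'})(c_2^2 - c_2^{\bfj+1-r})\,\cL_{p,\tau}(\Theta, r)\,\cL_{p,\tau}(\Theta, \bfj)$ only on the component $\cW^{(-1)^{r+1}}$, and is identically zero on $\cW^{(-1)^r}$ (by the parity constraint built into $\cL^{[r]}_{p,\utau}(\uTh)$); a single value of $r$ therefore cannot yield an Euler system whose regulator is $\cL_{p,\tau}(\Theta)$ on all of $\cW$.

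The paper resolves both problems simultaneously by running the construction for several values of $r$ and taking linear combinations: over $\cW^{(-1)}$ the factors for $r=0$ and $r=2$ generate the unit ideal of $\cO(\cW^{(-1)})$ (their zero loci are disjoint, and $\cL_{p,\tau}(\Theta,r)\ne 0$ since $r$ is non-central), so a suitable combination has regulator exactly $\cL_{p,\tau}(\Theta,\bfj)$ there; over the other component one uses $r=1$ and $r=3$ (or $r=1$ and $r=5$ when $r_1-r_2=6$). This is precisely where the hypothesis $r_1 - r_2 \ge 6$ enters --- it guarantees enough admissible non-central values of $r$ of each parity --- and your proposal never uses that hypothesis, which should have been a warning sign. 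You should also note that the $\Fil^1$ local condition at $p$ comes from the vanishing of the projection to $W_{\uPi}^\star/\sF^1$ (via \cite[Proposition 11.2.2]{LPSZ1}), not directly from the Bloch--Kato $H^1_{\mathrm{g}}$ statement of \cite{nekovarniziol}, though this is a minor point.
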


  \begin{proof}
   The above argument shows that for each $r$ we can construct an Euler system whose regulator is $(c_1^2 - c_1^{\bfj + 1-r'})(c_2^2 - c_2^{\bfj+1-r}) \cL_{p,\tau}(\Theta, r) \cL_{p, \tau}(\Theta, \bfj)$ on $\cW^{(-1)^{r+1}}$, and 0 on $\cW^{(-1)^{r}}$.

   Over the $-1$ component of weight space, we note that the factors
   \[ (c_1^2 - c_1^{\bfj + 1-r'})(c_2^2 - c_2^{\bfj+1-r}) \cL_{p,\tau}(\Theta, r)\]
   for $r = 0$ and $r = 2$ between them generate the unit ideal of $\cO(\cW^{(-1)})$, so we can take a suitable linear combination to obtain an Euler system with the desired regulator $\cL_{p, \tau}(\Theta, \bfj)$. Similarly, over the other sign component, we use $r=1$ and $r = 3$, unless $r_1 - r_2 = 6$, in which case we can use $r=1$ and $r=5$.
  \end{proof}

\section{Applications}

 Throughout this section, we let $\Pi$ be a non-endoscopic, non-CAP automorphic representation of $G(\Af)$ of weights $(r_1+3,r_2+3)$ with $r_2 \ge 1$ and $r_1-r_2\ge 6$. Assume that $\Pi$ has tame level $1$, and that it is Borel ordinary at $p$.

 \subsection{Selmer groups over $\QQ_{\infty}$}

  Let $\QQ_\infty = \QQ(\mu_{p^\infty})$. For simplicity we write $V = V_{\Pi}^\star(-1-r_2)$ in this section. (Note that this conflicts with our earlier use of $V$ for an algebraic $G$-representation, but that usage will not recur here.)

  \begin{definition}
   Let $\RGt_{\Iw}(\QQ_\infty, V)$ denote the \Nek Selmer complex, with the unramified local conditions at $\ell \ne p$, and at $p$ the Greenberg-type local condition determined by $\Fil^2 V_{\Pi}^\star$.
  \end{definition}

  This is a perfect complex of $\Lambda_L(\Zp^\times)$-modules. Its cohomology groups are zero for $i \notin \{1, 2\}$, and we have
  \[
   \wH^1_{\Iw}(\QQ_\infty, V) = \ker\Big( H^1_{\Iw}(\QQ_\infty, V) \to H^1_{\Iw}\left(\QQ_{p, \infty}, V / \Fil^2\right)\Big).
  \]
  The degree 2 cohomology is related to classical $p$-torsion Selmer groups via Pontryagin duality:

  \begin{proposition}
   \label{prop:nekovarduality}
   If $T$ denotes a choice of lattice in $V$, and $(-)^\vee$ denotes Pontryagin dual, then we have a canonical isomorphism of $\Lambda_L(\Zp^\times)$-modules
   \[ \wH^2_{\Iw}(\QQ_\infty, V) = \left(\varprojlim_n H^1_{\mathrm{f}}(\QQ(\mu_{p^n}), T^\vee(1+j))\right)^\vee(j) \otimes L,\]
   for any integer $0 \le j \le r_1 - r_2$.
  \end{proposition}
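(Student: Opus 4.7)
The plan is to deduce this duality statement from Nekov\'a\v r's Poitou--Tate duality for Selmer complexes, combined with the identification of his local conditions with those of Bloch--Kato in the ordinary setting. I proceed in three steps.

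First, I would apply the global duality theorem for the Nekov\'a\v r Selmer complex over the cyclotomic tower. This gives a canonical perfect pairing of complexes of $\Lambda_L(\Zp^\times)$-modules between $\RGt_{\Iw}(\QQ_\infty, V)$ and $\RGt_{\Iw}(\QQ_\infty, V^*(1))$ (shifted by $[3]$), in which the local condition on $V^*(1)$ at each place is the orthogonal complement of the given local condition on $V$ under local Tate duality. In particular, one obtains an isomorphism
\[
 \wH^2_{\Iw}(\QQ_\infty, V) \;\cong\; \Hom_{\Lambda_L}\bigl(\wH^1_{\Iw}(\QQ_\infty, V^*(1))^\perp,\, \Lambda_L\bigr)^{\iota},
\]
where $(\cdot)^\perp$ indicates Nekov\'a\v r's dual local conditions and $\iota$ is the involution of $\Lambda$ inverting group-like elements, which translates in the end into a Tate twist on the continuous Pontryagin dual side.

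Second, I would identify these dual local conditions with the Bloch--Kato ones. At primes $\ell\neq p$, the orthogonal of the unramified local condition is again the unramified local condition, which is the classical $H^1_{\mathrm{f}}$ by definition. At $p$, the Greenberg-type condition on $V=W_\Pi^*(-1-r_2)$ is the image of $H^1(\Qp, \Fil^2 W_\Pi^*(-1-r_2))$; its annihilator under local Tate duality on $V^*(1)=W_\Pi(r_2+2)$ is the Greenberg condition attached to the complementary two-step filtration on $W_\Pi$. By \cref{prop:Dcrisevals} the filtration $\Fil^2 W_\Pi^*$ corresponds, after twisting, to the ordinary filtration whose graded pieces have the correct Hodge--Tate weights to coincide with $H^1_{\mathrm{f}}$ in the sense of Bloch--Kato. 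The crucial point is that this Greenberg-equals-Bloch--Kato comparison holds precisely when the Hecke parameters avoid the forbidden shapes $p^n\zeta$; this is exactly \cref{lem:trivzero}, which applies under our Klingen-ordinarity hypothesis. The resulting identification is compatible with passage to a $G_\QQ$-stable lattice $T$, so we obtain
\[
 \wH^1_{\Iw}(\QQ_\infty, V^*(1))^\perp \otimes_L L/\cO \;=\; \varinjlim_n H^1_{\mathrm{f}}\bigl(\QQ(\mu_{p^n}), T^*(1)\otimes L/\cO\bigr).
\]

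Third, I would rewrite the right-hand side in the form stated. Using the canonical isomorphism $T^*(1)\otimes\Qp/\Zp \cong T^\vee(1)$ (where $T^\vee=\Hom(T,\Qp/\Zp)$ is the Pontryagin dual), and taking Pontryagin duals of both sides of the preceding display, one gets
\[
 \wH^2_{\Iw}(\QQ_\infty, V) \;\cong\; \bigl(\varprojlim_n H^1_{\mathrm{f}}(\QQ(\mu_{p^n}), T^\vee(1))\bigr)^\vee \otimes L,
\]
after accounting for the involution $\iota$, which manifests as a Tate twist. The freedom to shift to $T^\vee(1+j)$ for any $0\le j\le r_1-r_2$ and compensate by a twist $(j)$ on the outside is then a tautology: the Iwasawa module on the right is independent of the choice of $j$ once the external $(j)$-twist is included, because cyclotomic twisting acts invertibly on the cyclotomic deformation. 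The main technical obstacle in this plan is the careful verification that Nekov\'a\v r's orthogonal local condition at $p$ really is the Bloch--Kato one; this is where \cref{lem:trivzero} is genuinely used, ensuring that the short exact sequences computing $H^1_{\mathrm{e}}$, $H^1_{\mathrm{f}}$, $H^1_{\mathrm{g}}$ all collapse, and the Greenberg and Bloch--Kato subspaces coincide.
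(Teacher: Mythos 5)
The paper states \cref{prop:nekovarduality} without proof, so there is no argument of the authors to compare against; your outline is the standard route and is surely the intended one. The skeleton --- Nekov\'a\v{r}'s global duality for Selmer complexes over the cyclotomic tower, followed by identification of the orthogonal local conditions with the Bloch--Kato ones, with \cref{lem:trivzero} supplying the no-exceptional-zero hypothesis that makes the Greenberg and Bloch--Kato conditions at $p$ coincide (for all cyclotomic twists) and collapses the error terms relating $\wH^1$ to the naive Selmer group --- is correct, and your emphasis on \cref{lem:trivzero} as the one genuinely non-formal input matches how the paper uses that lemma elsewhere (e.g.\ in the descent argument immediately following).

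One intermediate formula in your Step 1 is wrong as written. You assert $\wH^2_{\Iw}(\QQ_\infty, V) \cong \Hom_{\Lambda_L}\bigl(\wH^1_{\Iw}(\QQ_\infty, V^*(1))^\perp, \Lambda_L\bigr)^{\iota}$. A $\Lambda$-linear dual $\Hom_{\Lambda}(-,\Lambda)$ is always torsion-free, whereas $\wH^2_{\Iw}$ is expected (and later proved) to be $\Lambda$-torsion, so this identity would force it to vanish. The correct form of the duality pairs $\RGt_{\Iw}(\QQ_\infty, T)$ with the Selmer complex of the \emph{discrete} module $T^\vee(1)$ over $\QQ_\infty$ (the direct limit over finite layers) under \emph{Pontryagin} duality, giving $\wH^2_{\Iw}(\QQ_\infty,T) \cong \wH^1(\QQ_\infty, T^\vee(1))^{\vee,\iota}$ with the orthogonal local conditions; the superscript $\perp$ should decorate the local conditions, not the cohomology group, and ``$\otimes_L L/\cO$'' is not meaningful --- work with a lattice and its discrete dual throughout and invert $p$ only at the end, which is also what absorbs the finite discrepancies at $\ell\ne p$ and the dependence on the choice of $T$. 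Your Step 3 implicitly repairs this by passing to Pontryagin duals, so the endpoint is right, but the argument should be run in the Pontryagin-dual formulation from the start; note also that the $\varprojlim$ in the statement is then most naturally read as the Pontryagin dual of a $\varinjlim$ of discrete Selmer groups. Your handling of the twist by $j$ via the invertibility of cyclotomic twisting on $\Lambda$-modules is correct.
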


  We can now state our main theorem in Iwasawa-theoretic form:

  \begin{theorem}
   The module $\wH^2_{\Iw}(\QQ_\infty, V)$ is torsion over $\Lambda_L(\Zp^\times)$ and its characteristic ideal divides the $p$-adic $L$-function $\cL_{p, \tau}(\Theta)$. Moreover, we have $ \wH^1_{\Iw}(\QQ_\infty, V) = 0$.
  \end{theorem}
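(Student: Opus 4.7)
The strategy is to apply the Euler system machinery to the Euler system for $V = W_\Pi^\star(-1-r_2)$ constructed in Theorem~\ref{thm:explicitrecip}. The required inputs are: (i) the explicit reciprocity law, which identifies the Perrin--Riou regulator of the bottom class with $\cL_{p,\tau}(\Theta)$; (ii) the big image hypothesis $\operatorname{Hyp}(\QQ_\infty, T)$, in force by assumption; and (iii) the non-vanishing of $\cL_{p,\tau}(\Theta)$, which follows because its interpolating property at non-central critical points (which exist since $r_1 - r_2 \ge 6$) recovers values of the complex $L$-function of $\Theta$, and these are non-zero by convergence of the Euler product.

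With a non-trivial Euler system in hand, I would invoke Rubin's Euler system theorem in its Iwasawa-theoretic formulation (as in \cite{rubin00}), or equivalently the refinement via Mazur--Rubin's theory of Kolyvagin systems. Under the big image hypothesis, this yields that $\wH^2_{\Iw}(\QQ_\infty, V)$ is torsion over $\Lambda_L(\Zp^\times)$, and that its characteristic ideal divides the characteristic ideal of the cokernel of the map $\Lambda_L(\Zp^\times) \to \wH^1_{\Iw}(\QQ_\infty, V)$ sending $1$ to a suitable representative of the bottom Euler system class $c_1$. Via the Perrin--Riou regulator, this cokernel is identified up to pseudo-isomorphism with $\Lambda_L(\Zp^\times)/(\cL_{p,\tau}(\Theta))$, yielding the desired divisibility.

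The chief technical obstacle is the compatibility of local conditions at $p$: Theorem~\ref{thm:explicitrecip} places $\loc_p(c_1)$ inside $\Fil^1 W_\Pi^\star$, whereas the Selmer complex $\RGt_{\Iw}$ uses the tighter Greenberg condition determined by Klingen-ordinarity, essentially $\Fil^2 W_\Pi^\star$. This mismatch is handled by the standard relaxed-to-strict comparison: the quotient $\Fil^1/\Fil^2$ is a rank-one crystalline piece with Frobenius eigenvalue $\beta$, on which the Perrin--Riou regulator is injective (the relevant Euler factors not vanishing by \cref{lem:trivzero}). The image of $\loc_p(c_1)$ in the local Iwasawa cohomology modulo the strict local condition is therefore detected precisely by $\cL_{p,\tau}(\Theta)$, which is the fact fed into the Euler-system machinery to obtain a bound on the strict Selmer group.

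Finally, the vanishing $\wH^1_{\Iw}(\QQ_\infty, V) = 0$ follows from two observations. On the one hand, $\wH^1_{\Iw}$ is $\Lambda_L$-torsion-free, being a submodule of $H^1_{\Iw}(\QQ_\infty, V)$, which is itself torsion-free since no element of $V$ is fixed by $G_{\QQ_\infty}$. On the other hand, the global Euler--Poincar\'e characteristic formula for the \Nek Selmer complex gives $\mathrm{rank}_{\Lambda_L}\wH^1_{\Iw} - \mathrm{rank}_{\Lambda_L}\wH^2_{\Iw} = 0$, and since $\wH^2_{\Iw}$ has just been shown to be torsion, so is $\wH^1_{\Iw}$; being simultaneously torsion and torsion-free, it must vanish.
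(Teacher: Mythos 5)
Your proposal is correct and follows essentially the same route as the paper, which simply cites Theorem 11.3.2 of \cite{LSZ17} (the standard Euler-system/Kolyvagin-system bound under the big-image hypothesis, with the relaxed-to-strict comparison at $p$ and the Euler-characteristic argument for the vanishing of $\wH^1_{\Iw}$) and substitutes the Euler system of Theorem~\ref{thm:explicitrecip}, whose regulator is $\cL_{p,\tau}(\Theta)$. Your write-up is a faithful expansion of exactly that argument, including the key point that the non-vanishing of the bottom class is supplied by non-central critical $L$-values.
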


  \begin{proof}
   This is proved in Theorem 11.3.2 of \cite{LSZ17} with the motivic $p$-adic $L$-function (for some specific choice of $r$) in place of $\cL_{p, \tau}(\Theta)$. Applying the same argument with the Euler system emerging from \cref{thm:explicitrecip} we obtain the result stated.
  \end{proof}

 \subsection{Selmer groups over $\QQ$}

  By a standard descent argument (using the fact that no exceptional-zero phenomena arise because of \cref{lem:trivzero}), we deduce the following:

  \begin{theorem}
   Let $0 \le j \le r_1 - r_2$, and let $\rho$ be a finite-order character of $\Zp^\times$. If $L(\Pi \otimes \rho, \tfrac{1-r_1+r_2}{2} + j) \ne 0$, then $H^1_{\mathrm{f}}(\QQ, V(-j-\rho)) = 0$.
  \end{theorem}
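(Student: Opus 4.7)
The plan is to deduce the Bloch--Kato vanishing from the Iwasawa-theoretic divisibility of the previous theorem by a standard descent argument, using the interpolation property of $\cL_{p,\tau}(\Theta)$ together with the absence of exceptional zeros guaranteed by \cref{lem:trivzero}.

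First I would translate the hypothesis on the complex $L$-function into a statement about the $p$-adic $L$-function. The defining interpolation property of $\cL_{p,\tau}(\Theta)$ (\cref{def:GL4Lp}), combined with the identity $L(\Theta, s) = L(\Pi, \tfrac{1-r_1+r_2}{2}+s)$ and the fact that the Euler-like factor $R_p(\Theta, \rho, j)$ is non-vanishing by \cref{lem:trivzero} (no Hecke parameter has the form $p^n \zeta$ with $\zeta$ a root of unity, so none of the factors in $R_p$ can vanish), shows that the hypothesis $L(\Pi\otimes\rho, \tfrac{1-r_1+r_2}{2}+j)\neq 0$ forces $\cL_{p,\tau}(\Theta, j+\rho)\neq 0$.

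Next I would use the Iwasawa Main Conjecture divisibility from the preceding theorem: the characteristic ideal of $\wH^2_{\Iw}(\QQ_\infty, V)$ divides $\cL_{p,\tau}(\Theta)$ in $\Lambda_L(\Zp^\times)$. Specialising along the character $j+\rho$, the fact that $\cL_{p,\tau}(\Theta)(j+\rho)\neq 0$ implies that the characteristic ideal of $\wH^2_{\Iw}(\QQ_\infty, V)$ has non-zero image under this specialisation, so the $(j+\rho)$-component of $\wH^2_{\Iw}(\QQ_\infty, V)$ is finite (in fact zero after inverting $p$, since we are working with $L$-coefficients).

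The final step is the descent from Iwasawa cohomology to $H^1_{\mathrm{f}}(\QQ, V(-j-\rho))$. Here I would appeal to Nekov\'a\v{r}'s control theorem for Selmer complexes, which under the absence-of-trivial-zeros hypothesis gives an exact sequence relating the specialisation of $\wH^\bullet_{\Iw}(\QQ_\infty, V)$ at $j+\rho$ to $\wH^\bullet(\QQ, V(-j-\rho))$; the relevant descent kernels/cokernels involve the local Euler factors $(1-\varphi)$ and $(1-p\varphi)$ on graded pieces of $\Dcris$, all of which are invertible by the computation in \cref{lem:h1g,lem:trivzero}. Combined with \cref{prop:nekovarduality} identifying $\wH^2_{\Iw}$ with the Pontryagin dual of a classical Selmer group, and with the vanishing $\wH^1_{\Iw}(\QQ_\infty, V)=0$ from the previous theorem (which ensures the Selmer complex at finite level also has trivial $H^1$), the vanishing of the specialised $\wH^2$ translates into $H^1_{\mathrm{f}}(\QQ, V(-j-\rho))=0$.

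The main conceptual obstacle has already been absorbed into the previous theorem; what remains is essentially bookkeeping. The most delicate point in the descent is verifying that the Greenberg-type local condition at $p$ defined by $\Fil^2 W_\Pi^\star$ recovers the Bloch--Kato local condition $H^1_{\mathrm{f}}(\Qp, V(-j-\rho))$ after specialisation, which requires checking that the quotient $V(-j-\rho)/\Fil^2$ has no Frobenius eigenvalue equal to $1$ and that $\Fil^2$ has none equal to $p^{-1}$ -- precisely the content of \cref{lem:trivzero} applied with the appropriate shifted weights, which holds throughout the range $0 \le j \le r_1-r_2$ because Klingen-ordinarity forces the valuations of the Hecke parameters to lie strictly outside $\{0, 1, \dots, r_2+1\}$ after shifting.
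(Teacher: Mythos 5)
Your proposal is correct and follows exactly the route the paper takes: the paper's entire proof is the single remark that the result follows from the preceding Iwasawa-theoretic theorem ``by a standard descent argument (using the fact that no exceptional-zero phenomena arise because of \cref{lem:trivzero})'', and your three steps --- non-vanishing of $\cL_{p,\tau}(\Theta, j+\rho)$ via the interpolation formula and the non-vanishing of $R_p$, specialisation of the characteristic-ideal divisibility, and \Nek's control theorem with the invertibility of the local Euler factors identifying the Greenberg and Bloch--Kato conditions --- are precisely the content of that remark, spelled out in more detail than the paper itself provides.
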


  This establishes the analytic rank 0 case of the Bloch--Kato conjecture for all critical values of the $L$-function of $\Pi$.

  \begin{note}
   The hypothesis $L(\Pi \otimes \rho, \tfrac{1-r_1+r_2}{2} + j) \ne 0$ is automatic if $j \ne \tfrac{r_1 - r_2}{2}$.
  \end{note}

\mychapter{Index of notation}

\subsection*{Variants of \texorpdfstring{$\eta$}{eta}}
\label{sect:variantsofeta}
 \vspace{1ex}

 \begin{center}
\tabulinesep=1mm

  $\begin{tabu}{c|c|c}
   \text{Notation} & \text{Cohomology group} & \text{Definition} \\
   \tabucline\\
   \eta_{\dR} & \Fil^1\mathbf{D}_{\dR}(V_\Pi) &  \text{\S \ref{sect:reduction1}}\\
   \eta_{\NNfp,-D} &\qquad  H^3_{\NNfp,c}(Y_{\Kl},\cV,1+q,\cQ)\qquad \qquad& \text{\S \ref{sect:reduction1}}\\
   \eta_{\lrig,-D} & \qquad H^3_{\dR}(\cX_{\Kl}\langle -\cD\rangle,\cV) & \text{\S \ref{sect:rigidfromPi}}\\
   \eta^m_{\rig,-D}&\qquad H^3_{\dR,c}(\cX^m_{\Kl}\langle -\cD\rangle, \cV) &  \text{\S \ref{sect:rigidfromPi}}\\
   \eta_{\lrigfp,-D} & H^3_{\lrigfp}(X_{\Kl}\langle -D\rangle,\cV,1+q;\cQ)& \text{\S \ref{sect:logrigfromPi}}\\
   \breve\eta_{\lrigfp,-D} & H^3_{\lrigfp}(\breve{X}_{\Kl}\langle -D\rangle,\cV,1+q;\cQ)& \text{\S \ref{sect:logrigfromPi}}\\
   \eta^{m}_{\rigfp,-D} & H^3_{\rigfp}(X^m_{\Kl}\langle -D\rangle,\cV,1+q;\cQ) & \text{\S \ref{sect:logrigfromPi}}\\
   \breve\eta^{m}_{\rigfp,-D} & H^3_{\rigfp}(\breve{X}^m_{\Kl}\langle -D\rangle,\cV,1+q,\cQ) & \text{\S \ref{sect:logrigfromPi}}\\
   \eta^{\alg}_{-D} & H^2(X_{\Kl},\cN^1(-D))& \text{\S \ref{sect:coherentfromPi}}\\
   \eta_{\coh,-D}^{m} & H^2_c(\cX_{\Kl}^m,\cN^1(-D)) & \text{\S \ref{sect:coherentfromPi}}\\
   \eta_{\coh,-D}^{(2,m)} & H^2_{c0}(\cX_{\Kl}^{(2,m)},\cN^1(-D)) & \text{\S \ref{sect:coherentfromPi}}\\
   \eta_{\coh}^{(2,m)} & H^2_{c0}(\cX_{\Kl}^{(2,m)},\cN^1) & \text{\S \ref{sect:coherentfromPi}}\\
   \tilde\eta^{m}_{\rig,-D} & \wH^3_{\dR,c}(\cX^{m}_{\Kl}\langle -D\rangle,\cV,1+q) & \text{\S \ref{sect:ordinaryparts}}\\
   \tilde\eta^{m}_{\rigfp,-D} & \wH^3_{\rigfp,c}(\cX^{m}_{\Kl}\langle -D\rangle,\cV,1+q;\cQ) & \text{\S \ref{sect:liftingfp}}\\
   \tilde\eta^{(2,m)}_{\rigfp} & \wH^3_{\rigfp,c0}(\cX^{(2,m)}_{\Kl},\cV,1+q;\cQ) & \text{\S \ref{ssec:coherentfppaireta}}\\
   \breve\eta_{\coh,-D}^{m} & H^2_c(\cX_{\Kl}^{m},\sFil^q\cV\otimes \Omega^1_G(-D)) & \text{\S \ref{sect:dRsheaves}}\\
   \breve\eta_{\coh}^{(2,m)} & H^2_c(\cX_{\Kl}^{(2,m)},\sFil^q\cV\otimes \Omega^1_G) & \text{\S \ref{sect:dRsheaves}}\\
  \end{tabu}$
\end{center}

\subsection*{P-adic L-functions}
\vspace{1ex}

\begin{center}
 \tabulinesep=1mm

 $\begin{tabu}{c|c|c}
 \text{Function} & \text{Domain} & \text{Defined in} \\
 \tabucline\\
 \cL_{p, \nu}(\Pi) & \cW \times \cW & \text{\cref{thm:padicLfcn}} \\
 \sL_1, \sL_2 & \cW & \text{\cref{def:sLi}}\\
 {}_{c_1, c_2} \cL^{\mot, r}_{p, \underline{\nu}}(\uPi), \cL^{\mot, r}_{p, \underline{\nu}}(\uPi)
 & U \times \cW & \text{\cref{def:motivicL}} \\
 \cL^{\mot, [r]}_{p, \underline{\nu}}(\uPi)& U \times \cW & \text{\cref{not:Lpboxr}}\\
 \cL_{\underline{\mu}}(\uPi) \text{ (conjectural)}& U \times \cW \times \cW  & \text{\cref{sect:ESconj}}\\
 \cL_{p, \tau}(\Theta) & \cW & \text{\cref{def:GL4Lp}}\\
 \cL_{p, \utau}(\uTh) & U \times \cW & \text{\cref{thm:gl4famL}}
 \end{tabu}$
\end{center}

\mychapter{Appendix: Rigid cohomology of EKOR strata}

\label{chap:appendix}

\section{Introduction}

 The purpose of this appendix is to study the rigid cohomology of certain EKOR strata in a $\GSp_4$ Shimura variety (of Klingen-parahoric level at $p$), in order to supply a technical result which is an input in the study of reciprocity laws for the $\GSp_4$ Euler system in the main text. We will state our result more precisely below, but a rough outline is as follows.

 \subsection{Setting: de Rham cohomology}

  Let $Y_K$ be the $\GSp_4$ Shimura variety of some (sufficiently small) level $K$. Then, to each algebraic representation $V$ of $\GSp_4$, we can associate a vector bundle with connection $(\cV, \nabla)$ on $Y_K$. If $X_K$ denotes a smooth projective toroidal compactification of $Y_K$, then $(\cV, \nabla)$ extends to a connection with logarithmic singularities along the boundary divisor $D_K = X_K - Y_K$; and hence we can define two natural complexes of sheaves on $X_K$: the logarithmic de Rham complex $\DR^\bullet(V) = \mathcal{V} \otimes \Omega^\bullet_{X_K}(\log D_K)$, and its ``cuspidal'' variant $\DR^\bullet_c(V) = \DR^\bullet(V) \otimes \cO_{X_K}(-D_K)$. The hypercohomology of these complexes computes the de Rham cohomology, with and without compact supports, of $Y_K$ with coefficients in $V$.

  There is a natural map of complexes $\DR^\bullet_c(V) \into \DR^\bullet(V)$ and hence a natural map
  \[ H^*(X_K, \DR^\bullet_c(V)) \to H^*(X_K, \DR^\bullet(V)). \tag{\dag} \]
  Moreover, these cohomology groups have a natural action of Hecke operators, and the map is Hecke-equivariant.

  We are interested in the \emph{localisation} of this map at a Hecke eigenvalue system (for the Hecke algebra at unramified primes) associated to a cuspidal automorphic representation $\pi$ of $G$; we claim that if $\pi$ is not of CAP type, then the localisation of $(\dag)$ is a isomorphism (in all degrees). This can be shown by first base-extending to $\CC$, and then comparing the de Rham cohomology with Betti cohomology of $Y_K(\CC)$ (with coefficients in the local system corresonding to $V$). This can be computed using the Borel--Serre compactification of $Y_K(\CC)$, which is only a manifold-with-corners rather than an algebraic variety, but has better Hecke-equivariance properties than the toroidal. Using the stratification of the Borel--Serre boundary in terms of parabolic subgroups of $G$, one can show that all of the Hecke eigenvalue systems appearing in the boundary are parabolically induced from automorphic representations of proper Levi subgroups of $G$; so the localisation of the boundary cohomology at a cuspidal, non-CAP representation is 0.

 \subsection{Goal: a rigid-analytic variant}

  We now consider the de Rham cohomology of the rigid-analytic dagger space $X_K^{\mathrm{an}}$ over $\Qp$ associated to $X_K$, for a prime $p$. By the GAGA theorem, the rigid-analytic de Rham cohomology of $X_K^{\mathrm{an}}$ is simply the base-extension to $\Qp$ of the algebraic de Rham cohomology of $X_K$. However, the rigid-analytic description brings up some new phenomena. Assuming $K$ to be of parahoric type at $p$, there is a natural model of $Y_K$ over $\ZZ_{(p)}$, whose special fibre has a natural stratification -- the EKOR (Ekedahl--Kottwitz--Oort--Rapoport) stratification. Moreover, this can be extended to the compactification $X_K$; and we can consider the \emph{tubes} of these mod $p$ strata, which are subspaces of the dagger space $X_K^{\mathrm{an}}$. The de Rham cohomology of these tubes can be interpreted as rigid cohomology of the mod $p$ strata, hence the title of this paper.

  Our goal is to compute the analogue of $(\dag)$ for the tubes in $X_K^{\mathrm{an}}$ of certain strata (or unions of strata) in the special fibre. These strata are invariant under the action of prime-to-$p$ Hecke correspondences, so we can ask about the Hecke eigensystems appearing in their cohomology. More precisely, we want to show that, for Klingen-parahoric levels at $p$ and two particular locally-closed subspaces $T$ in $X_{K, \mathbb{F}_p}$, the analogue of $(\dag)$,
  \[ H^*_c\big(\tb{T}, \DR^\bullet_c(V)\big) \to H^*_c\big(\tb{T}, \DR^\bullet(V)\big) \tag{$\dag_p$}, \]
  is an isomorphism after localising at a non-CAP, cuspidal eigensystem. This is the result we need for our computations in the main text of this paper.

  \begin{remark}
   Note that in $(\dag_p)$, both cohomology groups are compactly-supported towards the complement of $\tb{T}$ in $X_K^{\mathrm{an}}$; only the support condition towards the toroidal boundary $T \cap D_K$ is changing. In particular, we should intepret $H^*_c(\tb{T}, \DR^\bullet(V))$ as a sort of ``partially compactly supported'' cohomology group, with compact support towards the complement of $T$ but non-compact support towards $T \cap D_K$. Cohomology groups with this sort of ``mixed support condition'' have appeared in many recent works on $p$-adic geometry of Shimura varieties, such as \cite{HLTT16} and \cite{boxerpilloni20}.
  \end{remark}

 \subsection{Outline of the argument}

  In order to analyse the map $(\dag_p)$, we proceed in two main steps.

  We first carry out a geometric computation, describing the intersections of EKOR strata and boundary strata inside the mod $p$ special fibre of a Klingen-level $\GSp_4$ Shimura variety. These intersectinos turn out to be either trivial, or preimages of EKOR strata in modular-curve boundary components (again of parahoric level at $p$).

  The second step is to consider the coefficient sheaves $(\cV, \nabla)$; we recall a result of Burgos and Wildeshaus, showing that the image of $(\cV, \nabla)$ under (derived) pushforward to a boundary stratum in the \emph{minimal} compactification of $Y_K$ can be expressed in terms of automorphic vector bundles on Shimura varieties of smaller dimension.

  Combining these two results, we obtain a description of the mapping fibre of $(\dag_p)$ in terms of parabolic inductions from the cohomology of EKOR strata in $\GL_1$ and $\GL_2$ Shimura varieties, allowing us to conclude that $(\dag_p)$ localises to an isomorphism at a non-CAP cuspidal representation.

  \begin{remark}
   We have not attempted to specify precisely \emph{which} parabolically-induced eigensystems appear in the kernel and cokernel of $(\dag_p)$. This is clearly possible, but it is not necessary for our intended applications so we shall not pursue it further here.
  \end{remark}

%
%
%


\section{General theory of compactifications}

 We first recall how to compactify Siegel modular varieties (over $\QQ$). We suppose $G = \GSp_{2n}$ and $K \subset G(\Af)$ is a neat open compact. There are $g$ maximal parabolic subgroups $P_1, \dots, P_r$ in $G$, with $P_r$ being the stabiliser of $\langle e_1, \dots, e_r\rangle$ in the standard representation, so the Levi $M_r$ of $P_r$ is $\GL_r \times \GSp_{2n-2r}$. We put $P_0 = G$. For each $r$ we let $M_{r, h}$ be the Hermitian part of $M_r$, isomorphic to $\GSp_{2n-2r}$ embedded into $G$ via
 \[ A \mapsto \begin{smatrix} \nu(A) \\ & A \\ && 1 \end{smatrix},\]
 where $\nu$ is the symplectic multiplier; and $P_{r, h} \subset P_r$ the preimage of $M_{r, h}$. 

 \subsection{Cusp labels and minimal compactification}

  \begin{definition}
   For $0 \le r \le n$, let $\mathfrak{C}(r, K)$ denote the double quotient
   \[ P_r(\QQ) P_{r, h}(\Af) \backslash G(\Af) / K; \]
   and let $\mathfrak{C}(K)$ be the set of pairs $(r, [g])$ with $r \in \{0, \dots, n\}$ and $[g] \in \mathfrak{C}(r, K)$. We call these \emph{cusp labels} at level $K$. We give $\mathfrak{C}(K)$ a poset structure by defining $(r, [g] ) \preccurlyeq (r', [g])$ for each $g \in G(\Af)$ and $r \ge r'$ (sic).
  \end{definition}

  Given a pair $(r, [g]) \in \mathfrak{C}(K)$, we let $K_{r, g} \subset \GSp_{2n-2r}(\Af)$ be the image of $P_{r, h}(\Af) \cap g K g^{-1}$ under the natural projection map. This is a neat open compact subgroup. We let $Z_{r, g}$ be the corresponding Shimura variety.

  \begin{note}
   For $r = n$, we need to understand $Z_{r, g}$ as $\QQ^{\times}_{> 0} \backslash \Af^\times / \det \pi(K_g)$; this is actually a double covering of the ``usual'' Shimura variety for $\mathbf{G}_m$, which is $\QQ^{\times} \backslash \Af^\times / \det \pi(K_g)$. Cf.~Definition 2.1 in \cite{pink90}. This issue does not arise for $r < n$, since the $\GSp_{2n-2r}$ Shimura datum has the expected number of components.
  \end{note}

  \begin{proposition}
   The Shimura variety $Y_K$ has a canonical compactification, the \emph{minimal (Baily--Borel) compactification} $j^{\min}: Y_K \into X^{\min}_K$, where $X^{\min}_K$ is a projective normal variety over $\QQ$.

   The variety $X^{\min}_K$ has a stratification\footnote{Recall that a stratification of a scheme is a decomposition as a set-theoretic disjoint union of locally-closed subschemes (the strata), with the property that the closure of any stratum is a union of strata.} by smooth strata $Z_{r, g}$ indexed by pairs $(r, g) \in \mathfrak{C}(K)$, with $Y_K$ corresponding to the stratum $(0, \id)$, and the closure relation given by the above poset structure. The stratum $Z_{r, g}$ is canonically identified with the quotient of the Shimura variety for $\GSp_{2n-2r}$ of level $K_{r, g}$ by a free action of a finite group of algebraic automorphisms $\Delta$.
  \end{proposition}

  \begin{proof}
   See e.g.~\cite[\S 3]{pink92} or \cite[\S 1]{burgoswildeshaus04}.
  \end{proof}

  \begin{remark}
   We shall restrict to level groups of the form $K = K_p K^p$ where $K_p$ is a standard parahoric subgroup at some prime $p$, and $K^p$ is a principal congruence subgroup in $G(\Af^p)$ of large enough level. In these cases the finite groups $\Delta$ appearing in Pink's construction are all trivial, so the boundary components $Z_{r, g}$ of $X_K^{\min}$ are themselves Shimura varieties (see \cite{stroh10-minimal}).
  \end{remark}

  Associated to each boundary stratum, we also have a discrete group $\bar{H}_C$ (in the notations of \cite{burgoswildeshaus04}); it is a neat arithmetic subgroup of the ``linear part'' $M_{r, \ell}(\QQ) \cong \GL_r(\QQ)$. For $r = 1$, this group is just $\QQ^\times$, which has no nontrivial neat subgroups, so  $\bar{H}_C$ is trivial.
%

 \subsection{Toroidal compactification}

  Since $X_K^{\min}$ is in general non-smooth, it is convenient to work with \emph{toroidal compactifications}. We briefly recall how these are defined.

  For each cusp-label $(r, g)$, we consider the space $\cP_r$ of positive-semidefinite bilinear forms on $\RR^r$ with rational radicals, and its interior $\cP_r^+$ consisting of positive-definite forms. The discrete group $\bar{H}_C \subset \GL_r(\QQ)$ associated with $(r, g)$ as above acts on $\cP_r$ and $\cP_r^+$. We choose, for each $(r, g) \in \mathfrak{C}(K)$, a collection $\Sigma_{r, g}$ of rational polyhedral cones in $\RR^{r(r+1)/2}$ forming a cone decomposition of $\cP_r$ (i.e. the cones are disjoint, their union is $\cP_r$, and each face of a cone in $\Sigma_{r, g}$ is also in $\Sigma_{r, g}$). These are required to satisfy the following properties, for each $(r, g)$:
  \begin{itemize}
   \item The action of $\bar{H}_C$ preserves $\Sigma_{r, g}$, and the set of orbits for this action is finite.
   \item There is a subset $\Sigma^+_{r, g} \subseteq \Sigma_{r, g}$ forming a cone decomposition of $P_r^+$.
  \end{itemize}
  The cones in $\Sigma_{r, g} - \Sigma_{r, g}^+$ are required to satisfy a compatibility condition with the $\Sigma_{r', g'}$ for $(r', g') \succcurlyeq (r, g)$, which we shall not specify here (see e.g.~\cite{faltingschai}).

  For any collection $\Sigma = (\Sigma_{r, g})$ satisfying these conditions, we can define a toroidal compactification $X_K^\Sigma$ of $Y_K$. We frequently omit the decoration $\Sigma$ once a choice of cone decomposition $\Sigma$ has been fixed.

  In general, $X_K^{\Sigma}$ is only an algebraic space; but if $\Sigma$ is chosen suitably, it is a smooth projective algebraic variety, and the complement $X_K - Y_K$ is a smooth normal-crossing divisor. It is a standard fact that cone decompositions $\Sigma$ with these properties do exist.

  The strata of the toroidal compactification are indexed by triples $(r, g, [\sigma])$, for $(r, g) \in \mathfrak{C}(K)$, and $[\sigma]$ a $\bar{H}_C$-orbit in $\Sigma_{r, g}^+$. Geometrically, each cusp-label $(r, g)$ determines a chain of maps
  \[ \Xi \to C \to Z \]
  where $Z = Z_{r, g}$, $C$ is an abelian scheme over $Z$ (of relative dimension $r(n-r)$), and $\Xi$ is a torus bundle over $C$ (of relative dimension $\tfrac{r(r+1)}{2}$). Each cone $\sigma \in \Sigma_{r, g}^+$ determines a torus embedding $\Xi \into \Xi(\sigma) = \bigsqcup_{\tau} \Xi_{\tau}$, where $\tau$ varies over the faces of $\sigma$, and $\Xi_{\sigma}$ is the unique closed fibre; the stratum $Z_{r, g, [\sigma]}$ is isomorphic to $\Xi_{\sigma}$ (and the formal completion of $X_K^{\Sigma}$ along $Z_{r, g, [\sigma]}$ is isomorphic to the completion of $\Xi(\sigma)$ along $\Xi_{\sigma}$). In particular, the codimension of $Z_{r, g, [\sigma]}$ in $X_K^{\Sigma}$ is the dimension of the cone $\sigma$.
  
  \begin{notation} For a cusp-label $(r, g)$, we write $Z_{r, g}^{\Sigma}$ for the preimage in $X_{K}^{\Sigma}$ of the stratum $Z_{r, g} \subseteq X_K^{\min}$, which is the union of the $Z_{r, g, [\sigma]}$ as $\sigma$ varies over $\bar{H}_C$-orbits in $\Sigma_{r, g}^+$.
  \end{notation}

  \begin{remark}
   The space $\Xi$ can be interpreted as a moduli space for polarised 1-motives (with toric parts of dimension $r$, and abelian part of dimension $n - r$) with level structures. The projection from $Z_{r, g, [\sigma]}$ to the underlying Shimura variety $Z_{r, g}$ correponds to forgetting the toric and linear part of the 1-motive.
  \end{remark}

 \subsection{Local cusp-labels at \texorpdfstring{$p$}{p}}

  Let $p$ be a prime, and suppose $K$ has the form $K^p K_p$ for $K_p \subset G(\Qp)$ and $K^p \subset G(\Af^p)$ open compacts.

  \begin{definition}
   A \emph{local cusp-label} is a pair $(r, g)$ with $0 \le r \le n$ and $g \in P_r(\Qp) \backslash G(\Qp) / K_p$. We write $\mathfrak{C}_p(K_p)$ for the set of these.
  \end{definition}

  There is a obvious map $\mathfrak{C}(K) \to \mathfrak{C}_p(K_p)$ given by $(r, g) \mapsto (r, g_p)$ (and this is compatible with the poset structure). Since $P_r(\QQ) P_{r, h}(\Qp)$ is dense in $P_r(\Qp)$, the fibre of this map over $(r, g_p)$ can be identified with the away-from-$p$ double quotient $P_r(\QQ) P_{r, h}(\Af^p) \backslash G(\Af^p) / K^p$; in particular, if we take the limit over prime-to-$p$ levels, the action of $G(\Af^p)$ is transitive on the fibres. Clearly, the subgroup $K_{r, g}$ is itself a product of groups at $p$ and away from $p$, and the factor at $p$ is determined by $g_p$; we write $K_{r, g_p}$ for this factor. Then the strata $Z_{r, g}$, for all $(r, g) \in \mathfrak{C}(K)$ mapping to a given $(r, g_p) \in \mathfrak{C}_p(K_p)$, are all Shimura varieties for $\GSp_{2n-2r}$ with the same $p$-level structure, namely $K_{r, g_p}$.

  \begin{definition}
   We define $Z_{(p, r, g_p)}$, for $g \in \mathfrak{C}_p(K_p)$, to be the disjoint union of the $Z_{r, g}$ for all $(r, g) \in \mathfrak{C}(K)$ mapping to $(r, g_p) \in \mathfrak{C}_p(K)$.
  \end{definition}

  So the set $\{ Z_{p, r, g_p} : (r, g_p) \in \mathfrak{C}_p(K_p)\}$ is a stratification of $X_K^{\min}$, somewhat coarser than the one described above, which is stable under the prime-to-$p$ Hecke action (its strata are exactly the prime-to-$p$ Hecke orbits on the set of strata $Z_{r, g}$). Similarly, we write $Z_{p, r, g_p}^{\Sigma}$ for the corresponding stratification of the toroidal compactification $X_K^{\Sigma}$.

\section{Compactifications at parahoric level}

 Recall that a \emph{parahoric subgroup} of $\GSp_{2n}(\Qp)$ is an open compact subgroup containing an Iwahori subgroup; since all Iwahori subgroups are conjugate, it suffices to consider the standard Iwahori subgroup (the preimage in $G(\Zp)$ of the upper-triangular Borel of $G(\Fp)$). We shall restrict attention to parahorics contained in $G(\Zp)$, which biject with subsets $J \subseteq I = \{1, \dots, n\}$, with $K_J$ denoting the subgroup given by the preimage of the mod $p$ parabolic $P_J(\Fp) = \bigcap_{i \in J} P_i(\Fp)$. Thus $K_{\varnothing} = G(\Zp)$, and $K_I$ is the standard Iwahori.

 \subsection{Weyl groups}

  We let $s_1, \dots, s_n$ be the usual generators of the Weyl group $W \subset \mathfrak{S}_{2n}$; explicitly $s_i = (i, i+1) (2n+1-i, 2n-i)$ for $1 \le i \le n-1$ and $s_n = (n, n+1)$. We choose arbitrary lifts of these to elements of $G(\Zp)$. With this numbering, $s_i$ fixes the lattice $W_j$ for all $i \ne j$; in particular, for any $J \subset I$, the subgroup $W_{J^c}$ generated by the $s_i$ for $i \notin J$ is a finite group contained in $K_J$ (which we can identify with the Weyl group of the Levi $M_J$). Moreover, we have $K_J = \bigsqcup_{w \in W_{J^c}} K_I w K_I$.

  \begin{lemma}
   For any subsets $J_1, J_2 \subseteq I$, we have
   \[ P_{J_1}(\Qp) \backslash G(\Qp) / K_{J_2} = K_{J_1} \backslash G(\Zp) / K_{J_2} = W_{J_1^c} \backslash W / W_{J_2^c}. \]
  \end{lemma}

  \begin{proof}
   This follows readily from the Bruhat decomposition of $G(\Fp)$.
  \end{proof}

 \subsection{Boundary components}

  Taking $J_1 = \{r\}$ for some $r$, it follows that every element of $\mathfrak{C}_p(K_J)$ has a representative in $W$.

  \begin{proposition}
   For $r \in \{1, \dots, n\}$ and $w \in W$, the image of $P_{r, h} \cap w P_J w^{-1}$ in $M_{r, h} \cong \GSp_{2n-2r}$ is a Weyl-group conjugate of a standard parabolic in $M_{r, h}$.
  \end{proposition}

  \begin{proof}
   The image of $P_{r, h} \cap w K_Jw^{-1}$ is clearly contained in $M_{r, h}(\Zp)$, and it contains the preimage of $M_{r, h} \cap w B(\Fp) w^{-1}$, which is a Borel subgroup of $M_{r, h}$.
  \end{proof}

  Thus the boundary strata $Z_{r, g}$ in the minimal compactification of a Siegel Shimura variety of standard-parahoric level at $p$ are themselves Siegel Shimura varieties (of smaller genus) of standard-parahoric level at $p$.

  \begin{remark}
   If $J = I$, so $K_J$ is the Iwahori subgroup, then the boundary components are themselves Iwahori-level Shimura varieties. This is not necessarily the case for general $J$, as we shall see below in the genus 2 setting.
  \end{remark}

 \subsection{Integral models}

  We recall that Siegel Shimura varieties of parahoric level at $p$ have canonical $\ZZ_{(p)}$-models. These are moduli spaces for abelian varieties of dimension $n$ over $\ZZ_{(p)}$-algebras, endowed with a prime-to-$p$ polarisation and level structure determined by $K^p$, and with a partial flag of isotropic subgroup-schemes $C_i \subset A[p]$ for $i \in J$, with $C_i$ of degree $p^i$ and $C_i \subset C_j$ for $i \le j$.

  Moreover, this extends to the compactification in the natural fashion:

  \begin{proposition}[{Stroh, see \cite[Theoreme principal] {stroh10-minimal}}]
   The compactification $X_K^{\min}$ has a $\ZZ_{(p)}$-model $j^{\min} : \cY_K \into \cX_K^{\min}$, with $\cX_K^{\min}$ a projective $\ZZ_{(p)}$-scheme; and $\cX_K^{\min}$ has a stratification indexed by $\mathfrak{C}(K)$, whose strata are the canonical models of the parahoric-level Shimura varieties for $\GSp_{2n-2r}$.
  \end{proposition}

  The construction of $\cX_K^{\min}$ involves, as an intermediate step, the construction of a $\ZZ_{(p)}$-model $\cX_K^{\Sigma}$ of $X_K^\Sigma$ (for suitable cone decompositions $\Sigma$), which maps naturally to $\cX_K^{\min}$; and the description of the boundary strata in terms of cusp-labels, and the formal coordinate charts along these boundary strata, applies also with $\ZZ_{(p)}$-coefficients.

\section{Boundary strata for the \texorpdfstring{$\GSp_4$}{GSp4} Klingen}

 \subsection{Group-theoretic description} We now specialise to $G = \GSp_4$ and $K_p = \Kl(p)$, the \emph{Klingen parahoric} (the preimage of $P_1(\Fp)$ in $G(\Zp)$). Computing the sets $\mathfrak{C}_p(K_p)$ as double-quotients of Weyl groups, $P_r(\Qp) \backslash G(\Qp) / K_p \cong W_{M_r} \backslash W_G / W_{M_{\Kl}}$, we obtain the following picture:
  \begin{itemize}

   \item There are exactly three local cusp-labels with $r = 1$, represented by the classes of $g_p = \id$, $g_p = s_1$, and $g_p = s_2$. For the first and third, we have $K_{r, g_p} = \GL_2(\Zp)$; for the second, $K_{r, g_p}$ is the standard Iwahori subgroup.

   \item There are two cusp-labels with $r = 2$, represented by $g_p = \id$ and $g_p = s_2$.

   \item We have $(2, \id) \preccurlyeq (1, \id)$ and $(2, s_2) \preccurlyeq (1, s_2)$, while both of the $r=2$ cusp labels precede $(1, s_1)$.
  \end{itemize}

  Thus, for $K = K^p \Kl(p)$, the stratification of the boundary of $X_K^{\min}$ by $p$-cusp labels has three 1-dimensional and two 0-dimensional strata. Two of the 1-dimensional strata are unions of modular curves of prime-to-$p$ levels, each of which has a single $G(\Af^p)$-orbit of cusps; while the third is a union of modular curves of $\Gamma_0(p)$ level, which have two $G(\Af^p)$-orbits of cusps ($0$ and $\infty$).

 \subsection{Preimages in the toroidal compactification}
 
  For the Klingen ($r = 1$) boundary strata, the space $\cP_r$ is simply $\RR_{\ge 0}$, so it has a unique polyhedral cone decomposition with a single non-trivial cone. Thus, for each $g \in \mathfrak{C}(1, K)$, there is a single toroidal boundary stratum above $Z_{r, g}$ and its fibres over $Z_{r, g}$ are elliptic curves.
  
  For $r = 2$, in contrast, the ``geometric'' parts of the stratification are trivial ($Z_{2, g}$ is 0-dimensional, and the abelian family $C$ over it is trivial), but the polyhedral cone decomposition is combinatorially rich -- the choice of $\Sigma_{2, g}$ amounts to choosing a triangulation of a fundamental domain for the action of $\bar{H}_C$ on the upper half-plane. So the preimage $Z^{\Sigma}_{r, g}$ is a 2-dimensional toric variety embedded in $X_K^{\Sigma}$.
  
 \subsection{Moduli-space interpretation}

  We can interpret the above stratification in terms of 1-motives with parahoric level structures. (This simply amounts to specialising the general statements of \S 1.2.6 of \cite{stroh10-siegel} to the case $n = 2$ and $J = \{1\}$).

  Along the boundary strata of $\cX_K^{\Sigma}$ with $r = 2$, the universal abelian surface $A$ degenerates into a 1-motive $M$ with no abelian part, i.e.~of the form $[Y \to T]$, where $Y \cong \ZZ^2$, and $T$ is a rank 2 torus. Its $p$-torsion $M[p]$ is therefore an extension of the \'etale group scheme $Y / pY$ by the multiplicative group scheme $T[p]$. The canonical level subgroup $C \subset A[p]$ extends to a subgroup of $M[p]$; and the two local cusp-labels at $p$ correspond to the two possibilities for its position relative to the filtration: either $C$ lies inside $T[p]$ (the case $g_p = 1$), or it maps isomorphically to its image in $Y / pY$ (the case $g_p = s_2$).

  For the $r = 1$ boundary strata we have a more complicated picture: the 1-motive has all three of its graded pieces non-trivial (a toric part, an elliptic curve, and a lattice part); the group $M[p]$ thus also has 3 graded pieces, with the ``middle'' graded piece given by the $p$-torsion of the elliptic curve, and the two outer pieces arising from the toric and lattice parts. There are thus 3 possibilities for where an order $p$ subgroup can land: it is either fully contained in the torus part (corresponding to $g_p = \id$); maps isomorphically to $E[p]$ (corresponding to $g_p = s_1$); or maps isomorphically to $Y / pY$ (corresponding to $g_p = s_2$). So the implied $p$-level structure on the elliptic curve $E$ is trivial in the first and last cases, and a $\Gamma_0(p)$-level structure in the second case.

\section{EKOR strata}

 \subsection{EKOR strata}

  Let $Y_K$ be a Shimura variety for $\GSp_{2n}$, whose level has the form $K^p K_J$ for some standard parahoric $K_J$ as above, and $\cY_K$ its $\ZZ_{(p)}$-model. Then the special fibre of $\cY_K$ has a canonical stratification, the \emph{Ekedahl--Kottwitz--Oort--Rapoport} (EKOR) stratification
  \[ \cY_{K, \Fp} = \bigsqcup_{x \in\, {}^J\!\operatorname{Adm}(\mu)} \cY_{K, \Fp}^x, \]
  where ${}^J\!\! \operatorname{Adm}(\mu)$ is a certain (finite) subset of the Iwahori Weyl group $\widetilde{W}$ depending on $J$ and the cocharacter $\mu$ defining the Shimura datum. We shall not recall the exact details here, but refer to \cite{herapoport17} and \cite{shenyuzhang21}.

  Note that for each $x \in {}^J\!\! \operatorname{Adm}(\mu)$, the EKOR stratum $\cY_{K, \Fp}^x$ is preserved by the action of prime-to-$p$ Hecke correspondences.

  \begin{remark}
   Note that the special cases of the EKOR stratification when the level group is either hyperspecial, or Iwahori, at $p$ are respectively the Ekedahl--Oort and the Kottwitz--Rapoport stratifications. These have a longer history; see \cite{herapoport17} and the references therein. However, we are principally interested in the case of the Klingen parahoric in $\GSp_4$, which does not fit into either of these extreme cases.
  \end{remark}

 \subsection{Well-positioned subschemes}

  We recall from \cite{lanstroh} the notion of a \emph{well-positioned subscheme} of $Y_{K, T}$, where $T$ is a $\ZZ_{(p)}$-scheme (such as $T = \Spec \mathbf{F}_p$). Associated to each boundary stratum $Z = Z_{r, g}$ of $X_K^{\min}$, there is a chain of morphisms
  \[ \Xi \to C \to Z\]
  with $C \to Z$ an abelian scheme, and $\Xi \to C$ a torsor under a split torus. The space $\Xi$ gives a formal coordinate chart for $Y_K$ in a neighbourhood of $Z$. With this notation, a locally-closed subscheme $S \subset Y_{K, T}$ is well-positioned if, for each $Z$, the pullback of $S$ to $\Xi$ coincides with the preimage in $\Xi$ of a locally-closed subscheme $S_{r, g}^\natural \subseteq (Z_{r, g})_T$.

  In \cite{lanstroh} it is shown that, for any stratification of $Y_{K, T}$ by well-positioned subschemes, we can define a stratification of $X_{K, T}^{\min}$ by setting
  \[ S^{\min} = \overline{j^{\min}(S)} - \bigcup_{\mathclap{T \subseteq \overline{S}, T \ne S}} \overline{j^{\min}(T)} \]
  for each stratum $S$ where $j^{\min}$ is the inclusion of $\cY_K$ into $\cX_K^{\min}$; and these satisfy $S^{\min} \cap Z_{r, g} = S_{r, g}^\natural$, for each cusp-label $(r, g)$, where $S_{r, g}^\natural$ is the subscheme appearing in the definition of ``well-positioned''. This evidently also defines an extension of the stratification of $Y_{K, T}$ to $X_{K, T}^\Sigma$, for any choice of toroidal boundary data $\Sigma$, via pullback along the natural map $\pi : X_{K, T}^\Sigma \to X_{K, T}^{\min}$.

  \begin{remark}
   Note that \cite{lanstroh} considers a more general setting where $\Xi$ is a torus torsor over a scheme $C \to Z$ which is not necessarily an abelian scheme, but some more complicated map. This leads to a complication, which is that the collection $(S_Z^\natural)_Z$ is not necessarily uniquely determined by $S$, and although there is always a map $S_Z^\natural \to S^{\min} \cap Z$ which is a bijection on underlying sets, this may not be an isomorphism of schemes. However, in our case $C$ is genuinely an abelian scheme over $Z$ (see \cite{pilloni20}), so in particular it is faithfully flat and the issue does not arise.
  \end{remark}

  \begin{proposition}[S.~Mao]
   \label{prop:lanstroh}
   The EKOR strata are well-positioned subschemes. Moreover, for each EKOR stratum $S$ and each cusp-label $(r, g)$, the intersection $S^{\natural}_{r, g}$ is either empty, or is an EKOR stratum in $(Z_{r, g})_{\Fp}$.
  \end{proposition}
  
  \begin{proof}
   This is a special case of a very general result for Hodge-type Shimura varieties of parahoric level due to Shengkai Mao \cite{mao-preprint}; see Theorem 1.1 of \emph{op.cit.}.
  \end{proof}
  
  \begin{remark}
   Note that the $p$-rank of the torsion $A[p]$ is constant on each EKOR stratum in $Y_{K, \Fp}$; and the union of the $p$-rank 0 EKOR strata is proper, so its image in $X_{K, \Fp}^{\min}$ is closed. Hence the $p$-rank 0 EKOR strata have empty intersection with every boundary stratum.
  \end{remark}

\section{EKOR strata for \texorpdfstring{$n = 1, 2$}{n = 1 and 2}}
 \label{sect:EKORexplicit}

 We recall what the EKOR stratification looks like in simple cases, following the account in \cite{shenyuzhang21}. (However, we shall label the EKOR strata by symbols denoting the properties of the $p$-divisible group, rather than by the more general but less concrete labelling by affine Weyl groups used in \emph{op.cit.}.)

 Let $G$ be either $\GL_2$ or $\GSp_4$, and let $K \subset G(\Af)$ be a level structure of the form $K = K_p K^p$, with $K_p \subset G(\Qp)$ a standard parahoric, and $K^p \subset G(\Af^p)$ any neat subgroup.

 \subsection{$\GL_2$ case}

  For $G = \GL_2$ there are exactly two parahoric level groups (up to conjugacy), namely $\GL_2(\Zp)$ and the Iwahori $\Iw$.

  If $K_p = \GL_2(\Zp)$, then the Shimura variety $Y_K$ has a canonical smooth model over $\ZZ_{(p)}$, which is a moduli space for elliptic curves $E$ with prime-to-$p$ level structure. Over $\Fp$ any elliptic curve is either ordinary or supersingular, and this gives a stratification with two smooth strata
  \[ Y_{K, \Fp} = Y_{K, \Fp}^{\ord} \sqcup Y_{K, \Fp}^{\mathrm{ss}}, \]
  of dimensions 1 and 0, which is the EKOR stratification (or the EO stratification, which is the same thing in this case).

  In the Iwahori-level case we have a marginally more complicated picture: $Y_K$ is semistable, but not smooth; and it parametrises pairs $(E, C)$, where $E$ is as before and $C \subset E[p]$ is a finite flat subgroup of order $p$. We have a decomposition
  \[ Y_{K, \Fp} = Y_{K, \Fp}^{m} \sqcup Y_{K, \Fp}^{\et} \sqcup Y_{K, \Fp}^{\alpha}\]
  into loci where $C$ is \'etale-locally isomorphic to $\mu_p$, $\ZZ/p$, or $\alpha_p$ respectively (forcing $E$ to be ordinary in the first two cases, and supersingular in the third). These strata have dimensions 1, 1, and 0 respectively (the same as their $p$-ranks), and the closure relation is given by the diagram
  \[
   \begin{tikzcd}[column sep=small, row sep=0pt]
             & m \\
     \alpha\ar[ru]\ar[rd] & \\
             & \et
   \end{tikzcd}
  \]
  where an arrow denotes that the source stratum is contained in the closure of the target stratum.

 \subsection{$\GSp_4$ spherical-level case}

  The $\GSp_4$ Shimura variety of prime-to-$p$ level, i.e.~for $K_p = \GSp_4(\Zp)$, parametrises abelian surfaces $A$ with some prime-to-$p$ polarisation and level structure (depending on $K^p$). We can decompose its special fibre according to the $p$-rank (the dimension of the multiplicative part of $A[p]$), which can be 0, 1 or 2. This can be refined by decomposing the $p$-rank 0, i.e.~supersingular, locus as the union of a ``superspecial'' locus (where $A$ is isomorphic over $\overline{\mathbf{F}}_p$ to a product of supersingular elliptic curves) and a ``supergeneral'' locus (where $A$ is isogenous, but not isomorphic, to such a product). This gives a stratification of $Y_{K, \Fp}$ with 4 strata, one of each dimension, with closure relation
  \[ (0, \mathrm{ss}) \longrightarrow (0, \mathrm{sg}) \longrightarrow (1) \longrightarrow (2) \]
  (where $(r)$ denotes $p$-rank equal to $r$). Note that the closed subvariety $Y_{K, \Fp}^{(0)} = Y_{K, \Fp}^{(0, ss)} \cup Y_{K, \Fp}^{(0, sg)}$ is proper but non-smooth: it is a union of projective lines intersecting at the superspecial points.

 \subsection{$\GSp_4$ Klingen-parahoric level}

  The EKOR stratification at Klingen parahoric level is described in \S 6.3 of \cite{shenyuzhang21}. There are 8 smooth strata, which we denote by the symbols
  \[ \{ (2, m), (2, \et),\ (1, m), (1, \et), (1, \alpha),\ (0, sg), (0, ss_1), (0, ss_2) \} \]
  where the integer $r$ in $(r, *)$ denotes the $p$-rank. The closure relations among the strata are given by the poset
  \[
   \begin{tikzcd}
             & (0, sg) \rar\ar[rd]\ar[rdd, end anchor=north west]& (1, \et) \rar                & (2, \et) \\
     (0, ss_1)\ar[ru]\ar[rd] & & (1, \alpha)\ar[rd] \ar[ru] & \\
             & (0, ss_2)\rar\ar[ru]\ar[ruu, end anchor=south west] & (1, m)  \rar               & (2, m)\\
     (\dim = 0) & (1) & (2) & (3)
   \end{tikzcd}
  \]
  We now explain the labelling. The space $Y_K$ is a moduli space for abelian surfaces $A$ as before with the additional data of a cyclic subgroup-scheme $C \subset A[p]$. For the non-supersingular strata, $(r, ?)$ signifies that $A$ has $p$-rank $r$, and $C$ has type $?$ (i.e.~is multiplicative, \'etale, or $\alpha_p$). For the $p$-rank 0 strata, $sg$ denotes that $A$ is supergeneral, while $ss_1$ and $ss_2$ denote two types of superspecial strata. (Note that the natural map to prime-to-$p$ level contracts the one-dimensional $ss_2$ strata to points, while the map to paramodular level contracts the $(0, sg)$ strata to points.)

  \begin{remark}
   In \cite{shenyuzhang21} the strata are labelled by certain elements of an affine Weyl group. Comparing with the description of the KR strata at Iwahori level in \cite{yu10}, one sees that the stratum labelled $s_{010}\tau$ in \emph{op.cit.} corresponds to $C$ \'etale, and $s_{120}\tau$ has $C$ multiplicative.
  \end{remark}

  See Figure \ref{fig:strata} in the main text for a diagram (in which we have amalgamated together the three rank 0 strata).

 \subsection{Intersections with boundary strata}
  \label{sect:explcitintersection}
  We now make explicit the result of Proposition \ref{prop:lanstroh}, by identifying the intersections $S_{r, g}^\natural$, for each EKOR stratum $S$ and each cusp-label $(r, g)$, as EKOR strata of $Z_{r, g}$. Since the EKOR stratifications are independent of the prime-to-$p$ level structure, the intersection depends only on the local cusp-label $(r, g_p) \in \mathfrak{C}_p(\Kl(p))$. See \cref{fig:EKORboundary} for a diagram.

  \paragraph*{$r = 2$ boundary components}

   Since the EKOR stratification on a Shimura variety for $\mathbf{G}_m$ is trivial, the each $r = 2$ boundary component lies wholly within a single EKOR stratum. Explicitly, $Z_{p, 2, \id}$ is contained in the $(2, m)$ EKOR stratum, while $Z_{p, 2, s_2}$ is contained in the $(2, \et)$ stratum. Geometrically, this corresponds to the fact that the $p$-torsion of the 1-motive along $Z_{p, 2, g}$ is an extension of an \'etale subgroup by a multiplicative one, so its $p$-rank must be 2, and a $p$-subgroup is multiplicative if it is contained in the toric part and \'etale otherwise.
   
   (We have accordingly drawn the $r = 2$ boundary strata as points in the diagram; geometrically this is slightly misleading, since their codimension may be 1 or 2 depending on whether the cone decompositions $\Sigma_{2, g}^+$ contain any one-dimensional cones.)

  \paragraph*{$r = 1$ boundary components}

   For $r = 1$, we have seen that the boundary components are modular curves.

   \begin{itemize}
    \item Along the boundary stratum $Z_{1, \id}$ (or, more precisely, its preimage in $X_{K, \Fp}^{\Sigma}$), the level group is always multiplicative (since it is contained in the torus part); and the $p$-rank of $M[p]$ is $r_p(M[p]) = 1 + r_p(E[p])$ where $E$ is the elliptic curve part of $M$.

    Hence the only EKOR strata which can intersect $Z_{1, \id}$ are the $(1, m)$ and $(2, m)$ strata; the intersection with the $(1, m)$ stratum is the $p$-rank 0 locus of $Z_{1, \id}$, i.e. the supersingular locus, and the intersection with the $(2, m)$ stratum is the ordinary locus. All other EKOR strata have empty intersection with $Z_{1, \id}$.

    \item The picture for $Z_{1, s_2}$ is similar, except the level group is always \'etale. So the supersingular locus is the intersection with the $(1, \et)$ stratum, and the ordinary locus the $(2, \et)$ stratum, and all other EKOR strata are disjoint from $Z_{1, \id}$.

    \item For $Z_{1, s_1}$, the $p$-rank the 1-motive is one greater than the $p$-rank of the elliptic curve part $E$, while the $p$-level subgroup of the 1-motive is isomorphic to its image in $E$. Hence the intersections with the $(2, m)$, $(2, \et)$, and $(1, \alpha)$ strata correspond to the $m$, $\et$, and $\alpha$ strata of the modular curve.
   \end{itemize}

\begin{figure}
\caption{Intersections of boundary strata with EKOR strata.}
\label{fig:EKORboundary}

\tikzset{every picture/.style={line width=0.75pt}} 

\begin{tikzpicture}[x=0.75pt,y=0.75pt,yscale=-1,xscale=1]

\draw   (380,116) -- (380,270) -- (200,204) -- (200,50) -- cycle ;
\draw   (20,116) -- (20,270) -- (200,204) -- (200,50) -- cycle ;
\draw    (140,30) -- (200,50) ;
\draw    (260,30) -- (200,50) ;
\draw    (140,30) -- (140,70) ;
\draw    (260,30) -- (260,70) ;
\draw    (20,170) .. controls (102,105.5) and (122,195.5) .. (200,120) ;
\draw    (200,120) .. controls (254,183) and (351,131) .. (380,180) ;
\draw  [fill={rgb, 255:red, 0; green, 0; blue, 0 }  ,fill opacity=1 ] (196,120) .. controls (196,117.79) and (197.79,116) .. (200,116) .. controls (202.21,116) and (204,117.79) .. (204,120) .. controls (204,122.21) and (202.21,124) .. (200,124) .. controls (197.79,124) and (196,122.21) .. (196,120) -- cycle ;
\draw [color={rgb, 255:red, 74; green, 144; blue, 226 }  ,draw opacity=1 ][line width=1.5]    (110,85) -- (110,235) ;
\draw  [fill={rgb, 255:red, 74; green, 144; blue, 226 }  ,fill opacity=1 ] (106,115) .. controls (106,112.24) and (108.24,110) .. (111,110) .. controls (113.76,110) and (116,112.24) .. (116,115) .. controls (116,117.76) and (113.76,120) .. (111,120) .. controls (108.24,120) and (106,117.76) .. (106,115) -- cycle ;
\draw [color={rgb, 255:red, 74; green, 144; blue, 226 }  ,draw opacity=1 ][line width=1.5]    (284,80) -- (284,236) ;
\draw  [fill={rgb, 255:red, 126; green, 211; blue, 33 }  ,fill opacity=1 ] (279,155) .. controls (279,152.24) and (281.24,150) .. (284,150) .. controls (286.76,150) and (289,152.24) .. (289,155) .. controls (289,157.76) and (286.76,160) .. (284,160) .. controls (281.24,160) and (279,157.76) .. (279,155) -- cycle ;
\draw [color={rgb, 255:red, 74; green, 144; blue, 226 }  ,draw opacity=1 ][line width=1.5]    (20,150) -- (200,80) ;
\draw [color={rgb, 255:red, 74; green, 144; blue, 226 }  ,draw opacity=1 ][line width=1.5]    (200,80) -- (380,140) ;
\draw  [fill={rgb, 255:red, 74; green, 144; blue, 226 }  ,fill opacity=1 ] (280,109) .. controls (280,106.24) and (282.24,104) .. (285,104) .. controls (287.76,104) and (290,106.24) .. (290,109) .. controls (290,111.76) and (287.76,114) .. (285,114) .. controls (282.24,114) and (280,111.76) .. (280,109) -- cycle ;
\draw [color={rgb, 255:red, 74; green, 144; blue, 226 }  ,draw opacity=1 ][line width=1.5]  [dash pattern={on 1.69pt off 2.76pt}]  (170,70) -- (200,80) ;
\draw [color={rgb, 255:red, 74; green, 144; blue, 226 }  ,draw opacity=1 ][line width=1.5]  [dash pattern={on 1.69pt off 2.76pt}]  (200,80) -- (230,70) ;
\draw  [fill={rgb, 255:red, 126; green, 211; blue, 33 }  ,fill opacity=1 ] (106,149) .. controls (106,146.24) and (108.24,144) .. (111,144) .. controls (113.76,144) and (116,146.24) .. (116,149) .. controls (116,151.76) and (113.76,154) .. (111,154) .. controls (108.24,154) and (106,151.76) .. (106,149) -- cycle ;
\draw  [fill={rgb, 255:red, 126; green, 211; blue, 33 }  ,fill opacity=1 ] (195,81) .. controls (195,78.24) and (197.24,76) .. (200,76) .. controls (202.76,76) and (205,78.24) .. (205,81) .. controls (205,83.76) and (202.76,86) .. (200,86) .. controls (197.24,86) and (195,83.76) .. (195,81) -- cycle ;
\draw  [dash pattern={on 4.5pt off 4.5pt}]  (370,60) -- (360.28,128.02) ;
\draw [shift={(360,130)}, rotate = 278.13] [color={rgb, 255:red, 0; green, 0; blue, 0 }  ][line width=0.75]    (10.93,-3.29) .. controls (6.95,-1.4) and (3.31,-0.3) .. (0,0) .. controls (3.31,0.3) and (6.95,1.4) .. (10.93,3.29)   ;
\draw  [dash pattern={on 4.5pt off 4.5pt}]  (60,60) -- (60,128) ;
\draw [shift={(60,130)}, rotate = 270] [color={rgb, 255:red, 0; green, 0; blue, 0 }  ][line width=0.75]    (10.93,-3.29) .. controls (6.95,-1.4) and (3.31,-0.3) .. (0,0) .. controls (3.31,0.3) and (6.95,1.4) .. (10.93,3.29)   ;
\draw  [dash pattern={on 4.5pt off 4.5pt}]  (160,270) -- (111.28,211.54) ;
\draw [shift={(110,210)}, rotate = 50.19] [color={rgb, 255:red, 0; green, 0; blue, 0 }  ][line width=0.75]    (10.93,-3.29) .. controls (6.95,-1.4) and (3.31,-0.3) .. (0,0) .. controls (3.31,0.3) and (6.95,1.4) .. (10.93,3.29)   ;
\draw  [dash pattern={on 4.5pt off 4.5pt}]  (250,270) .. controls (234.16,249.21) and (239.88,240.18) .. (278.81,210.89) ;
\draw [shift={(280,210)}, rotate = 143.13] [color={rgb, 255:red, 0; green, 0; blue, 0 }  ][line width=0.75]    (10.93,-3.29) .. controls (6.95,-1.4) and (3.31,-0.3) .. (0,0) .. controls (3.31,0.3) and (6.95,1.4) .. (10.93,3.29)   ;
\draw  [dash pattern={on 4.5pt off 4.5pt}]  (420,210) .. controls (403.09,219.95) and (326.77,224.95) .. (284.63,156.04) ;
\draw [shift={(284,155)}, rotate = 59.04] [color={rgb, 255:red, 0; green, 0; blue, 0 }  ][line width=0.75]    (10.93,-3.29) .. controls (6.95,-1.4) and (3.31,-0.3) .. (0,0) .. controls (3.31,0.3) and (6.95,1.4) .. (10.93,3.29)   ;
\draw  [dash pattern={on 4.5pt off 4.5pt}]  (300,30) .. controls (308.82,52.54) and (310.92,77.96) .. (295,103.44) ;
\draw [shift={(294,105)}, rotate = 303.18] [color={rgb, 255:red, 0; green, 0; blue, 0 }  ][line width=0.75]    (10.93,-3.29) .. controls (6.95,-1.4) and (3.31,-0.3) .. (0,0) .. controls (3.31,0.3) and (6.95,1.4) .. (10.93,3.29)   ;
\draw  [dash pattern={on 4.5pt off 4.5pt}]  (210,30) .. controls (219.65,42.55) and (208.81,58.81) .. (200.85,74.32) ;
\draw [shift={(200,76)}, rotate = 296.57] [color={rgb, 255:red, 0; green, 0; blue, 0 }  ][line width=0.75]    (10.93,-3.29) .. controls (6.95,-1.4) and (3.31,-0.3) .. (0,0) .. controls (3.31,0.3) and (6.95,1.4) .. (10.93,3.29)   ;
\draw  [dash pattern={on 4.5pt off 4.5pt}]  (50,270) .. controls (33.08,279.95) and (47.85,228.52) .. (99.22,161.02) ;
\draw [shift={(100,160)}, rotate = 127.41] [color={rgb, 255:red, 0; green, 0; blue, 0 }  ][line width=0.75]    (10.93,-3.29) .. controls (6.95,-1.4) and (3.31,-0.3) .. (0,0) .. controls (3.31,0.3) and (6.95,1.4) .. (10.93,3.29)   ;
\draw  [dash pattern={on 4.5pt off 4.5pt}]  (120,30) .. controls (128.82,52.54) and (130.92,77.96) .. (115,103.44) ;
\draw [shift={(114,105)}, rotate = 303.18] [color={rgb, 255:red, 0; green, 0; blue, 0 }  ][line width=0.75]    (10.93,-3.29) .. controls (6.95,-1.4) and (3.31,-0.3) .. (0,0) .. controls (3.31,0.3) and (6.95,1.4) .. (10.93,3.29)   ;

\draw (349,32) node [anchor=north west][inner sep=0.75pt]   [align=left] {$\displaystyle ( Z_{p, 1, s_{1}})^{\et}$};
\draw (31,28) node [anchor=north west][inner sep=0.75pt]   [align=left] {$\displaystyle ( Z_{p, 1, s_{1}})^{m}$};
\draw (134,272) node [anchor=north west][inner sep=0.75pt]   [align=left] {$\displaystyle ( Z_{p, 1, id})^{ord}$};
\draw (251,269) node [anchor=north west][inner sep=0.75pt]   [align=left] {$\displaystyle ( Z_{p, 1, s_{2}})^{ord}$};
\draw (424,202) node [anchor=north west][inner sep=0.75pt]   [align=left] {$\displaystyle ( Z_{p, 1, s_{2}})^{ss}$};
\draw (282,2) node [anchor=north west][inner sep=0.75pt]   [align=left] {$\displaystyle Z_{p, 2, s_{2}}$};
\draw (181,2) node [anchor=north west][inner sep=0.75pt]   [align=left] {$\displaystyle ( Z_{p, 1, s_{1}})^{\alpha }$};
\draw (21,278) node [anchor=north west][inner sep=0.75pt]   [align=left] {$\displaystyle ( Z_{p, 1, id})^{ss}$};
\draw (92,2) node [anchor=north west][inner sep=0.75pt]   [align=left] {$\displaystyle Z_{p, 2, id}$};

\end{tikzpicture}
\end{figure}

 \begin{remark}
  A by-product of this analysis is the following observation:

  \begin{quotation}
   ``For each EKOR stratum $S$, there is a unique smallest $G(\Af^p)$-orbit of boundary strata with which it intersects; and the intersection with this smallest boundary stratum is a $p$-rank 0 EKOR stratum in the corresponding boundary component.''
  \end{quotation}

  So there is a bijection between EKOR strata for $\GSp_4$ of level $\Kl(p)$, and pairs consisting of a local cusp-label $(r, g_p)$, and a $p$-rank 0 EKOR stratum for $\GSp_{4-2r}$ at level $K_{r, g_p}$. (This generalises the fact that in a modular curve of $\Gamma_0(p)$ level, each of the two open EKOR strata contains a unique prime-to-$p$ Hecke orbit of cusps.)

  We expect that the above statement should hold more generally (for any $n$ and any standard parahoric $K_J$ in $\GSp_{2n}(\Zp)$). This appears to be related to the `shuffle' construction described by C.-F.~Yu in \cite{yu10}.
 \end{remark}

\section{Coefficient sheaves}

 Since the EKOR strata in Siegel Shimura varieties are smooth varieties over $\Fp$, we may form their rigid cohomology (in the sense of Berthelot). These cohomology groups are finite-dimensional $\Qp$-vector spaces; and they have an action of the Hecke algebra of $\GSp_4(\Af^p)$ of level $K^p$, and hence \emph{a fortiori} of the spherical Hecke algebra away from $S$, where $S$ consists of $p$ and the ramified primes of $K^p$. One can therefore ask which systems of eigenvalues for the spherical Hecke algebra appear in these rigid cohomology groups.

 More generally, one can also consider cohomology with compact support; or cohomology with \emph{partial} compact support, in which we impose varying support conditions towards the other EKOR strata and the toroidal boundary.

 \begin{remark}
  For an example of these partial-support cohomology groups, see \cite{HLTT16}, where the authors consider the rigid cohomology of the ordinary locus in a unitary Shimura variety (with hyperspecial level at $p$) taking compact support towards the toroidal boundary, but non-compact support toward the non-ordinary locus.
 \end{remark}

 The aim of this section is to examine the ``boundary contributions'' to these cohomology groups.

 \subsection{Logarithmic de Rham complexes}

  Let $V$ be an algebraic representation of $\GSp_4$ and $(\cV, \nabla) = \mu_{K, \dR}(V)$ the corresponding vector bundle on $Y_K$, where $\mu_{K, \dR}$ is the canonical-construction functor. This extends canonically to a vector bundle on $X_K^\Sigma$, equipped with a connection with logarithmic poles along the boundary divisor $D_K$; so we have a logarithmic de Rham complex
  \[ \DR^\bullet(V) = \cV \otimes_{\cO_{X^\Sigma_K}} \Omega^\bullet_{X^\Sigma_K}(\log D_K), \]
  and its subcomplex
  \[ \DR^\bullet_c(V) = \DR^\bullet(V)(-D_K),\]
  which are finite complexes whose terms are finite free $\cO_{X^\Sigma_K}$-modules (although the differentials are not $\cO_{X^\Sigma_K}$-linear). Since $\DR^\bullet_c(V)$ is a subcomplex of $\DR^\bullet(V)$, we can consider the quotient complex $\frac{\DR^\bullet(V)}{\DR^\bullet_c(V)}$, which is supported on the boundary $X_K - Y_K$.

  \begin{remark}
   Note that this complex can be defined for the algebraic varieties $Y_K \into X_K$, or for the rigid-analytic spaces $Y_K^{\mathrm{an}} \into X_K^{\mathrm{an}}$, and these correspond under the analytification functor for coherent sheaves.
  \end{remark}

  Our next goal will be to describe this complex. More precisely, if $Z_{r, g}$ is a boundary stratum of $X_K^{\min}$, and $\pi$ denotes the map $X_K \to X_K^{\min}$, we shall compute the restriction of $R\pi_*\left( \frac{\DR^\bullet(V)}{\DR^\bullet_c(V)} \right)$ to $Z_{r, g}$, in terms of de Rham complexes associated to algebraic representations of the Levi subgroup $M_{r, g}$.

 \subsection{D-modules}

  We shall need the concept of a \textbf{D-module} over an algebraic variety; for a systematic account see e.g.~\cite{hotta08}. For our purposes it suffices to note the following properties, for a smooth variety $Y$ over a field $k$ of characteristic 0:
  \begin{itemize}
   \item a $D$-module on $Y$ is a quasicoherent sheaf on $Y$ with an action of a certain (noncommutative) algebra sheaf on $Y$ (the sheaf of differential operators);
   \item a vector bundle with integrable connection on $Y$ is naturally a D-module on $Y$;
   \item for any $D$-module $M$ we can define a de Rham complex $\DR^\bullet(M)$, extending the usual definition when $M$ is a vector bundle.
  \end{itemize}
  This can be extended to non-smooth varieties, by choosing an embedding $Y \into X$ with $X$ a smooth variety and considering $D$-modules on $X$ supported in $Y$.

  Following \cite[\S 6]{hotta08}, we may define a subcategory $D^b_{\mathrm{rh}}(D_Y)$ of the derived category of complexes of $D$-modules consisting of bounded complexes of $D$-modules whose cohomology groups are \emph{regular holonomic}. This category satisfies the formalism of Grothendieck's ``six operations''; in particular, we have functors $f^!, f_!, f^*, f_*$ for any morphism $f$.

  \begin{note}
   In good geometric situations, the Grothendieck operations on D-modules can be understood in terms of pushforward functors for coherent sheaves. In particular, we have the following:
   \begin{itemize}
    \item If $\pi: X \to Y$ is a smooth morphism, the pushforward $\pi_*(M)$ for a $D$-module $M$ is the coherent-sheaf pushforward of the relative de Rham complex $M \otimes_{\cO_X} \Omega^\bullet_{X / Y}$. In particular, we can understand de Rham cohomology with coefficients in $M$ as pushforward along the structure map to $\Spec k$.
    \item If $D$ is a normal-crossing divisor in $X$, $Y \into^j X$ the complementary open subset, and $(M, \nabla)$ a connection on $Y$, which extends to a logarithmic connection on $X$ with unipotent monodromy along $D$, then $M$ is regular holonomic on $Y$, and the de Rham complex of $j_* M \in D^b_{\mathrm{rh}}(D_X)$ is the logarithmic de Rham complex $M \otimes_{\cO_X} \Omega^\bullet_{X}(\log Z)$, and similarly for $j_! M$.\qedhere
   \end{itemize}
  \end{note}

 \subsection{Restriction to the boundary}

  We first consider the following special case: let $X$ be a variety, $Z \subset X$ a closed subvariety, and $Y = X - Z$ the complementary open subvariety, with $j : U \into X$ and $i : Z \into X$ the inclusion maps. Then, for any $V \in D^b_{\mathrm{rh}}(D_Y)$, we have an exact triangle in $D^b_h(D_X)$,
  \begin{equation}
   \label{eq:exacttri}
   j_! V \to j_* V \to i_* i^* j_* V \to [+1].
  \end{equation}
  Thus the $D$-module complex $i^* j_* V$ on $Z$ is the ``boundary contribution'' to the cohomology of $Y$ (with coefficients in $V$).

  We shall need the following computation:

  \begin{proposition}
   Suppose $Z$ is a normal crossing divisor, and $V = (M, \nabla)$ is a connection on $Y$, extending to a logarithmic connection on $X$ with unipotent monodromy along $Z$.

   Then the de Rham complex of the $D$-module $i^* j_* V$ on $Z$, considered as a complex of sheaves on $X$ supported on $Z$, is equal to the quotient complex $\frac{\DR^\bullet(V)}{\DR^\bullet_c(V)}$.
  \end{proposition}

  \begin{proof}
   There is an explicit formula for $i^* j_* M$, as a complex of $D$-modules on $Z$ (which is well-known to the experts, although hard to find written down explicitly; we thank Claude Sabbah for explaining this formula to us). We equip $D_X$ with an increasing filtration by letting $F_n D_X$ be the operators $P$ satisfying $P \cdot I_Z^k \subseteq I_Z^{k-n}$ for all $k$ (with $I_Z$ the reduced ideal sheaf of the divisor $Z$). Then $F_0 D_X$, the sheaf of logarithmic differential operators, is a subring, and $F_1 D_X$ a module over $F_0 D_X$; moreover, $F_0 D_X$ preserves the canonical extension $\overline{V}$ of $V$. Then we may form the complex
   \[ F_1 D_X \otimes_{F_0 D_X} \overline{V} \longrightarrow \overline{V}(Z), \]
   and this gives a canonical representative of $i^* j_* V$.

   Locally around smooth points of $Z$ (i.e.~away from the intersections of the components), this complex is quasi-isomorphic to the much simpler complex $\overline{V}|_{Z} \longrightarrow \overline{V}|_{Z}$, where $\overline{V}|_{Z}$ is the pullback of $\overline{V}$ as an $\cO$-module, and the map is the monodromy operator (the residue along $Z$ of the logarithmic connection on $V$). One checks easily that the total complex of the double complex $\DR^\bullet\left(\overline{V}|_{Z} \longrightarrow \overline{V}|_{Z}\right)$ coincides with the $\cO$-module pullback to $Z$ of the logarithmic de Rham complex of $\overline{V}$.
  \end{proof}

 \subsection{Sheaves attached to algebraic representations}

  We are interested in taking $Y = Y_G(K)$ a Siegel Shimura variety. Then there is a natural \emph{canonical construction} functor
  \[ \mu_{K, \dR} : \operatorname{Rep}_{\QQ}(G) \to \operatorname{VB}(Y), \]
  where $\operatorname{VB}(-)$ denotes the category of vector bundles with integrable connection (and this takes values in the subcategory of connections with regular singularities along $X_K - Y_K$). This functor maps the defining representation of $G$ to the de Rham homology of the universal abelian variety over $Y$, with its Gauss--Manin connection. This extends naturally to a functor from complexes of algebraic representations to $D^b_{\mathrm{rh}}(Y)$.

  We want to compute the composite
  \[ i_{Z}^* \circ j_* \circ \mu_{K, \dR}\ :\ \operatorname{Rep}_{\QQ}(G) \to D^b_{\mathrm{rh}}(Z) \]
  when $X = X_K^{\min}$ and $Z = Z_{r, g}$ is a boundary stratum. This coincides, by proper base-change, with the pushforward to $Z_{r, g}$ of the corresponding D-module complex on $Z_{r, g}^{\Sigma} = \pi^{-1}(Z_{r, g})$, which is a union of boundary strata of $X_K^{\Sigma}$, for any choice of toroidal boundary data $\Sigma$. Since the proper pushforward functor is compatible with the formation of de Rham complexes, it follows that the de Rham complex of $(i_{Z_{r, g}}^* \circ j_* \circ \mu_{K, \dR})(V)$ coincides, as a complex of sheaves on $Z_{r, g}$, with the complex $R\pi_*\left( \frac{\DR^\bullet(V)}{\DR^\bullet_c(V)} \right)$ considered above.

  \begin{theorem}[Burgos--Wildeshaus]\label{thm:BWh}
   For each boundary component $Z = Z_{r, g}$ of $X_K^{\min}$ (identified with a Shimura variety $Y_{M_{r, h}}(K_{r, g})$), and each $V \in \operatorname{Rep}_{\QQ}(G)$, we have the formula
   \[ i_Z^*\, j_*\,  \mu_{K, \dR}(V) = \mu_{K_{r, g}} R\Gamma(\bar{H}_C, R\Gamma(\mathfrak{n}_r, V))[-c] \]
   after base-extension to $\QQbar$, where $c$ denotes the codimension of $Z_{r, g}$ in $X_K^{\min}$. More concretely, the cohomology sheaves of this complex are given by
   \[ \mathcal{H}^n\left(i_Z^*j_* \mu_{K, \dR}(V)\right) =
    \bigoplus_{i + j = n + c}
    \mu_{K_{r, g}} \left(H^i(\bar{H}_C, H^j(\mathfrak{n}_r, V)) \right). \]
  \end{theorem}

  \begin{note}
   Observe that $H^j(\mathfrak{n}_r, V)$ is a finite-dimensional algebraic representation of $M_r = M_{r, h} \times M_{r, \ell}$. It is therefore a direct sum of simple subrepresentations, each of which is a tensor product of a representation of $M_{r, h}$ (on which $\bar{H}_C$ acts trivially) and a representation of $M_{r, \ell}$ (on which $K_{r, g}$ acts trivially). In particular, if $M_{r, \ell}$ is a product of copies of $\mathbf{G}_m$, so that $\bar{H}_C$ (being a neat subgroup of $M_{r, \ell}(\QQ)$) is trivial, then we simply obtain
   \[ \mathcal{H}^n\left(i_Z^*\, j_*\, \mu_{K, \dR}(V)\right) =
       \mu_{K_{r, g}} \left(H^{n + c}(\mathfrak{n}_r, V)\right) .\]
  \end{note}

 \begin{proof}[Proof of \cref*{thm:BWh}]
  The main theorem of \cite{burgoswildeshaus04} is exactly the analogue of the above statement in the category of mixed Hodge modules (in the sense of Saito) on the Shimura variety over $\CC$. Recall that a mixed Hodge module is a pair consisting of a D-module and a perverse sheaf related by appropriate comparison maps; in particular, there is a forgetful functor to D-modules, and applying the forgetful functor to their result gives the theorem after base-extension to $\CC$. However, since the statement to be proved is a purely algebraic one, we can descend it to $\QQbar$ by the Lefschetz principle.
 \end{proof}

 \begin{remark} \
  \begin{enumerate}[(i)]
   \item The isomorphism of \cref{thm:BWh} satisfies a compatibility with the action of $G(\Af)$, analogous to the statements for \'etale sheaves in \cite[\S 4.8]{pink92}.

   \item We expect that these canonical isomorphisms over $\QQbar$ should descend to $\QQ$, but this cannot be deduced directly from the main result of \cite{burgoswildeshaus04}, since mixed Hodge modules only make sense over $\CC$. We have been informed by Kai--Wen Lan (pers.~comm.) that forthcoming work of his and his collaborators will establish a compatibility result for the $p$-adic Riemann--Hilbert correspondence with the six-operations functors; using this, one can derive a version of the theorem with $\Qp$-coefficients, by applying the $p$-adic Riemann--Hilbert functor to Pink's analogous result for \'etale sheaves.

   \item A related result in the case $V = \mathbf{1}_G$ is proved in \cite{wildeshaus07} using Voevodsky's triangulated category of geometrical motives. However, although there exists a ``de Rham realisation'' functor for Voevodsky's category, this still does not imply the result above: it gives an isomorphism between the de Rham cohomology groups of these two objects of $D^b_{\mathrm{rh}}(Z)$, but we do not know if this arises from an isomorphism between the underlying complexes of D-modules, so it is not meaningful to ``restrict'' to an open subspace.

   Ideally, one would like to have an isomorphism analogous to that of \cref{thm:BWh} in the category of relative motives over $Z$, from which the statement of the theorem given above would follow by taking de Rham realisations, and the result of \cite{wildeshaus07} would follow by pushing forward along the structure map to $\Spec \QQ$. However, we shall not attempt this here.\qedhere
  \end{enumerate}
 \end{remark}

\section{Application to rigid cohomology}
 \label{sect:forgetsupports}
 \subsection{Setup}
 
  We now apply the above results to prove \cref{prop:kernablapairsto0} of the main text. We recall the statement: 
  
  \begin{proposition}
   \label{prop:forgetsupports-appendix}
   The map
   \[ \tag{\dag} H^i_{c0}\left(\tb{X_{K, \Fp}^{(2, m)}}, \DR^\bullet_c(V)\right) \to  H^i_{c0}\left(\tb{X_{K, \Fp}^{(2, m)}}, \DR^\bullet(V)\right) \]
   induces an isomorphism on the $\Pif'$-generalized eigenspace for the spherical Hecke algebra, for all degrees $i$, where $\Pif$ is an automorphic representation of $G$ which is cohomological of weight $V$ and not CAP.
  \end{proposition}
  
  Here the subscript ``c0'' denotes cohomology with compact support towards the tube of the closed subvariety $X_{K, \Fp}^{\alpha}$, as in \cref{notation:defc0support}.
  
  \begin{proposition}
   We have an exact triangle
   \[ 
    R\Gamma_{c}\left(\tb{X_{K, \Fp}^{(1, m)}}, \DR^\bullet(V)\right) \to R\Gamma_{c}\left(\tb{X_{K, \Fp}^{m}}, \DR^\bullet(V)\right) \to R\Gamma_{c0}\left(\tb{X_{K, \Fp}^{(2, m)}},  \DR^\bullet(V)\right) \to [+1] 
   \]
   and similarly for $\DR_c$.
  \end{proposition}
  
  \begin{proof} 
   This is an instance of \cref{prop:exactseqs}. 
  \end{proof}
  
  \begin{proposition}
   \label{prop:interior-reduction}
   If the maps 
   \[ H^i_{c}\left(\tb{X_{K, \Fp}^{(1, m)}}, \DR^\bullet_c(V)\right) \to  H^i_{c}\left(\tb{X_{K, \Fp}^{(1, m)}}, \DR^\bullet(V)\right)\]
   and
   \[ H^i_{c}\left(\tb{X_{K, \Fp}^{m}}, \DR^\bullet_c(V)\right) \to  H^i_{c}\left(\tb{X_{K, \Fp}^{m}}, \DR^\bullet(V)\right)\]
   induce isomorphisms on the $\Pif'$-generalized eigenspaces for all degrees $i$, then the same is true for the map $(\dag)$.
  \end{proposition}
  
  \begin{proof}
   Since taking generalized eigenspaces is exact (a particular case of the exactness of localization), this follows from the five-lemma and the cohomology long exact sequences associated to the two triangles in the previous proposition.
  \end{proof}

 \subsection{Rigid cohomology of the $(1, m)$ locus}

  To analyse the first map in \cref{prop:interior-reduction}, we consider the short exact sequence of complexes of coherent sheaves on $X_K$,
  \[
   \tag{\dag}
   0 \to \DR^\bullet_c(V) \to \DR^\bullet(V) \to \frac{\DR^\bullet(V)}{\DR^\bullet_c(V)} \to 0.
  \]
  This makes sense either on the algebraic variety $X_K$, or on its dagger analytification $\cX_K$ (since analytification of coherent sheaves is exact). Clearly, the third term is supported on the boundary $Z_K^{\Sigma} = X_K^{\Sigma} - Y_K^{\Sigma}$ of the toroidal compactification (resp.~on its analytification $\mathcal{Z}_K^{\Sigma}$.) To lighten the notation we shall drop the $K$'s for the remainder of this proof.
  
  \begin{proposition}
   To prove that the first map of \cref{prop:interior-reduction} is an isomorphism in all degrees, it suffices to prove that the $\Pif'$-generalized eigenspace of $H^*_{c}\left(\tb{X_{K, \Fp}^{(1, m)}} \cap \mathcal{Z}^{\Sigma}, \frac{\DR^\bullet(V)}{\DR^\bullet_c(V)}\right)$ vanishes in all degrees.
  \end{proposition} 
  
  \begin{proof}
   We can interpret the map concerned as part of the cohomology long exact sequence associated to the exact triangle
   \[ 
    R\Gamma_{c}\left(\tb{X_{K, \Fp}^{(1, m)}}, \DR^\bullet_c(V)\right) \to  R\Gamma_{c}\left(\tb{X_{K, \Fp}^{(1, m)}}, \DR^\bullet(V)\right) \to
    R\Gamma_{c}\left(\tb{X_{K, \Fp}^{(1, m)}}\, \cap \mathcal{Z}_K^{\Sigma}, \frac{\DR^\bullet(V)}{\DR^\bullet_c(V)}\right)\to [+1], 
   \]
   given by $(\dag)$ and the identification of $\frac{\DR^\bullet(V)}{\DR^\bullet_c(V)}$ with a complex of sheaves on $\mathcal{Z}^\Sigma$. Since passing to $\Pif'$-generalized eigenspaces is exact (an instance of the exactness of localization at a maximal ideal), we have an analogous long exact sequence of localisations; and if the terms arising from $\frac{\DR^\bullet(V)}{\DR^\bullet_c(V)}$ localize to 0, the maps between remaining terms must be isomorphisms in every degree.
  \end{proof}
  
  \begin{proposition}
   We have
   \[ \tb{X_{\Fp}^{(1, m)}} \cap \mathcal{Z}^{\Sigma} = \tb{X_{\Fp}^{(1, m)}} \cap Z_{p, 1,\mathrm{id}}^{\Sigma} = \pi^{-1}\left(\tb{Z_{p, 1, \mathrm{id}, \Fp}^{\mathrm{ss}}}\right), \]
   where $Z_{p, 1, \mathrm{id}, \Fp}^{\mathrm{ss}}$ denotes the supersingular locus in the special fibre of $Z_{p, 1, \mathrm{id}}$, which is a finite union of prime-to-$p$ level modular curves.
  \end{proposition}
  
  \begin{proof}
   From the functoriality of the tube construction we have
   \[ \tb{X_{K, \Fp}^{(1, m)}}\ \cap\ \mathcal{Z}^\Sigma =\ 
     \Big] X_{K, \Fp}^{(1, m)}  \cap Z^{\Sigma}_{\Fp} \Big[\ .
   \]
   We saw in \cref{sect:explcitintersection} above that the intersection $X_{K, \Fp}^{(1, m)}  \cap Z^{\Sigma}_{\Fp}$ is contained in the stratum $Z^{\Sigma}_{p, 1, \mathrm{id}, \Fp}$, and the intersection is exactly the preimage of the supersingular locus $Z_{p, 1, \mathrm{id}, \Fp}^{\mathrm{ss}}$.
  \end{proof}
  
  These two propositions combine to give a canonical isomorphism
  \[ R\Gamma_{c}\left(\tb{X_{K, \Fp}^{(1, m)}}\, \cap \mathcal{Z}_K^{\Sigma}, \frac{\DR^\bullet(V)}{\DR^\bullet_c(V)}\right) = R\Gamma_{c}\left(\tb{Z_{p, 1, \mathrm{id}, \Fp}^{\mathrm{ss}}}, R\pi_* \left(\frac{\DR^\bullet(V)}{\DR^\bullet_c(V)}\right) \right), \]
  where the variety $Z_{p, 1, \mathrm{id}}$ is a disjoint union of modular curves $Z_{1, g}$ indexed by the global cusp-labels $(1, g) \in \mathfrak{C}(K)$ mapping to $(1, \mathrm{id}) \in \mathfrak{C}(\Kl(p))$.

  \begin{proposition}
   For each cusp-label $(1, g)$ mapping to $(1, \mathrm{id}) \in \mathfrak{C}(\Kl(p))$, the base-extension to $\QQbar_p$ of the complex of sheaves $R\pi_*\left( \frac{\DR^\bullet(V)}{\DR^\bullet_c(V)}\right)$ on the dagger space $\tb{Z_{1, g, \Fp}^{\mathrm{ss}}}$ has a filtration whose $n$-th graded piece is the de Rham complex of the vector bundle $\mu_{K_{1, g}, \dR}\left(W_n\right)$, where $W_n$ denotes the algebraic representation $H^{n+2}(\mathfrak{n}_{\Kl}, V)$ of $M_{\Kl}$.
  \end{proposition}
  
  \begin{proof}
   Since the map $\pi$ is proper, $R\pi_*$ commutes with rigid-analytification of coherent sheaves; so for each cusp-label $(1, g) \in \mathfrak{C}(K)$ mapping to $(1, \mathrm{id}) \in \mathfrak{C}(\Kl(p))$, the sheaf $R\pi_*\left( \frac{\DR^\bullet(V)}{\DR^\bullet_c(V)}\right)$ on the dagger space $\tb{Z_{1, g, \Fp}^{\mathrm{ss}}}$ is restriction to the open dagger subvariety $\tb{Z_{1, \mathrm{id}, \Fp}^{\mathrm{ss}}}$ of the analytification of the corresponding algebraic coherent sheaf $R\pi_*\left( \frac{\DR^\bullet(V)}{\DR^\bullet_c(V)}\right)$ on $Z_{1, \mathrm{id}}$ as a $\Qp$-variety.
   
   This we have computed above, using the formalism of D-modules, in \cref{thm:BWh}. In this case the codimension $c$ is 2, and the group $\overline{H}_C$ is trivial (as we observed above, since it is a neat arithmetic subgroup of $\mathbf{G}_m / \QQ$); so the direct sum over $i + j = n + c$ collapses to a single term with $j = n + 2$.
  \end{proof}
  
  We now consider passage to the limit over prime-to-$p$ level groups $K^{p}$. Recall that $Z_{p, 1, \mathrm{id}}$ denotes the disjoint union of the boundary strata $Z_{1, g}$ over global cusp-labels mapping to the local cusp-label $(1, \id)$.
  
  \begin{remark}
   This step is identical to the corresponding step in the computation of boundary computations to Betti cohomology of locally symmetric spaces, a classical result due to Harder (see e.g.~\cite[\S 4.2.1]{harderraghuram20} for an account). We do not need to use any properties of the stratification at $p$ other than its invariance under the $G(\Af^p)$-action.
  \end{remark}
  
  \begin{proposition}
   We have an isomorphism of $G(\Af^p)$-representations
   \[ 
    \varinjlim_{K^p} H^*_{c}\left(\tb{Z_{p, 1, \mathrm{id}, \Fp}^{\mathrm{ss}}}, - \right) = \operatorname{Ind}_{P_{\Kl}(\QQ) P_{\Kl, h}(\Af^p)}^{G(\Af^p)}\left( H^*_c \left(\tb{Z_{1, \id, \Fp}^{\mathrm{ss}}}, -\right)\right),
   \]
   where $-$ denotes the complex $ R\pi_*\tfrac{\DR^\bullet(V)}{\DR^\bullet_c(V)}$.
  \end{proposition}
  
  \begin{proof}
   For each $K^p$ we have
   \[ R\Gamma_{c}\left(\tb{Z_{p, 1, \mathrm{id}, \Fp}^{\mathrm{ss}}}, - \right) = 
      \bigoplus_{g} 
       R\Gamma_{c}\left(\tb{Z_{1, g, \Fp}^{\mathrm{ss}}}, -\right).
   \]
   The $g$ appearing in this sum are parametrized by the quotient $P_{\Kl}(\QQ) P_{\Kl, h}(\Af^p) \backslash G(\Af^p) / K^p$; so the above is exactly the standard formula given by Mackey theory for the $K^p$-invariants of the induced representation. Passing to the limit over $K^p$ we obtain the result.
  \end{proof}

  It remains to identify the action of the group $P_{\Kl}(\QQ) P_{\Kl, h}(\Af^p)$ on $R\Gamma_c \left(\tb{Z_{1, \id, \Fp}^{\mathrm{ss}}}, -\right)$. Again, this step is identical to the corresponding computation for Betti cohomology. The action of the unipotent radical $N_{\Kl}(\Af)$ is trivial (since $N_{\Kl}(\Af)$ acts trivially on the boundary components of the minimal compactification, and on the Lie algebra cohomology $R\Gamma(\mathfrak{n}_{\Kl}, -)$); so the action factors through $M_{\Kl, h}(\Af^p) M_{\Kl}(\QQ) \cong \GL_2(\Af^p) \times \QQ^\times$. The $\QQ^\times$ factor also acts trivially on the boundary components, but acts on the coefficients via its natural action on the complex of algebraic $M_{\Kl}$-representations $R\Gamma(\mathfrak{n}_{\Kl}, V)$. The action of $\GL_2(\Af^p)$ is given by the following theorem:
  
  \begin{proposition}[Deuring, Serre]
   If $W$ denotes the algebraic representation $\Sym^k$ of $\GL_2$, and $L_p = \GL_2(\Zp)$, then the cohomology
   \[ \varinjlim_{L^p \subset \GL_2(\Af^p)} H^*_c\big( \tb{Y_{\GL_2}(L^p L_p)^{ss}}, W\big)\]
   is concentrated in degree 1, and is isomorphic as a $\GL_2(\Af^p)$-representation to the direct sum of the prime-to-$p\infty$ parts of all cuspidal automorphic representations associated to modular forms of weight $k + 2$ whose local factor at $p$ is an unramified twist of Steinberg.
  \end{proposition}
  
  \begin{proof}
   This follows from the fact that the supersingular locus in the $\GL_2$ Shimura variety can be identified (compatibly with the $G(\Af^p)$-action) with the 0-dimensional Shimura variety for the unique quaternion algebra $D$ of discriminant $p\infty$, with the local level at $p$ given by a maximal order in $D \otimes \Qp$. We refer to introduction of \cite{tianxiao16} for a modern account of this theory (including the compatibility with the passage from algebraic representations to coefficient sheaves). Hence the $G(\Af^p)$-representations which contribute are precisely those which are in the image of Jacquet--Langlands transfer from automorphic representations of $D^\times$ which have maximal level at $p$, and have Archimedean component $W^\vee$. These correspond to the modular forms stated.
  \end{proof}
  
  \begin{corollary}
   Every representation of $G(\Af^p)$ appearing as a Jordan-H\"older factor of the representation
   \[ \varinjlim_{K^p} H^*_{c}\left(\tb{Z_{p, 1, \mathrm{id}, \Fp}^{\mathrm{ss}}}, R\pi_*\left( \frac{\DR^\bullet(V)}{\DR^\bullet_c(V)} \right)\right)\]
   is a subquotient of a representation of the form $\operatorname{Ind}_{P(\Af^p)}^{G(\Af^p)}\left(\pi_{\mathrm{f}}^p \times \chi\right)$, where $\pi$ is the representation associated to a modular form of some weight $\ge 2$, and $\chi$ is an algebraic Gr\"ossencharacter.
  \end{corollary}
  
  \begin{proof}
   We can write the induction $\operatorname{Ind}_{P_{\Kl}(\QQ) P_{\Kl, h}(\Af^p)}^{G(\Af^p)}$ as the composite of two steps: induction from $\GL_2(\Af^p) \times \QQ^\times$ to $(\GL_2 \times \GL_1)(\Af^p)$, and parabolic induction from $P_{\Kl}(\Af^p)$ to $G$.
   
   We have seen that the representation $R\Gamma_c \left(\tb{Z_{1, \id, \Fp}^{\mathrm{ss}}}, -\right)$ has a filtration whose gradeds are direct sums of the representations $\pi_{\mathrm{f}}^p$ associated to modular forms, considered as representations of $\GL_2(\Af^p) \times \QQ^\times$ with $\QQ^\times$ acting via some algebraic character. Induction from $\GL_2(\Af^p) \times \QQ^\times$ to $\GL_2(\Af^p) \times \GL_1(\Af^p)$ maps each $\pi_{\mathrm{f}}^p$ to the direct sum of the $\pi_{\mathrm{f}}^p \times \chi$, as $\chi$ varies over Gr\"ossencharacters of conductor coprime to $p$ and infinity-type determined by the $\QQ^\times$-action. So the possible Jordan-H\"older factors of $\varinjlim_{K^p} H^*_{c}\left(\tb{Z_{p, 1, \mathrm{id}, \Fp}^{\mathrm{ss}}}, R\pi_*\left( \frac{\DR^\bullet(V)}{\DR^\bullet_c(V)} \right)\right)$ are precisely the parabolic inductions of these representations. 
  \end{proof}
  
  Passing to $K_p$-invariants, for an arbitrary $K_p$, we conclude that the spherical Hecke eigensystems appearing in $H^*_{c}\left(\tb{X_{K, \Fp}^{(1, m)}} \cap \mathcal{Z}^{\Sigma}, \frac{\DR^\bullet(V)}{\DR^\bullet_c(V)}\right)$ are all associated to pairs $(\pi, \chi)$ as above; in terms of Galois representations, they correspond to 4-dimensional representations which are the sum of two 2-dimensional blocks. Hence this module must localize to 0 at a non-endoscopic, non-CAP representation $\Pi$, and we have shown that the first map in \cref{prop:interior-reduction} is an isomorphism on the $\Pif'$-generalized eigenspace after base-extension to $\QQbar_p$. Since base-extension to $\QQbar_p$ is compatible with forming kernels and cokernels, the map is an isomorphism over $\Qp$.

 \subsection{Rigid cohomology of the multiplicative locus}

  We now consider the second statement of \cref{prop:interior-reduction}, using full multiplicative locus $X_{K, \Fp}^m = X_{K, \Fp}^{(1, m)} \cup X_{K, \Fp}^{(2, m)}$. Exactly as in the previous section, it suffices to show that the $\Pif'$-generalized eigenspace is zero in the cohomology
  \[ H^*_{c}\left(\tb{X_{K, \Fp}^{m}} \cap \mathcal{Z}^{\Sigma}, \frac{\DR^\bullet(V)}{\DR^\bullet_c(V)}\right),\]
  where $\mathcal{Z}^{\Sigma}$ is the toroidal boundary.
  
  We now recall that we can decompose $Z^{\Sigma}$ (and hence $\mathcal{Z}^{\Sigma}$) as a union $Z^{\Sigma}_1 \cup Z_2^{\Sigma}$, where $Z^{\Sigma}_r = \bigsqcup_{g \in \mathfrak{C}(r, K)} Z^{\Sigma}_{r, g}$. Moreover, $Z_1^{\Sigma}$ is open in $Z^{\Sigma}$, and $Z_2^{\Sigma}$ is closed. This gives an exact triangle
  \[ 
   R\Gamma_c\left(\tb{X_{K, \Fp}^{m}} \cap \mathcal{Z}^{\Sigma}_1, -\right) \to 
   R\Gamma_c\left(\tb{X_{K, \Fp}^{m}} \cap \mathcal{Z}^{\Sigma}, -\right) \to 
   R\Gamma\left(\tb{X_{K, \Fp}^{m}} \cap \mathcal{Z}_2^{\Sigma}, -\right)  \to [+1]. \]
  
  \begin{remark}
   Here we do not need to specify a support condition for the third term, since $\tb{X_{K, \Fp}^{m}} \cap \mathcal{Z}_2^{\Sigma}$ is a union of connected components of $\mathcal{Z}_2^{\Sigma}$ and hence proper.
   
   The exact triangle above can be obtained purely sheaf-theoretically using the description of $i^* j_* V$ given above: there is a natural map from the de Rham complex of $(i^{\Sigma})^* (j^\Sigma)_*(V)$, as a complex of coherent sheaves on $Z^{\Sigma}$, to the corresponding complex for the restriction to $Z_2^{\Sigma}$. A somewhat tedious computation in local coordinates shows that the kernel of this map can be identified with the image of the natural map from the logarithmic de Rham complex along $Z_2^{\Sigma}$ with compact support towards the intersection points.
  \end{remark}
  
  For the first term (with $\mathcal{Z}^{\Sigma}_1$), we have seen in \cref{sect:explcitintersection} that  
  \[ \tb{X_{K, \Fp}^{m}} \cap \cZ^{\Sigma}_1 = \pi^{-1} \left(\tb{Z^m_{p, 1, s_1, \Fp}}\right)\, \sqcup\, \pi^{-1}\left(\tb{Z_{p, 1, \id, \Fp}}\right),\]
  giving a corresponding direct-sum decomposition of the cohomology. Exactly as in the previous section, after passing to the limit over prime-to-$p$ levels $K^p$, we obtain the induction formula
  \[ \varinjlim_{K^p} H^*_c\left(\tb{Z^m_{p, 1, s_1, \Fp}}, -\right) = \operatorname{Ind}_{P_{\Kl}(\Af^p)P_{\Kl}(\QQ)}^{G(\Af^p)}
  H^*_c\left(\tb{Z^m_{p, 1, s_1, \Fp}}, - \right),\]
  and similarly for $Z_{p, 1, \id, \Fp}$. So we are reduced to computing the $\GL_2(\Af^p)$-representations appearing in the cohomology of a modular curve of prime-to-$p$ level, resp.~of the multiplicative locus in a modular curve of $\Gamma_0(p)$ level. The cohomology of either can be computed (over $\QQbar_p$) in terms of modular forms of either unramified or Steinberg type at $p$, and the proof concludes exactly as before.
  
  One can also make a similar argument in the case of the Siegel boundary strata, but this is somewhat more intricate (since the group $\bar{H}_C$ appearing in the computation varies with $K^p$). So it is much simpler just to note that as $\tb{X_{K, \Fp}^{m}} \cap \mathcal{Z}_2^{\Sigma} = \mathcal{Z}_{p, 2, \mathrm{id}}^{\Sigma}$ is a union of components of the analytification of $Z^{\Sigma}_{p, 2, \mathrm{id}}$, its cohomology is a $G(\Af^p)$-invariant direct summand of the cohomology of the full Siegel boundary stratum $\cZ_2^{\Sigma}$. The latter is the analytification of a projective algebraic variety over $\QQ$; so, by the rigid-analytic GAGA theorem, we can compute its cohomology using algebraic de Rham cohomology, and hence compare with Betti cohomology of the complex points. (That is, we may directly quote Harder's computations rather than recapitulating their proofs.) We conclude, again, that all the eigensystems for the spherical Hecke algebra appearing in $R\Gamma\left(\tb{X_{K, \Fp}^{m}} \cap \mathcal{Z}_2^{\Sigma}, \DR^\bullet(V)/\DR^\bullet_c(V)\right)$ correspond to parabolic inductions (this time via the Siegel parabolic), of cohomological automorphic representations of $\GL_2 \times \GL_1$. Hence these localize to 0 at $\Pif'$, completing the proof of \cref{prop:forgetsupports-appendix}.
  

%
\newcommand{\noopsort}[1]{\relax}
 \bibliographystyle{../../amsalphaurl}

\begin{thebibliography}{GLZ 25}

\bibitem[AIS15]{andreattaiovitastevens}
\textsc{F.~Andreatta, A.~Iovita,} and \textsc{G.~Stevens},
  \articlehref{http://doi.org/10.1017/S1474748013000364}{\emph{Overconvergent
  {E}ichler--{S}himura isomorphisms}}, J. Inst. Math. Jussieu \textbf{14}
  (2015), no.~2, 221--274. \MR{3315057}

\bibitem[Art04]{arthur04}
\textsc{J.~Arthur}, \emph{Automorphic representations of
  {$\operatorname{GSp}(4)$}}, Contributions to automorphic forms, geometry, and
  number theory, Johns Hopkins Univ. Press, 2004, pp.~65--81. \MR{2058604}

\bibitem[BB04]{baldassarriberthelot04}
\textsc{F.~Baldassarri} and \textsc{P.~Berthelot}, \emph{On {D}work cohomology
  for singular hypersurfaces}, Geometric aspects of {D}work theory. {V}ol. {I},
  {II}, Walter de Gruyter, Berlin, 2004, pp.~177--244. \MR{2023290}

\bibitem[BCF04]{baldassarrietal04}
\textsc{F.~Baldassarri, M.~Cailotto,} and \textsc{L.~Fiorot},
  \articlehref{http://doi.org/10.1007/s00229-004-0448-y}{\emph{Poincar\'e
  duality for algebraic de {R}ham cohomology}}, Manuscripta Math. \textbf{114}
  (2004), no.~1, 61--116. \MR{2136528}

\bibitem[BK10]{bannaikings10}
\textsc{K.~Bannai} and \textsc{G.~Kings},
  \articlehref{http://doi.org/10.1353/ajm.2010.0012}{\emph{{$p$}-adic elliptic
  polylogarithm, {$p$}-adic {E}isenstein series and {K}atz measure}}, Amer. J.
  Math. \textbf{132} (2010), no.~6, 1609--1654. \MR{2766179}

\bibitem[BD{\etalchar{+}}25]{BDGJW25}
\textsc{D.~{Barrera Salazar}, M.~Dimitrov, A.~Graham, A.~Jorza,} and
  \textsc{C.~Williams},
  \articlehref{http://doi.org/10.56994/JAMR.003.002.002}{\emph{On the
  {$\operatorname{GL}(2n)$} eigenvariety: branching laws, {S}halika families
  and {$p$}-adic {$L$}-functions}}, J. Assoc. Math. Research \textbf{3} (2025),
  no.~2, 161--–236.

\bibitem[Be{\u\i}86]{beilinson86}
\textsc{A.~Be{\u\i}linson},
  \articlehref{http://doi.org/10.1090/conm/055.1/862628}{\emph{Notes on
  absolute {H}odge cohomology}}, Applications of algebraic {$K$}-theory to
  algebraic geometry and number theory, {P}art {I}, {II} ({B}oulder, {C}olo.,
  1983), Contemp. Math., vol.~55, Amer. Math. Soc., Providence, RI, 1986,
  pp.~35--68. \MR{862628}

\bibitem[Ber97]{berthelot97}
\textsc{P.~Berthelot},
  \articlehref{http://doi.org/10.1007/s002220050143}{\emph{Finitude et
  puret\'{e} cohomologique en cohomologie rigide}}, Invent. Math. \textbf{128}
  (1997), no.~2, 329--377, with an appendix by A.~J.~de Jong.
  \MR{1440308}

\bibitem[BDP13]{BDP13}
\textsc{M.~Bertolini, H.~Darmon,} and \textsc{K.~Prasanna},
  \articlehref{http://doi.org/10.1215/00127094-2142056}{\emph{Generalized
  {H}eegner cycles and {$p$}-adic {R}ankin {$L$}-series}}, Duke Math. J.
  \textbf{162} (2013), no.~6, 1033--1148, with an appendix by Brian Conrad.
  \MR{3053566}

\bibitem[Bes00]{besser00}
\textsc{A.~Besser},
  \articlehref{http://doi.org/10.1007/BF02834843}{\emph{Syntomic regulators and
  {$p$}-adic integration. {I}. {R}igid syntomic regulators}}, Israel J. Math.
  \textbf{120} (2000), no.~1, 291--334. \MR{1809626}

\bibitem[Bes12]{besser12}
\bysame, \articlehref{http://doi.org/10.1007/s11856-011-0188-0}{\emph{On the
  syntomic regulator for {$K_1$} of a surface}}, Israel J. Math. \textbf{190}
  (2012), no.~1, 29--66. \MR{2956231}

\bibitem[BLZ16]{besserloefflerzerbes16}
\textsc{A.~Besser, D.~Loeffler,} and \textsc{S.~L. Zerbes},
  \articlehref{http://doi.org/10.1007/s40316-015-0041-7}{\emph{Finite
  polynomial cohomology for general varieties}}, Ann. Math. Qu\'ebec
  \textbf{40} (2016), no.~1, 203--220 ({G}lenn {S}tevens 60th birthday issue).
  \MR{3512529}

\bibitem[BP20]{boxerpilloni20}
\textsc{G.~Boxer} and \textsc{V.~Pilloni},
  \articlehref{https://perso.ens-lyon.fr/vincent.pilloni/HigherColeman.pdf}{\emph{Higher
  {C}oleman theory}}, preprint, 2020.

\bibitem[BD21]{browndupont}
\textsc{F.~Brown} and \textsc{C.~Dupont},
  \articlehref{http://doi.org/10.1515/crelle-2020-0042}{\emph{Single-valued
  integration and double copy}}, J. Reine Angew. Math. \textbf{775} (2021),
  145--196. \MR{4265182}

\bibitem[BW04]{burgoswildeshaus04}
\textsc{J.~I. Burgos} and \textsc{J.~Wildeshaus},
  \articlehref{http://doi.org/10.1016/j.ansens.2004.01.002}{\emph{Hodge modules
  on {S}himura varieties and their higher direct images in the {B}aily-{B}orel
  compactification}}, Ann. Sci. \'{E}cole Norm. Sup. (4) \textbf{37} (2004),
  no.~3, 363--413. \MR{2060481}

\bibitem[BG14]{buzzardgee14}
\textsc{K.~Buzzard} and \textsc{T.~Gee},
  \articlehref{http://doi.org/10.1017/CBO9781107446335.006}{\emph{The
  conjectural connections between automorphic representations and {G}alois
  representations}}, Automorphic forms and {G}alois representations. {V}ol. 1,
  LMS Lecture Notes vol. 414, Cambridge Univ. Press, 2014,
  pp.~135--187. \MR{3444225}

\bibitem[D{\'e}g08]{deglise08}
\textsc{F.~D{\'e}glise},
  \articlehref{https://www.math.uni-bielefeld.de/documenta/vol-13/18.html}{\emph{Around
  the {G}ysin triangle. {II}}}, Doc. Math. \textbf{13} (2008), 613--675.
  \MR{2466188}

\bibitem[DN18]{degliseniziol18}
\textsc{F.~D{\'e}glise} and \textsc{W.~Nizio{\l}},
  \articlehref{http://doi.org/10.4171/CMH/430}{\emph{On {$p$}-adic absolute
  {H}odge cohomology and syntomic coefficients. {I}}}, Comment. Math. Helv.
  \textbf{93} (2018), no.~1, 71--131. \MR{3777126}

\bibitem[Del74]{deligne74}
\textsc{P.~Deligne},
  \articlehref{http://doi.org/10.1007/BF02685881}{\emph{Th\'{e}orie de {H}odge.
  {III}}}, Pub. Math. IH{\'E}S \textbf{44} (1974), 5--77. \MR{498552}

\bibitem[DI87]{deligneillusie87}
\textsc{P.~Deligne} and \textsc{L.~Illusie},
  \articlehref{http://doi.org/10.1007/BF01389078}{\emph{Rel\`evements modulo
  {$p^2$} et d\'{e}composition du complexe de de {R}ham}}, Invent. Math.
  \textbf{89} (1987), no.~2, 247--270. \MR{894379}

\bibitem[DR73]{delignerapoport73}
\textsc{P.~Deligne} and \textsc{M.~Rapoport},
  \articlehref{http://doi.org/10.1007/978-3-540-37855-6_4}{\emph{Les sch\'emas
  de modules de courbes elliptiques}}, Modular functions of one variable, {II}
  ({A}ntwerp, 1972), Lecture Notes in Math., vol. 349, Springer, Berlin, 1973,
  pp.~143--316. \MR{0337993}

\bibitem[DJR20]{DJR20}
\textsc{M.~Dimitrov, F.~Januszewski,} and \textsc{A.~Raghuram},
  \articlehref{http://doi.org/10.1112/S0010437X20007551}{\emph{{$L$}-functions
  of {$\operatorname{GL}_{2n}$}: {$p$}-adic properties and non-vanishing of
  twists}}, Compos. Math. \textbf{156} (2020), no.~12, 2437--2468. \MR{4199213}

\bibitem[EY20]{ertlyamada19}
\textsc{V.~Ertl} and \textsc{K.~Yamada},
  \articlehref{http://arxiv.org/abs/2009.09160}{\emph{Poincare duality for
  rigid analytic {H}yodo--{K}ato theory}}, preprint, 2020,
  \path{arXiv:2009.09160}.

\bibitem[EY21]{ertlyamada18}
\bysame, \articlehref{http://doi.org/10.4171/rsmup/81}{\emph{Comparison between
  rigid and crystalline syntomic cohomology for strictly semistable log schemes
  with boundary}}, Rend. Semin. Mat. Univ. Padova \textbf{145} (2021),
  213--291. \MR{4261656}

\bibitem[Fal89]{faltings89}
\textsc{G.~Faltings}, \emph{Crystalline cohomology and {$p$}-adic
  {G}alois-representations}, Algebraic analysis, geometry, and number theory
  ({B}altimore, {MD}, 1988), Johns Hopkins Univ. Press, 1989, pp.~25--80.
  \MR{1463696}

\bibitem[FC90]{faltingschai}
\textsc{G.~Faltings} and \textsc{C.-L. Chai},
  \articlehref{http://doi.org/10.1007/978-3-662-02632-8}{\emph{Degeneration of
  abelian varieties}}, Ergebnisse der Mathematik und ihrer Grenzgebiete,
  vol.~22, Springer, Berlin, 1990. \MR{1083353}

\bibitem[GT19]{geetaibi18}
\textsc{T.~Gee} and \textsc{O.~Ta\"{\i}bi},
  \articlehref{http://doi.org/10.5802/jep.99}{\emph{Arthur's multiplicity
  formula for {$\operatorname{GSp}_4$} and restriction to
  {$\operatorname{Sp}_4$}}}, J. \'Ecole Poly. Math. \textbf{6} (2019),
  469--535. \MR{3991897}

\bibitem[GK98]{grossekloenne-thesis}
\textsc{E.~Grosse-Kl{\"o}nne}, \emph{de {R}ham-{K}ohomologie in der rigiden
  {A}nalysis}, Ph.D. thesis, Universit\"at M\"unster, 1998.

\bibitem[GK00]{grossekloenne00}
\bysame, \articlehref{http://doi.org/10.1515/crll.2000.018}{\emph{Rigid
  analytic spaces with overconvergent structure sheaf}}, J. reine angew. Math.
  \textbf{519} (2000), 73--95. \MR{1739729}

\bibitem[GK02]{grossekloenne02}
\bysame,
  \articlehref{http://doi.org/10.1215/S0012-7094-02-11312-X}{\emph{Finiteness
  of de {R}ham cohomology in rigid analysis}}, Duke Math. J. \textbf{113}
  (2002), no.~1, 57--91. \MR{1905392}

\bibitem[GK05]{grossekloenne05}
\bysame,
  \articlehref{http://doi.org/10.1090/S1056-3911-05-00402-9}{\emph{Frobenius
  and monodromy operators in rigid analysis, and {D}rinfel'd's symmetric
  space}}, J. Algebraic Geom. \textbf{14} (2005), no.~3, 391--437. \MR{2129006}

\bibitem[GLZ25]{grossiloefflerzerbes-GO4}
\textsc{G.~Grossi, D.~Loeffler,} and \textsc{S.~L. Zerbes},
  \articlehref{http://arxiv.org/abs/2407.17055v2}{\emph{Asai--{F}lach classes,
  {$p$}-adic {$L$}-functions and the {B}loch--{K}ato conjecture for
  {$\operatorname{GO}(4)$}}}, preprint, 2025, \path{arXiv:2407.17055v2}.

\bibitem[HR20]{harderraghuram20}
\textsc{G.~Harder} and \textsc{A.~Raghuram}, \emph{Eisenstein cohomology for
  {$\operatorname{GL}_N$} and the special values of {R}ankin--{S}elberg
  {$L$}-functions}, Ann. of Math. Studies, vol. 203, Princeton Univ. Press,
  2020. \MR{3970997}

\bibitem[Har04]{harris04}
\textsc{M.~Harris}, \emph{Occult period invariants and critical values of the
  degree four {$L$}-function of {$\operatorname{GSp}(4)$}}, Contributions to
  automorphic forms, geometry, and number theory (in honour of {J}.~{S}halika),
  Johns Hopkins Univ. Press, 2004, pp.~331--354. \MR{2058613}

\bibitem[HL{\etalchar{+}}16]{HLTT16}
\textsc{M.~Harris, K.-W. Lan, R.~Taylor,} and \textsc{J.~Thorne},
  \articlehref{http://doi.org/10.1186/s40687-016-0078-5}{\emph{On the rigid
  cohomology of certain {S}himura varieties}}, Res. Math. Sci. \textbf{3}
  (2016), no.~37, special collection in honour of R.F.~Coleman. \MR{3565594}

\bibitem[HT02]{harristaylor02}
\textsc{M.~Harris} and \textsc{R.~Taylor},
  \articlehref{http://doi.org/10.4310/AJM.2002.v6.n1.a4}{\emph{Regular models
  of certain {S}himura varieties}}, Asian J. Math. \textbf{6} (2002), no.~1,
  61--94. \MR{1902647}

\bibitem[Har66]{hartshorne66}
\textsc{R.~Hartshorne},
  \articlehref{http://doi.org/10.1007/BFb0080482}{\emph{Residues and duality}},
  Lecture Notes in Math., vol.~20, Springer, Berlin, 1966. \MR{0222093}

\bibitem[HR17]{herapoport17}
\textsc{X.~He} and \textsc{M.~Rapoport},
  \articlehref{https://0-doi-org.pugwash.lib.warwick.ac.uk/10.1007/s00229-016-0863-x}{\emph{Stratifications
  in the reduction of {S}himura varieties}}, Manuscripta Math. \textbf{152}
  (2017), no.~3-4, 317--343. \MR{3608295}

\bibitem[HTT08]{hotta08}
\textsc{R.~Hotta, K.~Takeuchi,} and \textsc{T.~Tanisaki},
  \articlehref{http://doi.org/10.1007/978-0-8176-4523-6}{\emph{{$D$}-modules,
  perverse sheaves, and representation theory}}, Progress in Mathematics, vol.
  236, Birkh\"auser, 2008. \MR{2357361}

\bibitem[HK03]{huberkings03}
\textsc{A.~Huber} and \textsc{G.~Kings},
  \articlehref{http://doi.org/10.1215/S0012-7094-03-11931-6}{\emph{Bloch--{K}ato
  conjecture and main conjecture of {I}wasawa theory for {D}irichlet
  characters}}, Duke Math. J. \textbf{119} (2003), no.~3, 393--464.
  \MR{2002643}

\bibitem[Jan88]{jannsen88}
\textsc{U.~Jannsen},
  \articlehref{http://doi.org/10.1007/BF01456052}{\emph{Continuous \'etale
  cohomology}}, Math. Ann. \textbf{280} (1988), no.~2, 207--245. \MR{929536}

\bibitem[Kat04]{kato04}
\textsc{K.~Kato},
  \articlehref{http://smf4.emath.fr/en/Publications/Asterisque/2004/295/html/smf_ast_295_117-290.html}{\emph{{$P$}-adic
  {H}odge theory and values of zeta functions of modular forms}},
  Ast{\'e}risque \textbf{295} (2004), 117--290, Cohomologies $p$-adiques et
  applications arithm{\'e}tiques. III. \MR{2104361}

\bibitem[KM85]{katzmazur85}
\textsc{N.~M. Katz} and \textsc{B.~Mazur}, \emph{Arithmetic moduli of elliptic
  curves}, Ann. of Math. Stud., vol. 108, Princeton Univ. Press, 1985.
  \MR{772569}

\bibitem[KLZ17]{KLZ17}
\textsc{G.~Kings, D.~Loeffler,} and \textsc{S.~L. Zerbes},
  \articlehref{http://doi.org/10.4310/CJM.2017.v5.n1.a1}{\emph{{R}ankin--{E}isenstein
  classes and explicit reciprocity laws}}, Cambridge J. Math. \textbf{5}
  (2017), no.~1, 1--122. \MR{3637653}

\bibitem[KLZ20]{KLZ20}
\bysame,
  \articlehref{http://doi.org/10.1353/ajm.2020.0002}{\emph{{R}ankin--{E}isenstein
  classes for modular forms}}, Amer. J. Math. \textbf{142} (2020), no.~1,
  79--138.

\bibitem[Lan17]{lan17}
\textsc{K.-W. Lan},
  \articlehref{http://doi.org/10.1093/imrn/rnv236}{\emph{Integral models of
  toroidal compactifications with projective cone decompositions}}, Int. Math.
  Res. Not. \textbf{2017} (2017), no.~11, 3237--3280. \MR{3693649}

\bibitem[LP18]{lanpolo}
\textsc{K.-W. Lan} and \textsc{P.~Polo},
  \articlehref{http://doi.org/10.4310/mrl.2018.v25.n1.a5}{\emph{Dual {BGG}
  complexes for automorphic bundles}}, Math. Res. Lett. \textbf{25} (2018),
  no.~1, 85--141. \MR{3818616}

\bibitem[LS18]{lanstroh}
\textsc{K.-W. Lan} and \textsc{B.~Stroh},
  \articlehref{http://doi.org/10.1017/fms.2018.20}{\emph{Compactifications of
  subschemes of integral models of {S}himura varieties}}, Forum Math. Sigma
  \textbf{6} (2018), e18, 105. \MR{3859178}

\bibitem[LS07]{lestum07}
\textsc{B.~Le~Stum},
  \articlehref{http://doi.org/10.1017/CBO9780511543128}{\emph{Rigid
  cohomology}}, Cambridge Tracts in Mathematics, vol. 172, Cambridge Univ.
  Press, 2007. \MR{2358812}

\bibitem[LP{\etalchar{+}}21]{LPSZ1}
\textsc{D.~Loeffler, V.~Pilloni, C.~Skinner,} and \textsc{S.~L. Zerbes},
  \articlehref{http://doi.org/10.1215/00127094-2021-0049}{\emph{Higher {H}ida
  theory and {$p$}-adic {$L$}-functions for {$\operatorname{GSp}(4)$}}}, Duke
  Math. J. \textbf{170} (2021), no.~18, 4033--4121. \MR{4348233}

\bibitem[LRZ26]{LRZ}
\textsc{D.~Loeffler, R.~Rockwood,} and \textsc{S.~L. Zerbes},
  \articlehref{http://arxiv.org/abs/2106.16082}{\emph{Spherical varieties and
  $p$-adic families of cohomology classes}}, Accepted to appear in ``Elliptic
  curves and modular forms in arithmetic geometry -- Celebrating Massimo
  Bertolini's 60th birthday'' (in press), 2026, \path{arXiv:2106.16082}.

\bibitem[LSZ20]{LSZ-asai}
\textsc{D.~Loeffler, C.~Skinner,} and \textsc{S.~L. Zerbes},
  \articlehref{http://doi.org/10.2969/aspm/08610595}{\emph{Syntomic regulators
  of {A}sai--{F}lach classes}}, Development of {I}wasawa Theory -- the
  centennial of {K.~I}wasawa's birth, Adv. Stud. Pure Math., vol.~86, Math.
  Soc. Japan, 2020, pp.~595--638.

\bibitem[LSZ21]{LSZ-unitary}
\bysame, \articlehref{http://doi.org/10.1007/s00208-021-02224-4}{\emph{An
  {E}uler system for {$\operatorname{GU}(2,1)$}}}, Math. Ann. \textbf{382}
  (2021), no.~3-4, 1091--1141. \MR{4403219}

\bibitem[LSZ22]{LSZ17}
\bysame, \articlehref{http://doi.org/10.4171/JEMS/1124}{\emph{Euler systems for
  {$\operatorname{GSp}(4)$}}}, J. Eur. Math. Soc. \textbf{24} (2022), no.~2,
  669--733. \MR{4382481}

\bibitem[LZ20a]{LZ20b-regulator}
\textsc{D.~Loeffler} and \textsc{S.~L. Zerbes},
  \articlehref{http://arxiv.org/abs/2011.15098}{\emph{{\noopsort{D}}{O}n
  {$p$}-adic regulators for {$\GSp(4)\times \GL(2)$} and {$\GSp(4)\times \GL(2)
  \times \GL(2)$}}}, preprint, 2020, \path{arXiv:2011.15098}.

\bibitem[LZ20b]{LZ20-yoshida}
\bysame,
  \articlehref{http://arxiv.org/abs/2006.14491}{\emph{\noopsort{B}{I}wasawa
  theory for quadratic {H}ilbert modular forms}}, preprint, 2020,
  \path{arXiv:2006.14491}.

\bibitem[LZ21a]{LZ21-erl}
\bysame, \articlehref{http://arxiv.org/abs/2106.14511}{\emph{{\noopsort{A}}{O}n
  the {B}loch--{K}ato conjecture for
  {$\operatorname{GSp}(4)\times\operatorname{GL}(2)$}}}, preprint, 2021,
  \path{arXiv:2106.14511}.

\bibitem[LZ21b]{LZ21-BSD}
\bysame, \articlehref{http://arxiv.org/abs/2110.13102}{\emph{On the
  {B}irch--{S}winnerton-{D}yer conjecture for modular abelian surfaces}},
  preprint, 2021, \path{arXiv:2110.13102}.

\bibitem[LZ23]{LZ-adjoint}
\bysame, \articlehref{http://arxiv.org/abs/2312.04665}{\emph{An {E}uler system
  for the adjoint of a modular form}}, preprint, 2023, \path{arXiv:2312.04665}.

\bibitem[LZ24]{LZ-equivar}
\bysame, \articlehref{http://arxiv.org/abs/2411.12576}{\emph{A universal
  {E}uler system for {$\operatorname{GSp}_4$}}}, preprint, 2024,
  \path{arXiv:2411.12576}.

\bibitem[Mao25]{mao-preprint}
\textsc{S.~Mao},
  \articlehref{http://arxiv.org/abs/2504.08574}{\emph{Compactifications of
  subschemes of integral models of {H}odge-type {S}himura varieties with
  parahoric level structures}}, preprint, 2025, \path{arXiv:2504.08574}.

\bibitem[MR04]{mazurrubin04}
\textsc{B.~Mazur} and \textsc{K.~Rubin},
  \articlehref{http://doi.org/10.1090/memo/0799}{\emph{Kolyvagin systems}},
  vol. 168, Mem. Amer. Math. Soc., no. 799, 2004. \MR{2031496}

\bibitem[Mie09]{mieda09}
\textsc{Y.~Mieda}, \emph{Cycle classes, {L}efschetz trace formula and
  integrality for {$p$}-adic cohomology}, Algebraic number theory and related
  topics 2007, RIMS K{\^o}ky{\^u}roku Bessatsu, vol. B12, Res. Inst. Math. Sci.
  (RIMS), Kyoto, 2009, pp.~57--66. \MR{2605773}

\bibitem[NN16]{nekovarniziol16}
\textsc{J.~Nekov{\'a}{\v{r}}} and \textsc{W.~Nizio{\l}},
  \articlehref{http://doi.org/10.2140/ant.2016.10.1695}{\emph{Syntomic
  cohomology and {$p$}-adic regulators for varieties over {$p$}-adic fields}},
  Algebra \& Number Theory \textbf{10} (2016), no.~8, 1695--1790, with
  appendices by Laurent Berger and Fr\'ed\'eric D\'eglise. \MR{3556797}

\bibitem[Oht95]{ohta95}
\textsc{M.~Ohta},
  \articlehref{http://doi.org/10.1515/crll.1995.463.49}{\emph{On the {$p$}-adic
  {E}ichler--{S}himura isomorphism for {$\Lambda$}-adic cusp forms}}, J. reine
  angew. Math. \textbf{463} (1995), 49--98. \MR{1332907}

\bibitem[PS97]{piatetskishapiro97}
\textsc{I.~I. Piatetski-Shapiro},
  \articlehref{http://doi.org/10.2140/pjm.1997.181.259}{\emph{{$L$}-functions
  for {$\operatorname{GSp}_4$}}}, Pacific J. Math. \textbf{181} (1997), no.~3,
  259--275, Olga Taussky-Todd: in memoriam. \MR{1610879}

\bibitem[Pil20]{pilloni20}
\textsc{V.~Pilloni},
  \articlehref{http://doi.org/10.1215/00127094-2019-0075}{\emph{Higher coherent
  cohomology and {$p$}-adic modular forms of singular weights}}, Duke Math. J.
  \textbf{169} (2020), no.~9, 1647--1807. \MR{4105535}

\bibitem[Pin90]{pink90}
\textsc{R.~Pink}, \emph{Arithmetical compactification of mixed {S}himura
  varieties}, Bonner Mathematische Schriften, vol. 209, Universit\"at Bonn,
  1990. \MR{1128753}

\bibitem[Pin92]{pink92}
\bysame, \articlehref{http://doi.org/10.1007/BF01444618}{\emph{On {$l$}-adic
  sheaves on {S}himura varieties and their higher direct images in the
  {B}aily--{B}orel compactification}}, Math. Ann. \textbf{292} (1992), no.~2,
  197--240. \MR{1149032}

\bibitem[Ram13]{ramakrishnan13}
\textsc{D.~Ramakrishnan},
  \articlehref{http://www.math.caltech.edu/~dinakar/papers/GalRepsGL(4).pdf}{\emph{Decomposition
  and parity of {G}alois representations attached to
  {$\operatorname{GL}(4)$}}}, Automorphic representations and {$L$}-functions,
  TIFR Stud. Math., vol.~22, Tata Inst. Fund. Res., Mumbai, 2013, pp.~427--454.
  \MR{3156860}

\bibitem[Rub00]{rubin00}
\textsc{K.~Rubin}, \emph{Euler systems}, Ann. of Math. Stud., vol. 147,
  Princeton Univ. Press, 2000. \MR{1749177}

\bibitem[SYZ21]{shenyuzhang21}
\textsc{X.~Shen, C.-F. Yu,} and \textsc{C.~Zhang},
  \articlehref{http://doi.org/10.1215/00127094-2021-0047}{\emph{E{KOR} strata
  for {S}himura varieties with parahoric level structure}}, Duke Math. J.
  \textbf{170} (2021), no.~14, 3111--3236. \MR{4319228}

\bibitem[Str10a]{stroh10-siegel}
\textsc{B.~Stroh},
  \articlehref{http://doi.org/10.24033/bsmf.2591}{\emph{Compactification de
  vari\'{e}t\'{e}s de {S}iegel aux places de mauvaise r\'{e}duction}}, Bull.
  Soc. Math. France \textbf{138} (2010), no.~2, 259--315.

\bibitem[Str10b]{stroh10-minimal}
\textsc{B.~Stroh},
  \articlehref{http://doi.org/10.5802/aif.2546}{\emph{Compactification minimale
  et mauvaise r\'{e}duction}}, Ann. Inst. Fourier (Grenoble) \textbf{60}
  (2010), no.~3, 1035--1055. \MR{2680823}

\bibitem[TX16]{tianxiao16}
\textsc{Y.~Tian} and \textsc{L.~Xiao},
  \articlehref{http://smf4.emath.fr/en/Publications/Asterisque/2016/382/html/smf_ast_382_73-162.php}{\emph{{$p$}-adic
  cohomology and classicality of overconvergent {H}ilbert modular forms}},
  Arithm{\'e}tique {$p$}-adique des formes de {H}ilbert, Ast{\'e}risque, vol.
  382, Soc. Math. France, 2016, pp.~73--162. \MR{3581176}

\bibitem[Til06]{tilouine06}
\textsc{J.~Tilouine},
  \articlehref{https://www.math.uni-bielefeld.de/documenta/vol-coates/tilouine.html}{\emph{Siegel
  varieties and {$p$}-adic {S}iegel modular forms}}, Doc. Math. Extra Vol.
  \textbf{4} (2006), 781--817, John H. Coates' Sixtieth Birthday. \MR{2290605}

\bibitem[Til12]{tilouine12}
\bysame, \articlehref{http://doi.org/10.5802/aif.2726}{\emph{Formes compagnons
  et complexe {BGG} dual pour {$GSp_4$}}}, Ann. Inst. Fourier (Grenoble)
  \textbf{62} (2012), no.~4, 1383--1436. \MR{3025747}

\bibitem[TU99]{tilouineurban99}
\textsc{J.~Tilouine} and \textsc{E.~Urban},
  \articlehref{http://doi.org/10.1016/S0012-9593(99)80021-4}{\emph{Several-variable
  {$p$}-adic families of {S}iegel--{H}ilbert cusp eigensystems and their
  {G}alois representations}}, Ann. Sci. \'Ecole Norm. Sup. (4) \textbf{32}
  (1999), no.~4, 499--574. \MR{1693583}

\bibitem[Urb05]{urban05}
\textsc{E.~Urban}, \emph{Sur les repr\'esentations {$p$}-adiques associ\'ees
  aux repr\'esentations cuspidales de {$\operatorname{GSp}_4 / \QQ$}}, Formes
  automorphes. II. Le cas du groupe {$\operatorname{GSp}(4)$}, Ast{\'e}risque,
  vol. 302, Soc. Math. France, 2005, pp.~151--176. \MR{2234861}

\bibitem[Wei05]{weissauer05}
\textsc{R.~Weissauer}, \emph{Four dimensional {G}alois representations}, Formes
  automorphes. II. Le cas du groupe {$\operatorname{GSp}(4)$}, Ast{\'e}risque,
  vol. 302, Soc. Math. France, 2005, Formes automorphes. II. Le cas du groupe
  {$\operatorname{GSp}(4)$}, pp.~67--150. \MR{2234860}

\bibitem[Wil07]{wildeshaus07}
\textsc{J.~Wildeshaus},
  \articlehref{https://doi.org/10.1112/S0010437X07002898}{\emph{On the boundary
  motive of a {S}himura variety}}, Compos. Math. \textbf{143} (2007), no.~4,
  959--985. \MR{2339835}

\bibitem[Yu10]{yu10}
\textsc{C.-F. Yu},
  \articlehref{http://doi.org/10.11650/twjm/1500406079}{\emph{{K}ottwitz--{R}apoport
  strata in the {S}iegel moduli spaces}}, Taiwanese J. Math. \textbf{14}
  (2010), no.~6, 2343 -- 2364.

\end{thebibliography}
\newcommand{\etalchar}[1]{$^{#1}$}
\providecommand{\bysame}{\leavevmode\hbox to3em{\hrulefill}\thinspace}
\providecommand{\MR}[1]{%
 MR \href{http://www.ams.org/mathscinet-getitem?mr=#1}{#1}.
}
\providecommand{\href}[2]{#2}
\newcommand{\articlehref}[2]{\href{#1}{#2}}


\end{document}